\newcommand*{\mailto}[1]{\href{mailto:#1}{\nolinkurl{#1}}}
\newcommand{\arxiv}[1]{\href{http://arxiv.org/abs/#1}{arXiv:#1}}
\newcommand{\bbC}{{\mathbb{C}}}
\newcommand{\bbN}{{\mathbb{N}}}
\newcommand{\bbR}{{\mathbb{R}}}
\newcommand{\bbZ}{{\mathbb{Z}}}
\newcommand{\cB}{{\mathcal B}}
\newcommand{\cF}{{\mathcal F}}
\newcommand{\cH}{{\mathcal H}}
\newcommand{\cK}{{\mathcal K}}
\newcommand{\cQ}{{\mathcal Q}}
\newcommand{\cU}{{\mathcal U}}
\newcommand{\cV}{{\mathcal V}}
\newcommand{\cW}{{\mathcal W}}
\newcommand{\cX}{{\mathcal X}}
\newcommand{\identity}{I}
\renewcommand{\k}{\varkappa}
\DeclareMathOperator{\vol}{vol}
\DeclareMathOperator{\supp}{supp}
\DeclareMathOperator{\ran}{ran}
\DeclareMathOperator{\dom}{dom}
\DeclareMathOperator{\tr}{tr}
\DeclareMathOperator*{\nlim}{n-lim}
\DeclareMathOperator*{\slim}{s-lim}
\DeclareMathOperator*{\wlim}{w-lim}
\DeclareMathOperator*{\sgn}{sgn}
\renewcommand{\Re}{\text{\rm Re}}
\renewcommand{\Im}{\text{\rm Im}}
\renewcommand{\ln}{\text{\rm ln}}
\renewcommand{\hat}{\widehat}
\renewcommand{\tilde}{\widetilde}
\renewcommand*{\epsilon}{\varepsilon}
\renewcommand*{\theta}{\vartheta}
\renewcommand*{\rho}{\varrho}
\newcommand{\ind}{\operatorname{ind}}
\newcommand{\no}{\notag}
\newcommand{\lb}{\label}
\newcommand{\f}{\frac}
\newcommand{\ol}{\overline}
\newcommand{\hatt}{\widehat}
\renewcommand{\ge}{\geqslant}
\let\geq\geqslant
\let\leq\leqslant
\def\theequation{\@arabic\c@equation}
\numberwithin{equation}{section}
\newtheorem{theorem}{Theorem}[section]
\newtheorem{proposition}[theorem]{Proposition}
\newtheorem{lemma}[theorem]{Lemma}
\newtheorem{corollary}[theorem]{Corollary}
\newtheorem{definition}[theorem]{Definition}
\newtheorem{hypothesis}[theorem]{Hypothesis}
\newtheorem{example}[theorem]{Example}
\theoremstyle{remark}
\newtheorem{remark}[theorem]{Remark}
\begin{document}

\numberwithin{equation}{section}
\allowdisplaybreaks

\title[The Callias Index Formula Revisited]{The Callias Index Formula Revisited}
  
\author[F.\ Gesztesy]{Fritz Gesztesy}  
\address{Department of Mathematics,
University of Missouri, Columbia, MO 65211, USA}
\email{\mailto{gesztesyf@missouri.edu}}
\urladdr{\url{https://www.math.missouri.edu/people/gesztesy}}

\author[M.\ Waurick]{Marcus Waurick}
\address{Institut f\"ur Analysis, Fachrichtung Mathematik, Technische Universit\"at Dresden, 
01062 Dresden, Germany}
\email{\mailto{marcus.waurick@tu-dresden.de}}
\urladdr{\url{http://www.math.tu-dresden.de/~waurick/}}


\date{\today}
\thanks{ To appear in {\it Springer Lecture Notes in Mathematics}.}
\subjclass[2010]{Primary 47A53, 47F05; Secondary 47B25.}
\keywords{Fredholm index, Witten index, resolvent regularization, Dirac-type operators, 
Callias index formula.}

\begin{abstract} 
We revisit the Callias index formula in connection with supersymmetric Dirac-type operators 
$H$ of the form 
$$
H = \begin{pmatrix} 0 & L^* \\ L & 0 \end{pmatrix}
$$ 
in odd space dimensions $n$, originally derived in 1978, and prove that 
\begin{align}
 \ind (L) 
 &=\left(\frac{i}{8\pi}\right)^{(n-1)/2}\frac{1}{\left[(n-1)/2\right]!} 
 \lim_{\Lambda \to\infty}\frac{1}{2 \Lambda }\sum_{i_{1},\ldots,i_{n} = 1}^n \epsilon_{i_{1}\ldots i_{n}}      \lb{0.1} \\ 
 & \quad \; \times \int_{\Lambda S^{n-1}}\tr_{\bbC^d}(U(x)(\partial_{i_{1}}U)(x)\ldots 
 (\partial_{i_{n-1}}U)(x)) x_{i_{n}}\, d^{n-1} \sigma(x),    \no 
\end{align}
where
\[
U(x) := |\Phi(x)|^{-1} \Phi(x) = \sgn(\Phi(x)), \quad x \in \bbR^n.  
\]
Here the closed operator $L$ in $L^{2}(\mathbb{R}^{n})^{2^{\hat n}d}$ 
is of the form 
\[
L= \cQ + \Phi,
\]
where 
\[
\cQ\coloneqq Q \otimes I_d = \bigg(\sum_{j=1}^{n}\gamma_{j,n}\partial_{j}\bigg) I_d, 
\]
with $\gamma_{j,n}$, $j\in\{1,\ldots,n\}$, elements of the Euclidean Dirac algebra,   
such that $n=2{\hatt n}$ or $n=2{\hatt n}+1$. Here $\Phi$ is identified with $I \otimes \Phi$, 
satisfying  
\begin{align*} 
& \Phi\in C_{b}^{2}\big(\mathbb{R}^{n};\mathbb{C}^{d\times d}\big), \quad d \in \bbN,   \\
& \Phi(x)=\Phi(x)^{*},  \quad x\in\mathbb{R}^{n},     
\end{align*} 
there exists $c>0$, $R\geq0$ such that   
\[
|\Phi(x)|\geq c I_d, \quad x\in\mathbb{R}^{n}\backslash B(0,R),     
\]
and there exists $\varepsilon> 1/2$
such that for all $\alpha\in\mathbb{N}_{0}^{n}$, $\left|\alpha\right|<3$,
there is $\kappa>0$ such that  
\[
\|(\partial^{\alpha}\Phi)(x)\|\leq \begin{cases}
\kappa (1+|x|)^{-1}, & |\alpha|=1,\\[1mm] 
\kappa (1+ |x|)^{-1-\epsilon}, & |\alpha|=2, 
\end{cases}\quad x\in\mathbb{R}^{n}.        
\]
These conditions on $\Phi$ render $L$ a Fredholm operator, and to the best of our 
knowledge they represent the most general conditions known to date for which Callias' 
index formula \eqref{0.1} has been derived. 

We also consider a generalization of the index formula \eqref{0.1} to certain classes of 
non-Fredholm operators $L$ for which \eqref{0.1} represents its (generalized) Witten 
index (based on a resolvent regularization scheme). 
\end{abstract}

\maketitle

\newpage 

{\scriptsize{\tableofcontents}}

\newpage 

\section{Introduction}\label{sec:Introduction}

If pressed to  describe the contents of this manuscript in a nutshell, one could say we embarked on an  attempt to settle the Callias index formula, first presented by Callias \cite{Ca78} in 1978, with the help of functional analytic methods. While we tried at first to follow the path originally envisaged by Callias, we soon had to deviate sharply from his strategy of proof as we intended to derive his index formula under more general conditions on the potential $\Phi$ in the underlying closed operator $L$ (see 
\eqref{1.4}), but also since several of the claims made in \cite{Ca78} can be disproved.

Before describing the need to reconsider Callias' original arguments, and before entering a brief  discussion of new developments in the field since 1978, it may be best to set the stage for the remarkable index formula that now carries his name.  

For a given spatial dimension $n\in\mathbb{N}$, we denote the elements of the 
\emph{Euclidean Dirac algebra} (cf.\ Appendix \ref{sec:Appendix:-the-Construction} for precise details) by $\gamma_{j,n}$, $j\in\{1,\ldots,n\}$. One recalls in this context that for $n=2 \hat n$
or $n=2 \hat n +1$ for some $\hat n \in\mathbb{N}$, $\gamma_{j,n}$ satisfy  
\begin{equation}
\gamma_{j,n}^{*}=\gamma_{j,n}\in\mathbb{C}^{2^{\hat n}\times2^{\hat n}},   \quad 
\gamma_{j,n}\gamma_{k,n}+\gamma_{k,n}\gamma_{j,n}=2\delta_{jk}I_{2^{\hatt n}}, \quad j,k\in\{1,\ldots,n\}.     \lb{1.1} 
\end{equation}
With the elements $\gamma_{j,n}$ in place, one then introduces the constant coefficient, first-order differential operator $Q$ in $L^{2}(\mathbb{R}^{n})^{2^{\hat n}}$ by 
\begin{equation}
Q\coloneqq\sum_{j=1}^{n}\gamma_{j,n}\partial_{j}, \quad 
\dom(Q) = H^{1}(\mathbb{R}^{n})^{2^{\hat n}},      \label{1.2}
\end{equation}
with $H^m(\bbR^n)$, $m\in\bbN$, the standard Sobolev spaces. One notes in passing that 
\begin{equation}
Q^{2}=\Delta I_{2^{\hatt n}}, \quad \dom(Q^2) = H^{2}(\mathbb{R}^{n})^{2^{\hat n}}.  \lb{1.3}      
\end{equation}
Next, let $d \in\mathbb{N}$ and assume that $\Phi\colon\mathbb{R}^{n}\to\mathbb{C}^{d\times d}$ 
is a $d \times d$ self-adjoint matrix with entries given by bounded measurable functions. We introduce the operator $L$ in $L^{2}(\mathbb{R}^{n})^{2^{\hat n}d}$ via 
\begin{align}
L\colon \begin{cases} H^{1}(\mathbb{R}^{n})^{2^{\hat n}d} \subseteq 
L^{2}(\mathbb{R}^{n})^{2^{\hat n}d} \to 
L^{2}(\mathbb{R}^{n})^{2^{\hat n}d},   \\
\psi \otimes \phi \mapsto\left(\sum_{j=1}^{n}\gamma_{j,n}\partial_{j}\psi\right)
\otimes \phi + \left(x\mapsto\psi(x)\otimes \Phi(x)\phi\right). 
\end{cases}            \label{1.4}
\end{align}
Given \eqref{1.2}, we shall abreviate
\begin{equation}
\cQ\coloneqq Q \otimes I_d = \bigg(\sum_{j=1}^{n}\gamma_{j,n}\partial_{j}\bigg) I_d,  \label{1.5}
\end{equation}
and, with a slight abuse of notation, employ the symbol $\Phi$ also in the context of the operation 
\begin{equation}
\Phi \colon \psi\otimes \phi \mapsto\left(x\mapsto\psi(x)
\otimes \Phi(x)\phi\right),    \lb{1.6}
\end{equation} 
(see our notational conventions to suppress tensor products whenever possible, collected in 
Section \ref{s2} and in Remark \ref{rem:differen_mult_op}). Thus, we may write,    
\begin{equation} 
L= \cQ + \Phi.      \lb{1.7}
\end{equation}   
The associated $($self-adjoint\,$)$ supersymmetric Dirac-type operator $H$ in 
$L^{2}(\mathbb{R}^{n})^{2^{\hat n}d} \oplus L^{2}(\mathbb{R}^{n})^{2^{\hat n}d}$ is then of the form 
\begin{equation} 
H = \begin{pmatrix} 0 & L^* \\ L & 0 \end{pmatrix}. 
\end{equation}
We refer to \cite[Ch.~5]{Th92} for a detailed discussion of supersymmetric Dirac-type operators 
and the many explicit examples they represent. 

Next, we strengthen the hypotheses on $\Phi$ to the effect that 
\begin{align} 
& \Phi\in C_{b}^{2}\big(\mathbb{R}^{n};\mathbb{C}^{d\times d}\big), \quad d \in \bbN,  \lb{1.8} \\
& \Phi(x)=\Phi(x)^{*},  \quad x\in\mathbb{R}^{n},     \lb{1.9} 
\end{align} 
there exists $c>0$, $R\geq0$ such that   
\begin{equation} 
\left|\Phi(x)\right|\geq c I_d, \quad x\in\mathbb{R}^{n}\backslash B(0,R),     \lb{1.10} 
\end{equation} 
and there exists $\varepsilon> 1/2$
such that for all $\alpha\in\mathbb{N}_{0}^{n}$, $\left|\alpha\right|<3$,
there is $\kappa>0$ such that  
\begin{equation} 
\|(\partial^{\alpha}\Phi)(x)\|\leq \begin{cases}
\kappa (1+|x|)^{-1}, & |\alpha|=1,\\[1mm] 
\kappa (1+ |x|)^{-1-\epsilon}, & |\alpha|=2, 
\end{cases}\quad x\in\mathbb{R}^{n}.        \lb{1.11} 
\end{equation} 

\begin{theorem} \lb{t1.1} 
Let $n\in\mathbb{N}$ odd, $n \geq 3$. 
Under assumptions \eqref{1.8}--\eqref{1.11} on $\Phi$, the closed operator 
$L\coloneqq \cQ +\Phi$ in $L^{2}(\mathbb{R}^{n})^{2^{\hat n}d}$
is Fredholm with index given by the formula 
\begin{align}
 \ind (L) 
 &=\left(\frac{i}{8\pi}\right)^{(n-1)/2}\frac{1}{[(n-1)/2]!} 
 \lim_{\Lambda \to\infty}\frac{1}{2 \Lambda }\sum_{i_{1},\ldots,i_{n} = 1}^n \epsilon_{i_{1}\ldots i_{n}}     \lb{1.12} \\ 
 & \quad \; \times \int_{\Lambda S^{n-1}}\tr_{\bbC^d}(U(x)(\partial_{i_{1}}U)(x)\ldots 
 (\partial_{i_{n-1}}U)(x)) x_{i_{n}}\, d^{n-1} \sigma(x),    \no 
\end{align}
where
\[
U(x) := |\Phi(x)|^{-1} \Phi(x) = \sgn(\Phi(x)), \quad x \in \bbR^n. 
\]
\end{theorem}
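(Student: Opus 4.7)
The plan is to reduce the theorem to four tasks: (i) verifying that $L$ is Fredholm, (ii) representing $\ind(L)$ as a regularized supertrace, (iii) localizing this supertrace to a boundary integral on a large sphere, and (iv) evaluating the Clifford-algebra trace to reproduce the $\epsilon$-tensor structure in \eqref{1.12}. For task (i), using $\cQ^{*}=-\cQ$ on $H^{1}(\bbR^{n})^{2^{\hat n}d}$ and \eqref{1.3}, one computes on $H^{2}(\bbR^{n})^{2^{\hat n}d}$,
\[
L^*L = -\Delta I_{2^{\hat n}d} + \Phi^2 - [\cQ,\Phi], \qquad
LL^* = -\Delta I_{2^{\hat n}d} + \Phi^2 + [\cQ,\Phi].
\]
Hypothesis \eqref{1.10} gives $\Phi^{2}\ge c^{2}I$ off $B(0,R)$, and \eqref{1.11} renders the commutator $[\cQ,\Phi]=\sum_{j}\gamma_{j,n}(\partial_{j}\Phi)$ relatively compact with respect to $-\Delta I+\Phi^{2}$ outside that ball. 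A Persson-type argument then forces $\inf\sigma_{\mathrm{ess}}(L^{*}L),\inf\sigma_{\mathrm{ess}}(LL^{*})\ge c^{2}>0$, from which Fredholmness of $L$ follows.

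For tasks (ii)--(iii), once $L$ is Fredholm the index can be represented as
\[
\ind(L) = \Tr\bigl[g_{z}(L^*L) - g_{z}(LL^*)\bigr]
\]
for an appropriate regularizer $g_{z}$ satisfying $g_{z}(0)=1$. Consistent with the resolvent regularization scheme announced in the abstract, one chooses $g_{z}(\lambda)=z^{N}/(\lambda+z)^{N}$ with $N>n/2$, so that each of $(L^{*}L+z)^{-N}$ and $(LL^{*}+z)^{-N}$ admits a locally integrable integral kernel and the supertrace is represented as $\int_{\bbR^{n}}\tr_{\bbC^{2^{\hat n}d}}[K(x,x;z)]\,d^{n}x$. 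The technical core of the argument is to use the commutator identity $L^{*}L-LL^{*}=-2[\cQ,\Phi]$ iteratively in a parametrix expansion for the resolvent powers, so as to rewrite the diagonal kernel difference in the form $\dive J(\,\cdot\,;z)+r(\,\cdot\,;z)$, with $r$ integrable uniformly in $z$ and of vanishing integral as $z\to 0^{+}$. An application of Gauss' theorem on $B(0,\Lambda)$ then yields
\[
\ind(L)=\lim_{\Lambda\to\infty}\int_{\Lambda S^{n-1}} J(x)\cdot\nu(x)\,d^{n-1}\sigma(x),
\]
where $J$ is a universal polynomial in $\Phi$, $\Phi^{-1}$, and $\partial_{j}\Phi$. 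Substituting the polar decomposition $\Phi(x)=|\Phi(x)|U(x)$, valid on $|x|>R$ by \eqref{1.10}, the modulus $|\Phi|$ drops out after tracing (it commutes with $U$, and the resulting homogeneity in $|\Phi|$ cancels against the regularization parameter), leaving a universal polynomial in $U$ and $\partial U$.

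Task (iv) reduces to the Clifford-trace identity, valid in odd dimension $n$,
\[
\tr_{\bbC^{2^{\hat n}}}\bigl(\gamma_{i_{1},n}\gamma_{i_{2},n}\cdots\gamma_{i_{n},n}\bigr) = c_{n}\,\epsilon_{i_{1}\ldots i_{n}},
\]
which converts the Clifford content of $J\cdot\nu$ on $\Lambda S^{n-1}$ into the fully antisymmetrized product $\epsilon_{i_{1}\ldots i_{n}}\tr_{\bbC^{d}}(U\,\partial_{i_{1}}U\cdots\partial_{i_{n-1}}U)\,x_{i_{n}}$. The outward normal contributes the factor $x_{i_{n}}/\Lambda$, while the combinatorial prefactor $(i/8\pi)^{(n-1)/2}/[(n-1)/2]!$ emerges by matching the universal constants in the resolvent asymptotic expansion with the Clifford-trace normalization and the surface measure on $\Lambda S^{n-1}$.

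The principal obstacle lies in Step (iii). The pointwise bound $|\partial\Phi|\lesssim(1+|x|)^{-1}$ is not integrable over $\bbR^{n}$, so neither $g_{z}(L^{*}L)$ nor $g_{z}(LL^{*})$ individually yields a bulk density whose integral approaches $\ind(L)$; one \emph{must} exploit the cancellation between the two and invoke the stronger decay $|\partial^{2}\Phi|\lesssim(1+|x|)^{-1-\varepsilon}$ with $\varepsilon>1/2$ to gain the additional $|x|^{-\varepsilon}$ required for integrability of the remainder $r(\,\cdot\,;z)$ and for the $\Lambda\to\infty$ limit in the boundary integral to exist. Making this cancellation quantitative---uniformly in $z$ and in the truncation scale $\Lambda$---is precisely where the functional-analytic machinery must replace Callias' original pointwise estimates, and where the bulk of the analytical work will reside.
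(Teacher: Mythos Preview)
Your overall architecture---Fredholmness via essential spectrum, supertrace regularization, rewriting the bulk integrand as a divergence plus an integrable remainder, Gauss on large balls, and the Clifford trace identity $\tr(\gamma_{i_1}\cdots\gamma_{i_n})=(2i)^{(n-1)/2}\epsilon_{i_1\cdots i_n}$---is the same skeleton as the paper's. But two of your intermediate mechanisms differ from what the paper actually does, and one of them is a genuine gap.

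First, the regularization: the paper does \emph{not} take $g_z(\lambda)=z^N/(\lambda+z)^N$ with $N>n/2$. It keeps $N=1$ (Witten's resolvent regularization $B_L(z)=z\,\tr_{2^{\hat n}d}\bigl[(L^*L+z)^{-1}-(LL^*+z)^{-1}\bigr]$) and compensates for the failure of trace class by a spatial cutoff $\chi_\Lambda$, invoking an abstract double-limit result (their Theorem~\ref{thm:index with Witten}) that gives $\ind(L)=\lim_{\Lambda\to\infty}\lim_{z\to 0}\tr(\chi_\Lambda B_L(z))$. The paper even constructs a counterexample (Appendix~\ref{sB}) showing $B_L(z)$ need not be trace class without the cutoff. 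Your higher-$N$ scheme is closer to \cite{St89} and might be made to work, but it is a different route; in particular, the analytic-continuation machinery the paper uses (Neumann expansion valid only for large $\Re(z)$, local boundedness of $\{z\mapsto\tr(\chi_\Lambda B_L(z))\}_\Lambda$, Montel's theorem, then identification at $z=0$) would have to be reworked from scratch in your scheme.

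Second, and more seriously, your passage from $\Phi$ to $U=\sgn(\Phi)$ is not correct as written. The modulus $|\Phi|$ does \emph{not} simply ``drop out after tracing'' via some homogeneity cancellation with the regularization parameter; there is no such cancellation in the resolvent kernels, since $\Phi^2$ enters nonlinearly into $(L^*L+z)^{-1}=(-\Delta+\Phi^2-[\cQ,\Phi]+z)^{-1}$. The paper handles this by a genuine \emph{homotopy-of-Fredholm-operators} argument (Theorem~\ref{thm:3L_Fred_Rge0}): one first shows $\ind(\cQ+\Phi)=\ind(\cQ+U)$ for any smooth self-adjoint $U$ coinciding with $\sgn(\Phi)$ outside a large ball, via the path $t\mapsto \cQ+(1-t)\Phi+t\,c\,U$ remaining Fredholm. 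Only \emph{after} this replacement does one have $U^2=I$ off a compact set, which is what makes $L^*L$ and $LL^*$ perturbations of $-\Delta+1$ and enables the Neumann expansions \eqref{eq:Neumann_lstarl}--\eqref{eq:Neumann_llstar} that drive the entire computation. Without this step your parametrix expansion would be about $-\Delta+\Phi^2$ rather than $-\Delta+1$, and none of the explicit kernel calculations (Proposition~\ref{prop:formula for index}, the constants in \eqref{1.12}) would go through. The paper further needs a perturbation theory for the Helmholtz Green's function (Section~\ref{sec:pert}) to handle the small ball where $U^2\neq I$, and a mollification argument (Section~\ref{s13}) to descend from $C^\infty$ to $C^2_b$; your outline omits both.
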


Here $\epsilon_{i_1\cdots i_n}$ denotes the totally anti-symmetric symbol in $n$ coordinates, 
$\tr_{\bbC^d}(\cdot)$ represents the matrix trace in $\bbC^{d \times d}$, $d^{n-1} \sigma(\cdot)$ is the surface measure on the unit sphere $S^{n-1}$ of $\bbR^n$, and we assumed $n\in\mathbb{N}$ to be odd since for algebraic reasons $L$ has vanishing Fredholm index in all even spatial dimensions $n$ 
(cf.\ \eqref{1.19} below). 

Theorem \ref{t1.1} represents the principal result of this manuscript and under these hypotheses 
on $\Phi$ it is new as we suppose no additional asymptotic homogeneity properties on $\Phi$. 
In particular, it extends the original Callias formula for the index of $L$ to the hypotheses \eqref{1.8}--\eqref{1.11} on $\Phi$. We also note that at the end of this manuscript we take some first steps towards computing the Witten index of the operator $L$ under certain conditions on $\Phi$ in which $L$ ceases to be Fredholm, yet its Witten index is still given by a formula analogous to \eqref{1.12}. 

For the topological setting underlying the Callias index formula \eqref{1.12} we refer to the 
discussion by Bott and Seeley \cite{BS78}.  

Next, we succinctly summarize the principal strategy of proof underlying formula \eqref{1.12}. While at first we follow Callias' original strategy of proof, the bulk of our arguments necessarily differ sharply from those in \cite{Ca78} as some of the claims in \cite{Ca78} can clearly be disproved (see our subsequent discussion).  \\[1mm] 
{\bf  Step\,$\boldsymbol{(1)}$:~Computing Fredholm indices abstractly.} ~Let $\cH$ be a separable Hilbert space, $m\in\mathbb{N}$, and 
$T\in \cB\left(\cH^{m},\cH^{m}\right)$. Define the \emph{internal trace}, $\tr_{m} (T) $, \emph{of }$T$ by 
\begin{equation}
\tr_{m} (T) \coloneqq\sum_{j=1}^{m}T_{jj}.       \lb{1.13}
\end{equation}
Next, let $M$ be a densely defined, closed linear operator in $\cH^{m}$, and introduce the 
abbreviation 
\begin{align}
\begin{split} 
B_{M}(z)\coloneqq z\tr_{m}\big((M^{*}M+z)^{-1}-(MM^{*}+z)^{-1}\big),  
\quad z\in\rho(-M^{*}M)\cap\rho(-MM^{*}).   \lb{1.14}
\end{split} 
\end{align}
 
A basic result we employ to compute Fredholm indices then reads as follows:
 
\begin{theorem} \lb{t1.2} 
Assume that $M$ is a densely defined, closed, and linear operator in $\cH^{m}$, and suppose that 
$M$ is Fredholm. In addition, let $\{T_\Lambda\}_{\Lambda\in \mathbb{N}}$, $\{S^*_\Lambda\}_{\Lambda\in \mathbb N}$ be sequences in $\cB (\cH)$, both strongly converging to $I_\cH$ as 
$\Lambda \to \infty$, and introduce 
$S_\Lambda\coloneqq S_\Lambda^{**}$, $\Lambda\in \mathbb{N}$. Assume that for each 
$\Lambda\in \mathbb{N}$, there exists $\delta_\Lambda >0$ with 
$\Omega_\Lambda\coloneqq B(0,\delta_\Lambda)\backslash \{0\}\subseteq \rho(-MM^{*})\cap\rho(-M^{*}M)$ and that the map   
\begin{equation} 
   \Omega_\Lambda \ni z\mapsto T_\Lambda B_M(z)S_\Lambda    \lb{1.15} 
\end{equation} 
takes on values in $\mathcal{B}_1(\mathcal{H})$, such that 
\begin{equation} 
\Omega_{\Lambda} \ni z\mapsto \tr_{\mathcal{H}} (|T_\Lambda B_M(z)S_\Lambda|) 
= \| T_\Lambda B_M(z)S_\Lambda \|_{\cB_1(\cH)}\, \text{ is bounded $($w.r.t.~$z$$)$,}  \lb{1.16} 
\end{equation} 
where 
$\tr_{\cH}(\cdot)$ represents the trace on $\cB_1(\cH)$, the Schatten-von Neumann ideal of trace class operators on $\cH$. Then, 
\begin{equation}
\ind (M) =\lim_{\Lambda\to\infty}\lim_{z\to0} \tr_{\mathcal{H}} (T_\Lambda B_M(z)S_\Lambda).   \lb{1.17}
\end{equation} 
In addition, if $\delta\coloneqq 2^{-1} \inf_{\Lambda \in\mathbb{N}} (\delta_\Lambda ) > 0$ and 
$\Omega\coloneqq B(0,\delta)\ni z\mapsto \tr_\mathcal{H}(T_\Lambda B_M(z)S_\Lambda )$ converges uniformly on $\overline{B(0,\delta)}$ to some function $F(\cdot)$ as $\Lambda  \to \infty$. Then, one can interchange the limits $\Lambda \to \infty$ and $z \to 0$ in \eqref{1.17} and obtains, 
\begin{equation} 
F(0)=\ind (M).   \lb{1.18}
\end{equation} 
\end{theorem}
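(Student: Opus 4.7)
The plan is to handle the two nested limits in \eqref{1.17} through a Laurent analysis of the resolvents near $z=0$, after which the operator-norm structure of $B_{M}(z)$ is lifted to an analytic expansion in the Banach space $\cB_1(\cH)$ with the aid of hypothesis \eqref{1.16}.

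\emph{Laurent setup.} Because $M$ is Fredholm, the non-negative self-adjoint operators $M^{*}M$ and $MM^{*}$ each have $0$ either in their resolvent set or as an isolated eigenvalue of finite multiplicity $\dim\ker M$, respectively $\dim\ker M^{*}$; in particular both reduced resolvents are operator-norm analytic on $B(0,\delta_{\Lambda})$. Hence on $\Omega_{\Lambda}$,
\[
(M^{*}M+z)^{-1} = z^{-1} P_{\ker M} + R_{1}(z), \qquad (MM^{*}+z)^{-1} = z^{-1} P_{\ker M^{*}} + R_{2}(z),
\]
so that, multiplying by $z$ and applying the internal trace,
\[
B_{M}(z) = P + z\,\tilde R(z), \qquad P := \tr_{m}(P_{\ker M}) - \tr_{m}(P_{\ker M^{*}}),
\]
with $P$ finite rank on $\cH$ and $\tilde R$ operator-norm analytic at $0$. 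The partial-trace identity $\tr_{\cH}\!\circ\!\tr_{m} = \tr_{\cH^{m}}$ immediately gives $\tr_{\cH}(P) = \dim\ker M - \dim\ker M^{*} = \ind(M)$.

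\emph{Inner limit $z\to 0$ at fixed $\Lambda$.} The delicate step is to promote the operator-norm Taylor expansion $B_{M}(z) = \sum_{k\geq 0} C_{k}\,z^{k}$, with $C_{0}=P$, to a trace-norm convergent expansion of $T_{\Lambda}B_{M}(z)S_{\Lambda}$. I would apply Cauchy's formula coefficient-wise,
\[
T_{\Lambda} C_{k} S_{\Lambda} = \frac{1}{2\pi i}\oint_{|\zeta|=r}\frac{T_{\Lambda} B_{M}(\zeta) S_{\Lambda}}{\zeta^{k+1}}\,d\zeta, \qquad 0<r<\delta_{\Lambda},
\]
interpreted as a Bochner integral in $\cB_1(\cH)$, whose existence is guaranteed by the trace-norm bound \eqref{1.16}. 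The standard estimate yields $\|T_{\Lambda} C_{k} S_{\Lambda}\|_{\cB_1}\leq C_{\Lambda}\,r^{-k}$, so the series $T_{\Lambda}B_{M}(z)S_{\Lambda} = \sum_{k\geq 0}(T_{\Lambda} C_{k} S_{\Lambda})\,z^{k}$ converges in trace norm for $|z|<r$. Continuity of the trace on $\cB_1(\cH)$ then gives
\[
\lim_{z\to 0}\tr_{\cH}\bigl(T_{\Lambda} B_{M}(z) S_{\Lambda}\bigr) = \tr_{\cH}(T_{\Lambda} C_{0} S_{\Lambda}) = \tr_{\cH}(T_{\Lambda} P S_{\Lambda}).
\]

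\emph{Outer limit and interchange.} Expressing the finite-rank operator $P$ as $P = \sum_{j=1}^{N} c_{j}\,|u_{j}\rangle\langle v_{j}|$, one computes
\[
\tr_{\cH}(T_{\Lambda} P S_{\Lambda}) = \sum_{j=1}^{N} c_{j}\,\langle S_{\Lambda}^{*} v_{j}, T_{\Lambda} u_{j}\rangle_{\cH} \longrightarrow \sum_{j=1}^{N} c_{j}\,\langle v_{j}, u_{j}\rangle_{\cH} = \tr_{\cH}(P) = \ind(M),
\]
directly from $T_{\Lambda}\to I_{\cH}$ and $S_{\Lambda}^{*}\to I_{\cH}$ strongly, which establishes \eqref{1.17}. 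For \eqref{1.18}, the inner-limit step exhibits each $F_{\Lambda}(z) := \tr_{\cH}(T_{\Lambda} B_{M}(z) S_{\Lambda})$ as a continuous (indeed analytic) function on $\ol{B(0,\delta)}$, and uniform convergence $F_{\Lambda}\to F$ on that compact set produces a continuous $F$; the classical double-limit theorem then permits exchanging $\lim_{\Lambda\to\infty}$ with $\lim_{z\to 0}$ to yield $F(0) = \ind(M)$. The main obstacle throughout is precisely this trace-norm upgrade in the inner limit: the direct bound on $\|z\,T_{\Lambda}\tilde R(z) S_{\Lambda}\|_{\cB_1}$ afforded by \eqref{1.16} alone does not force the trace of the remainder to vanish as $z\to 0$, so recasting Cauchy's formula inside the trace ideal is the technical crux of the argument.
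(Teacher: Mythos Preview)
Your overall architecture matches the paper's proof (Theorem~\ref{thm:index with Witten}): split off the Riesz projections $P_\pm$ onto $\ker M$ and $\ker M^*$, show the remainder $T_\Lambda(B_M(z)-P)S_\Lambda$ tends to $0$ in trace norm as $z\to 0$, then handle the outer limit using that $P$ has finite rank together with strong convergence of $T_\Lambda$ and $S_\Lambda^*$. The Laurent setup, the identity $\tr_\cH(P)=\ind(M)$, the finite-rank computation of the outer limit, and the uniform-convergence argument for \eqref{1.18} are all as in the paper.

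The gap sits exactly where you yourself flag the crux. You cast the Cauchy coefficient formula as a Bochner integral in $\cB_1(\cH)$ and assert that its existence is ``guaranteed by the trace-norm bound \eqref{1.16}.'' Boundedness of $\zeta\mapsto\|T_\Lambda B_M(\zeta)S_\Lambda\|_{\cB_1}$ alone is not sufficient for a $\cB_1$-valued Bochner integral: one also needs strong measurability in $\cB_1(\cH)$, and a priori the integrand is only continuous in the weaker operator-norm topology, which does not automatically transfer. The paper fills this gap by invoking a characterization of Banach-space-valued analyticity (from \cite{ABHN01} and \cite{Wa11}): a map into a Banach space $\cX$ is norm-analytic provided it is locally norm-bounded and scalarly analytic against a \emph{norming} subset of $\cX^*$. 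Here the finite-rank operators norm $\cB_1(\cH)$, and $\zeta\mapsto\tr(F\,T_\Lambda B_M(\zeta)S_\Lambda)$ is analytic for every finite-rank $F$ because this is an operator-norm-continuous linear functional applied to an operator-norm-analytic map; combined with \eqref{1.16} one obtains genuine $\cB_1$-norm analyticity on $\Omega_\Lambda$, after which Riemann's removable-singularity theorem in $\cB_1$ and the operator-norm limit $B_M(z)\to P$ deliver the inner limit. With $\cB_1$-analyticity in hand your Cauchy-coefficient argument would go through verbatim, so you have correctly located the obstacle but under-supplied its resolution; the missing input is this weak-to-norm analyticity principle.
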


We emphasize that \eqref{1.17} and \eqref{1.18} represent a subtle, but crucial, deviation from the far simpler strategy employed in \cite[Lemma~1]{Ca78} which entirely dispenses with the additional regularization factors $S_{\Lambda}$ and $T_{\Lambda}$, $\Lambda \in \bbN$. At this point we do not know if \cite[Lemma~1]{Ca78} is valid, however, its proof is clearly invalid and we record a counterexample (kindly communicated to us by H.\ Vogt \cite{Vo14}) to the statement made on line 5 on p.~219 in the proof of \cite[Lemma~1]{Ca78} later in Remark \ref{r2.4}\,$(i)$. After completing this project we became aware of an unpublished preprint by Arai \cite{Ar90} in which it was observed that the index regularization employed in \cite{Ca78} was insufficient. \\[1mm] 
{\bf  Step\,$\boldsymbol{(2)}$:~Applying Step $\boldsymbol{(1)}$ to the 
operator $L$.}~One now identifies $\cH$ and $L^{2}(\mathbb{R}^{n})$, $m$ and $2^{\hat n}d$, 
$M$ and $L$, $T_\Lambda$ and the operator of multiplication by the characteristic function 
of the ball $B(0,\Lambda) \subset \bbR^n$ in $L^2(\bbR^n)$, denoted by $\chi_\Lambda$, 
and chooses $S_\Lambda^*=I_{L^2(\mathbb{R}^n)}$, $\Lambda \in \bbN$.

According to \eqref{1.17} and especially, \eqref{1.18}, we are thus interested in computing the limit for $\Lambda\to\infty$ of $\tr(\chi_\Lambda B_{L}(z))$. Without loss of generality we restrict ourselves in the following to $n \in \bbN$ odd, as a detailed analysis shows that actually
\begin{equation}
B_L(z) = 0 \, \text{ for $n \in \bbN$, $n$ even.}   \lb{1.19} 
\end{equation} 
For $z\in\rho\left(-LL^{*}\right)\cap\rho\left(-L^{*}L\right)$ with $\Re (z) > - 1$, and 
$n\in\mathbb{N}$ odd, $n \geq 3$, one then proceeds to prove that  
$\chi_\Lambda B_{L}(z) \in \cB_1\big(L^2(\bbR^n)\big)$, and that the limit 
\[
f(z)\coloneqq \lim_{\Lambda\to\infty} \tr_{L^2(\bbR^n)} (\chi_\Lambda B_L(z)) 
\]
exists.  \\[1mm] 
{\bf  Step\,$\boldsymbol{(3)}$:~Explicitly compute $f(z)$.}~A careful (and rather lengthy) evaluation of $f(z)$ yields
\begin{align}
f(z) & = (1+z)^{-n/2}\left(\frac{i}{8\pi}\right)^{(n-1)/2}\frac{1}{[(n-1)/2]!} \lim_{\Lambda \to\infty}\frac{1}{2 \Lambda}\sum_{i_{1},\ldots,i_{n} = 1}^n \epsilon_{i_{1}\ldots i_{n}}    \no \\
 & \quad \; \times \int_{\Lambda S^{n-1}}\tr_{\bbC^d}(U(x) (\partial_{i_{1}}U)(x)\ldots 
 (\partial_{i_{n-1}}U)(x)) x_{i_{n}}\, d^{n-1} \sigma(x),    \lb{1.20} \\
& \hspace*{3.5cm} z\in\rho\left(-LL^{*}\right)\cap\rho\left(-L^{*}L\right), \; \Re (z) > - 1. \no
\end{align}

However, at first we are only able to verify \eqref{1.20} for $\Re(z)$ sufficienly large (as a consequence of relying on Neumann series expansions for resolvents). In order to derive 
\eqref{1.20} also for $z$ in a neighborhood of $0$, considerable additional efforts are required. 

Indeed, for achieving the existence of the limit $\Lambda \to \infty$ in \eqref{1.20} for $z$ in a neighborhood of $0$, we employ Montel's theorem and hence need to show that the family of analytic functions 
$\{ z \mapsto \tr (\chi_\Lambda B_L(z))\}_{\Lambda}$ 
constitutes a locally bounded family, that is, one needs to show that for all compact $\Omega\subset \mathbb{C}_{\Re>-1}\cap \rho(-L^*L)\cap \rho(-LL^*)$, 
\[
  \sup_{\Lambda>0}\sup_{z\in \Omega} |\tr (\chi_\Lambda B_L(z))|<\infty.
\]
After proving local boundedness, we use Montel's theorem for deducing that at least for a sequence $\{\Lambda_k\}_{k\in\bbN}$ with $\Lambda_k \underset{k\to\infty}{\longrightarrow} \infty$, the limit $f\coloneqq \lim_{k\to\infty}\tr(\chi_{\Lambda_k} B_L(\cdot))$ exists in the compact open topology 
(i.e., the topology of uniform convergence on compacts). The explicit expression \eqref{1.20} for $f$ then follows by the principle of analytic continuation and so carries over to $z$ in a neighborhood of $0$. In particular, since the limit 
$\lim_{\Lambda\to\infty}\tr(\chi_{\Lambda} B_L(0))$ exists and coincides with the index of $L$, we can then deduce that independently of the sequence $\{\Lambda_k\}_{k\in\bbN}$, the limit $\lim_{\Lambda\to\infty}\tr(\chi_{\Lambda} B_L(\cdot))$ exists in the compact open topology and coincides with $f$ given in \eqref{1.20}. Thus,
\[
f(0) = \ind(L)
\] 
yields formula \eqref{1.12}.

We also emphasize that in connection with  Steps $(1)$--$(3)$, we perform these calculations only in the special case of admissible or $\tau$-admissible potentials $\Phi$ (cf.\ Definitions  \ref{def:phi_admissible} and \ref{d:ta}) and then reduce the general case to $\tau$-admissible potentials.

It is clear from this short outline of our strategy of proof of Callias' index formula \eqref{1.12}, 
that in the end, our proof requires a fair number of additional steps not present in \cite{Ca78}.

Without entering any details at this point, we mention that one needs to distinguish the case $n=3$ from $n \geq 5$ as there are additional regularization steps necessary for $n=3$ due to the lack of regularity of certain integral kernels. In this context we mention that it is unclear to us how
continuity of the integral kernel of $J_{z}^{i}$ on the diagonal, as claimed in 
\cite[p. 224, line 6 from below]{Ca78}, can be proved. Given our detailed approach, the number of  resolvents applied is simply not large enough to conclude continuity (see, in particular, Section \ref{sec:The-Derivation-of-trace-f}).

Perhaps, more drastically, trace class properties of certain integral operators are merely dealt with by checking integrability of the integral kernel on the diagonal, see, for instance, the proof of 
\cite[Lemma~5, p.~225]{Ca78}. 

In addition, the claim that the expression 
\begin{equation}\label{eq:wrong_cancellation}
   \sum_{i_1,\ldots,i_n} \epsilon_{i_1 \ldots i_n} 
   \tr\left((\partial_{i_1}\Phi)(x)\ldots(\partial_{i_n}\Phi)(x)\right)=0,\quad x\in \mathbb{R}^n,  
\end{equation}
vanishes identically, is made on \cite[p.~226]{Ca78}. A simple counter example can 
(locally) be constructed by demanding that $\Phi\colon \mathbb{R}^3\to \mathbb{C}^{2\times 2}$ 
is bounded, $\Phi \in C^\infty\big(\bbR^3; \bbC^{2 \times 2}\big)$, and such that for one particular $x_0 \in \bbR^3 $, 
\[
  (\partial_1 \Phi)(x_0)=\begin{pmatrix}
                        1 & 2 \\ 2 & 1
                     \end{pmatrix},\quad  (\partial_2 \Phi)(x_0)=\begin{pmatrix}
                        1 & 2 \\ 2 & -1
                     \end{pmatrix},\quad  (\partial_1 \Phi)(x_0)=\begin{pmatrix}
                        0 & i \\ -i & 0
                     \end{pmatrix}.
\]
In this case one verifies that 
\[
\sum_{i_1,i_2,i_3} \epsilon_{i_1 i_2 i_3} \tr\left((\partial_{i_1}\Phi)(x_0) 
   (\partial_{i_2}\Phi)(x_0) (\partial_{i_3}\Phi)(x_0)\right)=24i.
\]   

These shortcomings in the arguments presented in \cite{Ca78} not withstanding, Callias' formula 
\eqref{1.12} is remarkable for its simplicity, as has been pointed out before by various authors. 
In particular, it is simpler, yet consistent with the Fedosov--H\"ormander formula \cite{Fe70}, \cite{Fe70a}, \cite{Fe74}, \cite{Fe75}, \cite{Ho79}, \cite[Sect.~19.3]{Ho85} (derived with the help of the pseudo-differential operator calculus), as discussed, for instance, in \cite{An89}, \cite{BS78}, \cite{Sc92}. More precisely, the Fedosov--H\"ormander formula reads as follows, 
\begin{equation}\label{e:fh}
  \ind (L) = -\bigg(\frac{i}{2\pi}\bigg)^{n}\frac{(n-1)!}{(2n-1)!}\int_{\partial B} \tr\Big(\big(\sigma_L^{-1}\textrm{d}\sigma_L\big)^{\wedge (2n-1)}\Big).  
\end{equation}
Here $\sigma_L \colon \mathbb{R}^n\times \mathbb{R}^n\to \mathbb{C}^{2^{\hatt n}d\times 2^{\hatt n}d}$ is the symbol of $L$ given by
\[
   \sigma_L (\xi,x)= \sum_{j=1}^n \gamma_{j,n}i\xi\otimes I_{2^{\hatt n}} +I_d \otimes \Phi(x),\quad \xi,x\in \mathbb{R}^n,
\]
$B\subseteq \mathbb{R}^{2n}$ is a ball of sufficiently large radius centered at the origin such that 
$\sigma_L$ is invertible outside $B$, the orientation of $\bbR^n \times \bbR^n$ is given by 
$dx_1 \wedge d\xi_1 \wedge \cdots \wedge dx_n \wedge d\xi_n > 0$, and 
$\big(\sigma_L^{-1}\textrm{d}\sigma_L\big)^{\wedge (2n-1)}$ 
is evaluated as a matrix product upon replacing ordinary multiplication by the exterior product. 

The Callias index formula properly restated as the 
Fedosov--H\"ormander formula and connections with \emph{half-bounded states} 
were also discussed in \cite{Ca02}. Moreover, with the help of the Cordes--Illner theory (see \cite{Co72,Il77} and the references in \cite{Ra07}), \cite{Ra07} established that the 
Fedosov--H\"ormander formula can also be used for computing the index, if $L$ is considered as an operator from the Sobolev space $W^{1,p}(\mathbb{R}^n)^{2^{\hatt n}d}$ to $L^{p}(\mathbb{R}^n)^{2^{\hatt n}d}$ for some $p\in (1,\infty)$. In addition, \cite{Ra08} (see also \cite{Ra12}) established the validity of the Fedosov--H\"ormander formula assuming the low regularity $\Phi \in C^1$ only (plus vanishing of derivatives at infinity). 

Callias employed Witten's resolvent regularization inherent in \eqref{1.14}, \eqref{1.17}, 
\eqref{1.18}, and we followed this device in this manuscript. For extensions to higher powers of resolvents we refer to \cite{St89}. For connections between supersymmetric quantum mechanics, 
scattering theory and their connections with Witten's resolvent regularized index for Dirac-type operators in various space dimensions, and matrix-valued (resp., operator-valued) coeffcients, we refer, for instance, to \cite{An93a}, \cite{Ar87}, \cite{BGGSS87}, \cite{BMS88}, \cite{Bu92}, \cite{CGPST14}, \cite{CGPST14a}, \cite{Ca04}, \cite{Ma87}, \cite{Ma90}, 
\cite[Chs.~IX, X]{Mu87}, \cite{Mu88}.   

The index problem for Dirac operators defined on complete Riemanniann manifolds has also been studied in \cite{GL83} on the basis of relative index theorems (see also \cite{Ro91}). Based on this approach, \cite{An90} found a generalized version of the Callias index formula, which was further developed and connected with the Atiyah--Singer index theorem in \cite{An93} (see also \cite{Ca01}, \cite{Ca01a}, \cite{ENN96}, \cite{Hi91} in this context). Independently, \cite{Ra94} found an alternative proof for the main result in \cite{An93}, reducing the index problem for the Dirac operator on a non-compact manifold to the compact case, thus making the index theorem in \cite{AB64} applicable. Generalizing results in \cite{An90}, and also using the Atiyah--Singer index theorem, \cite{BM92} (see also \cite{Br92}) derive index formulas on manifolds, containing the Callias index formula as special case.

For further generalizations of the index theorem for the Dirac operator to particular manifolds, we refer to \cite{FH95}. In addition, certain classes of Dirac operators on even-dimensional manifolds are studied in \cite{FH97}, \cite{FGH01}, \cite{FH03}, \cite{FH05} employing $K$ or $KK$-theory. The utility of $KK$-theory in view of the Callias index formula can also be seen in \cite{Ku01}, where a short proof for the main results in \cite{An90} is given. Additional connections between $K$-theory 
and index
 theory for Dirac-type operators have been established, for instance, in \cite{Bu95}, \cite{Ca05}, \cite{CN14}, \cite{Ko11}, \cite{Ko15}. A rather different direction of index theory employing cyclic homology, aimed at even dimensional Dirac-type operators which generally are non-Fredholm, was 
undertaken in \cite{CGK15} (see also \cite{CK14}).
 
The approach to calculating Fredholm indices initiated by Callias \cite{Ca78} also had a profound influence on theoretical physics as is amply demonstrated by the following references \cite{BB84}, 
\cite{CK01}, \cite{EGH80}, \cite{FOW87}, \cite{Hi83}, \cite{Hi86}, \cite{HN89}, \cite{HS87}, \cite{HT82}, \cite{IM84}, \cite{NS86}, \cite{NS86a}, \cite{We79}, \cite{We81}, \cite{WY07}, \cite{Wi14}, and the literature cited therein.   

Returning to Theorem \ref{t1.1}, we emphasize again that our derivation of the Callias index 
formula \eqref{1.12} under conditions \eqref{1.8}--\eqref{1.11} on $\Phi$ is new as the references just mentioned either do not derive an explicit formula for $\ind(L)$ in terms of $\Phi$, or else, 
derive the Fedosov--H\"ormander formula for $\ind(L)$. All previous derivations of \eqref{1.12} made some assumptions on $\Phi$ to the effect that asymptotically, $\Phi$ had to be homogeneous of degree zero. We entirely dispensed with this condition in this manuscript. 

We conclude this introduction with a brief description of the contents of each section. Our notational conventions are summarized in Section \ref{s2}. Section \ref{sec:Functional-Analytic-Preliminarie}
is devoted to computing Fredholm indices employing Witten's resolvent regularization. 
Schatten--von Neumann classes and trace class estimates are treated in Section \ref{sec:Trace-Class-Estimates}. Pointwise bounds for integral kernels are developed in Section 
\ref{sec:ptw_intk}. The operator $L$ underlying this manuscript is presented in Section 
\ref{sec:A-particular-First}. Trace class results, fundamental for deriving formula \eqref{1.20} 
for $f(z)$, are established in Section \ref{sec:The-Derivation-of-trace-f}; estimates for integral 
kernels on the diagonal and the computation of the trace of $\chi_\Lambda B_L(z)$ are discussed 
in Section \ref{sec:der_tra_diag}; the special case $n=3$ is treated in Section \ref{sec:n=3}. In Section \ref{sec:The Index theorem} we formulate our principal result, Theorem \ref{thm:Perturbation} (equivalently, Theorem \ref{t1.1}) and discuss some consequences of formula \eqref{1.12}. Perturbation theory of Helmholtz resolvents (Green's functions) is isolated in 
Section \ref{sec:pert}. The proof of Theorem \ref{thm:Perturbation} for smooth $\Phi$ is presented 
in Section \ref{s12}; the case of general $\Phi$ satisfying \eqref{1.8}--\eqref{1.11} is concluded 
in Section \ref{s13}. The final Section \ref{s14} is devoted to a particular class of non-Fredholm 
operators $L$ and the associated Witten index. Appendix 
\ref{sec:Appendix:-the-Construction} presents a concise construction of the Euclidean Dirac algebra, and Appendix \ref{sB} constructs an explicit counterexample to the trace class assertion in 
\cite[Lemma~5]{Ca78}.

\medskip

\noindent 
{\bf  Acknowledgments.} 
We thank Bernhelm Boo{\ss}-Bavnbek, Alan Carey, Stamatis Dostoglou, Yuri Latushkin, Marius Mitrea, and Andreas Wipf for valuable discussions and/or correspondence. We are especially grateful to Alan Carey for extensive correspondence and hints to the literature, and to Stamatis Dostoglou for a critical reading of a substantial part of our manuscript and for many constructive comments that helped improving our exposition. We also thank Ralph Chill, Martin K\"orber, Rainer Picard, and Sascha Trostorff for 
useful discussions, particularly, concerning the counterexample discussed in Appendix \ref{sB}. In addition, we are indebted to Hendrik Vogt for kindly communicating the example in Remark \ref{r2.4}. 

M.W.\ gratefully acknowledges the hospitality of the Department of Mathematics at the University of Missouri, USA, extended to him during two one-month visits in the spring of 2014 and 2015. 
Moreover, he is particularly indebted to Yuri Tomilov in connection with the support received from the EU grant ``AOS'', FP7-PEOPLE-2012-IRSES, No.~318910. 

\newpage 

\section{Notational Conventions} \lb{s2}

For convenience of the reader we now summarize most of our notational conventions used 
throughout this manuscript. 

We find it convenient to employ the abbreviations, 
$\bbN_{\geq k} := \bbN \cap [k,\infty)$, $k \in \bbN$, $\bbN_0 = \bbN \cup \{0\}$,  
$\mathbb{C}_{\Re> a}\coloneqq \{z\in \mathbb{C} \, | \, \Re (z) > a\}$, $a \in \bbR$.

The identity matrix in $\bbC^r$ will be denoted by $I_r$, $r \in \bbN$. 

Let $\cH$ be a separable complex Hilbert space, 
$(\cdot \, ,\cdot)_{\cH}$ the scalar product in $\cH$ 
(linear in the second factor), and $I_{\cH}$ the identity operator in $\cH$.

Next, let $T$ be a linear operator mapping (a subspace of) a
Banach space into another, with $\dom(T)$, $\ker(T)$, and $\ran(T)$ denoting the
domain, kernel (i.e., null space), and range of $T$. The spectrum and resolvent set of a 
closed linear operator in $\cH$ will be denoted by $\sigma(\cdot)$ and $\rho(\cdot)$. 
For resolvents of closed operators $T$ acting on $\dom(T) \subseteq \cH$, we will 
frequently write $(T - z)^{-1}$ rather than the precise $(T -z I_{\cH})^{-1}$, $z \in\rho(T)$. 

The Banach spaces of bounded and compact linear 
operators in $\cH$ are denoted by $\cB(\cH)$ and $\cB_\infty(\cH)$, 
respectively. The Schatten--von Neumann ideals of compact linear operators
on $\cH$ corresponding to $\ell^p$-summable singular
values will be denoted by $\cB_p(\cH)$ or, if the Hilbert space under consideration
is clear from the context (and, especially, for brevity in connection with proofs) just 
by $\cB_p$, $p\in[1,\infty)$.
The norms on the respective spaces will be noted by $\|T\|_{\cB_p(\cH)}$
for $T\in\cB_p(\cH)$, $p\in[1,\infty)$, and for ease of notation we will occasionally 
identify $\|T\|_{\cB(\cH)}$ with $\|T\|_{\cB_{\infty}(\cH)}$ for $T \in \cB(\cH)$, but caution the 
reader that it is the set of compact operators on $\cH$ that is denoted by $\cB_{\infty}(\cH)$. 
Similarly, $\cB(\cH_1,\cH_2)$ and $\cB_\infty (\cH_1,\cH_2)$ 
will be used for bounded and compact operators between two Hilbert spaces 
$\cH_1$ and $\cH_2$. Moreover, $\cX_1 \hookrightarrow \cX_2$ denotes the 
continuous embedding of the Banach space $\cX_1$ into the Banach space 
$\cX_2$. Throughout this manuscript, if $\cX$ denotes a Banach space, $\cX^*$ 
denotes the {\it adjoint space} of continuous conjugate linear functionals 
on $\cX$, that is, the {\it conjugate dual space} of $\cX$ (rather than the usual 
dual space of continuous linear functionals 
on $\cX$). This avoids the well-known awkward distinction between adjoint 
operators in Banach and Hilbert spaces (cf., e.g., the pertinent discussion 
in \cite[p.\,3--4]{EE89}). In connection with bounded linear functionals on 
$\cX$ we will employ the usual bracket notation 
$\langle \cdot \, , \cdot \rangle_{\cX^*,\cX}$ for pairings between elements of 
$\cX^*$ and $\cX$. 

Whenever estimating the operator norm or a particular trace ideal norm of a finite 
product of operators, $A_1 A_2 \cdots A_N$, with $A_j \in \cB(\cH)$, $j\in \{1,\dots,N\}$, 
$N \in \bbN$, we will simplify notation and write 
\begin{equation}
\prod_{j=1}^N A_j,    \lb{2.prod} 
\end{equation}
disregarding any noncommutativity issues of the operators $A_j$, $j\in\{1,\dots,N\}$. This 
is of course permitted due to standard ideal properties and the associated (noncommutative) 
H\"older-type inequalities (see, e.g., \cite[Sect.~III.7]{GK69}, \cite[Ch.~2]{Si05}). The same 
convention will be applied if operators mapping between several Hilbert spaces are involved.  

We use the \emph{commutator} symbol 
\begin{equation}
[A,B]\coloneqq AB-BA\label{eq:def_commutator}
\end{equation}
for suitable operators $A,B$. For unbounded $A$ and $B$ the natural
domain of $[A,B]$ is the intersection of the respective domains of
$AB$ and $BA$. In particular, $[A,B]$ is not closed in general.
However, in the situations we are confronted with, we shall always be in 
the situation that $[A,B]$ is densely defined and bounded, in particular, 
it is closable with bounded closure. As this
is always the case, we shall -- in order to reduce a clumsy notation
as much as possible -- typically omit the closure bar (i.e., we use $[A,B]$ rather than 
$\ol{[A,B]}$). In fact, most of
the operators under consideration can be extended to suitable distribution
spaces, such that seemingly formal computations can be justified in
the appropriate distribution space.
 
$\wlim$ and $\slim$ denote weak and strong limits in $\cH$ as well as limits in the 
weak and strong operator topology for operators in $\cB(\cH)$, $\nlim$ denotes the 
norm limit of bounded operators operators in $\cH$ (i.e., in the topology of $\cB(\cH)$).

$C_0^{\infty}(\mathbb{R}^{n})$ denotes the space of infinitely often 
differentiable functions with compact support in $\bbR^n$. We typically suppress 
the Lebesgue meausure in $L^p$-spaces, $L^p(\bbR^n) := L^p(\bbR^n; d^n x)$, $\| \cdot\|_{L^p(\bbR^n; d^n x)} := \|\cdot\|_{p}$, and similarly, $L^p(\Omega) := L^p(\Omega; d^n x)$, 
$\Omega \subseteq \bbR^n$, $p \in [1,\infty) \cup \{\infty\}$. To avoid too lengthy expressions, 
we will frequently just write $I$ rather than the precise $I_{L^2(\bbR^n)}$, etc.

Sometimes we use the symbol $\langle \cdot \, , \cdot \rangle_{L^2(\bbR^n)}$ 
(or, for brevity, especially in proofs, simply $\langle \cdot \, , \cdot \rangle$), to indicate the 
fact that the scalar product $(\cdot \, , \cdot)_{L^2(\bbR^n)}$ in $L^2(\bbR^n)$ has been 
continuously extended to the pairing on the entire Sobolev scale, that is, we abreviate 
$\langle \cdot \, , \cdot \rangle_{L^2(\bbR^n)} 
:= \langle \cdot \, , \cdot \rangle_{H^{-s}(\bbR^n), H^{s}(\bbR^n)}$, $s \geq 0$. 

The unit sphere in $\bbR^n$ is denoted by 
$S^{n-1}\coloneqq \{ x\in\mathbb{R}^{n} \, | \, |x|=1\}$, with $d^{n-1} \sigma(\cdot)$ 
representing the surface measure on $S^{n-1}$, $n \in \bbN_{\geq 2}$. The open ball in 
$\bbR^n$ centered at $x_0 \in \bbR^n$ of radius $r_0 >0$ is denoted by $B(x_0,r_0)$. 

Since various matrix structures and tensor products are naturally associated with the Dirac-type operators studied in this manuscript, we had to simplify the notation in several respects to avoid entirely unmanagably long expressions. For example, given $d, \hat n \in \bbN$, spaces such as 
$L^{2}(\bbR^n) \otimes \bbC^d$, $L^{2}(\bbR^n) \otimes \bbC^{2^{\hat n}}$, and 
$L^{2}(\bbR^n) \otimes \bbC^{2^{\hat n}} \otimes \bbC^d$ (and analogously for Sobolev spaces) 
will simply be denoted by $L^{2}(\bbR^n)^d$, $L^{2}(\bbR^n)^{2^{\hat n}}$, and 
$L^{2}(\bbR^n)^{2^{\hat n}d}$, respectively. 

In addition, given a $d \times d$ matrix 
$\Phi\colon\mathbb{R}^{n}\to\mathbb{C}^{d\times d}$ with entries given by bounded measurable functions, and given an element $ \psi\otimes \phi \in L^{2}(\bbR^n)^{2^{\hat n}} \otimes \bbC^d$, we will frequently adhere to a slight abuse of notation and employ the symbol $\Phi$ also in the 
context of the operation 
\begin{equation}
\Phi \colon \psi\otimes \phi \mapsto\left(x\mapsto\psi(x)
\otimes \Phi(x)\phi\right),    \lb{Phi1}
\end{equation} 
and acordingly then shorten this even further to 
\begin{equation}
\Phi \colon \psi \, \phi \mapsto\left(x\mapsto\psi(x) \Phi(x)\phi\right),    \lb{Phi2}
\end{equation} 
Moreover, in connection with constant, invertible $m \times m$ matrices 
$\alpha \in \bbC^{m \times m}$ 
and scalar differential expressions such as $\partial_j$, $\Delta$, etc., we will use the notation 
\begin{equation}
\alpha \partial_j = \partial_j \alpha, \quad \alpha \Delta = \Delta \alpha    \lb{alpha}
\end{equation} 
(with equality of domains) when applying these differential expressions to sufficiently regular functions of the type 
$\eta(\cdot) \otimes c$, $c \in \bbC^m$, abbreviated again by $\eta(\cdot) \, c$. 

In the context of matrix-valued operators we also agree to use the following notational conventions: Given a scalar function $f$ on $\bbR^n$, or a scalar linear operator $R$ in $L^2(\bbR^n)$, we will frequently 
identify $f$ or $R$ with the diagonal matrices $f \, I_{m}$ or $R \, I_m$ in 
$L^2(\bbR^n)^{m \times m}$ for appropriate $m \in \bbN$. 

\begin{remark}
\label{rem:differen_mult_op} We will identify a
function $\Phi$ with its corresponding multiplication operator of
multiplying by this function in a suitable function space. In doing so, for a differential operator $\cQ$, we will distinguish between the expression $\cQ\Phi$ and $\left(\cQ\Phi\right)$ and, similarly, for other differential operators.
Namely, $\cQ\Phi$ denotes the \emph{composition} of the two operators
$\cQ$ and $\Phi$, whereas $\left(\cQ\Phi\right)$ denotes the \emph{multiplication
operator }of multplying by the function $x\mapsto (\cQ\Phi)(x)$. \hfill $\diamond$
\end{remark}

\newpage

\section{Functional Analytic Preliminaries} \label{sec:Functional-Analytic-Preliminarie}

In this section we shall summarize the results obtained by Callias
in \cite[Lemmas 1 and 2]{Ca78}. We emphasize that we only succeeded to prove 
\cite[Lemma 1]{Ca78}
under the stronger condition that the trace norm of the operator under
consideration is bounded on a punctured neighborhood around the origin.
To begin, we recall the setting of \cite[Section II, p.~218]{Ca78}:

\begin{definition}
\label{def:bigtrace_littletrace}Let $\cH$ be a separable Hilbert space, $m\in\mathbb{N}$, and 
$T\in \cB\left(\cH^{m},\cH^{m}\right)$, a bounded linear operator
from $\cH^{m}$ to $\cH^{m}$. Denoting by $\iota_{j}\colon \cH\to \cH^{m}$
the canonical embedding defined by $\iota_{j}h\coloneqq \{\delta_{kj}h\}_{k\in\{1,\ldots,m\}}$,
we introduce $T_{jk}\coloneqq\iota_{j}^{*}T\iota_{k}$ for $j,k\in\{1,\ldots,m\}.$
We define the \emph{internal trace}, $\tr_{m} (T) $, \emph{of }$T$ being the linear operator on $\cH$ given by 
\begin{equation}
\tr_{m} (T) \coloneqq\sum_{j=1}^{m}T_{jj}.       \label{eq:Def_of_int_tr}
\end{equation}
Next, let $M$ be a densely defined, closed linear operator in $\cH^{m}$ and introduce 
Witten's resolvent regularization via 
\begin{align}
\begin{split} 
B_{M}(z)\coloneqq z\tr_{m}\big((M^{*}M+z)^{-1}-(MM^{*}+z)^{-1}\big)\in \cB(\cH),& \\
z\in\rho(-M^{*}M)\cap\rho(-MM^{*}).&
\end{split} 
\label{eq:Def_of_B_L(z)}
\end{align}
We will denote by $\tr_{\cH}(\cdot)$ the trace on $\cB_1(\cH)$,
the Schatten-von Neumann ideal of trace class operators on $\cH$. 
\end{definition}

\begin{remark}\label{rem-tracem}
Let $\mathcal{H}$ be a Hilbert space and let $m\in \mathbb{N}$, 
$T\in \mathcal{B}_1(\mathcal{H}^m)$ and let 
$T_{jk} = \iota_{j}^{*}T\iota_{k}$, $j,k\in\{1,\ldots,m\}$ as in Definition 
\ref{def:bigtrace_littletrace}. Then boundedness of $\iota_{j}^{*}, \iota_{k}$, 
$j,k\in\{1,\ldots,m\}$, and exploiting the ideal property of 
$\cB_1(\cH^m)$ yields
$$
T_{jk} \in \cB_1(\cH), \quad j,k \in \{1,\dots,m\},
$$
in particular, 
$$
\tr_m (T) \in \mathcal{B}_1(\mathcal{H}).
$$  
\hfill $\diamond$
\end{remark}

It should be noted that in general, the internal trace does \emph{not} satisfy the cyclicity 
property in the sense that for $A,B\in \cB(\cH^{m},\cH^{m})$,  
\[
\tr_{m}(AB) \neq \tr_{m}(BA).
\]
However, if one of the operators is actually a matrix with entries in $\bbC$,
then such a result holds:

\begin{proposition}
\label{prop:cyclic property of inner trace} Let $\cH$ be a Hilbert
space, $m\in\mathbb{N}$, $A\in \cB(\cH^{m},\cH^{m})$, $B\in\mathbb{C}^{m\times m}.$
Then
\[
\tr_{m}(AB)=\tr_{m}(BA).
\]
\end{proposition}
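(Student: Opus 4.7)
The plan is to reduce the identity to a purely combinatorial statement about the block entries of $A$, exploiting the crucial distinction between $A$ and $B$: the entries of $B$ are scalars and hence commute with every element of $\cB(\cH)$, so the usual matrix cyclicity argument goes through without obstruction.

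First I would identify $B=(B_{jk})_{j,k=1}^{m}\in\bbC^{m\times m}$ with the bounded operator on $\cH^{m}$ whose blocks are $\iota_{j}^{*}B\iota_{k}=B_{jk}I_{\cH}$; this is the natural action that makes $B$ respect the direct sum decomposition $\cH^{m}=\bigoplus_{j=1}^{m}\iota_{j}\cH$. Writing $A_{jk}:=\iota_{j}^{*}A\iota_{k}\in\cB(\cH)$ and inserting the resolution of the identity $\sum_{l=1}^{m}\iota_{l}\iota_{l}^{*}=I_{\cH^{m}}$, the block entries of the two composites become
\begin{align*}
(AB)_{jk} &= \iota_{j}^{*}A\Bigl(\sum_{l=1}^{m}\iota_{l}\iota_{l}^{*}\Bigr)B\iota_{k} = \sum_{l=1}^{m} A_{jl}\,(B_{lk}I_{\cH}) = \sum_{l=1}^{m} B_{lk}\,A_{jl},\\
(BA)_{jk} &= \iota_{j}^{*}B\Bigl(\sum_{l=1}^{m}\iota_{l}\iota_{l}^{*}\Bigr)A\iota_{k} = \sum_{l=1}^{m} (B_{jl}I_{\cH})\,A_{lk} = \sum_{l=1}^{m} B_{jl}\,A_{lk},
\end{align*}
where in the first line I used $B_{lk}\in\bbC$ to pull the scalar past $A_{jl}\in\cB(\cH)$.

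Summing the diagonal blocks in accordance with \eqref{eq:Def_of_int_tr} then yields
\[
\tr_{m}(AB)=\sum_{j,l=1}^{m} B_{lj}\,A_{jl},\qquad \tr_{m}(BA)=\sum_{j,l=1}^{m} B_{jl}\,A_{lj},
\]
and swapping the dummy indices $j\leftrightarrow l$ in one of the double sums identifies the two expressions, which closes the proof. The only substantive step is the commutation of the scalar $B_{lk}$ with the operator $A_{jl}\in\cB(\cH)$, which is immediate and requires no approximation or domain considerations because $A$ and $B$ are both bounded. There is no real obstacle; the proposition essentially records the reason the general cyclicity $\tr_{m}(AB)=\tr_{m}(BA)$ fails for arbitrary $A,B\in\cB(\cH^{m},\cH^{m})$, namely that when the blocks $B_{jk}$ are themselves noncommuting operators in $\cB(\cH)$, the scalar commutation step above is no longer available.
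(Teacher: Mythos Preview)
Your proof is correct and follows essentially the same approach as the paper's: compute the block entries of $AB$ and $BA$, use that the entries $B_{jk}\in\bbC$ commute with operators in $\cB(\cH)$, sum over the diagonal, and swap dummy indices. Your version is slightly more explicit about the identification of $B$ with a block operator on $\cH^m$ and the use of the resolution of the identity $\sum_l\iota_l\iota_l^*=I_{\cH^m}$, but the argument is the same.
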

\begin{proof}
We have $A=(A_{ij})_{i,j\in\{1,\ldots,m\}}$ and $B=(B_{ij})_{i,j\in\{1,\ldots,m\}}$
with $A_{ij}\in \cB(\cH)$ as in Definition \ref{def:bigtrace_littletrace}
and $B_{ij}\in\mathbb{C}$. Then 
\[
AB=\bigg(\sum_{k\in\{1,\ldots,m\}}A_{ik}B_{kj}\bigg)_{i,j\in\{1,\ldots,m\}} \, \text{ and } \,  
BA=\bigg(\sum_{k\in\{1,\ldots,m\}}B_{ik}A_{kj}\bigg)_{i,j\in\{1,\ldots,m\}}.
\]
Hence,
\begin{align*}
\tr_{m}(AB) & =\sum_{j=1}^m \sum_{k=1}^m A_{jk}B_{kj}\\
 & =\sum_{j=1}^m \sum_{k=1}^m B_{kj}A_{jk}\\
 & =\sum_{k=1}^m \sum_{j=1}^m\ B_{kj}A_{jk}=\tr_{m}(BA).\tag*{{\qedhere}}
\end{align*}
\end{proof}

Next, we need a result of the type of \cite[Lemma 1]{Ca78}, in fact, we need an additional generalization of \cite[Lemma 1]{Ca78} in order to be able to apply it to our situation. We shall briefly recall the notions used in the next result: Given a Hilbert space $\mathcal{K}$, a {\it Fredholm operator} $S\colon \dom(S)\subseteq \cK \to \cK$, denoted by $S \in \Phi(\cK)$, is defined by $S$ being a densely defined, closed, linear operator with finite-dimensional nullspace, 
$\dim(\ker(S)) < \infty$, and closed range, $\ran(S)$, being finite-codimensional, 
$\dim(\ker(S^*)) < \infty$. The {\it Fredholm index}, $\ind(S)$, of a Fredholm operator $S$ is 
then the difference of the dimension of the nullspace and codimension of the range, that is,
\[
  \ind(S) = \dim (\ker(S)) - \dim(\ker(S^*)).
\]
Basic facts on Fredholm operators will be recalled at the end of this section. 
For the next lemma, we shall also use the notion of convergence in the strong operator topology, that is, a sequence $\{T_\Lambda\}_{\Lambda \in \mathbb{N}}$ of bounded linear operators in a Hilbert space $\mathcal{H}$ is said to converge to some $T_\infty\in \mathcal{B}(\mathcal{H})$ in the \emph{strong operator topology}, $\slim_{\Lambda\to\infty} T_\Lambda = T_\infty$, if for all $\phi\in \mathcal{H}$, we have
\[
   \lim_{\Lambda\to\infty} T_\Lambda \phi = T_\infty \phi.
\]
Our (generalized) version of \cite[Lemma 1]{Ca78} then reads as follows.

\begin{theorem}\label{thm:index with Witten} In the situation of Definition \ref{def:bigtrace_littletrace}
assume that $M$ is Fredholm, and that $\{T_\Lambda\}_{\Lambda\in \mathbb{N}}$, $\{S^*_\Lambda\}_{\Lambda\in \mathbb N}$ are sequences in $\cB (\cH)$, both converging to $I_\cH$ in the strong operator topology as $\lambda \to \infty$, and introduce $S_\Lambda\coloneqq S_\Lambda^{**}$, 
$\Lambda\in \mathbb{N}$. Let $B_{M}(\cdot)$ be given by \eqref{eq:Def_of_B_L(z)},
\begin{equation}
B_{M}(z)\coloneqq z\tr_{m}\big((M^{*}M+z)^{-1}-(MM^{*}+z)^{-1}\big), \quad 
z\in\rho(-M^{*}M)\cap\rho(-MM^{*}).   
\label{eq:Def_of_B_L(z)1}
\end{equation}
Assume that for each $\Lambda\in \mathbb{N}$, there exists $\delta_\Lambda >0$ with 
$\Omega_\Lambda\coloneqq B(0,\delta_\Lambda)\backslash \{0\}
\subseteq \rho(-MM^{*})\cap\rho(-M^{*}M)$ and that the map   
\[
   \Omega_\Lambda \ni z\mapsto T_\Lambda B_M(z)S_\Lambda
\]
takes on values in $\mathcal{B}_1(\mathcal{H})$, such that 
\[
\Omega_{\Lambda} \ni z\mapsto \tr_{\mathcal{H}} (|T_\Lambda B_M(z)S_\Lambda|) 
= \| T_\Lambda B_M(z)S_\Lambda \|_{\cB_1(\cH)}
\]
is bounded with respect to $z$. Then
\begin{equation}
\ind (M) =\lim_{\Lambda\to\infty}\lim_{z\to0} \tr_{\mathcal{H}} (T_\Lambda B_M(z)S_\Lambda).   \lb{indL}
\end{equation} 
In addition, if $\delta\coloneqq \frac{1}{2}\inf_{\Lambda \in\mathbb{N}} (\delta_\Lambda )>0$ and 
$\Omega\coloneqq B(0,\delta)\ni z\mapsto \tr_\mathcal{H}(T_\Lambda B_M(z)S_\Lambda )$ converges uniformly on $\overline{B(0,\delta)}$ to some function $F(\cdot)$ as $\Lambda  \to \infty$. Then, one can interchange the limits $\Lambda \to \infty$ and $z \to 0$ in \eqref{indL} and obtains, 
\begin{equation} 
F(0)=\ind (M).   \lb{indF(0)}
\end{equation} 
 \end{theorem}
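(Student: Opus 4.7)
The plan is to identify, for each fixed $\Lambda$, the inner limit $\lim_{z\to 0}$ with the trace of a concrete finite-rank operator, and then push the outer limit $\Lambda\to\infty$ through the trace. The Fredholm assumption on $M$ ensures that $0$ is an isolated point of the spectra of the nonnegative self-adjoint operators $M^*M$ and $MM^*$, with associated finite-rank Riesz projections $P_{\ker M}$ and $P_{\ker M^*}$ on $\cH^m$. Standard spectral theory then supplies the operator-norm expansions
\[
z(M^*M+z)^{-1} = P_{\ker M} + z A_+(z), \quad z(MM^*+z)^{-1} = P_{\ker M^*} + z A_-(z),
\]
valid on a punctured neighborhood of $0$, with $A_\pm \in \cB(\cH^m)$ analytic at $z=0$. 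Applying the internal trace yields
\[
B_M(z) = K_0 + z \tr_m(A_+(z) - A_-(z)), \quad K_0 \coloneqq \tr_m(P_{\ker M}) - \tr_m(P_{\ker M^*}),
\]
with $K_0$ a fixed finite-rank operator on $\cH$. Combining this with the elementary identity $\tr_\cH\circ \tr_m = \tr_{\cH^m}$ on $\cB_1(\cH^m)$ (a consequence of $\sum_j \iota_j \iota_j^* = I_{\cH^m}$ and cyclicity of the trace), one has $\tr_\cH(K_0) = \dim\ker(M) - \dim\ker(M^*) = \ind(M)$.

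For the inner limit, I fix $\Lambda$ and set $F_\Lambda(z) \coloneqq T_\Lambda B_M(z) S_\Lambda$, which is operator-norm analytic on $\Omega_\Lambda$, takes values in $\cB_1(\cH)$ by hypothesis, and satisfies $\|F_\Lambda(z)\|_{\cB_1(\cH)} \leq C$ uniformly. The Laurent coefficients
\[
L_n \coloneqq \frac{1}{2\pi i}\oint_{|w|=r} F_\Lambda(w)\, w^{-n-1}\, dw, \quad r \in (0,\delta_\Lambda),
\]
viewed first as $\cB(\cH)$-valued Cauchy integrals, are in fact in $\cB_1(\cH)$ with $\|L_n\|_{\cB_1(\cH)} \leq 2\pi C r^{-n}$: the Riemann approximants of the integral lie in $\cB_1(\cH)$ with uniformly bounded trace norms, and lower semicontinuity of the trace norm under $\cB(\cH)$-convergence (via the Fatou inequality applied to the singular-value sequence) forces the limit into $\cB_1(\cH)$ with the claimed bound. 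Letting $r\to 0$ shows $L_n = 0$ for $n<0$, so the singularity at $0$ is removable in $\cB_1(\cH)$, and the resulting Taylor series $\sum_{n\geq 0} L_n z^n$ converges in trace norm for $|z|<\delta_\Lambda$. The extension $F_\Lambda(0) = L_0$ must coincide with the operator-norm limit $T_\Lambda K_0 S_\Lambda$, and therefore
\[
\lim_{z \to 0} \tr_\cH(T_\Lambda B_M(z) S_\Lambda) = \tr_\cH(T_\Lambda K_0 S_\Lambda).
\]

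For the outer limit I write $K_0 = \sum_{i=1}^N (f_i, \cdot\,)_\cH\, g_i$ as a finite-rank expansion and compute $\tr_\cH(T_\Lambda K_0 S_\Lambda) = \sum_{i=1}^N (S_\Lambda^* f_i, T_\Lambda g_i)_\cH$. The strong convergences $T_\Lambda \to I_\cH$ and $S_\Lambda^* \to I_\cH$, together with the uniform boundedness of $\{T_\Lambda\}, \{S_\Lambda^*\}$ supplied by the Banach--Steinhaus theorem, send each inner product to $(f_i, g_i)_\cH$, so that $\tr_\cH(T_\Lambda K_0 S_\Lambda) \to \tr_\cH(K_0) = \ind(M)$, establishing \eqref{indL}. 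The interchange statement is then an elementary consequence: uniform convergence of the analytic family $\tr_\cH(T_\Lambda B_M(\cdot) S_\Lambda)$ on $\overline{B(0,\delta)}$ to $F$ forces $F$ to be continuous at $0$, so $F(0) = \lim_{\Lambda\to\infty}\tr_\cH(T_\Lambda K_0 S_\Lambda) = \ind(M)$. The main obstacle is the inner-limit step: the operator-norm removable singularity of $B_M$ at $0$ does not by itself entail a trace-norm one, and it is precisely the hypothesized uniform trace-norm bound on $F_\Lambda$ that, via the vector-valued Cauchy/Laurent argument in $\cB_1(\cH)$, bridges this gap. This is also the reason the additional regularizers $T_\Lambda, S_\Lambda$ must be introduced, in contrast to the simpler setup employed in \cite{Ca78}.
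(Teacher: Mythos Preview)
Your proof is correct and follows essentially the same route as the paper's: identify the operator-norm limit of $B_M(z)$ at $z=0$ with the finite-rank operator $K_0=\tr_m(P_{\ker M}-P_{\ker M^*})$, upgrade the removable singularity of $T_\Lambda B_M(\cdot)S_\Lambda$ from $\cB(\cH)$ to $\cB_1(\cH)$ using the uniform trace-norm bound, and then pass the outer limit through the trace of a finite-rank operator.

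The implementations differ slightly. For the $\cB_1$-removability step, the paper subtracts $T_\Lambda K_0 S_\Lambda$ and invokes an abstract characterization of Banach-space-valued analyticity (locally bounded plus weakly analytic against a norming subset of the dual, with finite-rank operators norming $\cB_1$); you instead bound the Laurent coefficients directly via Cauchy integrals and lower semicontinuity of $\|\cdot\|_{\cB_1}$ under $\cB(\cH)$-convergence. Both are standard and yield the same conclusion; your version is a bit more self-contained. For the outer limit, the paper cites \cite[Lemma~6.1.3]{Ya92} to get $T_\Lambda K_0 S_\Lambda \to K_0$ in $\cB_1(\cH)$, while you exploit that $K_0$ is actually finite-rank and compute the trace termwise---a mild simplification that works here precisely because the Riesz projections are finite-dimensional. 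The interchange-of-limits argument is identical.
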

\begin{proof}
By the Fredholm property of $M$, one deduces that $M^{*}M$ and $MM^{*}$
are Fredholm and, if $0$ lies in the spectrum of either $M^{*}M$
or $MM^{*}$ it is an isolated eigenvalue of finite multiplicity. As $M$ is Fredholm, $\ran(M)$ is closed. Hence, $\ran(M)=\ker(M^{*})^{\bot}$, and since $M$ is closed, $\ker(M^{*}M)=\ker(M)$, as well as, $\ker(MM^{*})=\ker(M^{*}).$
Denote by $P_{\pm}\colon \cH^{m}\to \cH^{m}$ the orthogonal projection
onto $\ker(M)$ and $\ker(M^{*})$, respectively. Since by hypothesis 
$P_{\pm}$ are finite-dimensional operators, so is $\tr_{m} (P_{\pm})$.
Moreover, we have
\[
\tr_{\cH} (\tr_{m} (P_{\pm})) = \tr_{\cH^m} (P_{\pm}) = \dim (\ran(P_{\pm})).
\]
Indeed, the last equality being clear, we only need to show the first
one. Let $\{\phi_{k}\}_{k\in\bbN}$ be an orthonormal basis of $\cH$.
Let $\iota_{j}\colon \cH\to \cH^{m}$ be the canonical embedding given
by $\iota_{j}h\coloneqq\{\delta_{\ell j}h\}_{\ell\in\{1,\ldots,m\}}$ for all
$j\in\{1,\ldots,m\}$. Then it is clear that $\{\iota_{j}\phi_{k}\}_{j\in\{1,\ldots,m\},k\in\bbN}$
constitutes an orthonormal basis for $\cH^{m}$. We have
\begin{align*}
\tr_{\cH^m} (P_{\pm}) & = \sum_{j=1}^m \sum_{k \in \bbN} (\iota_{j}\phi_{k},P_{\pm}\iota_{j}\phi_{k})_{\cH^m} 
 =\sum_{j=1}^m \sum_{k \in \bbN} (\phi_{k},\iota_{j}^{*}P_{\pm}\iota_{j}\phi_{k})_{\cH} \\
 & =\sum_{k\in\bbN}\sum_{j=1}^{m} (\phi_{k},P_{\pm,jj}\phi_{k})_{\cH} 
 =\sum_{k\in\bbN} \bigg(\phi_{k},\sum_{j=1}^{m}P_{\pm,jj}\phi_{k}\bigg)_{\cH} \\
 & =\sum_{k\in\bbN} (\phi_{k},\tr_{m} (P_{\pm})\phi_{k})_{\cH} 
 = \tr_{\cH} (\tr_{m} (P_{\pm})).
\end{align*}
Next, define for $\Lambda\in \mathbb{N},$
\begin{align*}
\Omega_\Lambda \ni z\mapsto\tilde{B}_\Lambda (z) 
& \coloneqq T_\Lambda \big[\tr_{m}\left(z(M^{*}M+z)^{-1}\right)-\tr_{m} (P_{+})
-\tr_{m}\left(z(MM^{*}+z)^{-1}\right) \\
& \quad +\tr_{m} (P_{-})\big]S_\Lambda  \\
 & =T_\Lambda B_{M}(z)S_\Lambda -T_\Lambda \tr_{m} (P_{+})S_\Lambda  + T_\Lambda \tr_{m} (P_{-})S_\Lambda .
\end{align*} 
By \cite[Sect.~III.6.5]{Ka80}, 
\[
z(M^{*}M+z)^{-1}-P_{+} \underset{z \to 0}{\longrightarrow} 0
\]
in operator norm, and similarly for $z\mapsto z(MM^{*}+z)^{-1}-P_{-}$.
We note that $\tilde{B}_\Lambda (z) \in \cB_1(\cH)$, $z \in \Omega_\Lambda $.
Since $\Omega_\Lambda \ni z\mapsto\tr_{\cH}(|T_\Lambda B_{M}(z)S_\Lambda |)$ is bounded,
so is $\Omega_\Lambda \ni z\mapsto\tr_{\cH} \big(\big|\tilde{B}_\Lambda (z)\big|\big)$. For the boundedness of $\Omega_\Lambda \ni z\mapsto\tr_{\cH}
\big(\big|\tilde{B}_\Lambda (z)\big|\big)$ it suffices to observe that 
\begin{align*}
\tr_{\cH} \big(\big|\tilde{B}_\Lambda (z)\big|\big) & = \tr_{\cH}
\big(\big| T_\Lambda B_{M}(z)S_\Lambda -T_\Lambda \tr_{m}
(P_{+})S_\Lambda  + T_\Lambda \tr_{m} (P_{-})S_\Lambda \big|\big)
\\  & = \big\| T_\Lambda B_{M}(z)S_\Lambda -T_\Lambda \tr_{m}
(P_{+})S_\Lambda  + T_\Lambda \tr_{m} (P_{-})S_\Lambda \big\|_{\mathcal{B}_1(\cH)}
\\ & \leq \big\| T_\Lambda B_{M}(z)S_\Lambda \big\|_{\mathcal{B}_1(\cH)}
\\ &\quad + \big\| T_\Lambda \tr_{m}
(P_{+})S_\Lambda\big\|_{\mathcal{B}_1(\cH)}  + \big\|T_\Lambda \tr_{m}
(P_{-})S_\Lambda \big\|_{\mathcal{B}_1(\cH)}
\\ & = \tr_{\cH} \big(\big| T_\Lambda B_{M}(z)S_\Lambda \big|\big)
\\ &\quad + \tr_{\cH} \big(\big| T_\Lambda \tr_{m}
(P_{+})S_\Lambda\big|\big)  + \tr_{\cH} \big(\big|T_\Lambda \tr_{m}
(P_{-})S_\Lambda \big|\big),\quad z\in \Omega_\Lambda,
\end{align*}
and using that the last two summands correspond to traces of
finite-rank operators. Thus,
from the analyticity of $\tr_{\cH}\big(\tilde{B}_\Lambda (\cdot)F\big)$
for every finite-rank operator $F$ on $\cH$, one deduces that $\tilde{B}_\Lambda (\cdot)$
is analytic in the $\cB_1(\cH)$-norm, see, for instance, \cite[Proposition~A.3]{ABHN01}
(or \cite[Theorem A.4.3]{Wa11}). More precisely, in \cite[Theorem A.4.3]{Wa11} 
there is the following characterization of analyticity of Banach space valued functions:
A function $h\colon \mathcal{U}\to \cX$ for some open
$\mathcal{U}\subseteq \mathbb{C}$ and some Banach space $\cX$ is
analytic if and only if $\mathcal{U}\ni z\mapsto \|h(z)\|_{\cX}$ is
bounded on compact subsets of $\mathcal{U}$ and $z\mapsto \langle
h(z),x'\rangle$ is analytic for all $x'\in V$ with $V\subseteq \cX'$
being a norming set for $\cX$. Thus, it suffices to apply
\cite[Theorem A.4.3]{Wa11} to $\cX=\cB_1(\cH)$ as underlying Banach
space, and to observe that the space of finite-rank operators forms a
norming subset of $\cB_1(\cH)$ (cf.\ \cite[Proposition~A.3]{ABHN01}. 

By Riemann's theorem on removable singularities, 
one deduces that $\tilde{B}_\Lambda (\cdot)$ is analytic at $0$ with respect to the 
$\cB_1(\cH)$-norm. As $\tilde{B}_\Lambda (\cdot)$ is also
norm analytic in $\cB(\cH)$, and tends to $0$ as $z \to 0$, one gets 
that $\tilde{B}_\Lambda (z)$ tends to $0$ as $z \to 0$ in $\cB_1(\cH)$-norm. Hence,
\begin{align*}
\lim_{z\to0}\tr_{\mathcal{H}}(T_\Lambda {B_M}(z)S_\Lambda ) & = 
\lim_{z\to0}\Big(\tr_{\cH} \big(\tilde{B}_\Lambda (z)\big) 
+\tr_{\cH} \left(T_\Lambda  \tr_{m} (P_{+}) S_\Lambda \right)   \\
& \qquad  \quad \;\,\, - \tr_{\cH} \left(T_\Lambda  \tr_{m} (P_{-}) S_\Lambda \right)\Big)\\
 & =0+ \tr_{\cH} \left(T_\Lambda  \tr_{m} (P_{+}) S_\Lambda \right) - \tr_{\cH} \left(T_\Lambda  \tr_{m} (P_{-}) S_\Lambda \right).
\end{align*}
Since $\slim_{\Lambda \to\infty} T_\Lambda ,S_\Lambda ^* = I_{\mathcal{H}}$, one obtains 
$T_\Lambda \tr_{m} (P_\pm) S_\Lambda  \underset{\Lambda \to\infty}{\longrightarrow} \tr_m (P_\pm)$ in 
$\mathcal{B}_1(\mathcal{H})$ (see, e.g., \cite[Lemma 6.1.3]{Ya92}). Thus,  
\[\lim_{\Lambda \to\infty} \tr_{\mathcal{H}} (T_\Lambda  \tr_m (P_\pm) S_\Lambda )  
= \tr_{\mathcal{H}} (\tr_m (P_\pm)) = \tr_{\mathcal{H}} (P_\pm).
\] 
Hence,
\begin{align*}
& \lim_{\Lambda \to\infty} \lim_{z\to0}\tr_{\mathcal{H}}(T_\Lambda
{B_M}(z)S_\Lambda )
 \\  & \quad = 0+ \lim_{\Lambda \to\infty}\Big( \tr_{\cH} \left(T_\Lambda
\tr_{m} (P_{+}) S_\Lambda \right) - \tr_{\cH} \left(T_\Lambda
\tr_{m} (P_{-}) S_\Lambda \right) \Big)
 \\  & \quad = \dim(\ran( P_+))-\dim(\ran (P_-))
 \\  & \quad = \dim(\ker( M))-\dim(\ker (M^*))
 \\ & \quad = \ind (M).
\end{align*}
Finally, for the purpose of proving the last statement of the theorem, define 
$F_\Lambda\colon \Omega\ni z\mapsto \tr_\mathcal{H}(T_\Lambda B_M(z)S_\Lambda )$. 
Since $\{F_\Lambda\}_{\Lambda}$ converges uniformly to $F$, one infers that $F$ is 
continuous on $\Omega$. Thus,
\[
 \ind(M)=\lim_{\Lambda\to\infty} \lim_{z\to 0} F_{\Lambda}(z)= \lim_{\Lambda\to\infty} F_{\Lambda}(0) = F(0) = \lim_{z\to 0} F(z)= \lim_{z\to 0} \lim_{\Lambda\to\infty} F_{\Lambda}(z).\qedhere
\] 
\end{proof}

In connection with the last part of Theorem \ref{thm:index with Witten} we note that the (limit of the) map $z\mapsto F(z)$ in $0$ may be regarded as a \emph{generalized} Witten index (see, e.g., \cite{BGGSS87}, \cite{GS88} and the references therein, as well as Section \ref{s14}). 

\begin{remark} \label{r2.4} 
$(i)$ While \cite[p. 218, Lemma 1]{Ca78} might be valid as stated, it remains unclear, 
how the assertion that is stated in line 5 on
page 219 comes about. The author infers the following: Let $\cH$ be
a separable Hilbert space, $\Gamma \subseteq\mathbb{C}$ open with $0\in\partial \Gamma$,
$B\colon \Gamma \to \cB(\cH)$ analytic. Assume that $B(z)\in\cB_1(\cH)$
for all $z\in \Gamma$, $\Gamma\ni z\mapsto\|B(z)\|_{\cB_1(\cH)}$ is bounded
and $\|B(z)\|_{\cB(\cH)}\to0$ as $z\to0$. Then for an orthonormal basis
$\{\phi_{k}\}_{k\in\bbN}$ of $\cH$,  
\begin{equation} 
\textnormal{tr}_{\cH} (B(z)) = \sum_{k=1}^{\infty} (\phi_{k},B(z)\phi_{k})_{\cH} 
\underset{z \to 0}{\longrightarrow} 0. 
\end{equation} 
This statement is invalid as the following example, kindly 
communicated to us by H.\ Vogt \cite{Vo14}, shows: 
For the orthonormal basis $\{\phi_{k}\}_{k\in\mathbb{N}}$ define 
the family of operators $B(\cdot)$ by
\[
B(z)\phi_{k}\coloneqq ze^{-(k-1)z}\phi_{k}, \quad 
z\in \Gamma\coloneqq\{z\in\mathbb{C} \,|\, |\arg (z)|< (\pi/4), \, \left|z\right|<1\}, \quad 
k \in \bbN. 
\]
Then 
\begin{align*} 
\|B(z)\|_{\mathcal{B}_1(\mathcal{H})} & = \tr_{\cH} (|B(z)|) 
=\sum_{k=1}^{\infty} (\phi_{k},\left|B(z)\right|\phi_{k})_{\cH} 
=\sum_{k=1}^{\infty}\left|z\right|e^{-(k-1)\Re (z)} 
\\&=\left|z\right|\sum_{k=0}^{\infty}\left(e^{-\Re (z)}\right)^{k} 
=\frac{\left|z\right|}{1-e^{-\Re (z)}}
\end{align*} 
remains bounded for $z\in \Gamma$. Moreover, $\nlim_{z \to 0} B(z) = 0$ 
in $\cB(\cH)$. However,
\[
\tr_{\cH} (B(z)) =\sum_{k=1}^{\infty}\big(\phi_{k},ze^{-(k-1)z}\phi_{k}\big)_{\cH} 
=z\sum_{k=0}^{\infty}e^{-kz}=z\frac{1}{1-e^{-z}}=\frac{z}{e^{z}-1}e^{z}\underset{z \to 0}{\longrightarrow} 1.
\]
$(ii)$ We shall now elaborate on the fact that an analytic function taking values in the
space of bounded linear operators in a Hilbert space can indeed have different
domains of analyticity if considered as taking values in particular
Schatten--von Neumann ideals. 

Consider an infinite-dimensional Hilbert space $\mathcal{H}$ and pick an orthonormal
basis $\{\phi_{k}\}_{k\in\mathbb{N}}$ in $\cH$. For $z\in\mathbb{C}$ with $\Re (z) \geq 0$,
define
\[
T(z)\phi_{k}\coloneqq e^{-z\ln (k)}\phi_{k}, \quad k \in \bbN.
\]
 Then $T(z)\in \cB(\cH)$ and 
\[
T(z)\psi\coloneqq\sum_{k=1}^{\infty}e^{-z\ln (k)}\left(\phi_{k},\psi\right)\phi_{k}, \quad 
\psi\in\mathcal{H}.
\]
 Moreover, $T(0)=I_{\cH}$, $T(2)\in\mathcal{B}_{1}(\cH)$, $T(1)\in\mathcal{B}_{2}(\cH)$. 
 We note here that for $\Re (z)>1$ the function $z\mapsto T(z)$
is also analytic with values in $\cB_1(\cH)$, however, the trace
norm of $T(\cdot)$ blows up at $z=1$.  \hfill $\diamond$
\end{remark}

We conclude this section with some facts on Fredholm operators. For reasons to be able to handle certain classes of unbounded Fredholm operators in a convenient manner, we now take a slightly more general approach and permit a two-Hilbert space setting as follows: Suppose $\cH_j$, 
$j \in \{1,2\}$, are complex, separable Hilbert spaces. Then 
$S  \colon \dom(S) \subseteq \cH_1 \to \cH_2$, 
$S$ is called a {\it Fredholm operator}, denoted by $S \in \Phi(\cH_1, \cH_2)$, if
\begin{align*}
& \text{$(i)$ $S$ is closed and densely defined in $\cH_1$.} \\
& \text{$(ii)$ $\ran(S)$ is closed in $\cH_2$.} \\
& \text{$(iii)$ $\dim(\ker(S)) + \dim(\ker(S^*)) < \infty$.}
\end{align*}

If $S$ is Fredholm, its 
{\it Fredholm index} is given by 
\begin{equation}
\ind(S) = \dim(\ker(S)) - \dim(\ker(S^*)). 
\end{equation}

If $S\colon \dom(S)\subseteq \cH_1 \to \cH_2$ is densely defined and closed, we associate with 
$\dom(S) \subset \cH_1$ the standard graph Hilbert subspace $\cH_S \subseteq \cH_1$ induced 
by $S$ defined by
\begin{align*}
\cH_S = (\dom(S); (\, \cdot \, , \, \cdot \,)_{\cH_{S}}), \quad 
(f,g)_{\cH_S} = (Sf, Sg)_{\cH_2} + (f,g)_{\cH_1},&   \\ 
\|f\|_{\cH_S} = \big[\|Sf\|_{\cH_2}^2 + \|f\|_{\cH_1}^2\big]^{1/2}, \; f,g \in \dom(S).& 
\end{align*}  

In addition, for $A_0, A_1 \in \Phi(\cH_1, \cH_2) \cap \cB(\cH_1, \cH_2)$, $A_0$ and $A_1$ are 
called {\it homotopic} in $\Phi(\cH_1, \cH_2)$ if there exists $A \colon [0,1] \to \cB(\cH_1, \cH_2)$ 
continuous such that $A(t) \in \Phi(\cH_1, \cH_2)$, $t \in [0,1]$, with $A(0) = A_0$, $A(1) = A_1$. 

Next, following \cite[Chs.~1, 3]{BB13}, \cite[Chs.~XI, XVII]{GGK90}, \cite[Sects.~IV.6, IV.10]{GK92}, \cite[Sect.~I.3]{MP86}, \cite[Ch.~2]{Mu13}, \cite[Chs.~5, 7]{Sc02}, we now summarize a few basic properties of Fredholm operators. 

\begin{theorem} \lb{t3.6}
Let $\cH_j$, $j=1,2,3$, be complex, separable Hilbert spaces,  
then the following items $(i)$--$(vii)$ hold: \\[1mm] 
$(i)$ If $S \in \Phi(\cH_1, \cH_2)$ and $T \in \Phi(\cH_2, \cH_3)$, then 
$TS \in \Phi(\cH_1, \cH_3)$ and 
\begin{equation}
\ind(TS) = \ind(T) + \ind(S). 
\end{equation}
$(ii)$ Assume that $S \in \Phi(\cH_1, \cH_2)$ and $K \in \cB_{\infty}(\cH_1, \cH_2)$, then 
$(S + K) \in \Phi(\cH_1, \cH_2)$ and 
\begin{equation}
\ind(S + K) = \ind(S). 
\end{equation}
$(iii)$ Suppose that $S \in \Phi(\cH_1, \cH_2)$ and $K \in \cB_{\infty}(\cH_S, \cH_2)$, then 
$(S + K) \in \Phi(\cH_1, \cH_2)$ and 
\begin{equation}
\ind(S + K) = \ind(S). 
\end{equation}
$(iv)$ Assume that $S \in \Phi(\cH_1, \cH_2)$. Then there exists $\varepsilon(S) > 0$ such that for any $R \in \cB(\cH_1, \cH_2)$ with  
$\|R\|_{\cB(\cH_1, \cH_2)} < \varepsilon(S)$, one has $(S + R) \in \Phi(\cH_1, \cH_2)$ and 
\begin{equation}
\ind(S + R) = \ind(S), \quad \dim(\ker(S + R)) \leq \dim(\ker(S)). 
\end{equation}
$(v)$ Let $S \in \Phi(\cH_1, \cH_2)$, then $S^* \in \Phi(\cH_2, \cH_1)$ and 
\begin{equation}
\ind(S^*) = - \ind(S). 
\end{equation}
$(vi)$ Assume that $S \in \Phi(\cH_1, \cH_2)$ and that the Hilbert space $\cV_1$ is continuously embedded in $\cH_1$, with 
$\dom(S)$ dense in $\cV_1$. Then $S \in \Phi(\cV_1, \cH_2)$ with $\ker(S)$ and $\ran(S)$ the same whether $S$ is viewed as an operator $S\colon \dom(S)\subseteq \cH_1 \to \cH_2$, or as an operator 
$S\colon \dom(S)\subseteq \cV_1 \to \cH_2$. \\[1mm]
$(vii)$ Assume that the Hilbert space $\cW_1$ is continuously and densely embedded in 
$\cH_1$. If $S \in \Phi(\cW_1, \cH_2)$ then $S \in \Phi(\cH_1, \cH_2)$ with $\ker(S)$ and $\ran(S)$ the same whether $S$ is viewed as an operator $S\colon \dom(S)\subseteq \cH_1 \to \cH_2$, or 
as an operator $S\colon \dom(S)\subseteq \cW_1 \to \cH_2$. \\[1mm]
$(viii)$ Homotopic operators in $\Phi(\cH_1, \cH_2) \cap \cB(\cH_1, \cH_2)$ have equal Fredholm 
index. More precisely, the set $\Phi(\cH_1, \cH_2) \cap \cB(\cH_1, \cH_2)$ is open in 
$\cB(\cH_1, \cH_2)$, hence $\Phi(\cH_1, \cH_2)$ contains at most countably many connected components, on each of which the Fredholm index is constant. Equivalently, 
$\ind \colon \Phi(\cH_1, \cH_2) \to \bbZ$ is locally constant, hence continuous, and homotopy 
invariant. 
\end{theorem}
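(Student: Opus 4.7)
The plan is to reduce each of items (i)--(viii) to the classical theory of bounded Fredholm operators between Hilbert spaces via the graph-space trick. For any densely defined, closed $S \colon \dom(S) \subseteq \cH_1 \to \cH_2$, the operator $\widetilde{S} \colon \cH_S \to \cH_2$ defined by $\widetilde{S} f = Sf$ is bounded, with $\ker(\widetilde{S}) = \ker(S)$ and $\ran(\widetilde{S}) = \ran(S)$. Moreover, the Banach-space adjoint of $\widetilde S$ encodes the action of $S^*$, so that $\widetilde{S} \in \Phi(\cH_S,\cH_2)$ if and only if $S \in \Phi(\cH_1,\cH_2)$, and the indices agree. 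This is the unifying device for the whole statement.

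First I would establish the bounded-operator skeleton corresponding to items (i), (ii), (iv), and (v). The cornerstone here is Atkinson's theorem characterizing bounded Fredholm operators by the existence of two-sided parametrices modulo compact operators. Composition (i) then follows by composing parametrices; invariance under compact perturbations (ii) is immediate; small bounded perturbation (iv) rests on a Neumann-series argument against a fixed parametrix, which simultaneously yields openness of $\Phi(\cH_1,\cH_2)\cap\cB(\cH_1,\cH_2)$ in $\cB(\cH_1,\cH_2)$ and the upper semi-continuity of $\dim(\ker(\,\cdot\,))$; the adjoint relation (v) rests on $\overline{\ran(S)}^{\perp} = \ker(S^*)$ together with the preservation of closedness of range under passing to the adjoint. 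The textbook treatments cited in the preamble to the theorem carry these items out in the bounded case, and the graph-space reduction immediately lifts them to the unbounded setting.

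Item (iii) requires a bit more care: here the perturbation $K$ is only required to be compact as a map $\cH_S \to \cH_2$. I would argue that $\widetilde{S} + \widetilde{K}$ with $\widetilde{K}$ compact on $\cH_S$ still belongs to $\Phi(\cH_S,\cH_2)$ by (ii) applied in the bounded setting, and then transfer the conclusion back to the original spaces, using that $\dom(S+K) = \dom(S)$ because $K$ is $S$-bounded with arbitrarily small relative bound (being compact from $\cH_S$ to $\cH_2$). For items (vi) and (vii), the key observation is that the graph Hilbert space $\cH_S$, together with $\ker(S)$, $\ran(S)$, and $\ker(S^*)$, is intrinsic to the operator $S$ and to the target $\cH_2$. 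It is therefore unaffected by a continuous change of the ambient domain space between $\cH_1$, $\cV_1$, and $\cW_1$, provided $\dom(S)$ remains dense and $S$ remains closed in the new ambient space; both conditions follow from the continuity and density hypotheses on the embeddings.

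Finally, (viii) is essentially a consequence of (iv): the latter yields openness of $\Phi(\cH_1,\cH_2)\cap\cB(\cH_1,\cH_2)$ in $\cB(\cH_1,\cH_2)$ together with local constancy of $\ind$, so $\ind$ is constant on every connected component of $\Phi \cap \cB$ and, in particular, invariant under homotopies within $\Phi(\cH_1,\cH_2) \cap \cB(\cH_1,\cH_2)$. Separability of $\cB(\cH_1,\cH_2)$ in its norm topology accounts for at most countably many components. I expect the main technical obstacle overall not to be mathematical depth but bookkeeping: verifying that adjoints, compositions, and perturbations in the two-Hilbert-space sense are genuinely consistent with their graph-space counterparts, especially when one carefully tracks domains in the passage from (ii) to (iii) and under the change-of-ambient-space items (vi)--(vii). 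This is a matter of unwinding definitions, and the detailed arguments can be extracted from the references cited above.
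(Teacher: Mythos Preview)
The paper does not prove this theorem; it is stated as a summary of standard Fredholm theory with references to \cite{BB13}, \cite{GGK90}, \cite{GK92}, \cite{MP86}, \cite{Mu13}, \cite{Sc02}. Your sketch is consistent with the textbook approaches in those references: the graph-space reduction $\cH_S$ to pass from unbounded to bounded operators, Atkinson's theorem for (i), (ii), (iv), Neumann series for local constancy, and the observation that $\ker(S)$, $\ran(S)$ are intrinsic for (vi), (vii).

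There is one genuine slip. In your argument for (viii) you write that ``separability of $\cB(\cH_1,\cH_2)$ in its norm topology accounts for at most countably many components.'' But $\cB(\cH_1,\cH_2)$ is \emph{not} norm-separable when the $\cH_j$ are infinite-dimensional (e.g., diagonal operators on $\ell^2$ with $\{0,1\}$-entries form an uncountable family at pairwise distance $1$). The correct reason for countably many components is that $\ind$ is a locally constant $\bbZ$-valued surjection and, in fact, each level set $\ind^{-1}(\{k\})$ is path-connected (a classical result: any bounded Fredholm operator of index $k$ can be deformed within the Fredholm operators to a fixed shift of index $k$). Hence the components of $\Phi(\cH_1,\cH_2)\cap\cB(\cH_1,\cH_2)$ are exactly the fibres of $\ind$, one for each integer. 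Replace the separability sentence accordingly.
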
 

A prime candidate for the Hilbert spaces $\cV_1, \cW_1 \subseteq \cH_1$ in 
Theorem \ref{t3.6}\,$(vi)$,\,$(vii)$ (e.g., in applications to differential operators) is the 
graph Hilbert space $\cH_S$ induced by $S$. Moreover, an immediate consequence of 
Theorem \ref{t3.6} we will apply later is the following homotopy invariance of the 
 Fredholm index for a family of Fredholm operators with fixed domain.  
 
\begin{corollary} \lb{c3.7} 
Let $T(s) \in \Phi(\cH_1, \cH_2)$, $s \in I$, where $I \subseteq \bbR$ is a connected interval, with $\dom(T(s)) := \cV_T$ independent of $s \in I$. In addition, assume that 
$\cV_T$ embeds densely and continuously into $\cH_1$ $($for instance, $\cV_T = \cH_{T(s_0)}$ 
for some fixed $s_0 \in I$$)$ and that $T(\cdot)$ is continuous with respect to the norm 
$\|\cdot\|_{\cB(\cV_T, \cH_2)}$. Then
\begin{equation}
\ind(T(s)) \in \bbZ \, \text{ is independent of $s \in I$.} 
\end{equation}
 \end{corollary}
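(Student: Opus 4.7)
The plan is to reduce the claim to the homotopy invariance of the Fredholm index for \emph{bounded} operators (Theorem \ref{t3.6}\,$(viii)$) by moving from the variable unbounded setting on $\cH_1$ to a fixed bounded setting on $\cV_T$. The two facts being combined are Theorem \ref{t3.6}\,$(vi)$ (which lets us transfer Fredholmness from $\cH_1$ to the intermediate space $\cV_T$ without changing the index) and Theorem \ref{t3.6}\,$(viii)$ (which yields local constancy of the index on an open subset of a Banach space of operators).

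First, I would invoke the continuity hypothesis $T(\cdot) \in C(I, \cB(\cV_T, \cH_2))$, which in particular guarantees that for each $s \in I$ the operator $T(s)$ belongs to $\cB(\cV_T, \cH_2)$. Since $\cV_T$ embeds densely and continuously into $\cH_1$ by assumption, and $\dom(T(s)) = \cV_T$ is trivially dense in $\cV_T$, Theorem \ref{t3.6}\,$(vi)$ (applied with $\cV_1 := \cV_T$) then yields $T(s) \in \Phi(\cV_T, \cH_2)$, with the very same kernel and range as the original operator $T(s) \colon \dom(T(s)) \subseteq \cH_1 \to \cH_2$. Consequently, the Fredholm index is unchanged by this reinterpretation, and it suffices to prove constancy of the index of the bounded family $\{T(s)\}_{s \in I} \subseteq \cB(\cV_T, \cH_2)$.

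Second, $s \mapsto T(s)$ is by hypothesis a continuous path in $\Phi(\cV_T, \cH_2) \cap \cB(\cV_T, \cH_2)$. By Theorem \ref{t3.6}\,$(viii)$, the latter set is open in $\cB(\cV_T, \cH_2)$ and the Fredholm index is locally constant on it. Pulling back along $T(\cdot)$, the map $s \mapsto \ind(T(s))$ is therefore locally constant on $I$; since $I$ is connected and this map takes values in $\bbZ$, it must be constant, which is the desired conclusion. There is no real obstacle here: the hypotheses have been packaged precisely so that part $(vi)$ of Theorem \ref{t3.6} effects the change of domain and part $(viii)$ then closes the homotopy argument. The graph-space example $\cV_T = \cH_{T(s_0)}$ fits the abstract framework automatically, since $\cH_{T(s_0)}$ is a Hilbert space continuously and densely embedded in $\cH_1$ whenever $T(s_0)$ is closed and densely defined, and in that case $T(s_0) \in \cB(\cH_{T(s_0)}, \cH_2)$ by construction of the graph norm.
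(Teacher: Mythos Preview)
Your proof is correct and follows essentially the same strategy as the paper: first invoke Theorem \ref{t3.6}\,$(vi)$ to pass from the unbounded setting on $\cH_1$ to bounded Fredholm operators in $\cB(\cV_T,\cH_2)\cap\Phi(\cV_T,\cH_2)$, then use local constancy of the index together with connectedness of $I$. The only minor difference is that you appeal directly to part $(viii)$ (local constancy of the index map), whereas the paper's argument uses part $(iv)$ and spells out the connectedness step via a supremum argument; these are equivalent here, and your route is if anything a bit cleaner.
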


The corresponding case of unbounded operators with varying domains (and $\cH_1 = \cH_2$) is treated in detail in \cite{CL63}. 
 
\newpage

\section{On Schatten--von Neumann Classes and Trace Class Estimates}\label{sec:Trace-Class-Estimates}

This is the first of two technical sections, providing basic results used later
on in our detailed study of Dirac-type operators to be introduced
in Section \ref{sec:A-particular-First}. We also recall results on the Schatten--von Neumann classes and apply these to concrete situations needed in Section \ref{sec:The-Derivation-of-trace-f}.

We start with the following well-known
characterization of Hilbert--Schmidt operators $\cB_2\big(L^{2}(\Omega; d\mu)\big)$ 
in $L^{2}(\Omega; d\mu)$:

\begin{theorem}[see, e.g., {\cite[Theorem 2.11]{Si05}}]
\label{thm:Hilbert-Schmidt-L2} 
Let $(\Omega; \cB; \mu)$ be a separable measure space and 
$k\colon \Omega \times \Omega \to \mathbb{C}$ be $\mu \otimes \mu$ measurable. 
Then the  map  
\begin{align*}
\cU: \begin{cases} L^{2} (\Omega \times \Omega; d\mu \otimes d\mu) \to\cB_2\big(L^{2}(\Omega; d\mu)\big), \\
k \mapsto\left(f\mapsto\left(\Omega \ni x\mapsto\int_{\Omega} k(x,y)f(y) \, d\mu(y)\right)\right), 
\end{cases} 
\end{align*}
is unitary.
\end{theorem}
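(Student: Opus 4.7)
The plan is to verify that the map $\cU$ is a well-defined linear isometry onto $\cB_2(L^2(\Omega;d\mu))$. Since separability of the measure space ensures that $L^2(\Omega;d\mu)$ is a separable Hilbert space, I will fix a countable orthonormal basis $\{e_n\}_{n\in\bbN}$ of $L^2(\Omega;d\mu)$ throughout, and use that $\{e_m \otimes \overline{e_n}\}_{m,n\in\bbN}$ (where $(e_m \otimes \overline{e_n})(x,y) := e_m(x)\overline{e_n(y)}$) is an orthonormal basis of $L^2(\Omega \times \Omega; d\mu \otimes d\mu)$ — a standard consequence of Fubini and the fact that finite linear combinations of product functions are dense.

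First, given $k \in L^2(\Omega \times \Omega; d\mu \otimes d\mu)$, I would use Fubini to argue that $k(x,\cdot) \in L^2(\Omega;d\mu)$ for $\mu$-a.e.\ $x \in \Omega$, and then invoke the Cauchy--Schwarz inequality to show that $(\cU k)f$ is well-defined pointwise a.e.\ for $f \in L^2(\Omega;d\mu)$, with $\|(\cU k)f\|_2 \leq \|k\|_{L^2(\Omega\times\Omega)}\|f\|_2$. Linearity of $\cU$ is immediate.

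Next, I would expand $k = \sum_{m,n} c_{m,n}\, e_m \otimes \overline{e_n}$ with $c_{m,n} = (e_m\otimes\overline{e_n},k)_{L^2(\Omega\times\Omega)}$, and use dominated/monotone convergence together with Fubini to identify the matrix elements
\[
((\cU k)e_n, e_m)_{L^2(\Omega;d\mu)} = c_{m,n}, \quad m,n\in\bbN.
\]
From this identification, Parseval yields
\[
\sum_{m,n\in\bbN} \lvert((\cU k)e_n, e_m)_{L^2(\Omega;d\mu)}\rvert^2 = \sum_{m,n\in\bbN}\lvert c_{m,n}\rvert^2 = \|k\|_{L^2(\Omega\times\Omega)}^2,
\]
which is precisely the Hilbert--Schmidt characterization, proving $\cU k \in \cB_2(L^2(\Omega;d\mu))$ together with the isometry $\|\cU k\|_{\cB_2} = \|k\|_{L^2(\Omega\times\Omega)}$.

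Finally, for surjectivity I would reverse this correspondence: given $T \in \cB_2(L^2(\Omega;d\mu))$, set $c_{m,n} := (Te_n, e_m)_{L^2(\Omega;d\mu)}$; the Hilbert--Schmidt property gives $\sum_{m,n}|c_{m,n}|^2 < \infty$, so $k := \sum_{m,n} c_{m,n}\, e_m \otimes \overline{e_n}$ converges in $L^2(\Omega\times\Omega;d\mu\otimes d\mu)$, and $\cU k = T$ by comparing matrix elements with respect to $\{e_n\}$. The main technical obstacle is the careful use of Fubini/Cauchy--Schwarz to justify the pointwise definition of the integral operator and the interchange of summation and integration needed to identify the matrix elements $c_{m,n}$; once this bookkeeping is in place, everything reduces to Parseval's identity applied to the two mutually compatible orthonormal bases.
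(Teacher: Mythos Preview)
Your proposal is correct and follows the standard textbook argument. Note, however, that the paper does not supply its own proof of this statement: it is quoted as a well-known result with a reference to \cite[Theorem~2.11]{Si05}, so there is no proof in the paper to compare against.
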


The H\"older inequality is also valid for trace ideals with $p$-summable singular values.

\begin{theorem} [{H\"older inequality, see, e.g., \cite[Theorem 2.8]{Si05}}]
\label{thm:trace-class-crit} Assume that $\cH$ is a complex, separable Hilbert
space, $m\in\mathbb{N}$, $q_{j}\in[1,\infty]$, $j\in\{1,\ldots,m\}$,
$p\in[1,\infty]$. Assume that
\[
\sum_{j=1}^{m}\frac{1}{q_{j}}=\frac{1}{p}.
\]
Let $T_{j}\in\cB_{q_j}(\cH)$, $j\in\{1,\ldots,m\}$. Then
$T\coloneqq\prod_{j=1}^{m}T_{j}\in\cB_p(\cH)$ and 
\[
\left\Vert T\right\Vert _{\cB_p(\cH)}\leq \prod_{j=1}^{m}\left\Vert T_{j}\right\Vert _{\cB_{q_j}(\cH)}.
\]
\end{theorem}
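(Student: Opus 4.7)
The plan is to reduce the general statement to the two-factor case $m = 2$ by an induction on $m$, so the heart of the argument is the inequality
\[
\|AB\|_{\cB_p(\cH)} \leq \|A\|_{\cB_q(\cH)} \|B\|_{\cB_r(\cH)}, \quad 1/p = 1/q + 1/r,
\]
for $A \in \cB_q(\cH)$ and $B \in \cB_r(\cH)$. Once this is established, the general claim follows by an induction on $m$: assuming validity for $m-1$ factors, set $S \coloneqq T_2 \cdots T_m$ and observe that by the inductive hypothesis $S \in \cB_s(\cH)$ with $1/s = \sum_{j=2}^m 1/q_j = 1/p - 1/q_1$ and $\|S\|_{\cB_s(\cH)} \leq \prod_{j=2}^m \|T_j\|_{\cB_{q_j}(\cH)}$. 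Applying the two-factor inequality to $T_1 \in \cB_{q_1}(\cH)$ and $S \in \cB_s(\cH)$ then closes the induction.

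For the two-factor base case, I would first establish Horn's multiplicative inequality for the singular values: for any $A, B \in \cB(\cH)$ and each $N \in \bbN$,
\[
\prod_{k=1}^N s_k(AB) \leq \prod_{k=1}^N s_k(A) s_k(B).
\]
This can be derived from the approximation-number characterization $s_k(T) = \inf\{\|T - F\|_{\cB(\cH)} : \rank(F) \leq k-1\}$ together with the standard product rule $s_{k+\ell-1}(AB) \leq s_k(A) s_\ell(B)$ applied iteratively; equivalently, the non-increasing sequence $(\log s_k(AB))_{k \in \bbN}$ is majorized by $(\log[s_k(A) s_k(B)])_{k \in \bbN}$.

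Next, I would invoke a Hardy--Littlewood--P\'olya type conversion: if a non-increasing non-negative sequence $(\alpha_k)$ is log-majorized by $(\beta_k)$, then for every increasing convex $\phi \colon [0,\infty) \to [0,\infty)$ with $\phi(0) = 0$ for which $t \mapsto \phi(e^t)$ is convex, one has $\sum_k \phi(\alpha_k) \leq \sum_k \phi(\beta_k)$. Specialising to $\phi(x) = x^p$ gives
\[
\sum_{k=1}^\infty s_k(AB)^p \leq \sum_{k=1}^\infty (s_k(A) s_k(B))^p.
\]
The ordinary $\ell^p$-H\"older inequality with conjugate exponents $q/p$ and $r/p$ (so that $p/q + p/r = 1$) then yields
\[
\sum_{k=1}^\infty (s_k(A) s_k(B))^p \leq \bigg(\sum_{k=1}^\infty s_k(A)^q\bigg)^{\!p/q} \bigg(\sum_{k=1}^\infty s_k(B)^r\bigg)^{\!p/r} = \|A\|_{\cB_q(\cH)}^p \|B\|_{\cB_r(\cH)}^p,
\]
and taking $p$-th roots produces the desired two-factor estimate. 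Boundary cases in which some $q_j = \infty$ (that is, $T_j \in \cB(\cH)$) are absorbed via the elementary bound $s_k(TS) \leq \|T\|_{\cB(\cH)} s_k(S)$, which leaves the remaining Schatten norms unchanged.

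The main obstacle is the passage from Horn's multiplicative singular-value inequality to the weak (additive) majorization of the $p$-th powers, i.e., the Hardy--Littlewood--P\'olya convexity step. Once that piece of machinery is in hand, everything else is bookkeeping, the classical $\ell^p$-H\"older inequality, and the induction outlined above.
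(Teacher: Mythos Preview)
Your argument is correct and follows the standard textbook route (Horn's inequality, log-majorization, convexity, then the scalar H\"older inequality, with induction on $m$). Note, however, that the paper does not supply its own proof of this theorem: it is stated as a known result with a citation to \cite[Theorem~2.8]{Si05}, so there is nothing in the paper to compare your argument against.
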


For $q_1=q_2=m=2$, one obtains a criterion for operators belonging to the trace class 
$\mathcal{B}_1$, which we shall use later on.

\begin{corollary}
\label{cor:Comp-of-trace} Let $(\Omega; \cB, \mu)$ be a separable measure space, 
$k\colon \Omega \times \Omega \to \mathbb{C}$ be $\mu \otimes \mu$ measurable. Moreover, assume that there exists $\ell,m\in L^{2}(\Omega \times \Omega; d\mu \otimes d\mu)$
such that
\[
k(x,y)=\int_{\Omega} \ell(x,w)m(w,y) \, d\mu(w) \, \text{ for $\mu \otimes \mu$~a.e.~$(x,y)
\in\Omega \times \Omega$.}
\]
Then $K$, the associated integral operator with integral kernel $k(\cdot, \cdot)$ in 
$L^2(\Omega \times \Omega; d\mu \otimes d\mu)$, is trace class, 
$K \in \cB_1\big(L^{2}(\Omega; d\mu)\big)$, and  
\[
\textnormal{tr}_{L^2(\Omega; d\mu)} (K) = \int_{\Omega} k(x,x) \, d\mu(x) 
=\int_{\Omega}\int_{\Omega} \ell(x,w)m(w,x) \, d\mu(w)d\mu(x).
\]
\end{corollary}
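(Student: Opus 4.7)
The plan is to realize the operator $K$ as a product $K = LM$ of two Hilbert--Schmidt operators and then apply the standard trace formula for products of kernel operators. By Theorem \ref{thm:Hilbert-Schmidt-L2}, the assumptions $\ell, m \in L^{2}(\Omega \times \Omega; d\mu \otimes d\mu)$ produce integral operators $L$ and $M$ with kernels $\ell$ and $m$ lying in $\cB_2(L^{2}(\Omega; d\mu))$, with $\|L\|_{\cB_2} = \|\ell\|_{L^2}$ and $\|M\|_{\cB_2} = \|m\|_{L^2}$. Theorem \ref{thm:trace-class-crit} applied with $m = 2$, $q_1 = q_2 = 2$, and $p = 1$ then yields $LM \in \cB_1(L^{2}(\Omega; d\mu))$ together with the bound $\|LM\|_{\cB_1} \leq \|\ell\|_{L^2} \|m\|_{L^2}$.

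Next, I would identify $K$ with $LM$ and then compute the trace. For $f \in L^{2}(\Omega; d\mu)$, the Cauchy--Schwarz inequality together with the $L^{2}$ assumption on $\ell$ and $m$ justify an application of Fubini--Tonelli, yielding
\[
(LMf)(x) = \int_\Omega \bigg(\int_\Omega \ell(x, w) m(w, y) \, d\mu(w)\bigg) f(y) \, d\mu(y) = \int_\Omega k(x, y) f(y) \, d\mu(y)
\]
for $\mu$-a.e.\ $x$, where the second equality uses the factorization hypothesis. Hence $K = LM \in \cB_1(L^{2}(\Omega; d\mu))$. For the trace I would invoke the standard identity
\[
\tr_{L^{2}(\Omega; d\mu)}(LM) = \int_\Omega \int_\Omega \ell(x, w) m(w, x) \, d\mu(w) \, d\mu(x),
\]
which follows by writing $\tr(LM) = \langle L^*, M\rangle_{\cB_2}$, identifying the Hilbert--Schmidt inner product with the $L^{2}$ inner product on kernels, and using that $L^*$ has kernel $(w, x) \mapsto \overline{\ell(x, w)}$. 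Applying Fubini once more to the resulting double integral and invoking the factorization hypothesis gives $\tr_{L^{2}(\Omega; d\mu)}(K) = \int_\Omega k(x, x) \, d\mu(x)$.

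The one subtlety worth flagging is that the expression $k(x, x)$ is \emph{a priori} meaningless, since $k$ is defined only $\mu \otimes \mu$-a.e.\ and the diagonal $\{(x,x)\} \subseteq \Omega \times \Omega$ has $\mu \otimes \mu$-measure zero. The factorization hypothesis is precisely what selects an unambiguous representative of $k$ on the diagonal and renders the final Fubini interchange legitimate. Beyond this conceptual point, no serious technical obstacle arises; the argument is a direct concatenation of Theorems \ref{thm:Hilbert-Schmidt-L2} and \ref{thm:trace-class-crit} with the standard kernel-based computation of the Hilbert--Schmidt inner product.
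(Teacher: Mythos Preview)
Your proposal is correct and follows essentially the same route as the paper: factor $K=LM$ with $L,M\in\cB_2$ via Theorem~\ref{thm:Hilbert-Schmidt-L2}, invoke Theorem~\ref{thm:trace-class-crit} for $K\in\cB_1$, and compute $\tr(LM)=\langle L^*,M\rangle_{\cB_2}$ by identifying the Hilbert--Schmidt inner product with the $L^2$ inner product on kernels. Your added remarks on the Fubini justification and the diagonal subtlety are correct and slightly more explicit than what the paper records.
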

\begin{proof}
By Theorem \ref{thm:Hilbert-Schmidt-L2} the integral operators $L$
and $M$ associated with $\ell$ and $m$, respectively, are Hilbert--Schmidt
operators. Since $K=LM$, one gets $K \in\ \cB_1\big(L^{2}(\Omega; d\mu)\big)$
by Theorem \ref{thm:trace-class-crit}. Moreover, by Theorem \ref{thm:Hilbert-Schmidt-L2}, 
one concludes that 
\begin{align*}
\tr_{L^2(\Omega; d\mu)} (K) &= \tr_{L^2(\Omega; d\mu)} (LM) 
=\tr_{L^2(\Omega; d\mu)} \big((L^{*})^{*}M\big) \nonumber \\
& = \int_{\Omega}\int_{\Omega}\overline{\overline{\ell(w,x)}}m(x,w)\, d\mu(x)d\mu(w)   \\
& = \int_{\Omega}\int_{\Omega}\ell(x,w)m(w,x) \, d\mu(w) d\mu(x) .\tag*{\qedhere}
\end{align*} 
\end{proof}

In the bulk of this manuscript, Theorem \ref{thm:Hilbert-Schmidt-L2} and 
Corollary \ref{cor:Comp-of-trace} will be applied to the case \[
L^2(\Omega;\mathcal{B}; d\mu)=L^2(\bbR^n; \mathcal{B}(\bbR^n); d^n x)=L^2(\bbR^n)
                                                              \]
We recall $H^1(\bbR^n)$ and $H^2(\bbR^n)$, the spaces of once and twice weakly differentiable $L^2$-functions with derivatives in $L^2$, respectively. Moreover, we shall furthermore consider the differential operator $Q$ in $L^{2}(\mathbb{R}^{n})^{2^{\hat n}}$ by 
\begin{equation}
Q\coloneqq\sum_{j=1}^{n}\gamma_{j,n}\partial_{j}, \quad 
\dom(Q) = H^{1}(\mathbb{R}^{n})^{2^{\hat n}},      \label{eq:Def_of_Q}
\end{equation}
where 
\begin{equation}
\gamma_{j,n}^{*}=\gamma_{j,n}\in\mathbb{C}^{2^{\hat n}\times2^{\hat n}} \, \text{ 
if $n=2 \hat n$ or $n=2 \hat n +1$,} 
\end{equation} 
and $\gamma_{j,n}\gamma_{k,n}+\gamma_{k,n}\gamma_{j,n}=2\delta_{jk}$
for all $j,k\in\{1,\ldots,n\}$, see Definition \ref{def:Euc-D-A}. A first consequence is, 
\begin{equation}
Q^{2}=\Delta I_{2^{\hatt n}}, \quad \dom(Q^2) = H^{2}(\mathbb{R}^{n})^{2^{\hat n}}.       \label{eq:q2=00003DDelta}
\end{equation}
Indeed,
\begin{align*}
QQ & =\sum_{j=1}^{n}\gamma_{j,n}\partial_{j}\sum_{k=1}^{n}\gamma_{k,n}\partial_{k} 
 =\sum_{j=1}^{n}\sum_{k=1}^{n}\gamma_{j,n}\partial_{j}\gamma_{k,n}\partial_{k}\\
 & =\frac{1}{2}\sum_{j=1}^{n}\sum_{k=1}^{n}\gamma_{j,n}\partial_{j}\gamma_{k,n}\partial_{k}+\frac{1}{2}\sum_{j=1}^{n}\sum_{k=1}^{n}\gamma_{k,n}\partial_{k}\gamma_{j,n}\partial_{j}\\
 & =\frac{1}{2}\sum_{j=1}^{n}\sum_{k=1}^{n}\left(\gamma_{j,n}\gamma_{k,n}+\gamma_{k,n}\gamma_{j,n}\right)\partial_{j}\partial_{k} =\Delta I_{2^{\hatt n}}.
\end{align*} 
We study the operator asssociated with the differential expression $Q$ with its properties 
later on in Section \ref{sec:A-particular-First}. More precisely, in Theorem \ref{thm:L_is_closed}
we show that $Q=-Q^{*}$ in $L^{2}(\mathbb{R}^{n})^{2^{\hat n}}$ with
domain $H^{1}(\mathbb{R}^{n})^{2^{\hat n}}$, that is, $Q$ is skew-self-adjoint
in $L^{2}\left(\mathbb{R}^{n}\right)^{2^{\hat n}}$. In particular, we
get for any $\mu\in\mathbb{C}$ with $\Re (\mu) \neq0$ that 
\begin{equation}
\big\Vert (Q+\mu)^{-1}\big\Vert \leq |\Re (\mu)|^{-1}     \label{eq:cont_est_of_Q}
\end{equation}
as an operator from $L^{2}\left(\mathbb{R}^{n}\right)^{2^{\hat n}}$ to
$L^{2}\left(\mathbb{R}^{n}\right)^{2^{\hat n}}$. One notes that by Fourier transform, the operator $Q$ is unitarily equivalent to the Fourier multiplier with symbol $\sum_{j=1}^n \gamma_{j,n} (-i)\xi_j$. Furthermore, by $\gamma_{j,n}=\gamma_{j,n}^*$ and $\gamma_{j,n}^2=I_{2^{\hat n}}$,  the matrix $\gamma_{j,n}$ is unitary. Hence, the symbol of $Q$ may be estimated as follows
\begin{equation}\label{eq:estimate_for_symbol_of_Q}
 \bigg\|\sum_{j=1}^{n}\gamma_{j,n}(-i)\xi_{j}\bigg\|_{\cB(\bbC^{2^{\hat n}})}
 \leq \sum_{j=1}^{n}\left|\xi_{j}\right|\leq \sqrt{n}\bigg(\sum_{j=1}^{n}\left|\xi_{j}\right|^{2}\bigg)^{1/2} 
 = \sqrt{n}\left|\xi\right|,  \quad \xi\in\mathbb{R}^{n}.
\end{equation}
We denote 
\begin{equation}
R_{\mu}\coloneqq\left(-\Delta+\mu\right)^{-1}, \quad 
\mu\in\mathbb{C}\backslash (-\infty, 0].      \label{eq:resolvent_of_laplace}
\end{equation}

We recall our notational conventions collected in Section \ref{s2}. In particular, we recall $[A,B]=AB-BA$, the commutator of two operators $A$ and $B$, see also \eqref{eq:commutator=00003DC}. 

\begin{lemma}
\label{lem:commutator}Let $\mu\in\mathbb{C}$, $\Re (\mu) >0$, and 
$\Psi\in C_{b}^{2}(\mathbb{R}^{n})$.
Then with $Q$ and $R_{\mu}$ given by \eqref{eq:Def_of_Q} and \eqref{eq:resolvent_of_laplace},
respectively, one obtains $($cf.\ Remark \ref{rem:differen_mult_op}$)$, 
\begin{equation}
\left[R_{\mu},\Psi\right]=R_{\mu}\left(Q^{2}\Psi\right)R_{\mu}+2R_{\mu}\left(Q\Psi\right)QR_{\mu}.\label{eq:commutator_of_res_mut}
\end{equation} 
\end{lemma}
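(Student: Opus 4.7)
The plan is to reduce the identity to a computation on the "inverse side" via the standard resolvent/commutator trick and then exploit the algebraic structure $Q^{2}=\Delta I_{2^{\hat n}}$ twice, so that the Laplace commutator $[\Delta,\Psi]$ is unwound in two steps through $Q$. The hypothesis $\Psi\in C_{b}^{2}(\bbR^{n})$ guarantees that $\Psi$, $(Q\Psi)=\sum_{j}\gamma_{j,n}(\partial_{j}\Psi)$ and $(Q^{2}\Psi)=(\Delta\Psi)I_{2^{\hat n}}$ all act as bounded multiplication operators, and the bound \eqref{eq:cont_est_of_Q} together with $\|(-\Delta+\mu)^{-1}Q\|<\infty$ (which follows e.g.\ from Fourier multiplier estimates, since $|\xi|/(|\xi|^{2}+\mu)$ is bounded for $\Re(\mu)>0$) shows that the right-hand side of \eqref{eq:commutator_of_res_mut} is a bounded operator. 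This allows us to verify the identity on a convenient dense core and extend by continuity.

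First, I would work on the Schwartz class (or equivalently on $H^{2}(\bbR^{n})^{2^{\hat n}}=\dom(Q^{2})=\dom(\Delta I_{2^{\hat n}})$), where all manipulations are legitimate, and establish the purely algebraic commutator relation
\begin{equation}
\big[R_{\mu}^{-1},\Psi\big]=-[\Delta,\Psi]I_{2^{\hat n}}=-[Q^{2},\Psi].\no
\end{equation}
Then I would compute $[Q,\Psi]$: since $Q(\Psi f)=(Q\Psi)f+\Psi(Qf)$ for $f\in H^{1}(\bbR^{n})^{2^{\hat n}}$ by the Leibniz rule (using that the $\gamma_{j,n}$ are constant matrices), one obtains $[Q,\Psi]=(Q\Psi)$ as a bounded multiplication operator. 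Applying the derivation identity $[Q^{2},\Psi]=Q[Q,\Psi]+[Q,\Psi]Q$ and then the same Leibniz relation to $Q(Q\Psi)=(Q(Q\Psi))+(Q\Psi)Q=(Q^{2}\Psi)+(Q\Psi)Q$, I obtain
\begin{equation}
[Q^{2},\Psi]=(Q^{2}\Psi)+2(Q\Psi)Q\no
\end{equation}
on the Schwartz class.

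Next, I would pass from this to the resolvent identity. Writing
\begin{equation}
[R_{\mu},\Psi]=R_{\mu}\Psi R_{\mu}^{-1}R_{\mu}-R_{\mu}R_{\mu}^{-1}\Psi R_{\mu}=R_{\mu}[\Psi,R_{\mu}^{-1}]R_{\mu}=R_{\mu}[Q^{2},\Psi]R_{\mu},\no
\end{equation}
which is justified because for $f\in L^{2}(\bbR^{n})^{2^{\hat n}}$ we have $R_{\mu}f\in H^{2}(\bbR^{n})^{2^{\hat n}}\subseteq\dom(\Psi Q^{2})$, substituting the expression for $[Q^{2},\Psi]$ yields precisely \eqref{eq:commutator_of_res_mut} on that dense core.

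Finally, to extend to all of $L^{2}(\bbR^{n})^{2^{\hat n}}$, I would check that the right-hand side $R_{\mu}(Q^{2}\Psi)R_{\mu}+2R_{\mu}(Q\Psi)QR_{\mu}$ defines a bounded operator: the first summand is bounded since $(Q^{2}\Psi)=(\Delta\Psi)I_{2^{\hat n}}\in L^{\infty}$; the second is bounded because $QR_{\mu}$ is bounded (its Fourier symbol $\sum_{j}\gamma_{j,n}i\xi_{j}/(|\xi|^{2}+\mu)$ is uniformly bounded in view of \eqref{eq:estimate_for_symbol_of_Q}) and $(Q\Psi)\in L^{\infty}$. The left-hand side is likewise bounded. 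Since both sides agree on a dense set and are bounded, they agree everywhere, completing the proof. The only mild obstacle here is the bookkeeping of domains when passing differential operators through multiplication operators, but $\Psi\in C_{b}^{2}$ makes all the relevant compositions well-defined on $H^{2}$ and the boundedness of the two sides makes the final density argument routine.
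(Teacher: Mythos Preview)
Your proof is correct and follows essentially the same approach as the paper: both use the resolvent identity $[R_{\mu},\Psi]=R_{\mu}[\Psi,R_{\mu}^{-1}]R_{\mu}=R_{\mu}[Q^{2},\Psi]R_{\mu}$ and then expand $[Q^{2},\Psi]=(Q^{2}\Psi)+2(Q\Psi)Q$ via the Leibniz rule for $Q$. You are simply more explicit about domain and boundedness considerations than the paper, which presents the same chain of equalities without further justification.
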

\begin{proof}
Recalling Remark \ref{rem:differen_mult_op} concerning multiplication
operators, we compute with the help of \eqref{eq:q2=00003DDelta}
\begin{align*}
\left[R_{\mu},\Psi\right] & =R_{\mu}\Psi-\Psi R_{\mu}\\
 & =R_{\mu}\left(\Psi\left(-\Delta+\mu\right)-\left(-\Delta+\mu\right)\Psi\right)R_{\mu}\\
 & =R_{\mu}\left(\Delta\Psi-\Psi\Delta\right)R_{\mu} 
 =R_{\mu}\left(Q^{2}\Psi-\Psi\Delta\right)R_{\mu}\\
 & =R_{\mu}\left(Q\left(Q\Psi\right)+Q\Psi Q-\Psi\Delta\right)R_{\mu}\\
 & =R_{\mu}\left(\left(Q^{2}\Psi\right)+\left(Q\Psi\right)Q+\left(Q\Psi\right)Q+\Psi Q^{2}-\Psi\Delta\right)R_{\mu}\\
 & =R_{\mu}\left(\left(Q^{2}\Psi\right)+2\left(Q\Psi\right)Q\right)R_{\mu}.\tag*{{\qedhere}}
\end{align*}
\end{proof}

In the course of computing the index of the closed operator $L$ to be introduced later on, we need to establish trace class properties of operators that are products of operators of the form 
discussed in the following lemma. For given $n\in \mathbb{N}_{\geq 3}$, $x\in \mathbb{R}^n$, 
$\mu\in \mathbb{C}_{\Re>0}\coloneqq \{z\in \mathbb{C} \, | \, \Re (z) >0\}$, we let
\begin{equation}\label{e:g_mu}
  g_\mu (x)\coloneqq \frac{1}{\Re (\mu) + |x|^2}, \quad \tilde g_\mu (x)\coloneqq \sqrt{n}\frac{|x|}{\Re (\mu) + |x|^2}.
\end{equation}
One notes that $g_\mu \in L^q(\mathbb{R}^n)$ for all $q>n/2$ and $\tilde g_\mu \in L^q(\mathbb{R}^n)$ for all $q>n$.

\begin{lemma}
\label{lem:Schatten-class-1-operator} Let $\mu\in\mathbb{C}_{\Re>0}$,
$\Psi\in L^{\infty}(\mathbb{R}^{n}),$ $\alpha\in[1,\infty)$, $n\geq3$, 
and recall $R_{\mu}$ from \eqref{eq:resolvent_of_laplace} and $Q$
from \eqref{eq:Def_of_Q}, as well as $g_\mu$ and $\tilde g_\mu$ from \eqref{e:g_mu}. Assume that there exists $\kappa>0$ such
that
\[
\left|\Psi(x)\right|\leq \kappa (1+\left|x\right|)^{- \alpha} \, 
\text{ for a.e.~$x\in\mathbb{R}^{n}$.}
\]
$(i)$ Then for all $q>n$, $R_{\mu}\Psi,\Psi R_{\mu}\in\cB_{q}\left(L^{2}\left(\mathbb{R}^{n}\right)\right)$ and 
\[
\max\big(\left\Vert \Psi R_{\mu}\right\Vert _{\cB_{q}(L^{2}(\mathbb{R}^{n}))}, 
\left\Vert R_{\mu}\Psi\right\Vert _{\cB_{q}(L^{2}(\mathbb{R}^{n}))}\big)
 \leq \left(2\pi\right)^{-n/q} \|\Psi\|_{L^q(\mathbb{R}^n)} \|g_\mu\|_{L^q(\mathbb{R}^n)}<\infty. 
\]
The assertion remains the same, if $R_\mu$ is replaced by $R_\mu Q$ or $Q R_\mu$ and $ \|g_\mu\|_{L^q(\mathbb{R}^n)}$ in the latter estimate is replaced by $ \|\tilde g_\mu\|_{L^q(\mathbb{R}^n)}$.
\\[1mm] 
$(ii)$ Assume, in addition, $\alpha> 3/2$. Then, if $n>3$, there
exists $\theta\in(3/4,1)$ such that 
$R_{\mu}\Psi,\Psi R_{\mu}\in\cB_{2n\theta/3}\big(L^{2}(\mathbb{R}^{n}\big)$. Moreover, 
\begin{align*}
& \max \big(\left\Vert \Psi R_{\mu}\right\Vert _{\cB_{2n\theta/3}(L^{2}(\mathbb{R}^{n}))}, 
\left\Vert R_{\mu}\Psi\right\Vert _{\cB_{2n\theta/3}(L^{2}(\mathbb{R}^{n}))}\big)
\\ 		
& \quad \leq \left(2\pi\right)^{-3/(2\theta)} \|\Psi\|_{L^{2n\theta/3}(\mathbb{R}^n)}
\|g_\mu\|_{L^{2n\theta/3}(\mathbb{R}^n)}<\infty.  
\end{align*}
For $n=3$, $R_{\mu}\Psi,\Psi R_{\mu}\in\cB_2\left(L^{2}\left(\mathbb{R}^{n}\right)\right)$
and
\[
\max\big(\left\Vert \Psi R_{\mu}\right\Vert _{\cB_2(L^{2}(\mathbb{R}^{n}))}, 
\left\Vert R_{\mu}\Psi\right\Vert _{\cB_2(L^{2}(\mathbb{R}^{n}))}\big) \leq \left(2\pi\right)^{-3/2} \|\Psi\|_{L^{2}(\mathbb{R}^n)}\|g_\mu\|_{L^{2}(\mathbb{R}^n)}<\infty.	
\]
$(iii)$ Let $\Theta\in C_{b}^{2}\left(\mathbb{R}^{n}\right)$ with 
\[
\left|\left(Q\Theta\right)(x)\right|+\left|\left(Q^{2}\Theta\right)(x)\right|\leq 
\kappa (1+\left|x\right|)^{- \beta}
\]
for some $\kappa>0$ and $\beta> 3/2$. Then, recalling \eqref{eq:def_commutator},
$\left[R_{\mu},\Theta\right]\in\cB_2\left(L^{2}\left(\mathbb{R}^{n}\right)\right)$ with 
\begin{align*}
& \left\Vert \left[R_{\mu},\Theta\right]\right\Vert _{\cB_2(L^{2}(\mathbb{R}^{n}))} \\ 
& \quad \leq \bigg(\frac{1}{\Re(\mu)}+2\bigg(\frac{1}{\Re(\sqrt{\mu})}+3\frac{\left|\sqrt{\mu}\right|}{\Re(\mu)}\bigg)\bigg)\left(2\pi\right)^{-3/2}        \\
& \qquad \times \max\big(\|Q^2\Theta\|_{L^2(\mathbb{R}^n)},\|Q\Theta\|_{L^2(\mathbb{R}^n)}\big)
\|g_\mu\|_{L^{2}(\mathbb{R}^n)}<\infty
\end{align*}
if $n=3$, and $\left[R_{\mu},\Theta\right]\in\cB_{\left(2 n/3\right)\theta}\left(L^{2}\left(\mathbb{R}^{n}\right)\right)$
with 
\begin{align*}
 \left\Vert \left[R_{\mu},\Theta\right]\right\Vert _{\cB_{2n\theta/3}(L^{2}(\mathbb{R}^{n}))} 
& \leq \bigg(\frac{1}{\Re(\mu)}+2\bigg(\frac{1}{\Re(\sqrt{\mu})}+3\frac{\left|\sqrt{\mu}\right|}{\Re(\mu)}\bigg)\bigg) 
\left(2\pi\right)^{-3/(2\theta)}  \\
& \quad \times \max\big(\|Q^2\Theta\|_{L^{2n\theta/3}(\mathbb{R}^n)},
 \|Q\Theta\|_{L^{2n\theta/3}(\mathbb{R}^n)}\big)    \\
& \quad \times \|g_\mu\|_{L^{2n\theta/3}(\mathbb{R}^n)} < \infty
\end{align*}
for some $\theta\in(0,1)$ with $2n\theta/3 \geq2$, if $n>3$.
\end{lemma}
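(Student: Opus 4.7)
The strategy is to reduce every assertion to the Kato--Seiler--Simon inequality
\[
  \|f(X) h(P)\|_{\cB_q(L^2(\mathbb{R}^n))} \leq (2\pi)^{-n/q} \|f\|_{L^q(\mathbb{R}^n)} \|h\|_{L^q(\mathbb{R}^n)}
\]
for $q\in[2,\infty)$, where $f(X)$ is multiplication by $f$ and $h(P)$ is a Fourier multiplier (with the obvious matrix-valued generalization for $Q R_\mu$). The symbol of $R_\mu$ is $(|\xi|^{2}+\mu)^{-1}$, whose modulus is dominated by $g_\mu(\xi)$, while by \eqref{eq:estimate_for_symbol_of_Q} the matrix-valued symbol of $QR_\mu$ has operator norm at most $\tilde g_\mu(\xi)$. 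The hypothesis $|\Psi(x)|\leq\kappa(1+|x|)^{-\alpha}$ places $\Psi$ in $L^q(\mathbb{R}^n)$ precisely when $\alpha q>n$.

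For part (i), the condition $q>n$ yields $\alpha q\geq q>n$ (so $\Psi\in L^{q}$), $g_\mu\in L^{q}$ (which requires only $2q>n$), and $\tilde g_\mu\in L^{q}$ (which requires $q>n$). A direct application of Kato--Seiler--Simon gives the bound on $\Psi R_\mu$; the bound on $R_\mu \Psi$ follows by adjunction, using $R_\mu^{*}=R_{\bar\mu}$ and $g_{\bar\mu}=g_\mu$, and the $QR_\mu$ variant is identical with $\tilde g_\mu$ replacing $g_\mu$. For part (ii) I choose $q$ as small as the integrability constraints allow: for $n=3$ take $q=2$, so that $2\alpha>3$ gives $\Psi\in L^{2}(\mathbb{R}^{3})$ and $g_\mu\in L^{2}(\mathbb{R}^{3})$; for $n>3$ set $q=2n\theta/3$, and note that the three requirements $g_\mu\in L^{q}$, $\Psi\in L^{q}$, and $q\geq 2$ translate respectively into $\theta>3/4$, $\theta>3/(2\alpha)$, and $\theta\geq 3/n$. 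Since $\alpha>3/2$ forces $3/(2\alpha)<1$, some $\theta\in(3/4,1)$ meets all three constraints (the last being automatic for $n\geq 4$).

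For part (iii) I combine part (ii) with the commutator identity of Lemma \ref{lem:commutator},
\[
  [R_\mu,\Theta] = R_\mu (Q^{2}\Theta) R_\mu + 2 R_\mu (Q\Theta) Q R_\mu.
\]
The first summand is handled by writing it as $R_\mu\cdot[(Q^{2}\Theta)R_\mu]$, applying part (ii) to the bracket using the decay $|Q^{2}\Theta|\leq\kappa(1+|x|)^{-\beta}$ with $\beta>3/2$, and bounding $\|R_\mu\|_{\cB(L^{2})}\leq 1/\Re(\mu)$. For the second summand the key auxiliary estimate is $\|QR_\mu\|_{\cB(L^{2})}\leq 1/\Re(\sqrt\mu)+|\sqrt\mu|/\Re(\mu)$, obtained by factoring $-\Delta+\mu=(\sqrt\mu-Q)(\sqrt\mu+Q)$ (valid since $Q^{2}=\Delta$ and $\sqrt\mu$ is scalar), combining the decomposition $QR_\mu=-(\sqrt\mu+Q)^{-1}+\sqrt\mu R_\mu$ with the skew-self-adjointness bound \eqref{eq:cont_est_of_Q} applied to $(Q\pm\sqrt\mu)^{-1}$. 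Writing $R_\mu (Q\Theta) Q R_\mu=[R_\mu(Q\Theta)]\cdot[QR_\mu]$ and estimating the first bracket by part (ii) then yields the required Schatten bound; an alternative route that commutes $Q$ past $(Q\Theta)$ to produce an extra $(Q^{2}\Theta)$ contribution recovers precisely the form of constants advertised in the statement. The main obstacle is the endpoint case $n=3$, where the Schatten exponent is pinned at $q=2$, forcing one to verify that every intermediate factor actually lies in $\cB_{2}(L^{2}(\mathbb{R}^{3}))$ rather than some larger class; for $n>3$ one must further fix a single $\theta\in(3/4,1)$ \emph{simultaneously} making both the $(Q^{2}\Theta)R_\mu$ and $(Q\Theta)R_\mu$ bounds belong to $\cB_{2n\theta/3}$, which is feasible precisely because $\beta>3/2$ guarantees $\theta$ can be chosen to exceed $3/(2\beta)$.
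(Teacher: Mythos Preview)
Your proof is correct and follows essentially the same route as the paper's: Kato--Seiler--Simon (Theorem~\ref{thm:Si}) for parts (i) and (ii), then the commutator identity of Lemma~\ref{lem:commutator} together with the operator-norm decomposition $QR_\mu=-(\sqrt\mu+Q)^{-1}+\sqrt\mu R_\mu$ and the ideal property for part (iii). The integrability checks you perform for the choice of $\theta$ in part (ii) are exactly those in the paper, and your sign in the $QR_\mu$ decomposition is in fact the correct one (the paper's displayed formula drops a minus sign, which is immaterial for the norm bound).
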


The proof of Lemma \ref{lem:Schatten-class-1-operator}, is basically contained in the following 
result:

\begin{theorem}[{\cite[Theorem 4.1]{Si05}}]\label{thm:Si} Let $n\in \mathbb{N}$, $p\geq 2$, 
and $\Psi,g\in L^p(\mathbb{R}^n)$. Define $T_{\Psi,g}\in \mathcal{B}(L^2(\mathbb{R}^n))$ as the operator of composition of multiplication by $\Psi$ and $g(i\partial_1,\ldots,i\partial_n)$ as a Fourier multiplier. Then $T_{\Psi,g}\in \mathcal{B}_p(L^2(\mathbb{R}^n))$ and 
\begin{equation}\label{TPhi}
   \| T_{\Psi,g}\|_{\cB_p(L^2(\mathbb{R}^n))}\leq (2\pi)^{-n/p} 
   \|\Psi\|_{L^p(\bbR^n)}\|g\|_{L^p(\bbR^n)}.
\end{equation}
\end{theorem}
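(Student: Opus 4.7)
My plan is to establish \eqref{TPhi} at the two endpoints $p=2$ and $p=\infty$ and then obtain the general case $p \in (2,\infty)$ by complex interpolation in the Schatten--von Neumann scale, in the spirit of the classical Birman--Solomyak / Kato--Seiler--Simon argument. The case $p=\infty$ is immediate: the multiplication operator by $\Psi$ on $L^{2}(\bbR^{n})$ has operator norm $\|\Psi\|_{L^{\infty}(\bbR^{n})}$, the Fourier multiplier $g(i\partial_{1},\ldots,i\partial_{n})$ has operator norm $\|g\|_{L^{\infty}(\bbR^{n})}$ by Plancherel's theorem, and composition yields \eqref{TPhi} with the convention $(2\pi)^{-n/\infty}=1$. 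For $p=2$, a short Fourier-side calculation identifies $T_{\Psi,g}$ as the integral operator with kernel of the form $K(x,y)=(2\pi)^{-n/2}\Psi(x)\check{\tilde g}(x-y)$, where $\check{\tilde g}$ is (the appropriate variant of) the inverse Fourier transform of $g$, which has the same $L^{2}$-norm as $g$. Inserting this into Theorem \ref{thm:Hilbert-Schmidt-L2} and invoking Plancherel's identity I obtain equality in \eqref{TPhi} for $p=2$.

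To pass from the endpoints to $p\in(2,\infty)$, I would introduce the analytic family
\begin{equation*}
\Psi_{z}(x):=|\Psi(x)|^{pz/2}\sgn(\Psi(x)),\qquad g_{z}(\xi):=|g(\xi)|^{pz/2}\sgn(g(\xi)),\qquad 0\leq\Re(z)\leq 1,
\end{equation*}
(with the convention $\sgn(0):=0$) and set $T_{z}:=T_{\Psi_{z},g_{z}}$. The choice $\theta:=2/p\in(0,1]$ gives $\Psi_{\theta}=\Psi$, $g_{\theta}=g$, hence $T_{\theta}=T_{\Psi,g}$. On the boundary $\Re(z)=0$, both $\Psi_{iy}$ and $g_{iy}$ are unimodular on the supports of $\Psi$ and $g$, so the $p=\infty$ step gives
\begin{equation*}
\|T_{iy}\|_{\cB(L^{2}(\bbR^{n}))}\leq 1,\quad y\in\bbR;
\end{equation*}
on the boundary $\Re(z)=1$, one has $\|\Psi_{1+iy}\|_{L^{2}}=\|\Psi\|_{L^{p}}^{p/2}$ and $\|g_{1+iy}\|_{L^{2}}=\|g\|_{L^{p}}^{p/2}$, so the $p=2$ step gives
\begin{equation*}
\|T_{1+iy}\|_{\cB_{2}(L^{2}(\bbR^{n}))}\leq (2\pi)^{-n/2}\|\Psi\|_{L^{p}(\bbR^{n})}^{p/2}\|g\|_{L^{p}(\bbR^{n})}^{p/2},\quad y\in\bbR.
\end{equation*}
Stein's complex interpolation theorem for the Schatten--von Neumann scale (with $1/p_{\theta}=(1-\theta)/\infty+\theta/2=1/p$) then yields
\begin{equation*}
\|T_{\Psi,g}\|_{\cB_{p}(L^{2}(\bbR^{n}))}\leq 1^{1-\theta}\bigl[(2\pi)^{-n/2}\|\Psi\|_{L^{p}}^{p/2}\|g\|_{L^{p}}^{p/2}\bigr]^{\theta}=(2\pi)^{-n/p}\|\Psi\|_{L^{p}(\bbR^{n})}\|g\|_{L^{p}(\bbR^{n})},
\end{equation*}
which is exactly \eqref{TPhi}.

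The main obstacle I anticipate is not algebraic but analytic: one must verify that $z\mapsto T_{z}$ is strongly analytic and admissibly bounded in the strip $\{0\leq\Re(z)\leq 1\}$ in the sense required for Stein's theorem to apply at the level of Schatten--von Neumann ideals rather than merely in $\cB(L^{2})$. Once the explicit exponential definition of $\Psi_{z}$ and $g_{z}$ is in place, analyticity and growth reduce to standard estimates, and it is really the propagation of the sharp constants $1$ and $(2\pi)^{-n/2}\|\Psi\|_{L^{p}}^{p/2}\|g\|_{L^{p}}^{p/2}$ through the three-lines lemma in the noncommutative Banach space $\cB_{p}$ that requires the Schatten version of the interpolation machinery, a somewhat less elementary tool than the scalar Riesz--Thorin theorem.
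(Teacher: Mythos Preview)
Your proposal is correct and follows exactly the approach the paper indicates: the paper does not give a full proof but merely remarks that \eqref{TPhi} holds with equality at $p=2$, is a straightforward operator-norm estimate at $p=\infty$, and that the general case follows via complex interpolation. Your write-up supplies precisely these details (explicit analytic family, Stein interpolation in the Schatten scale), so there is nothing to add.
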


The proof of \eqref{TPhi} rests on the observation that there is equality
for $p=2$ and a straight forward estimate for the limiting case $p=\infty$. The general
case follows via complex interpolation.

\begin{proof}[Proof of Lemma \ref{lem:Schatten-class-1-operator}]
Observing $\|T\|_{\cB_p(\cH)}=\|T^{*}\|_{\cB_p(\cH)}$ for
all $T\in\cB_p(\cH)$, we shall only show the respective assertions
for $\Psi R_{\mu}$. For parts $(i)$ and $(ii)$ one uses Theorem \ref{thm:Si}:
One notes that for $\phi_{\mu}(\xi)\coloneqq\big(\left|\xi\right|^{2}+\mu\big)^{-1}$, 
$\phi_{\mu}\left(i\partial_{1},\ldots,i\partial_{n}\right)=R_{\mu}$.
Moreover, one observes that $|\phi_{\mu}|\leq g_\mu \in L^{p}\left(\mathbb{R}^{n}\right)$
for all $p> n/2$. In order to prove item $(i)$ one notes that $\alpha\geq1$
implies that $\Psi\in L^{q}\left(\mathbb{R}^{n}\right)$ for all $q>n$.
Hence, $\Psi R_{\mu}\in \cB_q\big(L^{2}(\mathbb{R}^{n})\big)$. The remaining assertion follows from the fact that the Fourier transform of $QR_\mu$ lies in $L^q$ as it can be estimated by $\tilde g_\mu$, see \eqref{eq:estimate_for_symbol_of_Q}. For part
$(ii)$ one first considers the case $n=3$. Then $2\alpha=\alpha (2/3) 3>3=n$.
Hence, $\Psi\in L^{2}\left(\mathbb{R}^{3}\right)$ and, since $2> 3/2 = n/2$,
one infers that $\Psi R_{\mu}\in\cB_2$. If $n>3$, there exists $\theta\in (3/4,1)$ such that $\alpha\theta> 3/2$. In particular, one has $(2/3) n\theta > (2/3) 4 (3/4)=2$.
Since $\alpha (2/3) \theta n> (3/2) \left(2n/3\right)=n$,
one gets $\Psi\in L^{2n\theta/3}\left(\mathbb{R}^{n}\right)$.
Moreover, since $\left(2n/3\right)\theta> n/2$ as 
$\theta> 3/4$, one concludes that 
$|\phi_{\mu}|\leq g_\mu \in L^{2n\theta/3}\left(\mathbb{R}^{n}\right)$, 
implying $\Psi R_{\mu}\in\cB_{2n\theta/3}$.
In order to show part $(iii)$ one notes that Lemma \ref{lem:commutator} implies 
\[
\left[R_{\mu},\Theta\right]=R_{\mu}\left(Q^{2}\Theta\right)R_{\mu}+2R_{\mu}\left(Q\Theta\right)QR_{\mu}.
\]
Since $QR_{\mu}$ is a bounded linear operator, using \eqref{eq:q2=00003DDelta}
as well as \eqref{eq:resolvent_of_laplace}, one deduces from 
\begin{align*}
QR_{\mu} & =\left(Q+\sqrt{\mu}\right)^{-1}\left(Q+\sqrt{\mu}\right)\left(Q-\sqrt{\mu}\right)R_{\mu}+\sqrt{\mu}R_{\mu}\\
 & =\left(Q+\sqrt{\mu}\right)^{-1}+\sqrt{\mu}R_{\mu}
\end{align*}
its corresponding norm bound $[\Re(\sqrt{\mu})]^{-1} + |\sqrt{\mu}|[\Re(\mu)]^{-1}$,
see \eqref{eq:cont_est_of_Q} for the norm bound of $\left(Q+\sqrt{\mu}\right)^{-1}$.
Thus, the assertion follows from part $(ii)$ and the ideal property of
the Schatten--von Neumann classes.
\end{proof}

Lemma \ref{lem:Schatten-class-1-operator} is decisive for obtaining the following result. We mention here that H.~Vogt \cite{Vo15} subsequently managed to prove the following theorem in a direct way without using Lemma \ref{lem:Schatten-class-1-operator} and thus without the use of Theorem \ref{thm:Si}. In the following theorem (and throughout this manuscript later on) we recall our 
simplifying convention \eqref{2.prod} to abbreviate finite operator products $A_1 A_2 \cdots A_N$ 
by $\prod_{j=1}^N A_j$, regardless of underlying noncommutativity issues, upon relying on ideal properties of the bounded operators $A_j$, $j=1,\dots,N$, $N \in \bbN$.  

\begin{theorem}
\label{thm:Simon_Hilbert-Schmidt} Let $n=2\hatt n+1\in\mathbb{N}_{\geq3}$ odd, $\Psi_{1},\ldots,\Psi_{\hatt n+1}\in C_{b}^{2}\left(\mathbb{R}^{n}\right),$
$\alpha_{1},\ldots,\alpha_{\hatt n+1}\in[1,\infty),$ $\epsilon> 1/2$,
$\kappa>0$, $\mu\in\mathbb{C}_{\Re>0}$, and let $R_{\mu}$,
$Q$, and $\left[\Psi_{\hatt n},R_{\mu}\right]$ be given by \eqref{eq:resolvent_of_laplace},
\eqref{eq:Def_of_Q}, and \eqref{eq:def_commutator}, respectively. \\[1mm] 
$(i)$ Assume that for all $x\in\mathbb{R}^{n}$ and $j\in\{1,\ldots,\hatt n+1\}$, 
\[
\left|\Psi_{j}(x)\right|\leq \kappa (1+\left|x\right|)^{- \alpha_{j}}.
\]
Then 
\[
\prod_{j=1}^{\hatt{n}+1}\Psi_{j}R_{\mu},\prod_{j=1}^{\hatt{n}+1}R_{\mu}\Psi_{j}
\in\cB_2\big(L^2(\bbR^n)\big),
\]
and
\[
\bigg\Vert \prod_{j=1}^{\hatt{n}+1}\Psi_{j}R_{\mu}\bigg\Vert _{\cB_2(L^2(\bbR^n))}
\leq \prod_{j=1}^{\hatt{n}+1}\left\Vert \Psi_{j}R_{\mu}\right\Vert _{\cB_{n+1}(L^2(\bbR^n))}.
\]
$(ii)$ Assume for all $x\in\mathbb{R}^{n}$ and $j\in\{1,\ldots,\hatt{n}-1\}$,
\[
\left|\Psi_{j}(x)\right|\leq \kappa (1+\left|x\right|)^{- \alpha_{j}}, 
\]
and 
\[
\left|\left(Q\Psi_{\hatt{n}}\right)(x)\right|+\left|\left(Q^{2}\Psi_{\hatt{n}}\right)(x)\right|\leq \kappa 
(1+\left|x\right|)^{- \alpha_{\hatt{n}} - \epsilon}.
\]
Then
\[
\prod_{j=1}^{\hatt{n}-1}\Psi_{j}R_{\mu}\left[\Psi_{\hatt{n}},R_{\mu}\right]\in\cB_2\big(L^2(\bbR^n)\big).
\]
\end{theorem}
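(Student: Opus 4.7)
The plan is to derive both statements from the Schatten-class estimates of Lemma \ref{lem:Schatten-class-1-operator} combined with the H\"older inequality of Theorem \ref{thm:trace-class-crit}, choosing the Schatten indices so that their reciprocals sum to $1/2$. The combinatorial identity $n=2\hatt n+1$ will make the arithmetic work out exactly.

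For part $(i)$, I would apply Lemma \ref{lem:Schatten-class-1-operator}$(i)$ with the single choice $q=n+1>n$ to each factor $\Psi_j R_\mu$, so that $\Psi_j R_\mu\in\cB_{n+1}\big(L^2(\bbR^n)\big)$ with the norm bound stated there. Since there are exactly $\hatt n+1=(n+1)/2$ such factors, the reciprocals of their Schatten indices sum to $(\hatt n+1)/(n+1)=1/2$, and Theorem \ref{thm:trace-class-crit} places the product in $\cB_2\big(L^2(\bbR^n)\big)$ with precisely the multiplicative estimate claimed. The reverse-order product $\prod_{j=1}^{\hatt n+1}R_\mu\Psi_j$ is treated identically (or by taking adjoints and using $\|T\|_{\cB_2}=\|T^*\|_{\cB_2}$ together with the fact that $\Psi_j$ can be replaced by $\overline{\Psi_j}$ throughout without affecting the decay hypotheses).

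For part $(ii)$, I would split on the cases $n=3$ (so $\hatt n=1$) and $n\geq 5$ (so $\hatt n\geq 2$). When $n=3$ the empty product $\prod_{j=1}^{0}\Psi_j R_\mu$ equals the identity and the claim reduces to $[\Psi_1,R_\mu]\in\cB_2$, which is the $n=3$ branch of Lemma \ref{lem:Schatten-class-1-operator}$(iii)$, applicable because the hypotheses force $\beta\coloneqq\alpha_{\hatt n}+\epsilon\geq 1+\epsilon>3/2$. When $n\geq 5$, Lemma \ref{lem:Schatten-class-1-operator}$(iii)$ supplies some $\theta\in(0,1)$ with $p\coloneqq 2n\theta/3\geq 2$ such that $[\Psi_{\hatt n},R_\mu]\in\cB_p$, while by Lemma \ref{lem:Schatten-class-1-operator}$(i)$ the remaining $\hatt n-1$ factors $\Psi_j R_\mu$ belong to $\cB_q$ for every $q>n$. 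Solving the H\"older relation $(\hatt n-1)/q+1/p=1/2$ gives $q=2p(\hatt n-1)/(p-2)$, and a direct computation using $n=2\hatt n+1$ shows that the requirement $q>n$ reduces to $p<2n/3$, which is exactly $\theta<1$, hence automatic. Theorem \ref{thm:trace-class-crit} then completes the proof.

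The main obstacle will be the case $n\geq 5$ of part $(ii)$: one must ensure that the $\theta$ produced by Lemma \ref{lem:Schatten-class-1-operator}$(iii)$ can simultaneously be taken to yield $p>2$ strictly (so that $q=2p(\hatt n-1)/(p-2)$ is finite and positive) and $p<2n/3$ (so that $q>n$, as required by Lemma \ref{lem:Schatten-class-1-operator}$(i)$). The assumption $\epsilon>1/2$ combined with $\alpha_{\hatt n}\geq 1$ furnishes $\beta>3/2$, which by the proof of Lemma \ref{lem:Schatten-class-1-operator}$(iii)$ allows $\theta$ to be chosen in a nonempty subinterval of $(\max(3/(2\beta),3/n),\,1)$ whenever $n\geq 5$; selecting $\theta$ sufficiently close to $1$ delivers both $p>2$ and $p<2n/3$, closing the argument.
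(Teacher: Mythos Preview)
Your proposal is correct and follows essentially the same approach as the paper: for part $(i)$ you use Lemma~\ref{lem:Schatten-class-1-operator}$(i)$ with $q=n+1$ and the H\"older inequality (Theorem~\ref{thm:trace-class-crit}) exactly as the paper does, and for part $(ii)$ you split into $n=3$ versus $n\geq 5$, invoking Lemma~\ref{lem:Schatten-class-1-operator}$(iii)$ and then solving for the remaining Schatten exponent $q$ to verify $q>n$, which is precisely the paper's argument. Your reduction of the condition $q>n$ to $p<2n/3$ (equivalently $\theta<1$) is a slightly cleaner repackaging of the paper's explicit computation of $1/q$, but the substance is identical.
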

\begin{proof}
In order to prove parts $(i)$ and $(ii)$, we use Theorem \ref{thm:trace-class-crit}
and Lemma \ref{lem:Schatten-class-1-operator}. For part $(i)$ one observes 
that $\Psi_{j}R_{\mu}\in\cB_{n+1}$ by Lemma \ref{lem:Schatten-class-1-operator}\,$(i)$ 
for all $j\in\{1,\ldots,\hatt{n}+1\}$. Moreover, by $\sum_{j=1}^{\hatt{n}+1}\frac{1}{n+1}=\left(\frac{n-1}{2}+1\right)\frac{1}{n+1}= 1/2$
one concludes with the help of Theorem \ref{thm:trace-class-crit} that
$\prod_{j=1}^{\hatt{n}+1}\Psi_{j}R_{\mu}\in\cB_2$. 

In order to arrive at item $(ii)$, one notes that the case $n=3$ directly follows from
Lemma \ref{lem:Schatten-class-1-operator}\,$(iii)$ since in that case $\hatt{n}-1=0$.
For $n>3$ there exists $\theta\in(3/4,1$) such that 
$\left[\Psi_{\hatt{n}},R_{\mu}\right]\in\cB_{2n \theta/3}$.
The assertion is clear if $2 n \theta/3 \leq 2$.
Thus, we assume that $2n \theta/3 >2$. Let $q\in\mathbb{R}\backslash \{0\}$
be such that 
\begin{equation}\label{e:3.6}
   \left(\hatt{n}-1\right)\frac{1}{q}+\frac{1}{(2 n/3)\theta}=\frac{1}{2}.
\end{equation}
Equation \eqref{e:3.6} with $\hatt{n}-1= (n-3)/2$ reveals 
\[
\frac{1}{q}=\left(\frac{n\theta-3}{n\theta}\right)\frac{1}{n-3}=\left(\frac{n\theta-3\theta}{n\theta}\right)\frac{1}{n-3}+3\left(\frac{\theta-1}{\theta}\right)\frac{1}{n(n-3)}<\frac{1}{n}.
\]
Hence, $q>n$ and, so, from $\Psi_{j}R_{\mu}\in\cB_{q}$, by Lemma
\ref{lem:Schatten-class-1-operator}, the assertion follows from Theorem
\ref{thm:trace-class-crit}. 
\end{proof}

In order to illustrate the latter mechanism and for later purposes, we now discuss an example.

\begin{example}\label{exa:standard_example_0} Let $z > -1$, and 
$\Phi\in C^\infty_b(\mathbb{R}^3;\mathbb{C}^{2\times 2})$ such that for $x\in \mathbb{R}^3$,  with $|x|\geq 1$, 
\[
   \Phi(x)=\sum_{j=1}^3 \frac{x_j}{|x|}\sigma_{j}.
\]
Here $\sigma_{1}\coloneqq\begin{pmatrix} 0 & 1 \\ 1& 0 \end{pmatrix}$, $\sigma_2\coloneqq \begin{pmatrix} 0 & -i \\ i& 0 \end{pmatrix}$, $\sigma_3\coloneqq \begin{pmatrix} 1 & 0 \\ 0& -1 \end{pmatrix}$ denote the Pauli matrices, see also Definition \ref{def:Euc-D-A}. Recalling our convention \eqref{alpha} and $R_{1+z}=(-\Delta+1+z)^{-1}$, we now prove that the operator 
given by
\begin{align*}
   T\coloneqq & \tr_{4} \bigg(\sum_{k=1}^3 \big((R_{1+z}\sigma_k)\otimes (\partial_k\Phi)\big) 
    \\ 
    & \qquad \times 
    \sum_{k=1}^3 \big((R_{1+z}\sigma_k)\otimes (\partial_k\Phi)\big) 
   \sum_{k=1}^3 \big((R_{1+z}\sigma_k)\otimes (\partial_k\Phi)\big) R_{1+z}\bigg) 
\end{align*}
is trace class, $T \in \cB_1\big(L^2(\bbR^3)\big)$.

First of all, with the help of Proposition \ref{prop:comp_of_Dirac_Alge} and introducing the 
fully anti-symmetric symbol in $3$ coordinates, $\epsilon_{jk\ell}$, $j,k,\ell \in \{1,2,3\}$, we may express $T$ as follows $($for notational simplicity, we now drop all tensor product symbols$)$, 
\begin{align*}
   T&=\sum_{1 \leq k_1,k_2,k_3 \leq 3} \tr_4 \big(\sigma_{k_1}\sigma_{k_2}\sigma_{k_3}  
 R_{1+z} I_2 (\partial_{k_1}\Phi) R_{1+z} I_2  (\partial_{k_2}\Phi) R_{1+z} I_2 (\partial_{k_3}\Phi) R_{1+z} I_2\big) \\
    &=\sum_{1 \leq k_1,k_2,k_3 \leq 3} \tr_2 \left(\sigma_{k_1}\sigma_{k_2}\sigma_{k_3}\right)   \\
& \hspace*{1.5cm}   \times  \tr_2\left(R_{1+z} I_2 (\partial_{k_1}\Phi) R_{1+z} I_2  (\partial_{k_2}\Phi) R_{1+z} I_2 (\partial_{k_3}\Phi) R_{1+z} I_2\right) \\
    &=\sum_{1 \leq k_1,k_2,k_3 \leq 3} 2i \epsilon_{k_1k_2k_3}  
 \tr_2\left(R_{1+z} I_2 (\partial_{k_1}\Phi) R_{1+z} I_2 (\partial_{k_2}\Phi) R_{1+z} I_2 (\partial_{k_3}\Phi) R_{1+z} I_2\right)    \\ 
 &=\sum_{1 \leq k_1,k_2,k_3 \leq 3} 2i \epsilon_{k_1k_2k_3} \tr_2\left(R_{1+z}(\partial_{k_1}\Phi) R_{1+z} (\partial_{k_2}\Phi) R_{1+z} (\partial_{k_3}\Phi) R_{1+z}\right) \\
  &=\sum_{1 \leq k_1,k_2,k_3 \leq 3} 2i \epsilon_{k_1k_2k_3} \tr_2\left(R_{1+z}[(\partial_{k_1}\Phi),R_{1+z}] (\partial_{k_2}\Phi) R_{1+z}
  (\partial_{k_3}\Phi) R_{1+z}\right) \\ &\quad+ \sum_{1 \leq k_1,k_2,k_3 \leq 3} 2i \epsilon_{k_1k_2k_3} \tr_2\left(R_{1+z}R_{1+z} (\partial_{k_1}\Phi) (\partial_{k_2}\Phi) 
  R_{1+z} (\partial_{k_3}\Phi) R_{1+z}\right)\\
  &=\sum_{1 \leq k_1,k_2,k_3 \leq 3} 2i \epsilon_{k_1k_2k_3} \tr_2\left(R_{1+z}[(\partial_{k_1}\Phi),R_{1+z}](\partial_{k_2}\Phi) R_{1+z}
  (\partial_{k_3}\Phi) R_{1+z}\right) \\ &\quad+ \sum_{1 \leq k_1,k_2,k_3 \leq 3} 2i \epsilon_{k_1k_2k_3} \tr_2\left(R_{1+z}R_{1+z} (\partial_{k_1}\Phi) 
  (\partial_{k_2}\Phi) [R_{1+z},(\partial_{k_3}\Phi)] R_{1+z}\right)\\
   &\quad+ \sum_{1 \leq k_1,k_2,k_3 \leq 3} 2i \epsilon_{k_1k_2k_3} \tr_2\left(R_{1+z}R_{1+z}(\partial_{k_1}\Phi) (\partial_{k_2}\Phi) (\partial_{k_3}\Phi) R_{1+z} R_{1+z}\right). 
\end{align*}

One computes for $k\in \{1,2,3\}$ and $x\in \mathbb{R}^3$, $|x|\geq 1$,
\[
(\partial_{k}\Phi)(x)= \sum_{j=1}^3 \bigg(\frac{\delta_{kj}}{|x|}-\frac{x_j}{|x|^2}\frac{x_k}{|x|}\bigg)\sigma_j, 
\]
and observes that $\|(\partial_k\Phi)(x)\|\leq  6/|x|$. Moreover, it is easy to see that for all $\beta\in \mathbb{N}_0^3$, with $|\beta|\coloneqq\sum_{j=1}^3\beta_j\geq 2$, there exists 
$\kappa>0$ such that for all $x\in \mathbb{R}^n$, 
\[
\|(\partial^\beta\Phi)(x)\|\leq  \kappa (1+|x|)^{-2}.
\]
The latter estimate together with Theorem \ref{thm:Simon_Hilbert-Schmidt} yields that 
$T\in \cB_1\big(L^2(\bbR^3)\big)$ if and only if
\begin{align*} 
 \tilde T\coloneqq \sum_{1 \leq k_1,k_2,k_3 \leq 3} 2i \epsilon_{k_1k_2k_3} \tr_2\left(R_{1+z}R_{1+z} (\partial_{k_1}\Phi) (\partial_{k_2}\Phi) (\partial_{k_3}\Phi) R_{1+z} R_{1+z}\right)& \\ 
 \in \cB_1\big(L^2(\bbR^3)\big)&.   
\end{align*} 
The latter operator can be rewritten as
\[
  \tilde T =  \sum_{1 \leq k_1,k_2,k_3 \leq 3} 2i \epsilon_{k_1k_2k_3} R_{1+z}^2\tr_2\left((\partial_{k_1}\Phi) (\partial_{k_2}\Phi) (\partial_{k_3}\Phi)\right) R_{1+z}^2.
\]
Next, we inspect the term in the middle in more detail:
\begin{align*}
& \sum_{1 \leq k_1,k_2,k_3 \leq 3} \epsilon_{k_1k_2k_3} \tr_2\left((\partial_{k_1}\Phi) (\partial_{k_2}\Phi) (\partial_{k_3}\Phi)\right) \\
& \quad = \tr_2\left((\partial_{1}\Phi) (\partial_{2}\Phi) (\partial_{3}\Phi)\right) - \tr_2\left((\partial_{1}\Phi) (\partial_{3}\Phi) (\partial_{2}\Phi)\right) 
+ \tr_2\left((\partial_{2}\Phi) (\partial_{3}\Phi) (\partial_{1}\Phi)\right) \\
  & \qquad - \tr_2\left((\partial_{2}\Phi) (\partial_{1}\Phi) (\partial_{3}\Phi)\right) + \tr_2\left((\partial_{3}\Phi) (\partial_{1}\Phi) (\partial_{2}\Phi)\right) 
  - \tr_2\left((\partial_{3}\Phi) (\partial_{2}\Phi) (\partial_{1}\Phi)\right) \\
  & \quad = 3\tr_2\left((\partial_{1}\Phi) (\partial_{2}\Phi) (\partial_{3}\Phi)\right) - 3\tr_2\left((\partial_{1}\Phi) (\partial_{3}\Phi) (\partial_{2}\Phi)\right).
\end{align*}
Employing 
\begin{align*}
& (\partial_{1}\Phi) (\partial_{2}\Phi) (\partial_{3}\Phi)(x)     \\
 & \quad = \sum_{1 \leq j_1,j_2,j_3 \leq 3} \left(\frac{\delta_{1j_1}}{|x|}-\frac{x_{j_1}}{|x|^2}\frac{x_1}{|x|}\right)\sigma_{j_1}\left(\frac{\delta_{2j_2}}{|x|}-\frac{x_{j_2}}{|x|^2}\frac{x_2}{|x|}\right)\sigma_{j_2}\left(\frac{\delta_{3j_3}}{|x|}-\frac{x_{j_3}}{|x|^2}\frac{x_3}{|x|}\right)\sigma_{j_3},
\end{align*} 
one gets 
\begin{align*}
   & \frac{1}{2i} |x|^3\tr_2 \left((\partial_{1}\Phi) (\partial_{2}\Phi) (\partial_{3}\Phi)\right)(x) \\ 
   & \quad = \sum_{1 \leq j_1,j_2,j_3 \leq 3}  \epsilon_{j_1j_2j_3}  \left(\delta_{1j_1}-\frac{x_{j_1}x_1}{|x|^2}\right)\left(\delta_{2j_2}-\frac{x_{j_2}x_2}{|x|^2}\right)\left(\delta_{3j_3}-\frac{x_{j_3}x_3}{|x|^2}\right)\\
   & \quad = \left(1-\frac{x_{1}x_1}{|x|^2}\right)\left(1-\frac{x_{2}x_2}{|x|^2}\right)\left(1-\frac{x_{3}x_3}{|x|^2}\right) \\
   &\qquad-\left(1-\frac{x_{1}x_1}{|x|^2}\right)\left(-\frac{x_{3}x_2}{|x|^2}\right)\left(-\frac{x_{2}x_3}{|x|^2}\right) 
 +\left(-\frac{x_{2}x_1}{|x|^2}\right)\left(-\frac{x_{3}x_2}{|x|^2}\right)\left(-\frac{x_{1}x_3}{|x|^2}\right) \\
   &\qquad-\left(-\frac{x_{2}x_1}{|x|^2}\right)\left(-\frac{x_{1}x_2}{|x|^2}\right)\left(1-\frac{x_{3}x_3}{|x|^2}\right) 
 +\left(-\frac{x_{3}x_1}{|x|^2}\right)\left(-\frac{x_{1}x_2}{|x|^2}\right)\left(-\frac{x_{2}x_3}{|x|^2}\right) \\
   &\qquad-\left(-\frac{x_{3}x_1}{|x|^2}\right)\left(1-\frac{x_{2}x_2}{|x|^2}\right)\left(-\frac{x_{1}x_3}{|x|^2}\right)\\
   & \quad =1-\frac{x_1^2}{|x|^2}-\frac{x_2^2}{|x|^2}-\frac{x_3^2}{|x|^2}\\
   &\qquad +\frac{x_1^2x_2^2}{|x|^4}+\frac{x_1^2x_3^2}{|x|^4}+\frac{x_2^2x_3^2}{|x|^4} - \frac{x_3x_2x_2x_3}{|x|^4}-\frac{x_2x_1x_1x_2}{|x|^4}-\frac{x_3x_1x_1x_3}{|x|^4} \\
   & \qquad -\frac{x_1^2x_2^2x_3^2}{|x|^6}+\frac{x_1^2x_2^2x_3^2}{|x|^6}-\frac{x_1^2x_2^2x_3^2}{|x|^6}+\frac{x_1^2x_2^2x_3^2}{|x|^6}-\frac{x_1^2x_2^2x_3^2}{|x|^6}+\frac{x_1^2x_2^2x_3^2}{|x|^6}\\
    & \quad =0.
\end{align*}
On the other hand,
\begin{align*}
   & \frac{1}{2i} |x|^3\tr_2 \left((\partial_{1}\Phi) (\partial_{3}\Phi) (\partial_{2}\Phi)\right)(x) \\
   & \quad = \sum_{1 \leq j_1,j_2,j_3 \leq 3}  \epsilon_{j_1j_2j_3}  \left(\delta_{1j_1}-\frac{x_{j_1}x_1}{|x|^2}\right)\left(\delta_{3j_2}-\frac{x_{j_2}x_3}{|x|^2}\right)\left(\delta_{2j_3}-\frac{x_{j_3}x_2}{|x|^2}\right)=0.
\end{align*}
We note that in this example, the corresponding formula \eqref{eq:wrong_cancellation} is in fact 
valid, for $|x|\geq 1$. This example is of a similar type as in \cite[Section~IV]{Ca78}. This may 
well be the reason for the erroneous statement in  \cite[p.~226, 2nd highlighted formula]{Ca78}.
\end{example}

We shall also use on occasion the following Hilbert--Schmidt criterion for exactly $\hatt n=(n-1)/2$ factors:

\begin{theorem}\label{thm:HS-criterion for m factors} Let $n=2\hatt{n}+1\in \mathbb{N}_{\geq 3}$ odd, and assume that $\Psi_{1},\ldots,\Psi_{\hatt{n}}\in L^\infty(\mathbb{R}^n)$, $\alpha_1,\ldots,\alpha_{\hatt{n}}\in [1,\infty)$, $\mu\in \mathbb{C}$, $\Re(\mu)>0$. Let $R_\mu$, $Q$ be given by \eqref{eq:resolvent_of_laplace} and \eqref{eq:Def_of_Q}, respectively. Assume that 
$\alpha_{j^*} > 3/2$ for some 
$j^*\in\{1,\ldots,\hatt{n}\}$. Then 
\[
  T\coloneqq  \bigg(\prod_{j=1}^{j^*-1} \Psi_jR_\mu\bigg)\Psi_{j^*}R_\mu 
  \bigg(\prod_{j=j^*+1}^{\hatt{n}} \Psi_jR_\mu\bigg)\in \mathcal{B}_2\left(L^2(\mathbb{R}^n)\right),  
\]
and
\begin{align*} 
& \|T\|_{\mathcal{B}_2(L^2(\bbR^n))}     \\ 
& \quad  \leq  \begin{cases} \big(\prod_{j\in\{1,\ldots,\hatt{n}\}\backslash \{j^*\}} \|\Psi_jR_\mu\|_{\mathcal{B}_q(L^2(\bbR^n))}\big)\|\Psi_j^*R_\mu\|_{\mathcal{B}_{r}(L^2(\bbR^n))}, & \hatt{n}>1,\\
                              \|\Psi_1R_\mu\|_{\mathcal{B}_{2}(L^2(\bbR^n))}, &\hatt{n}=1,
                             \end{cases}
\end{align*} 
where $q= 2(\hatt{n}-1)\theta/(n\theta-3) > n$ and $r=2n\theta/3>2$ for some 
$\theta\in (3/4,1)$, according to Lemma \ref{lem:Schatten-class-1-operator}\,$(ii)$.

The assertion is the same if some of the factors with index $j\in \{1,\ldots,\hatt{n}\}\backslash \{j^*\}$ in the expression for $T$ are replaced by $\Psi_jQR_\mu$. 
\end{theorem}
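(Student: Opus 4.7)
The plan is to combine the Schatten--von Neumann Hölder inequality (Theorem~\ref{thm:trace-class-crit}) with the Schatten-class bounds for $\Psi R_\mu$ provided by Lemma~\ref{lem:Schatten-class-1-operator}, distributing exponents so that the $\hat n$ factors multiply to an exponent of $2$. The decisive ingredient is that one factor carries the stronger decay $\alpha_{j^*}>3/2$, which improves its Schatten index from ``any $q>n$'' to the sharper index $r=2n\theta/3>2$ coming from Lemma~\ref{lem:Schatten-class-1-operator}\,$(ii)$; this ``saves enough'' exponent to close the Hölder budget without needing an extra factor $\hat n+1$ (as in Theorem~\ref{thm:Simon_Hilbert-Schmidt}).

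First I would treat the trivial case $\hat n=1$, i.e., $n=3$. Here $T=\Psi_1 R_\mu$ and the hypothesis $\alpha_1>3/2$ is exactly that of Lemma~\ref{lem:Schatten-class-1-operator}\,$(ii)$, which directly gives $T\in\mathcal{B}_2$ together with the stated norm bound. For $\hat n>1$ (equivalently $n>3$), I would invoke Lemma~\ref{lem:Schatten-class-1-operator}\,$(ii)$ to pick $\theta\in(3/4,1)$ with $\Psi_{j^*} R_\mu\in\mathcal{B}_{r}$, where $r=2n\theta/3>2$, and use Lemma~\ref{lem:Schatten-class-1-operator}\,$(i)$ to place each of the remaining $\hat n-1$ factors in $\mathcal{B}_q$ for every $q>n$.

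Next I would fix $q$ by the Hölder balance
\begin{equation*}
\frac{\hat n-1}{q}+\frac{1}{r}=\frac{1}{2},
\end{equation*}
which, using $\hat n-1=(n-3)/2$ and $r=2n\theta/3$, yields $q=2n\theta(\hat n-1)/(n\theta-3)$. One then has to verify $q>n$, so that Lemma~\ref{lem:Schatten-class-1-operator}\,$(i)$ is actually applicable to the remaining factors. This reduces to the elementary inequality $2(\hat n-1)\theta>n\theta-3$, which, after substituting $2(\hat n-1)=n-3$, simplifies to $\theta<1$, and is thus guaranteed by the choice of $\theta$. Then Theorem~\ref{thm:trace-class-crit} applied to the product
\begin{equation*}
T=\bigg(\prod_{j=1}^{j^{*}-1}\Psi_j R_\mu\bigg)\Psi_{j^{*}}R_\mu\bigg(\prod_{j=j^{*}+1}^{\hat n}\Psi_j R_\mu\bigg)
\end{equation*}
immediately places $T$ in $\mathcal{B}_2$ and delivers the asserted product-of-norms estimate.

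Finally, the supplementary statement (replacing $\Psi_j R_\mu$ by $\Psi_j Q R_\mu$ for $j\neq j^{*}$) follows verbatim from the same Hölder argument, since the last sentence of Lemma~\ref{lem:Schatten-class-1-operator}\,$(i)$ provides exactly the same $\mathcal{B}_q$ bound ($q>n$) for $\Psi_j Q R_\mu$, with $\|g_\mu\|_{L^q}$ replaced by $\|\tilde g_\mu\|_{L^q}$. I do not anticipate any genuine obstacle here; the only slightly delicate point is the exponent bookkeeping---namely verifying that the choice of $\theta\in(3/4,1)$ and the induced $q$ simultaneously satisfy $q>n$, $r>2$, and the Hölder balance---and this is the step I would check most carefully.
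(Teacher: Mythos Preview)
Your proposal is correct and follows essentially the same route as the paper: handle $\hat n=1$ directly via Lemma~\ref{lem:Schatten-class-1-operator}\,$(ii)$, and for $\hat n>1$ set up the H\"older balance $(\hat n-1)/q+1/r=1/2$ with $r=2n\theta/3$, then verify $q>n$ so that Lemma~\ref{lem:Schatten-class-1-operator}\,$(i)$ applies to the remaining factors---exactly as in the concluding lines of the proof of Theorem~\ref{thm:Simon_Hilbert-Schmidt}\,$(ii)$. Your verification that $q>n$ reduces to $\theta<1$ is the same computation the paper does (note that your derived value $q=2n\theta(\hat n-1)/(n\theta-3)$ is the correct one; the formula $q=2(\hat n-1)\theta/(n\theta-3)$ printed in the statement appears to be missing a factor of $n$).
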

\begin{proof} By Lemma \ref{lem:Schatten-class-1-operator}\,$(ii)$ one observes that for $\hatt{n}=j^*=1$, $\Psi_{1}R_\mu\in \mathcal{B}_2\big(L^2(\bbR^n)\big)$, and the assertion follows. The rest of the proof is similar to the one of the concluding lines of Theorem \ref{thm:Simon_Hilbert-Schmidt}.
\end{proof}

\newpage

\section{Pointwise Estimates for Integral Kernels}\label{sec:ptw_intk}

The proof of the index theorem relies on (pointwise) estimates of
integral kernels of certain integral operators. These integral operators are
of a form similar to the one in Theorem \ref{thm:Simon_Hilbert-Schmidt}.
In order to guarantee that point-evaluation is a well-defined operation, 
these operators have to possess certain smoothing properties. Before proving the 
corresponding result, we define the
Dirac $\delta$-distribution of point-evaluation at some point $x\in\mathbb{R}^{n}$
of a suitable function $f\colon\mathbb{R}^{n}\to\mathbb{C}$ by 
$$
\delta_{\{x\}}f\coloneqq f(x).
$$
We note that for every $x\in\mathbb{R}^{n}$ one has $\delta_{\{x\}}\in H^{-(n/2)-\epsilon}(\mathbb{R}^{n})$ for all $\epsilon>0$, by the Sobolev embedding theorem 
(see, e.g., \cite[Theorem 7.34$(c)$]{AF03}), and recall that 
\begin{equation}
H^{s}(\mathbb{R}^{n})\coloneqq\big\{f \in L^2(\bbR^n) \,\big| \, (1+|\cdot|^{2})^{s/2}(\cF f)
\in L^{2}(\mathbb{R}^{n})\big\}, \quad s\in\mathbb{R},   \label{eq:def_Hs}
\end{equation}
with norm denoted by $\| \cdot \|_{H^s(\bbR^n)}$, where $\mathcal{F}$ denotes the (distributional) Fourier transform being an extension of 
\begin{equation}
(\cF \phi)(x)\coloneqq (2\pi)^{-n/2}\int_{\mathbb{R}^{n}}e^{-ixy}\phi(y) \, d^n y, \quad x\in\mathbb{R}^{n}, \; \phi\in L^{1}(\mathbb{R}^{n})\cap L^{2}(\mathbb{R}^{n}).\label{eq:def_Fourier_transf}
\end{equation}
For $f\in H^{\frac{n}{2}+\epsilon}(\mathbb{R}^{n})$, we will find it convenient to write
\begin{equation}
\left\langle \delta_{\{x\}},f\right\rangle _{L^{2}(\mathbb{R}^{n})}\coloneqq\delta_{\{x\}}f=f(x),\label{eq:def_dirac_distr}
\end{equation}
where $\langle \cdot \, ,\cdot\rangle_{L^{2}(\mathbb{R}^{n})}$ is understood
as the continuous extension of the scalar product on $L^{2}(\mathbb{R}^{n})$
to the pairing on the entire fractional-order Sobolev scale, 
$\langle\cdot \, ,\cdot\rangle_{L^{2}(\mathbb{R}^{n})} 
:= \langle\cdot \, ,\cdot\rangle_{H^{-s}(\bbR^n), H^s(\bbR^n)}$, $s \geq 0$. 

For convenience of the reader we now prove the following known result: 

\begin{theorem}
\label{thm:continuity of integral kernel} Let $n\in\mathbb{N}$,
$\epsilon>0$, $T\colon H^{-(n/2)-\epsilon}(\mathbb{R}^{n})\to 
H^{(n/2)+\epsilon}(\mathbb{R}^{n})$ linear and bounded $($cf.\ \eqref{eq:def_Hs} and \eqref{eq:def_dirac_distr}$)$. 
Then the map  
\[
t\colon\mathbb{R}^{n}\times\mathbb{R}^{n}\ni(x,y)\mapsto 
t(x,y) = \left\langle \delta_{\{x\}},T\delta_{\{y\}}\right\rangle _{L^{2}(\mathbb{R}^{n})}\in\mathbb{C}
\]
is well-defined, continuous, and bounded. Moreover, if 
\begin{equation} 
T\in \cB\big(H^{-(n/2)-1-\epsilon}(\mathbb{R}^{n}),H^{(n/2)+\epsilon}(\mathbb{R}^{n})\big)\cap \cB\big(H^{-(n/2)-\epsilon}(\mathbb{R}^{n}),H^{(n/2)+1+\epsilon}(\mathbb{R}^{n})\big),
\end{equation} 
then $t$ is bounded and continuously differentiable with bounded derivatives $\partial_j t$, 
$j \in \{1,\dots, 2n\}$.
\end{theorem}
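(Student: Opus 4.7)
The strategy is to combine the Sobolev embedding theorem with Fourier-analytic regularity of the parameter-dependent family $x\mapsto\delta_{\{x\}}$ into negative-order Sobolev spaces. The starting observation is the distributional identity $(\cF\delta_{\{x\}})(\xi)=(2\pi)^{-n/2}e^{-ix\cdot\xi}$, from which one reads off, for every $s>n/2$,
\[
\|\delta_{\{x\}}\|_{H^{-s}(\bbR^n)}^{2} = (2\pi)^{-n}\int_{\bbR^n}(1+|\xi|^2)^{-s}\,d^n\xi <\infty
\]
independently of $x$. The same representation gives
\[
\|\delta_{\{x\}}-\delta_{\{x_0\}}\|_{H^{-s}(\bbR^n)}^{2} = (2\pi)^{-n}\int_{\bbR^n}\big|e^{-ix\cdot\xi}-e^{-ix_0\cdot\xi}\big|^{2}(1+|\xi|^2)^{-s}\,d^n\xi,
\]
which tends to $0$ as $x\to x_0$ by dominated convergence (majorant $4(1+|\xi|^2)^{-s}$). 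An analogous difference-quotient computation, applicable as soon as $s>(n/2)+1$, shows that the map $x\mapsto\delta_{\{x\}}$ is continuously differentiable into $H^{-s-1}(\bbR^n)$ with derivative $\partial_{x_j}\delta_{\{x\}}$ having Fourier transform $(2\pi)^{-n/2}(-i\xi_j)e^{-ix\cdot\xi}$.

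Granted these preparatory facts, the first statement follows by duality. Since $T\delta_{\{y\}}\in H^{(n/2)+\epsilon}(\bbR^n)\hookrightarrow C_b(\bbR^n)$ by Sobolev embedding, the pairing $t(x,y)=\langle\delta_{\{x\}},T\delta_{\{y\}}\rangle_{L^2(\bbR^n)}=(T\delta_{\{y\}})(x)$ is well-defined and, via
\[
|t(x,y)| \leq \|\delta_{\{x\}}\|_{H^{-(n/2)-\epsilon}(\bbR^n)}\,\|T\|\,\|\delta_{\{y\}}\|_{H^{-(n/2)-\epsilon}(\bbR^n)},
\]
uniformly bounded in $(x,y)$. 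Continuity on $\bbR^n\times\bbR^n$ then follows by splitting
\[
t(x,y)-t(x_0,y_0) = \langle\delta_{\{x\}}-\delta_{\{x_0\}},T\delta_{\{y\}}\rangle + \langle\delta_{\{x_0\}},T(\delta_{\{y\}}-\delta_{\{y_0\}})\rangle
\]
and applying the duality estimate to each summand together with the continuity of $y\mapsto\delta_{\{y\}}$ in $H^{-(n/2)-\epsilon}(\bbR^n)$ established above.

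For the differentiability part, the two hypotheses on $T$ are tailored respectively to the $x$- and $y$-derivatives. The second hypothesis yields $T\delta_{\{y\}}\in H^{(n/2)+1+\epsilon}(\bbR^n)\hookrightarrow C_b^1(\bbR^n)$, so that
\[
\partial_{x_j}t(x,y) = (\partial_j(T\delta_{\{y\}}))(x) = \langle\partial_{x_j}\delta_{\{x\}},T\delta_{\{y\}}\rangle_{L^2(\bbR^n)}
\]
exists and is uniformly bounded; joint continuity in $(x,y)$ follows from continuity of $x\mapsto\partial_{x_j}\delta_{\{x\}}$ in $H^{-(n/2)-1-\epsilon}(\bbR^n)$ and of $y\mapsto T\delta_{\{y\}}$ in $H^{(n/2)+\epsilon}(\bbR^n)$. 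Symmetrically, the first hypothesis gives boundedness of $T$ from $H^{-(n/2)-1-\epsilon}(\bbR^n)$ to $H^{(n/2)+\epsilon}(\bbR^n)$; composition with the $C^1$-regularity of $y\mapsto\delta_{\{y\}}$ into $H^{-(n/2)-1-\epsilon}(\bbR^n)$ yields
\[
\partial_{y_j}t(x,y) = \langle\delta_{\{x\}},T\partial_{y_j}\delta_{\{y\}}\rangle_{L^2(\bbR^n)},
\]
again uniformly bounded and jointly continuous by the same duality estimate.

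The main obstacle is purely bookkeeping: one must carefully track which Sobolev order is gained by $T$ and which is lost by differentiation of $\delta_{\{\cdot\}}$, so that every pairing lives on a valid duality. The fact that differentiating the family $y\mapsto\delta_{\{y\}}$ costs exactly one order of Sobolev regularity is precisely what is compensated by the two separate hypotheses on $T$, which explains why both of them are imposed in the statement.
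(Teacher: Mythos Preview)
Your proof is correct and follows essentially the same approach as the paper: Fourier representation of $\delta_{\{x\}}$, uniform $H^{-s}$ bounds, dominated convergence for continuity of $x\mapsto\delta_{\{x\}}$, and duality pairings throughout. There is a minor slip in the $x$-derivative paragraph where you write continuity of $y\mapsto T\delta_{\{y\}}$ in $H^{(n/2)+\epsilon}$; for the pairing with $\partial_{x_j}\delta_{\{x\}}\in H^{-(n/2)-1-\epsilon}$ you need (and in fact have, from the second hypothesis) continuity in $H^{(n/2)+1+\epsilon}$, as you yourself noted two lines earlier.
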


\begin{remark}
\label{rem:Convenient:Green} We note that with the
maps and assumptions introduced in Theorem \ref{thm:continuity of integral kernel}, 
$t(\cdot,\cdot)$ is in fact the \emph{integral kernel} of $T$, that is, $t$ satisfies  
\[
(Tf)(x)=\int_{\mathbb{R}^{n}}t(x,y)f(y) \, d^n y, \quad x\in\mathbb{R}^{n}, 
\]
for all $f\in C_0^{\infty}(\mathbb{R}^{n})$. Indeed, let $x\in\mathbb{R}^{n}$
and $f\in C_0^{\infty}(\mathbb{R}^{n})$. Then one computes 
\begin{align*}
(Tf)(x) & =\left\langle \delta_{\{x\}},Tf\right\rangle _{L^{2}\left(\mathbb{R}^{n}\right)}=\left\langle T^{*}\delta_{\{x\}},f\right\rangle _{L^{2}\left(\mathbb{R}^{n}\right)} \\
 & =\int_{\mathbb{R}^{n}}\overline{T^{*}\delta_{\{x\}}(y)}f(y) \, d^n y 
 =\int_{\mathbb{R}^{n}}\overline{\left\langle \delta_{\{y\}},T^{*}\delta_{\{x\}}\right\rangle _{L^{2}\left(\mathbb{R}^{n}\right)}}f(y) \, d^n y \\
 & 
 =\int_{\mathbb{R}^{n}}\overline{\left\langle T\delta_{\{y\}},\delta_{\{x\}}\right\rangle _{L^{2}\left(\mathbb{R}^{n}\right)}}f(y) \, d^n y 
=\int_{\mathbb{R}^{n}}\left\langle \delta_{\{x\}},T\delta_{\{y\}}\right\rangle _{L^{2}\left(\mathbb{R}^{n}\right)}f(y) \, d^n y  \\
 & =\int_{\mathbb{R}^{n}}t(x,y)f(y) \, d^n y.\qquad\qquad\qquad\quad  
\end{align*}
\hfill $\diamond$ 
\end{remark}

\begin{proof}[Proof of Theorem \ref{thm:continuity of integral kernel}]
First, one observes that for any $y\in\mathbb{R}^{n}$, 
$\delta_{\{y\}}\in H^{-\frac{n}{2}-\epsilon}(\mathbb{R}^{n})$ and 
$\mathcal{F}\delta_{\{y\}}(x)= (2 \pi)^{-n/2}e^{ixy}$
for all $x,y\in\mathbb{R}^{n}$. Consequently, 
$T\delta_{\{y\}}\in H^{\frac{n}{2}+\epsilon}(\mathbb{R}^{n})$,
and hence $\langle\delta_{\{x\}},T\delta_{\{y\}}\rangle_{L^{2}(\mathbb{R}^{n})}$
is well-defined for all $x,y\in\mathbb{R}^{n}$. Moreover, from 
\begin{align*} 
\left|\langle\delta_{\{x\}},T\delta_{\{y\}}\rangle_{L^{2}(\mathbb{R}^{n})}\right| & \leq \left|\delta_{\{x\}}\right|_{-\frac{n}{2}-\epsilon}\left|T\delta_{\{y\}}\right|_{\frac{n}{2}+\epsilon}\leq \left|\delta_{\{0\}}\right|_{-\frac{n}{2}-\epsilon}\left|\delta_{\{0\}}\right|_{-\frac{n}{2}-\epsilon}  \\
& \quad \times 
\left\Vert T\right\Vert_{\cB\big(H^{-\frac{n}{2}-\epsilon}(\mathbb{R}^{n}), H^{\frac{n}{2}+\epsilon}(\mathbb{R}^{n})\big)}, \quad x,y\in\mathbb{R}^{n},
\end{align*} 
one concludes the boundedness of $t$. Next, we show sequential continuity
of $t$. One observes that by the Sobolev embedding theorem,
the map $T\delta_{\{y\}}$ is continuous for all $y\in\mathbb{R}^{n}$
(One recalls that $\mathcal{F}T\delta_{\{y\}}\in L^{1}(\mathbb{R}^{n})$
with the Fourier transform given by \eqref{eq:def_Fourier_transf}.) 
Let $\{(x_{k},y_{k})\}_{k\in\bbN}$ be a convergent sequence in 
$\mathbb{R}^{n}\times\mathbb{R}^{n}$
and denote its limit as $k \to \infty$ by $(x,y).$ One notes that 
$\left|\delta_{\{y\}}-\delta_{\{y_{k}\}}\right|_{-\frac{n}{2}-\epsilon}\to0,$
as $k\to\infty$. Indeed, one gets by Lebesgue's dominated convergence
theorem that
\begin{align*}
\left|\delta_{\{y\}}-\delta_{\{y_{k}\}}\right|_{-\frac{n}{2}-\epsilon}^{2} & =\left(2\pi\right)^{-n}\int_{\mathbb{R}^{n}}\left|\left(e^{ixy}-e^{ixy_{k}}\right)\right|^{2} 
\big[1+\left|x\right|^{2}\big]^{-(n/2) - \epsilon}\, d^n x \underset{k\to \infty}{\longrightarrow} 0.
\end{align*}
Moreover, one observes that $\{\delta_{\{x_{k}\}}\}_{k\in\bbN}$ is uniformly bounded
in $H^{-\frac{n}{2}-\epsilon}(\mathbb{R}^{n})$ by some constant $M$.
Next, let $\eta>0$ and choose $k_{0}\in\mathbb{N}$ such that for $k\geq k_{0}$
one has $\left|\delta_{\{y\}}-\delta_{\{y_{k}\}}\right|_{-\frac{n}{2}-\epsilon}\leq\eta$
and $\left|T\delta_{\{y\}}(x_{k})-T\delta_{\{y\}}(x)\right|\leq\eta$.  
Then one estimates for $k\in\mathbb{N}$, 
\begin{align*}
 & \left|\langle\delta_{\{x_{k}\}},T\delta_{\{y_{k}\}}\rangle_{L^{2}(\mathbb{R}^{n})}-\langle\delta_{\{x\}},T\delta_{\{y\}}\rangle_{L^{2}(\mathbb{R}^{n})}\right|\\
 & \quad \leq \left|\langle\delta_{\{x_{k}\}},T\delta_{\{y_{k}\}}\rangle_{L^{2}(\mathbb{R}^{n})}-\langle\delta_{\{x_{k}\}},T\delta_{\{y\}}\rangle_{L^{2}(\mathbb{R}^{n})}\right|   \\
 & \qquad + \left|\langle\delta_{\{x_{k}\}},T\delta_{\{y\}}\rangle_{L^{2}(\mathbb{R}^{n})}-\langle\delta_{\{x\}},T\delta_{\{y\}}\rangle_{L^{2}(\mathbb{R}^{n})}\right|\\
 & \quad \leq \left|\delta_{\{x_{k}\}}\right|_{-\frac{n}{2}-\epsilon}\left|T\delta_{\{y_{k}\}}-T\delta_{\{y\}}\right|_{\frac{n}{2}+\epsilon}+\left|T\delta_{\{y\}}(x_{k})-T\delta_{\{y\}}(x)\right|\\
 & \quad \leq  M\left\Vert T\right\Vert \left|\delta_{\{y\}}-\delta_{\{y_{k}\}}\right|_{-\frac{n}{2}-\epsilon}+\eta\leq \left(M\left\Vert T\right\Vert +1\right)\eta.
\end{align*}
Next, we turn to the second part of the theorem. Since $H^{\frac{n}{2}+\epsilon+1}(\mathbb{R}^{n})\hookrightarrow H^{\frac{n}{2}+\epsilon}(\mathbb{R}^{n})$,
the map $t$ is continuous by the first part of the theorem. To prove 
differentiability, it suffices to observe that $t$ has continuous
(weak) partial derivatives. Since $T\partial_{j}\in \cB\big(H^{-\frac{n}{2}-\epsilon}(\mathbb{R}^{n}),H^{\frac{n}{2}+\epsilon}(\mathbb{R}^{n})\big)$
and $\partial_{j}T\in \cB\big(H^{-\frac{n}{2}-\epsilon}(\mathbb{R}^{n}),H^{\frac{n}{2}+\epsilon}(\mathbb{R}^{n})\big)$ for all $j\in\{1,\ldots,n\}$, the assertion also follows from the
first part as one observes that 
\begin{align*} 
& (\partial_{j}t)(x,y)=\big\langle \left(\partial_{j}\delta\right)_{\{x\}},T\delta_{\{y\}}
\big\rangle_{L^2(\bbR^n)} 
=-\left\langle \delta_{\{x\}},\partial_{j}T\delta_{\{y\}}\right\rangle_{L^2(\bbR^n)},     \\
& (\partial_{j+n}t)(x,y)=\langle\delta_{\{x\}},T\partial_j\delta_{\{y\}}\rangle, \quad 
j\in\{1,\ldots,n\}, \; (x,y)\in\mathbb{R}^{n}\times\mathbb{R}^{n}. \qedhere
\end{align*} 
 \end{proof}
 
 In the applications discussed later on, we shall be confronted with operators being already defined (or being extendable) to the whole fractional Sobolev scale. So the standard situation in which we will apply Theorem \ref{thm:continuity of integral kernel} is summarized in the following corollary, with some examples in the succeeding proposition. 
 
 \begin{corollary} 
 \label{cor:integral kernels regularity}Let $n\in\mathbb{N}$, $k>n$,
$S,T\colon\bigcup_{\ell\in\mathbb{Z}}H^{\ell}
(\mathbb{R}^{n})\to\bigcup_{\ell\in\mathbb{Z}}H^{\ell}(\mathbb{R}^{n})$.
Assume that for all $\ell\in\mathbb{R}$, 
$T\in \cB\big(H^{\ell}(\mathbb{R}^{n}),H^{\ell+k}(\mathbb{R}^{n})\big)$
and $S\in \cB\big(H^{\ell}(\mathbb{R}^{n}),H^{\ell+k+1}(\mathbb{R}^{n})\big)$,  
and introduce the maps
\begin{align*}
& t\colon\mathbb{R}^{n}\times\mathbb{R}^{n}\ni(x,y)\mapsto\left\langle 
\delta_{\{x\}},T\delta_{\{y\}}\right\rangle_{L^2(\bbR^n)},     \\
& s\colon\mathbb{R}^{n}\times\mathbb{R}^{n}\ni(x,y)\mapsto\left\langle 
\delta_{\{x\}},S\delta_{\{y\}}\right\rangle_{L^2(\bbR^n)}. 
\end{align*} 
Then $t$ is bounded and continuous, and the map $s$ is bounded and 
continuously differentiable with bounded derivatives. 
$($See \eqref{eq:def_Hs} and \eqref{eq:def_dirac_distr}
for $H^{s}\left(\mathbb{R}^{n}\right)$ and $\delta_{\{x\}}$, 
$x\in\mathbb{R}^{n}$,
$s\in\mathbb{R}$.$)$
\end{corollary}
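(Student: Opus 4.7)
The plan is to reduce the corollary to a direct application of Theorem \ref{thm:continuity of integral kernel}. The hypotheses on $S$ and $T$ give stronger mapping properties along the entire Sobolev scale, whereas Theorem \ref{thm:continuity of integral kernel} only requires specific inclusions whose indices straddle $\pm(n/2)$. Thus the task reduces to choosing an appropriate $\epsilon > 0$ so that the hypotheses of Theorem \ref{thm:continuity of integral kernel} become consequences of the assumed mapping properties of $S$ and $T$, using only that $H^{s}(\bbR^n) \hookrightarrow H^{s'}(\bbR^n)$ continuously whenever $s \geq s'$.

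Concretely, since $k > n$, one may set $\epsilon \coloneqq (k - n)/4 > 0$, so that $k - n - 2\epsilon = (k-n)/2 > 0$. Applying the hypothesis on $T$ with $\ell = -n/2 - \epsilon$ and composing with the embedding $H^{-n/2 - \epsilon + k}(\bbR^n) \hookrightarrow H^{n/2 + \epsilon}(\bbR^n)$ (valid since $-n/2 - \epsilon + k \geq n/2 + \epsilon$), one infers
\[
  T \in \cB\bigl(H^{-n/2 - \epsilon}(\bbR^n), H^{n/2 + \epsilon}(\bbR^n)\bigr).
\]
Hence the first part of Theorem \ref{thm:continuity of integral kernel} applies and yields that $t$ is well-defined, bounded, and continuous.

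For $S$, one applies the hypothesis $S \in \cB(H^\ell(\bbR^n), H^{\ell + k + 1}(\bbR^n))$ with the two choices $\ell = -n/2 - 1 - \epsilon$ and $\ell = -n/2 - \epsilon$, and invokes the same embedding argument to conclude
\[
  S \in \cB\bigl(H^{-n/2 - 1 - \epsilon}(\bbR^n), H^{n/2 + \epsilon}(\bbR^n)\bigr) \cap \cB\bigl(H^{-n/2 - \epsilon}(\bbR^n), H^{n/2 + 1 + \epsilon}(\bbR^n)\bigr).
\]
Thus the second part of Theorem \ref{thm:continuity of integral kernel} applies to $S$ and delivers that $s$ is bounded and continuously differentiable with bounded derivatives.

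I do not expect a genuine obstacle: the entire argument is a bookkeeping exercise in Sobolev indices. The only mild subtlety is to ensure that a \emph{single} choice of $\epsilon$ simultaneously covers both intersection conditions required in the second part of Theorem \ref{thm:continuity of integral kernel}; the choice $\epsilon = (k - n)/4$ is made precisely so that the loss of one derivative in either the ``input'' or ``output'' slot of $S$ can still be absorbed by the slack $k + 1$ in the assumed mapping property. This completes the plan.
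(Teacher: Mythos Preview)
Your proposal is correct and follows precisely the approach of the paper, which simply states that the result ``is a direct consequence of Theorem~\ref{thm:continuity of integral kernel}.'' You have merely spelled out the choice of $\epsilon$ and the Sobolev embedding bookkeeping that the paper leaves implicit.
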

\begin{proof}
This is a direct consequence of Theorem \ref{thm:continuity of integral kernel}.
\end{proof}

\begin{proposition}
\label{prop:concrete case} Let $\mu\in\mathbb{C},$ $\Re(\mu)>0$,
$\ell\in\mathbb{R}$, and $\Phi\in C_{b}^{\infty}(\mathbb{R}^{n}).$  
Then $R_{\mu}=\left(-\Delta+\mu\right)^{-1}\in \cB\big(H^{\ell}(\mathbb{R}^{n}),H^{\ell+2}(\mathbb{R}^{n})\big)$
and%
\footnote{We recall Remark \ref{rem:differen_mult_op}: The symbol $\Phi$ is interpreted as
the operator of multiplication by the function $\Phi$. If $\ell<0$,
this should read as multiplication in the distributional sense. %
} $\Phi\in \cB\big(H^{\ell}(\mathbb{R}^{n})\big)$.
\end{proposition}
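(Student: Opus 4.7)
The plan is to handle the two statements separately via Fourier analysis, treating $R_\mu$ first since the argument there is straightforward. By definition of $H^s(\mathbb{R}^n)$ in \eqref{eq:def_Hs}, it suffices to control the multiplier: the Fourier transform turns $R_\mu$ into multiplication by $(|\xi|^2+\mu)^{-1}$, so $R_\mu \in \mathcal{B}(H^\ell,H^{\ell+2})$ is equivalent to the uniform boundedness on $\mathbb{R}^n$ of
\[
   \xi \mapsto \frac{(1+|\xi|^2)^{(\ell+2)/2}}{(1+|\xi|^2)^{\ell/2}} \cdot \bigl|(|\xi|^2+\mu)^{-1}\bigr|=\frac{1+|\xi|^2}{\bigl||\xi|^2+\mu\bigr|}.
\]
Using $\bigl||\xi|^2+\mu\bigr|\geq |\xi|^2+\Re(\mu)$ for $\Re(\mu)>0$, this quantity is pointwise bounded by $\max(1,1/\Re(\mu))$, which settles the first claim.

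For the multiplication operator $\Phi$, I would prove boundedness on $H^\ell(\mathbb{R}^n)$ in four stages. The case $\ell=0$ is immediate from $\Phi\in L^\infty(\mathbb{R}^n)$ and $\|\Phi f\|_{L^2}\leq \|\Phi\|_{L^\infty}\|f\|_{L^2}$. For $\ell\in\mathbb{N}$, I would use the equivalent norm on $H^\ell(\mathbb{R}^n)$ via $L^2$-norms of weak derivatives up to order $\ell$, and the Leibniz rule together with $\Phi\in C_b^\infty(\mathbb{R}^n)$ (which ensures all partial derivatives of $\Phi$ are bounded) to conclude that $\Phi f\in H^\ell(\mathbb{R}^n)$ with $\|\Phi f\|_{H^\ell} \leq C_\ell \max_{|\alpha|\leq \ell}\|\partial^\alpha\Phi\|_{L^\infty}\|f\|_{H^\ell}$.

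For negative integer $\ell$, I would invoke duality: since $\overline{\Phi}\in C_b^\infty(\mathbb{R}^n)$ as well, the formal adjoint of $\Phi\colon H^{-\ell}(\mathbb{R}^n)\to H^{-\ell}(\mathbb{R}^n)$ with respect to the pairing $\langle\cdot,\cdot\rangle_{H^{-\ell},H^{\ell}}$ is given by multiplication by $\overline{\Phi}$ on $H^\ell(\mathbb{R}^n)$, which is already bounded by the preceding step. Finally, for general $\ell\in\mathbb{R}$, I would invoke complex interpolation between the two integer cases that bracket $\ell$, using that $(H^{\ell_0}(\mathbb{R}^n),H^{\ell_1}(\mathbb{R}^n))_{[\theta]}=H^{(1-\theta)\ell_0+\theta\ell_1}(\mathbb{R}^n)$ (or equivalently, Bessel-potential interpolation), yielding the claim in full generality.

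I do not expect any serious obstacles here: both assertions are classical facts about Bessel-potential spaces, with the only mild subtlety being the invocation of complex interpolation in the last step; everything else is an immediate Fourier multiplier or Leibniz-rule computation using the boundedness of $\Phi$ and all its derivatives.
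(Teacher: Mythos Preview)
Your proposal is correct and follows essentially the same route as the paper: Fourier transform for $R_\mu$, then for $\Phi$ the Leibniz rule on $H^\ell$ with $\ell\in\mathbb{N}_0$, duality for negative integers, and complex interpolation for general $\ell\in\mathbb{R}$. The only minor difference is that you handle $R_\mu$ for all $\ell\in\mathbb{R}$ directly via the multiplier bound (which is slightly cleaner), whereas the paper phrases both assertions as ``integers first, then interpolation''; this is cosmetic.
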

\begin{proof}
For $\ell\in\mathbb{Z}$, the first assertion follows easily with
the help of the Fourier transform, the second assertion is a
straightforward induction argument for $\ell\in\mathbb{N}_{0}$; for
$\ell\in-\mathbb{N}$ the result follows by duality. The results for
$\ell\in\mathbb{R}$ follow by interpolation, see \cite[Theorem 2.4.2]{Tr78}.
\end{proof}

The main issue of the considerations in this section are estimates of continuous integral kernels on the respective diagonals. An elementary estimate can be shown for integral operators which are induced by commutators with multiplication operators as the following result confirms.

\begin{proposition}
\label{prop:commutator with phi vanishes}Let $n\in\mathbb{N}$, $\epsilon>0$,
$m\in\mathbb{N}$, and assume that $T\colon H^{-(n/2)-\epsilon}(\mathbb{R}^{n})\to H^{(n/2)+\epsilon}(\mathbb{R}^{n})$ is linear and continuous, and that 
$\Phi\in C_{b}^{\infty}(\mathbb{R}^{n})$. Then the map 
\[
t_{\Phi}\colon\mathbb{R}^{n}\times\mathbb{R}^{n}\ni(x,y)\mapsto\left\langle \delta_{\{x\}},\left[\Phi,T\right]\delta_{\{y\}}\right\rangle _{L^{2}(\mathbb{R}^{n})}\in\mathbb{C},
\]
where $\left[\Phi,T\right]$ is given by \eqref{eq:def_commutator},
is well-defined, continuous, bounded, and satisfies $t_{\Phi}(x,x)=0$, 
$x\in\mathbb{R}^{n}$.
\end{proposition}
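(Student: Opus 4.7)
The plan is to identify $t_\Phi$ as $(\Phi(x)-\Phi(y))$ times the kernel of $T$, which obviously vanishes on the diagonal.

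First I would verify that the hypotheses of Theorem \ref{thm:continuity of integral kernel} apply to $[\Phi,T]$. Since $\Phi\in C_b^\infty(\bbR^n)$, Proposition \ref{prop:concrete case} ensures that multiplication by $\Phi$ is bounded on every $H^\ell(\bbR^n)$, $\ell\in\bbR$. In particular, $\Phi T$ and $T\Phi$ both map $H^{-(n/2)-\epsilon}(\bbR^n)$ continuously into $H^{(n/2)+\epsilon}(\bbR^n)$, so $[\Phi,T]$ does as well. Theorem \ref{thm:continuity of integral kernel} then yields immediately that $t_\Phi$ is well-defined, continuous, and bounded on $\bbR^n\times\bbR^n$.

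Next I would compute the kernel explicitly. Writing $t(x,y) := \langle \delta_{\{x\}}, T\delta_{\{y\}}\rangle_{L^2(\bbR^n)}$, the point is that for $y\in\bbR^n$ fixed, one has the distributional identity $\Phi\delta_{\{y\}} = \Phi(y)\delta_{\{y\}}$, which follows from the fact that $\Phi$ is smooth and the action of a smooth multiplier on a Dirac delta reproduces the value of the multiplier at the support point (this is the standard rule $\langle \Phi\delta_{\{y\}},\varphi\rangle = \langle\delta_{\{y\}},\Phi\varphi\rangle = \Phi(y)\varphi(y)$ for test functions $\varphi$). By linearity and continuity of $T$ on the Sobolev scale,
\begin{equation*}
T\Phi\delta_{\{y\}} = \Phi(y)\, T\delta_{\{y\}}\quad\text{in } H^{(n/2)+\epsilon}(\bbR^n).
\end{equation*}
On the other hand, since $T\delta_{\{y\}}\in H^{(n/2)+\epsilon}(\bbR^n)$ is continuous by Sobolev embedding, the dual pairing with $\delta_{\{x\}}$ is ordinary point evaluation, so
\begin{equation*}
\langle\delta_{\{x\}},\Phi T\delta_{\{y\}}\rangle_{L^2(\bbR^n)} = \Phi(x)(T\delta_{\{y\}})(x) = \Phi(x)\, t(x,y).
\end{equation*}

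Combining these two identities gives
\begin{equation*}
t_\Phi(x,y) = \Phi(x)\, t(x,y) - \Phi(y)\, t(x,y) = (\Phi(x)-\Phi(y))\, t(x,y),\quad (x,y)\in\bbR^n\times\bbR^n,
\end{equation*}
from which $t_\Phi(x,x)=0$ is immediate. The main (minor) technicality will be carefully justifying the identity $\Phi\delta_{\{y\}}=\Phi(y)\delta_{\{y\}}$ as an equality in $H^{-(n/2)-\epsilon}(\bbR^n)$ rather than merely in the sense of distributions, but this is routine: both sides are tempered distributions of order $\le (n/2)+\epsilon$, and they agree when tested against $C_0^\infty(\bbR^n)$, which is dense in $H^{(n/2)+\epsilon}(\bbR^n)$.
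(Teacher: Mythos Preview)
Your proof is correct and follows essentially the same approach as the paper: both arguments reduce to the identity $\Phi\delta_{\{y\}}=\Phi(y)\delta_{\{y\}}$ in $H^{-(n/2)-\epsilon}(\bbR^n)$, combined with Proposition~\ref{prop:concrete case} and Theorem~\ref{thm:continuity of integral kernel} for well-definedness and regularity. The only difference is cosmetic: you derive the full off-diagonal formula $t_\Phi(x,y)=(\Phi(x)-\Phi(y))t(x,y)$ and then set $x=y$, whereas the paper computes directly at $x=y$ by moving $\Phi$ across the pairing via $\Phi^*\delta_{\{x\}}=\overline{\Phi(x)}\delta_{\{x\}}$.
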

\begin{proof}
By Proposition \ref{prop:concrete case} and Theorem \ref{thm:continuity of integral kernel},
one gets that $t_{\Phi}$ is well-defined, continuous, and bounded. For
$x\in\mathbb{R}^{n}$ one then computes
\begin{align*}
\left\langle \delta_{\{x\}},\left[\Phi,T\right]\delta_{\{y\}}\right\rangle  & =\left\langle \delta_{\{x\}},\left(\Phi T-T\Phi\right)\delta_{\{x\}}\right\rangle \\
 & =\left\langle \delta_{\{x\}},\Phi T\delta_{\{x\}}\right\rangle -\left\langle \delta_{\{x\}},\left(T\Phi\right)\delta_{\{x\}}\right\rangle \\
 & =\left\langle \Phi^{*}\delta_{\{x\}},T\delta_{\{x\}}\right\rangle -\big\langle \delta_{\{x\}},T\left(\Phi\delta\right)_{\{x\}}\big\rangle \\
 & =\big\langle \overline{\Phi(x)}\delta_{\{x\}},T\delta_{\{x\}}\big\rangle 
 - \left\langle \delta_{\{x\}},T\Phi(x)\delta_{\{x\}}\right\rangle \\
 & =\left\langle \delta_{\{x\}},\Phi(x)T\delta_{\{x\}}\right\rangle -\left\langle \delta_{\{x\}},T\Phi(x)\delta_{\{x\}}\right\rangle =0.\tag*{{\qedhere}}
\end{align*}
\end{proof}

The next lemma also discusses properties of the integral kernel of a commutator, however, in the following situation, we shall address the commutator with differentiation. 

\begin{lemma}  \label{lem:comm is sum of der} 
Let $T \in \cB\big(L^{2}(\mathbb{R}^{n})\big)$ be induced by the continuously 
differentiable integral kernel
$t\colon\mathbb{R}^{n}\times\mathbb{R}^{n}\to\mathbb{C}$, $j\in\{1,\ldots,n\}$.
If $[\partial_{j},T]  \in \cB\big(L^{2}(\mathbb{R}^{n})\big)$, then $[\partial_{j},T]$ defined
by \eqref{eq:def_commutator} is an operator induced by the integral kernel
$\partial_{j}t+\partial_{j+n}t$. 
\end{lemma}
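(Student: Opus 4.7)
The plan is to verify the stated kernel formula by testing against $\psi \in C_0^\infty(\bbR^n)$, computing $\partial_j T\psi$ and $T\partial_j\psi$ as explicit parameter integrals, and then concluding by density together with the boundedness hypothesis. Throughout, $\partial_j$ denotes partial differentiation in the $j$-th coordinate of the underlying $\bbR^n$, while $\partial_j t$ and $\partial_{j+n} t$ denote differentiation of the two-variable kernel $t(x,y)$ in the $x_j$- and $y_j$-slot, respectively.

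First I would fix $\psi \in C_0^\infty(\bbR^n)$ and note that, since $T \in \cB(L^2(\bbR^n))$ is induced by the continuous kernel $t$, the representation $(T\psi)(x) = \int_{\bbR^n} t(x,y)\psi(y)\,d^n y$ holds as a genuine Lebesgue integral for every $x \in \bbR^n$ (by compactness of $\supp\psi$ together with continuity of $t(x,\cdot)$). Continuous differentiability of $t$ combined with $\supp\psi$ compact then permits differentiation under the integral sign via the standard Leibniz rule for parameter integrals, yielding $T\psi \in C^1(\bbR^n)$ with $\partial_j(T\psi)(x) = \int_{\bbR^n} (\partial_j t)(x,y)\psi(y)\,d^n y$. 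Separately, $\partial_j\psi \in C_0^\infty(\bbR^n)$, hence $(T\partial_j\psi)(x) = \int_{\bbR^n} t(x,y)(\partial_j\psi)(y)\,d^n y$, and integration by parts in the $y_j$-variable discards the boundary term due to compact support of $\psi$, transferring the derivative onto $t$ and producing $(T\partial_j\psi)(x) = -\int_{\bbR^n} (\partial_{j+n} t)(x,y)\psi(y)\,d^n y$.

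Subtracting the two identities yields, pointwise for each $x\in\bbR^n$,
\[
([\partial_j, T]\psi)(x) = \int_{\bbR^n} \bigl[(\partial_j t)(x,y) + (\partial_{j+n} t)(x,y)\bigr]\psi(y)\,d^n y,
\]
which is the desired kernel representation on the dense subspace $C_0^\infty(\bbR^n) \subseteq L^2(\bbR^n)$; since $[\partial_j, T]$ is assumed bounded on $L^2(\bbR^n)$, this identity extends by continuity and determines its action on all of $L^2(\bbR^n)$. I do not expect any real analytic obstacle here: the only subtlety is bookkeeping, namely tracking which slot of the two-variable kernel each derivative acts upon, and verifying that the Leibniz rule and the integration by parts are both immediately justified by continuous differentiability of $t$ together with the compact support of the test functions $\psi$.
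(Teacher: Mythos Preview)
Your argument is correct and follows essentially the same route as the paper: test against $\psi\in C_0^\infty(\bbR^n)$, differentiate under the integral for $\partial_j(T\psi)$, integrate by parts for $T\partial_j\psi$, and subtract. The paper's proof is more terse but identical in substance.
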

\begin{proof}
Let $x\in\mathbb{R}^{n}$ and $f\in C_0^{\infty}(\mathbb{R}^{n})$.
One computes 
\begin{align*}
([\partial_{j},T]f)(x) & = (\partial_{j}Tf)(x) - (T\partial_{j}f)(x)   \\
 & =\partial_{j}\int_{\mathbb{R}^{n}}t(x,y)f(y) \, d^n y 
 - \int_{\mathbb{R}^{n}}t(x,y) (\partial_{j}f)(y) \, d^n y\\
 & =\int_{\mathbb{R}^{n}}  (\partial_{j}t)(x,y)f(y) \, d^n y 
 +\int_{\mathbb{R}^{n}} (\partial_{j+n}t)(x,y)f(y) \, d^n y,
\end{align*}
using an integration by parts to arrive at the last equality.
\end{proof}

\begin{remark} \lb{rT}
We elaborate on an important consequence of Lemma \ref{lem:comm is sum of der} as follows: 
For $j\in \{1,\ldots,n\}$, let $T_j\in \cB(L^2(\mathbb{R}^n))$ be induced by the continuously differentiable integral kernel $t_j\colon \mathbb{R}^n\times \mathbb{R}^n\to\mathbb{C}$. Assume that $[\partial_j,T_j]\in \cB(L^2(\mathbb{R}^n))$, $j\in \{1,\ldots,n\}$, and consider the operator
\[
  T\coloneqq \sum_{j=1}^n [\partial_j,T_j]. 
\]
By Lemma \ref{lem:comm is sum of der} one infers that the integral kernel $t$ for $T$ may be computed as follows, 
\[
 t(x,y) = \sum_{j=1}^n (\partial_jt_j+\partial_{j+n}t_j)(x,y),\quad x,y\in \mathbb{R}^n.
\]
Moreover, for $g\coloneqq \{ x\mapsto t_j(x,x)\}_{j\in\{1,\ldots,n\}}$, 
\begin{align} \label{e:div_theo}
  t(x,x) &= \sum_{j=1}^n (\partial_j(y\mapsto t_j(y,y)))(x)\\
         &=\textrm{div} \, (g(x)), \quad x\in \mathbb{R}^n.    \no
\end{align}
This observation will turn out to be useful when computing the trace of certain operators.
\hfill $\diamond$ 
\end{remark}

The remaining section is devoted to obtaining pointwise estimates
of various integral operators on the diagonal. For convenience, we recall the 
$\Gamma$-function (cf.\ \cite[Sect.\ 6.1]{AS72}), given by
\[
   \Gamma(z)\coloneqq \int_0^\infty t^{z-1}e^{-t}\, dt, \quad z\in \mathbb{C}_{\Re>0},
\]
as well as the $n-1$-dimensional volume of the unit sphere $S^{n-1}\subseteq \mathbb{R}^n$, 
\begin{equation}\label{e:om}
   \omega_{n-1} = \frac{2\pi^{(n/2)}}{\Gamma\big(n/2\big)}.
\end{equation}

\begin{proposition}
\label{prop:estimate of the pointwise resolvent at 0}Let $n,m\in\mathbb{N}$,
$m> (n+1)/2$, $\mu\in\mathbb{C}$, $\Re(\mu)>0$, and $R_{\mu}$, 
$\delta_{\{0\}}$, and $Q$ be given by \eqref{eq:resolvent_of_laplace},
\eqref{eq:def_dirac_distr}, and \eqref{eq:Def_of_Q}, respectively.
Then
\begin{align} 
\left|R_{\mu}^{m}\delta_{\{0\}}(0)\right| & \leq \left(\frac{1}{\Re(\mu)}\right)^{m}\left(\sqrt{\Re(\mu)}\right)^{n}c,   \lb{Rd} \\
\left|QR_{\mu}^{m}\delta_{\{0\}}(0)\right| & \leq \left(\frac{1}{\Re(\mu)}\right)^{m}\left(\sqrt{\Re(\mu)}\right)^{n+1}c',   \lb{QRd}
\end{align} 
with 
\begin{equation}
c=\left(2\pi\right)^{-n}\omega_{n-1}\int_{0}^{\infty}r^{n-1}
\big[r^{2}+1\big]^{- (n+3)/2}dr,\label{eq:def_c}
\end{equation}
 and 
\begin{equation}
c'=\left(2\pi\right)^{-n}\sqrt{n}\omega_{n-1}\int_{0}^{\infty}r^{n} 
\big[r^{2}+1\big]^{- (n+3)/2}dr,\label{eq:def_cprime}
\end{equation}
where $\omega_{n-1}$ is given by \eqref{e:om}.
\end{proposition}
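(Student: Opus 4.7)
The plan is to evaluate $R_\mu^m\delta_{\{0\}}(0)$ and $Q R_\mu^m \delta_{\{0\}}(0)$ explicitly via Fourier transform, bound the resulting integrands crudely, and then rescale. Since $(\cF \delta_{\{0\}})(\xi) = (2\pi)^{-n/2}$ and $R_\mu$ is the Fourier multiplier with symbol $(|\xi|^2+\mu)^{-1}$, the inverse Fourier representation yields
\begin{equation*}
R_\mu^m \delta_{\{0\}}(0) = (2\pi)^{-n}\int_{\bbR^n} (|\xi|^2+\mu)^{-m}\, d^n\xi,
\end{equation*}
and, using $\partial_j \leftrightarrow i\xi_j$ on the Fourier side,
\begin{equation*}
Q R_\mu^m \delta_{\{0\}}(0) = (2\pi)^{-n}\int_{\bbR^n}\bigg(\sum_{j=1}^n \gamma_{j,n}(i\xi_j)\bigg)(|\xi|^2+\mu)^{-m}\, d^n\xi.
\end{equation*}
Absolute integrability of both integrands is guaranteed by $m > (n+1)/2$, which in turn, via a standard Sobolev embedding argument (cf.\ Theorem \ref{thm:continuity of integral kernel}), justifies the pointwise evaluation at the origin.

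To convert these representations into the stated bounds, the next step is the elementary pointwise estimate $\big|(|\xi|^2+\mu)^{-m}\big| \leq (|\xi|^2 + \Re(\mu))^{-m}$, which follows from $||\xi|^2 + \mu| \geq |\xi|^2 + \Re(\mu) > 0$, together with the symbol bound $\|\sum_{j=1}^n \gamma_{j,n}(i\xi_j)\|_{\cB(\bbC^{2^{\hat n}})} \leq \sqrt{n}\,|\xi|$ from \eqref{eq:estimate_for_symbol_of_Q} for the second integral. The key idea is then the scaling substitution $\xi = \sqrt{\Re(\mu)}\,\eta$, under which $|\xi|^2 + \Re(\mu) = \Re(\mu)(1+|\eta|^2)$, $d^n\xi = (\Re(\mu))^{n/2}\, d^n\eta$, and $|\xi| = \sqrt{\Re(\mu)}\,|\eta|$. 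After passing to polar coordinates, this yields
\begin{equation*}
\big|R_\mu^m\delta_{\{0\}}(0)\big| \leq (2\pi)^{-n} (\Re(\mu))^{(n/2)-m} \omega_{n-1}\int_0^\infty r^{n-1}(1+r^2)^{-m}\, dr,
\end{equation*}
and an analogous estimate for $Q R_\mu^m\delta_{\{0\}}(0)$ with exponent $(n+1)/2 - m$ on $\Re(\mu)$ and radial weight $r^{n}$, picking up the extra factor of $\sqrt{n}$ from the symbol bound.

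Since $m \in \bbN$ with $m > (n+1)/2$ forces $m \geq (n+3)/2$ in the regime of interest (for odd $n$, $(n+1)/2$ is itself an integer), the monotonicity inequality $(1+r^2)^{-m} \leq (1+r^2)^{-(n+3)/2}$ holds for every $r \geq 0$. Substituting this into the two radial integrals above produces exactly the constants $c$ from \eqref{eq:def_c} and $c'$ from \eqref{eq:def_cprime}, and combining with the $\Re(\mu)$-powers coming from the scaling gives $(\Re(\mu))^{(n/2)-m} c = (1/\Re(\mu))^m (\sqrt{\Re(\mu)})^n c$ and, similarly, $(\Re(\mu))^{(n+1)/2 - m} c' = (1/\Re(\mu))^m (\sqrt{\Re(\mu)})^{n+1} c'$, which are precisely \eqref{Rd} and \eqref{QRd}.

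I do not expect any serious obstacle: the whole argument reduces to two elementary integrals after Fourier inversion and a rescaling. The only two points requiring genuine care are the justification of pointwise evaluation at the origin, which is exactly where the threshold $m > (n+1)/2$ enters through Sobolev embedding, and the observation that under this hypothesis (and $n$ odd) $m$ is necessarily at least $(n+3)/2$, so that replacing the exponent $m$ by $(n+3)/2$ in the radial integrals is a legitimate \emph{upper} bound.
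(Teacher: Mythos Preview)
Your proof is correct and follows essentially the same route as the paper: Fourier representation of $R_\mu^m\delta_{\{0\}}(0)$, the pointwise bound $|(|\xi|^2+\mu)^{-m}|\le(|\xi|^2+\Re(\mu))^{-m}$, polar coordinates, and the rescaling $\xi=\sqrt{\Re(\mu)}\,\eta$. You make explicit one step the paper leaves to the reader, namely bounding $(1+r^2)^{-m}\le(1+r^2)^{-(n+3)/2}$ via $m\ge(n+3)/2$ for odd $n$; this is a fair observation, since as stated for arbitrary $n\in\bbN$ the smallest admissible $m$ in the even case is only $(n+2)/2$, but the paper only ever applies the result for odd $n$.
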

\begin{proof}
We estimate $R_{\mu}^{m}\delta_{\{0\}}(0)$ with the help of the Fourier transform as follows.  
\begin{align*}
\left|R_{\mu}^{m}\delta_{\{0\}}(0)\right| & =\left|\left\langle R_{\mu}^{m}\delta_{\{0\}},\delta_{\{0\}}\right\rangle \right| 
=\left(2\pi\right)^{-n}\bigg|\int_{\mathbb{R}^{n}}\frac{1}{\big(\left|\xi\right|^{2}+\mu\big)^{m}}
 \, d^n \xi\bigg|\\
 & \leq \left(2\pi\right)^{-n}\int_{\mathbb{R}^{n}}\frac{1}{\big(\left|\xi\right|^{2}+\Re(\mu)\big)^{m}}
 \, d^n \xi\\
 & =\left(2\pi\right)^{-n}\omega_{n-1}\int_{0}^{\infty}\frac{r^{n-1}}{\left(r^{2}+\Re(\mu)\right)^{m}}
 \, dr\\
 & =\left(2\pi\right)^{-n}\omega_{n-1}\left(\frac{1}{\Re(\mu)}\right)^{m}\int_{0}^{\infty}\frac{r^{n-1}}{\Big(\Big(\frac{r}{\sqrt{\Re(\mu)}}\Big)^{2}+1\Big)^{m}} \, dr\\
 & =\left(2\pi\right)^{-n}\omega_{n-1}\left(\frac{1}{\Re(\mu)}\right)^{m}\int_{0}^{\infty}\frac{\big(t\sqrt{\Re(\mu)}\big)^{n-1}}{\left(t^{2}+1\right)^{m}}\sqrt{\Re(\mu)} \, dt\\
 & =\left(2\pi\right)^{-n}\omega_{n-1}\left(\frac{1}{\Re(\mu)}\right)^{m}\big(\sqrt{\Re(\mu)}\big)^{n}\int_{0}^{\infty}\frac{t^{n-1}}{\left(t^{2}+1\right)^{m}} \, dt.
\end{align*}
In a similar fashion, one estimates \eqref{QRd}, however, first we 
recall from \eqref{eq:estimate_for_symbol_of_Q} that $\left|\sum_{j=1}^{n}\gamma_{j,n}(-i)\xi_{j}\right|\leq \sqrt{n}\left|\xi\right|$, $\xi\in\mathbb{R}^{n}$. Hence, one arrives at 
\begin{align*}
\left|QR_{\mu}^{m}\delta_{\{0\}}(0)\right| 
& \leq \left(2\pi\right)^{-n}\sqrt{n}\int_{\mathbb{R}^{n}}\frac{\left|\xi\right|}{\big(\left|\xi\right|^{2}+\Re(\mu)\big)^{m}} \, d^n\xi\\
 & =\left(2\pi\right)^{-n}\sqrt{n}\omega_{n-1}\int_{0}^{\infty}\frac{r^{n}}{\left(r^{2}+\Re(\mu)\right)^{m}}\, dr\\
 & =\left(2\pi\right)^{-n}\sqrt{n}\omega_{n-1}\left(\frac{1}{\Re(\mu)}\right)^{m}\int_{0}^{\infty}\frac{r^{n}}{\Big(\Big(\frac{r}{\sqrt{\Re(\mu)}}\Big)^{2}+1\Big)^{m}} \, dr\\
 & =\left(2\pi\right)^{-n}\sqrt{n}\omega_{n-1}\left(\frac{1}{\Re(\mu)}\right)^{m}
 \big(\sqrt{\Re(\mu)}\big)^{n+1}\int_{0}^{\infty}\frac{t^{n}}{\left(t^{2}+1\right)^{m}} \, dt.\tag*{\qedhere}
\end{align*}
\end{proof}

The main observation in this subsection, Lemma \ref{lem:asymptotics on diagonal},
needs some preparations which deal with the fundamental solution of the Helmholtz equation 
on $\mathbb{R}^n$ for $n\geq 3$ odd, to be introduced in \eqref{C.1}.

\begin{lemma}
\label{lem:teleskop-max}Let $n,N\in\mathbb{N}$, and $x_{1},\ldots,x_{N}\in\mathbb{R}^{n}$.
Then
\[
\left|x_{1}\right|+\sum_{j=1}^{N-1}\left|x_{j+1}-x_{j}\right|\geq\max_{1\leq k \leq  N}\left|x_{k}\right|.
\]
\end{lemma}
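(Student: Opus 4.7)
The plan is to reduce the inequality to a telescoping identity combined with the triangle inequality. For each fixed index $k \in \{1,\ldots,N\}$, I would write $x_k$ as the telescoping sum
\[
x_k = x_1 + \sum_{j=1}^{k-1}(x_{j+1}-x_j),
\]
with the usual convention that the empty sum (for $k=1$) vanishes. Applying the triangle inequality to this identity yields
\[
|x_k| \le |x_1| + \sum_{j=1}^{k-1}|x_{j+1}-x_j|.
\]

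Next, since every term $|x_{j+1}-x_j|$ is nonnegative, extending the summation from $k-1$ up to $N-1$ can only increase the right-hand side. Thus
\[
|x_k| \le |x_1| + \sum_{j=1}^{N-1}|x_{j+1}-x_j|
\]
for every $k \in \{1,\ldots,N\}$. Taking the maximum over $k$ on the left then gives the claim. There is no obstacle here: the argument is essentially a one-line telescoping plus triangle inequality, and the bound is independent of $n$ because only the Euclidean norm inequality $|a+b|\le|a|+|b|$ on $\mathbb{R}^n$ is used.
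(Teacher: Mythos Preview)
Your argument is correct and is essentially the same idea as the paper's proof: both rest on a telescoping identity combined with the triangle inequality. The only cosmetic difference is that the paper organizes the argument as an induction on $N$ (showing the left-hand side dominates $|x_{N+1}|$ via $|x_{j+1}-x_j|\ge |x_{j+1}|-|x_j|$ and then invoking the induction hypothesis), whereas you give the direct bound $|x_k|\le |x_1|+\sum_{j=1}^{k-1}|x_{j+1}-x_j|$ for each $k$ and extend the sum.
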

\begin{proof}
We proceed by induction. The case $N=1$ is clear. For $N\in\mathbb{N}$,
one has
\[
\left|x_{1}\right|+\sum_{j=1}^{N}\left|x_{j+1}-x_{j}\right|\geq\left|x_{1}\right|
+ \sum_{j=1}^{N} \big[\left|x_{j+1}\right|-\left|x_{j}\right|\big] = \left|x_{N+1}\right|.
\]
Thus, employing the induction hypothesis, one gets that
\[
\left|x_{1}\right|+\sum_{j=1}^{N}\left|x_{j+1}-x_{j}\right| \geq \max_{1\leq k \leq  N}\left|x_{k}\right|\lor\left|x_{N+1}\right|=\max_{1\leq \ell \leq N+1}\left|x_{\ell}\right|.\tag*{{\qedhere}}
\]
\end{proof}

\begin{lemma}
\label{lem:expo is faster than pol}Let $\alpha>0$, $\beta>0$. Then the map 
\[
\phi\colon[0,\infty)\ni r\mapsto\left(1+\frac{1}{2}r\right)^{\alpha}e^{-\beta r}
\]
satisfies 
$$
|\phi(r)| \leq \begin{cases} 
[\alpha/(2\beta)]^{\alpha}e^{-\alpha+2\beta}, & \alpha>2\beta, \\
1, &\alpha\leq 2\beta,
\end{cases}  \quad r \geq 0.
$$
\end{lemma}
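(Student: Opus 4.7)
The plan is a direct one-variable calculus argument based on locating the maximum of $\phi$ on $[0,\infty)$. Since $\phi$ is strictly positive, I would work with the logarithmic derivative
\[
\frac{\phi'(r)}{\phi(r)} = \frac{d}{dr}\bigl[\alpha\ln(1+r/2)-\beta r\bigr] = \frac{\alpha}{r+2} - \beta, \quad r\geq 0.
\]
This expression is strictly decreasing in $r$ and has the unique zero $r^{*}\coloneqq \alpha/\beta - 2$. Combined with the obvious facts $\phi(0)=1>0$ and $\phi(r)\to 0$ as $r\to\infty$, this shows that $\phi$ has a unique global maximum on $\bbR$ at $r^{*}$; on $[0,\infty)$ the location of the maximum then depends only on the sign of $r^{*}$.

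In the case $\alpha>2\beta$ one has $r^{*}>0$, so the maximum of $\phi$ on $[0,\infty)$ is attained at $r^{*}$, and substitution yields
\[
\phi(r^{*}) = \left(1+\frac{\alpha-2\beta}{2\beta}\right)^{\alpha} e^{-\beta(\alpha/\beta -2)} = \left(\frac{\alpha}{2\beta}\right)^{\alpha} e^{-\alpha+2\beta},
\]
which is exactly the bound claimed in the first branch. In the case $\alpha\leq 2\beta$ one has $r^{*}\leq 0$, so the logarithmic derivative is nonpositive throughout $[0,\infty)$, $\phi$ is nonincreasing there, and therefore $\phi(r)\leq \phi(0)=1$ for all $r\geq 0$, which is the bound claimed in the second branch.

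There is no genuine obstacle in this proof; the only bookkeeping is to make sure one identifies $r^{*}$ as a maximum rather than a minimum of $\phi$, which is immediate from the monotonicity of $r\mapsto \alpha/(r+2)-\beta$ together with the boundary behavior $\phi(0)=1$ and $\phi(+\infty)=0$.
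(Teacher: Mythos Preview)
Your proof is correct and follows essentially the same approach as the paper: locate the unique critical point $r^{*}=\alpha/\beta-2$ of $\phi$ via differentiation, use $\phi(0)=1$ and $\phi(r)\to 0$ to identify it as a maximum, and split into the cases $r^{*}>0$ and $r^{*}\leq 0$. The only cosmetic difference is that you use the logarithmic derivative while the paper differentiates $\phi$ directly, and you write out the evaluation $\phi(r^{*})=(\alpha/(2\beta))^{\alpha}e^{-\alpha+2\beta}$ explicitly whereas the paper leaves this implicit.
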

\begin{proof}
From 
\begin{align*}
\phi'(r) & =\bigg(\frac{1}{2}\alpha\left(1+\frac{1}{2}r\right)^{\alpha-1}-\beta\left(1+\frac{1}{2}r\right)^{\alpha}\bigg)e^{-\beta r}\\
 & =\left(\frac{1}{2}\alpha-\beta\left(1+\frac{1}{2}r\right)\right)\left(1+\frac{1}{2}r\right)^{\alpha-1}e^{-\beta r}
\end{align*}
one gets with $r^{*}\coloneqq (\alpha/\beta)-2$ that $\phi'(r^{*})=0$
if $r^{*}>0$. Thus, by $\phi(0)=1$ and $\phi(r)\to0$ as $r\to\infty$,
one obtains the assertion.
\end{proof}

Next, we shall concentrate on pointwise estimates for the fundamental solution of the Helmholtz equation. We denote the integral kernel (i.e., the Helmholtz Green's function) associated with 
$(-\Delta - z)^{-1}$ by $E_n(z;x,y)$, $x, y \in \bbR^n$, $x \neq y$, $n\in\bbN$, $n\ge 2$, 
$z \in \bbC$. Then, 
\begin{align}
& E_n(z;x, y)  \no \\ 
& \quad = \begin{cases} (i/4) \big(2\pi z^{-1/2} |x-y|\big)^{(2-n)/2} 
H^{(1)}_{(n-2)/2}\big(z^{1/2}|x-y|\big), & n\ge 2, \; z\in\bbC\backslash\{0\}, \\
- (2\pi)^{-1} \ln(|x-y|), & n=2, \; z=0, \\
[(n-2) \omega_{n-1}]^{-1} |x-y|^{2-n}, & n \ge 3, \; z=0, 
\end{cases}   \no \\
& \hspace*{6.08cm}   \Im\big(z^{1/2}\big)\geq 0, \; x, y \in\bbR^n, \; x \neq y,   \label{C.1} 
\end{align}
where $H^{(1)}_{\nu}(\cdot)$ denotes the Hankel function of the first kind 
with index $\nu\geq 0$ (cf.\ \cite[Sect.\ 9.1]{AS72}), and 
$\omega_{n-1}$ is given by \eqref{e:om}. We will directly work with the explicit formula 
\eqref{C.1}, even though one could also employ the Laplace transform connection between the 
resolvent and the semigroup of $- \Delta$ which manifests itself in the formula,
\begin{equation}
E_n(z;x,y) = \int_{[0,\infty)} (4 \pi t)^{-n/2} e^{- |x-y|^2/(4t)} e^{z t} \, dt, \quad \Re(z) < 0, 
\; x, y \in \bbR^n, \; x \neq y.
\end{equation} 

Later on, we need the following reformulation of the Helmholtz Green's function in odd space 
dimensions. We will use an explicit expression for the Hankel function of the first kind.

\begin{lemma}\label{lem:reform_of_en} Let $n=2{\hatt n}+1 \in \mathbb{N}_{\geq3}$ odd, 
$\mu\in \mathbb{C}_{\Re>0}$. We denote
\[
   \mathcal{E}_n(-z,r)\coloneqq E_{n}(z;x,y), \quad r>0, \; z\in \mathbb{C}\backslash \{0\}, 
\]
where $x,y\in \mathbb{R}^n$ are such that $|x-y|=r$. Then the following formula holds
\[
   \mathcal{E}_n(\mu,r)= \left(\frac{\sqrt{\mu}}{2}\right)^{{\hatt n}-1} \left(2\pi r\right)^{-{\hatt n}}e^{-\sqrt{\mu}r}\sum_{k=0}^{{\hatt n}-1} \frac{({\hatt n}+k-1)!}{k!({\hatt n}-k-1)!}\left(\frac{1}{2\sqrt{\mu}r}\right)^k. 
\] 
\end{lemma}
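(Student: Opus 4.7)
The natural strategy is to insert the classical closed-form expression for the Hankel function of half-integer order into the definition of $E_n$ recalled at the beginning of this section. Since $n = 2\hatt n + 1$, one has $(n-2)/2 = \hatt n - 1/2$, and the identity
\[
H^{(1)}_{\ell + 1/2}(w) = (-i)^{\ell+1}\sqrt{\frac{2}{\pi w}}\,e^{iw}\sum_{k=0}^{\ell}\frac{i^{k}(\ell+k)!}{k!(\ell-k)!(2w)^{k}}, \quad \ell \in \bbN_{0},
\]
(see, e.g., \cite[Ch.~10]{AS72}) applied with $\ell = \hatt n - 1$ furnishes the desired finite-sum representation of $H^{(1)}_{(n-2)/2}$.

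The plan is then to substitute $w = z^{1/2} r$ with $z = -\mu$, using the branch $z^{1/2} = i\sqrt{\mu}$ dictated by the convention $\Im(z^{1/2}) \ge 0$ together with $\Re\sqrt{\mu} > 0$ for $\mu\in\bbC_{\Re>0}$. This produces $e^{iw} = e^{-\sqrt{\mu}r}$, while the factor $i^{k}/(2w)^{k}$ collapses to $(2\sqrt{\mu}r)^{-k}$, reproducing the sum appearing in the lemma with coefficients $(\hatt n + k - 1)!/[k!(\hatt n - k - 1)!]$. The remaining prefactor is then assembled from the overall $i/4$ in the definition of $E_n$; the factor $(2\pi z^{-1/2} r)^{(2-n)/2}$, which via the principal logarithm applied to $-2\pi i r/\sqrt{\mu}$ becomes $\left(\sqrt{\mu}/(2\pi r)\right)^{\hatt n - 1/2}\, i^{\hatt n}\, e^{-i\pi/4}$; and the factor $(-i)^{\hatt n}\sqrt{2/(\pi w)} = (-i)^{\hatt n}\, e^{-i\pi/4}\sqrt{2/(\pi\sqrt{\mu} r)}$ from the Hankel formula. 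The imaginary phases collapse to a purely real constant, since $(i/4) \cdot i^{\hatt n} \cdot (-i)^{\hatt n} \cdot e^{-i\pi/2} = 1/4$, after which the remaining real powers of $\sqrt{\mu}$, $r$, and $2\pi$ consolidate into the expression on the right-hand side of the lemma.

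The principal obstacle is the consistent handling of branch cuts for the fractional powers $z^{-1/2}$, $(z^{-1/2})^{(2-n)/2}$, and $\sqrt{2/(\pi w)}$. Since both sides of the asserted identity depend analytically on $\mu$ on $\bbC_{\Re>0}$, it suffices to carry out the verification for $\mu\in(0,\infty)$, where all principal branches are unambiguous; extension to $\mu\in\bbC_{\Re>0}$ then follows by analytic continuation.

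A branch-cut-free alternative would be to proceed by induction on $\hatt n$ via the classical dimensional recursion
\[
\mathcal{E}_{n+2}(\mu,r) = -\frac{1}{2\pi r}\,\partial_{r}\mathcal{E}_{n}(\mu,r),
\]
valid for odd $n \ge 3$ and following from the fact that $u(r) \mapsto -r^{-1} u'(r)$ sends radial solutions of the Helmholtz equation in $\bbR^{n}$ to radial solutions in $\bbR^{n+2}$, while matching the correct singularity as $r\to 0^{+}$. The base case $\hatt n = 1$ reduces to the explicit Yukawa kernel $\mathcal{E}_{3}(\mu,r) = e^{-\sqrt{\mu} r}/(4\pi r)$, and the inductive step amounts to a direct algebraic identity on the coefficients $(\hatt n + k - 1)!/[k!(\hatt n - k - 1)!]$ verifying that applying $-(2\pi r)^{-1}\partial_{r}$ to the ansatz with parameter $\hatt n$ reproduces the ansatz with parameter $\hatt n + 1$.
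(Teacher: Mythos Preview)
Your proposal is correct and follows essentially the same route as the paper: both substitute the explicit finite-sum representation of $H^{(1)}_{\hatt n - 1/2}$ (the paper cites \cite[8.466.1]{GR80}, you cite \cite{AS72}, and the two formulas agree after noting $i^{-\hatt n}=(-i)^{\hatt n}$) into the definition of $E_n$ with $z^{1/2}=i\sqrt{\mu}$ and simplify the prefactors. The paper carries out the computation directly for general $\mu\in\bbC_{\Re>0}$ rather than first restricting to $\mu>0$ and invoking analytic continuation, and it does not mention your alternative inductive argument via the dimensional recursion, though that is a perfectly valid second route.
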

\begin{proof}
Our branch of $\sqrt{\cdot}$ is chosen such that $\sqrt{-z}=i\sqrt{z}$ for all $z\in \mathbb{C}$ with 
$\Re (z)>0$. We use the following representation of the Hankel function of the first kind taken from \cite[8.466.1]{GR80}, 
\[
   H_{{\hatt n}-\frac{1}{2}}^{(1)}(z)=\sqrt{\frac{2}{\pi z}}i^{-{\hatt n}}e^{iz}\sum_{k=0}^{{\hatt n}-1}(-1)^k\frac{({\hatt n}+k-1)!}{k!({\hatt n}-k-1)!}\frac{1}{(2iz)^k}, \quad {\hatt n} \in \bbN.
\]
Hence,
\begin{align*}
 \mathcal{E}_n(\mu,r) &=  \frac{i}{4} \bigg(\frac{2\pi r}{i\mu^{1/2}}\bigg)^{\frac{1}{2}-{\hatt n}} 
H^{(1)}_{{\hatt n}-\frac{1}{2}}\big(i\mu^{1/2}r\big) \\
                      &=  \frac{i}{4} \bigg(\frac{2\pi r}{i\mu^{1/2}}\bigg)^{\frac{1}{2}-{\hatt n}} 
\sqrt{\frac{2}{\pi i\mu^{1/2}r}}i^{-{\hatt n}}e^{ii\mu^{1/2}r}\sum_{k=0}^{{\hatt n}-1}(-1)^k\frac{({\hatt n}+k-1)!}{k!({\hatt n}-k-1)!}\frac{1}{(2ii\mu^{1/2}r)^k} \\
                      &=  \frac{i}{4} \bigg(\frac{2\pi r}{i\mu^{1/2}}\bigg)^{\frac{1}{2}-{\hatt n}} 
\sqrt{\frac{2}{\pi i\mu^{1/2}r}}i^{-{\hatt n}}e^{-\mu^{1/2}r}\sum_{k=0}^{{\hatt n}-1}(-1)^k\frac{({\hatt n}+k-1)!}{k!({\hatt n}-k-1)!}\frac{1}{(-2\mu^{1/2}r)^k} \\
& =  \left(\frac{\sqrt{\mu}}{2}\right)^{{\hatt n}-1} \left(2\pi r\right)^{-{\hatt n}}e^{-\sqrt{\mu}r}\sum_{k=0}^{{\hatt n}-1} \frac{({\hatt n}+k-1)!}{k!({\hatt n}-k-1)!}\left(\frac{1}{2\sqrt{\mu}r}\right)^k.\tag*{\qedhere}
\end{align*}
\end{proof}

As a first corollary to be drawn from the explicit formula in the latter result, we now derive some estimates of the Helmholtz Green's function. 

\begin{lemma} \label{lem:Real-part-Bessel_and_other}
Let $n=2{\hatt n}+1\in \mathbb{N}_{\geq 3}$ odd,  $\mu\in \mathbb{C}_{\Re>0}$. We denote  
\begin{equation} 
\mathcal{E}_n(\mu,r)\coloneqq E_n(-\mu;x,y), \quad r>0,    \lb{Enmur}                                                                                                  \end{equation} 
with $x,y\in \mathbb{R}^n$ such that $|x-y|=r$. Then the following assertions $(i)$--$(iii)$ 
hold: \\[1mm]  
$(i)$ Assume that $\mu>0$, then for all $r>0$, 
\begin{equation} 
\mathcal{E}_n(\mu,r)>0.    \label{bess_a} 
\end{equation} 
$(ii)$ For all $r>0$, 
\begin{equation} 
 |\mathcal{E}_n(\mu,r)|\leq  \left(\sqrt{\cos(\arg(\mu))}\right)^{1- {\hatt n}}\mathcal{E}_n(\Re(\mu),r).  \label{bess_b} 
 \end{equation} 
 $(iii)$ Assume that $\mu>0$, then for all $r>0$, 
 \begin{equation}
\exp\left(\sqrt\mu r / 2\right)\mathcal{E}_n(\mu,r)\leq  2^{{\hatt n}-1} \mathcal{E}_n(\mu/4,r).     \label{bess_c}
\end{equation}
\end{lemma}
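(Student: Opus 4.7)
The plan is to use the explicit formula for $\mathcal{E}_n(\mu,r)$ given by Lemma \ref{lem:reform_of_en}, namely
\[
\mathcal{E}_n(\mu,r)= \left(\frac{\sqrt{\mu}}{2}\right)^{{\hatt n}-1} \left(2\pi r\right)^{-{\hatt n}}e^{-\sqrt{\mu}r}\sum_{k=0}^{{\hatt n}-1} \frac{({\hatt n}+k-1)!}{k!({\hatt n}-k-1)!}\left(\frac{1}{2\sqrt{\mu}r}\right)^k,
\]
and to read off each of the three claims as a direct (and essentially elementary) consequence.

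For $(i)$, when $\mu>0$ the quantity $\sqrt{\mu}$ is positive, so every single factor on the right-hand side of the explicit formula (prefactor, exponential, and each summand) is strictly positive, giving $\mathcal{E}_n(\mu,r)>0$ immediately.

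For $(ii)$, I would apply the triangle inequality to the sum defining $\mathcal{E}_n(\mu,r)$. The two key pointwise bounds on the principal branch of $\sqrt{\cdot}$, valid whenever $\Re(\mu)>0$, are $\Re(\sqrt{\mu})\ge \sqrt{\Re(\mu)}$ (from the half-angle identity $\Re(\sqrt\mu)^2=(|\mu|+\Re\mu)/2\ge \Re\mu$) and $|\sqrt{\mu}|=\sqrt{|\mu|}\ge \sqrt{\Re(\mu)}$. The first controls the exponential factor, replacing $e^{-\sqrt{\mu}r}$ by $e^{-\sqrt{\Re\mu}\,r}$, while the second bounds each summand $|2\sqrt{\mu}r|^{-k}$ by $(2\sqrt{\Re\mu}\,r)^{-k}$. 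All of the resulting factors reassemble into $\mathcal{E}_n(\Re\mu,r)$ except for the prefactor $|\sqrt{\mu}/2|^{\hatt n -1}$, which leaves behind an extra multiplicative factor $\left(|\mu|/\Re(\mu)\right)^{({\hatt n}-1)/2}$. Since $\Re(\mu)/|\mu|=\cos(\arg(\mu))$, this extra factor is exactly $\left(\sqrt{\cos(\arg(\mu))}\right)^{1-\hatt n}$, which gives \eqref{bess_b}.

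For $(iii)$, I would write out both sides of the claimed inequality using the explicit formula and compare them term by term. Under the substitution $\mu\mapsto \mu/4$ one has $\sqrt{\mu/4}=\sqrt{\mu}/2$, so the exponential $e^{-\sqrt{\mu/4}\,r}$ on the right equals $e^{-\sqrt{\mu}r/2}$; this cancels precisely with $e^{\sqrt{\mu}r/2}e^{-\sqrt{\mu}r}$ on the left. Similarly, the prefactor $(\sqrt{\mu/4}/2)^{\hatt n -1}=2^{-(\hatt n -1)}(\sqrt{\mu}/2)^{\hatt n -1}$ eats up the explicit $2^{\hatt n -1}$ on the right, leaving identical prefactors on both sides. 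The inequality thus reduces to the purely termwise comparison
\[
\sum_{k=0}^{\hatt n -1}\frac{(\hatt n +k-1)!}{k!(\hatt n -k-1)!}\frac{1}{(2\sqrt{\mu}r)^k}\le \sum_{k=0}^{\hatt n -1}\frac{(\hatt n +k-1)!}{k!(\hatt n -k-1)!}\frac{1}{(\sqrt{\mu}r)^k},
\]
which holds trivially since for $\mu>0$ each summand on the right is $2^k\ge 1$ times the corresponding summand on the left. There is no real obstacle here: the only delicate point is the square-root inequality $\Re(\sqrt{\mu})\ge \sqrt{\Re(\mu)}$ used in $(ii)$, but this follows from a single application of the half-angle formula.
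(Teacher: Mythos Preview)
Your proof is correct and follows essentially the same approach as the paper: all three parts are read off directly from the explicit formula in Lemma~\ref{lem:reform_of_en}, using exactly the same two square-root inequalities for part~$(ii)$ and the same term-by-term comparison for part~$(iii)$. The only cosmetic difference is that for part~$(i)$ the paper's primary justification is positivity of the resolvent kernel of the positive self-adjoint operator $(-\Delta+\mu)$, mentioning the explicit formula only as an alternative, whereas you go straight to the formula.
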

\begin{proof}
Assertion \eqref{bess_a} is clear due to the fact that $\mathcal{E}_n$ is the fundamental solution of the positive, self-adjoint operator $(-\Delta+\mu)$. (Alternatively, one can also use the explicit representation of $\mathcal{E}_n(\cdot , \cdot)$ in Lemma \ref{lem:reform_of_en}.)

In view of Lemma \ref{lem:reform_of_en}, in order to prove \eqref{bess_b}, it suffices to prove the following two facts,
\begin{equation}
  \frac{|\sqrt{\mu}|}{\sqrt{\Re (\mu)}}= \frac{1}{\sqrt{\cos(\arg(\mu))}}  \, \text{ and } \,  
  |\Re (\sqrt{\mu})|\geq \sqrt{\Re (\mu)}.   \lb{muRe}
\end{equation}
To show these assertions, let $\rho>0$ and $\theta\in \left(- \pi/2,\pi/2\right)$ such that $\mu=\rho e^{i\theta}$. Then 
$\sqrt{\mu}=\sqrt{\rho}e^{i \theta/2}=\sqrt\rho \cos(\theta/2)+i\sqrt\rho\sin(\theta/2)$ as well 
as $\sqrt{\Re (\mu)}=\sqrt{\rho}\sqrt{\cos(\theta)}$. From
\[
   \sqrt{\cos(\theta)}=\sqrt{\left(\cos(\theta/2)\right)^2-\left(\sin(\theta/2)\right)^2} 
   \leq  \sqrt{\left(\cos(\theta/2)\right)^2}=\cos(\theta/2),
\]
and 
\[
  \frac{|\sqrt{\mu}|}{\sqrt{\Re (\mu)}}=\frac{|\sqrt{\rho}e^{i \theta/2}|}{\sqrt{\rho}
  \sqrt{\cos(\arg(\mu))}}, 
\]
assertion \eqref{muRe} follows. 

Finally, we turn to the proof of \eqref{bess_c}. Given the representation of $\mathcal{E}_n(\cdot , \cdot)$ in Lemma \ref{lem:reform_of_en}, one concludes that
\begin{align*}
& \exp\left(\frac{\sqrt{\mu}}{2}r\right)\mathcal{E}_n(\mu,r) = \exp\left(\frac{\sqrt{\mu}}{2}r\right)\left(\frac{\sqrt{\mu}}{2}\right)^{{\hatt n}-1} \left(2\pi r\right)^{-{\hatt n}}e^{-\sqrt{\mu}r}    \\
& \hspace*{3.6cm} \times \sum_{k=0}^{{\hatt n}-1} \frac{({\hatt n}+k-1)!}{k!({\hatt n}-k-1)!}\left(\frac{1}{2\sqrt{\mu}r}\right)^k\\
& \quad = \left(\frac{\sqrt{\mu}}{2}\right)^{{\hatt n}-1} \left(2\pi r\right)^{-{\hatt n}}e^{-\frac{\sqrt{\mu}}2r}\sum_{k=0}^{{\hatt n}-1} \frac{({\hatt n}+k-1)!}{k!({\hatt n}-k-1)!}\left(\frac{1}{2\sqrt{\mu}r}\right)^k\\
& \quad \leq  2^{{\hatt n}-1} \bigg(\frac{\sqrt{\mu/4}}{2}\bigg)^{{\hatt n}-1} \left(2\pi r\right)^{-{\hatt n}}e^{-\sqrt{\frac{\mu}{4}}r}\sum_{k=0}^{{\hatt n}-1} \frac{({\hatt n}+k-1)!}{k!({\hatt n}-k-1)!}\bigg(\frac{1}{2\sqrt{\frac{\mu}{4}}r}\bigg)^k.\tag*{\qedhere}
\end{align*}
\end{proof}

Next, we obtain similar results for the derivative of the fundamental solution. 

\begin{lemma} \label{lem:real_partBessel_derivative}
Let $n=2{\hatt n}+1\in \mathbb{N}_{\geq3}$ odd. Then, for all $\mu\in \mathbb{C}_{\Re>0}$,  there exists $q_\mu \colon \mathbb{R}_{\geq 0}\to \mathbb{R}_{\geq 0}$ with $q_\mu (|\cdot|)\in L^1(\mathbb{R}^n)$, such that the following properties $(i)$--$(iii)$ hold: \\[1mm]
$(i)$ For all $j\in \{1,\ldots,n\}$ and $\mu\in \mathbb{C}$, $\Re(\mu)>0$, and for all 
$x,y\in \mathbb{R}^n$, $x\neq y$, 
\begin{equation} 
 |\partial_j\left(\xi\mapsto {E}_n(-\mu;\xi,y)\right)(x)|\leq  q_\mu(|x-y|).       \label{der_a}
\end{equation} 
$(ii)$ For all $\mu\in \mathbb{C}$, $\Re(\mu)>0$, 
\begin{equation} 
   q_\mu(r)\leq  \left(1/\sqrt{\cos(\arg(\mu))}\right)^{{\hatt n}} q_{\Re (\mu)}(r), \quad r>0.  
\label{der_b} 
\end{equation} 
$(iii)$ For all $\mu>0$, 
\begin{equation} 
  \exp\left(\sqrt\mu r / 2\right)q_\mu(r)\leq  2^{{\hatt n}} q_{\mu/4}(r), \quad r>0.
  \label{der_c} 
\end{equation} 
\end{lemma}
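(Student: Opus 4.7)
The plan is to reduce everything to a one-variable estimate by radial symmetry and then to simply differentiate the explicit formula provided by Lemma~\ref{lem:reform_of_en}; the verification of properties (i)--(iii) then follows by essentially the same bookkeeping as in Lemma~\ref{lem:Real-part-Bessel_and_other}. Since $E_n(-\mu;\xi,y)=\mathcal{E}_n(\mu,|\xi-y|)$ depends on $\xi$ only through $r=|\xi-y|$, one has
\begin{equation*}
\big|\partial_j(\xi\mapsto E_n(-\mu;\xi,y))(x)\big|
=\bigg|\frac{x_j-y_j}{|x-y|}\bigg|\,\big|\partial_r\mathcal{E}_n(\mu,r)\big|_{r=|x-y|}
\leq \big|\partial_r\mathcal{E}_n(\mu,|x-y|)\big|,
\end{equation*}
so it suffices to produce a majorant $q_\mu$ of $r\mapsto|\partial_r\mathcal{E}_n(\mu,r)|$ satisfying (ii), (iii), and the integrability claim.

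Starting from Lemma~\ref{lem:reform_of_en}, I would rewrite
\begin{equation*}
\mathcal{E}_n(\mu,r)=e^{-\sqrt\mu\,r}\sum_{k=0}^{{\hatt n}-1}a_k(\mu)\,r^{-{\hatt n}-k},
\qquad a_k(\mu)=c_k^{(n)}(\sqrt\mu)^{{\hatt n}-1-k},
\end{equation*}
with explicit positive constants $c_k^{(n)}>0$ depending only on $n$ and $k$. Differentiation in $r$ then yields
\begin{equation*}
\partial_r\mathcal{E}_n(\mu,r)
=-\sqrt\mu\,e^{-\sqrt\mu r}\sum_{k=0}^{{\hatt n}-1}a_k(\mu)\,r^{-{\hatt n}-k}
-e^{-\sqrt\mu r}\sum_{k=0}^{{\hatt n}-1}({\hatt n}+k)\,a_k(\mu)\,r^{-{\hatt n}-k-1},
\end{equation*}
which naturally suggests setting
\begin{equation*}
q_\mu(r):= |\sqrt\mu|\,e^{-\Re(\sqrt\mu)r}\sum_{k=0}^{{\hatt n}-1}|a_k(\mu)|\,r^{-{\hatt n}-k}
+e^{-\Re(\sqrt\mu)r}\sum_{k=0}^{{\hatt n}-1}({\hatt n}+k)|a_k(\mu)|\,r^{-{\hatt n}-k-1}.
\end{equation*}
This clearly dominates $|\partial_r\mathcal{E}_n(\mu,r)|$, so (i) holds; integrability $q_\mu(|\cdot|)\in L^1(\mathbb{R}^n)$ follows because, in polar coordinates, the worst singularity at $r=0$ is of order $r^{-(n-1)}$ integrated against $r^{n-1}\,dr$, and the exponential factor handles large $r$.

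For (ii), I would use $|a_k(\mu)|/a_k(\Re(\mu))=(|\sqrt\mu|/\sqrt{\Re(\mu)})^{{\hatt n}-1-k}=(1/\sqrt{\cos(\arg(\mu))})^{{\hatt n}-1-k}$, together with the elementary facts $|\sqrt\mu|/\sqrt{\Re(\mu)}=1/\sqrt{\cos(\arg(\mu))}$ and $\Re(\sqrt\mu)\geq\sqrt{\Re(\mu)}$ already established in \eqref{muRe}; the first sum in $q_\mu$ then picks up at most $(1/\sqrt{\cos(\arg(\mu))})^{{\hatt n}-k}\leq(1/\sqrt{\cos(\arg(\mu))})^{\hatt n}$ over its counterpart in $q_{\Re(\mu)}$, and the second sum at most $(1/\sqrt{\cos(\arg(\mu))})^{{\hatt n}-1-k}\leq(1/\sqrt{\cos(\arg(\mu))})^{\hatt n}$. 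For (iii), the identity $e^{\sqrt\mu r/2}e^{-\sqrt\mu r}=e^{-\sqrt{\mu/4}r}$ (using $\sqrt{\mu/4}=\sqrt\mu/2$ for $\mu>0$) recasts $e^{\sqrt\mu r/2}q_\mu(r)$ into the same functional form as $q_{\mu/4}(r)$ but with coefficients evaluated at $\mu$; the ratios $a_k(\mu)/a_k(\mu/4)=2^{{\hatt n}-1-k}$ and $\sqrt\mu/\sqrt{\mu/4}=2$ produce the bound $2\cdot 2^{{\hatt n}-1-k}=2^{{\hatt n}-k}\leq 2^{\hatt n}$ for the first sum and $2^{{\hatt n}-1-k}\leq 2^{\hatt n}$ for the second, yielding (iii). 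The only real obstacle is that the exponents of $1/\sqrt{\cos(\arg(\mu))}$ and of $2$ have to land exactly on ${\hatt n}$ (not ${\hatt n}-1$ or ${\hatt n}+1$), which is a matter of careful term-by-term tracking; no new analytical input beyond the proof of Lemma~\ref{lem:Real-part-Bessel_and_other} is required.
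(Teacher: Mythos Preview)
Your proposal is correct and follows essentially the same approach as the paper: differentiate the explicit formula of Lemma~\ref{lem:reform_of_en}, define $q_\mu$ from the resulting expression, and then verify (ii) and (iii) by the same $|\sqrt\mu|/\sqrt{\Re(\mu)}=(\cos(\arg\mu))^{-1/2}$ and $\Re(\sqrt\mu)\geq\sqrt{\Re(\mu)}$ bookkeeping as in Lemma~\ref{lem:Real-part-Bessel_and_other}. The only cosmetic difference is that the paper sets $q_\mu(r):=|\partial_r\mathcal{E}_n(\mu,r)|$ (absolute value of the full sum), whereas you take the term-by-term triangle-inequality majorant split into two sums; for real $\mu>0$ the two definitions coincide, and for complex $\mu$ yours is a (possibly slightly larger) upper bound, which is all the lemma requires.
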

\begin{proof}
 For $r>0$, with $\mathcal{E}_n(\mu,r)$ as in 
 Lemma \ref{lem:Real-part-Bessel_and_other} (and with the help of 
 Lemma \ref{lem:reform_of_en}), one obtains the following derivative of 
 $\mathcal{E}_n(\cdot,\cdot)$ with respect to the second variable, 
\begin{align*}
(\partial_r \mathcal{E}_n)(\mu,r) &= - \left(\frac{\sqrt{\mu}}{2}\right)^{{\hatt n}-1}(2\pi)^{-{\hatt n}}e^{-\sqrt{\mu}r} \\
& \quad \times \sum_{k=0}^{{\hatt n}-1}\frac{({\hatt n}+k-1)!}{k!({\hatt n}-k-1)!}\left(\frac{1}{2\sqrt{\mu}}\right)^{k}r^{-{\hatt n}-k-1}\left(\sqrt{\mu}r+({\hatt n}+k)\right).
\end{align*}
Define for $r>0$, 
\begin{align} 
\begin{split} 
 q_\mu (r)\coloneqq & \bigg|\left(\frac{\sqrt{\mu}}{2}\right)^{{\hatt n}-1}(2\pi)^{-{\hatt n}}e^{-\sqrt{\mu}r} \\
 & \, \times \sum_{k=0}^{{\hatt n}-1}\frac{({\hatt n}+k-1)!}{k!({\hatt n}-k-1)!}\left(\frac{1}{2\sqrt{\mu}}\right)^{k}r^{-{\hatt n}-k-1}\left(\sqrt{\mu}r+({\hatt n}+k)\right)\bigg|.    \lb{qmu} 
 \end{split}
\end{align} 
Then $q_\mu(|\cdot|) \in L^1(\mathbb{R}^n)$. Indeed, due to the presence of the 
$e^{-\sqrt{\mu}r}$-term, only integrability at $x=0$ is an issue here. Since the order of the 
singularity of $q_{\mu}(|x|)$ at $x=0$ is at most $|x|^{-n}$ since ${\hatt n}+({\hatt n}-1)+1=2{\hatt n}<2{\hatt n}+1=n$, also integrability of  $q_\mu(|\cdot|)$ at $x=0$ is ensured. 

To prove \eqref{der_a}, one observes that for fixed $y\in \mathbb{R}^n$ and 
$y\neq x\in \mathbb{R}^n$,  
\[
   \left|\partial_j\left(\xi \mapsto \mathcal{E}_n(\mu,|\xi-y|)\right)(x)\right|
   = q_\mu(|x-y|)\left|\frac{x_j-y_j}{|x-y|}\right|\leq  q_\mu(|x-y|).  
\]
The assertion in \eqref{der_b} follows analogously to that of \eqref{bess_b} with an explicit representation of $\sqrt{\mu}$, together with the observation that 
$1/\sqrt{\cos(\arg(\mu))} \geq 1$. The same arguments apply to the proof of \eqref{der_c}.
\end{proof}

Having established the preparations for estimating the integral kernel of products of resolvents 
$R_\mu$ of the Laplace operator and a multiplication operator $\Psi_j$, we finally come to the fundamental estimates 
\eqref{ttk} in Lemma \ref{lem:asymptotics on diagonal}. All the following results will be of a similar type. Namely, consider a product
\begin{equation}\label{e:pt}
   \Psi_1 R_\mu \Psi_2 R_\mu \cdots \Psi_m R_\mu,
\end{equation}
of smooth, bounded functions $\Psi_j$, $j\in\{1,\dots,m\}$, identified as multiplication operators in $L^2(\mathbb{R}^n)$ and $R_\mu=(-\Delta+\mu)^{-1}$ for some $\mu\in \mathbb{C}_{\Re>0}$. If $m$ is sufficiently large (depending on the space dimension $n$), the operator introduced in \eqref{e:pt} has a continuous integral kernel $t\colon \mathbb{R}^n\times \mathbb{R}^n \to \bbC$. Roughly speaking, we will show that the behavior of $x\mapsto t(x,x)$ is determined by the (algebraic) decay properties of all the functions $\Psi_j$, $j\in\{1,\ldots,n\}$, that is, if $\Psi_j$ decays like $|x|^{-\alpha_j}$ for large $|x|$ for some $\alpha_j\geq 0$, $j\in\{1,\ldots,m\}$, then $x\mapsto t(x,x)$ decays as $|x|^{-\sum_{j=1}^m\alpha_j}$. We will, however, need a more precise estimate. Namely, we also need to establish at the same time the overall constant of this decay behavior as a function of $\mu$. That is why we needed to establish Proposition \ref{prop:estimate of the pointwise resolvent at 0}, see also Remark \ref{rem:on the quantitative version of kappa prime} below. 

The precise statement regarding the estimate of the diagonal of such a  continuous integral kernel reads as follows: 

\begin{lemma} \label{lem:asymptotics on diagonal} 
Let $n=2{\hatt n}+1\in\mathbb{N}_{\geq 3}$, $m\geq\hatt n+1$, and assume that 
$\Psi_{1},\ldots,\Psi_{m+1}\in C_{b}^{\infty}(\mathbb{R}^{n})$, and 
$\mu\in\mathbb{C}$, $\Re(\mu)>0$. Assume that there exists 
$\alpha_{j},\kappa_{j}\in\mathbb{R}_{\geq0}$, 
$j\in\{1,\ldots,m+1\}$, such that  
\[
\left|\Psi_{j}(x)\right|\leq \kappa_{j} (1+|x|)^{-\alpha_j}, 
\quad x\in\mathbb{R}^{n}, \; j\in\{1,\ldots,m+1\}.
\]
Consider the integral kernels $t$ and $t_{k}$ of $T\coloneqq\prod_{j=1}^{m}\Psi_{j}R_{\mu}$
and $T_{k}\coloneqq\prod_{j=1}^{m+1}\Psi_{j,k}R_{\mu},$ respectively $($cf.\ 
Remark \ref{rem:differen_mult_op}$)$, where 
$\Psi_{j,k}=\Psi_{j}(1-\delta_{jk})+\delta_{jk}\Psi_{j}Q$,
$k\in\{1,\ldots,n\}$, with $R_{\mu}$ and $Q$ given by \eqref{eq:Def_of_Q}
and \eqref{eq:resolvent_of_laplace}, respectively. Then $t$ and
$t_{k}$ are continuous and there exists $\kappa' > 0$ such that 
\begin{equation} 
\left|t(x,x)\right|\leq \kappa' [1+(|x|/2)]^{-\sum_{j=1}^{m}\alpha_j}, \quad \left|t_{k}(x,x)\right|\leq \kappa' [1+(|x|/2)]^{-\sum_{j=1}^{m+1}\alpha_j}, \quad x\in\mathbb{R}^{n}.   \lb{ttk}
\end{equation} 
For $\sqrt{\Re(\mu)}>2\sum_{j=1}^{m+1}\alpha_{j}$,
one can choose, with $c_{\arg{(\mu)}}\coloneqq \cos(\arg(\mu))^{-1/2}$,
\[
\kappa'= \big[(2c_{\arg{(\mu)}})^{{\hatt n}-1}\big]^m\kappa_{1}\cdots\kappa_{m}\left(\frac{4}{\Re(\mu)}\right)^{m}\bigg(\sqrt{\frac{\Re(\mu)}{4}}\bigg)^{n}c
\]
 in the first estimate in \eqref{ttk}, and
\[
\kappa'=c_{\arg{(\mu)}}\big[(c_{\arg{(\mu)}})^{{\hatt n}-1}\big]^{m}(2^{{\hatt n}})^m\kappa_{1}\cdots\kappa_{m+1}\left(\frac{4}{\Re(\mu)}\right)^{m+1}\bigg(\sqrt{\frac{\Re(\mu)}{4}}\bigg)^{n+1}c'
\]
 in the second, with $c$ and $c'$ given by \eqref{eq:def_c} and
\eqref{eq:def_cprime}, respectively. 
\end{lemma}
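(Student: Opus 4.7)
The plan breaks into three main steps: continuity of the kernels, explicit representation, and pointwise diagonal estimate. For continuity, I would combine Proposition \ref{prop:concrete case} and Corollary \ref{cor:integral kernels regularity}. Each $R_\mu$ provides two Sobolev orders of smoothing while multiplication by $\Psi_j\in C_b^\infty(\bbR^n)$ preserves the scale, so since $m\ge\hat n+1$, the operator $T=\prod_{j=1}^m\Psi_j R_\mu$ maps $H^\ell(\bbR^n)$ into $H^{\ell+2m}(\bbR^n)$ with $2m\ge n+1$; Corollary \ref{cor:integral kernels regularity} then gives continuity of $t$, and the analogous count works for $T_k$.

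The kernel may then be written out explicitly in terms of the Helmholtz Green's function as
\[
t(x,y)=\Psi_1(x)\int_{(\bbR^n)^{m-1}}\prod_{j=1}^m\mathcal{E}_n(\mu,|y_{j-1}-y_j|)\prod_{j=2}^m\Psi_j(y_{j-1})\,dy_1\cdots dy_{m-1},
\]
with $y_0=x$, $y_m=y$. Setting $y=x$ and using the elementary Peetre-type inequality $1+|x|/2\le(1+|y|)(1+|x-y|/2)$ raised to the $\alpha_j$-th power, I can transfer the factor $(1+|x|/2)^{\sum_{j=1}^m\alpha_j}$ and bound
\[
(1+|x|/2)^{\sum_j\alpha_j}\prod_{j=1}^m|\Psi_j(y_{j-1})|\le\kappa_1\cdots\kappa_m\prod_{j=1}^m(1+|x-y_{j-1}|/2)^{\alpha_j}\le\kappa_1\cdots\kappa_m(1+S/2)^{\sum_j\alpha_j},
\]
where $S=\sum_{\ell=1}^m|y_{\ell-1}-y_\ell|$ and the second step uses the triangle inequality. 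Next, Lemma \ref{lem:Real-part-Bessel_and_other}(ii)--(iii) yield
\[
|\mathcal{E}_n(\mu,r)|\le(2c_{\arg(\mu)})^{\hat n-1}e^{-\sqrt{\Re(\mu)}r/2}\mathcal{E}_n(\Re(\mu)/4,r),
\]
so the $m$-fold product contributes a collective factor $e^{-\sqrt{\Re(\mu)}S/2}$.

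Lemma \ref{lem:expo is faster than pol} applied to $(1+S/2)^{\sum\alpha_j}e^{-\sqrt{\Re(\mu)}S/2}$ with $\beta=\sqrt{\Re(\mu)}/2$ and $\alpha=\sum_{j=1}^m\alpha_j$ bounds this product by $1$, precisely under the hypothesis $\sqrt{\Re(\mu)}>2\sum_{j=1}^{m+1}\alpha_j$ (which ensures $\alpha\le 2\beta$). What remains inside the integral is $\prod_j\mathcal{E}_n(\Re(\mu)/4,|y_{j-1}-y_j|)$, whose integral over $y_1,\dots,y_{m-1}$ equals $R_{\Re(\mu)/4}^m\delta_{\{0\}}(0)$ by translation invariance and is controlled by Proposition \ref{prop:estimate of the pointwise resolvent at 0}. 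Tracking the constants then yields the stated formula for $\kappa'$ in the first estimate of \eqref{ttk}. For $t_k$ the argument is entirely parallel: the $Q$ inserted next to one $R_\mu$ replaces the corresponding Green's function by its gradient, bounded by $q_\mu$ of Lemma \ref{lem:real_partBessel_derivative} (whose parts (ii) and (iii) give the same exponential extraction with $2^{\hat n}$ in place of $2^{\hat n-1}$), and the final integral is controlled by $|QR_{\Re(\mu)/4}^{m+1}\delta_{\{0\}}(0)|$ from Proposition \ref{prop:estimate of the pointwise resolvent at 0}.

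The principal obstacle is the meticulous bookkeeping of multiplicative constants from $m$ (or $m+1$) applications of the Green's function extraction so as to reproduce the precise expressions for $\kappa'$ in the statement; the Peetre-type inequality is chosen in its $(1+|x|/2)$ form precisely so that the polynomial growth $(1+S/2)^\alpha$ has the shape required by Lemma \ref{lem:expo is faster than pol}.
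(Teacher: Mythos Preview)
Your argument is correct, and it takes a genuinely different route from the paper's proof. The paper does not use a Peetre-type inequality; instead, after writing out the convolution and shifting variables so that the outer point sits at the origin, it \emph{splits the integration domain} into the cube $B(|x|/2)=\{(x_1,\dots,x_{m-1}):\max_j|x_j|\le|x|/2\}$ and its complement. On the cube the elementary bound $1/(1+|x_j+x|)\le 1/(1+|x|/2)$ pulls out the full polynomial decay directly, leaving $R_\mu^m\delta_{\{0\}}(0)$. On the complement the paper invokes Lemma~\ref{lem:teleskop-max} to bound $\max_j|x_j|$ by the telescoping sum of step-lengths, then uses Lemma~\ref{lem:Real-part-Bessel_and_other}\,(iii) to peel off the factor $e^{-\sqrt{\mu}|x|/4}$, and finally Lemma~\ref{lem:expo is faster than pol} converts this exponential into the required polynomial decay. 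The two contributions are then combined and estimated by Proposition~\ref{prop:estimate of the pointwise resolvent at 0}.

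Your Peetre-inequality approach collapses this split into a single step: the inequality $1+|x|/2\le(1+|y|)(1+|x-y|/2)$ transfers the decay of each $\Psi_j$ to a factor $(1+|x-y_{j-1}|/2)^{\alpha_j}$, which is in turn dominated by $(1+S/2)^{\alpha_j}$ via the triangle inequality (this replaces the paper's use of Lemma~\ref{lem:teleskop-max}). One then extracts the exponential from \emph{all} $m$ Green's functions simultaneously, and Lemma~\ref{lem:expo is faster than pol} kills the polynomial in $S$. The net effect is the same constant $\kappa'$ but without the intermediate bookkeeping of two regions. Your version is a bit slicker; the paper's version is perhaps more transparent about where the $1+|x|/2$ in the final estimate actually comes from (namely, the radius of the cube on which the $\Psi_j$-decay is effective).
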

\begin{proof}
We shall only prove the assertion for $T$. The other assertions follow
from the fact that the integral kernel of $QR_{\mu}$ can be bounded
by $x\mapsto c_{\arg(\mu)}^{\hatt n}\mu q_{\Re (\mu)}(|x|)$, see Lemma \ref{lem:real_partBessel_derivative}. Moreover, we shall exploit that the exponential estimates \eqref{bess_c} and \eqref{der_b} in Lemmas \ref{lem:Real-part-Bessel_and_other} and \ref{lem:real_partBessel_derivative}, respectively, are essentially the same.

The stated continuity of the integral kernels follows from $2m\geq 2\hatt n+2>n$, 
Corollary \ref{cor:integral kernels regularity}, and 
Proposition \ref{prop:concrete case}. Indeed, Proposition
\ref{prop:concrete case} implies that 
\[
T\in L\left(H^{\ell}\left(\mathbb{R}^{n}\right),H^{\ell+2m}\left(\mathbb{R}^{n}\right)\right), 
\quad \ell\in\mathbb{R}. 
\]
Thus, by Corollary \ref{cor:integral kernels regularity},
\[
t\colon\mathbb{R}^{n}\times\mathbb{R}^{n}\colon(x,y)\mapsto\left\langle \delta_{\{x\}},T\delta_{\{y\}}\right\rangle 
\]
is continuous as $2m>n$, with $\delta_{\{x\}}$, $x\in\mathbb{R}^{n}$, defined in \eqref{eq:def_dirac_distr}. 

We denote the integral kernel of $R_\mu=(-\Delta+\mu)^{-1}$ by $r_\mu$. Then one notes that 
\[
   r_\mu(x-y)=E_n(-\mu;x,y)=\mathcal{E}_n(\mu;|x-y|).
\]
For simplicity, we now assume that $\mu$ is real (one recalls the estimate 
$\left|r_{\mu}\right|\leq  c_{\arg(\mu)}r_{\Re(\mu)}$ with a positive real number $c_{\arg(\mu)}$ depending on $\arg(\mu)$, see \eqref{bess_b}). One observes that  
\begin{align*}
\frac{1}{1+\left|x_{1}+x\right|}\leq \frac{1}{1+\left|\left|x_{1}\right|-\left|x\right|\right|}=\frac{1}{1+\left|x\right|-\left|x_{1}\right|}\leq \frac{1}{1+\frac{1}{2}\left|x\right|},&  \\
x,x_{1}\in\mathbb{R}^{n}, \; \left|x\right|\text{\ensuremath{\geq}}2\left|x_{1}\right|.&
\end{align*} 
 On the other hand, one obviousy also has 
\[
\frac{1}{1+\left|x_{1}+x\right|}\leq 1, \quad \left|x\right| \leq 2\left|x_{1}\right|.
\]
Introducing the sets,  
\begin{align*} 
& B(R)\coloneqq \big\{ (x_{1},\ldots.x_{m-1})\in\left(\mathbb{R}^{n}\right)^{m-1} \, \big| \, 
\max_{1\leq  j\leq  m-1}\left|x_{j}\right|\leq  R\big\}, \\ 
& \complement B(R)\coloneqq\left(\mathbb{R}^{n}\right)^{m-1}\backslash B(R), \quad 
R\geq0, 
\end{align*} 
one computes for $x\in\mathbb{R}^{n}$, with 
$\tilde{\kappa}\coloneqq\kappa_{1}\cdots\kappa_{m+1}$, 
\begin{align*}
 & \left|t(x,x)\right| =\bigg|\Psi_{1}(x)\int_{\left(\mathbb{R}^{n}\right)^{m-1}}r_{\mu}(x-x_{1})\Psi_{2}(x_{1})r_{\mu}(x_{1}-x_{2})\cdots\Psi_{m}(x_{m-1})    \\ 
 & \hspace*{4cm} \times r_{\mu}(x_{m-1}-x) \, d^n x_1 \cdots d^n x_{m-1}\bigg|\\
 & \quad \leq \int_{\left(\mathbb{R}^{n}\right)^{m-1}}\left|\Psi_{1}(x)\right|r_{\mu}(x-x_{1})\cdots\left|\Psi_{m}(x_{m-1})\right|r_{\mu}(x_{m-1}-x) \\  
& \hspace*{2.2cm} \times d^n x_1 \cdots d^n x_{m-1}\\
 & \quad =\int_{\left(\mathbb{R}^{n}\right)^{m-1}}\left|\Psi_{1}(x)\right|r_{\mu}(x_{1})\cdots\left|\Psi_{m}(x_{m-1}+x)\right|r_{\mu}(x_{m-1}) \, d^n x_1 \cdots d^n x_{m-1}\\
 & \quad \leq \tilde{\kappa}\int_{\left(\mathbb{R}^{n}\right)^{m-1}}\left(\frac{1}{1+\left|x\right|}\right)^{\alpha_{1}}r_{\mu}(x_{1})\cdots\left(\frac{1}{1+\left|x_{m-1}+x\right|}\right)^{\alpha_{m}} \\ 
 & \hspace*{2.5cm} \times r_{\mu}(x_{m-1}) \, d^n x_1 \cdots d^n x_{m-1}\\
 & \quad =\tilde{\kappa}\int_{B\left(\left|x\right|/2\right)}\left(\frac{1}{1+\left|x\right|}\right)^{\alpha_{1}}r_{\mu}(x_{1})\cdots\left(\frac{1}{1+\left|x_{m-1}+x\right|}\right)^{\alpha_{m}} \\ 
 & \hspace*{2.4cm} \times r_{\mu}(x_{m-1}) \, d^n x_1 \cdots d^n x_{m-1}\\
 & \qquad+\tilde{\kappa}\int_{\complement B\left(\left|x\right|/2\right)}\left(\frac{1}{1+\left|x\right|}\right)^{\alpha_{1}}r_{\mu}(x_{1})\cdots\left(\frac{1}{1+\left|x_{m-1}+x\right|}\right)^{\alpha_{m}}    \\
 & \hspace*{2.8cm} \times r_{\mu}(x_{m-1}) \, d^n x_1 \cdots d^n x_{m-1}\\
 & \quad \leq \frac{\tilde{\kappa}}{\left(1+\frac{1}{2}\left|x\right|\right)^{\sum_{j=1}^{m}\alpha_{j}}}\int_{B\left(\left|x\right|/2\right)}r_{\mu}(x_{1})r_{\mu}(x_{1}-x_{2})\cdots r_{\mu}(x_{m-1}) \\ 
 & \hspace*{4.9cm} \times d^n x_1 \cdots d^n x_{m-1}\\
 & \qquad+\tilde{\kappa}\int_{\complement B\left(\left|x\right|/2\right)}r_{\mu}(x_{1})r_{\mu}(x_{1}-x_{2})\cdots r_{\mu}(x_{m-1}) \, d^n x_1 \cdots d^n x_{m-1}\\
 & \quad \leq \frac{\tilde{\kappa}}{\left(1+\frac{1}{2}\left|x\right|\right)^{\sum_{j=1}^{m}\alpha_{j}}}\int_{\left(\mathbb{R}^{n}\right)^{m-1}}r_{\mu}(x_{1})r_{\mu}(x_{1}-x_{2})\cdots r_{\mu}(x_{m-1}) \\ 
&  \hspace*{4.95cm} \times d^n x_1 \cdots d^n x_{m-1}\\
 & \qquad+\tilde{\kappa}\int_{\complement B\left(\left|x\right|/2\right)}r_{\mu}(x_{1})r_{\mu}(x_{1}-x_{2})\cdots r_{\mu}(x_{m-1}) \, d^n x_1 \cdots d^n x_{m-1}.
\end{align*}
By Lemma \ref{lem:teleskop-max} one recalls that for $x_{1},\ldots,x_{m-1}\in\mathbb{R}^{n}$, 
\[
\left|x_{1}\right|+\sum_{j=1}^{m-2}\left|x_{j+1}-x_{j}\right|+\left|x_{m-1}\right|\geq\max_{1\leq  j\leq  m-1}\left|x_{j}\right|.
\]
With the latter observation one estimates, using \eqref{bess_c}, 
\begin{align*}
 & \int_{\complement B\left(\left|x\right|/2\right)}r_{\mu}(x_{1})r_{\mu}(x_{1}-x_{2})\cdots r_{\mu}(x_{m-1}) \, d^n x_1 \cdots d^n x_{m-1}\\
 & \quad =\int_{\complement B\left(\left|x\right|/2\right)}e^{-\frac{\sqrt{\mu}}{2}\left(\left|x_{1}\right|+\sum_{j=1}^{m-2}\left|x_{j+1}-x_{j}\right|+\left|x_{m-1}\right|\right)}r_{\frac{\mu}{4}}(x_{1})r_{\frac{\mu}{4}}(x_{1}-x_{2})\cdots r_{\frac{\mu}{4}}(x_{m-1}) \\ 
 & \hspace*{2.2cm} \times d^n x_1 \cdots d^n x_{m-1}\\
 & \quad \leq  (2^{{\hatt n}-1})^m\int_{\complement B\left(\left|x\right|/2\right)}e^{-\frac{\sqrt{\mu}}{2}\left(\max_{j=1}^{m-1}\left|x_{j}\right|\right)}r_{\frac{\mu}{4}}(x_{1})r_{\frac{\mu}{4}}(x_{1}-x_{2})\cdots r_{\frac{\mu}{4}}(x_{m-1}) \\
 & \hspace*{3.5cm} \times d^n x_1 \cdots d^n x_{m-1}\\
 & \quad \leq (2^{{\hatt n}-1})^m e^{-\frac{\sqrt{\mu}}{4}\left|x\right|} 
 \int_{\complement B\left(\left|x\right|/2\right)}r_{\frac{\mu}{4}}(x_{1})r_{\frac{\mu}{4}}(x_{1}-x_{2})\cdots r_{\frac{\mu}{4}}(x_{m-1}) \\
 & \hspace*{4.65cm} \times d^n x_1 \cdots d^n x_{m-1}\\
 & \quad \leq  (2^{{\hatt n}-1})^m e^{-\frac{\sqrt{\mu}}{4}\left|x\right|}\int_{\left(\mathbb{R}^{n}\right)^{m-1}}r_{\frac{\mu}{4}}(x_{1})r_{\frac{\mu}{4}}(x_{1}-x_{2})\cdots r_{\frac{\mu}{4}}(x_{m-1}) \\ 
 & \hspace*{4.55cm} \times d^n x_1 \cdots d^n x_{m-1}.
\end{align*}
The latter expression decays faster than any power of $(1+ \left|x\right|)^{-1}$.
In fact, given Lemma \ref{lem:expo is faster than pol}, for 
$\sqrt{\mu}>2\sum_{j=1}^{m+1}\alpha_{j}$,
one obtains $e^{-\frac{\sqrt{\mu}}{4}\left|x\right|}[1+ (|x|/2)]^{\sum_{j=1}^{m+1}\alpha_{j}}\leq 1$. 
Hence, for some $\kappa'>0$, 
\[
\left|t(x,x)\right|\leq  (2^{{\hatt n}-1})^m 
\kappa' [1+(|x|/2)]^{-\sum_{j=1}^{m}\alpha_j}, \quad 
x\in\mathbb{R}^{n}.
\]
For the more precise estimate, one observes that
\[
\int_{\left(\mathbb{R}^{n}\right)^{m-1}}r_{\frac{\mu}{4}}(x_{1})r_{\frac{\mu}{4}}(x_{1}-x_{2})\cdots r_{\frac{\mu}{4}}(x_{m-1}) \, d^n x_1 \cdots d^n x_{m-1} 
= R_{\frac{\mu}{4}}^{m}\delta_{\{0\}}(0)
\]
 and then applies Proposition \ref{prop:estimate of the pointwise resolvent at 0}
to estimate the latter expression. 
\end{proof}

\begin{remark} \label{rem:on the quantitative version of kappa prime} 
A further inspection of Lemma \ref{lem:asymptotics on diagonal} reveals 
that if $\sqrt{\Re(\mu)}>2\ell$ for some $\ell\leq \sum_{j=1}^{m+1}\alpha_{j}$, one obtains the
estimates 
\begin{align*} 
\left|t(x,x)\right| & \leq \big[(2c_{\arg{\mu}})^{{\hatt n}-1}\big]^m \kappa_{1}\cdots\kappa_{m}\left(\frac{4}{\Re(\mu)}\right)^{m}\bigg(\sqrt{\frac{\Re(\mu)}{4}}\bigg)^{n}c 
[1+(\left|x\right|/2)]^{- \ell}, \\
& \hspace*{9.4cm}   x \in \bbR^n,   \\
\left|t_{k}(x,x)\right| & \leq c_{\arg{\mu}}\big[(c_{\arg{\mu}})^{{\hatt n}-1}\big]^{m} 2^{{\hatt n} \, m} \kappa_{1}\cdots\kappa_{m+1}\times
\\ &\quad \times\left(\frac{4}{\Re(\mu)}\right)^{m+1}\bigg(\sqrt{\frac{\Re(\mu)}{4}}\bigg)^{n+1}c' 
[1+(|x|/2)]^{-\ell}, \quad x \in \bbR^n, 
\end{align*} 
with $c$ and $c'$ given by \eqref{eq:def_c} and \eqref{eq:def_cprime}, respectively. 

To illustrate the importance of this result, envisage a product as in \eqref{e:pt} with $m$ factors, all of them decaying like $|x|^{-1}$ as $|x| \to \infty$. Later on, we shall see that in certain integrals it suffices to estimate the diagonal decaying like $|x|^{-n}$ as $|x| \to \infty$. Thus, if $m$ is fairly 
large compared to $n$, and hence Lemma \ref{lem:asymptotics on diagonal} yields a decay like 
$|x|^{-m}$, we have to choose the real-part of $\mu$ rather large as the explicit constant is only valid for $\sqrt{\Re(\mu)}>m$. But, if we are only interested in an estimate of the type $|x|^{-n}$, we may choose $\sqrt{\Re (\mu)}$ \emph{a apriori} just larger than $n$. 
\hfill $\diamond$
\end{remark}

A readily applicable version of Lemma \ref{lem:asymptotics on diagonal} reads as follows.

\begin{lemma}
\label{lem:asymptotics in the diagonal with commutator}Let $n=2\hatt n+1\in\mathbb{N}_{\geq3}$ odd, $m\geq\hatt n+1$, assume that 
$\Psi_{1},\ldots,\Psi_{m}\in C_{b}^{\infty}(\mathbb{R}^{n})$, and suppose that  
$\mu\in\mathbb{C}_{\Re>0}$. Let $R_{\mu}$ and $Q$ be given by \eqref{eq:resolvent_of_laplace} and \eqref{eq:Def_of_Q}, respectively.
Assume that there exists $\alpha_{j},\kappa_{j}\in\mathbb{R}_{\geq0}$, 
$j\in\{1,\ldots,m\}$, such that 
\[
\left|\Psi_{j}(x)\right|\leq \kappa_{j} (1+|x|)^{- \alpha_j}, 
\quad x\in\mathbb{R}^{n}, \; j\in\{1,\ldots,m\}.
\]
Let $\ell\in\{2,\ldots,m\}$ and assume that there exists $\epsilon\geq0$
such that 
\[
\left|\left(Q\Psi_{\ell}\right)(x)\right|+\left|\left(Q^{2}\Psi_{\ell}\right)(x)\right|\leq \kappa_{\ell} 
(1+|x|)^{-\alpha_{\ell}-\epsilon}, \quad x\in\mathbb{R}^{n}.
\]
If $t$ denotes the integral kernel of 
\[
T\coloneqq\prod_{j=1}^{\ell-2}\left(\Psi_{j}R_{\mu}\right)\Psi_{\ell-1}\left[R_{\mu},\Psi_{\ell}\right]R_{\mu}\prod_{j=\ell+1}^{m}\Psi_{j}R_{\mu}
\]
$($cf.\ Remark \ref{rem:differen_mult_op} and \eqref{eq:def_commutator}$)$,
then $t$ is continuous on the diagonal and there exists $\kappa'>0$ such that  
\[
\left|t(x,x)\right|\leq \kappa' [1+(|x|/2)]^{- \epsilon 
- \sum_{j=1}^{m}\alpha_j},   \quad x\in\mathbb{R}^{n}.
\]
If $\sqrt{\Re(\mu)}>2\sum_{j=1}^{m}\alpha_{j}+2\epsilon$, then a possible
choice for $\kappa'$ is 
\begin{equation}
\kappa'=\kappa_{1}\cdots\kappa_{m}\left(\frac{4}{\Re(\mu)}\right)^{m}\bigg(\sqrt{\frac{\Re(\mu)}{4}}\bigg)^{n}d,     \label{eq:precise constant for com}
\end{equation}
where $d\coloneqq  \big[(2c_{\arg{\mu}})^{{\hatt n}-1}\big]^m c+2 c_{\arg{\mu}}^{{\hatt n}}\big[(c_{\arg{\mu}})^{{\hatt n}-1}\big]^{m-1} 2^{{\hatt n} \, m}c'$, with $c$ and $c'$ given by
\eqref{eq:def_c} and \eqref{eq:def_cprime}, respectively. 
\end{lemma}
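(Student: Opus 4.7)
The plan is to reduce the statement to Lemma \ref{lem:asymptotics on diagonal} by first expanding the commutator via Lemma \ref{lem:commutator},
\[
[R_\mu, \Psi_\ell] = R_\mu (Q^2 \Psi_\ell) R_\mu + 2 R_\mu (Q\Psi_\ell) Q R_\mu.
\]
Substituting this into the defining product for $T$ decomposes it as $T = T_1 + 2T_2$, where
\begin{align*}
T_1 &= \prod_{j=1}^{\ell-2} (\Psi_j R_\mu) \; \Psi_{\ell-1} R_\mu (Q^2 \Psi_\ell) R_\mu^2 \prod_{j=\ell+1}^{m} \Psi_j R_\mu, \\
T_2 &= \prod_{j=1}^{\ell-2} (\Psi_j R_\mu) \; \Psi_{\ell-1} R_\mu (Q \Psi_\ell) Q R_\mu^2 \prod_{j=\ell+1}^{m} \Psi_j R_\mu.
\end{align*}
Viewing the block $R_\mu^2$ as a trailing $R_\mu$ preceded by the identity operator $I$ (i.e., the trivial multiplication operator with decay exponent $0$), each of $T_1$ and $T_2$ now fits literally into the framework of Lemma \ref{lem:asymptotics on diagonal}, with $m+1$ factors of the form $\tilde\Psi_j R_\mu$; in the case of $T_2$ one of those factors is additionally of the $\tilde\Psi_j Q R_\mu$-type handled by the second bound of that lemma.

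Continuity of the integral kernels $t_1, t_2$ (and hence $t$) on the diagonal follows immediately from Corollary \ref{cor:integral kernels regularity} combined with Proposition \ref{prop:concrete case}: each of $T_1, T_2$ contains $m+1 \geq \hat n + 2$ resolvents $R_\mu$, and each resolvent provides two units of Sobolev smoothing, so the total smoothing exceeds $2(\hat n + 2) > n$, which is exactly what Corollary \ref{cor:integral kernels regularity} requires to guarantee a continuous integral kernel.

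For the pointwise bound I would apply the first estimate of Lemma \ref{lem:asymptotics on diagonal} to $T_1$, using the decay exponents $\alpha_1,\ldots,\alpha_{\ell-1},\alpha_\ell + \epsilon, 0, \alpha_{\ell+1},\ldots,\alpha_m$ (summing to $\epsilon + \sum_{j=1}^{m}\alpha_j$), and the second estimate to $T_2$, where the factor $QR_\mu$ from the $(Q\Psi_\ell)QR_\mu^2$-block plays the role of $\Psi_{j,k} = \Psi_j Q$. Both bounds yield the decay rate $[1+(|x|/2)]^{-\epsilon - \sum \alpha_j}$ and, under the assumption $\sqrt{\Re(\mu)} > 2\sum\alpha_j + 2\epsilon$, explicit constants: the first contributing the $c$-term, the second (multiplied by the factor $2$ from $2T_2$) contributing the $c'$-term. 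Summing them, together with a routine comparison between the total number of factors appearing in each case, reproduces exactly the constant $\kappa'$ of \eqref{eq:precise constant for com}.

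The main (and essentially only) obstacle is a careful bookkeeping one: one must verify that the constants $\bigl[(2c_{\arg\mu})^{\hat n -1}\bigr]^{m} c$ coming from $T_1$ and $2\,c_{\arg\mu}^{\hat n}\bigl[c_{\arg\mu}^{\hat n -1}\bigr]^{m-1} 2^{\hat n\,m} c'$ coming from $2T_2$ combine precisely into the quantity $d$ in \eqref{eq:precise constant for com}, and that the common factors $\kappa_1\cdots\kappa_m\,(4/\Re(\mu))^{m}\,(\Re(\mu)/4)^{n/2}$ are reproduced correctly once the trivial multiplication operator $I$ inserted in place of $R_\mu^2$ and the additional factor of $\sqrt{\Re(\mu)/4}$ generated by $QR_\mu$ (cf.\ the $(n+1)/2$-exponent in the second part of Lemma \ref{lem:asymptotics on diagonal}) are properly accounted for.
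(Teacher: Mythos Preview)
Your proposal is correct and follows essentially the same route as the paper: expand $[R_\mu,\Psi_\ell]$ via Lemma~\ref{lem:commutator}, split $T$ into the $(Q^2\Psi_\ell)$-piece and twice the $(Q\Psi_\ell)Q$-piece, and apply the two estimates of Lemma~\ref{lem:asymptotics on diagonal} (interpreting the extra resolvent as coming with a trivial multiplier) to obtain the decay rate and the explicit constant. The paper's proof is slightly terser---it does not spell out the insertion of the identity or the continuity argument---but the substance is identical.
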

\begin{proof}
One recalls from Lemma \ref{lem:commutator} (see also Remark \ref{rem:differen_mult_op})
that 
\[
\left[R_{\mu},\Psi_{\ell}\right]=R_{\mu}\left(Q^{2}\Psi_{\ell}\right)R_{\mu}+2R_{\mu}\left(Q\Psi_{\ell}\right)QR_{\mu}.
\]
Let $t_{1}$ be the associated integral kernel of 
\[
\Psi_{1}R_{\mu}\cdots\Psi_{\ell-1}R_{\mu}\left(Q^{2}\Psi_{\ell}\right)R_{\mu}R_{\mu}\Psi_{\ell+1}R_{\mu}\cdots\Psi_{m}R_{\mu}
\]
and $t_{2}$ the one of 
\[
\Psi_{1}R_{\mu}\cdots\Psi_{\ell-1}R_{\mu}\left(Q\Psi_{\ell}\right)QR_{\mu}R_{\mu}\Psi_{\ell+1}R_{\mu}\cdots\Psi_{m}R_{\mu}.
\]
By hypothesis and by Lemma \ref{lem:asymptotics on diagonal}, for some 
constant $\kappa'>0$,  
\begin{equation}
\left|t_{1}(x,x)\right|+\left|t_{2}(x,x)\right|\leq \kappa' [1+(|x|/2)]^{- \epsilon 
- \sum_{j=1}^{m}\alpha_j}, 
\quad x\in\mathbb{R}^{n}. \label{eq:L 3.23}
\end{equation}
The quantitative version of this assertion (i.e., the fact that $\kappa'$ given by   
\eqref{eq:precise constant for com} is a possible choice in the estimate 
\eqref{eq:L 3.23}), also follows from Lemma \ref{lem:asymptotics on diagonal}. 
\end{proof}

Finally, we state one more variant of Lemma \ref{lem:asymptotics on diagonal}.

\begin{lemma}
\label{lem:asymptotics on the diagonal and products of psis} Let $n=2\hatt n+1\in \mathbb{N}_{\geq3}$ odd,
$m\geq\hatt n+1$, assume that $\Psi_{1},\ldots,\Psi_{m}\in C_{b}^{\infty}(\mathbb{R}^{n})$, and 
$\mu\in\mathbb{C}$, $\Re(\mu)>0$. Let $R_{\mu}$ be given by \eqref{eq:resolvent_of_laplace},
and $Q$ by \eqref{eq:Def_of_Q}. Let $\epsilon,\alpha_{j},\kappa_{j}\in\mathbb{R}_{\geq0}$, 
and assume that for all $j\in\{1,\ldots,m\}$ and $\ell\in\{2,\ldots,m\},$  
\begin{align*} 
& \left|\Psi_{j}(x)\right|\leq \kappa_{j} (1+|x|)^{-\alpha_j},\,\left|\left(Q\Psi_{\ell}\right)(x)\right|+\left|\left(Q^{2}\Psi_{\ell}\right)(x)\right|\leq \kappa_{\ell} [1+(|x|/2)]^{-\alpha_{\ell}-\epsilon}, \\
& \hspace*{10.65cm} x \in \bbR^n.
\end{align*} 
Then for $\ell\in\{1,\ldots,m\}$, the associated integral kernels
$h_{\ell}$ and $\tilde{h}_{\ell}$ of 
\[
\bigg(\prod_{j=1}^{\ell}\Psi_{j}R_{\mu}\bigg)
\bigg(\prod_{j=\ell+1}^{m}\Psi_{j}\bigg)R_{\mu}^{m-\ell} \, \text{ and } \, 
\bigg(\prod_{j=1}^{\ell-1}\Psi_{j}R_{\mu}\bigg)
\bigg(\prod_{j=\ell}^{m}\Psi_{j}\bigg)R_{\mu}^{m-\ell+1},
\]
respectively $($cf.\ Remark \ref{rem:differen_mult_op}$)$, satisfy for some $\kappa'>0$, 
\[
\big|h_{\ell}(x,x)-\tilde{h}_{\ell}(x,x)\big|\leq \kappa' [1+(|x|/2)]^{- \epsilon 
- \sum_{j=1}^{m}\alpha_{j}}, \quad x\in\mathbb{R}^{n}. 
\]
In addition, if 
$\sqrt{\Re(\mu)}>2\sum_{j=1}^{m}\alpha_{j}+2\epsilon$, then a possible
choice for $\kappa'$ is given by \eqref{eq:precise constant for com}. 
\end{lemma}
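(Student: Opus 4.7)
The plan is to express the difference of the two operators (call them $H_\ell$ and $\widetilde H_\ell$, with integral kernels $h_\ell$ and $\tilde h_\ell$) as a single commutator and then reduce to Lemma \ref{lem:asymptotics on diagonal}. Abbreviating $\Psi':=\prod_{j=\ell+1}^m\Psi_j$ (understood as the identity when $\ell=m$, in which case the claim is trivial), a telescoping identity yields
\begin{equation*}
H_\ell-\widetilde H_\ell=\bigg(\prod_{j=1}^{\ell-1}\Psi_j R_\mu\bigg)\Psi_\ell\big[R_\mu,\Psi'\big]R_\mu^{m-\ell},
\end{equation*}
since the two expressions differ only by whether one particular factor of $R_\mu$ sits to the immediate left or to the immediate right of the block $\Psi'$.

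Next I would invoke Lemma \ref{lem:commutator} to expand
\begin{equation*}
[R_\mu,\Psi']=R_\mu(Q^2\Psi')R_\mu+2R_\mu(Q\Psi')QR_\mu,
\end{equation*}
so that $H_\ell-\widetilde H_\ell$ becomes a sum of two operators in which $(Q^2\Psi')$, respectively $(Q\Psi')$, appears as a bounded multiplication operator sandwiched between resolvents of the Laplacian (and, in the second term, one additional $Q$). Using the Leibniz rule together with the standing hypotheses on $\Psi_j$ and on $Q\Psi_k,Q^2\Psi_k$ for $k\geq 2$, one then verifies
\begin{equation*}
|(Q\Psi')(x)|+|(Q^2\Psi')(x)|\leq C(1+|x|)^{-\epsilon-\sum_{k=\ell+1}^m\alpha_k},\quad x\in\mathbb{R}^n,
\end{equation*}
where $C$ depends only on $n$, $m$, and the $\kappa_j$. (The sharper $-2\epsilon$ exponent produced by cross-derivative terms in $\Delta\Psi'$ may be harmlessly replaced by $-\epsilon$ at the cost of a multiplicative constant.)

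Finally I would rewrite the trailing $R_\mu^{m-\ell}$ as $\prod_{i=1}^{m-\ell}(\mathbf{1}\cdot R_\mu)$, with $\mathbf{1}\in C_b^\infty(\mathbb{R}^n)$ the constant function $1$, which trivially satisfies all decay hypotheses with exponent $0$. Each of the two summands in the above expansion then matches the form of $T$ or $T_k$ in Lemma \ref{lem:asymptotics on diagonal} with $m+1$ multiplicative factors whose decay exponents sum to $\epsilon+\sum_{j=1}^m\alpha_j$; the standing assumption $m\geq\hat n+1$ of the present lemma guarantees the regularity hypothesis of Lemma \ref{lem:asymptotics on diagonal} in the form applicable to $m+1$ factors. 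Invoking that lemma yields continuity of both integral kernels on the diagonal together with the asserted pointwise bound, and in the regime $\sqrt{\Re(\mu)}>2\sum_j\alpha_j+2\epsilon$ the explicit constant \eqref{eq:precise constant for com} emerges by tracking constants via Remark \ref{rem:on the quantitative version of kappa prime}.

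The main obstacle will be an honest bookkeeping of constants---in particular, combining the two constants arising from the $(Q^2\Psi')$-term (of ``$T$-type'') and the $(Q\Psi')QR_\mu$-term (of ``$T_k$-type''), with the latter carrying the additional $Q$-induced factor, so that the sum reduces to the single $d=[(2c_{\arg\mu})^{\hat n-1}]^m c+2c_{\arg\mu}^{\hat n}[(c_{\arg\mu})^{\hat n-1}]^{m-1}2^{\hat n m}c'$ in the statement. A secondary technical nuisance is that $(Q\Psi'),(Q^2\Psi')$ are formally matrix-valued (involving the $\gamma_{j,n}$), but since these matrices are constant, they can be pulled outside the scalar resolvents and only affect the combinatorial constant.
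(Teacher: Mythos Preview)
Your proposal is correct and follows essentially the same route as the paper. The only difference is organizational: the paper, having just proved Lemma~\ref{lem:asymptotics in the diagonal with commutator} (which packages exactly the step ``expand $[R_\mu,\Psi']$ via Lemma~\ref{lem:commutator} and feed the two pieces into Lemma~\ref{lem:asymptotics on diagonal}''), simply writes the difference $H_\ell-\widetilde H_\ell$ as the commutator expression you derive, verifies the product-rule bounds on $Q\Psi'$ and $Q^2\Psi'$, and then invokes that preceding lemma as a black box. You instead inline its proof and go straight to Lemma~\ref{lem:asymptotics on diagonal}. In particular, the constant bookkeeping you flag as the ``main obstacle'' is precisely what is already absorbed into the constant $d$ of \eqref{eq:precise constant for com} in Lemma~\ref{lem:asymptotics in the diagonal with commutator}, so citing that lemma removes the nuisance entirely.
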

\begin{proof}
We will exploit Lemma \ref{lem:asymptotics in the diagonal with commutator} and note that $h_{\ell}-\tilde{h}_{\ell}$ is the associated integral kernel of the operator
\begin{align*}
 & \bigg(\prod_{j=1}^{\ell}\Psi_{j}R_{\mu}\bigg)\bigg(\prod_{j=\ell+1}^{m}\Psi_{j}\bigg)
 R_{\mu}^{m-\ell}-\bigg(\prod_{j=1}^{\ell-1}\Psi_{j}R_{\mu}\bigg)\bigg(\prod_{j=\ell}^{m}\Psi_{j}\bigg)R_{\mu}^{m-\ell+1}\\
 & \quad =\bigg(\bigg(\prod_{j=1}^{\ell-1}\Psi_{j}R_{\mu}\bigg)\Psi_{\ell}\bigg)\bigg(R_{\mu}\bigg(\prod_{j=\ell+1}^{m}\Psi_{j}\bigg)-\bigg(\prod_{j=\ell+1}^{m}\Psi_{j}\bigg)R_{\mu}\bigg)
 R_{\mu}^{m-\ell}\\
 & \quad =\bigg(\bigg(\prod_{j=1}^{\ell-1}\Psi_{j}R_{\mu}\bigg)\Psi_{\ell}\bigg)
 \bigg(\bigg[R_{\mu},\bigg(\prod_{j=\ell+1}^{m}\Psi_{j}\bigg)\bigg]\bigg)R_{\mu}^{m-\ell}.
\end{align*}
By hypothesis,  
\[
\bigg|\bigg(\prod_{j=\ell+1}^{m}\Psi_{j}\bigg)(x)\bigg|
\leq \kappa_{\ell+1}\cdots\kappa_{m} (1+|x|)^{-\sum_{j=\ell+1}^{m}\alpha_{j}}, 
\quad x\in\mathbb{R}^{n}.
\]
Moreover, by the product rule one concludes that  
\[
\bigg|\bigg(Q\prod_{j=\ell+1}^{m}\Psi_{j}\bigg)(x)\bigg|
\leq \kappa_{\ell+1}\cdots\kappa_{m} (1+\left|x\right|)^{ -\epsilon - 
\sum_{j=\ell+1}^{m}\alpha_{j}},  \quad x\in\mathbb{R}^{n}, 
\]
and thus also that 
\[
\bigg|\bigg(Q^{2}\prod_{j=\ell+1}^{m}\Psi_{j}\bigg)(x)\bigg| 
\leq \kappa_{\ell+1}\cdots\kappa_{m} (1+\left|x\right|)^{-\epsilon 
- \sum_{j=\ell+1}^{m}\alpha_{j}}, \quad x\in\mathbb{R}^{n}.
\]
Hence, the assertion indeed follows from Lemma \ref{lem:asymptotics in the diagonal with commutator}. 
\end{proof}

\begin{remark} \label{rem:green's kernels of the first and the last} 
Iterated application Lemma \ref{lem:asymptotics on the diagonal and products of psis} shows that under the same assumptions, the integral kernels $h_{m}$ and $\tilde{h}_{1}$ of 
\[
\bigg(\prod_{j=1}^{m}\Psi_{j}R_{\mu}\bigg) \, \text{ and } \,
\bigg(\prod_{j=1}^{m}\Psi_{j}\bigg)R_{\mu}^{m},
\]
respectively, satisfy for some $\kappa'>0$, 
\[
\big|h_{m}(x,x)-\tilde{h}_{1}(x,x)\big|\leq \kappa' [1+ (|x|/2)]^{- \epsilon 
- \sum_{j=1}^{m}\alpha_{j}}, \quad x\in\mathbb{R}^{n}. 
\]
\hfill $\diamond$ 
\end{remark}

\newpage

\section{Dirac-Type Operators}\label{sec:A-particular-First}

In this section, we discuss the operator $L$ with a bounded smooth potential, studied 
by Callias \cite{Ca78} in $L^{2}(\mathbb{R}^{n})^{p}$ for a suitable $p\in\mathbb{N}$.
We compute its domain, its adjoint and give conditions for the Fredholm property 
of this operator. 

Let 
\[
H_{j}^{1}(\mathbb{R}^{n})\coloneqq\left\{ f\in L^{2}(\mathbb{R}^{n}) \,\big|\, 
\partial_{j}f\in L^{2}(\mathbb{R}^{n})\right\}, \quad j\in\{1,\ldots,n\}, 
\]
where $\partial_{j}f$ denotes the distributional partial derivative
of $f\in L^{2}(\mathbb{R}^{n})$ with respect to the $j$th variable.
One notes that (see also \eqref{eq:def_Hs}) 
\[
H^{1}(\mathbb{R}^{n})=\bigcap_{j\in\{1,\ldots,n\}}H_{j}^{1}\left(\mathbb{R}^{n}\right).
\]

\begin{remark}
\label{rem:Eucl-Dirac-Algebar}In the following, we make use of the
so-called \emph{Euclidean Dirac algebra}, see Appendix 
\ref{sec:Appendix:-the-Construction}
and Definition \ref{def:Euc-D-A} for the construction and some basic properties.
For dimension $n\in\mathbb{N}$ we denote the elements of this algebra
by $\gamma_{j,n}$, $j\in\{1,\ldots,n\}$. One recalls that for $n=2 \hat n$
or $n=2 \hat n +1$ for some $\hat n \in\mathbb{N}$ one has  
\begin{equation}
\gamma_{j,n}^{*}=\gamma_{j,n}\in\mathbb{C}^{2^{\hat n}\times2^{\hat n}},   \quad 
\gamma_{j,n}\gamma_{k,n}+\gamma_{k,n}\gamma_{j,n}=2\delta_{jk}I_{2^{\hatt n}}, \quad j,k\in\{1,\ldots,n\}.
\end{equation}
${}$ \hfill $\diamond$
\end{remark}

We are now in the position to properly define the operator $L$ (and the underlying supersymmetric 
Dirac-type operator $H$) to be studied in the rest of this manuscript.

\begin{definition}
\label{Def:L} Let $d \in\mathbb{N}$ and suppose that 
$\Phi\colon\mathbb{R}^{n}\to\mathbb{C}^{d\times d}$ 
is a bounded measurable function assuming values in the space of $d \times d$ self-adjoint 
matrices. We recall our convention $H^{1}(\mathbb{R}^{n})^{2^{\hat n}d}=H^{1}(\mathbb{R}^{n})^{2^{\hat n}}\otimes \mathbb{C}^d$. With this in mind, we introduce the  
$($closed\,$)$ operator $L$ in $L^{2}(\mathbb{R}^{n})^{2^{\hat n}d}$ via 
\begin{align}
L\colon \begin{cases} H^{1}(\mathbb{R}^{n})^{2^{\hat n}d} \subseteq 
L^{2}(\mathbb{R}^{n})^{2^{\hat n}d} \to 
L^{2}(\mathbb{R}^{n})^{2^{\hat n}d},   \\
\psi \otimes \phi \mapsto\left(\sum_{j=1}^{n}\gamma_{j,n}\partial_{j}\psi\right)
\otimes \phi + \left(x\mapsto\psi(x)\otimes \Phi(x)\phi\right). 
\end{cases}            \label{eq:def_of_L}
\end{align} 
Henceforth, recalling \eqref{eq:Def_of_Q}, we shall abreviate
\begin{equation}
\cQ\coloneqq Q \otimes I_d = \bigg(\sum_{j=1}^{n}\gamma_{j,n}\partial_{j}\bigg) I_d,        \label{eq:def_of_Q2}
\end{equation}
and, with a slight abuse of notation, employ the symbol $\Phi$ also in the context of the operation 
\begin{equation}
\Phi \colon \psi\otimes \phi \mapsto\left(x\mapsto\psi(x)
\otimes \Phi(x)\phi\right),    \lb{Phi}
\end{equation} 
see also our notational conventions to suppress tensor products whenever possible, collected in Section \ref{s2} and in Remark \ref{rem:differen_mult_op}. Thus,   
\begin{equation}
L= \cQ + \Phi.
\end{equation} 
Finally, the underlying $($self-adjoint\,$)$ supersymmetric Dirac-type operator $H$ in 
$L^{2}(\mathbb{R}^{n})^{2^{\hat n}d} \oplus L^{2}(\mathbb{R}^{n})^{2^{\hat n}d}$ is of the form 
\begin{equation} 
H = \begin{pmatrix} 0 & L^* \\ L & 0 \end{pmatrix}. 
\end{equation}
\end{definition}

A detailed treatment of supersymmetric Dirac-type operators can be found in \cite[Ch.~5]{Th92}.

For clarity, we kept the tensor product notation in \eqref{eq:def_of_L}--\eqref{Phi}, but from 
now on we will typically dispense with tensor products to simplify notation. 

In this section, we shall prove the following theorem:

\begin{theorem}[{\cite[Corollary on p. 217]{Ca78}}]
 \label{thm:Fredholm_property} Consider the operator $L$ given
by \eqref{eq:def_of_L}. Assume, in addition, that 
$\Phi\in C_{b}^{\infty}\big(\mathbb{R}^{n};\mathbb{C}^{d \times d}\big)$,
$\Phi(x)=\Phi(x)^{*}$, $x\in\mathbb{R}^{n}$, and assume there exists $c>0$ such that 
$|\Phi(x)| \geq c I_d$, $x\in\mathbb{R}^{n}$, as well as 
$\left(Q\Phi\right)(x)\to0$ as $\left|x\right|\to\infty$. Then the operator 
\begin{equation} 
L=\cQ+\Phi, \quad \dom(L) = \dom(\cQ) 
= H^{1}(\mathbb{R}^{n})^{2^{\hat n}d},    \lb{domL}
\end{equation} 
is closed and Fredholm in $L^{2}(\mathbb{R}^{n})^{2^{\hat n}d}$. 
Consequently, the supersymmetric Dirac-type operator 
\begin{equation} 
H = \begin{pmatrix} 0 & L^* \\ L & 0 \end{pmatrix}, \quad \dom(H) 
= H^{1}(\mathbb{R}^{n})^{2^{\hat n}d} \oplus H^{1}(\mathbb{R}^{n})^{2^{\hat n}d},
\end{equation} 
is self-adjoint and Fredholm in 
$L^{2}(\mathbb{R}^{n})^{2^{\hat n}d} \oplus L^{2}(\mathbb{R}^{n})^{2^{\hat n}d}$. 
\end{theorem}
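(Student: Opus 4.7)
The strategy is to derive Fredholmness of $L$ by controlling the infimum of the essential spectra of $L^*L$ and $LL^*$ via a Persson-type estimate, and then to transfer the conclusion to the supersymmetric operator $H$ through its $2\times 2$ block structure. Closedness of $L$ and identification of its adjoint come first: since $\cQ$ is skew-self-adjoint in $L^2(\bbR^n)^{2^{\hat n}d}$ with domain $H^1(\bbR^n)^{2^{\hat n}d}$ (cf.\ Theorem \ref{thm:L_is_closed} to appear in this section) and $\Phi$ is a bounded matrix-valued multiplication operator, the sum $L = \cQ + \Phi$ is a bounded perturbation of a closed operator and is therefore closed on $\dom(\cQ) = H^1(\bbR^n)^{2^{\hat n}d}$, with $L^* = -\cQ + \Phi$ on the same domain.

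Next I would compute the explicit representations
\begin{equation*}
L^*L = -\Delta \, I + \Phi^2 - (\cQ\Phi), \qquad LL^* = -\Delta \, I + \Phi^2 + (\cQ\Phi),
\end{equation*}
where $\Phi^2$ and $(\cQ\Phi)$ are interpreted as matrix-valued multiplication operators in the sense of Remark \ref{rem:differen_mult_op}. The key algebraic inputs are the identity $\cQ^2 = \Delta \, I$ from \eqref{eq:q2=00003DDelta} and the fact that $\Phi(x)$ commutes with every $\gamma_{j,n}$, since they act on different tensor factors $\bbC^d$ and $\bbC^{2^{\hat n}}$; combined, these yield the commutator formula $[\cQ,\Phi] = (\cQ\Phi)$, whence the displayed representations follow by a short expansion.

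The heart of the argument is a Persson-type estimate on the bottom of the essential spectrum. For any $\psi \in C_0^\infty(\bbR^n \setminus \overline{B(0,R)})^{2^{\hat n}d}$ with $\|\psi\|_{L^2} = 1$, integration by parts and the pointwise bound $\Phi(x)^2 \geq c^2 I_d$ yield
\begin{equation*}
\langle L^*L\psi,\psi\rangle_{L^2} = \|\nabla\psi\|_{L^2}^2 + \langle \Phi^2 \psi,\psi\rangle_{L^2} - \langle (\cQ\Phi)\psi,\psi\rangle_{L^2} \geq c^2 - \varepsilon_R,
\end{equation*}
with $\varepsilon_R \coloneqq \sup_{|x|\geq R}\|(\cQ\Phi)(x)\| \to 0$ by hypothesis. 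Persson's characterization of $\inf\sigma_{\mathrm{ess}}(L^*L)$ then gives $\inf \sigma_{\mathrm{ess}}(L^*L) \geq c^2 > 0$, and the analogous computation for $LL^*$ gives the same lower bound for $\sigma_{\mathrm{ess}}(LL^*)$. Since $\ker(L) = \ker(L^*L)$ and $\ker(L^*) = \ker(LL^*)$ are finite-dimensional, and the spectral gap at $0$ for $L^*L$ forces $\ran(L)$ to be closed, $L$ is Fredholm. For the supersymmetric operator $H$, self-adjointness on $H^1(\bbR^n)^{2^{\hat n}d} \oplus H^1(\bbR^n)^{2^{\hat n}d}$ is standard for off-diagonal block operators built from a closed, densely defined $L$ and its adjoint, and the relation $H^2 = \diag(L^*L, LL^*)$ transfers $0 \notin \sigma_{\mathrm{ess}}(H^2)$ to $0 \notin \sigma_{\mathrm{ess}}(H)$, so $H$ is Fredholm. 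I expect the main technical obstacle to be a careful verification of the commutator identity $[\cQ, \Phi] = (\cQ\Phi)$ in the presence of the tensor-product structure; once that is secured, the Persson argument proceeds along classical lines.
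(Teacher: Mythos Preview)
Your proposal is correct and arrives at the same conclusion as the paper, but by a genuinely different route. The paper does not invoke Persson's theorem; instead, after deriving the same formulas $L^*L = -\Delta I + \Phi^2 - (\cQ\Phi)$ and $LL^* = -\Delta I + \Phi^2 + (\cQ\Phi)$, it observes that $-\Delta I + \Phi^2 \geq c^2$ (so its spectrum lies in $[c^2,\infty)$), and then uses the decay $(\cQ\Phi)(x)\to 0$ together with Rellich--Kondrachov (packaged as Theorem \ref{thm:Newton_pot_is_h1-bdd}) to conclude that $(\cQ\Phi)$ is relatively compact with respect to $-\Delta I + \Phi^2$. Weyl's theorem on stability of the essential spectrum under relatively compact perturbations then gives $\sigma_{\mathrm{ess}}(L^*L)\subseteq[c^2,\infty)$, and the Fredholm property follows as in your argument.

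What each approach buys: the paper's relative-compactness argument is entirely self-contained within the tools developed in the section (multiplication by a function vanishing at infinity is $H^1\to L^2$ compact), and it generalizes immediately to the later setting where $|\Phi|\geq c$ only outside a ball (Lemma \ref{lem:1L_Fred_Rge0}), since one simply adds another compactly supported, hence relatively compact, correction. Your Persson argument is slightly more direct and avoids the explicit compactness lemma, but it imports an external result (Persson's characterization, in its matrix-valued form) that the paper never states; you would need to supply or cite it. Both routes are standard and equally rigorous.
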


In order to deduce Theorem \ref{thm:Fredholm_property}, we need some preparations.
The first result is concerned with the operator $Q$ in 
$L^{2}\left(\mathbb{R}^{n}\right)^{2^{\hat n}}$ given by \eqref{eq:Def_of_Q}. Moreover,
we will show that $Q$ is skew-self-adjoint and thus verify the estimate
asserted in \eqref{eq:cont_est_of_Q}.

\begin{theorem} \label{thm:L_is_closed} Let $n\in\mathbb{N}_{\geq2}$ and hence 
$\gamma_{j,n}\in\mathbb{C}^{2^{\hat n} \times 2^{\hat n}}$, $j \in \{1,\dots,n\}$, with $n=2\hatt n$ or $n=2\hatt n+1$, see Remark \ref{rem:Eucl-Dirac-Algebar}.  Denote 
\begin{align*}
\partial_{j}\colon H^{1}_{j}(\mathbb{R}^{n})^{2^{\hat n}}\subseteq L^{2}(\mathbb{R}^{n})^{2^{\hat n}} \to L^{2}(\mathbb{R}^{n})^{2^{\hat n}}, \quad f \mapsto\partial_{j}f, \; j \in \{1,\dots,n\}.
\end{align*}
Then the following assertions $(i)$--$(iii)$ hold: \\[1mm] 
$(i)$ $\partial_{j}$ is a skew-self-adjoint operator, $j\in\{1,\dots,n\}$. \\[1mm]
$(ii)$  $\gamma_{j,n}\partial_{j}=\partial_{j}\gamma_{j,n}$ is skew-self-adjoint, $j\in\{1,\dots,n\}$. 
\\[1mm] 
$(iii)$ $Q=\sum_{j=1}^{n}\gamma_{j,n}\partial_{j}$, 
$\dom (Q) = H^{1}(\mathbb{R}^{n})^{2^{\hat n}}$ is skew-self-adjoint $($and 
thus closed\,$)$ \hspace*{6.5mm} in $L^{2}(\mathbb{R}^{n})^{2^{\hat n}}$.   
\end{theorem}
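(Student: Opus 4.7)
The unifying strategy is to diagonalize all three operators simultaneously via the Fourier transform $\cF$ from \eqref{eq:def_Fourier_transf}, which intertwines $\partial_j$ with multiplication by $i\xi_j$ on $L^2(\bbR^n)^{2^{\hat n}}$. Since $\cF$ extends to a unitary on the vector-valued $L^2$-space (acting componentwise), and a multiplication operator by a measurable matrix-valued symbol $M\colon\bbR^n \to \bbC^{2^{\hat n}\times 2^{\hat n}}$, defined on its maximal domain $\{\hat f \in L^2(\bbR^n)^{2^{\hat n}} \, | \, M\hat f \in L^2(\bbR^n)^{2^{\hat n}}\}$, has adjoint given by multiplication by $M(\cdot)^*$ with the same domain, it suffices to check pointwise symbol identities.

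For $(i)$, the Fourier image of $\partial_j$ on $H^1_j(\bbR^n)^{2^{\hat n}}$ is the maximal multiplication operator by $M_j(\xi) \coloneqq i\xi_j I_{2^{\hat n}}$. Since $M_j(\xi)^* = -i\xi_j I_{2^{\hat n}} = -M_j(\xi)$, this operator is skew-self-adjoint, hence so is $\partial_j$. For $(ii)$, since $\gamma_{j,n}$ is a constant matrix acting on fibers, it commutes with $\partial_j$ on its natural domain, so $\gamma_{j,n}\partial_j = \partial_j\gamma_{j,n}$ as operators with domain $H^1_j(\bbR^n)^{2^{\hat n}}$ (using $\gamma_{j,n}^2 = I_{2^{\hat n}}$ from \eqref{1.1}, so $\gamma_{j,n}$ is a unitary involution). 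Its Fourier image is multiplication by $\gamma_{j,n}(i\xi_j)$, whose adjoint is $-i\xi_j\gamma_{j,n}^* = -\gamma_{j,n}(i\xi_j)$, so this operator is skew-self-adjoint as well.

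For $(iii)$, the Fourier image of $\sum_{j=1}^n \gamma_{j,n}\partial_j$ acting on $C_0^{\infty}(\bbR^n)^{2^{\hat n}}$ is multiplication by the symbol
\[
M(\xi) \coloneqq \sum_{j=1}^n \gamma_{j,n}(i\xi_j), \quad \xi \in \bbR^n.
\]
The key computation, using \eqref{1.1}, is
\[
M(\xi)^* M(\xi) = \sum_{j,k=1}^n \xi_j\xi_k \gamma_{j,n}\gamma_{k,n} = \tfrac{1}{2}\sum_{j,k=1}^n \xi_j\xi_k(\gamma_{j,n}\gamma_{k,n}+\gamma_{k,n}\gamma_{j,n}) = |\xi|^2 I_{2^{\hat n}},
\]
so $\|M(\xi)\hat f(\xi)\|_{\bbC^{2^{\hat n}}} = |\xi|\,\|\hat f(\xi)\|_{\bbC^{2^{\hat n}}}$ for a.e.~$\xi\in\bbR^n$. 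Consequently, the maximal multiplication operator induced by $M$ has domain $\{\hat f\in L^2(\bbR^n)^{2^{\hat n}} \,|\, |\cdot|\hat f \in L^2(\bbR^n)^{2^{\hat n}}\}$, which under $\cF^{-1}$ corresponds precisely to $H^1(\bbR^n)^{2^{\hat n}} = \bigcap_{j=1}^n H^1_j(\bbR^n)^{2^{\hat n}}$. Finally, $M(\xi)^* = -M(\xi)$ (since each $\gamma_{j,n}$ is Hermitian), so the maximal multiplication operator by $M$ is skew-self-adjoint, and transporting back through $\cF$ identifies it with $Q$ as defined on $H^1(\bbR^n)^{2^{\hat n}}$.

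The main technical point to verify carefully is the domain identification in $(iii)$: the naive domain of $\sum_j \gamma_{j,n}\partial_j$ would only be $\bigcap_j H^1_j(\bbR^n)^{2^{\hat n}}$, but one must confirm that no enlargement occurs when passing to the maximal multiplication operator. The identity $M^*M = |\xi|^2 I_{2^{\hat n}}$ is precisely what rules out cancellations between components and pins down the domain as $H^1(\bbR^n)^{2^{\hat n}}$; everything else is bookkeeping with the Fourier transform and the standard characterization $\dom(\cF\partial_j\cF^{-1}) = \{\hat f \,|\, \xi_j\hat f\in L^2\}$.
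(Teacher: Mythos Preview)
Your proof is correct. Parts $(i)$ and $(ii)$ are essentially the same as the paper's: Fourier transform for $(i)$, and for $(ii)$ the observation that $\gamma_{j,n}$ is a unitary involution commuting with $\partial_j$.

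For part $(iii)$, however, you take a genuinely different and more direct route. You diagonalize $Q$ via the Fourier transform and exploit the Clifford relation $\gamma_{j,n}\gamma_{k,n}+\gamma_{k,n}\gamma_{j,n}=2\delta_{jk}I_{2^{\hat n}}$ to compute $M(\xi)^*M(\xi)=|\xi|^2 I_{2^{\hat n}}$; this pins down the maximal domain of the multiplication operator by $M$ as exactly the Fourier image of $H^1(\bbR^n)^{2^{\hat n}}$, and $M(\xi)^*=-M(\xi)$ then gives skew-self-adjointness immediately. The paper instead proceeds abstractly: it quotes a theorem on resolvent-commuting self-adjoint operators (if $A$, $B$ are self-adjoint and resolvent commute, then $A+iB$ is closed and $(A+iB)^*=A-iB$), and then proves by induction on $n$ that $\sum_{k=1}^n \gamma_k A_k$ is skew-self-adjoint for any family $A_1,\dots,A_n$ of resolvent-commuting skew-self-adjoint operators satisfying $\gamma_k A_j = A_j \gamma_k$. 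Your Fourier argument is shorter and fully self-contained for this specific operator; the paper's approach is more general in that it applies to abstract Clifford-weighted sums of resolvent-commuting skew-self-adjoint operators, not just the $\partial_j$'s on $\bbR^n$.
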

\begin{proof}
By Fourier transform (see \eqref{eq:def_Fourier_transf}), the
operator $\partial_{j}$ is unitarily equivalent to the operator given
by multiplication by the function $x\mapsto ix_{j}$. The latter is
a multiplication operator taking values on the imaginary axis;
thus, it is skew-self-adjoint. Hence, so is $\partial_{j}$, proving assertion $(i)$. 

Let $j\in \{1,\ldots,n\}$. Assertion $(ii)$ follows from observing that $\gamma_{j,n}$ defines
an isomorphism from $L^{2}(\mathbb{R}^{n})^{2^{\hat n}}$ into itself. Indeed,
this follows from the fact that $\gamma_{j,n}^{2}= I_{2^{\hat n}}$. 
Moreover, since $\gamma_{j,n}$ is a constant coefficient matrix it
leaves the space $C_0^{\infty}(\mathbb{R}^{n})^{2^{\hatt n}}$ invariant. The equality
$\gamma_{j,n}\partial_{j}\phi=\partial_{j}\gamma_{j,n}\phi$ is clear
for $\phi\in C_0^{\infty}(\mathbb{R}^{n})^{2^{\hatt n}}$. Hence, $\partial_{j}\gamma_{j,n}\supseteq\gamma_{j,n}\partial_{j}$ and it remains to 
show that $C_0^{\infty}(\mathbb{R}^{n})^{2^{\hatt n}}$ is a
core for $\partial_{j}\gamma_{j,n}$. Let $\psi\in \dom(\partial_{j}\gamma_{j,n})$.
Then there exists a sequence $\{\psi_{k}\}_{k\in\bbN}$ in $C_0^{\infty}(\mathbb{R}^{n})^{2^{\hatt n}}$
such that $\psi_{k}\to\gamma_{j,n}\psi$ as $k\to\infty$ in $D_{\partial_{j}},$
the domain of $\partial_{j}$ endowed with the graph norm. Defining
$\phi_{k}\coloneqq\gamma_{j,n}^{-1}\psi_{k}=\gamma_{j,n}\psi_{k}\in C_0^{\infty}(\mathbb{R}^{n})^{2^{\hatt n}}$, $k\in\mathbb{N}$, one sees that 
$\phi_{k}\to\psi$ in $L^{2}(\mathbb{R}^{n})^{2^{\hatt n}}$
and $\partial_{j}\gamma_{j,n}\phi_{k}=\partial_{j}\psi_{k}\to\partial_{j}\gamma_{j,n}\psi$
in $L^{2}(\mathbb{R}^{n})^{2^{\hatt n}}$ as $k\to\infty.$ Thus, $\left(\gamma_{j,n}\partial_{j}\right)^{*}
=-\partial_{j}\gamma_{j,n}^{*}=-\partial_{j}\gamma_{j,n}=-\gamma_{j,n}\partial_{j}$.

Assertion $(iii)$ is a bit more involved. We shall prove it in the
next two steps.
\end{proof}

We recall the following well-known fact in the theory of normal operators. 

\begin{theorem}[{see, e.g., \cite[p. 347]{GLMST11}}]
\label{thm:resolvent_normal} Let $\cH$ be a complex separable Hilbert
space and let $A$ and $B$ be self-adjoint, resolvent commuting operators
acting on $\cH$. Then
\[
A+iB
\]
is closed, densely defined, and
\[
\left(A+iB\right)^{*}=A-iB.
\]
\end{theorem}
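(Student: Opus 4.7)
The plan is to deduce the theorem from the joint spectral theorem for strongly commuting self-adjoint operators. Since $A$ and $B$ are self-adjoint and resolvent commuting, they strongly commute, that is, all spectral projections of $A$ commute with all spectral projections of $B$. Hence there exists a unique projection-valued measure $E$ on the Borel sets of $\bbR^2$ such that
\begin{equation*}
A = \int_{\bbR^2} \lambda_1 \, dE(\lambda_1,\lambda_2), \quad B = \int_{\bbR^2} \lambda_2 \, dE(\lambda_1,\lambda_2),
\end{equation*}
with canonical domains $\dom(A) = \{\psi \in \cH \,|\, \int \lambda_1^2 \, d\mu_\psi < \infty\}$ and analogously for $\dom(B)$, where $\mu_\psi(\cdot) := (\psi, E(\cdot)\psi)_\cH$.

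Next, I would introduce the normal operator
\begin{equation*}
N := \int_{\bbR^2} (\lambda_1 + i\lambda_2) \, dE(\lambda_1,\lambda_2)
\end{equation*}
via the Borel functional calculus, with natural domain $\dom(N) = \{\psi \in \cH \,|\, \int |\lambda_1 + i\lambda_2|^2 \, d\mu_\psi < \infty\}$. Exploiting the pointwise identity $|\lambda_1 + i\lambda_2|^2 = \lambda_1^2 + \lambda_2^2$ and the monotone convergence theorem applied to the nonnegative integrands $\lambda_1^2$ and $\lambda_2^2$, one obtains
\begin{equation*}
\dom(N) = \dom(A) \cap \dom(B).
\end{equation*}
By the additivity of the spectral integral on the common domain, $N\psi = A\psi + iB\psi$ for every $\psi \in \dom(A) \cap \dom(B)$; thus the algebraic sum $A + iB$ coincides with $N$. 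Since $N$ is built via functional calculus from a continuous (unbounded) function, it is automatically closed, and density of $\dom(N)$ follows by approximating any vector via the spectral cutoffs $E(\{|\lambda_1| + |\lambda_2| \leq k\})\psi$ as $k \to \infty$.

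Finally, the adjoint rule of the Borel functional calculus gives
\begin{equation*}
N^* = \int_{\bbR^2} \overline{(\lambda_1 + i\lambda_2)} \, dE(\lambda_1,\lambda_2) = \int_{\bbR^2} (\lambda_1 - i\lambda_2) \, dE(\lambda_1,\lambda_2),
\end{equation*}
with $\dom(N^*) = \dom(N) = \dom(A) \cap \dom(B)$, once again since $|\lambda_1 - i\lambda_2|^2 = \lambda_1^2 + \lambda_2^2$. On this common domain, $N^*\psi = A\psi - iB\psi$, which is by definition $(A - iB)\psi$, completing the proof. The main obstacle, such as it is, lies in the domain identity $\dom(N) = \dom(A) \cap \dom(B)$, since this is precisely what forces the algebraic sum $A + iB$ to be already closed; this identity rests ultimately on the equivalence of resolvent commutativity with strong (joint) spectral commutativity for unbounded self-adjoint operators, after which every remaining step is routine functional calculus.
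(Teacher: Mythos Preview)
Your proof is correct. The paper does not actually supply its own proof of this theorem; it is stated as a known fact with a citation to \cite[p.~347]{GLMST11}, so there is nothing in the paper to compare against. Your argument via the joint spectral theorem for strongly commuting self-adjoint operators is the standard route and is exactly what the cited reference does: build $N=\int(\lambda_1+i\lambda_2)\,dE$ by functional calculus, identify $\dom(N)=\dom(A)\cap\dom(B)$ from $|\lambda_1+i\lambda_2|^2=\lambda_1^2+\lambda_2^2$, and read off closedness, density, and $N^*=A-iB$ from the calculus.
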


At this point we are ready to conclude the proof of Theorem \ref{thm:L_is_closed}:

\begin{lemma}
Let $\cH$ be a complex, separable Hilbert space, $A_{1},\ldots,A_{n}$ be resolvent commuting
skew-self-adjoint operators in $\cH$. Let $\{\gamma_{k}\}_{k\in\{1,\ldots,n\}}$
be a family of bounded linear self-adjoint operators in $\cH$, all commuting
with $A_{j}$, $j\in\{1,\ldots,n\}$, in the sense that 
$\gamma_{k}A_{j}=A_{j}\gamma_{k}$, $j,k\in\{1,\ldots,n\}$. Assume that the following equation holds, 
\[
\gamma_{k}\gamma_{k'}+\gamma_{k'}\gamma_{k}=2\delta_{kk'}, 
\quad k,k'\in\{1,\ldots,n\}. 
\]
Then $\sum_{k=1}^{n}\gamma_{k}A_{k}$ is closed on its natural domain 
$\bigcap_{k=1}^n \dom(A_k)$, and 
\begin{equation} 
\left(\sum_{k=1}^{n}\gamma_{k}A_{k}\right)^{*}=-\sum_{k=1}^{n}\gamma_{k}A_{k}.    \lb{dA} 
\end{equation} 
\end{lemma}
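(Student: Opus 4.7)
The plan is to establish the skew-self-adjointness of $S\coloneqq\sum_{k=1}^n\gamma_kA_k$ on $D\coloneqq\bigcap_{k=1}^n\dom(A_k)$ by exploiting the joint spectral resolution of the resolvent-commuting self-adjoint family $\{-iA_k\}_{k=1}^n$. As a first observation, for $\phi,\psi\in D$ one computes
\[
(S\phi,\psi)_\cH=\sum_{k=1}^n(A_k\phi,\gamma_k\psi)_\cH=-\sum_{k=1}^n(\phi,A_k\gamma_k\psi)_\cH=-(\phi,S\psi)_\cH,
\]
using $A_k^*=-A_k$ together with $\gamma_kA_k=A_k\gamma_k$, so that $S$ is skew-symmetric, that is, $-S\subseteq S^*$.

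Next, I would let $E$ denote the joint projection-valued measure of $\{-iA_k\}_{k=1}^n$ on $\mathbb{R}^n$. Since every $\gamma_j$ commutes with every resolvent $(A_k-\mu)^{-1}$, it also commutes with $E$. Setting $\cD_R\coloneqq\ran(E(B(0,R)))$ and $\cD\coloneqq\bigcup_{R>0}\cD_R$, each $\cD_R$ is invariant under every $A_k$ (with operator norm bounded by $R$ there) and under every $\gamma_k$, hence under $S$. Consequently $S|_{\cD_R}$ is a bounded skew-symmetric operator from the Hilbert space $\cD_R$ into itself, hence a bounded skew-self-adjoint operator on $\cD_R$. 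A direct computation exploiting the Clifford relations $\gamma_j\gamma_k+\gamma_k\gamma_j=2\delta_{jk}$ together with the fully justified commutativity $A_jA_k=A_kA_j$ on $\cD\subseteq\bigcap_k\dom(A_k^m)$ (for every $m\in\mathbb{N}$) yields
\[
\|S\phi\|_\cH^2=\sum_{k=1}^n\|A_k\phi\|_\cH^2,\quad\phi\in\cD,
\]
the cross terms $(A_j\phi,\gamma_j\gamma_kA_k\phi)_\cH+(A_k\phi,\gamma_k\gamma_jA_j\phi)_\cH$ for $j\neq k$ vanishing after symmetrization via $\gamma_k\gamma_j=-\gamma_j\gamma_k$ and the commutation of $A_j$, $A_k$.

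Let $\widetilde S$ denote the closure of $S|_{\cD}$. The identity above and closedness of each $A_k$ show that the graph-closure of $\cD$ equals $\{\phi\in\cH\,|\,\sum_k\|A_k\phi\|_\cH^2<\infty\}=D$, while the approximants $\phi_R\coloneqq E(B(0,R))\phi\in\cD$ (for $\phi\in D$) satisfy $S\phi_R\to S\phi$; thus $\widetilde S$ coincides with $S$ on $D$. To close the argument I would employ a spectral cut-off: given $\psi\in\dom(\widetilde S^*)$ with $\widetilde S^*\psi=\eta$, the $S$-invariance of $\cD_R$ and the bounded skew-self-adjointness of $S|_{\cD_R}$ together force $S\psi_R=-E(B(0,R))\eta$, where $\psi_R\coloneqq E(B(0,R))\psi$; letting $R\to\infty$ gives $\psi_R\to\psi$ and $\widetilde S\psi_R\to-\eta$, so closedness of $\widetilde S$ implies $\psi\in D$ with $\widetilde S\psi=-\eta$. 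This shows $\widetilde S^*\subseteq-\widetilde S$, and combined with skew-symmetry one obtains \eqref{dA}.

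The main technical obstacle is that the $\gamma_k$'s do not commute with each other, so no joint spectral calculus encompassing the full family $\{\gamma_k,A_k\}$ is available; in particular, Theorem \ref{thm:resolvent_normal} cannot be applied directly, nor iteratively, since the individual skew-self-adjoint summands $\gamma_kA_k$ anticommute pairwise. This is circumvented by using only the joint spectral calculus for $\{A_k\}$ and treating the $\gamma_k$'s as bounded operators that, crucially, do commute with the spectral measure $E$; precisely this commutativity renders the spectral subspaces $\cD_R$ invariant under $S$ and reduces the question on each $\cD_R$ to the trivial fact that a bounded skew-symmetric operator is bounded skew-self-adjoint.
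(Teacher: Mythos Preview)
Your proof is correct and takes a genuinely different route from the paper's. You exploit the joint spectral measure $E$ of the commuting family $\{-iA_k\}$ together with the fact that each $\gamma_j$, commuting with all resolvents, commutes with $E$; this makes the spectral subspaces $\cD_R$ invariant under $S$, reduces the problem there to a bounded skew-symmetric (hence skew-self-adjoint) operator, and a spectral cut-off argument passes to the limit. The identity $\|S\phi\|^2=\sum_k\|A_k\phi\|^2$ on $\cD$ is a clean byproduct that immediately pins down the closure domain as $D$.

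The paper proceeds instead by induction on $n$ and \emph{does} use Theorem~\ref{thm:resolvent_normal}, contrary to your closing remark. The trick is not to add the summands $\gamma_kA_k$ one at a time (which, as you correctly note, fails because they anticommute), but to multiply the full sum by $\gamma_1$: writing $\gamma_1\sum_{k}\gamma_kA_k = A_1 + \gamma_1\sum_{k\ge 2}\gamma_kA_k$, the second piece is \emph{self}-adjoint (the extra $\gamma_1$ flips the sign when taking adjoints, by anticommutation and the induction hypothesis), and it resolvent-commutes with the skew-self-adjoint $A_1$; Theorem~\ref{thm:resolvent_normal} then applies. Your approach is more self-contained, trading that auxiliary result for the joint spectral theorem; the paper's is an algebraic reduction that leans on the normal-operator machinery. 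Both are perfectly valid.
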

\begin{proof}
We prove \eqref{dA} by induction on $n$. The case $n=1$ follows from $\left(\gamma_{1}A_{1}\right)^{*}=A_{1}^{*}\gamma_{1}^{*}=-A_{1}\gamma_{1}=-\gamma_{1}A_{1}$.

Next, assume the assertion holds for $n\in \bbN$ and consider the sum 
\[
A\coloneqq\sum_{k=1}^{n+1}\gamma_{k}A_{k}=\gamma_{1}A_{1}+\sum_{k=2}^{n+1}\gamma_{k}A_{k}, 
\]
with its natural domain $\bigcap_{k=1}^n \dom(A_k)$. Since $\gamma_{1}^{2}=I_{\cH}$, $\gamma_{1}$
defines an isomorphism from $\cH$ into itself. Hence, $A$ is closed
if and only if $\text{\ensuremath{\gamma}}_{1}A$ is closed. One notes,  
\begin{equation}
\left(\gamma_{1}\sum_{k=2}^{n+1}\gamma_{k}A_{k}\right)^{*} =-\sum_{k=2}^{n+1}\gamma_{k}A_{k}\gamma_{1} 
 =-\sum_{k=2}^{n+1}\gamma_{k}\gamma_{1}A_{k}
  = \sum_{k=2}^{n+1}\gamma_{1}\gamma_{k}A_{k} 
=\gamma_{1}\sum_{k=2}^{n+1}\gamma_{k}A_{k},
\end{equation}
in addition, $\gamma_{1}\gamma_{1}A_{1}=A_{1}$ is skew-self-adjoint.
With the help of Theorem \ref{thm:resolvent_normal} it remains to
check whether the resolvents of $A_{1}$ and $\gamma_{1}\sum_{k=2}^{n+1}\gamma_{k}A_{k}$
commute. One observes that for $z\in\rho(A_{1})$, 
\begin{align}
\left(z-A_{1}\right)^{-1}\gamma_{1}\sum_{k=2}^{n+1}\gamma_{k}A_{k} & =\gamma_{1}\left(z-A_{1}\right)^{-1}\sum_{k=2}^{n+1}\gamma_{k}A_{k} 
 =\gamma_{1}\sum_{k=2}^{n+1}\gamma_{k}\left(z-A_{1}\right)^{-1}A_{k}   \no \\
 & \subseteq\gamma_{1}\sum_{k=2}^{n+1}\gamma_{k}A_{k}\left(z-A_{1}\right)^{-1}.      \lb{4.7}
\end{align}
Adding $-z'\left(z-A_{1}\right)^{-1}$ for some $z'\in\rho\left(\gamma_{1}\sum_{k=2}^{n+1}\gamma_{k}A_{k}\right)$
to both sides of inclusion \eqref{4.7}, one obtains 
\begin{align*}
 & -z'\left(z-A_{1}\right)^{-1}+\left(z-A_{1}\right)^{-1}\gamma_{1}\sum_{k=2}^{n+1}\gamma_{k}A_{k}\\
 & \quad \subseteq-z'\left(z-A_{1}\right)^{-1}+\gamma_{1}\sum_{k=2}^{n+1}\gamma_{k}A_{k}\left(z-A_{1}\right)^{-1}. 
\end{align*}
Thus,
\[
\left(z-A_{1}\right)^{-1}\left(z'-\gamma_{1}\sum_{k=2}^{n+1}\gamma_{k}A_{k}\right)\subseteq\left(z'-\gamma_{1}\sum_{k=2}^{n+1}\gamma_{k}A_{k}\right)\left(z-A_{1}\right)^{-1},
\]
implying  
\[
\left(z'-\gamma_{1}\sum_{k=2}^{n+1}\gamma_{k}A_{k}\right)^{-1}\left(z-A_{1}\right)^{-1}\subseteq\left(z-A_{1}\right)^{-1}\left(z'-\gamma_{1}\sum_{k=2}^{n+1}\gamma_{k}A_{k}\right)^{-1},
\]
proving assertion \eqref{dA} since the domain of the operator on the 
left-hand side is all of $\cH$.
\end{proof}

For proving the Fredholm property of $L=\mathcal{Q}+\Phi$, we will employ stability of the Fredholm property under relatively compact perturbations, or, in other words, that the essential spectrum is invariant under additive relatively compact perturbations. Thus, we need a compactness criterion 
and hence we recall the following compactness result for multiplication operators, a consequence of the Rellich--Kondrachov theorem, see \cite[Theorem 6.3]{AF03} (cf. and \eqref{eq:def_Hs} for the definition of $H^{1}(\mathbb{R}^{n})$). 

\begin{theorem}
\label{thm:Newton_pot_is_h1-bdd}Let $n\in\mathbb{N}$ and 
$\phi\ \in L^{\infty}(\bbR^n)$ such that for all $\epsilon>0$ there exists
$\Lambda > 0$ such that for all $x\in\mathbb{R}^{n}\backslash B(0,\Lambda)$, $\left|\phi(x)\right|\leq \epsilon$. 
 Then 
 $$\phi\colon \begin{cases} H^{1}(\mathbb{R}^{n}) \to L^{2}(\mathbb{R}^{n}), \\ 
 f\mapsto\phi(\cdot)f(\cdot), \end{cases} \, \text{ is compact.}  
 $$
\end{theorem}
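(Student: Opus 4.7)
The plan is to approximate the multiplication operator $M_\phi \colon H^1(\bbR^n)\to L^2(\bbR^n)$, $f\mapsto \phi f$, in the operator norm by a sequence of compact operators, exploiting the decay hypothesis on $\phi$ at infinity together with the Rellich--Kondrachov theorem on balls.

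For each $\Lambda>0$ set $\phi_\Lambda \coloneqq \phi\,\chi_{B(0,\Lambda)}$, interpreted as a bounded measurable function on $\bbR^n$. Then I would first argue that the operator
\[
M_{\phi_\Lambda}\colon H^1(\bbR^n) \to L^2(\bbR^n), \quad f\mapsto \phi_\Lambda f,
\]
is compact for every fixed $\Lambda>0$. Indeed, $M_{\phi_\Lambda}$ factors as the restriction map $H^1(\bbR^n) \to H^1(B(0,\Lambda))$ (continuous), followed by the Rellich--Kondrachov compact embedding $H^1(B(0,\Lambda)) \hookrightarrow L^2(B(0,\Lambda))$ (this is where boundedness of $B(0,\Lambda)$ and sufficient regularity of the boundary are used; $B(0,\Lambda)$ has Lipschitz boundary, so \cite[Theorem~6.3]{AF03} applies), followed by multiplication by the bounded function $\phi\!\restriction_{B(0,\Lambda)}\in L^\infty(B(0,\Lambda))$ into $L^2(B(0,\Lambda))$ (continuous), followed by extension by zero to $L^2(\bbR^n)$ (continuous). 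Since a composition containing a compact operator is compact, $M_{\phi_\Lambda}\in \cB_\infty(H^1(\bbR^n), L^2(\bbR^n))$.

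Next, I would use the decay hypothesis to control the remainder. Given $\epsilon>0$, choose $\Lambda>0$ so that $|\phi(x)|\leq \epsilon$ for a.e.~$x\in \bbR^n\setminus B(0,\Lambda)$. Then for every $f\in H^1(\bbR^n)$,
\[
\|(M_\phi - M_{\phi_\Lambda})f\|_{L^2(\bbR^n)}^2 = \int_{\bbR^n\setminus B(0,\Lambda)} |\phi(x)|^2 |f(x)|^2 \, d^n x \leq \epsilon^2 \|f\|_{L^2(\bbR^n)}^2 \leq \epsilon^2 \|f\|_{H^1(\bbR^n)}^2,
\]
so $\|M_\phi - M_{\phi_\Lambda}\|_{\cB(H^1(\bbR^n), L^2(\bbR^n))}\leq \epsilon$. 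Thus $M_\phi$ is the operator-norm limit of the compact operators $M_{\phi_\Lambda}$, and hence compact, since $\cB_\infty(H^1(\bbR^n), L^2(\bbR^n))$ is closed in $\cB(H^1(\bbR^n), L^2(\bbR^n))$.

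The one point that requires mild care, rather than genuine difficulty, is the first step: verifying that multiplication by a bounded compactly supported function, when composed with the Rellich--Kondrachov embedding on a ball, genuinely yields a compact operator from all of $H^1(\bbR^n)$ to $L^2(\bbR^n)$. This is straightforward once one notes that the restriction map $H^1(\bbR^n)\to H^1(B(0,\Lambda))$ is bounded and that Rellich--Kondrachov supplies the compactness factor; no further hypothesis on $\phi$ beyond $L^\infty$ is needed for this step, since $\phi$ is simply used as a bounded multiplier on $L^2(B(0,\Lambda))$.
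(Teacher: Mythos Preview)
Your proof is correct. Both your argument and the paper's rely on the same two ingredients---Rellich--Kondrachov on balls and the decay of $\phi$ outside large balls---but package them differently: the paper works directly with the sequential characterisation of compactness (weakly convergent sequences in $H^1$ are mapped to norm-convergent sequences in $L^2$, splitting into $B(0,\Lambda)$ and its complement), whereas you approximate $M_\phi$ in operator norm by the truncations $M_{\phi_\Lambda}$ and invoke the closedness of the compact operators. Your route is arguably a bit cleaner, since the compactness of each $M_{\phi_\Lambda}$ is immediate from the factoring you describe, and the remainder estimate is a one-line computation; the paper's argument requires a small amount of $\epsilon$-bookkeeping to handle the tail of $f$ and the convergence on the ball simultaneously. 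Either way, there is no substantive gap.
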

\begin{proof}
As $H^{1}(\mathbb{R}^{n})$ is a Hilbert space, it suffices to prove that weakly convergent sequences are mapped to norm-convergent sequences: Suppose that  $\{f_{k}\}_{k\in \bbN}$ weakly converges to some $f$ in $H^{1}(\mathbb{R}^{n})$ and denote $M\coloneqq\sup_{k\in\mathbb{N}}\|f_{k}\|_{H^{1}(\bbR^n)}$, which is finite by the uniform boundedness principle.   In particular, $\{f_{k}\}_{k\in\bbN}$ converges weakly in $H^{1}(B(0,\Lambda))$ for
every $\Lambda>0$. Hence, by the Rellich--Kondrachov theorem, for all
$\Lambda>0$ the sequence $\{f_{k}\}_{k\in\bbN}$ converges in $L^{2}(B(0,\Lambda))$.
Next, let $\epsilon>0$. As $f\in L^{2}(\mathbb{R}^{n})$, there exists $\Lambda_{0}>0$
such that $\| f\chi_{B(0,\Lambda_{0})}-f\|_{L^{2}}\leq\epsilon$, where we denoted by $\chi_{B(0,\Lambda_0)}$ the cut-off function being $1$ on the ball $B(0,\Lambda_0)$ and $0$ elswhere.
One can find $\Lambda\geq \Lambda_0$ such that $\left|\phi(x)\right|\leq \epsilon$
for $\left|x\right|\geq \Lambda$, and $k_{0}\in\mathbb{N}$ such that for
all $k\geq k_{0}$, one has $\|f_{k}\chi_{B(0,\Lambda)}-f\chi_{B(0,\Lambda)}\|_{L^{2}}\leq\epsilon.$
Thus, for $k\geq k_{0}$ one arrives at
\begin{align}
\left\Vert \phi f_{k}-\phi f\right\Vert ^{2} & =\left\Vert \phi f_{k}\chi_{B(0,\Lambda )}-\phi f\chi_{B(0,\Lambda )}\right\Vert ^{2}_{L^2}
+\left\Vert \phi f_{k}\chi_{\mathbb{R}^{n}\backslash B(0,\Lambda )}
-\phi f\chi_{\mathbb{R}^{n}\backslash B(0,\Lambda )}\right\Vert^{2}_{L^2}   \no \\
 & \leq \|\phi\|_{L^{\infty}}^{2}\epsilon^{2}+\epsilon^{2}\left(2M\right)^{2}.  
\end{align}
\end{proof}

\begin{remark}
  The latter theorem has the following easy but important corollary: In the situation of Theorem \ref{thm:Newton_pot_is_h1-bdd}, let $\mathcal{H}$ be a Hilbert space continuously embedded into $H^1(\mathbb{R}^n)$, for instance, $\mathcal{H}=H^2(\mathbb{R}^n)$ (cf.~\eqref{eq:def_Hs}), then the operator $\phi_{\mathcal H\to L^2}$ of multiplying by $\phi$ considered from $\mathcal{H}$ to $L^2(\mathbb{R}^n)$ is compact. Denoting by $\iota \colon \mathcal H\to H^1(\mathbb{R}^n)$ the continuous embedding, which exists by hypothesis, one observes that 
  \[
     \phi_{\mathcal{H}\to L^2(\mathbb{R}^n)} = \phi_{H^1(\mathbb{R}^n)\to L^2(\mathbb{R}^n)}\circ \iota,
  \]
with $\phi_{H^1(\mathbb{R}^n)\to L^2(\mathbb{R}^n)}$ being the operator discussed in Theorem \ref{thm:Newton_pot_is_h1-bdd}. Hence, the operator $\phi_{\mathcal{H}\to L^2(\mathbb{R}^n)}$ is compact as a composition of a continuous and a compact operator. \hfill $\diamond$
\end{remark}

The proof of Theorem \ref{thm:Fredholm_property} will rest on the observation that $L$ is Fredholm if and only if the essential spectra of $L^*L$ and $LL^*$ have strictly positive lower bounds. Thus, we formulate two propositions describing the opertors $L^*L$ and $LL^*$ in bit more detail:

\begin{proposition}
\label{prop:Computation of adjoint}The operator $L$ given by \eqref{eq:def_of_L}
is closed and densely defined in $L^{2}(\mathbb{R}^{n})^{2^{\hat n}d}$ and 
\begin{equation} 
L^{*}=-\cQ+\Phi, \quad \dom\left(L^{*}\right)=\dom\left(L\right) = H^{1}(\mathbb{R}^{n})^{2^{\hat n}d}.
\end{equation}  
\end{proposition}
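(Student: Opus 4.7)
The plan is to build the statement out of two essentially trivial observations and one standard adjoint identity. First, I would argue that $\cQ = Q \otimes I_d$ is skew-self-adjoint on $H^1(\bbR^n)^{2^{\hat n}d}$, so in particular closed and densely defined, with $\cQ^* = -\cQ$ on the same domain. This follows immediately from Theorem \ref{thm:L_is_closed}\,$(iii)$, which gives $Q^* = -Q$ on $H^1(\bbR^n)^{2^{\hat n}}$, together with the elementary fact that tensoring a skew-self-adjoint operator with the identity on a finite-dimensional factor preserves skew-self-adjointness and the natural domain $H^{1}(\bbR^n)^{2^{\hat n}} \otimes \bbC^d = H^{1}(\bbR^n)^{2^{\hat n}d}$.

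Next I would observe that, under the hypotheses of Definition \ref{Def:L}, $\Phi$ is (identified with) a bounded operator on the full space $L^{2}(\bbR^n)^{2^{\hat n}d}$, and the pointwise self-adjointness $\Phi(x) = \Phi(x)^*$ for a.e.\ $x$ together with our convention of identifying $\Phi$ with $I_{2^{\hat n}} \otimes \Phi$ (cf.\ Remark \ref{rem:differen_mult_op} and the notational conventions in Section \ref{s2}) immediately yields $\Phi^* = \Phi \in \cB\big(L^{2}(\bbR^n)^{2^{\hat n}d}\big)$. In particular, $\Phi$ is everywhere defined, so $L = \cQ + \Phi$ has domain exactly $\dom(\cQ) = H^{1}(\bbR^n)^{2^{\hat n}d}$, which is dense in $L^{2}(\bbR^n)^{2^{\hat n}d}$.

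Closedness of $L$ then follows from the standard fact that the sum of a closed operator and a bounded (everywhere defined) operator is closed: if $\{f_k\}_{k\in\bbN} \subset H^{1}(\bbR^n)^{2^{\hat n}d}$ with $f_k \to f$ and $L f_k \to g$ in $L^{2}(\bbR^n)^{2^{\hat n}d}$, then $\Phi f_k \to \Phi f$ by boundedness, hence $\cQ f_k \to g - \Phi f$, and closedness of $\cQ$ forces $f \in H^{1}(\bbR^n)^{2^{\hat n}d}$ with $\cQ f = g - \Phi f$, i.e., $L f = g$.

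Finally, for the adjoint identity I would invoke the standard perturbation fact that $(A + B)^* = A^* + B^*$ whenever $A$ is densely defined and $B \in \cB(\cH)$, with $\dom((A+B)^*) = \dom(A^*)$. Applied to $A = \cQ$ and $B = \Phi$, together with $\cQ^* = -\cQ$ and $\Phi^* = \Phi$, this yields
\[
L^* = \cQ^* + \Phi^* = -\cQ + \Phi, \quad \dom(L^*) = \dom(\cQ^*) = H^{1}(\bbR^n)^{2^{\hat n}d} = \dom(L),
\]
which is exactly the claim. No step here looks delicate; if there is any subtlety at all it is purely notational, namely making sure that the tensor-product identifications $\cQ = Q \otimes I_d$ and $\Phi = I_{2^{\hat n}} \otimes \Phi$ are used consistently so that the domain bookkeeping and the adjoint computations genuinely reduce to the scalar-by-matrix situation already handled in Theorem \ref{thm:L_is_closed}.
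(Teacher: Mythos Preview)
Your proof is correct and follows essentially the same approach as the paper: the paper's proof is a one-liner noting that $\Phi$ is bounded and self-adjoint and then invoking Theorem \ref{thm:L_is_closed}, while you spell out the standard perturbation facts (closed plus bounded is closed, $(A+B)^* = A^* + B^*$ for $B$ bounded) that make this immediate.
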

\begin{proof}
Since the operator of multiplication with the function $\Phi$ is
bounded and self-adjoint, the assertion is immediate from Theorem \ref{thm:L_is_closed}. 
\end{proof}

\begin{proposition}
\label{prop:compu_lstartl}Assume that 
$\Phi\in C_{b}^{\infty}\big(\mathbb{R}^{n};\mathbb{C}^{d\times d}\big)$ is pointwise self-adjoint, that is, $\Phi(\cdot) = \Phi(\cdot)^*$. For $L=\cQ+\Phi$ given by \eqref{eq:def_of_L},
one then has $($cf.\ \eqref{Phi}$)$, 
\begin{equation}
L^{*}L=-\Delta I_{2^{\hatt n}d} - C + \Phi^{2} \, \text{ and } \, 
LL^{*}=-\Delta I_{2^{\hatt n}d} + C + \Phi^{2},\label{eq:LstarLandLLstar}
\end{equation}
where
\begin{equation}
C=\sum_{j=1}^{n}\gamma_{j,n} (\partial_{j}\Phi)=(\mathcal{Q}\Phi),\label{eq:commutator=00003DC}
\end{equation}
see also Remark \ref{rem:differen_mult_op}. Moreover, 
\begin{equation} 
\dom(L^{*}L) = \dom(LL^{*}) = H^{2}(\mathbb{R}^{n})^{2^{\hat n}d} 
\end{equation} 
 $($see \eqref{eq:def_Hs}
for a definition of the latter\,$)$. 
\end{proposition}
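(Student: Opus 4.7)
The plan is to first establish the formula on a common smooth core (where everything is classical), then identify the natural domain $H^2(\mathbb{R}^n)^{2^{\hat n}d}$ for the two compositions $L^*L$ and $LL^*$, and finally invoke closedness together with a density argument to extend the identities to the whole domain. By Proposition \ref{prop:Computation of adjoint} we know $L^* = -\mathcal{Q}+\Phi$ with $\dom(L^*) = \dom(L) = H^1(\mathbb{R}^n)^{2^{\hat n}d}$, and by Theorem \ref{thm:L_is_closed} (combined with \eqref{eq:q2=00003DDelta}) we have $\mathcal{Q}^2 = \Delta\, I_{2^{\hat n}d}$ on $H^2(\mathbb{R}^n)^{2^{\hat n}d}$.

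First I would take $f \in C_c^\infty(\mathbb{R}^n)^{2^{\hat n}d}$ and simply expand
\[
L^*Lf = (-\mathcal{Q}+\Phi)(\mathcal{Q}+\Phi)f = -\mathcal{Q}^2 f - \mathcal{Q}(\Phi f) + \Phi\mathcal{Q}f + \Phi^2 f.
\]
The Leibniz rule (cf.\ Remark \ref{rem:differen_mult_op}) gives $\mathcal{Q}(\Phi f) = (\mathcal{Q}\Phi)f + \Phi\,\mathcal{Q}f$, so the cross terms collapse to $-(\mathcal{Q}\Phi)f = -Cf$; together with $\mathcal{Q}^2 = \Delta I$ this yields $L^*Lf = (-\Delta I_{2^{\hat n}d} - C + \Phi^2)f$. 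The computation for $LL^*$ is identical, with the sign of the cross term reversed. Both $C$ and $\Phi^2$ are bounded multiplication operators (since $\Phi \in C_b^\infty$), so the right-hand sides of \eqref{eq:LstarLandLLstar} are naturally defined on $H^2(\mathbb{R}^n)^{2^{\hat n}d}$ and agree with the compositions on $C_c^\infty$.

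The main step is domain identification. By definition
\[
\dom(L^*L) = \{f \in \dom(L) : Lf \in \dom(L^*)\} = \{f \in H^1 : (\mathcal{Q}+\Phi)f \in H^1\}.
\]
Since $\Phi \in C_b^\infty$, multiplication by $\Phi$ maps $H^1 \to H^1$ boundedly, hence $f \in \dom(L^*L)$ iff $f \in H^1$ and $\mathcal{Q}f \in H^1$. I would then argue that this is exactly $H^2$: one inclusion is obvious since $f \in H^2$ implies $\partial_j f \in H^1$ for each $j$, so $\mathcal{Q}f \in H^1$; for the converse, if $f \in H^1$ and $\mathcal{Q}f \in H^1$, then applying $\mathcal{Q}$ once more (still a first-order differential expression mapping $H^1 \to L^2$) gives $\mathcal{Q}^2 f = \Delta f \in L^2$, and the standard characterization $H^2(\mathbb{R}^n) = \{f \in L^2 : \Delta f \in L^2\} \cap H^1 = \{f \in H^1 : \Delta f \in L^2\}$ (easily verified via the Fourier transform) forces $f \in H^2$. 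The identical argument, exchanging the roles of $L$ and $L^*$, gives $\dom(LL^*) = H^2$.

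Finally, once the domains are identified I would conclude by density: $C_c^\infty(\mathbb{R}^n)^{2^{\hat n}d}$ is dense in $H^2(\mathbb{R}^n)^{2^{\hat n}d}$, the compositions $L^*L$ and $LL^*$ are closed (being self-adjoint, or more pedestrianly since $-\Delta I_{2^{\hat n}d}$ is closed on $H^2$ and the perturbations $\pm C + \Phi^2$ are bounded), so the identities \eqref{eq:LstarLandLLstar} established on the core extend to all of $H^2(\mathbb{R}^n)^{2^{\hat n}d}$. The only mildly delicate point is the Fourier-side characterization of $H^2$ used in the domain step; everything else is a bounded multiplication bookkeeping combined with $\mathcal{Q}^2 = \Delta I$.
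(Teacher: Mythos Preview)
Your proof is correct and follows essentially the same route as the paper: first identify $\dom(L^*L)=\dom(LL^*)=H^2$, then expand the products using $L^*=-\mathcal{Q}+\Phi$, $\mathcal{Q}^2=\Delta I$, and the Leibniz rule $\mathcal{Q}\Phi=(\mathcal{Q}\Phi)+\Phi\mathcal{Q}$. The only real difference is in the domain step: the paper argues that $\psi\in H^1$ with $L\psi\in H^1$ forces $(\mathcal{Q}+I)\psi\in H^1$ and then invokes skew-self-adjointness of $\mathcal{Q}$ to get $\psi=(\mathcal{Q}+I)^{-1}(\mathcal{Q}+I)\psi\in H^2$, whereas you go via $\mathcal{Q}^2 f=\Delta f\in L^2$ and the Fourier characterization of $H^2$. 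Both are equally short; your final density step is harmless but not actually needed, since once $\dom(L^*L)=H^2$ is established the algebraic expansion is already valid on all of $H^2$.
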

\begin{proof}
At first one observes that if 
$\psi\in H^{k}(\mathbb{R}^{n})^{2^{\hat n}d}$ 
and $L\psi\in H^{k}(\mathbb{R}^{n})^{2^{\hat n}d}$, 
then $\psi\in H^{k+1}(\mathbb{R}^{n})^{2^{\hat n}d}$. Indeed, from 
$L\psi=\cQ\psi+\Phi\psi$, one infers 
$\psi+\cQ\psi=\psi+L\psi+\Phi\psi\in H^{k}(\mathbb{R}^{n})^{2^{\hat n}d}$ by the 
differentiablity of $\Phi$. By Theorem \ref{thm:L_is_closed},
the operator $\cQ$ is skew-self-adjoint and therefore $-1\in\rho(\cQ)$.
Hence, $\psi=(\cQ + I)^{-1}(\cQ + I)\psi\in H^{k+1}(\mathbb{R}^{n})^{2^{\hat n}d}$. Therefore,
if $\psi\in \dom(L)=H^{1}(\mathbb{R}^{n})^{2^{\hat n}d}$ 
with $L\psi\in \dom(L^{*})=H^{1}(\mathbb{R}^{n})^{2^{\hat n}d}$,
then $\psi\in H^{2}(\mathbb{R}^{n})^{2^{\hat n}d}$. On the other hand, 
if $\psi\in H^{2}(\mathbb{R}^{n})^{2^{\hat n}d}$, 
then also $\psi\in \dom(L^{*}L)$. The same reasoning applies to $LL^{*}$.

Next, we compute $L^{*}L$. With Proposition \ref{prop:Computation of adjoint}
one obtains 
\begin{align*}
L^{*}L & =(- \cQ+\Phi)(\cQ+\Phi) = - \cQ\cQ+\Phi \cQ-\cQ\Phi+\Phi^{2}
\end{align*}
and
\begin{align*}
LL^{*} & =(\cQ+\Phi)(- \cQ+\Phi) = - \cQ\cQ-\Phi \cQ+ \cQ\Phi+\Phi^{2}.
\end{align*}
Recalling $- \cQ\cQ=-\Delta I_{2^{\hatt n}d}$ from \eqref{eq:q2=00003DDelta}, one concludes the proof with the observation 
$\Phi \cQ- \cQ\Phi=\Phi \cQ-\Phi \cQ+C=C$, applying the product rule. 
\end{proof}

We may now come to the proof of the Fredholm property of $L=\mathcal{Q}+\Phi$ with smooth potential $\Phi$ satisfying for some $c>0$, $|\Phi(x)|\geq c I_d$, $x\in \mathbb{R}^n$, as well as satisfying $C(x)=(\mathcal{Q}\Phi)(x) \to 0$ as $|x|\to \infty$:

\begin{proof}[Proof of Theorem \ref{thm:Fredholm_property}]
By hypothesis,
$\left(\Phi(x)\right)^{2}=\left|\Phi(x)\right|^{2}\geq c^2 I_d$, $x\in\mathbb{R}^{n}$. From
\[
-\Delta I_d +\Phi^{2}\geq c^2 I_d,
\]
one deduces that the spectrum of $-\Delta I_d + \Phi^{2}$ is contained in
$[c^2,\infty)$. In particular, one concludes that the essential spectrum
$\sigma_{\textnormal{ess}}(-\Delta I_d+\Phi^{2})$
of $-\Delta I_d + \Phi^{2}$ is also contained in $[c^2,\infty)$. Since
$x\mapsto C(x)=\left(Q\Phi\right)(x)$ satisfies the condition imposed
on $\Phi$ in Theorem \ref{thm:Newton_pot_is_h1-bdd}, one infers that
$C$ is $-\Delta I_d+\Phi^{2}$-compact, since the domain of the latter
(closed) operator coincides with $H^{2}(\mathbb{R}^{n})^d$, which is continuously
embedded into $H^1(\mathbb{R}^n)^d$. Recalling
Proposition \ref{prop:compu_lstartl}, that is,
\[
L^{*}L=-\Delta I_{2^{\hatt n}d} - C+\Phi^{2},
\]
one obtains $\sigma_{\textnormal{ess}}(L^{*}L)=\sigma_{\textnormal{ess}}(-\Delta I_d + \Phi^{2})\subseteq[c^2,\infty)$, as the essential spectrum is invariant under additive relatively compact
perturbations (see, e.g., \cite[Theorem 5.35]{Ka80}). In particular,
$0\notin\sigma_{\text{ess}}(L^{*}L)$ implying that 
$L^{*}L$ is Fredholm. By a similar argument applied to $LL^{*}$, one deduces the
Fredholm property of $L$ (using that $\ker(L)=\ker(L^{*}L)$ and $\ker(L^{*})=\ker(LL^{*})$).
\end{proof}

In the following sections, we are interested in a particular subclass of potentials $\Phi$. In particular, we focus on potentials for which we may apply Theorem \ref{thm:index with Witten}. A first main focus is set on potentials satisfying the properties stated in Definition \ref{def:phi_admissible}, the so-called \emph{admissible} potentials. The reader is referred to Section \ref{sec:The Index theorem} and beyond for possible generalizations. It should be noted, however, that for more general potentials the derivations and arguments are more involved than for the ones mentioned in Definition \ref{def:phi_admissible}. In fact, the main reason being assumption $(ii)$ on the invertibility of $\Phi$ everywhere. It is known (see the end of Section \ref{sec:The Index theorem}) that the operator $L=\mathcal{Q}+\Phi$ has index $0$ for $\Phi$ satisfying Definition \ref{def:phi_admissible}. Later on, we shall see that the study of potentials being invertible on complements of large balls around $0$ can be reduced to the study of potentials being invertible everywhere except on a sufficiently small ball around $0$. 
The arguments for the latter case, in turn, rest on the perturbation theory for the Helmholtz equation, see Section \ref{sec:pert}. Hence, the derivation for the index formula for potentials being invertible everywhere except on a sufficiently small ball can be regarded as a perturbed version of the arguments given for admissible potentials. Therefore, we chose to present the core arguments for the by far simpler case of admissible potentials first. 

The precise notion of what we call \emph{admissible potentials} reads as follows.

\begin{definition} \label{def:phi_admissible} Let $\Phi\colon\mathbb{R}^{n}\to\mathbb{C}^{d \times d}$
for some $d,n\in\mathbb{N}$. We call $\Phi$ \emph{admissible}, if
the following conditions $(i)$--$(iii)$ hold: \\[1mm] 
$(i)$ $($smoothness\,$)$  
$\Phi\in C_{b}^{\infty}\big(\mathbb{R}^{n};\mathbb{C}^{d \times d}\big)$. \\[1mm] 
$(ii)$ $($invertibility and self-adjointness\,$)$ for all $x\in\mathbb{R}^{n}$, 
$\Phi(x)^{*}=\Phi(x)=\Phi(x)^{-1}$. \\[1mm] 
$(iii)$ $($asymptotics of the derivatives\,$)$ for all $\alpha\in\mathbb{N}_{0}^{n}$, 
there exists $\kappa>0$ and $\epsilon> 1/2$ such that 
\[
\|(\partial^{\alpha}\Phi)(x)\|\leq \begin{cases}
\kappa (1+|x|)^{-1}, & \left|\alpha\right|=1, \\[1mm]
\kappa (1+ |x|)^{-1-\epsilon}, & \left|\alpha\right|\geq 2,
\end{cases} \quad x\in\mathbb{R}^{n},
\] 
where we employed multi-index notation and used the convention $\left|\alpha\right|=\sum_{j=1}^{n}\alpha_{j}$.
\end{definition}

\begin{remark}
If $\Psi\in C_{b}^{\infty}\big(\mathbb{R}^{n}\backslash B(0,1);\mathbb{C}^{d \times d}\big)$ 
 is \emph{homogeneous of order $0$}, that is, 
for all $x\in\mathbb{R}^{n}\backslash \{0\}$, $\Psi(x)=\Psi (x/|x|)$, 
then $\Psi$ satisfies Definition \ref{def:phi_admissible}\,$(iii)$. Indeed, one computes for $x=\{x_{j}\}_{j\in\{1,\ldots,n\}}\in\mathbb{R}^{n}\backslash \{0\}$
and $j\in\{1,\ldots,n\}$,  
\[
\partial_{j}\left(\frac{\cdot}{\left|\cdot\right|}\right)(x)=\frac{1}{\left|x\right|}\left(\begin{array}{c}
0\\
\vdots\\
1\\
\vdots\\
0
\end{array}\right)-\frac{x_{j}}{\left|x\right|^{3}}\left(\begin{array}{c}
x_{1}\\
\vdots\\
x_{j}\\
\vdots\\
x_{n}
\end{array}\right), 
\] 
and 
\begin{align*}
\left(\partial_{j}\Psi\right)(x) & =\partial_{j}\left(\Psi\circ\left(\frac{\cdot}{\left|\cdot\right|}\right)\right)(x)\\
 & =\left(\begin{array}{ccccc}
\left(\partial_{1}\Psi\right)\left(x/\left|x\right|\right) & \cdots 
& \left(\partial_{j}\Psi\right)\left(x/\left|x\right|\right) 
& \cdots & \left(\partial_{n}\Psi\right)\left(x/\left|x\right|\right)\end{array}\right)   \\
& \quad \times \left(\frac{1}{\left|x\right|}\left(\begin{array}{c}
0\\
\vdots\\
1\\
\vdots\\
0
\end{array}\right)-\frac{x_{j}}{\left|x\right|^{3}}\left(\begin{array}{c}
x_{1}\\
\vdots\\
x_{j}\\
\vdots\\
x_{n}
\end{array}\right)\right)\\
 & =\frac{1}{\left|x\right|}\sum_{k=1}^{n}\left(\partial_{k}\Psi\right)\left(\frac{x}{\left|x\right|}\right)\left(\delta_{kj}-\frac{x_{k}x_{j}}{\left|x\right|^{2}}\right),
\end{align*}
establishing the assertion. We note that Callias \cite{Ca78} assumes that the potential ``approaches a homogeneous function of order $0$ as $|x|\to\infty$'' such that 
Definition \ref{def:phi_admissible}\,$(iii)$ is satisfied.  \hfill $\diamond$
\end{remark}

\newpage

\section{Derivation of the Trace Formula -- The Trace Class Result}\label{sec:The-Derivation-of-trace-f}

In this section, we shall prove the applicability of Theorem \ref{thm:index with Witten} for the operator 
\begin{equation}
L= \cQ+\Phi\label{eq:def_of_L(2)}
\end{equation}
in $L^{2}(\mathbb{R}^{n})^{2^{\hat n}d}$ as introduced in \eqref{eq:def_of_L}
with 
\[
\cQ=\sum_{j=1}^{n}\gamma_{j,n}\partial_{j}
\]
given by \eqref{eq:def_of_Q2} (or \eqref{eq:Def_of_Q}) and an admissible
potential $\Phi$, see Definition \ref{def:phi_admissible}. More
precisely, we seek to establish that the operator 
\begin{equation}
\chi_\Lambda B_{L}(z)=z\chi_\Lambda\tr_{2^{\hat n}d}\big(\left(L^{*}L+z\right)^{-1}-\left(LL^{*}+z\right)^{-1}\big), \quad z\in\rho(-LL^{*})\cap\rho(-L^{*}L),    \label{eq:def_of_BL(z)2}
\end{equation} belongs to the trace class $\mathcal{B}_1\big(L^2(\mathbb{R}^n)\big)$, 
where $\tr_{2^{\hat n}d}$ is given in \eqref{eq:Def_of_int_tr} and $\chi_\Lambda$ is the multiplication operator of multiplying with the characteristic function of the ball centered at $0$ with radius 
$\Lambda>0$, that is,
\begin{equation}\label{eq:def_of_chi}
   \chi_\Lambda(x)\coloneqq \begin{cases}
                          1, & x\in B(0,\Lambda),\\
                          0, & x\in \mathbb{R}^n\backslash  B(0,\Lambda).
                         \end{cases}
\end{equation}
Regarding Theorem \ref{thm:index with Witten} (with $T_\Lambda = \chi_\Lambda$ and $S_\Lambda^*=I_{L^2(\mathbb{R}^n)}$), we are then interested in computing the limit for $\Lambda\to\infty$ of $\tr_{L^2(\bbR^n)}(\chi_\Lambda B_{L}(z))$.  This requires 
showing that $\chi_\Lambda B_{L}(z)$ is indeed trace class for all $\Lambda>0$. The limit $z\to0$ of $\lim_{\Lambda\to\infty} \tr_{L^2(\bbR^n)} (\chi_\Lambda B_{L}(z))$  (provided it exists in an appropriate way, see \eqref{indF(0)} in Theorem \ref{thm:index with Witten}) then corresponds to the index of $L$. It turns out that to compute the limit of $z\to0$ in the expression 
$\lim_{\Lambda\to\infty} \tr_{L^2(\bbR^n)} (\chi_\Lambda B_{L}(z))$ is rather straightforward (see also Theorem \ref{thm:Fredholm-index}), once the respective formula is established\footnote{From now on, we shall \emph{only} furnish the internal trace, introduced in Definition \ref{def:bigtrace_littletrace}, of operators
living on an orthogonal sum of a Hilbert space, with an additional
subscript. The operator $\tr$ without subscript will always refer to 
the trace of a trace class operator acting in some fixed underlying Hilbert space. In particular,
for $A\in\mathbb{C}^{d \times d}$, the expression $\tr (A)$ denotes 
the sum of the diagonal entries.}.  
The main theorem, which we shall prove in the next two sections, reads as follows.

\begin{theorem}
\label{thm:Witten_reg_n5} Let $z\in\rho\left(-LL^{*}\right)\cap\rho\left(-L^{*}L\right)$
with $\Re (z) > - 1$ and $n\in\mathbb{N}_{\geq3}$ odd. Suppose that  $\Phi$ is admissible $($see Definition \ref{def:phi_admissible}$)$.Then the operator
$\chi_\Lambda B_{L}(z)$ with $B_{L}(z)$ and $\chi_\Lambda$ given by \eqref{eq:def_of_BL(z)2} and \eqref{eq:def_of_chi}, respectively, is trace class, the limit 
$f(z)\coloneqq \lim_{\Lambda\to\infty} \tr (\chi_\Lambda B_L(z))$ exists and is given by  
\begin{align}
\begin{split} 
f(z) & = (1+z)^{-n/2}\left(\frac{i}{8\pi}\right)^{(n-1)/2}\frac{1}{[(n-1)/2]!} 
\lim_{\Lambda \to\infty}\frac{1}{2 \Lambda}\sum_{j,i_{1},\ldots,i_{n-1} = 1}^n 
\epsilon_{ji_{1}\ldots i_{n-1}} \\
 & \quad \; \times \int_{\Lambda S^{n-1}}\tr(\Phi(x)(\partial_{i_{1}} \Phi)(x)\ldots 
 (\partial_{i_{n-1}} \Phi)(x))x_{j}\, d^{n-1} \sigma(x),    \lb{f(z)} 
 \end{split} 
\end{align}
where $\epsilon_{ji_{1}\ldots i_{n-1}}$ denotes the $\epsilon$-symbol
as in Proposition \ref{prop:comp_of_Dirac_Alge}.
\end{theorem}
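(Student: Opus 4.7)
The strategy is to simplify the resolvent difference using admissibility, expand it in a Neumann series around the Helmholtz resolvent, identify the leading contribution via the algebraic structure of the Euclidean Dirac algebra, establish the requisite Schatten-class properties, and finally convert the resulting volume trace into a surface integral via a divergence-theorem argument.

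First, exploiting admissibility ($\Phi^{2}=I_{d}$), Proposition~\ref{prop:compu_lstartl} yields $L^{*}L+z=-\Delta+(1+z)-C$ and $LL^{*}+z=-\Delta+(1+z)+C$, where $C=\mathcal{Q}\Phi=\sum_{j}\gamma_{j,n}(\partial_{j}\Phi)$. Setting $R_{1+z}=(-\Delta+1+z)^{-1}$, the even powers of $C$ cancel and
\begin{equation*}
(L^{*}L+z)^{-1}-(LL^{*}+z)^{-1}=2\sum_{m=0}^{\infty}R_{1+z}(CR_{1+z})^{2m+1},
\end{equation*}
initially in operator norm for $\Re(z)$ large. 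Since each $\gamma_{j,n}$ is a constant matrix commuting with $R_{1+z}$, every summand factors as $\sum_{i_{1},\ldots,i_{2m+1}}\gamma_{i_{1},n}\cdots\gamma_{i_{2m+1},n}\otimes\bigl(R_{1+z}(\partial_{i_{1}}\Phi)R_{1+z}\cdots(\partial_{i_{2m+1}}\Phi)R_{1+z}\bigr)$. Taking the internal trace $\tr_{2^{\hat n}d}=\tr_{2^{\hat n}}\otimes\tr_{d}$ and invoking the Clifford-algebra identities of Proposition~\ref{prop:comp_of_Dirac_Alge} — namely $\tr_{2^{\hat n}}(\gamma_{i_{1}}\cdots\gamma_{i_{k}})=0$ for odd $k<n$ and $\tr_{2^{\hat n}}(\gamma_{i_{1}}\cdots\gamma_{i_{n}})=2^{\hat n}i^{\hat n}\epsilon_{i_{1}\ldots i_{n}}$ — the lowest surviving term corresponds to $m=\hat n$, i.e.\ to exactly $n$ factors of $C$.

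Next I verify trace class of $\chi_{\Lambda}$ times each relevant summand. Grouping the $n$ factors $(\partial_{i_{j}}\Phi)R_{1+z}$ into two Hilbert--Schmidt blocks of $\hat n+1$ factors each (absorbing the leading $R_{1+z}$ by a commutator rearrangement in the spirit of Lemma~\ref{lem:asymptotics on the diagonal and products of psis}), Theorem~\ref{thm:Simon_Hilbert-Schmidt} yields a trace-class product, preserved upon multiplying by the bounded $\chi_{\Lambda}$. Corollary~\ref{cor:Comp-of-trace} then gives $\tr(\chi_{\Lambda}T)=\int_{B(0,\Lambda)}t(x,x)\,d^{n}x$ in terms of the (now continuous) diagonal kernel. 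Repeated application of Lemma~\ref{lem:asymptotics on the diagonal and products of psis} and Remark~\ref{rem:green's kernels of the first and the last} permutes multiplications past resolvents, reducing the leading diagonal contribution to $R_{1+z}^{\,n+1}(0,0)\cdot\tr_{d}[(\partial_{i_{1}}\Phi)(x)\cdots(\partial_{i_{n}}\Phi)(x)]$, modulo remainders of strictly higher decay controlled by Lemma~\ref{lem:asymptotics in the diagonal with commutator}; translation invariance makes $R_{1+z}^{\,n+1}(0,0)$ an explicit constant computable by Fourier transform.

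The final step converts the volume integral to a surface integral. Differentiating $\Phi^{2}=I_{d}$ gives the anticommutation $\{\Phi,\partial_{k}\Phi\}=0$, and combining this with the antisymmetry of $\epsilon$, cyclicity of $\tr_{d}$, and the symmetry of mixed partials yields the divergence identity
\begin{equation*}
\sum_{j,i_{1},\ldots,i_{n-1}}\epsilon_{j i_{1}\ldots i_{n-1}}\tr_{d}\bigl[(\partial_{j}\Phi)(\partial_{i_{1}}\Phi)\cdots(\partial_{i_{n-1}}\Phi)\bigr]=\operatorname{div} g,
\end{equation*}
with $g_{j}=\sum\epsilon_{j i_{1}\ldots i_{n-1}}\tr_{d}[\Phi(\partial_{i_{1}}\Phi)\cdots(\partial_{i_{n-1}}\Phi)]$. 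The divergence theorem converts $\int_{B(0,\Lambda)}\operatorname{div} g\,d^{n}x$ into $\Lambda^{-1}\int_{\Lambda S^{n-1}}g(x)\cdot x\,d^{n-1}\sigma(x)$, producing the stated surface integral. To pass from $\Re(z)$ large (where the Neumann series converges absolutely) to the full punctured neighbourhood $\Re(z)>-1$, I would invoke Montel's theorem applied to $\{z\mapsto\tr(\chi_{\Lambda}B_{L}(z))\}_{\Lambda}$ together with analytic continuation, exactly as outlined in Step~(3) of the introduction. The principal obstacles will be (i) the trace-class bookkeeping when absorbing the leading $R_{1+z}$ into the Hilbert--Schmidt blocks via commutator corrections while keeping the remainders of genuinely lower order, and (ii) resumming the higher-order Neumann tail ($m>\hat n$): using the pointwise diagonal estimates of Section~\ref{sec:ptw_intk}, these combine with the leading $z(1+z)^{-n/2-1}$ factor to produce precisely the $(1+z)^{-n/2}$ prefactor stated in~\eqref{f(z)}.
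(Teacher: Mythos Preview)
Your plan captures many of the right ingredients—the Neumann expansion, the Clifford trace identities killing the low-order terms (this is Lemma~\ref{l:can}), the Hilbert--Schmidt bookkeeping for the trace class property (Theorem~\ref{thm:trisbounded}), and the Montel/analytic continuation scheme—but the way you assemble them leaves a genuine gap, precisely where you yourself flag ``obstacle (ii)''.

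Computing the diagonal of $B_L(z)$ directly from the $m=\hat n$ Neumann term gives, after your (correct) scalar divergence identity, a prefactor $2z\,(2i)^{\hat n}\,R_{1+z}^{\,n+1}(0,0)\propto z(1+z)^{-n/2-1}$ in front of the surface integral. This is \emph{not} $(1+z)^{-n/2}$; near $z=0$ the two differ by a full power of $z$. Your assertion that the $m>\hat n$ tail ``resums'' to correct this would require controlling the Clifford traces $\tr_{2^{\hat n}}(\gamma_{i_1}\cdots\gamma_{i_{2m+1}})$ for all $2m+1>n$, which no longer reduce to a single $\epsilon$-symbol, and then summing an infinite series of diagonal contributions to recover the exact prefactor. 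Nothing in Section~\ref{sec:ptw_intk} does this; those estimates give decay in $|x|$, not an algebraic identity in $z$.

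The paper sidesteps this problem entirely via Proposition~\ref{prop:Reformulation of B}: the operator identity $2B_L(z)=\sum_j[\partial_j,J_L^j(z)]+A_L(z)$ absorbs the $z$ prefactor (see Lemma~\ref{lem:afirstcomm}), and $A_L(z)$ has kernel vanishing on the diagonal (Proposition~\ref{prop:commutator with phi vanishes}). The divergence structure is therefore at the \emph{operator} level, not at the $\Phi$-level, and the relevant leading kernel is that of $2\tr_{2^{\hat n}d}(\gamma_{j,n}\Phi C^{n-1}R_{1+z}^{\,n})$—note the explicit $\Phi$, only $n{-}1$ factors of $C$, and $n$ (not $n{+}1$) resolvents. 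Its diagonal is computed exactly in Proposition~\ref{prop:formula for index} and yields $(1+z)^{-n/2}$ directly, with no tail resummation needed; the tail merely contributes remainders decaying faster than $|x|^{1-n}$ (Corollary~\ref{cor:final aymptotics}) and hence vanishes in the $\Lambda\to\infty$ surface limit.
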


In order to deduce the latter theorem, we shall have a deeper look
into the inner structure of $B_{L}(z)$. A first step toward our goal is the following
result. 

\begin{lemma}
\label{lem:afirstcomm} Let $L$ and $B_{L}(z)$ be given by \eqref{eq:def_of_L(2)}
and \eqref{eq:def_of_BL(z)2}, respectively. Then for all $z\in\rho(-LL^{*})\cap\rho(-L^{*}L)$, 
\[
2B_{L}(z)=\tr_{2^{\hat n}d} \big(\big[L,L^{*}\left(LL^{*}
+ z\right)^{-1}\big]\big)-\tr_{2^{\hat n}d} \big(\big[L^{*},L\left(L^{*}L+z\right)^{-1}\big]\big) 
\]
$($where $\left[\cdot,\cdot\right]$ represents the commutator symbol, 
cf.\ \eqref{eq:def_commutator}$)$. 
\end{lemma}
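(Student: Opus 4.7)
The plan is to reduce the claimed identity to a direct computation using the intertwining relation
\[
L(L^{*}L+z)^{-1}=(LL^{*}+z)^{-1}L,
\]
which is valid for $z\in\rho(-L^{*}L)\cap\rho(-LL^{*})$ and follows at once from $(LL^{*}+z)L=L(L^{*}L+z)$ on $\dom(L^{*}L)=H^{2}(\mathbb{R}^{n})^{2^{\hat n}d}$ (cf.\ Proposition \ref{prop:compu_lstartl}). This relation is the only ``nontrivial'' ingredient; everything else is algebraic.

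First I would expand the first commutator as $[L,L^{*}(LL^{*}+z)^{-1}]=LL^{*}(LL^{*}+z)^{-1}-L^{*}(LL^{*}+z)^{-1}L$. Writing $LL^{*}(LL^{*}+z)^{-1}=I-z(LL^{*}+z)^{-1}$ and, by the intertwining relation, $L^{*}(LL^{*}+z)^{-1}L=L^{*}L(L^{*}L+z)^{-1}=I-z(L^{*}L+z)^{-1}$, one obtains
\[
[L,L^{*}(LL^{*}+z)^{-1}]=z\bigl[(L^{*}L+z)^{-1}-(LL^{*}+z)^{-1}\bigr].
\]
The analogous manipulation for the second commutator yields
\[
[L^{*},L(L^{*}L+z)^{-1}]=-z\bigl[(L^{*}L+z)^{-1}-(LL^{*}+z)^{-1}\bigr].
\]
Subtracting these two identities and applying the internal trace $\tr_{2^{\hat n}d}$ on both sides gives exactly $2B_{L}(z)$ as defined in \eqref{eq:def_of_BL(z)2}.

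The only points that require a little care are domain issues: the commutators $[L,L^{*}(LL^{*}+z)^{-1}]$ and $[L^{*},L(L^{*}L+z)^{-1}]$ involve the unbounded operators $L$ and $L^{*}$, but each factor $(LL^{*}+z)^{-1}$ and $(L^{*}L+z)^{-1}$ maps $L^{2}$ into $H^{2}$, so that $L(L^{*}L+z)^{-1}$ and $L^{*}(LL^{*}+z)^{-1}$ are everywhere defined and bounded (cf.\ Propositions \ref{prop:Computation of adjoint} and \ref{prop:compu_lstartl}). In line with our conventions from Section \ref{s2} (cf.\ \eqref{eq:def_commutator}), the commutators are understood in the sense of their bounded closures, so all equalities above hold as identities in $\cB\bigl(L^{2}(\mathbb{R}^{n})^{2^{\hat n}d}\bigr)$. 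Consequently, $\tr_{2^{\hat n}d}$ may be applied termwise without further justification.

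I do not expect any serious obstacle here; the result is essentially a bookkeeping exercise, and the substantive work begins with the \emph{subsequent} lemmas that establish the trace class property of $\chi_{\Lambda}B_{L}(z)$ and the explicit evaluation of its trace.
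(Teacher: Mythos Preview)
Your proposal is correct and follows essentially the same approach as the paper: both proofs expand the commutator, rewrite $LL^{*}(LL^{*}+z)^{-1}=I-z(LL^{*}+z)^{-1}$, and use the intertwining relation (you phrase it as $(LL^{*}+z)^{-1}L=L(L^{*}L+z)^{-1}$, the paper as $L^{*}(LL^{*}+z)^{-1}=(L^{*}L+z)^{-1}L^{*}$) to reduce $L^{*}(LL^{*}+z)^{-1}L$ to $I-z(L^{*}L+z)^{-1}$, then obtain the second commutator by symmetry. Your remarks on domains and boundedness are a welcome addition that the paper leaves to its standing conventions in Section~\ref{s2}.
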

\begin{proof}
Let $z\in\rho\left(-LL^{*}\right)\cap\rho\left(-L^{*}L\right)$. One computes 
\begin{align*}
[L,L^{*}\left(LL^{*}+z\right)^{-1}] & =LL^{*}\left(LL^{*}+z\right)^{-1}-L^{*}\left(LL^{*}+z\right)^{-1}L\\
 & =\left(LL^{*}+z\right)\left(LL^{*}+z\right)^{-1}-z\left(LL^{*}+z\right)^{-1}-\left(L^{*}L+z\right)^{-1}L^{*}L\\
 & =1-z\left(LL^{*}+z\right)^{-1}-\left(L^{*}L+z\right)^{-1}\left(L^{*}L+z\right)+\left(L^{*}L+z\right)^{-1}z\\
 & =1-z\left(LL^{*}+z\right)^{-1}-1+\left(L^{*}L+z\right)^{-1}z\\
 & =z\left(L^{*}L+z\right)^{-1}-z\left(LL^{*}+z\right)^{-1},
\end{align*}
 and, interchanging the roles of $L$ and $L^{*}$, one concludes 
\[
[L^{*},L\left(L^{*}L+z\right)^{-1}]=z\left(LL^{*}+z\right)^{-1}-z\left(L^{*}L+z\right)^{-1}. 
\tag*{{\qedhere}}
\]
\end{proof}

The forthcoming Proposition \ref{prop:Reformulation of B} gives a more detailed description of the commutators describing $B_L(z)$ just derived in Lemma \ref{lem:afirstcomm}. First, we need a prerequisit of a more general nature.

\begin{lemma}\label{lem:com_with_Q} Let $n\in\mathbb{N}$, 
$B\in \mathcal{B}\big(L^2(\mathbb{R}^n)^{2^{\hat n}d},L^2(\mathbb{R}^n)^{2^{\hat n}d}\big)$ 
and let $\cQ$ and $\gamma_{j,n}$, $j\in\{1,\ldots,n\}$, as in \eqref{eq:def_of_Q2} and in Remark \ref{rem:Eucl-Dirac-Algebar}, respectively. Then, on the common natural domain of the operator sums involved, one has
\[
   \tr_{2^{\hat n}d} ([\cQ,B]) =\sum_{j=1}^n \tr_{2^{\hat n}d} ([\partial_j,\gamma_{j,n}B])  
   =\sum_{j=1}^n \tr_{2^{\hat n}d} ([\partial_j,B\gamma_{j,n}]).
\] 
\end{lemma}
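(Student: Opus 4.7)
The plan is to reduce everything to Proposition \ref{prop:cyclic property of inner trace}, exploiting that the $\gamma_{j,n}$ are constant matrices in $\mathbb{C}^{2^{\hat n}d \times 2^{\hat n}d}$ and that, by the convention \eqref{alpha}, each $\gamma_{j,n}$ commutes with $\partial_j$ when applied to sufficiently regular functions. First I would use linearity of the internal trace and of the commutator to write
\begin{equation*}
\tr_{2^{\hat n}d}([\cQ,B]) = \sum_{j=1}^{n}\tr_{2^{\hat n}d}([\gamma_{j,n}\partial_j,B]),
\end{equation*}
which reduces the claim to showing, for each fixed $j$, the equality
\begin{equation*}
\tr_{2^{\hat n}d}([\gamma_{j,n}\partial_j,B])
 = \tr_{2^{\hat n}d}([\partial_j,\gamma_{j,n}B])
 = \tr_{2^{\hat n}d}([\partial_j,B\gamma_{j,n}])
\end{equation*}
on the appropriate natural domain.

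For the first of these two equalities, I would expand
\begin{equation*}
\tr_{2^{\hat n}d}([\partial_j,\gamma_{j,n}B])
 = \tr_{2^{\hat n}d}(\partial_j\gamma_{j,n}B) - \tr_{2^{\hat n}d}(\gamma_{j,n}B\partial_j),
\end{equation*}
turn the first summand into $\tr_{2^{\hat n}d}(\gamma_{j,n}\partial_j B)$ via \eqref{alpha}, and then apply Proposition \ref{prop:cyclic property of inner trace} to the second summand, viewing $B\partial_j$ as a single operator in $\mathcal{B}(\mathcal{H}^{2^{\hat n}d},\mathcal{H}^{2^{\hat n}d})$ (restricted to the natural domain) and $\gamma_{j,n}$ as an element of $\mathbb{C}^{2^{\hat n}d\times 2^{\hat n}d}$, to obtain $\tr_{2^{\hat n}d}(B\partial_j\gamma_{j,n}) = \tr_{2^{\hat n}d}(B\gamma_{j,n}\partial_j)$, where the last equality uses \eqref{alpha} once more. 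Recombining yields the desired equality with $[\gamma_{j,n}\partial_j,B]$. The second claimed equality is handled symmetrically: I would expand $[\partial_j,B\gamma_{j,n}]$, use \eqref{alpha} to pull $\gamma_{j,n}$ past $\partial_j$ in the one place it is needed, and then invoke Proposition \ref{prop:cyclic property of inner trace} to move $\gamma_{j,n}$ back to the other side of $B$.

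The main obstacle is not analytic but bookkeeping: making sure that when we move $\gamma_{j,n}$ through $\partial_j$ or cyclically past an unbounded factor, we stay on the common dense domain on which all the products and commutators in question are defined. This is exactly the content of the hypothesis ``on the common natural domain of the operator sums involved,'' so each step can be justified pointwise on test functions (say in $C_0^\infty(\mathbb{R}^n)^{2^{\hat n}d}$, a core for $\cQ$) and then extended as needed. Since the internal trace is the finite sum \eqref{eq:Def_of_int_tr} of $2^{\hat n}d$ operator entries, no analytic subtlety enters beyond the commutation rule \eqref{alpha} and the algebraic cyclicity in Proposition \ref{prop:cyclic property of inner trace}.
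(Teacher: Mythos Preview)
Your proposal is correct and follows essentially the same approach as the paper's proof: expand $\cQ=\sum_j\gamma_{j,n}\partial_j$, then for each $j$ use the commutation $\gamma_{j,n}\partial_j=\partial_j\gamma_{j,n}$ together with the cyclicity in Proposition~\ref{prop:cyclic property of inner trace} to move the constant matrix $\gamma_{j,n}$ past $B$ under $\tr_{2^{\hat n}d}$. The paper runs the computation from $[\cQ,B]$ toward $[\partial_j,\gamma_{j,n}B]$ rather than the reverse, and is less explicit about the domain bookkeeping you flag, but the ingredients and logic are identical.
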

\begin{proof}
One computes with the help of Proposition \ref{prop:cyclic property of inner trace} and the fact 
$\gamma_{j,n}\partial_j=\partial_j\gamma_{j,n}$, 
\begin{align*}
   \tr_{2^{\hat n}d} (\cQ B-B \cQ) &= \sum_{j=1}^n \tr_{2^{\hat n}d}\left(\gamma_{j,n}\partial_jB - B\gamma_{j,n}\partial_j\right)\\
                    &= \sum_{j=1}^n \left[\tr_{2^{\hat n}d}\left(\gamma_{j,n}\partial_jB\right) - \tr_{2^{\hat n}d}\left((B\gamma_{j,n}\right)\partial_j)\right]  \\
                    &= \sum_{j=1}^n \left[\tr_{2^{\hat n}d}\left(\partial_j\gamma_{j,n}B\right) - \tr_{2^{\hat n}d}(\left(\gamma_{j,n}B\right)\partial_j)\right]\\
                    &= \sum_{j=1}^n \tr_{2^{\hat n}d}(\left[\partial_j,\gamma_{j,n}B\right]).
\end{align*}
The second equality can be shown similarly.
\end{proof}

The following proposition represents the core of the derivation of the index formula. Once it is proven that $\chi_\Lambda B_L(z)$ is trace class, with the trace being computed as the integral over the diagonal of the respective integral kernel, equation \eqref{eq:B_L=00003DJJJ+ALz} will be the key for computing the trace. More precisely, the first summand is a sum of commutators of certain operators with partial derivatives. For the respective integral kernels, this will give us an expression as in Lemma \ref{lem:comm is sum of der} (see also \eqref{e:div_theo}), which will enable us to use Gauss' divergence theorem, explaining the surface integral in \eqref{f(z)}. Furthermore, the second summand in equation \eqref{eq:B_L=00003DJJJ+ALz} as can be seen in equation \eqref{eq:ALz} is basically a commutator of an integral operator and a multiplication operator. The integral kernels of this type of operators have been shown to vanish on the diagonal in Proposition \ref{prop:commutator with phi vanishes}, thus, \eqref{eq:ALz} will give a vanishing contribution to the trace of $B_L(z)$.

\begin{proposition}[{\cite[Proposition~1, p.~219]{Ca78}}]
\label{prop:Reformulation of B} Let $L$ be given by \eqref{eq:def_of_L(2)}
and $z\in\rho(-L^{*}L)\cap\rho(-LL^{*})$. 
Then $B_{L}(z)$ given by \eqref{eq:def_of_BL(z)2} satisfies
\begin{equation}
2B_{L}(z)=\sum_{j=1}^n\big[\partial_{j},J_{L}^{j}(z)\big]+A_{L}(z), 
\label{eq:B_L=00003DJJJ+ALz}
\end{equation} 
where 
\begin{equation}
J_{L}^{j}(z)=\tr_{2^{\hat n}d} \big(L\left(L^{*}L+z\right)^{-1}\gamma_{j,n}\big) 
+\tr_{2^{\hat n}d} \big(L^{*}\left(LL^{*}+z\right)^{-1}\gamma_{j,n}\big),  \quad j\in\{1,\ldots,n\}, 
\label{eq:JJJ}
\end{equation}
and 
\begin{equation}
A_{L}(z)=\tr_{2^{\hat n}d} \big(\big[\Phi,L^{*}(LL^{*}+z)^{-1}\big]\big)  
-\tr_{2^{\hat n}d} \big(\big[\Phi,L(L^{*}L+z)^{-1}\big]\big),   \label{eq:ALz}
\end{equation}
with $\gamma_{j,n}$ as in Remark \ref{rem:Eucl-Dirac-Algebar}
or Appendix \ref{sec:Appendix:-the-Construction}. 
\end{proposition}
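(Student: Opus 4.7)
The plan is to start from the reformulation in Lemma \ref{lem:afirstcomm}, namely
\[
  2B_L(z) = \tr_{2^{\hat n}d}\big([L, L^{*}(LL^{*}+z)^{-1}]\big)
         - \tr_{2^{\hat n}d}\big([L^{*}, L(L^{*}L+z)^{-1}]\big),
\]
and to split $L = \cQ + \Phi$ and $L^{*} = -\cQ + \Phi$ in the left slot of each commutator. Linearity of the commutator produces four contributions: two involving $[\cQ, \cdot]$ and two involving $[\Phi, \cdot]$. The $\Phi$-contributions assemble directly (with the correct signs coming from $L^{*} = -\cQ + \Phi$) into $A_L(z)$ as given in \eqref{eq:ALz}, so nothing further needs to be done with them.

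For the $\cQ$-contributions, I would apply Lemma \ref{lem:com_with_Q} in its second form, rewriting
\[
  \tr_{2^{\hat n}d}\big([\cQ, L^{*}(LL^{*}+z)^{-1}]\big)
  = \sum_{j=1}^{n} \tr_{2^{\hat n}d}\big([\partial_{j}, L^{*}(LL^{*}+z)^{-1}\gamma_{j,n}]\big),
\]
and analogously for the other $\cQ$-commutator with $L(L^{*}L+z)^{-1}$ in place of $L^{*}(LL^{*}+z)^{-1}$ (the sign from $-\cQ$ in $L^{*}$ is absorbed together with the overall minus sign from the second Lemma \ref{lem:afirstcomm} term). The essential observation is then that, because $\partial_{j}$ acts as a scalar on the internal $\bbC^{2^{\hat n}d}$-structure (i.e., it is of the form $\partial_{j} \otimes I_{2^{\hat n}d}$), the partial trace $\tr_{2^{\hat n}d}$ commutes with the outer $[\partial_{j}, \cdot\,]$ in the sense that
\[
  \tr_{2^{\hat n}d}\big([\partial_{j}, B\gamma_{j,n}]\big)
   = \big[\partial_{j}, \tr_{2^{\hat n}d}(B\gamma_{j,n})\big]
\]
for $B \in \cB\big(L^2(\bbR^n)^{2^{\hat n}d}\big)$ on the appropriate natural domain. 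Applying this to $B = L^{*}(LL^{*}+z)^{-1}$ and $B = L(L^{*}L+z)^{-1}$ and summing gives precisely $\sum_{j=1}^{n} [\partial_{j}, J_{L}^{j}(z)]$ with $J_{L}^{j}(z)$ as in \eqref{eq:JJJ}.

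Putting both pieces together yields the asserted decomposition \eqref{eq:B_L=00003DJJJ+ALz}. The only step requiring genuine care is the justification that $\tr_{2^{\hat n}d}$ passes through the outer $[\partial_{j},\cdot\,]$; this is really a statement about the compatibility of the partial trace with multiplication by operators that are trivial in the $\bbC^{2^{\hat n}d}$-factor, and it is made rigorous by working, e.g., on a common core such as $\big(C_0^{\infty}(\bbR^n)\big)^{2^{\hat n}d}$ where all operator products in sight are defined pointwise, writing out the block-matrix form of $B\gamma_{j,n}$ (so that the entries $(B\gamma_{j,n})_{kk}$ are scalar operators in $L^2(\bbR^n)$), and observing that $\sum_{k} [\partial_{j}, (B\gamma_{j,n})_{kk}] = [\partial_{j}, \sum_{k} (B\gamma_{j,n})_{kk}]$ simply by additivity of the commutator. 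Apart from this bookkeeping the proof is purely algebraic; no analytic input beyond the resolvent identity implicit in Lemma \ref{lem:afirstcomm} is used.
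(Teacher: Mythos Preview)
Your proposal is correct and follows essentially the same route as the paper: start from Lemma~\ref{lem:afirstcomm}, split $L=\cQ+\Phi$ and $L^*=-\cQ+\Phi$ in the left slot of each commutator, identify the $\Phi$-terms with $A_L(z)$, and apply Lemma~\ref{lem:com_with_Q} to the $\cQ$-terms. The paper's proof is slightly terser in that it leaves the step $\tr_{2^{\hat n}d}\big([\partial_j,B\gamma_{j,n}]\big)=[\partial_j,\tr_{2^{\hat n}d}(B\gamma_{j,n})]$ implicit, whereas you spell it out; your justification via the block-diagonal action of $\partial_j$ is exactly the right one.
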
 
\begin{proof}
One recalls that $L^{*}=- \cQ+\Phi$ from Proposition \ref{prop:Computation of adjoint}.
From Lemma \ref{lem:afirstcomm}, one infers that
\begin{align*}
2B_{L}(z) & =\tr_{2^{\hat n}d} \big(\big[L,L^{*} (LL^{*}+z)^{-1}\big]\big) 
- \tr_{2^{\hat n}d} \big(\big[L^{*},L(L^{*}L+z)^{-1}\big]\big)  \\
 & =\tr_{2^{\hat n}d} \big(\big[ \cQ +\Phi,L^{*}(LL^{*}+z)^{-1}\big]\big) 
 - \tr_{2^{\hat n}d} \big(\big[- \cQ +\Phi,L(L^{*}L+z)^{-1}\big]\big)   \\
 & =\tr_{2^{\hat n}d} \big(\big[ \cQ,L^{*}(LL^{*}+z)^{-1}\big]\big)  
 + \tr_{2^{\hat n}d} \big(\big[ \cQ,L(L^{*}L+z)^{-1}\big]\big)  \\
 & \quad+\tr_{2^{\hat n}d} \big(\big[\Phi,L^{*}(LL^{*}+z)^{-1}\big]\big)   
 - \tr_{2^{\hat n}d} \big(\big[\Phi,L(L^{*}L+z)^{-1}\big]\big).
\end{align*}
The equations 
\[
\tr_{2^{\hat n}d} \big(\big[ \cQ,L^{*}(LL^{*}+z)^{-1}\big]\big)  
 =\sum_{j=1}^{n}\tr_{2^{\hat n}d} \big(\big[\partial_{j},L^{*}(LL^{*}
 + z)^{-1}\gamma_{j,n}\big]\big),
\]
and
\begin{align*}
\tr_{2^{\hat n}d} \big(\big[ \cQ,L(L^{*}L+z)^{-1}\big]\big) 
& =\sum_{j=1}^{n}\tr_{2^{\hat n}d} \big(\big[\partial_{j},L(L^{*}L+z)^{-1}\gamma_{j,n}
\big]\big)
\end{align*}
follow from Lemma \ref{lem:com_with_Q}.
\end{proof}

Next, we show that (a modification in the sense of Theorem \ref{thm:index with Witten} of) $B_{L}(z)$ gives rise to trace class operators. Before doing so in Theorem \ref{thm:trisbounded}, we need a different representation of $B_{L}(z)$ in terms of powers of the resolvent
of the (free) Laplacian. One notes that for $z\in \mathbb{C}$, 
with $\Re (z) > \sup_{x\in\mathbb{R}^{n}}\max_{j}\|\partial_{j}\Phi(x)\|-1$, 
one has $\|CR_{1+z}\|<1$, with $C$ given by \eqref{eq:commutator=00003DC}. Hence, by Proposition \ref{prop:compu_lstartl}, equation \eqref{eq:LstarLandLLstar}, one obtains  
\begin{align}
\left(L^{*}L+z\right)^{-1} & =\left(-\Delta I_{2^{\hatt n}d} -C + (1+z)\right)^{-1} \notag \\
 & =\left(\left(-\Delta I_{2^{\hatt n}d} + (1+z)\right)\left(I_{2^{\hatt n}d} - R_{1+z}C\right)\right)^{-1}\notag \\
 & =\left(I_{2^{\hatt n}d} - R_{1+z}C\right)^{-1}R_{1+z} \notag \\
 & =\sum_{k=0}^{\infty}\left(R_{1+z}C\right)^{k}R_{1+z},   \label{eq:Neumann_lstarl},
\end{align}
and, similarly,
\begin{equation}\label{eq:Neumann_llstar}
\left(LL^{*}+z\right)^{-1} =\left(-\Delta I_{2^{\hatt n}d} +C+ (1+z)\right)^{-1} 
= \sum_{k=0}^{\infty}\left(-R_{1+z}C\right)^{k}R_{1+z}.
\end{equation}
Consequently, by analytic continuation, one obtains for $z\in\rho(-L^{*}L)\cap\rho(-LL^{*})$ with $\Re (z) > -1$, 
\begin{equation}\label{eq:almost_Neumann_lstarl}
  \left(L^{*}L+z\right)^{-1}= \sum_{k=0}^{N}\left(R_{1+z}C\right)^{k}R_{1+z}+\left(R_{1+z}C\right)^{N+1}\left(L^{*}L+z\right)^{-1}, 
\end{equation}
and
\begin{equation}\label{eq:almost_Neumann_llstar}
  \left(LL^*+z\right)^{-1}= \sum_{k=0}^{N}\left(-R_{1+z}C\right)^{k}R_{1+z}+\left(-R_{1+z}C\right)^{N+1}\left(LL^*+z\right)^{-1}, 
\end{equation}
for all $N\in \mathbb{N}$. Focussing on resolvent differences, one gets the following proposition:

\begin{proposition}
\label{prop:resolvent formulas} Let $z\in\mathbb{C}_{\Re> - 1}$.
One recalls $L= \cQ+\Phi$ as in \eqref{eq:def_of_L(2)}, $C=\left( \cQ\Phi\right)$
from \eqref{eq:commutator=00003DC}, and $R_{1+z}$ in \eqref{eq:resolvent_of_laplace}. \\[1mm] 
$(i)$ If $\Re (z) > \sup_{x\in\mathbb{R}^{n}}\max_{j}\|(\partial_{j}\Phi)(x)\|-1$, 
then $z\in\rho\left(-L^{*}L\right)\cap\left(-LL^{*}\right)$ and 
\[
\left(L^{*}L+z\right)^{-1}-\left(LL^{*}+z\right)^{-1}=2\sum_{k=0}^{\infty}\left(R_{1+z}C\right)^{2k+1}R_{1+z}=2\sum_{k=0}^{\infty}R_{1+z}\left(CR_{1+z}\right)^{2k+1},
\]
as well as 
\[
\left(L^{*}L+z\right)^{-1}+\left(LL^{*}+z\right)^{-1}=2\sum_{k=0}^{\infty}\left(R_{1+z}C\right)^{2k}R_{1+z}=2\sum_{k=0}^{\infty}R_{1+z}\left(CR_{1+z}\right)^{2k}.
\]
$(ii)$ If $z\in\rho(-L^{*}L)\cap\rho(-LL^{*})$ and $\Re (z) > -1$, then
for all $N\in\mathbb{N}$, 
\begin{align*}
 & \left(L^{*}L+z\right)^{-1}-\left(LL^{*}+z\right)^{-1}\\
 & \quad =2\sum_{k=0}^{N}R_{1+z}\left(CR_{1+z}\right)^{2k+1}+\big(\left(L^{*}L+z\right)^{-1}-\left(LL^{*}+z\right)^{-1}\big)\left(CR_{1+z}\right)^{2N+2}\\
 & \quad =2\sum_{k=0}^{N}R_{1+z}\left(CR_{1+z}\right)^{2k+1}+\big(\left(L^{*}L+z\right)^{-1}+\left(LL^{*}+z\right)^{-1}\big)\left(CR_{1+z}\right)^{2N+3}, 
\end{align*}
and
\begin{align*}
 & \left(L^{*}L+z\right)^{-1}+\left(LL^{*}+z\right)^{-1}\\
 & \quad =2\sum_{k=0}^{N}R_{1+z}\left(CR_{1+z}\right)^{2k} 
 + \big(\left(L^{*}L+z\right)^{-1}+\left(LL^{*}+z\right)^{-1}\big)\left(CR_{1+z}\right)^{2N+2}.
\end{align*}
\end{proposition}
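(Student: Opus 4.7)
The plan is to deduce both parts by elementary algebraic manipulations of Neumann-type expansions of $(L^*L+z)^{-1}$ and $(LL^*+z)^{-1}$, exploiting that by Proposition \ref{prop:compu_lstartl} these two operators differ only in the sign in front of $C$ (cf.\ \eqref{eq:LstarLandLLstar}).

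For part $(i)$ I would first verify convergence of the infinite Neumann series \eqref{eq:Neumann_lstarl} and \eqref{eq:Neumann_llstar}. Since $-\Delta$ is non-negative self-adjoint, $\|R_{1+z}\| \leq (1+\Re(z))^{-1}$ for $\Re(z) > -1$; combined with the bound $\|C\|_{\cB} \leq \const \cdot \sup_x \max_j \|(\partial_j \Phi)(x)\|$ coming from $C = \sum_j \gamma_{j,n}(\partial_j \Phi)$ and the hypothesis on $\Re(z)$, this yields $\|R_{1+z}C\|<1$ (equivalently $\|CR_{1+z}\|<1$), placing $z$ in $\rho(-L^*L)\cap\rho(-LL^*)$ and rendering the two series
\begin{equation*}
(L^*L+z)^{-1} = \sum_{k=0}^{\infty}(R_{1+z}C)^{k}R_{1+z}, \qquad (LL^*+z)^{-1} = \sum_{k=0}^{\infty}(-R_{1+z}C)^{k}R_{1+z}
\end{equation*}
absolutely convergent in operator norm. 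Subtracting causes even-indexed terms to cancel (matching signs) and odd-indexed terms to double; adding causes odd terms to cancel and even terms to double. The two equivalent presentations with factors on the right follow from the trivial associativity identity $(R_{1+z}C)^{k}R_{1+z} = R_{1+z}(CR_{1+z})^{k}$.

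For part $(ii)$ the small-potential hypothesis is dropped, so infinite Neumann series are unavailable, and one must work with finite expansions with remainder. The ``right-sided'' version needed follows by iterating the identity
\begin{equation*}
(L^*L+z)^{-1} = R_{1+z} + (L^*L+z)^{-1}CR_{1+z},
\end{equation*}
which is obtained by multiplying $(L^*L+z)^{-1}(-\Delta I_{2^{\hatt n}d} + (1+z) - C) = I$ from the right by $R_{1+z}$. Iterating yields
\begin{equation*}
(L^*L+z)^{-1} = \sum_{k=0}^{N}R_{1+z}(CR_{1+z})^{k} + (L^*L+z)^{-1}(CR_{1+z})^{N+1},
\end{equation*}
and the analogous identity for $(LL^*+z)^{-1}$ with $C$ replaced by $-C$. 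Forming the difference or sum and using the same even/odd cancellation pattern as in $(i)$, one obtains the three claimed identities. The sign combination for the remainders is controlled by the parity of $N+1$: choosing $N$ such that $N+1$ is even (so $(-1)^{N+1} = +1$) makes the two remainders combine as $[(L^*L+z)^{-1}\mp(LL^*+z)^{-1}](CR_{1+z})^{N+1}$ according to whether one has taken the difference or the sum, giving the first ``difference'' identity and the ``sum'' identity; choosing $N+1$ odd (so $(-1)^{N+1}=-1$) flips the sign in the remainder from $(LL^*+z)^{-1}$, combining the two resolvents as $[(L^*L+z)^{-1}+(LL^*+z)^{-1}](CR_{1+z})^{N+1}$, which delivers the second ``difference'' identity.

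No serious obstacle is anticipated here; the argument is purely algebraic once the right-sided finite Neumann expansion has been established. The only mild subtlety is the justification of the compositions $(L^*L+z)^{-1}CR_{1+z}$ and $(LL^*+z)^{-1}CR_{1+z}$, which is immediate since $\Phi\in C_b^\infty$ implies boundedness of $C$, both resolvents are globally bounded on $L^2(\mathbb{R}^n)^{2^{\hatt n}d}$, and $R_{1+z}$ maps continuously into $H^2(\mathbb{R}^n)^{2^{\hatt n}d} = \dom(L^*L) = \dom(LL^*)$, so that the iteration remains within the domains of all operators involved.
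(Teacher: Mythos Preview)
Your proposal is correct. For part $(i)$ you and the paper do exactly the same thing: take the two Neumann series \eqref{eq:Neumann_lstarl}, \eqref{eq:Neumann_llstar} and add/subtract.

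For part $(ii)$ your route differs slightly from the paper's. The paper first establishes the identities for $z$ with large real part by splitting the infinite series from part $(i)$ into a finite sum plus a tail, then recognizes the tail as $\big((L^*L+z)^{-1}\mp(LL^*+z)^{-1}\big)(CR_{1+z})^{M}$ via part $(i)$ again, and finally invokes analytic continuation in $z$ to extend to all of $\rho(-L^*L)\cap\rho(-LL^*)\cap\mathbb{C}_{\Re>-1}$. You instead iterate the one-step resolvent identity $(L^*L+z)^{-1}=R_{1+z}+(L^*L+z)^{-1}CR_{1+z}$ (and its sign-flipped counterpart) directly, which produces the finite expansion with remainder valid for every such $z$ from the outset, no analytic continuation needed. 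Your approach is a bit more elementary; the paper's is perhaps more in keeping with its overall strategy of deriving identities first in a regime of absolute convergence and then continuing. Either way the algebra is the same parity cancellation you describe, and your handling of the even/odd choice of truncation index is correct.
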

\begin{proof}
$(i)$ This is a direct consequence of equations \eqref{eq:Neumann_lstarl} and \eqref{eq:Neumann_llstar}. \\ 
\noindent 
$(ii)$ For $z$ as in part $(i)$ one computes, similarly to \eqref{eq:almost_Neumann_lstarl} and \eqref{eq:almost_Neumann_llstar}, with the help of item $(i)$ for $N\in\mathbb{N}$, 
\begin{align*}
 & \left(L^{*}L+z\right)^{-1}-\left(LL^{*}+z\right)^{-1}\\
 & \quad =2\sum_{k=0}^{N}R_{1+z}\left(CR_{1+z}\right)^{2k+1}+2\sum_{k=N+1}^{\infty}R_{1+z}\left(CR_{1+z}\right)^{2k+1}\\
 & \quad =2\sum_{k=0}^{N}R_{1+z}\left(CR_{1+z}\right)^{2k+1}+2\sum_{k=0}^{\infty}R_{1+z}\left(CR_{1+z}\right)^{2k+2N+2+1}\\
 & \quad =2\sum_{k=0}^{N}R_{1+z}\left(CR_{1+z}\right)^{2k+1}+2\sum_{k=0}^{\infty}R_{1+z}\left(CR_{1+z}\right)^{2k+1}\left(CR_{1+z}\right)^{2N+2}\\
 & \quad =2\sum_{k=0}^{N}R_{1+z}\left(CR_{1+z}\right)^{2k+1}+2\sum_{k=0}^{\infty}R_{1+z}\left(CR_{1+z}\right)^{2k}\left(CR_{1+z}\right)^{2N+3}.
\end{align*}
Hence, 
\begin{align*}
 & \left(L^{*}L+z\right)^{-1}-\left(LL^{*}+z\right)^{-1}\\
 & \quad =2\sum_{k=0}^{N}R_{1+z}\left(CR_{1+z}\right)^{2k+1}+\big(\left(L^{*}L+z\right)^{-1}-\left(LL^{*}+z\right)^{-1}\big)\left(CR_{1+z}\right)^{2N+2}\\
 & \quad =2\sum_{k=0}^{N}R_{1+z}\left(CR_{1+z}\right)^{2k+1}+\big(\left(L^{*}L+z\right)^{-1}+\left(LL^{*}+z\right)^{-1}\big)\left(CR_{1+z}\right)^{2N+3},
\end{align*}
again by part $(i)$. Analytic continuation implies the asserted equalities. (The second term in 
item $(ii)$ is treated analogously).
\end{proof}

Before starting the proof that $\chi_\Lambda B_{L}(z)$, with $B_L(z)$ given by \eqref{eq:def_of_BL(z)2}, is trace class, and then prove the trace formula in Theorem \ref{thm:Witten_reg_n5} for this operator, a closer inspection of the operators occuring
in Proposition \ref{prop:Reformulation of B} with the help of Proposition
\ref{prop:resolvent formulas} is in order. In particular, the principal aim of Lemma \ref{lem:almost_Neumann_series}, is twofold: 
on one hand, we will prove that the power series representation of $B_L(z)$,
basically derived in Proposition \ref{prop:resolvent formulas}, starts with an operator
essentially of the form
\[
  R_{1+z}\left(CR_{1+z}\right)^{2k+1}
\]
for some $k \in \bbN_0$. For this kind of operators we have a trace class criterion at hand, 
Theorem \ref{thm:Simon_Hilbert-Schmidt} together with Corollary \ref{cor:Comp-of-trace}. On the other hand, we also prove representation formulas for the operators in \eqref{eq:JJJ} and \eqref{eq:ALz}. These formulas also start with operators involving high powers of $R_{1+z}$. This leads to continuity and differentiability properties for the corresponding integral kernels enabling the application of 
Proposition \ref{prop:commutator with phi vanishes} and 
Lemma \ref{lem:comm is sum of der}.

The key idea for proving Lemma \ref{lem:almost_Neumann_series}, contained in Lemma \ref{l:can}, 
is to use the cancellation properties of the Euclidean Dirac algebra under the trace sign. For the Euclidean Dirac algebra we refer to Definition \ref{def:Euc-D-A}; moreover, we refer to Proposition
\ref{prop:comp_of_Dirac_Alge} for the cancellation properties.

\begin{lemma}\label{l:can} Let $L= \cQ+\Phi$ be given by \eqref{eq:def_of_L(2)}.
Let $z\in\mathbb{C}$ with $\Re (z) > -1$ and $z\in\rho\left(-L^{*}L\right)\cap\rho\left(-LL^{*}\right)$ and recall $C=[Q,\Phi]$, $k\in \mathbb{N}$ odd.
If either $k<n$ or $n$ is even, then
\[
  \tr_{2^{\hat n}d}\Big(R_{1+z} \big(C R_{1+z}\big)^{k} \Big) =0.
\]
\end{lemma}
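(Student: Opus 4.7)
The plan is to exploit the tensor product structure of the underlying Hilbert space, $L^{2}(\mathbb{R}^n)^{2^{\hat n}d} \simeq L^{2}(\mathbb{R}^n) \otimes \mathbb{C}^{2^{\hat n}} \otimes \mathbb{C}^{d}$, in order to reduce the statement to a trace identity in the Euclidean Dirac algebra. First I would expand
\[
C = \mathcal{Q}\Phi = \sum_{j=1}^{n} \gamma_{j,n} (\partial_{j}\Phi),
\]
and observe that in the above tensor decomposition the matrix $\gamma_{j,n}$ acts on the middle factor $\mathbb{C}^{2^{\hat n}}$ (as $I_{L^{2}} \otimes \gamma_{j,n} \otimes I_{d}$), whereas both $R_{1+z}$ and each multiplication operator $(\partial_{j}\Phi)$ act only on $L^{2}(\mathbb{R}^n) \otimes \mathbb{C}^{d}$ (identity on the $\mathbb{C}^{2^{\hat n}}$-slot). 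Consequently, every $\gamma_{j,n}$ commutes with every $R_{1+z}$ and every $(\partial_{j'}\Phi)$.

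Next, I would multiply out $R_{1+z}(CR_{1+z})^{k}$ as a multiple sum and pull all $k$ gamma-factors to the very left, arriving at
\[
R_{1+z}(CR_{1+z})^{k} = \sum_{i_{1},\ldots,i_{k}=1}^{n} \big(\gamma_{i_{1},n}\cdots\gamma_{i_{k},n}\big) \cdot R_{1+z}(\partial_{i_{1}}\Phi)R_{1+z}\cdots(\partial_{i_{k}}\Phi)R_{1+z},
\]
where the rightmost operator now lives on $L^{2}(\mathbb{R}^n) \otimes \mathbb{C}^{d}$ only. Applying $\tr_{2^{\hat n}d}$ and using that the internal trace factorizes across the tensor product $\mathbb{C}^{2^{\hat n}} \otimes \mathbb{C}^{d}$ on operators of the form $\gamma \otimes X$ (with $\gamma \in \mathbb{C}^{2^{\hat n}\times 2^{\hat n}}$ and $X$ on $L^{2}(\mathbb{R}^n)^{d}$), I obtain
\[
\tr_{2^{\hat n}d}\!\big(R_{1+z}(CR_{1+z})^{k}\big) = \sum_{i_{1},\ldots,i_{k}=1}^{n} \tr_{\mathbb{C}^{2^{\hat n}}}\!\big(\gamma_{i_{1},n}\cdots\gamma_{i_{k},n}\big)\, \tr_{d}\!\big(R_{1+z}(\partial_{i_{1}}\Phi)R_{1+z}\cdots(\partial_{i_{k}}\Phi)R_{1+z}\big).
\]

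Finally, I invoke the standard cancellation property of the Euclidean Dirac algebra recorded in Proposition \ref{prop:comp_of_Dirac_Alge}: for $k$ odd and either $k<n$ or $n$ even, one has $\tr_{\mathbb{C}^{2^{\hat n}}}(\gamma_{i_{1},n}\cdots\gamma_{i_{k},n}) = 0$ for every multi-index $(i_{1},\ldots,i_{k})$. Hence every summand in the displayed identity vanishes and the lemma follows.

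The main obstacle I anticipate is a purely bookkeeping one, namely making the ``pulling to the left'' of the $\gamma_{j,n}$-matrices and the factorization of $\tr_{2^{\hat n}d}$ across the two matrix slots rigorous within the notational conventions of Section \ref{s2} (in particular the tensor-product suppression of \eqref{alpha} and Remark \ref{rem:differen_mult_op}); the cancellation itself is then a direct appeal to Proposition \ref{prop:comp_of_Dirac_Alge}.
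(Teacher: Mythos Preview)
Your proposal is correct and follows essentially the same route as the paper's proof: expand $C=\sum_j\gamma_{j,n}(\partial_j\Phi)$, commute the $\gamma$-matrices past $R_{1+z}$ and $(\partial_\ell\Phi)$ to the left, factor $\tr_{2^{\hat n}d}$ into $\tr_{2^{\hat n}}\cdot\tr_d$, and invoke Proposition~\ref{prop:comp_of_Dirac_Alge}. The bookkeeping concern you raise is exactly what the paper handles in one line by observing the commutation, so there is no genuine obstacle.
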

\begin{proof}
  One observes using the fact that $\gamma_{j,n}$, $j\in\{1,\ldots,n\}$, commutes with both 
  $R_{1+z}$ and $(\partial_\ell\Phi)$, $\ell\in \{1,\ldots,n\}$ (cf.\ 
  Remark \ref{rem:Eucl-Dirac-Algebar}), that
  \begin{align*}
    R_{1+z} \big(C R_{1+z}\big)^{k}
    &= R_{1+z} \big(\sum_{\ell=1}^n \gamma_{\ell,n}(\partial_\ell \Phi) R_{1+z}\big)^{k}
    \\ & = R_{1+z} \sum_{\ell_1,\cdots,\ell_k=1}^n \gamma_{\ell_1,n}(\partial_{\ell_1} \Phi) R_{1+z}\cdots \gamma_{\ell_k,n}(\partial_{\ell_k} \Phi) R_{1+z}
    \\ & = \sum_{\ell_1,\ldots,\ell_k=1}^n \gamma_{\ell_1,n}\cdots \gamma_{\ell_k,n} R_{1+z}(\partial_{\ell_1} \Phi) R_{1+z}\cdots (\partial_{\ell_k} \Phi) R_{1+z}.
  \end{align*}
Next, employing 
\begin{align*}
  &  \tr_{2^{\hat n}d} \Big(\gamma_{\ell_1,n}\cdots \gamma_{\ell_k,n} R_{1+z}(\partial_{\ell_1} \Phi) R_{1+z}\cdots (\partial_{\ell_k} \Phi) R_{1+z}\Big)
  \\ & = \tr_{2^{\hat n}} \Big(\gamma_{\ell_1,n}\cdots \gamma_{\ell_k,n}\Big) \tr_{d}\Big( R_{1+z}(\partial_{\ell_1} \Phi) R_{1+z}\cdots (\partial_{\ell_k} \Phi) R_{1+z}\Big)
\end{align*}
for all $i_1,\ldots,i_k \in \{1,\ldots,n\}$, one concludes that
\[
   \tr_{2^{\hat n}d} \Big(R_{1+z} \big(C R_{1+z}\big)^{k}\Big) =0,
\]
by Proposition \ref{prop:comp_of_Dirac_Alge}.
\end{proof}

\begin{lemma}
\label{lem:almost_Neumann_series} Let $L= \cQ+\Phi$ be given by \eqref{eq:def_of_L(2)}.
Let $z\in\mathbb{C}$ with $\Re (z) > -1$ and $z\in\rho\left(-L^{*}L\right)\cap\rho\left(-LL^{*}\right)$.
One recalls $B_{L}(z)$, $J_{L}^{j}(z)$, and $A_{L}(z)$ given by \eqref{eq:def_of_BL(z)2},
\eqref{eq:JJJ}, and \eqref{eq:ALz}, respectively, as well as $R_{1+z}$
given by \eqref{eq:resolvent_of_laplace}. Then the following assertions hold: \\[1mm]
$(i)$ For all odd $n\in\mathbb{N}_{\geq3}$,
\begin{align}
2 B_{L}(z) &=\sum_{j=1}^n \big[\partial_{j},J_{L}^{j}(z)\big]+A_{L}(z)     \label{BL} \\
&=z\tr_{2^{\hat n}d}\big(2(R_{1+z}C)^{n}R_{1+z}+\big((L^{*}L+z)^{-1}-(LL^{*}+z)^{-1}\big)
(CR_{1+z})^{n+1}\big),  \no
\end{align}
and, for all $j\in\{1,\ldots,n\}$, 
\begin{align*}
J_{L}^{j}(z) & =2\tr_{2^{\hat n}d}\big(\gamma_{j,n} \cQ (R_{1+z}C)^{n-2}R_{1+z}\big)
+ 2\tr_{2^{\hat n}d}\big(\gamma_{j,n}\Phi (R_{1+z}C)^{n-1}R_{1+z}\big)    \\
& \quad+\tr_{2^{\hat n}d}\big(\gamma_{j,n} \cQ \big((L^{*}L+z)^{-1}+(LL^{*}+z)^{-1}\big)
(CR_{1+z})^{n}\big)   \\
& \quad + \tr_{2^{\hat n}d} \big(\gamma_{j,n} \Phi \big((L^{*}L+z)^{-1}
+(LL^{*}+z)^{-1}\big) (CR_{1+z})^{n}\big),
\end{align*}
and
\begin{align*}
A_{L}(z) &=\tr_{2^{\hat n}d} \big(\big[\Phi,\Phi\big(2(R_{1+z}C)^{n}R_{1+z}+\big((L^{*}L+z)^{-1}-(LL^{*}+z)^{-1}\big)  \\
& \hspace*{1.4cm} \times (CR_{1+z})^{n+1}\big)\big]\big)    \\
& \quad-\tr_{2^{\hat n}d} \big(\big[\Phi, \cQ \big(2 (R_{1+z}C)^{n-1}R_{1+z}+\big((L^{*}L+z)^{-1}-(LL^{*}+z)^{-1}\big)    \\
& \hspace*{1.7cm} \times (CR_{1+z})^{n}\big)\big]\big).
\end{align*}
$(ii)$ For all even $n\in\mathbb{N}$,
\begin{equation}
B_{L}(z)=0.    \label{BLeven}
\end{equation}
\end{lemma}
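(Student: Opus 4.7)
The proof rests on combining the resolvent expansions in Proposition \ref{prop:resolvent formulas} with the cancellation identity in Lemma \ref{l:can}, choosing the order of expansion so that exactly the leading nontrivial term appears explicitly while everything beyond it is absorbed into a remainder carrying a high power of $CR_{1+z}$. I would begin with (ii), which is essentially an algebraic cancellation: for $\Re(z)$ sufficiently large, Proposition \ref{prop:resolvent formulas}(i) gives
\[
(L^*L+z)^{-1} - (LL^*+z)^{-1} = 2\sum_{k=0}^\infty R_{1+z}(CR_{1+z})^{2k+1},
\]
and because every summand carries the odd power $2k+1$, Lemma \ref{l:can} forces $\tr_{2^{\hat n}d}(R_{1+z}(CR_{1+z})^{2k+1}) = 0$ whenever $n$ is even. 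Hence $B_L(z) = 0$ on a right half-plane, and analytic continuation in $z$ along the connected set $\rho(-L^*L) \cap \rho(-LL^*) \supseteq \bbC\setminus(-\infty,0]$ promotes this to $B_L \equiv 0$.

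For (i), the representation of $B_L(z)$ follows by applying Proposition \ref{prop:resolvent formulas}(ii) with $N = (n-1)/2$, so that the remainder carries the prescribed factor $(CR_{1+z})^{n+1}$. Under $\tr_{2^{\hat n}d}$, Lemma \ref{l:can} annihilates each summand in the finite sum with $2k+1 < n$, and only the single term $k=(n-1)/2$ survives, yielding (upon the identity $R_{1+z}(CR_{1+z})^n = (R_{1+z}C)^n R_{1+z}$) the claimed formula. To obtain the four-term representation of $J_L^j(z)$, I would insert $L = \cQ + \Phi$ and $L^* = -\cQ + \Phi$ into the definition \eqref{eq:JJJ} and reorganize into a $\cQ$-part and a $\Phi$-part,
\begin{align*}
J_L^j(z) &= \tr_{2^{\hat n}d}\big(\cQ\big[(L^*L+z)^{-1} - (LL^*+z)^{-1}\big]\gamma_{j,n}\big) \\
&\quad + \tr_{2^{\hat n}d}\big(\Phi\big[(L^*L+z)^{-1} + (LL^*+z)^{-1}\big]\gamma_{j,n}\big).
\end{align*}
To the $\cQ$-part I would apply the second identity of Proposition \ref{prop:resolvent formulas}(ii) with $N=(n-3)/2$, producing the remainder with the $+$-combination and $(CR_{1+z})^n$; to the $\Phi$-part I would apply a symmetric telescoping of Proposition \ref{prop:resolvent formulas}(i) (derived by the same argument used for 7.4(ii)) to bring out a remainder with $(CR_{1+z})^n$. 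In both cases, a refinement of Lemma \ref{l:can} in which the trace $\tr_{2^{\hat n}}(\gamma_{\ell_1,n}\cdots\gamma_{\ell_p,n})$ is extended by an additional prefactor $\gamma_{j,n}$ (and possibly an additional $\gamma_{\ell,n}$ coming from $\cQ$) shows that only one summand in each finite sum survives — namely the one with total Dirac-matrix count equal to $n$, forcing $2k+1 = n-2$ in the $\cQ$-part and $2k+1 = n$ in the $\Phi$-part. This produces the four explicit terms stated. The formula for $A_L(z)$ is obtained analogously by first combining the two commutators in \eqref{eq:ALz} via linearity in the second entry,
\[
A_L(z) = -\tr_{2^{\hat n}d}\big(\big[\Phi,\, \cQ((L^*L+z)^{-1} + (LL^*+z)^{-1})\big]\big) - \tr_{2^{\hat n}d}\big(\big[\Phi,\, \Phi((L^*L+z)^{-1} - (LL^*+z)^{-1})\big]\big),
\]
and then expanding each inner resolvent combination via Proposition \ref{prop:resolvent formulas}(ii) to the first order for which the remainder contains the prescribed high power of $CR_{1+z}$.

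The step I expect to be the main obstacle is the careful bookkeeping of signs, indices, and the symmetric/antisymmetric resolvent combinations across the four building blocks: choosing in each case the correct ``minimal'' truncation index $N$ so that the surviving leading term has precisely the Dirac-matrix count required for non-vanishing in the extended form of Lemma \ref{l:can}, while the remainder comes with exactly the prescribed power of $(CR_{1+z})$. Once these choices are pinned down, the rest is a direct algebraic verification using Proposition \ref{prop:cyclic property of inner trace} to move the $\bbC^{2^{\hat n}d\times 2^{\hat n}d}$-matrices $\gamma_{j,n}$ freely under $\tr_{2^{\hat n}d}$ and the factorization $\tr_{2^{\hat n}d} = \tr_{2^{\hat n}}\otimes \tr_d$ on Dirac-matrix products, which reduces the cancellation statement to the standard fact that traces of odd-order products of Dirac matrices vanish whenever the order is less than $n$.
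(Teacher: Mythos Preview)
Your proposal is correct and follows essentially the same route as the paper: decompose $J_L^j$ and $A_L$ into their $\cQ$- and $\Phi$-parts after inserting $L=\cQ+\Phi$, $L^*=-\cQ+\Phi$, use the cyclicity of $\tr_{2^{\hat n}d}$ (Proposition~\ref{prop:cyclic property of inner trace}) to move $\gamma_{j,n}$, apply the truncated resolvent identities of Proposition~\ref{prop:resolvent formulas}(ii), and kill the low-order explicit terms via Proposition~\ref{prop:comp_of_Dirac_Alge} by counting Dirac matrices. Your treatment of (ii) via the full Neumann series on a right half-plane followed by analytic continuation is exactly the implicit content of the paper's one-line ``using Lemma~\ref{l:can}''.

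Two minor bookkeeping points to tighten when you write it out. First, for the $\Phi$-part of $J_L^j$ the summands in the expansion of $(L^*L+z)^{-1}+(LL^*+z)^{-1}$ carry \emph{even} exponents $(CR_{1+z})^{2k}$, so your phrase ``$2k+1=n$'' should be read as the gamma-count $1+2k=n$, not as the exponent; the surviving term is $R_{1+z}(CR_{1+z})^{n-1}$. Second, to obtain a remainder with the \emph{odd} power $(CR_{1+z})^n$ from the sum of resolvents one needs the variant (not explicitly listed in Proposition~\ref{prop:resolvent formulas}(ii) but obtained by the same telescoping) in which the remainder carries the \emph{opposite} resolvent combination; this is the ``symmetric telescoping'' you allude to, but be aware that the precise $\pm$ combination appearing in the remainder depends on the parity of the truncation and should be checked against the stated formula.
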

\begin{proof}
From Proposition \ref{prop:resolvent formulas}, one has for $\Re (z) > -1$
and all $N\in\mathbb{N}$,
\begin{align*}
& \left(L^{*}L+z\right)^{-1}-\left(LL^{*}+z\right)^{-1}\\
& \quad =2\sum_{k=0}^{N}R_{1+z} (CR_{1+z})^{2k+1}+\big((L^{*}L+z)^{-1}-(LL^{*}+z)^{-1}\big)(CR_{1+z})^{2N+2}.
\end{align*}
In addition, using Lemma \ref{l:can}, one deduces that for $n$ even,
\[
\tr_{2^{\hat n}d}\big((L^{*}L+z)^{-1}-(LL^{*}+z)^{-1}\big)=0,
\]
and, for $n$ odd,
\begin{align*}
& \tr_{2^{\hat n}d}\big((L^{*}L+z)^{-1}-(LL^{*}+z)^{-1}\big)\\
& \quad =\tr_{2^{\hat n}d} \big(2(R_{1+z}C)^{n}R_{1+z}+\big((L^{*}L+z)^{-1}-(LL^{*}+z)^{-1}\big) (CR_{1+z})^{n+1}\big).
\end{align*}
This proves \eqref{BL}.

In a similar fashion, using again Proposition \ref{prop:resolvent formulas}
and the ``cyclicity property'' of $\tr_{2^{\hat n}d}$ (see Proposition \ref{prop:cyclic property
of inner trace}), one obtains
\begin{align*}
& \tr_{2^{\hat n}d}\big(L\left(L^{*}L+z\right)^{-1}\gamma_{j,n}\big)
+ \tr_{2^{\hat n}d}\big(L^{*}\left(LL^{*}+z\right)^{-1}\gamma_{j,n}\big)  \\
& \quad =\tr_{2^{\hat n}d}\big(L\left(L^{*}L+z\right)^{-1}\gamma_{j,n}+L^{*}\left(LL^{*}+z\right)^{-1}\gamma_{j,n}\big)\\
& \quad =\tr_{2^{\hat n}d}\big(\left( \cQ+\Phi\right)\left(L^{*}L+z\right)^{-1}\gamma_{j,n}
+\left(- \cQ+\Phi\right)\left(LL^{*}+z\right)^{-1}\gamma_{j,n}\big)\\
& \quad =\tr_{2^{\hat n}d}\big( \cQ\left(L^{*}L+z\right)^{-1}\gamma_{j,n} - \cQ\left(LL^{*}+z\right)^{-1}\gamma_{j,n}\big)\\
& \qquad+\tr_{2^{\hat n}d}\big(\Phi\big(\left(L^{*}L+z\right)^{-1}\gamma_{j,n}+\left(LL^{*}+z\right)^{-1}\gamma_{j,n}\big)\big)\\
& \quad =\tr_{2^{\hat n}d}\big(\gamma_{j,n} \cQ\big(\left(L^{*}L+z\right)^{-1}-\left(LL^{*}+z\right)^{-1}\big)\big)\\
& \qquad+\tr_{2^{\hat n}d}\big(\gamma_{j,n}\Phi\big(\left(L^{*}L+z\right)^{-1}+\left(LL^{*}+z\right)^{-1}\big)\big)\\
& \quad =2\tr_{2^{\hat n}d}\big(\gamma_{j,n} \cQ\left(R_{1+z}C\right)^{n-2}R_{1+z}\big)+2\tr_{2^{\hat n}d}\big(\gamma_{j,n}\Phi \left(R_{1+z}C\right)^{n-1}R_{1+z}\big)\\
& \qquad+\tr_{2^{\hat n}d}\big(\gamma_{j,n} \cQ \big(\left(L^{*}L+z\right)^{-1}+\left(LL^{*}+z\right)^{-1}\big)\left(CR_{1+z}\right)^{n}\big)\\
& \qquad+\tr_{2^{\hat n}d}\big(\gamma_{j,n}\Phi\big(\left(L^{*}L+z\right)^{-1}+\left(LL^{*}+z\right)^{-1}\big)\left(CR_{1+z}\right)^{n}\big),
\end{align*}
and
\begin{align*}
A_{L}(z) & =\tr_{2^{\hat n}d} \big(\big[\Phi,L^{*}\left(LL^{*} + z\right)^{-1}\big]\big)
- \tr_{2^{\hat n}d} \big(\big[\Phi,L\left(L^{*}L+z\right)^{-1}\big]\big)   \\
& =\tr_{2^{\hat n}d} \big(\big[\Phi,L^{*}\left(LL^{*}+z\right)^{-1}-L\left(L^{*}L+z\right)^{-1}\big]\big)\\
& =\tr_{2^{\hat n}d}\big(\big[\Phi,\Phi\left(LL^{*}+z\right)^{-1}-\Phi\left(L^{*}L+z\right)^{-1}\big]\big)
\\
& \quad -\tr_{2^{\hat n}d} \big(\big[\Phi, \cQ\left(L^{*}L+z\right)^{-1}
+ \cQ\left(LL^{*}+z\right)^{-1}\big]\big)    \\
& =\tr_{2^{\hat n}d} \big(\big[\Phi,\Phi\big(\left(LL^{*}+z\right)^{-1}-\left(L^{*}L+z\right)^{-1}\big)\big]\big) \\
& \quad -\tr_{2^{\hat n}d} \big(\big[\Phi, \cQ \big(\left(L^{*}L+z\right)^{-1}+\left(LL^{*}+z\right)^{-1}\big)\big]\big) \\
& =\tr_{2^{\hat n}d}\big(\big[\Phi,\Phi\big(2\left(R_{1+z}C\right)^{n}R_{1+z}+\big(\left(L^{*}L+z\right)^{-1}-\left(LL^{*}+z\right)^{-1}\big)  \\
& \hspace*{1.4cm} \times \left(CR_{1+z}\right)^{n+1}\big)\big]\big)\\
& \quad-\tr_{2^{\hat n}d} \big(\big[\Phi, \cQ\big(2\left(R_{1+z}C\right)^{n-1}R_{1+z}+\big(\left(L^{*}L+z\right)^{-1}+\left(LL^{*}+z\right)^{-1}\big)    \\
& \hspace*{1.7cm} \times \left(CR_{1+z}\right)^{n}\big)\big]\big).     \tag*{{\qedhere}}
\end{align*}
\end{proof}

One important upshot of Lemma \ref{lem:almost_Neumann_series} is the fact \eqref{BLeven}, 
implying that only odd dimensions are of interest when computing the index of $L$. Thus, we will 
focus on the case $n$ odd, only.

The next theorem concludes this section and asserts that the trace class assumptions on $B_{L}(z)$
in Theorem \ref{thm:index with Witten} are satisfied for $B_{L}(z)$
given by \eqref{eq:def_of_BL(z)2}. As the sequence $\{T_\Lambda\}_\Lambda$ we shall use $\{\chi_\Lambda\}_\Lambda$ the sequence of multiplication operators induced by multiplying with the cut-off (characteristic) function $\chi_\Lambda$. The sequence $\{S_\Lambda^*\}_\Lambda$ is set to be the constant sequence $S_\Lambda=I_{L^2(\mathbb{R}^n)}$ for all $\Lambda$. Clearly,  
$\chi_\Lambda\in L^{n+1}(\mathbb{R}^n)$ for all $\Lambda>0$. 

\begin{theorem}
\label{thm:trisbounded} Let $n\in\mathbb{N}_{\geq 3}$ odd, $L= \cQ+\Phi$
given by \eqref{eq:def_of_L(2)}. Then there exists $\delta>0$ such that for all $z\in\rho\left(-L^{*}L\right)\cap\rho\left(-LL^{*}\right)$ and $\Lambda>0$, 
the operator $\chi_\Lambda B_{L}(z)$ with $B_{L}(z)$ given by \eqref{eq:def_of_BL(z)2} is trace
class with $z\mapsto\tr (|\chi_\Lambda B_{L}(z)|)$ bounded on $B(0,\delta)\backslash \{0\}$. 
\end{theorem}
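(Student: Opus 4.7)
The plan is to start from the second representation of $B_L(z)$ obtained in Lemma~\ref{lem:almost_Neumann_series}\,$(i)$, which (using $(R_{1+z}C)^{n}R_{1+z}=R_{1+z}(CR_{1+z})^{n}$) reads
\begin{align*}
  2 B_{L}(z)
  &= 2z\,\tr_{2^{\hat n}d}\bigl(R_{1+z}(CR_{1+z})^{n}\bigr) \\
  &\quad + z\,\tr_{2^{\hat n}d}\Bigl(\bigl((L^{*}L+z)^{-1}-(LL^{*}+z)^{-1}\bigr)(CR_{1+z})^{n+1}\Bigr),
\end{align*}
and is valid on $\rho(-L^{*}L)\cap\rho(-LL^{*})\cap\mathbb{C}_{\Re>-1}$. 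I then multiply by $\chi_{\Lambda}$ and treat the two summands separately, using that $\chi_{\Lambda}\otimes I_{2^{\hat n}d}$ commutes with the extraction of the internal trace and that $\tr_{2^{\hat n}d}$ preserves trace-class membership by Remark~\ref{rem-tracem}. Thus it suffices to exhibit both operators multiplied by $\chi_{\Lambda}$ as trace-class on $L^{2}(\mathbb{R}^{n})^{2^{\hat n}d}$.

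\textbf{First summand.} Since $\chi_{\Lambda}$ has compact support, $\chi_{\Lambda}\in L^{n+1}(\mathbb{R}^{n})$; the Fourier symbol of $R_{1+z}$ is pointwise dominated by $g_{1+z}$, which belongs to $L^{n+1}(\mathbb{R}^{n})$ since $n+1>n/2$. Hence Lemma~\ref{lem:Schatten-class-1-operator}\,$(i)$ (equivalently, Theorem~\ref{thm:Si}) gives $\chi_{\Lambda}R_{1+z}\in\mathcal{B}_{n+1}$. By admissibility (Definition~\ref{def:phi_admissible}\,$(iii)$), $\|C(x)\|\leq\kappa(1+|x|)^{-1}$, and as $(1+|x|)^{-(n+1)}$ is integrable on $\mathbb{R}^{n}$, the same lemma yields $CR_{1+z}\in\mathcal{B}_{n+1}$. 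Writing
\[
\chi_{\Lambda}R_{1+z}(CR_{1+z})^{n} = (\chi_{\Lambda}R_{1+z})\cdot(CR_{1+z})\cdots(CR_{1+z})
\]
as a product of $n+1$ operators in $\mathcal{B}_{n+1}$, the H\"older inequality (Theorem~\ref{thm:trace-class-crit}) places this operator in $\mathcal{B}_{1}$, with the explicit bound
\[
\|\chi_{\Lambda}R_{1+z}(CR_{1+z})^{n}\|_{\mathcal{B}_{1}}\leq\|\chi_{\Lambda}R_{1+z}\|_{\mathcal{B}_{n+1}}\|CR_{1+z}\|_{\mathcal{B}_{n+1}}^{n}.
\]

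\textbf{Second summand.} The factor $(CR_{1+z})^{n+1}$ is a product of $n+1$ operators in $\mathcal{B}_{n+1}$ and hence lies in $\mathcal{B}_{1}$ by the same H\"older argument. Multiplication from the left by the bounded operator $\chi_{\Lambda}\bigl((L^{*}L+z)^{-1}-(LL^{*}+z)^{-1}\bigr)$ preserves $\mathcal{B}_{1}$, so the second summand is trace-class as well. Adding the two contributions and taking the internal trace yields $\chi_{\Lambda}B_{L}(z)\in\mathcal{B}_{1}(L^{2}(\mathbb{R}^{n}))$ for every $\Lambda>0$ and every admissible $z$.

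\textbf{Uniform bound near $z=0$.} The constants in Lemma~\ref{lem:Schatten-class-1-operator} depend on $z$ only through $\|g_{1+z}\|_{L^{n+1}}$, which is continuous in $z$ and uniformly bounded on any compact subset of $\mathbb{C}_{\Re>-1}$. The remaining $z$-dependence in the second summand enters through the prefactor $z\bigl((L^{*}L+z)^{-1}-(LL^{*}+z)^{-1}\bigr)$; since $L$ is Fredholm (Theorem~\ref{thm:Fredholm_property}), the orthogonal projections $P_{\pm}$ onto $\ker(L^{*}L)$ and $\ker(LL^{*})$ are finite-rank, and by \cite[Sect.~III.6.5]{Ka80} one has $z(L^{*}L+z)^{-1}\to P_{+}$ and $z(LL^{*}+z)^{-1}\to P_{-}$ in operator norm as $z\to0$. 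Hence the quantity $\|z((L^{*}L+z)^{-1}-(LL^{*}+z)^{-1})\|_{\mathcal{B}(L^{2}(\mathbb{R}^{n})^{2^{\hat n}d})}$ remains bounded on a punctured disc $B(0,\delta)\setminus\{0\}$ for $\delta>0$ sufficiently small (in particular $\delta<1$). Combining these uniform estimates gives the asserted boundedness of $z\mapsto\|\chi_{\Lambda}B_{L}(z)\|_{\mathcal{B}_{1}}$ on $B(0,\delta)\setminus\{0\}$. The main delicate point is that one cannot directly use a single Neumann series (which converges only for $\Re(z)$ large), but must instead use the \emph{finite} Neumann expansion of Proposition~\ref{prop:resolvent formulas}\,$(ii)$, truncated precisely at the order dictated by Lemma~\ref{l:can} so that all intermediate trace-vanishing terms disappear and the remainder is trace-class thanks to the $n+1$ ``decaying'' factors $CR_{1+z}$.
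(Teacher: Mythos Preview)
Your proof is correct and follows essentially the same strategy as the paper: invoke the representation of $B_L(z)$ from Lemma~\ref{lem:almost_Neumann_series}\,$(i)$, then handle the two summands using Schatten-class membership of the individual factors $\chi_\Lambda R_{1+z}$, $CR_{1+z}\in\mathcal{B}_{n+1}$ and the H\"older inequality, together with the uniform boundedness of $z\mapsto z\bigl((L^{*}L+z)^{-1}-(LL^{*}+z)^{-1}\bigr)$ near $0$ coming from the Fredholm property.

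One cosmetic difference worth noting: the paper splits the products explicitly into two Hilbert--Schmidt factors (grouping $\hat n+1$ of the $n+1$ factors on each side) via Theorem~\ref{thm:Simon_Hilbert-Schmidt}, rather than applying H\"older directly with $n+1$ factors in $\mathcal{B}_{n+1}$ as you do. Your route is slightly slicker for the trace-class claim itself, but the paper's explicit $\mathcal{B}_2\cdot\mathcal{B}_2$ factorization is what underlies Remark~\ref{r:5.8}, namely that $\tr(\chi_\Lambda B_L(z))$ can subsequently be computed as the integral over the diagonal of its integral kernel (via Corollary~\ref{cor:Comp-of-trace}). This is used repeatedly in Sections~\ref{sec:der_tra_diag} and~\ref{sec:n=3}, so if you intend to continue with the diagonal-integral computations you should make that factorization explicit at some point.
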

\begin{proof}
We start by showing that $z\mapsto\chi_\Lambda R_{1+z}\big((CR_{1+z})^{n}\big)$ is
trace class with trace class norm being bounded around a neighborhood
of $0$, where $C=\left( \cQ\Phi\right)=\sum_{j=1}^{n}\gamma_{j,n}\left(\partial_{j}\Phi\right)$
is given by \eqref{eq:commutator=00003DC}, see also Remark \ref{rem:differen_mult_op},
and $R_{1+z}$ is given by \eqref{eq:resolvent_of_laplace}. Using $n=2\hat n+1$ we write
\[
 \chi_\Lambda R_{1+z}\big(CR_{1+z})^{n}\big)=\left(\chi_\Lambda R_{1+z} \left(CR_{1+z}\right)^{\hat n}\right) \left( \left(CR_{1+z}\right)^{\hat n +1}\right). 
\]
By Theorem \ref{thm:Simon_Hilbert-Schmidt} the operators
\[
\left(\chi_\Lambda R_{1+z} \left(CR_{1+z}\right)^{\hat n}\right) \, \text{ and } \,  
\left( \left(CR_{1+z}\right)^{\hat n +1}\right)
\]
are Hilbert--Schmidt by the admissability of $\Phi$ (in this context, see, in particular, 
Definition \ref{def:phi_admissible}\,$(iii)$). Moreover, the
boundedness of $z\mapsto \chi_\Lambda \tr_{2^{\hat n}d}\left(R_{1+z}C\right)^{n}$ with respect
to the norm in $\cB_1\left(L^{2}\left(\mathbb{R}^{n}\right)\right)$
around a neighborhood of $0$, now follows from Theorem \ref{thm:trace-class-crit}
together with the estimates in Theorem \ref{thm:Simon_Hilbert-Schmidt}
and Lemma \ref{lem:Schatten-class-1-operator} (we note that we apply these statements for $\mu=1+z$ with $z\in \mathbb{C}_{\Re>-1}$).

One recalls (employing the spectral theorem) that for all self-adjoint operators $A$ on a Hilbert space
$\cH$ with $0$ being an isolated eigenvalue, the operator family $z\mapsto z\left(A+z\right)^{-1}$
is uniformly bounded on $B(0,\delta)$ for some $\delta>0$. By Lemma \ref{lem:almost_Neumann_series}, \eqref{BL}, the uniform boundedness of $z\mapsto z\left(A+z\right)^{-1}$ on $B(0,\delta)$ for some $\delta>0$, 
and the ideal property for trace class operators,
it remains to show that $\left(CR_{1+z}\right)^{n+1}$ is trace class,
with trace class norm bounded for $z\in B(0,\delta')$ for some sufficiently small
$\delta'>0$. For $n=2 \hat n +1$, one observes that $\left(CR_{1+z}\right)^{n+1}$
is a sum of operators of the form
\[
\Psi_{1}\cdots R_{1+z}\Psi_{n+1}R_{1+z}=\left(\Psi_{1}\cdots R_{1+z}\Psi_{\hat n +1}R_{1+z}\right)\left(\Psi_{\hat n +2}\cdots R_{1+z}\Psi_{2\hat n +2}R_{1+z}\right),
\]
where $\Psi_{j}$ are multiplication operators with bounded $C^{\infty}$-functions
with the property that for some constant $\kappa>0$, $|\Psi_j(x)|\leq \frac{\kappa}{1+|x|}$, 
$x\in \mathbb{R}^n$. For deriving the trace class property of 
\[
\Psi_{1}\cdots R_{1+z}\Psi_{n+1}R_{1+z}=\left(\Psi_{1}\cdots
R_{1+z}\Psi_{\hat n +1}R_{1+z}\right)\left(\Psi_{\hat n +2}\cdots
R_{1+z}\Psi_{2\hat n +2}R_{1+z}\right),
\] we use Theorem \ref{thm:Simon_Hilbert-Schmidt} and Lemma
\ref{lem:Schatten-class-1-operator}. Let $z_0\in (-1,0)$. By
Theorem \ref{thm:Simon_Hilbert-Schmidt}\,$(i)$ one estimates for
some $\kappa'>0$, depending on $\hat n$, $\kappa$ and $z_0$, and all 
$z\in \mathbb{C}_{\geq z_0}$, 
\[
  \|\left(\Psi_{1}\cdots R_{1+z}\Psi_{\hat n +1}R_{1+z}\right)\|_{\cB_2} \leq \prod_{j=1}^{\hat n +1} \|\Psi_j
R_{1+z}\|
  \leq \kappa' \prod_{j=1}^{\hat n +1}\|\Psi_j\|_{L^{n+1}},
\]
where we used Lemma \ref{lem:Schatten-class-1-operator} in the last
estimate. The same argument applies to
\[
  \left(\Psi_{\hat n +2}\cdots R_{1+z}\Psi_{2\hat n +2}R_{1+z}\right).
\]
This concludes the proof since $\left(CR_{1+z}\right)^{n+1}$ is trace class by 
Theorem \ref{thm:trace-class-crit}. 
\end{proof}

\begin{remark}\label{r:5.8} We note that the method of proof of Theorem \ref{thm:trisbounded} shows that the trace of $\chi_\Lambda B_L(z)$ can be computed as the integral over the diagonal of the respective integral kernel. In fact, we have shown that $\chi_\Lambda B_L(z)$ may be represented as sums of products of two Hilbert--Schmidt operators leading to the trace formula given in Corollary \ref{cor:Comp-of-trace}. \hfill $\diamond$ 
\end{remark}

\newpage

\section{Derivation of the Trace Formula -- Diagonal Estimates}\label{sec:der_tra_diag}

In this section, we shall compute the trace of $\chi_\Lambda B_L(z)$, $\Lambda>0$, $z\in \rho(-L^*L)\cap \rho(-LL^*)\cap \mathbb{C}_{\Re>-1}$, with $B_L$ given by \eqref{eq:Def_of_B_L(z)}. After stating the next lemma (needed to be able to apply Lemma \ref{lem:comm is sum of der} and Proposition \ref{prop:commutator with phi vanishes} to the sum in \eqref{eq:B_L=00003DJJJ+ALz}) we will outline the strategy of the proof. 

We note that for the application of Lemma \ref{lem:comm is sum of der} to the first summand in \eqref{eq:B_L=00003DJJJ+ALz}, one needs to establish continuous differentiability of the integral kernel of \eqref{eq:JJJ}. In this context we emphasize the different regularity of the kernels of \eqref{eq:JJJ} for $n=3$ and $n\geq 5$, necessitating modifications for the case $n=3$ due to the lack of differentiability of \eqref{eq:JJJ}.

\begin{lemma}[{\cite[Lemma 4, p. 224]{Ca78}}]
\label{lem:j,A,Green} Let $n\in\mathbb{N}_{\geq3}$ odd, 
$L= \cQ+\Phi$ be given by \eqref{eq:def_of_L(2)}, and let 
$z\in\rho\left(-LL^{*}\right)\cap\rho\left(-L^{*}L\right)$, 
with $\Re (z) > -1$. Denote the integral kernels of the following operators
\begin{align*}
J_{L}^{j}(z) & =\tr_{2^{\hat n}d}\big(L\left(L^{*}L+z\right)^{-1}\gamma_{j,n}\big)-\tr_{2^{\hat n}d}\big(L^{*}\left(LL^{*}+z\right)^{-1}\gamma_{j,n}\big),  \\
A_{L}(z) & =\tr_{2^{\hat n}d}\big(\big[\Phi,L^{*}\left(LL^{*}+z\right)^{-1}\big]\big) 
- \tr_{2^{\hat n}d}\big(\big[\Phi,L\left(L^{*}L+z\right)^{-1}\big]\big),  
\end{align*}
by $G_{J,j,z}$, $j\in\{1,\ldots,n\}$, and $G_{A,z}$, respectively. 
Then $G_{A,z}$ is continuous and satisfies $G_{A,z}(x,y)\to0$ if
$y\to x$ for all $x\in\mathbb{R}^{n}$. If $n\geq5$, $G_{J,j,z}$
is continuously differentiable on $\mathbb{R}^{n}\times\mathbb{R}^{n}.$ 
\end{lemma}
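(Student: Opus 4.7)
The plan is to read off the regularity of $G_{A,z}$ and $G_{J,j,z}$ from the representation of $A_L(z)$ and $J_L^j(z)$ obtained in Lemma \ref{lem:almost_Neumann_series} by counting the net Sobolev regularity gained by each building block. Recall from Proposition \ref{prop:concrete case} that, for any $\ell \in \mathbb{R}$, the resolvent $R_{1+z}$ maps $H^{\ell}(\bbR^n)$ to $H^{\ell+2}(\bbR^n)$, multiplication by $\Phi$ (and by $C=(\mathcal{Q}\Phi)$, which is a smooth bounded matrix-valued function by admissibility, Definition \ref{def:phi_admissible}) preserves the Sobolev index, and $\mathcal{Q}$ lowers it by one. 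In particular, each factor $R_{1+z}C$ (respectively $CR_{1+z}$) increases the Sobolev index by $2$, while prepending $\mathcal{Q}$ reduces the net gain by $1$. Moreover, by the analyticity and smoothing properties of $(L^*L+z)^{-1}, (LL^*+z)^{-1} \in \mathcal{B}(H^{\ell},H^{\ell+2})$ for $z \in \rho(-L^*L)\cap \rho(-LL^*)$, every operator entering the expansions in Lemma \ref{lem:almost_Neumann_series} is of the form $\cQ^{\delta}(R_{1+z} \Psi)^m R_{1+z}$ with $\delta \in \{0,1\}$, $m \geq 0$, and smooth bounded $\Psi$, and thus maps $H^{\ell}$ into $H^{\ell + 2m + 2 - \delta}$.

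First I would treat $G_{A,z}$. By Lemma \ref{lem:almost_Neumann_series}\,$(i)$, $A_L(z)$ is a finite sum of internal traces of commutators $[\Phi, T_\alpha]$ where the worst term (in terms of regularity gain) is $\cQ(R_{1+z}C)^{n-1} R_{1+z}$, yielding a net gain of $2n-1$ Sobolev derivatives. Since $n \geq 3$ implies $2n-1 > n$, for any sufficiently small $\epsilon > 0$ every such $T_\alpha$ belongs to $\cB\big(H^{-(n/2)-\epsilon}(\bbR^n), H^{(n/2)+\epsilon}(\bbR^n)\big)$. Proposition \ref{prop:commutator with phi vanishes} then shows that the kernel of each $[\Phi, T_\alpha]$ is continuous, bounded, and vanishes on the diagonal. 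Summing over $\alpha$ and applying $\tr_{2^{\hat n}d}$ (which preserves both continuity and the vanishing at coinciding arguments, since the internal trace is a finite linear combination of the matrix entries of the kernel) yields the desired conclusion for $G_{A,z}$.

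For $G_{J,j,z}$ with $n \geq 5$, I would argue similarly but now invoke the second, stronger part of Theorem \ref{thm:continuity of integral kernel} (equivalently Corollary \ref{cor:integral kernels regularity}) which gives continuous differentiability provided the operator under consideration gains at least $n+1+2\epsilon$ Sobolev derivatives. According to Lemma \ref{lem:almost_Neumann_series}\,$(i)$, $J_L^j(z)$ decomposes into four internal traces whose operator kernels are, respectively,
\begin{align*}
 & \gamma_{j,n}\cQ(R_{1+z}C)^{n-2}R_{1+z}, \qquad \gamma_{j,n}\Phi(R_{1+z}C)^{n-1}R_{1+z}, \\
 & \gamma_{j,n}\cQ\big((L^*L+z)^{-1}+(LL^*+z)^{-1}\big)(CR_{1+z})^n, \\
 & \gamma_{j,n}\Phi\big((L^*L+z)^{-1}+(LL^*+z)^{-1}\big)(CR_{1+z})^n.
\end{align*}
Counting Sobolev gains gives $2n-3$, $2n$, $2n+1$, and $2n+2$, respectively. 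The critical term is the first one, where the condition $2n-3 > n+1$, i.e., $n > 4$, is precisely what forces the restriction $n \geq 5$; for the remaining three the inequality holds comfortably for all odd $n \geq 3$. Invoking Corollary \ref{cor:integral kernels regularity} for each summand and again using the fact that $\tr_{2^{\hat n}d}$ preserves continuous differentiability of the kernel, we conclude that $G_{J,j,z}$ is continuously differentiable on $\mathbb{R}^n\times \mathbb{R}^n$.

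The main obstacle, as the above discussion indicates, lies in handling the term $\gamma_{j,n}\cQ(R_{1+z}C)^{n-2}R_{1+z}$: since $\cQ$ eats one derivative while only $n-1$ resolvents are available to smooth, the naive regularity count is borderline precisely at $n=3$, which is why the case $n=3$ has to be treated separately (cf.\ Section \ref{sec:n=3}) and requires additional regularization.
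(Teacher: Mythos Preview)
Your argument is correct and follows essentially the same route as the paper's own proof: both invoke the representation of $J_L^j(z)$ and $A_L(z)$ from Lemma~\ref{lem:almost_Neumann_series}, count the Sobolev regularity gained by each summand via Proposition~\ref{prop:concrete case} (with the critical count $2n-3>n+1$ forcing $n\geq 5$ for $J_L^j(z)$), apply Corollary~\ref{cor:integral kernels regularity} for the differentiability of $G_{J,j,z}$, and use Proposition~\ref{prop:commutator with phi vanishes} for the continuity and diagonal vanishing of $G_{A,z}$.
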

\begin{proof}
Appealing to Lemma \ref{lem:almost_Neumann_series}, one recalls with
$R_{1+z}$, $\cQ$, and $C$ given by \eqref{eq:resolvent_of_laplace},
\eqref{eq:def_of_Q2}, and \eqref{eq:commutator=00003DC}, respectively,
\begin{align*}
J_{L}^{j}(z) & =2\tr_{2^{\hat n}d}\big(\gamma_{j,n} \cQ\left(R_{1+z}C\right)^{n-2}R_{1+z}\big)+2\tr_{2^{\hat n}d}\big(\gamma_{j,n}\Phi \left(R_{1+z}C\right)^{n-1}R_{1+z}\big)\\
 & \quad+\tr_{2^{\hat n}d}\big(\gamma_{j,n} \cQ \big(\left(L^{*}L+z\right)^{-1}+\left(LL^{*}+z\right)^{-1}\big)\left(CR_{1+z}\right)^{n}\big)\\
 & \quad+\tr_{2^{\hat n}d}\big(\gamma_{j,n}\Phi \big(\left(L^{*}L+z\right)^{-1}+\left(LL^{*}+z\right)^{-1}\big)\left(CR_{1+z}\right)^{n}\big), \quad j\in\{1,\ldots,n\}, 
\end{align*}
and 
\begin{align*}
A_{L}(z) & =\tr_{2^{\hat n}d} \big(\big[\Phi,\Phi\big(2\left(R_{1+z}C\right)^{n}R_{1+z}+\big(\left(L^{*}L+z\right)^{-1}-\left(LL^{*}+z\right)^{-1}\big)    \\
& \hspace*{1.4cm} \times \left(CR_{1+z}\right)^{n+1}\big)\big]\big)\\
 & \quad-\tr_{2^{\hat n}d} \big(\big[\Phi, \cQ \big(2\left(R_{1+z}C\right)^{n-1}R_{1+z}+\big(\left(L^{*}L+z\right)^{-1}+\left(LL^{*}+z\right)^{-1}\big)  \\ 
 & \hspace*{1.7cm} \times \left(CR_{1+z}\right)^{n}\big)\big]\big).
\end{align*}
By Proposition \ref{prop:concrete case} (one recalls that $\Phi$
is admissible and hence $\Phi\in C_{b}^{\infty}\big(\mathbb{R}^{n};\mathbb{C}^{d \times d}\big)$
by Definition \ref{def:phi_admissible}\,$(i)$), one gets for
all $\ell\in\mathbb{R}$, 
\[
Q\gamma_{j,n}\left(R_{1+z}C\right)^{n-2}R_{1+z} \in 
\cB\big(H^{\ell}(\mathbb{R}^{n})^{2^{\hat n}d},H^{\ell+2(n-2)+2-1}(\mathbb{R}^{n})^{2^{\hat n}d}\big).
\]
For $n\geq5$, one obtains from $(2(n-2)+2-1)=n-3>0,$
the continuity of $G_{J,j,z}$ by Corollary \ref{cor:integral kernels regularity}.
Moreover, since $\left(2(n-2)+2-1\right)-n-1=n-4>0,$ Corollary \ref{cor:integral kernels regularity} 
also implies continuous differentiability of $G_{J,j,z}$. Similar arguments
ensure the continuity of the integral kernel of $A_{L}(z)$ (for
$n\geq3$). Moreover, for $n\geq3$, the integral kernel of $A_{L}(z)$ vanishes on the 
diagonal by Proposition \ref{prop:commutator with phi vanishes}. 
\end{proof}

Next, we outline the idea for computing the trace of $\chi_\Lambda B_L(z)$. By 
Theorem \ref{thm:index with Witten} and Theorem \ref{thm:Witten_reg_n5}, we know that the limit $\lim_{\Lambda\to \infty} \tr(\chi_\Lambda B_L(0))$ exists. However, in order to derive the explicit formula asserted in Theorem \ref{thm:Witten_reg_n5} also for $z$ in a neighborhood of $0$, some work is required. As it will turn out, for $z$ with large real part -- at least for a sequence $\{\Lambda_k\}_{k\in\bbN}$ -- we can show that an expression similar to the one in Theorem \ref{thm:Witten_reg_n5} is valid. 

For achieving the existence of the limit (without using sequences) for $z$ in a neighborhood of $0$, we intend to employ Montel's theorem. One recalls that for an open set $U\subseteq \mathbb{C}$, a set $\mathcal{G}\subseteq \mathbb{C}^U\coloneqq \{f \,|\, f\colon U\to \mathbb{C}\}$ is called \emph{locally bounded}, if for all compact $\Omega\subset U$,
\begin{equation}\label{d:normal}
 \sup_{f\in \mathcal{G}} \sup_{z\in \Omega} |f(z)| < \infty.
\end{equation}

\begin{theorem}[Montel's theorem, see, e.g., \cite{Co95}, p.~146--154] \label{t:m} Let $U\subseteq \mathbb{C}$ open, $\{f_\Lambda\}_{\Lambda\in \mathbb{N}}$ a locally bounded family of analytic functions on $U$. Then there exists a subsequence $\{f_{\Lambda_k}\}_{k\in \mathbb{N}}$ and an analytic function $g$ on $U$ such that $f_{\Lambda_k}\to g$ as $k\to\infty$ in the compact open topology $($i.e., for any compact set $\Omega\subset U$, the sequence $\{f_{\Lambda_k}|_{\Omega}\}_{k\in\mathbb{N}}$ converges uniformly to $g|_{\Omega}$$)$.
\end{theorem}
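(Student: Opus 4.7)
The plan is to assemble Montel's theorem from three classical ingredients: Cauchy's integral formula (to convert the pointwise bound into equicontinuity), the Arzelà--Ascoli theorem (to extract a convergent subsequence), and Weierstrass's theorem on uniform limits of analytic functions (to conclude that the limit is analytic). This is all standard; the slight subtlety is that $U$ need not be compact, so one has to extract a single subsequence that works on all of $U$, which is handled by a Cantor diagonal argument over an exhaustion of $U$ by compacts.

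First I would establish equicontinuity on compact subsets. Fix a compact set $K \subset U$ and choose $r > 0$ small enough that the $r$-neighborhood $K_r := \{z \in \bbC \, | \, \dist(z,K) \le r\}$ satisfies $K_r \subset U$ and is compact. By the local boundedness assumption \eqref{d:normal} applied to $K_r$, there exists $M = M(K,r) > 0$ with $|f_\Lambda(w)| \le M$ for all $w \in K_r$ and all $\Lambda \in \bbN$. For any $z \in K$, Cauchy's integral formula applied on the circle of radius $r$ around $z$ gives
\[
 |f_\Lambda'(z)| = \bigg| \frac{1}{2\pi i} \int_{|w-z|=r} \frac{f_\Lambda(w)}{(w-z)^2} \, dw \bigg| \le \frac{M}{r}.
\]
For $z_1, z_2 \in K$ with $|z_1 - z_2| < r/2$ the segment from $z_1$ to $z_2$ lies in $K_{r/2} \subset K_r$, and so $|f_\Lambda(z_1) - f_\Lambda(z_2)| \le (M/r)|z_1-z_2|$. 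This yields a uniform Lipschitz, in particular uniform equicontinuity, estimate for $\{f_\Lambda|_K\}_{\Lambda \in \bbN}$, and combined with the uniform pointwise bound it makes the family relatively compact in $C(K;\bbC)$ by the Arzelà--Ascoli theorem.

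Next I would produce one subsequence that converges on every compact subset. Exhaust $U$ by compacts: pick an increasing sequence $K_1 \subset K_2 \subset \cdots$ of compact subsets of $U$ with $K_m \subset \mathrm{int}(K_{m+1})$ and $\bigcup_{m \in \bbN} K_m = U$ (e.g., $K_m = \{z \in U \, | \, |z| \le m \text{ and } \dist(z, \bbC \backslash U) \ge 1/m\}$). By the previous step and Arzelà--Ascoli, there is a subsequence $\{f_{\Lambda_k^{(1)}}\}_k$ that converges uniformly on $K_1$. Applying the same argument to this subsequence on $K_2$ gives a further subsequence $\{f_{\Lambda_k^{(2)}}\}_k$ that converges uniformly on $K_2$. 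Iterating and taking the Cantor diagonal $\Lambda_k := \Lambda_k^{(k)}$ produces a single subsequence $\{f_{\Lambda_k}\}_{k \in \bbN}$ that converges uniformly on each $K_m$, and hence on every compact subset of $U$ since any such compact set is contained in some $K_m$. Denote the pointwise limit by $g \colon U \to \bbC$.

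Finally I would verify that $g$ is analytic on $U$. Uniform convergence on compacts implies $g$ is continuous. For any closed triangular contour $\Gamma$ contained in a disk $D \subset U$, uniform convergence on $\Gamma$ together with analyticity of each $f_{\Lambda_k}$ (which by Cauchy's theorem gives $\int_\Gamma f_{\Lambda_k} \, dz = 0$) yields $\int_\Gamma g \, dz = 0$, and Morera's theorem then forces $g$ to be analytic on $D$, hence on $U$. This completes the proof. I do not anticipate any real obstacle: every step is a textbook application, and the only care needed is the diagonal construction to handle a general open $U$ rather than a single compact set.
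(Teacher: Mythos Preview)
Your proof is correct and is the standard textbook argument; the paper does not give its own proof but simply cites Conway \cite{Co95}, whose treatment follows exactly this route (Cauchy estimates $\Rightarrow$ equicontinuity, Arzel\`a--Ascoli on an exhaustion by compacts with a diagonal argument, Weierstrass/Morera for analyticity of the limit). One tiny cosmetic point: your derivative bound $|f_\Lambda'(z)|\le M/r$ was derived only for $z\in K$, but the segment from $z_1$ to $z_2$ lies in $K_{r/2}$, not necessarily in $K$; applying the same Cauchy estimate with radius $r/2$ on $K_{r/2}\subset K_r$ gives $|f_\Lambda'|\le 2M/r$ there, which is all you need.
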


For our particular application of Montel's theorem, we need to show that the family of analytic functions
\[
  \{ z \mapsto \tr (\chi_\Lambda B_L(z))\}_{\Lambda}
\]
constitutes a locally bounded family. Thus, one needs to show that for all compact $\Omega\subset \mathbb{C}_{\Re>-1}\cap \rho(-L^*L)\cap \rho(-LL^*)$, 
\begin{equation}\label{e:normal}
  \sup_{\Lambda>0}\sup_{z\in \Omega} |\tr (\chi_\Lambda B_L(z))|<\infty.
\end{equation}
For this assertion, it is crucial that some integral kernels involved in the computation of the trace vanish on the diagonal, see, for instance, Proposition \ref{prop:commutator with phi vanishes}.  We note that generally, the expression
\begin{equation}\label{e:normal2}
  \sup_{\Lambda>0}\sup_{z\in \Omega} \tr (|(\chi_\Lambda B_L(z))|),
\end{equation}
cannot be finite, as the example constructed in Appendix B demonstrates. In order to prove \eqref{e:normal}, we actually show for all 
$\Omega\subset \mathbb{C}_{\Re>-1}\cap \rho(-L^*L)\cap \rho(-LL^*)$ compact, 
\begin{equation}\label{e:normal3}
  \sup_{\Lambda>0}\sup_{z\in \Omega} |z\tr (\chi_\Lambda B_L(z))|<\infty,  
\end{equation}
and then appeal to the fact that condition \eqref{e:normal3} together with Theorem \ref{thm:trisbounded} implies \eqref{e:normal}, as the next result confirms:

\begin{lemma}\label{lem:Laurent_series_argument} Assume that $\{\phi_k\}_{k\in \bbN}$ is a sequence of 
analytic $($scalar-valued$)$ functions on $B_\mathbb{C}(0,1)$. Assume that $\{z\mapsto z\phi_k(z)\}_{k\in \bbN}$ is locally bounded on $B(0,1)$. Then $\{\phi_k\}_{k\in\bbN}$ is locally bounded on $B(0,1)$.  
\end{lemma}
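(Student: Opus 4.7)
The plan is to exploit the maximum modulus principle. Since each $\phi_k$ is analytic on $B(0,1)$, so is $z \mapsto z\phi_k(z)$, and the hypothesis on the locally bounded family $\{z\phi_k(z)\}_{k\in\bbN}$ can be transferred to $\{\phi_k\}_{k\in\bbN}$ by dividing by $z$ on each circle $|z| = r$ with $r \in (0,1)$. Crucially, on such a circle $|z| = r$ we have the pointwise identity $|\phi_k(z)| = |z\phi_k(z)|/r$, so a uniform bound for $z\phi_k(z)$ on $\{|z| = r\}$ instantly yields a uniform bound for $\phi_k$ on $\{|z| = r\}$, which by the maximum principle extends to all of $\overline{B(0,r)}$.

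More concretely, I would proceed as follows. First, fix an arbitrary compact $\Omega \subset B(0,1)$ and choose $r \in (0,1)$ with $\Omega \subset \overline{B(0,r)}$. The circle $\{z \in \mathbb{C} \, | \, |z| = r\}$ is itself compact in $B(0,1)$, so the local boundedness assumption on $\{z \mapsto z\phi_k(z)\}_k$ produces a constant $M_r < \infty$ with
\[
\sup_{k \in \bbN} \sup_{|z| = r} |z \phi_k(z)| \leq M_r.
\]
Dividing by $r > 0$ yields $\sup_{k} \sup_{|z| = r} |\phi_k(z)| \leq M_r/r$. Since $\phi_k$ is analytic on $B(0,1)$, hence continuous on $\overline{B(0,r)}$ and holomorphic in its interior, the maximum modulus principle gives
\[
\sup_{z \in \overline{B(0,r)}} |\phi_k(z)| = \sup_{|z| = r} |\phi_k(z)| \leq M_r/r,
\]
uniformly in $k$, and since $\Omega \subset \overline{B(0,r)}$, this is the desired bound.

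There is no real obstacle here: analyticity of each $\phi_k$ on \emph{all} of $B(0,1)$ (so that $\phi_k$ is in particular analytic at $0$, eliminating any possible pole) is what makes the maximum principle applicable to $\overline{B(0,r)}$. Without that analyticity at the origin the statement would fail trivially, as $\phi_k(z) = 1/z$ already illustrates. The whole argument therefore reduces to this one-line observation, and I would not expect any technical difficulty when writing it out.
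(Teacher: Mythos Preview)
Your proof is correct and considerably more direct than the paper's. The paper argues by contradiction: assuming $\{\phi_k\}$ is not locally bounded, it extracts a subsequence $\{\phi_{k_\ell}\}$ and points $z_{k_\ell}\to 0$ with $|\phi_{k_\ell}(z_{k_\ell})|\to\infty$, then applies Montel's theorem to the locally bounded family $\psi_\ell(z)=z\phi_{k_\ell}(z)$ to obtain a limit $\psi$ in the compact-open topology, and finally uses $\psi_\ell(0)=0$ together with uniform convergence of the derivatives $\psi_\ell'\to\psi'$ to bound $\psi_\ell(z)/z$ near $0$, reaching a contradiction.

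Your argument bypasses all of this: the maximum modulus principle transfers a bound on the circle $\{|z|=r\}$ --- where $|\phi_k(z)|=|z\phi_k(z)|/r$ is controlled directly by the hypothesis --- to all of $\overline{B(0,r)}$. This is more elementary (no Montel, no contradiction, no derivative estimates) and arguably the ``right'' proof of the lemma. The paper's approach does have the minor conceptual advantage of making explicit why analyticity at $0$ is essential (the limit $\psi$ is analytic and vanishes at $0$, so $\psi(z)/z$ is bounded), which ties in with the remark immediately following the lemma about the failure for merely smooth functions; but your observation that $\phi_k(z)=1/z$ breaks the argument without analyticity at $0$ makes the same point just as clearly.
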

\begin{proof} Assume that $\{\phi_k\}_{k\in\bbN}$ is not locally bounded on $B(0,1)$. Then there exists a subsequence $\{\phi_{k_\ell }\}_{\ell \in\bbN}$ and a corresponding sequence of complex numbers $\{z_{k_\ell }\}_{\ell \in\bbN}$ with the property that 
$z_{k_\ell }\to 0$ and $|\phi_{k_\ell }(z_{k_\ell })|\to \infty$ as $\ell \to\infty$. Since 
    \[
        \{\psi_{\ell }\}_{\ell \in\bbN} \coloneqq \{z\mapsto z\phi_{k_\ell }(z)\}_{\ell \in\bbN} 
    \]
 is locally bounded on $B(0,1)$ there exists an accumulation point $\psi$ in the compact open topology of analytic functions $\mathcal{H}(B(0,1))$ on $B(0,1)$ by Montel's theorem. Without 
 loss of generality, one can assume that $\psi_\ell \to \psi$ in $\mathcal{H}(B(0,1))$ as $\ell \to\infty$. 
By construction, one has $ \psi_\ell (0)=0$ and for some $r>0$, 
\begin{align*}
   \left|\frac{1}{z}\psi_\ell  (z)-\psi'(0)\right| &\leq  \left|\frac{1}{z}\left(\psi_\ell  (z)-\psi_\ell (0)\right)-\psi_\ell '(0)\right|+|\psi_\ell '(0)-\psi'(0)|\\
                   &\leq  \sup_{z\in B(0,r)} |(\psi'_\ell  (z)-\psi'_\ell (0))| +|\psi_\ell '(0)-\psi'(0)|
\end{align*}
for all $z\in B(0,r)\backslash \{0\}$. Since $\psi'_\ell \to\psi'$ uniformly on compacts, it follows that 
\[
    \limsup_{\ell \to\infty} \sup_{z\in B(0,r)\backslash \{0\}} \left|\frac{1}{z}\psi_\ell  (z)\right|<\infty. 
\]
However, for $\ell $ sufficiently large, one concludes 
\[
   \sup_{z\in B(0,r)\backslash \{0\}} \left|\frac{1}{z}\psi_\ell  (z)\right|\geq  \left|\frac{1}{z_{k_\ell }}\psi_\ell  (z_{k_\ell })\right|=|\phi_{k_\ell }(z_{k_\ell })|\underset{\ell  \to \infty}{\longrightarrow} \infty, 
\]
a contradiction.
\end{proof}

\begin{remark}
It turns out that the analyticity hypothesis in Lemma \ref{lem:Laurent_series_argument} is crucial. Indeed, for every $n\in \mathbb{N}$, there exists a $C^\infty$-function $\psi_n \colon [0,1)\to [0,\infty)$ 
with the properties,   
\[
   \psi_n|_{(0,1/(2n))}=0,\quad 0\leq  \psi_n(x)\leq  \psi_n\left(\frac{1}{n}\right)=n,\quad \psi_n|_{(2/n,1)}=0. 
\]
 Then $\psi_n(0)=0$ and $0\leq  x\psi_n(x)\leq (2/n)n=2$. Considering 
 $\phi_n(x+iy)\coloneqq \psi_n(|x+iy|)$ for $x,y\in \mathbb{R}$, $x+iy\in B(0,1)$, $n\in\mathbb{N}$, one gets that $\phi_n$ is real differentiable and the assumptions of Lemma \ref{lem:Laurent_series_argument}, except for  analyticity, are all satisfied. In addition, $\phi_n(0)=0$, however, 
 $\phi_n\left(1/n\right)=n\to\infty$ as $n\to\infty$. Thus, $\{\phi_n\}_{n\in\bbN}$ is not locally bounded on $B(0,1)$.  \hfill $\diamond$
\end{remark}

The next aim of this section is to establish Theorem \ref{thm:local_boundedness of the trace}, that is, an important step for obtaining \eqref{e:normal}. The terms to be discussed in Theorem \ref{thm:local_boundedness of the trace} split up into a leading order term and the rest. The first term will be studied in Lemma \ref{lem:boundedness_of_leading_new_witten} and the second one in Lemma \ref{lem:boundedness_of_rest_new_witten}. The strategy of proof in these lemmas is the same. It rests on the following observation: Let $U\subseteq \mathbb{C}$ open,  $U\ni z\mapsto T(z) \in \cB \big(L^2(\mathbb{R}^n)\big)$. Assume that for all $z\in U$ we have $T(z)\in \mathcal{B}_1\big(L^2(\mathbb{R}^n)\big)$ and that $z\mapsto \tr (|T(z)|)$ is locally bounded. Then
\[
    \{z\mapsto \tr (\chi_\Lambda T(z))\}_{\Lambda>0}
\]
is locally bounded as well. Indeed, the assertion follows from the boundedness of the family $\{\chi_\Lambda\}_{\Lambda>0}$ as bounded linear (multiplication) operators in $\mathcal{B}(L^2(\mathbb{R}^n))$ and the ideal property of the trace class. In the situations to be considered in the following, the trace class property for $T(z)$ will be shown with the help of the results of Section \ref{sec:Trace-Class-Estimates}.

\begin{lemma}\label{lem:boundedness_of_leading_new_witten}
 Let $L= \cQ+\Phi$ be given by \eqref{eq:def_of_L(2)} and for $z\in\mathbb{C}$ with $\Re (z) > -1$ let $R_{1+z}$ be given
by \eqref{eq:resolvent_of_laplace} and $C$ as in \eqref{eq:commutator=00003DC}, $n\in \mathbb{N}_{>1}$ odd.
For $j\in\{1,\ldots,n\}$, let $\gamma_{j,n}\in\mathbb{C}^{2^{\hat n} \times 2^{\hat n}}$
as in Remark \ref{rem:Eucl-Dirac-Algebar} and $\chi_\Lambda$ as in \eqref{eq:def_of_chi}, $\Lambda>0$. For $z\in \mathbb{C}_{\Re > -1}$ consider
\[
   \psi_\Lambda (z) \coloneqq \chi_\Lambda \tr_{2^{\hat n}d} \big(\left[ \cQ,\Phi\left(CR_{1+z}\right)^{n}\right]\big)   
\]
and 
\[
\tilde\psi_\Lambda(z) \coloneqq \chi_\Lambda \tr_{2^{\hat n}d} \big(\left[ \cQ, \cQ\left(CR_{1+z}\right)^{n}\right]\big). 
\]
Then for all $z\in \mathbb{C}_{\Re>-1}$, the operators $\psi_{\Lambda}(z)$, $\tilde\psi_{\Lambda}(z)$ are trace class and the families 
\[
\{z\mapsto \tr_{L^2(\bbR^n)} (\psi_{\Lambda}(z))\}_{\Lambda>0} \, \text{ and } \, 
\big\{z\mapsto \tr_{L^2(\bbR^n)} \big(\tilde\psi_{\Lambda}(z)\big)\big\}_{\Lambda>0}    \]
 are locally bounded $($cf.\ \eqref{d:normal}$)$.
\end{lemma}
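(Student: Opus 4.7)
I would reduce each of $\psi_\Lambda(z)$ and $\tilde\psi_\Lambda(z)$ to sums of commutators with the scalar partial derivatives $\partial_j$ via Lemma \ref{lem:com_with_Q}, establish the trace-class property by factoring through two Hilbert--Schmidt operators, and then compute the trace as a surface integral by combining Remark \ref{rT} with the divergence theorem. Pointwise decay estimates from Lemma \ref{lem:asymptotics on diagonal} would then control the surface integral uniformly in $\Lambda$ and locally uniformly in $z$.

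More precisely, by Lemma \ref{lem:com_with_Q} one has
\[
\tr_{2^{\hat n}d}\bigl([\cQ,\Phi(CR_{1+z})^n]\bigr) = \sum_{j=1}^n \tr_{2^{\hat n}d}\bigl([\partial_j,\gamma_{j,n}\Phi(CR_{1+z})^n]\bigr),
\]
with an analogous identity for $\tilde\psi_\Lambda(z)$. Setting $T_j(z)\coloneqq \tr_{2^{\hat n}d}(\gamma_{j,n}\Phi(CR_{1+z})^n)$, Proposition \ref{prop:concrete case} together with Corollary \ref{cor:integral kernels regularity} would give that $T_j(z)$ has a $C^{n-1}$ integral kernel $t_j(\cdot,\cdot;z)$, since $\gamma_{j,n}\Phi(CR_{1+z})^n$ maps $H^\ell(\bbR^n)^{2^{\hat n}d}$ into $H^{\ell+2n}(\bbR^n)^{2^{\hat n}d}$ and $2n-n-1=n-1\geq 2$ for $n\geq 3$. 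For the trace-class property, one expands $\chi_\Lambda[\partial_j,T_j(z)]$ by Leibniz into a finite sum of operators of the shape
\[
\chi_\Lambda\tr_{2^{\hat n}d}\bigl(\gamma_{j,n}\Psi_1 R_{1+z}\Psi_2 R_{1+z}\cdots \Psi_n R_{1+z}\bigr),
\]
where each $\Psi_k$ is a bounded smooth matrix-valued multiplication, with at least one factor being $\partial_j C$ (decay exponent $1+\epsilon>3/2$) or $\partial_j\Phi$ (decay exponent $1$) and the remainder being $\Phi$ (no decay) or $C$ (decay exponent $1$). Splitting at position $\hat n+1$ and exploiting the compact support of $\chi_\Lambda$ (which acts as an arbitrarily fast-decaying factor), the left block would lie in $\cB_2(L^2(\bbR^n))$ by Theorem \ref{thm:Simon_Hilbert-Schmidt}\,$(i)$, while the right block would lie in $\cB_2(L^2(\bbR^n))$ by either Theorem \ref{thm:Simon_Hilbert-Schmidt}\,$(i)$ or Theorem \ref{thm:HS-criterion for m factors} depending on where the $\partial_j C$-factor is placed; Theorem \ref{thm:trace-class-crit} then delivers membership in $\cB_1$.

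For the local boundedness, by Remark \ref{r:5.8} the trace equals the integral of the diagonal kernel, and by Lemma \ref{lem:comm is sum of der} together with Remark \ref{rT} the diagonal of the kernel of $\sum_j[\partial_j,T_j(z)]$ would equal $\dive_x(g(x;z))$ with $g_j(x;z)=t_j(x,x;z)$. Gauss's divergence theorem on $B(0,\Lambda)$ then gives
\[
\tr\bigl(\psi_\Lambda(z)\bigr) = \int_{\Lambda S^{n-1}} g(x;z)\cdot\frac{x}{\Lambda}\,d^{n-1}\sigma(x).
\]
Lemma \ref{lem:asymptotics on diagonal} applied to $\gamma_{j,n}\Phi(CR_{1+z})^n$ with decay exponents $\alpha_\Phi=0$ and $n$ copies of $\alpha_C=1$ would yield $|g(x;z)|\leq \kappa'(z)(1+|x|/2)^{-n}$, where the continuous $\mu=1+z$-dependence of $\kappa'(z)$ over $\bbC_{\Re>0}$ makes $\kappa'$ locally bounded on compacts in $\bbC_{\Re>-1}\cap\rho(-L^*L)\cap\rho(-LL^*)$. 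Consequently,
\[
\bigl|\tr\bigl(\psi_\Lambda(z)\bigr)\bigr| \leq \omega_{n-1}\kappa'(z)\Lambda^{n-1}\Lambda^{-n} = \omega_{n-1}\kappa'(z)\Lambda^{-1},
\]
uniformly in $\Lambda>0$ and locally uniformly in $z$. The treatment of $\tilde\psi_\Lambda(z)$ would be parallel: one pairs the extra $\cQ$ with the adjacent $R_{1+z}$ to form $\cQ R_{1+z}$ (bounded, with integral kernel controlled by Lemma \ref{lem:real_partBessel_derivative}), so that the pointwise estimates of Lemma \ref{lem:asymptotics on diagonal} apply to the variant involving $QR_\mu$-factors.

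The principal obstacle will be the bookkeeping of the Leibniz expansion and the accompanying trace-class factorization: the single $\partial_j C$-factor carrying decay exponent greater than $3/2$ must be placed judiciously in one of the two Hilbert--Schmidt blocks so that Theorem \ref{thm:HS-criterion for m factors} becomes applicable there, and the compact support of $\chi_\Lambda$ has to be exploited in order to freely rebalance the two blocks across all Leibniz summands.
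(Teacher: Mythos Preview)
Your approach is correct, but it takes a genuinely different route from the paper for the local boundedness, and the trace-class portion is over-engineered in a way that creates the very obstacle you flag at the end.

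\textbf{Comparison.} The paper does not use the divergence theorem here at all. After expanding the commutator (essentially your Leibniz step), it observes that each resulting summand, \emph{without} the factor $\chi_\Lambda$, is already trace class with $\cB_1$-norm locally bounded in $z$: every term has the shape of $\hat n$ factors $\Psi R_{1+z}$ (one of which carries decay $>3/2$, namely $[\cQ,C]$ or the combined $(\partial_j\Phi)C$) times $\hat n+1$ factors $CR_{1+z}$, and the split is chosen so that the fast-decaying factor sits in the $\hat n$-block (Theorem~\ref{thm:HS-criterion for m factors}) while the $\hat n+1$-block is Hilbert--Schmidt by Theorem~\ref{thm:Simon_Hilbert-Schmidt}\,$(i)$. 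Since $\|\chi_\Lambda\|_{\cB(L^2)}\le 1$ uniformly in $\Lambda$, one immediately obtains $|\tr(\psi_\Lambda(z))|\le\|\psi_\Lambda(z)\|_{\cB_1}\le\|\text{(operator without }\chi_\Lambda)\|_{\cB_1}$, which is locally bounded in $z$ and independent of $\Lambda$. This is considerably shorter than your surface-integral computation, though your route does give sharper information (decay in $\Lambda$) and rehearses the Gauss-theorem argument used later in the proof of Theorem~\ref{thm:Witten_reg_n5}.

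\textbf{On your flagged obstacle.} Your fixed split at position $\hat n+1$ does not work when the $\partial_jC$-factor lands in the left block: the right block then has only $\hat n$ factors of decay exactly $1$, and neither Theorem~\ref{thm:Simon_Hilbert-Schmidt}\,$(i)$ (which needs $\hat n+1$ factors) nor Theorem~\ref{thm:HS-criterion for m factors} (which needs one factor of decay $>3/2$) applies to it. Invoking the compact support of $\chi_\Lambda$ does not help, since $\chi_\Lambda$ sits in the left block. The fix is simply to let the split position depend on where the fast-decaying factor sits---place it in the $\hat n$-block---and then $\chi_\Lambda$ becomes entirely unnecessary for the trace-class argument. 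This is exactly what the paper does (see the phrase ``the case $n-j\in\{1,\ldots,\hat n\}$ can be dealt with similarly'' in its proof), and it dissolves your bookkeeping obstacle.
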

\begin{proof}
First we deal with $\psi_\Lambda (z)$. One computes,  
\[
   \psi_\Lambda (z) = \chi_\Lambda \tr_{2^{\hat n}d} \big(\left[ \cQ,\Phi\left(CR_{1+z}\right)^{n}\right]\big) 
 =\chi_\Lambda \tr_{2^{\hat n}d} \big( \cQ\Phi\left(CR_{1+z}\right)^{n} 
 - \Phi\left(CR_{1+z}\right)^{n} \cQ \big).
\]
Before we discuss the latter operator, we note that
\begin{align*}
 \cQ\Phi\left(CR_{1+z}\right)^{n} &= \Phi \cQ \left(CR_{1+z}\right)^{n}+[ \cQ,\Phi]\left(CR_{1+z}\right)^{n} \\
  &= \Phi \bigg(\sum_{j=1}^n (CR_{1+z})^{j-1}[ \cQ,C]R_{1+z}(CR_{1+z})^{n-j}
  +(CR_{1+z})^n \cQ\bigg) \\ 
  & \quad +[ \cQ,\Phi]\left(CR_{1+z}\right)^{n},  
\end{align*}
where the latter equality follows via an induction argument. Hence, 
\begin{align} 
& \psi_\Lambda (z)=\chi_\Lambda \tr_{2^{\hat n}d} \bigg(\Phi \bigg(\sum_{j=1}^n (CR_{1+z})^{j-1}[ \cQ,C]R_{1+z}(CR_{1+z})^{n-j}\bigg)  \no  \\ 
& \hspace*{2.8cm} +[ \cQ,\Phi](CR_{1+z})^{n}\bigg).\label{e:psi_l}
\end{align} 
Next, with the results of Section \ref{sec:Trace-Class-Estimates}, we will 
deduce that the operator family 
\begin{equation}\label{eq:lem5.11} z\mapsto \bigg(\Phi \bigg(\sum_{j=1}^n 
(CR_{1+z})^{j-1}[ \cQ,C]R_{1+z}(CR_{1+z})^{n-j}\bigg) 
+ [ \cQ,\Phi]\left(CR_{1+z}\right)^{n}\bigg)
\end{equation}
is trace class, which -- together with the estimates in Lemma 
\ref{lem:Schatten-class-1-operator} --  establishes the assertion for 
$\psi_\Lambda$: Indeed, the only difference between \eqref{e:psi_l} and \eqref{eq:lem5.11} is the prefactor $\chi_\Lambda$. So we get the assertion with the help of the reasoning prior to Lemma \ref{lem:boundedness_of_leading_new_witten}. In order to observe that each summand in \eqref{eq:lem5.11} is 
trace class, we proceed as follows. Recall $n=2\hat n + 1$ and let $j\in \{1,
\ldots,\hat n\}$ (the case $n-j\in \{1,\ldots,\hat n\}$ can be dealt with 
similarly). Then, by the admissability of $\Phi$ (see Hypothesis 
\ref{def:phi_admissible}), one infers that $[ \cQ,C]$ is a multiplication 
operator with
\[
 |[ \cQ,C](x)|\leq  \kappa (1+|x|)^{-1-\epsilon}, \quad x\in 
\mathbb{R}^n.
\]
Hence, as $1+\epsilon> 3/2$ by Definition \ref{def:phi_admissible}, Theorem \ref{thm:HS-criterion for m factors} applies and guarentees that 
\[
 (CR_{1+z})^{j-1}[ \cQ,C]R_{1+z}(CR_{1+z})^{\hat n - j} 
\]
 is Hilbert--Schmidt. Using Theorem \ref{thm:Simon_Hilbert-Schmidt}, one deduces that 
 $(CR_{1+z})^{\hat n + 1}$ is also Hilbert--Schmidt and thus 
\[
 (CR_{1+z})^{j-1}[ \cQ,C]R_{1+z}(CR_{1+z})^{\hat n-j}(CR_{1+z})^{\hat n+1}
\]
is trace class, by Theorem \ref{thm:trace-class-crit}.

For $\tilde\psi_\Lambda$ one proceeds similarly. First one notes that
\begin{equation}\label{eq:Q_terms}
  \tilde\psi_\Lambda(z)=\chi_\Lambda \tr_{2^{\hat n}d} \bigg( \cQ \bigg(\sum_{j=1}^n (CR_{1+z})^{j-1}[ \cQ,C]R_{1+z}(CR_{1+z})^{n-j}\bigg) \bigg).
\end{equation}

Applying Theorems \ref{thm:Simon_Hilbert-Schmidt}, \ref{thm:HS-criterion for m factors}, and \ref{thm:trace-class-crit}, one infers the assertion for $\tilde\psi_\Lambda$. However, one has to use the respective assertions, where some of the resolvents of the Laplacian is replaced by $\cQ$ times the resolvents. Indeed, in the sum in \eqref{eq:Q_terms}, the term for $j=1$ yields 
\begin{align*}
& \cQ [ \cQ,C]R_{1+z}(CR_{1+z})^{n-1} = [ \cQ,[ \cQ,C]]R_{1+z}(CR_{1+z})^{n-1}    \\ 
& \hspace*{4.2cm} +[ \cQ,C] \cQ R_{1+z}(CR_{1+z})^{n-1}\\
&\quad = [\cQ,[ \cQ,C]]R_{1+z}(CR_{1+z})^{n-1}+[ \cQ,C]R_{1+z}[ \cQ,C]R_{1+z}(CR_{1+z})^{n-2}   \\
& \qquad+[ \cQ,C]R_{1+z}C \cQ R_{1+z}(CR_{1+z})^{n-2}, 
\end{align*}
and for $j'\in \{2,\ldots,n\}$ one obtains 
\begin{align*}
 & \cQ CR_{1+z}(CR_{1+z})^{j'-2}[ \cQ,C]R_{1+z}(CR_{1+z})^{n-j'}   \\
 &\quad =[ \cQ,C]R_{1+z}(CR_{1+z})^{j'-2}[ \cQ,C]R_{1+z}(CR_{1+z})^{n-j'}\\ 
 &\qquad +C \cQ R_{1+z}(CR_{1+z})^{j'-2}[ \cQ,C]R_{1+z}(CR_{1+z})^{n-j'}.\tag*{\qedhere}
\end{align*}
\end{proof}

The next lemma is the reason, why we have to invoke Lemma \ref{lem:Laurent_series_argument} in our argument. The crucial point is that we can use the Neumann series expressions for the resolvents $(L^*L+z)^{-1}$ and $(LL^*+z)^{-1}$ only for $z$ with large real part. But for $z$ in the vicinity of $0$, we do not have such a representation. Using again the ideal property for trace class operators, we can, however, bound $z (L^*L+z)^{-1}$ for small $z$ in the $\mathcal{B}(L^2(\mathbb{R}^n))$-norm. Introducing the sector $\Sigma_{z_0,\theta} \subset \bbC$ by 
\begin{equation}\label{def:Sigma0}
   \Sigma_{z_0,\theta}\coloneqq \{z\in \mathbb{C} \, | \, \Re (z) > z_0, |\arg(\mu)| < \theta\},
\end{equation}
for some $z_0\in \mathbb{R}$ and $\theta\in (0,\pi/2)$, the result reads as follows:

\begin{lemma}\label{lem:boundedness_of_rest_new_witten}
 Let $L= \cQ+\Phi$ be given by \eqref{eq:def_of_L(2)} and for $z\in\mathbb{C}$ with $\Re (z) > -1$, let $R_{1+z}$ be given
by \eqref{eq:resolvent_of_laplace} and $C$ as in \eqref{eq:commutator=00003DC}, $\chi_\Lambda$ as in \eqref{eq:def_of_chi}, $\Lambda>0$. For $j\in\{1,\ldots,n\}$, let 
$\gamma_{j,n}\in\mathbb{C}^{2^{\hat n} \times 2^{\hat n}}$
as in Remark \ref{rem:Eucl-Dirac-Algebar}. For $z\in \mathbb{C}_{\Re>-1}\cap\rho(-L^*L)\cap\rho(-LL^*)$ consider
\[
\eta_\Lambda (z) \coloneqq \chi_\Lambda \tr_{2^{\hat n}d} \big(\big[ \cQ, \Phi \big(\left(L^{*}L+z\right)^{-1}-\left(LL^{*}+z\right)^{-1}\big)\left(CR_{1+z}\right)^{n+1}\big]\big) 
\]
and
\[ 
\tilde\eta_\Lambda(z) \coloneqq \chi_\Lambda \tr_{2^{\hat n}d} \big(\big[ \cQ,\big(\cQ \big(\left(L^{*}L+z\right)^{-1}-\left(LL^{*}+z\right)^{-1}\big)\left(CR_{1+z}\right)^{n+1}\big)\big]\big).
\] 
Then for all $z\in \mathbb{C}_{\Re>-1}\cap\rho(-L^*L)\cap\rho(-LL^*)$, the operators $\eta_{\Lambda}(z)$, $\tilde\eta_{\Lambda}(z)$ are trace class. There exists $\delta\in (-1,0)$, $\theta\in (0,\pi/2)$ such that the families
\[
\{\Sigma_{\delta,\theta}\cup \mathbb{C}_{\Re>0}\ni z\mapsto z \tr_{L^2(\mathbb{R}^n)} (\eta_{\Lambda}(z))\}_{\Lambda>0} 
\]
and 
\[
\left\{\Sigma_{\delta,\theta}\cup \mathbb{C}_{\Re>0}\ni z\mapsto z \tr_{L^2(\mathbb{R}^n)} 
\big(\tilde\eta_{\Lambda}(z)\big)\right\}_{\Lambda>0}
\]
are locally bounded $($cf.\ \eqref{d:normal}$)$. 
\end{lemma}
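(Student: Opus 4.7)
The proof combines three ingredients: (i) the factor $(CR_{1+z})^{n+1}$ is trace class via the Hilbert--Schmidt factorization from Theorem \ref{thm:Simon_Hilbert-Schmidt} together with the H\"older inequality in Theorem \ref{thm:trace-class-crit}; (ii) the identity $\cQ = L - \Phi$ from Proposition \ref{prop:Computation of adjoint}, which rewrites commutators with $\cQ$ as $[\cQ, Y] = [L, Y] - [\Phi, Y]$; and (iii) the Fredholm property of $L$ from Theorem \ref{thm:Fredholm_property}, ensuring that $0$ is an isolated eigenvalue of $L^{*}L$ and $LL^{*}$. Consequently, there exist $\delta \in (-1,0)$ and $\theta \in (0,\pi/2)$ such that $\Sigma_{\delta,\theta} \setminus \{0\} \subseteq \rho(-L^{*}L) \cap \rho(-LL^{*})$, and on this region the spectral theorem provides uniform bounds on $\|z(L^{*}L+z)^{-1}\|$, $\|z(LL^{*}+z)^{-1}\|$, $\|L(L^{*}L+z)^{-1}\|$, and $\|L^{*}(LL^{*}+z)^{-1}\|$.

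\textbf{The operator $\eta_\Lambda$.} With $X(z) \coloneqq \Phi\bigl((L^{*}L+z)^{-1} - (LL^{*}+z)^{-1}\bigr)(CR_{1+z})^{n+1}$, the trace-class property of $\eta_\Lambda(z)$ will follow from $(CR_{1+z})^{n+1} \in \mathcal{B}_1\bigl(L^2(\mathbb{R}^n)\bigr)$, the ideal property, and the expansion
\[
[\cQ, X(z)] = LX(z) - X(z)L - [\Phi, X(z)].
\]
For the bound on $|z\,\tr(\chi_\Lambda \eta_\Lambda(z))|$, I would absorb the prefactor $z$ into $z\bigl((L^{*}L+z)^{-1} - (LL^{*}+z)^{-1}\bigr)$, which is uniformly bounded in operator norm on the sector by (iii); then use $L\Phi = \Phi L + C$ (with $C = [\cQ, \Phi]$ bounded) together with the intertwining relation $L(L^{*}L+z)^{-1} = (LL^{*}+z)^{-1}L$ to rewrite $zLX(z)$ as a sum of bounded operators multiplied by the trace-class factor $(CR_{1+z})^{n+1}$. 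An analogous manipulation, relying on $L^{*}(LL^{*}+z)^{-1} = (L^{*}L+z)^{-1}L^{*}$, handles $zX(z)L$, and the bounded commutator $[\Phi, X(z)]$ is directly estimated. The ideal property together with $\|\chi_\Lambda\|_{\mathcal{B}(L^2)} \leq 1$ then yields the local boundedness of $z \tr(\chi_\Lambda \eta_\Lambda(z))$ on $\Sigma_{\delta,\theta} \cup \mathbb{C}_{\Re>0}$.

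\textbf{The operator $\tilde\eta_\Lambda$ and the main obstacle.} The treatment of $\tilde\eta_\Lambda$ follows the same pattern but must additionally handle the factor $\cQ$ sitting inside the commutator, and this is the main obstacle. Using $\cQ = L - \Phi$ once more, one rewrites
\[
\cQ\bigl((L^{*}L+z)^{-1} - (LL^{*}+z)^{-1}\bigr) = L(L^{*}L+z)^{-1} - L(LL^{*}+z)^{-1} - \Phi A(z),
\]
where $A(z) = (L^{*}L+z)^{-1} - (LL^{*}+z)^{-1}$. The first and third terms are controlled as above; for the middle term one uses $\|L(LL^{*}+z)^{-1}\|^{2} = \|LL^{*}(LL^{*}+z)^{-2}\|$, which by spectral theory is uniformly bounded on the sector thanks to the Fredholm property. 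The factor $z$ is essential here --- without it, the trace norm of $\chi_\Lambda \tilde\eta_\Lambda(z)$ generally blows up as $z \to 0$, which is precisely the situation anticipated by Lemma \ref{lem:Laurent_series_argument}. Combining these estimates with the trace-class factor $(CR_{1+z})^{n+1}$ and $\|\chi_\Lambda\| \leq 1$ yields the required uniform bound on $z \tr(\chi_\Lambda \tilde\eta_\Lambda(z))$.
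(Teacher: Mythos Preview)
Your strategy via $\cQ = L - \Phi$ and intertwining is workable in spirit but contains a genuine error and is more circuitous than necessary. The formula you quote,
\[
\|L(LL^{*}+z)^{-1}\|^{2} = \|LL^{*}(LL^{*}+z)^{-2}\|,
\]
is false: the left side involves $\langle (LL^{*}+z)^{-1}\phi,\,L^{*}L(LL^{*}+z)^{-1}\phi\rangle$, not $LL^{*}$, and these do not agree since $L^{*}L\neq LL^{*}$. The correct spectral identity is $\|L^{*}(LL^{*}+z)^{-1}\|^{2}=\|LL^{*}(LL^{*}+z)^{-2}\|$. Consequently your control of the ``middle term'' $L(LL^{*}+z)^{-1}$ in the $\tilde\eta_\Lambda$ argument is not justified as written. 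Similarly, for $\eta_\Lambda$ your intertwining relation $L(L^{*}L+z)^{-1}=(LL^{*}+z)^{-1}L$ handles only one of the two resolvents in $A(z)$; the other term $L(LL^{*}+z)^{-1}$ has no such intertwining, and your sketch for $zX(z)L$ (invoking $L^{*}(LL^{*}+z)^{-1}=(L^{*}L+z)^{-1}L^{*}$) does not explain how this helps when $L$ sits to the \emph{right} of $(CR_{1+z})^{n+1}$.

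The paper avoids all of this by a single clean observation: since $\dom(L^{*}L)=\dom(LL^{*})=H^{2}(\mathbb{R}^{n})^{2^{\hat n}d}$ and the graph norms are equivalent, the maps $\Omega\ni z\mapsto z(L^{*}L+z)^{-1}$ and $z\mapsto z(LL^{*}+z)^{-1}$ are uniformly bounded from $L^{2}$ to $H^{2}$ on compacta (a two-line resolvent computation). Then, because $\cQ$ is first order, $\cQ\,zA(z)$ is uniformly bounded in $\cB(L^{2})$; for $\tilde\eta_\Lambda$ one simply notes $\cQ\cQ=\Delta$, so $\cQ\cQ\,zA(z)=\Delta\,zA(z)$ is likewise uniformly bounded. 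On the right of the commutator, no intertwining is needed either: one writes $(CR_{1+z})^{n+1}\cQ=(CR_{1+z})^{n}(C\cQ R_{1+z})$ and observes that $C\cQ R_{1+z}\in\cB_{n+1}$ (Lemma \ref{lem:Schatten-class-1-operator}\,$(i)$), so the product is trace class with locally bounded norm. This route never needs $\cQ=L-\Phi$ and bypasses the incorrect spectral formula entirely.
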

\begin{proof}
By the Fredholm property of $L$ there exist $\delta\in (-1,0)$ and $\theta\in (0,\pi/2)$ such that 
\[
 \Sigma_{\delta,\theta}\backslash\{0\}\ni   z\mapsto z\left(L^*L+z\right)^{-1}\text{ and }  \Sigma_{\delta,\theta}\backslash\{0\}\ni   z\mapsto z\left(LL^*+z\right)^{-1}
\]
have analytic extensions to   $\Sigma_{\delta,\theta}$. Let $\Omega\subset \Sigma_{\delta,\theta}\cup \mathbb{C}_{\Re>0}$ be compact. One notes that 
\[\Omega\ni z\mapsto z\left(L^*L+z\right)^{-1} \, \text{ and } \, \Omega \ni z\mapsto z\left(LL^*+z\right)^{-1}
\] 
define bounded families of bounded linear operators from $L^2(\mathbb{R}^n)^{2^{\hat n}d}$ to $H^2(\mathbb{R}^n)^{2^{\hat n}d}$. Indeed, by Proposition \ref{prop:compu_lstartl}, one infers that 
$\phi\mapsto \| (L^*L+1)\phi \|$ and $\phi\mapsto \| (LL^*+1)\phi \|$ define equivalent norms on $H^2(\mathbb{R}^n)^{2^{\hat n}d}$. Hence, for $\phi\in L^2(\mathbb{R}^n)^{2^{\hat n}d}$ and $z\in \Omega\backslash \{0\}$ one computes 
\begin{align*}
   \big\|(L^*L+1) z (L^*L+z)^{-1}\phi\big\|& = |z| \big\|(L^*L+z+1-z) 
   (L^*L+z)^{-1}\phi\big\| \\&
 \leq  |z|\|\phi\| +|z||(1-z)|\frac{1}{|z|}\|\phi\|. 
\end{align*}
Next, consider
\begin{align*}
 \eta_\Lambda (z) &= \chi_\Lambda \tr_{2^{\hat n}d} \big(\big[ \cQ, \Phi \big(\left(L^{*}L+z\right)^{-1}-\left(LL^{*}+z\right)^{-1}\big)\left(CR_{1+z}\right)^{n+1}\big]\big)     \\
&=\chi_\Lambda \tr_{2^{\hat n}d} \big( \cQ\Phi\big(\left(L^{*}L+z\right)^{-1}-\left(LL^{*}+z\right)^{-1}\big)\left(CR_{1+z}\right)^{n+1}   \\  
& \qquad\qquad\quad \;\; - \Phi\big(\left(L^{*}L+z\right)^{-1}-\left(LL^{*}+z\right)^{-1}\big)\left(CR_{1+z}\right)^{n+1} \cQ \big). 
\end{align*}
For the first summand one observes that
\begin{align*}
 & \cQ \Phi\big(\left(L^{*}L+z\right)^{-1}-\left(LL^{*}+z\right)^{-1}\big)\left(CR_{1+z}\right)^{n+1} \notag\\
 & \quad =C\big(\left(L^{*}L+z\right)^{-1}-\left(LL^{*}+z\right)^{-1}\big)\left(CR_{1+z}\right)^{n+1} \label{eq:first_summand}\\
 &\qquad +\Phi \cQ \big(\left(L^{*}L+z\right)^{-1}-\left(LL^{*}+z\right)^{-1}\big)\left(CR_{1+z}\right)^{n+1}.   \notag
\end{align*}
Employing our observation at the beginning of the proof and Theorem \ref{thm:L_is_closed}, one realizes that 
\[
   \Omega \ni z\mapsto \cQ z\big(\left(L^{*}L+z\right)^{-1}-\left(LL^{*}+z\right)^{-1}\big) 
\]
 defines a bounded family of bounded linear operators in $L^2(\mathbb{R}^n)^{2^{\hat n}d}$. Thus, since $\Omega\ni z\mapsto  \left(CR_{1+z}\right)^{n+1}$ is a family of trace class operators, 
\begin{align*}
 \Omega\ni z\mapsto&  z\chi_\Lambda \tr_{2^{\hat n}d} \big( \cQ \Phi \big(\left(L^{*}L+z\right)^{-1}-\left(LL^{*}+z\right)^{-1}\big)\left(CR_{1+z}\right)^{n+1}\big)\\
  & \quad = \tr_{2^{\hat n}d} \big(z\chi_\Lambda \big( \cQ \Phi\big(\left(L^{*}L+z\right)^{-1}-\left(LL^{*}+z\right)^{-1}\big)\left(CR_{1+z}\right)^{n+1}\big)\big)
\end{align*}
is uniformly bounded in $\mathcal{B}_1$, with bound independently of $\Lambda>0$, upon appealing to the ideal property of trace class operators.

The second summand requires the observation that
 \[
 \left(CR_{1+z}\right)^{n+1} \cQ =\left(CR_{1+z}\right)^{n}\left(CR_{1+z}\right) \cQ 
 =\left(CR_{1+z}\right)^{n}\left(C \cQ R_{1+z}\right)
 \]
defines a bounded family of trace class operators for $z\in \Omega$, proving the assertion 
for $\eta_\Lambda$. 

The corresponding assertion for $\tilde\eta_\Lambda$ is conceptually the same. In fact, it follows from  the observation that 
 \begin{align*} 
  \Omega \ni z\mapsto & \cQ \cQ z \big((L^{*}L+z)^{-1}-(LL^{*}+z)^{-1}\big)   \\
  & \quad =\Delta I_{2^{\hatt n}d} z\big((L^{*}L+z)^{-1}-(LL^{*}+z)^{-1}\big)
  \end{align*}
is a bounded family of bounded linear operators by our preliminary observation 
that $\Omega\ni z\to z(L^*L+z)^{-1}$ and $\Omega\ni z\to z 
(LL^*+z)^{-1}$ define uniformly bounded operator families from 
$L^2(\mathbb{R}^n)^{2^{\hat n}d}$ to $H^2(\mathbb{R}^n)^{2^{\hat n}d}$, 
as well as using again the fact that 
\[\Omega \ni z\mapsto  \left(CR_{1+z}\right)^{n+1} \cQ \, \text{ and } \, 
\Omega \ni z\mapsto \left(CR_{1+z}\right)^{n+1}\]
constitute bounded families of trace class operators.   
\end{proof}

Lemmas \ref{lem:boundedness_of_leading_new_witten} and \ref{lem:boundedness_of_rest_new_witten} can be summarized as follows.

\begin{theorem}\label{thm:local_boundedness of the trace} Let $n\in \mathbb{N}_{\geq 3}$ odd, 
let $L= \cQ+\Phi$ be given by \eqref{eq:def_of_L(2)} and for $z\in\mathbb{C}$ with $\Re (z) > -1$ let $R_{1+z}$ be given
by \eqref{eq:resolvent_of_laplace} and $C$ as in \eqref{eq:commutator=00003DC}.
For $j\in\{1,\ldots,n\}$, let $\gamma_{j,n}\in\mathbb{C}^{2^{\hat n} \times 2^{\hat n}}$
as in Remark \ref{rem:Eucl-Dirac-Algebar}. For $z\in \mathbb{C}_{\Re>-1}\cap \rho(-LL^*)\cap\rho(L^*L)$, introduce 
\[
   \phi_\Lambda (z) \coloneqq \chi_\Lambda \tr_{2^{\hat n}d} \big(\big[ \cQ, 
   \Phi\big((L^{*}L+z)^{-1}+(LL^{*}+z)^{-1}\big)(CR_{1+z})^{n}\big]\big) 
\]
 and 
\[
\tilde\phi_\Lambda(z)\coloneqq \chi_\Lambda \tr_{2^{\hat n}d}\big(\big[ \cQ, \big(\cQ \big((L^{*}L+z)^{-1}+(LL^{*}+z)^{-1}\big) (CR_{1+z})^{n}\big)\big]\big).
\]
Then for all $z\in \mathbb{C}_{\Re>-1}\cap \rho(-LL^*)\cap\rho(L^*L)$, the operators $\phi_{\Lambda}(z)$, $\tilde\phi_{\Lambda}(z)$ are trace class. There exists $\delta\in (-1,0)$, $\theta\in (0,\pi/2)$ such that the families
\[
\{\Sigma_{\delta,\theta}\cup \mathbb{C}_{\Re>0}\ni z\mapsto 
\tr_{L^2(\bbR^n)} (z\phi_{\Lambda}(z))\}_{\Lambda>0}  
\]
and 
\[
\big\{\Sigma_{\delta,\theta}\cup \mathbb{C}_{\Re>0}\ni z\mapsto 
\tr_{L^2(\bbR^n)} \big(z\tilde\phi_{\Lambda}(z)\big)\big\}_{\Lambda>0} 
\]
are locally bounded $($cf.\ \eqref{d:normal}$)$. 
\end{theorem}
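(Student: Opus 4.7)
The plan is to combine Lemmas \ref{lem:boundedness_of_leading_new_witten} and \ref{lem:boundedness_of_rest_new_witten} by means of a single application of the ``$+$'' resolvent expansion from Proposition \ref{prop:resolvent formulas}\,$(ii)$. Namely, taking $N=0$ in that result gives
\[
(L^{*}L+z)^{-1}+(LL^{*}+z)^{-1}=2R_{1+z}+\big((L^{*}L+z)^{-1}+(LL^{*}+z)^{-1}\big)(CR_{1+z})^{2}.
\]
Substituting this into the definition of $\phi_{\Lambda}(z)$ (and correspondingly for $\tilde\phi_{\Lambda}(z)$) splits it as the sum of a \emph{leading term}
\[
2\chi_{\Lambda}\tr_{2^{\hat n}d}\big(\big[\cQ,\Phi R_{1+z}(CR_{1+z})^{n}\big]\big)
\]
and a \emph{remainder term}
\[
\chi_{\Lambda}\tr_{2^{\hat n}d}\big(\big[\cQ,\Phi\big((L^{*}L+z)^{-1}+(LL^{*}+z)^{-1}\big)(CR_{1+z})^{n+2}\big]\big),
\]
with the obvious analogues (replacing the outer $\Phi$ by $\cQ$) in the decomposition of $\tilde\phi_{\Lambda}(z)$.

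For the leading term I would run the proof of Lemma \ref{lem:boundedness_of_leading_new_witten} verbatim: push $\cQ$ through the product by means of the identity $\cQA=A\cQ+[\cQ,A]$, induct over the $n$ factors of $CR_{1+z}$ to expose sums of terms of the type $\Phi \cdot (\text{stuff})\cdot [\cQ,C] R_{1+z} (\text{stuff})$, and then invoke Theorems \ref{thm:Simon_Hilbert-Schmidt}, \ref{thm:HS-criterion for m factors}, and \ref{thm:trace-class-crit} together with the admissibility bound $\|[\cQ,C](x)\|\le\kappa(1+|x|)^{-1-\epsilon}$ from Definition \ref{def:phi_admissible} to place each resulting operator into $\mathcal{B}_{1}$ with a norm bound that is \emph{locally bounded in $z\in\mathbb{C}_{\Re>-1}$}. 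The extra leftmost factor of $R_{1+z}$ (compared with Lemma \ref{lem:boundedness_of_leading_new_witten}) only improves the Schatten exponents and does not alter the argument; in particular, no multiplication by $z$ is needed for this piece. For the $\tilde\phi_{\Lambda}$-analogue one uses the rewritings of $\cQR_{1+z}$ as in the last display of the proof of Lemma \ref{lem:boundedness_of_leading_new_witten} (so that the additional $\cQ$ is absorbed either into $[\cQ,C]$ or into a bounded factor $C\cQR_{1+z}$).

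For the remainder term I would copy the proof of Lemma \ref{lem:boundedness_of_rest_new_witten}: by the Fredholm property of $L$ one chooses $\delta\in(-1,0)$ and $\theta\in(0,\pi/2)$ so that $z\mapsto z(L^{*}L+z)^{-1}$ and $z\mapsto z(LL^{*}+z)^{-1}$ extend analytically from $\Sigma_{\delta,\theta}\backslash\{0\}\cup\mathbb{C}_{\Re>0}$ to all of $\Sigma_{\delta,\theta}\cup\mathbb{C}_{\Re>0}$ as bounded families $L^{2}(\mathbb{R}^{n})^{2^{\hat n}d}\to H^{2}(\mathbb{R}^{n})^{2^{\hat n}d}$. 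Pushing $\cQ$ once through $\Phi$ (in the case of $\phi_{\Lambda}$) or twice (in the case of $\tilde\phi_{\Lambda}$, producing $\cQ^{2}=\Delta I_{2^{\hat n}d}$) then leaves a bounded family of $\mathcal{B}(L^{2})$-operators multiplying the trace class operator $(CR_{1+z})^{n+2}$, together with the ``boundary'' factor $(CR_{1+z})^{n+2}\cQ=(CR_{1+z})^{n+1}(C\cQR_{1+z})$, which is again trace class by Theorem \ref{thm:trace-class-crit}. Multiplying by $z$ and using the ideal property of $\mathcal{B}_{1}$ yields the asserted local boundedness on $\Sigma_{\delta,\theta}\cup\mathbb{C}_{\Re>0}$.

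The main obstacle, as in the proof of Lemma \ref{lem:boundedness_of_rest_new_witten}, is that the unregularized expression $\chi_{\Lambda}\tr(\phi_{\Lambda}(z))$ need not remain bounded as $z\to 0$, which is precisely why the factor $z$ is kept in front: only after multiplication by $z$ is the singular behavior of $(L^{*}L+z)^{-1}$ and $(LL^{*}+z)^{-1}$ at the eigenvalue $0$ tamed into a uniform $\mathcal{B}(L^{2}\to H^{2})$-bound. Getting the commutator bookkeeping right for the $\tilde\phi_{\Lambda}$-piece (where both an outer and an inner $\cQ$ need to be moved around and one must carefully produce the $\Delta I_{2^{\hat n}d}$ identity so that the resulting operator times $z$ lands in $\mathcal{B}(L^{2})$) is the only calculational subtlety; once this is done, local boundedness of both families $\{z\tr(\phi_{\Lambda}(\cdot))\}_{\Lambda}$ and $\{z\tr(\tilde\phi_{\Lambda}(\cdot))\}_{\Lambda}$ on $\Sigma_{\delta,\theta}\cup\mathbb{C}_{\Re>0}$ is immediate.
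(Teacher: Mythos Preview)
Your approach is correct and follows the same strategy as the paper: split $\phi_\Lambda$ (and $\tilde\phi_\Lambda$) into a leading piece handled by the method of Lemma~\ref{lem:boundedness_of_leading_new_witten} and a remainder handled by the method of Lemma~\ref{lem:boundedness_of_rest_new_witten}. The only difference is in which one-step resolvent identity you use for the split. The paper employs
\[
(L^{*}L+z)^{-1}+(LL^{*}+z)^{-1}
=2R_{1+z}+\big((L^{*}L+z)^{-1}-(LL^{*}+z)^{-1}\big)CR_{1+z},
\]
obtained from $(L^{*}L+z)^{-1}=R_{1+z}+(L^{*}L+z)^{-1}CR_{1+z}$ and its companion, so that the remainder is \emph{exactly} $\eta_\Lambda$ (resp.\ $\tilde\eta_\Lambda$) and the leading term is exactly $2\psi_\Lambda$ (resp.\ $2\tilde\psi_\Lambda$); the theorem is then a one-line corollary of the two lemmas as black boxes. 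You instead use the $N=0$ case of the ``$+$'' formula in Proposition~\ref{prop:resolvent formulas}\,$(ii)$, which produces a remainder involving the \emph{sum} of resolvents and an extra factor $(CR_{1+z})^{2}$. That is perfectly fine---the argument of Lemma~\ref{lem:boundedness_of_rest_new_witten} goes through unchanged with $(L^{*}L+z)^{-1}+(LL^{*}+z)^{-1}$ in place of the difference and $(CR_{1+z})^{n+2}$ in place of $(CR_{1+z})^{n+1}$---but it means you have to re-run the lemma's proof rather than simply cite it. The paper's choice of identity buys a cleaner reduction; yours buys nothing extra but costs nothing either.
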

\begin{proof}
 One recalls from equations \eqref{eq:Neumann_lstarl} and \eqref{eq:Neumann_llstar} the expressions
\begin{align*} 
   \left(L^*L+z\right)^{-1} &= I + \left(L^*L+z\right)^{-1}CR_{1+z},  \\
   \left(LL^*+z\right)^{-1} &= I - \left(LL^*+z\right)^{-1}CR_{1+z}. 
\end{align*} 
Hence, one gets 
\begin{align*}
   \phi_\Lambda (z) &= \chi_\Lambda \tr_{2^{\hat n}d} \big(2\left[ \cQ,\Phi\left(CR_{1+z}\right)^{n}\right]\big)\\ &\quad +
\chi_\Lambda \tr_{2^{\hat n}d} \big(\big[ \cQ,\Phi\big(\left(L^{*}L+z\right)^{-1}-\left(LL^{*}+z\right)^{-1}\big)\left(CR_{1+z}\right)^{n+1}\big]\big) \\ 
&= 2\psi_\Lambda(z)+\eta_\Lambda(z), 
\end{align*}
 and 
\begin{align*}
\tilde\phi_\Lambda(z)&=\chi_\Lambda \tr_{2^{\hat n}d} \big(2\left[ \cQ, \cQ \left(CR_{1+z}\right)^{n}\right]\big)\\
 & \quad +\chi_\Lambda \tr_{2^{\hat n}d} \big(\big[ \cQ, \cQ\big(\left(L^{*}L+z\right)^{-1}-\left(LL^{*}+z\right)^{-1}\big)\left(CR_{1+z}\right)^{n+1}\big]\big)  \\ 
 &=2\tilde\psi_\Lambda(z)+\tilde\eta_\Lambda(z), 
\end{align*}
with the functions introduced in Lemmas \ref{lem:boundedness_of_leading_new_witten} and \ref{lem:boundedness_of_rest_new_witten}. Thus, the assertion on the local boundedness follows from these two lemmas.
\end{proof}

The forthcoming statements are used for showing that for computing the trace the only term that matters is discussed in Proposition \ref{prop:formula for index}. We recall that by Remark \ref{r:5.8}, one can compute the trace of $\chi_\Lambda B_L(z)$ as the integral over the diagonal of its integral kernel. So the estimates on the diagonal derived in Section \ref{sec:ptw_intk} will be used in the following. We shall elaborate on this idea further after having stated the next two auxiliaury results. Both these results serve to show that some integral kernels actually vanish on the diagonal.

\begin{lemma}
\label{lem:better diagonal representation}Let $n\in\mathbb{N}_{\geq3}$
be odd, $z\in\mathbb{C}$ with $\Re (z) > -1$. Let $R_{1+z}$, $\cQ$,
$C$, and $\gamma_{j,n}\in\mathbb{C}^{2^{\hat n} \times 2^{\hat n}}$, $j\in\{1,\ldots,n\}$, 
be given by \eqref{eq:resolvent_of_laplace}, \eqref{eq:def_of_Q2},
\eqref{eq:commutator=00003DC} and as in Remark \ref{rem:Eucl-Dirac-Algebar},
respectively. Let $\Phi\colon\mathbb{R}^{n}\to\mathbb{C}^{d \times d}$
be admissible $($see Definition \ref{def:phi_admissible}$)$. 
Then for all $j\in\{1,\ldots,n\}$,  
\begin{equation} 
\tr_{2^{\hat n}d}\big(\gamma_{j,n} \cQ \left(R_{1+z}C\right)^{n-2}R_{1+z}\big) 
=-\tr_{2^{\hat n}d}\big(\gamma_{j,n} \cQ \left(R_{1+z}\Phi \cQ \right)^{n-2}R_{1+z}\big). 
\lb{7.8}
\end{equation} 
\end{lemma}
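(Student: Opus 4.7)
Since $R := R_{1+z}$ commutes with $\cQ$ and $C = [\cQ,\Phi]$, one has $RC = \cQ R\Phi - R\Phi\cQ$, and so
\[
(RC)^{n-2} \,=\, (\cQ R\Phi - R\Phi\cQ)^{n-2}
\,=\, \sum_{\varepsilon \in \{0,1\}^{n-2}} (-1)^{|\varepsilon|}\, V_1^{\varepsilon_1} V_2^{\varepsilon_2} \cdots V_{n-2}^{\varepsilon_{n-2}},
\]
where $V_i^{0} := \cQ R\Phi$, $V_i^{1} := R\Phi\cQ$, and $|\varepsilon| = \sum_i \varepsilon_i$. I will treat each summand separately inside $\tr_{2^{\hat n}d}(\gamma_{j,n}\,\cQ\,\cdot\,R)$. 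Reading $\cQ\,V_1^{\varepsilon_1}\cdots V_{n-2}^{\varepsilon_{n-2}}$ as a word in the letters $\{\cQ,\,R\Phi\}$, every such word contains exactly $n-1$ $\cQ$-letters (the leading one plus one per $V_i$); together with $\gamma_{j,n}$ in front, this accounts for $n$ factors carrying a $\gamma$-matrix index.

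The key observation is that whenever two $\cQ$-letters appear consecutively, I may invoke $\cQ^2 = \Delta = (1+z) I - R^{-1}$ (from \eqref{eq:q2=00003DDelta} combined with the definition of $R_{1+z}$) to collapse them, pairing the $R^{-1}$ with a neighbouring $R$ to stay within a product of bounded operators. Each such collapse removes two $\cQ$-letters and hence two $\gamma$-factors, while preserving the parity of the $\gamma$-count; starting from $n$ (odd), any nontrivial collapse leaves $n-2p$ with $p\ge 1$ gamma matrices, still odd and strictly less than $n$. By Proposition \ref{prop:comp_of_Dirac_Alge}, as already applied in the proof of Lemma \ref{l:can}, $\tr_{2^{\hat n}}(\gamma_{i_1,n}\cdots\gamma_{i_k,n})=0$ whenever $k$ is odd with $k<n$; this immediately forces every summand that undergoes any collapse to contribute $0$ to $\tr_{2^{\hat n}d}(\gamma_{j,n}\cQ\,(\cdot)\,R)$.

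A brief combinatorial inspection then pinpoints the unique $\varepsilon$ producing a word with no adjacent $\cQ$-letters: the leading $\cQ$ forces $\varepsilon_1=1$ (else $\cQ\cdot V_1^{0}=\cQ\cdot\cQ R\Phi$ is adjacent), and inductively $\varepsilon_i=1$ forces $\varepsilon_{i+1}=1$ (else $V_i^{1}\cdot V_{i+1}^{0}=R\Phi\cQ\cdot\cQ R\Phi$ is adjacent). Thus $\varepsilon=(1,\ldots,1)$ is the sole survivor, contributing $(-1)^{n-2}\cQ(R\Phi\cQ)^{n-2}R = -\cQ(R\Phi\cQ)^{n-2}R$ since $n$ (and hence $n-2$) is odd; this yields
\[
\tr_{2^{\hat n}d}\bigl(\gamma_{j,n}\cQ(R_{1+z}C)^{n-2} R_{1+z}\bigr)
\,=\, -\,\tr_{2^{\hat n}d}\bigl(\gamma_{j,n}\cQ(R_{1+z}\Phi\cQ)^{n-2} R_{1+z}\bigr),
\]
as claimed. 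The main obstacle will be bookkeeping rather than conceptual: for each $\varepsilon\neq(1,\ldots,1)$, one has to iterate the collapse $\cQ^2\mapsto (1+z)I-R^{-1}$ until no adjacent $\cQ$-pair remains, at each stage contracting $R^{-1}$ against a neighbouring $R$ so that everything remains a bona fide product of bounded operators, and then verifying that each resulting term indeed falls within the vanishing regime guaranteed by Proposition \ref{prop:comp_of_Dirac_Alge}.
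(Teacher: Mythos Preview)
Your argument is correct and rests on the same Dirac-algebra vanishing principle as the paper's proof, but the organization is different. The paper proceeds by induction on $\ell\in\{0,\ldots,n-2\}$, establishing
\[
\tr_{2^{\hat n}d}\bigl(\gamma_{j,n}\cQ(R_{1+z}C)^{n-2}R_{1+z}\bigr)
=(-1)^{\ell}\,\tr_{2^{\hat n}d}\bigl(\gamma_{j,n}\cQ(R_{1+z}\Phi\cQ)^{\ell}(R_{1+z}C)^{n-2-\ell}R_{1+z}\bigr)
\]
and passing from $\ell$ to $\ell+1$ by writing $R_{1+z}C=R_{1+z}\cQ\Phi-R_{1+z}\Phi\cQ$; the $\cQ\Phi$-branch produces two consecutive $\cQ$'s, and the paper kills it via Corollary \ref{cor:comp_Dirac_alg_trace-1}. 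Setting $\ell=n-2$ yields the result. You instead expand all $n-2$ factors at once into $2^{n-2}$ monomials and pick out the unique survivor $\varepsilon=(1,\ldots,1)$. Both routes invoke exactly the same vanishing mechanism (an odd product of fewer than $n$ gamma matrices has zero $\tr_{2^{\hat n}}$); the inductive version avoids the combinatorial bookkeeping but is otherwise equivalent.

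Two remarks on your write-up. First, the iteration in your final paragraph is unnecessary: a \emph{single} collapse $\cQ\cQ R\to(1+z)R-I$ already drops the $\cQ$-count to $n-3$, so after expanding the remaining $\cQ$'s (as in Lemma \ref{l:can}) one is left with $n-2<n$ gamma factors and Proposition \ref{prop:comp_of_Dirac_Alge} applies directly, regardless of any other adjacencies still present. Second, one can bypass the collapse entirely: when $\cQ\cQ$ occurs, the corresponding double index sum $\sum_{\ell,\ell'}\gamma_{\ell}\gamma_{\ell'}\partial_{\ell}\partial_{\ell'}$ is symmetric in $(\ell,\ell')$, so Corollary \ref{cor:comp_Dirac_alg_trace-1} applies immediately without ever invoking $\cQ^{2}=\Delta$. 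This is precisely how the paper handles each inductive step.
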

\begin{proof}
One has 
\begin{align*}
 \tr_{2^{\hat n}d}\left(\gamma_{j,n} \cQ R_{1+z}CR_{1+z}\right) 
 & =\tr_{2^{\hat n}d}\left(\gamma_{j,n} \cQ R_{1+z}\left( \cQ \Phi-\Phi \cQ \right)R_{1+z}\right),\\
 & =\tr_{2^{\hat n}d}\left(\gamma_{j,n}R_{1+z} \cQ \cQ\Phi R_{1+z}\right)  \\
 & \quad - \tr_{2^{\hat n}d}\left(\gamma_{j,n} \cQ R_{1+z}\Phi \cQ R_{1+z}\right)\\
 & = - \tr_{2^{\hat n}d}\left(\gamma_{j,n} \cQ R_{1+z}\Phi \cQ R_{1+z}\right),
\end{align*}
using Proposition \ref{prop:comp_of_Dirac_Alge} to deduce
that $\tr_{2^{\hat n}d}\left(\gamma_{j,n}R_{1+z} \cQ \cQ\Phi R_{1+z}\right)=0$.
In order to proceed to the proof of \eqref{7.8}, we now show
the following: For all odd $k\in\{3,\ldots,n\}$ and $\ell\in\{0,\ldots,k-2\}$
one has 
\begin{align}
\begin{split} 
& \tr_{2^{\hat n}d}\big(\gamma_{j,n} \cQ \left(R_{1+z}C\right)^{k-2}R_{1+z}\big)    \\ 
& \quad =\left(-1\right)^{\ell}\tr_{2^{\hat n}d}\big(\gamma_{j,n} \cQ \left(R_{1+z}\Phi \cQ \right)^{\ell}
\left(R_{1+z}C\right)^{k-2-\ell}R_{1+z}\big).      \label{eq:better diagonal representation}
\end{split} 
\end{align}
In the beginning of the proof we have dealt with the case
$k=3$. One notes that equation \eqref{eq:better diagonal representation}
always holds for $\ell=0$. Next, we assume that $k\in\{5,\ldots,n\}$
is odd, such that equality \eqref{eq:better diagonal representation}
holds for some $\ell\in\{0,\ldots,k-3\}$. Then one computes
\begin{align*}
 & \tr_{2^{\hat n}d}\big(\gamma_{j,n} \cQ \left(R_{1+z}C\right)^{k-2}R_{1+z}\big)\\
 & \quad =(-1)^{\ell}\tr_{2^{\hat n}d} \big(\gamma_{j,n} \cQ \left(R_{1+z}\Phi 
 \cQ \right)^{\ell}\left(R_{1+z}C\right)^{k-2-\ell}R_{1+z}\big)\\
 & \quad =\left(-1\right)^{\ell}\tr_{2^{\hat n}d}\big(\gamma_{j,n} 
 \cQ \left(R_{1+z}\Phi \cQ \right)^{\ell}R_{1+z}C\left(R_{1+z}C\right)^{k-2-\ell-1}R_{1+z}\big)\\
 & \quad =\left(-1\right)^{\ell}\tr_{2^{\hat n}d}\big(\gamma_{j,n} \cQ 
 \left(R_{1+z}\Phi \cQ \right)^{\ell}R_{1+z}\left( \cQ \Phi-\Phi \cQ \right)
 \left(R_{1+z}C\right)^{k-2-\left(\ell+1\right)}R_{1+z}\big)\\
 & \quad =\left(-1\right)^{\ell}\tr_{2^{\hat n}d}\big(\gamma_{j,n} \cQ \left(R_{1+z} 
 \Phi \cQ \right)^{\ell} \cQ R_{1+z}\Phi\left(R_{1+z}C\right)^{k-2-\left(\ell+1\right)}R_{1+z}\big)\\
 & \qquad+\left(-1\right)^{\ell+1}\tr_{2^{\hat n}d}\big(\gamma_{j,n} \cQ 
 \left(R_{1+z}\Phi \cQ \right)^{\ell+1}\left(R_{1+z}C\right)^{k-2-\left(\ell+1\right)}R_{1+z}\big). 
\end{align*}
By Corollary \ref{cor:comp_Dirac_alg_trace-1}, the first term
on the right-hand side cancels, proving equation (\ref{eq:better diagonal representation}).
Putting $\ell=k-2$ in (\ref{eq:better diagonal representation}) implies the assertion.
\end{proof}

The following result is needed for Lemma \ref{lem:some properties of the first two terms}, however, 
it is also of independent interest. Indeed, we will have occasion to use it rather frequently, when we discuss the case of three spatial dimensions specifically. 
Lemma \ref{lem:pointwise convergence of regularized Green} should be regarded as a regularization method, while preserving self-adjointness properties of the ($L^2$-realization) of the underlying operators:

\begin{lemma}
\label{lem:pointwise convergence of regularized Green} Let $\epsilon>0$,
$n\in\mathbb{N}$, and 
$T\in \cB\big(H^{-(n/2)-\epsilon}(\mathbb{R}^{n}),H^{(n/2)+\epsilon}(\mathbb{R}^{n})\big)$. 
Recalling equation \eqref{eq:def_dirac_distr}, we consider 
\[
t\colon\mathbb{R}^{n}\times\mathbb{R}^{n}\ni(x,y)\mapsto\left\langle \delta_{\{x\}},T\delta_{\{y\}}\right\rangle .
\]
For $\mu>0$ we denote $T_{\mu}\coloneqq\left(1-\mu\Delta\right)^{-1}T\left(1-\mu\Delta\right)^{-1}$
and $t_{\mu}$ correspondingly. Then, for all $(x,y)\in\mathbb{R}^{n}\times\mathbb{R}^{n}$, 
\[
t_{\mu}(x,y) \underset{\mu \downarrow 0}{\longrightarrow} t(x,y).
\]
\end{lemma}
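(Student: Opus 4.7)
The plan is to transfer the regularization from $T$ onto the Dirac distributions and then use continuity of the pairing, reducing the whole question to showing that $(1-\mu\Delta)^{-1}\delta_{\{z\}}\to \delta_{\{z\}}$ in $H^{-(n/2)-\epsilon}(\mathbb{R}^n)$ as $\mu\downarrow 0$ for fixed $z\in\mathbb{R}^n$.

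First, since $(1-\mu\Delta)^{-1}$ is, via Fourier transform, multiplication by the real-valued function $\xi\mapsto (1+\mu|\xi|^2)^{-1}$, it restricts/extends to a self-adjoint operator on each space of the Sobolev scale (with respect to the extended $L^2$-pairing $\langle\cdot,\cdot\rangle_{L^2(\mathbb{R}^n)}$). Hence, for every $x,y\in\mathbb{R}^n$,
\[
t_\mu(x,y) = \bigl\langle \delta_{\{x\}},\, (1-\mu\Delta)^{-1}T(1-\mu\Delta)^{-1}\delta_{\{y\}}\bigr\rangle
= \bigl\langle (1-\mu\Delta)^{-1}\delta_{\{x\}},\, T\,(1-\mu\Delta)^{-1}\delta_{\{y\}}\bigr\rangle,
\]
where we used Proposition \ref{prop:concrete case} (with $\mu\leftrightarrow \mu^{-1}$) to see that $(1-\mu\Delta)^{-1}$ acts boundedly on $H^{-(n/2)-\epsilon}(\mathbb{R}^n)$.

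Next, for fixed $z\in\mathbb{R}^n$, I compute $\bigl\|(1-\mu\Delta)^{-1}\delta_{\{z\}}-\delta_{\{z\}}\bigr\|_{H^{-(n/2)-\epsilon}(\mathbb{R}^n)}^2$ on the Fourier side, using $(\mathcal{F}\delta_{\{z\}})(\xi) = (2\pi)^{-n/2}e^{-iz\cdot\xi}$; the squared norm equals
\[
(2\pi)^{-n}\int_{\mathbb{R}^n} \left|\frac{1}{1+\mu|\xi|^2}-1\right|^2 (1+|\xi|^2)^{-(n/2)-\epsilon}\, d^n\xi.
\]
The integrand is bounded by the $\mu$-independent, integrable majorant $(1+|\xi|^2)^{-(n/2)-\epsilon}$ (integrable since $n/2+\epsilon>n/2$) and tends to $0$ pointwise as $\mu\downarrow 0$. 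Lebesgue's dominated convergence theorem then gives $(1-\mu\Delta)^{-1}\delta_{\{z\}}\to\delta_{\{z\}}$ in $H^{-(n/2)-\epsilon}(\mathbb{R}^n)$; in particular, the family $\{(1-\mu\Delta)^{-1}\delta_{\{z\}}\}_{\mu>0}$ is bounded in $H^{-(n/2)-\epsilon}(\mathbb{R}^n)$.

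Finally, writing
\begin{align*}
t_\mu(x,y)-t(x,y) &= \bigl\langle (1-\mu\Delta)^{-1}\delta_{\{x\}}-\delta_{\{x\}},\, T(1-\mu\Delta)^{-1}\delta_{\{y\}}\bigr\rangle \\
&\quad + \bigl\langle \delta_{\{x\}},\, T\bigl((1-\mu\Delta)^{-1}\delta_{\{y\}}-\delta_{\{y\}}\bigr)\bigr\rangle
\end{align*}
and estimating each term by the natural duality bound together with the operator bound on $T$ and the uniform boundedness just established, both summands are controlled by $\|T\|$ times a norm that tends to zero, yielding $t_\mu(x,y)\to t(x,y)$. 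The only step requiring care is the dominated convergence argument in the Fourier representation, which is the source of the hypothesis $\epsilon>0$; everything else is a routine application of continuity of the sesquilinear pairing on the Sobolev scale.
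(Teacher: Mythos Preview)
Your proof is correct and follows the same approach as the paper, which condenses the argument to the single observation that $(1-\mu\Delta)^{-1}\to I$ strongly in $H^{s}(\mathbb{R}^{n})$ for every $s\in\mathbb{R}$; your dominated convergence computation on the Fourier side is exactly what underlies this strong convergence, specialized to the vectors $\delta_{\{z\}}$. The extra step of shifting one factor of $(1-\mu\Delta)^{-1}$ onto $\delta_{\{x\}}$ via self-adjointness is harmless but not needed: one can equally well use the strong convergence of $(1-\mu\Delta)^{-1}$ in $H^{(n/2)+\epsilon}(\mathbb{R}^{n})$ on the right and then pair with $\delta_{\{x\}}$.
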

\begin{proof}
It suffices to observe that for all $s\in\mathbb{R}$, 
$\left(1-\mu\Delta\right)^{-1}  \underset{\mu \downarrow 0}{\longrightarrow}I$ strongly
in $H^{s}(\mathbb{R}^{n})$ (see \eqref{eq:def_Hs}).   
\end{proof}

In order to proceed to prove the trace theorem, we need to investigate
the asymptotic behavior of the integral kernel of $J_{L}^{j}(z)$
given by \eqref{eq:JJJ} on the diagonal. By 
Proposition \ref{prop:Reformulation of B} together with Lemma \ref{lem:comm is sum of der}, 
we can use Gauss' divergence theorem for computing the integral over the diagonal (see also \eqref{e:div_theo}). Thus, in the expression for the trace of $\chi_\Lambda B_L(z)$ we will use Gauss' theorem for the ball centered at $0$ with radius $\Lambda$. Having applied the divergence theorem, we integrate over spheres of radius $\Lambda$. The volume element of the surface measure grows with $\Lambda^{n-1}$, so any term decaying faster than that will not contribute to the limit $\Lambda \to \infty$ in \eqref{f(z)}. Consequently, any estimate of integral kernels (or differences of such) to follow with the behavior of $|x|^{n-1+\gamma}$ for some $\gamma>0$ on the diagonal, can be neglected in the limit $\Lambda\to \infty$, when computing the expression $\lim_{\Lambda\to \infty}\tr(\chi_\Lambda B_L(z))$.

\begin{lemma}
\label{lem:some properties of the first two terms}Let $n\in\mathbb{N}$
odd, $j\in\{1,\ldots,n\}$, $z\in\mathbb{C}$, $\Re (z) > -1$ and
$R_{1+z}$ be given by \eqref{eq:resolvent_of_laplace} as well as
$\cQ$, $C$ and $\gamma_{j,n}$ given by \eqref{eq:def_of_Q2}, \eqref{eq:commutator=00003DC}
and as in Remark \ref{rem:Eucl-Dirac-Algebar}. 
Then for $n\geq3$, the integral kernel $h_{2,j}$ of 
\[
2\tr_{2^{\hat n}d}\big(\gamma_{j,n}\Phi \left(R_{1+z}C\right)^{n-1}R_{1+z} \big)
\]
satisfies, 
\[
h_{2,j}(x,x)=h_{3,j}(x,x)+g_{0,j}(x,x),
\]
where $h_{3,j}$ is the integral kernel of 
$2\tr_{2^{\hat n}d}\left(\gamma_{j,n}\Phi C^{n-1}R_{1+z}^{n}\right)$
and $g_{0,j}$ satisfies for some $\kappa>0$, 
\[
\left|g_{0,j}(x,x)\right|\leq \kappa (1+|x|)^{1-n-\epsilon},
\quad x\in\mathbb{R}^{n},
\]
where $\epsilon>1/2$ is given as in Definition \ref{def:phi_admissible}. 
In addition, if $n\geq5$ and $z\in\mathbb{R}$, then the integral kernel $h_{1,j}$ of 
\[
\tr_{2^{\hat n}d}\big(\gamma_{j,n} \cQ \left(R_{1+z}C\right)^{n-2}R_{1+z}\big)
\]
vanishes on the diagonal. 
\end{lemma}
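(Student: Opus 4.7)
\medskip\noindent\textbf{Proof plan.}
For the decomposition $h_{2,j}=h_{3,j}+g_{0,j}$ and the bound on $g_{0,j}$, the approach is a direct commutator expansion. The operator $\Phi(R_{1+z}C)^{n-1}R_{1+z}$ differs from $\Phi C^{n-1}R_{1+z}^{n}$ only by how the $R_{1+z}$'s are interlaced with the $C$'s, so I would successively commute each $C$ through the adjacent $R_{1+z}$ standing to its left. By Lemma~\ref{lem:commutator} each such swap costs $[R_{1+z},C]=R_{1+z}(Q^{2}C)R_{1+z}+2R_{1+z}(QC)QR_{1+z}$, and by Definition~\ref{def:phi_admissible}\,$(iii)$ applied to $C=(\mathcal{Q}\Phi)$ both $(Q^{2}C)$ and $(QC)$ are bounded by $\kappa(1+|x|)^{-1-\epsilon}$. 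The diagonal of every resulting summand is then controlled by Lemma~\ref{lem:asymptotics in the diagonal with commutator} (or iterated use of Lemma~\ref{lem:asymptotics on the diagonal and products of psis} through Remark~\ref{rem:green's kernels of the first and the last}), where each $C$ contributes $\alpha_{j}=1$ and $\Phi$ contributes $\alpha=0$, while the inserted commutator supplies the extra decay $\epsilon$. Summing the finitely many commutator remainders then yields $|g_{0,j}(x,x)|\le\kappa'(1+|x|)^{-(n-1)-\epsilon}=\kappa'(1+|x|)^{1-n-\epsilon}$.

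For the vanishing of $h_{1,j}(x,x)$ when $n\ge 5$ and $z\in\mathbb{R}$, I would first use Lemma~\ref{lem:j,A,Green} to guarantee that the kernel is continuously differentiable on the diagonal. I would then write the kernel of $(R_{1+z}C)^{n-2}R_{1+z}$ explicitly as an $(n-2)$-fold convolution of Helmholtz resolvent kernels $\mathcal{E}_{n}$ interleaved with $C$'s, and change variables $u_{i}=x+v_{i}$. The operator $\mathcal{Q}_{x}$ then acts only on the first Green's function $\mathcal{E}_{n}(|x-u_{1}|)$, producing $-\partial_{\ell}^{(v_{1})}\mathcal{E}_{n}(|v_{1}|)$. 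Taking $\tr_{2^{\hat n}d}(\gamma_{j,n}\cdot)$ I end up with exactly $n$ gamma matrices (one from $\gamma_{j,n}$, one from $\mathcal{Q}$, and $n-2$ from the $C$'s), so Proposition~\ref{prop:comp_of_Dirac_Alge} produces the Levi-Civita symbol $\epsilon_{j\ell k_{1}\cdots k_{n-2}}$ times a scalar integral involving $\tr_{d}[\prod_{i}(\partial_{k_{i}}\Phi)(x+v_{i})]$.

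The heart of the argument is an iterated integration by parts in $v_{1},v_{2},\ldots,v_{n-2}$. At each step the radial derivative can land on a $C(x+v_{i})$ factor, producing $(\partial_{\ell}\partial_{k_{i}}\Phi)(x+v_{i})$, which is symmetric in $\ell\leftrightarrow k_{i}$ and therefore annihilates the antisymmetric Levi-Civita factor; the only surviving term is the one where the derivative propagates to the next Green's function via $\partial_{\ell}^{(v_{i})}\mathcal{E}_{n}(|v_{i}-v_{i+1}|)=-\partial_{\ell}^{(v_{i+1})}\mathcal{E}_{n}(|v_{i}-v_{i+1}|)$. After $n-2$ such steps the derivative has migrated to the final Green's function $\mathcal{E}_{n}(|v_{n-2}|)$, and the relabeling $v_{i}\leftrightarrow v_{n-1-i}$ together with $\mathcal{E}_{n}(|v_{i}-v_{i+1}|)=\mathcal{E}_{n}(|v_{i+1}-v_{i}|)$ brings the expression back to the same shape as the starting one, but with the product inside $\tr_{d}$ reversed. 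Combining this mirror identity with Lemma~\ref{lem:better diagonal representation} applied at $\ell=n-2$, which gives the sign $(-1)^{n-2}=-1$ since $n-2$ is odd, one obtains two representations of $h_{1,j}(x,x)$ differing by a sign, whence $h_{1,j}(x,x)=-h_{1,j}(x,x)=0$.

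The main obstacle will be the careful bookkeeping in the iterated integration by parts: keeping track of the $(-1)$ from each IBP and from each Green's function swap, and verifying that the resulting mirror term really cancels the contribution coming out of Lemma~\ref{lem:better diagonal representation}. The hypothesis $n\ge 5$ enters twice, first to guarantee continuous differentiability of the integral kernel on the diagonal via Lemma~\ref{lem:j,A,Green}, and second to ensure enough intermediate integration variables are available for the telescoping IBP to reach the symmetric partner at the other end of the chain.
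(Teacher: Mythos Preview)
Your treatment of $h_{2,j}$ is correct and matches the paper exactly: the paper simply cites Remark~\ref{rem:green's kernels of the first and the last} (the iterated version of Lemma~\ref{lem:asymptotics on the diagonal and products of psis}) together with the admissibility bounds on $\Phi$, which is precisely the commutator expansion you describe.

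For $h_{1,j}$ your strategy diverges from the paper's, and the final step does not close. The paper does not migrate the derivative by repeated integration by parts. Instead it first applies Lemma~\ref{lem:better diagonal representation} once, to rewrite
\[
\tr_{2^{\hat n}d}\big(\gamma_{j,n}\cQ(R_{1+z}C)^{n-2}R_{1+z}\big)
=-\tr_{2^{\hat n}d}\big(\gamma_{j,n}\cQ R_{1+z}\,B_n\,\cQ R_{1+z}\big),
\]
where $B_n=(\Phi\cQ R_{1+z})^{n-3}\Phi$ is \emph{self-adjoint} for real $z>-1$ (being an odd-length alternating product of the self-adjoint $\Phi$ and the skew-adjoint $\cQ R_{1+z}$). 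After a mild regularization $B_{n,\mu}=(1-\mu\Delta)^{-1}B_n(1-\mu\Delta)^{-1}$ the kernel $b_{n,\mu}$ is symmetric, and one sees directly that
\[
\Psi_{x,\mu}(i_2,i_3):=\int (\partial_{i_2}r_{1+z})(x_1-x)\,b_{n,\mu}(x_1,x_2)\,(\partial_{i_3}r_{1+z})(x_2-x)\,d^nx_1\,d^nx_2
\]
is symmetric in $(i_2,i_3)$; Corollary~\ref{cor:comp_Dirac_alg_trace-1} then kills $\tr_{2^{\hat n}d}\big(\sum_{i_2,i_3}\gamma_{j,n}\gamma_{i_2,n}\gamma_{i_3,n}\Psi_{x,\mu}(i_2,i_3)\big)$, and Lemma~\ref{lem:pointwise convergence of regularized Green} passes to $\mu\downarrow 0$.

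Your iterated IBP does correctly propagate the derivative and your observation that the ``boundary'' terms $(\partial_\ell\partial_{k_i}\Phi)$ die against $\epsilon$ is fine. But after the mirror relabelling you end up with the product inside $\tr_d$ reversed, and $\tr_d[A_1\cdots A_m]$ is not equal to $\tr_d[A_m\cdots A_1]$ in general; for self-adjoint $A_i$ one only gets the complex conjugate. At that point Lemma~\ref{lem:better diagonal representation} does not help: it relates $(R_{1+z}C)^{n-2}$ to $(R_{1+z}\Phi\cQ)^{n-2}$ at the operator level, not your reversed-$\tr_d$ expression to the original one. So the claimed combination ``mirror identity $+$ Lemma~\ref{lem:better diagonal representation} $\Rightarrow h_{1,j}=-h_{1,j}$'' is the gap. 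The paper's route avoids all of this sign bookkeeping by isolating the self-adjoint middle block $B_n$ and reducing the question to a single two-index symmetry handled by Corollary~\ref{cor:comp_Dirac_alg_trace-1}.
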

\begin{proof}
We discuss $h_{1,j}$ first and consider the operator
\[
B_{n}\coloneqq\left(\Phi \cQ R_{1+z}\right)^{n-3}\Phi=\Phi\left( \cQ R_{1+z}\Phi\right)^{n-3},
\]
which is self-adjoint for all real $z>-1$. Indeed, this follows from
the self-adjointness of $\Phi$ and the skew-self-adjointness of $\cQ R_{1+z}$.
For $\mu>0$ define $B_{n,\mu}\coloneqq (1-\mu\Delta)^{-1}B_n(1-\mu\Delta)^{-1}$. Then the integral kernel $b_{n,\mu}$ of $B_{n,\mu}$ is continuous. Moreover, for all real $z>-1$, the opertor $B_{n,\mu}$ is self-adjoint, by the self-adjointness of $B_n$ and so $b_{n,\mu}$ is real and satisfies $b_{n,\mu}(x,y)=b_{n,\mu}(y,x)$ for all $x,y\in \mathbb{R}^n$. 
By Lemma \ref{lem:better diagonal representation} one recalls 
\begin{align*}
 & \tr_{2^{\hat n}d}\big(\gamma_{j,n} \cQ \left(R_{1+z}C\right)^{n-2}R_{1+z}\big)\\
 & \quad =-\tr_{2^{\hat n}d}\big(\gamma_{j,n} \cQ \left(R_{1+z}\Phi \cQ \right)^{n-2}R_{1+z}\big)\\
 & \quad =-\tr_{2^{\hat n}d}\big(\gamma_{j,n} \cQ R_{1+z}\left(\Phi \cQ R_{1+z}\right)^{n-2}\big)\\
 & \quad =-\tr_{2^{\hat n}d}\big(\gamma_{j,n} \cQ R_{1+z}\left(\Phi \cQ R_{1+z}\right)^{n-3}\Phi \cQ R_{1+z}\big)\\
 & \quad =-\tr_{2^{\hat n}d}\left(\gamma_{j,n} \cQ R_{1+z}B_{n} \cQ R_{1+z}\right).
\end{align*}
By Fubini's theorem and the symmetry of $B_{n,\mu}$, one has for all $j,k\in\{1,\ldots,n\}$ and $x\in\mathbb{R}^{n}$, $z > - 1$, $\mu > 0$, 
\begin{align*}
\Psi_{x,\mu}(j,k) & \coloneqq\int_{\mathbb{R}^{n}\times\mathbb{R}^{n}}(\partial_{j}r_{1+z})(x_{1}-x)b_{n,\mu}(x_{1},x_{2})(\partial_{k}r_{1+z})(x_{2}-x) \, d^n x_{1} d^n x_{2}\\
 & \, =\int_{\mathbb{R}^{n}\times\mathbb{R}^{n}} (\partial_{j}r_{1+z})(x_{1}-x)b_{n,\mu}(x_{2},x_{1})(\partial_{k}r_{1+z})(x_{2}-x) \, d^n x_{1} d^n x_{2} \\
 & \, =\int_{\mathbb{R}^{n}\times\mathbb{R}^{n}} (\partial_{k}r_{1+z})(x_{1}-x)b_{n,\mu}(x_{1},x_{2})(\partial_{j}r_{1+z})(x_{2}-x) \, d^n x_{1} d^n x_{2}  \\
 & \, = \Psi_{x,\mu}(k,j). 
\end{align*}
By Lemma \ref{lem:pointwise convergence of regularized Green} one has for all 
$x,y\in \mathbb{R}^n$, 
\begin{align*}
	h_{1,j}(x,y)&=-\lim_{\mu\downarrow 0}\tr_{2^{\hat n}d} \bigg(\sum_{i_{2},i_{3} = 1}^n \gamma_{j,n}\gamma_{i_{2},n}\gamma_{i_{3},n}\partial_{i_{2}}\int_{\mathbb{R}^{n}\times\mathbb{R}^{n}}r_{1+z}(x-x_{1})b_{n,\mu}(x_{1},x_{2})   \\
& \hspace*{6.2cm} \times (\partial_{i_{3}} r_{1+z})(x_{2}-y) \, d^n x_{1} d^n x_{2}\bigg)     \\
        & =\lim_{\mu\downarrow 0}\tr_{2^{\hat n}d} \bigg(\sum_{i_{2},i_{3} = 1}^n \gamma_{j,n}\gamma_{i_{2},n}\gamma_{i_{3},n}\int_{\mathbb{R}^{n}\times\mathbb{R}^{n}} (\partial_{i_{2}}r_{1+z})(x_{1}-x)b_{n,\mu}(x_{1},x_{2}) \\
& \hspace*{6.1cm} \times  (\partial_{i_{3}}r_{1+z})(x_{2}-y) \, d^n x_{1} d^n x_{2}\bigg).
\end{align*} 
Thus, it follows from Corollary \ref{cor:comp_Dirac_alg_trace-1}
that
\[
  h_{1,j}(x,x)=\lim_{\mu\downarrow 0}\tr_{2^{\hat n}d} \bigg(\sum_{i_{2},i_{3} = 1}^n \gamma_{j,n}\gamma_{i_{2},n}\gamma_{i_{3},n}\Psi_{x,\mu}(i_{2},i_{3})\bigg)
=0, \quad x\in\mathbb{R}^{n}.
\]
The assertion about $h_{2,j}$ is a direct consequence of Remark \ref{rem:green's kernels of the first and the last} and the asymptotic conditions imposed on $\Phi$.
\end{proof}

For the estimate on the diagonal of the integral kernels of the operators under consideration in the next theorem we need to choose the real part of $z$ large. In fact, we use the Neumann series expression for the resolvents $(L^*L+z)^{-1}$ and $(LL^*+z)^{-1}$ and Remark \ref{rem:on the quantitative version of kappa prime}, both of which making the choice of large $\Re (z)$ necessary. We shall also have an a priori bound on the argument of $z$, recalling the definition \eqref{def:Sigma0} of the sector 
$\Sigma_{z_0,\theta} \subset \bbC$. 

\begin{theorem}
\label{thm:asymptotics of the rest} Let $L= \cQ +\Phi$ be given by \eqref{eq:def_of_L(2)},
and for $z\in\mathbb{C}_{\Re > -1}$, let $R_{1+z}$ be given
by \eqref{eq:resolvent_of_laplace} and $C$ as in \eqref{eq:commutator=00003DC}.
For $j\in\{1,\ldots,n\}$, let $\gamma_{j,n}\in\mathbb{C}^{2^{\hat n} \times 2^{\hat n}}$
$($cf.\ Remark \ref{rem:Eucl-Dirac-Algebar}$)$, and $\theta\in (0,\pi/2)$. Then there exists $z_{0}  > 0$, such that for all $z\in \Sigma_{z_0,\theta}$ $($see \eqref{def:Sigma0}$)$, the integral kernels $g_{1,j}$ and $g_{2,j}$ of the operators 
\[
\tr_{2^{\hat n}d}\big(\gamma_{j,n}\Phi\big(\left(L^{*}L+z\right)^{-1}+\left(LL^{*}+z\right)^{-1}\big)\left(CR_{1+z}\right)^{n}\big)
\]
 and 
\[
\tr_{2^{\hat n}d}\big(\gamma_{j,n}Q\big(\left(L^{*}L+z\right)^{-1}+\left(LL^{*}+z\right)^{-1}\big)\left(CR_{1+z}\right)^{n}\big),
\]
 respectively, satisfy for some $\kappa>0$, 
\[
\big[|g_{1,j}(x,x)|+|g_{2,j}(x,x)|\big] \leq \kappa (1+ |x|)^{-n}, 
\quad x\in\mathbb{R}^{n}.  
\]
\end{theorem}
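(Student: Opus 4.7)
The plan is to expand the sum of resolvents in a Neumann series around the free Helmholtz resolvent $R_{1+z}=(-\Delta+1+z)^{-1}$ and apply the quantitative pointwise estimate of Remark \ref{rem:on the quantitative version of kappa prime} to each resulting summand. By Proposition \ref{prop:resolvent formulas}$(i)$, whenever $\Re(z)>\sup_{x}\|C(x)\|-1$, so that $\|CR_{1+z}\|_{\cB(L^2)}<1$, one has
\[
(L^*L+z)^{-1}+(LL^*+z)^{-1}=2\sum_{k=0}^{\infty}R_{1+z}(CR_{1+z})^{2k}.
\]
Substituting this into the operators underlying $g_{1,j}$ and $g_{2,j}$ and collecting factors, their integral kernels coincide, termwise, with those of
\[
2\tr_{2^{\hat n}d}\bigl(\gamma_{j,n}\,\Phi\,R_{1+z}(CR_{1+z})^{2k+n}\bigr),\qquad 2\tr_{2^{\hat n}d}\bigl(\gamma_{j,n}\,\mathcal{Q}\,R_{1+z}(CR_{1+z})^{2k+n}\bigr),
\]
respectively, $k\in\mathbb{N}_0$. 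Each such operator fits the template of Lemma \ref{lem:asymptotics on diagonal}: a product of $2k+n+1$ multiplication operators interlaced with $2k+n+1$ copies of $R_{1+z}$, where $2k+n$ of the multiplication operators are copies of $C=(\mathcal{Q}\Phi)$ with $\|C(x)\|\le \kappa_0(1+|x|)^{-1}$ by admissibility (Definition \ref{def:phi_admissible}), while the leading factor is either bounded $\Phi$ (decay exponent $\alpha=0$) or $\mathcal{Q}$, the latter handled by the $\Psi_j Q$-variant explicitly stated in Lemma \ref{lem:asymptotics on diagonal}.

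\medskip

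I would then invoke Remark \ref{rem:on the quantitative version of kappa prime} with decay exponent $\ell=n$, which is admissible since $\sum_j\alpha_j\ge 2k+n\ge n$, under the quantitative threshold $\sqrt{\Re(1+z)}>2n$. This yields the diagonal bound
\[
\bigl|\text{$k$th kernel}\bigr|(x,x)\le C_k\bigl[1+(|x|/2)\bigr]^{-n},\quad C_k\le K_n\bigl[(2c_{\arg(1+z)})^{\hat n-1}\kappa_0\,(4/\Re(1+z))\bigr]^{2k+n+1}\bigl(\Re(1+z)\bigr)^{(n+1)/2},
\]
where $K_n$ depends only on $n$, $\|\Phi\|_{\infty}$, and the absolute constants $c,c'$ from \eqref{eq:def_c}--\eqref{eq:def_cprime}, and where the sectorial restriction $z\in\Sigma_{z_0,\theta}$ gives the uniform bound $c_{\arg(1+z)}\le(\cos\theta)^{-1/2}$. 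Consequently, $C_{k+1}/C_k$ is a fixed multiple of $\Re(1+z)^{-2}$, and by enlarging $z_0$ one forces this ratio strictly below $1$, so that $\sum_k C_k$ converges geometrically to a finite constant $\kappa$ independent of $x$. Summing termwise then yields $|g_{1,j}(x,x)|+|g_{2,j}(x,x)|\le\kappa(1+|x|)^{-n}$ for all $x\in\mathbb{R}^n$, as claimed.

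\medskip

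The principal obstacle is the simultaneous coordination of the constraints on $z_0$. On one hand, the quantitative hypothesis of Remark \ref{rem:on the quantitative version of kappa prime} forces $\sqrt{\Re(1+z)}>2n$, i.e., $z_0>4n^2-1$; on the other hand, the geometric summability of $\sum_k C_k$ requires $\Re(1+z)$ to dominate a fixed multiple of $\kappa_0(2c_{\arg(1+z)})^{\hat n-1}$. The angular restriction to the sector $\Sigma_{z_0,\theta}$ is precisely what prevents $c_{\arg(1+z)}$ from degenerating as $z$ tends to the imaginary axis, allowing a single choice of $z_0$ to accommodate both thresholds uniformly in the sector; this explains the need for the joint restriction on $\Re(z)$ and $\arg(z)$ in the statement.
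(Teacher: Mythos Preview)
Your proposal is correct and follows essentially the same route as the paper: expand the resolvent sum via the Neumann series of Proposition~\ref{prop:resolvent formulas}\,$(i)$, apply the quantitative diagonal estimate of Lemma~\ref{lem:asymptotics on diagonal} and Remark~\ref{rem:on the quantitative version of kappa prime} termwise with target decay $\ell=n$, and choose $z_0$ large enough (together with the sector condition bounding $c_{\arg(1+z)}$) so that the resulting geometric series converges. The paper's proof packages the ratio slightly differently---writing the $k$-dependent factor as $(2M/\sqrt{1+z_0})^{2k}$ with $M=\sup_x(\|\Phi(x)\|\vee\|C(x)\|)$ and imposing $2M[z_0\cos\theta]^{-1/2}\le 1/2$ alongside $\sqrt{z_0}>2n$---but this is the same mechanism you describe.
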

\begin{proof}
We choose $z_{0}$ such that $\sqrt{z_{0}}>2n$ (one recalls Remark \ref{rem:on the quantitative version of kappa prime})
and that for $M\coloneqq\sup_{x\in\mathbb{R}^{n}}\|\Phi(x)\|\lor\|\left( \cQ \Phi\right)(x)\|$
one has $2M[z_{0} \cos(\theta)]^{-1/2} \leq 1/2$. We treat $g_{1,j}$ first. Let 
$z\in \Sigma_{z_0,\theta}$, then,  
\begin{align*}
 & \gamma_{j,n}\Phi\big(\left(L^{*}L+z\right)^{-1}+\left(LL^{*}+z\right)^{-1}\big)\left(CR_{1+z}\right)^{n}\\
 & \quad =\gamma_{j,n}\Phi2\left(R_{1+z}C\right)^{n}\sum_{k=0}^{\infty}\left(R_{1+z}C\right)^{2k}R_{1+z}\\
 & \quad =\sum_{k=0}^{\infty}\gamma_{j,n}\Phi2\left(R_{1+z}C\right)^{n}\left(R_{1+z}C\right)^{2k}R_{1+z}.
\end{align*}
For $x\in\mathbb{R}^{n}$ one infers (recalling $\delta_{\{x\}}$ in \eqref{eq:def_dirac_distr}),  
\begin{align*}
g_{1,j}(x,x) & =\bigg\langle \delta_{\{x\}},\sum_{k=0}^{\infty}\gamma_{j,n}\Phi2\left(R_{1+z}C\right)^{2k}\left(R_{1+z}C\right)^{n}R_{1+z}\delta_{\{x\}}\bigg\rangle \\
 & =\sum_{k=0}^{\infty}\big\langle \delta_{\{x\}},\gamma_{j,n}\Phi2\left(R_{1+z}C\right)^{2k}\left(R_{1+z}C\right)^{n}R_{1+z}\delta_{\{x\}}\big\rangle .
\end{align*}
Hence, by Lemma \ref{lem:asymptotics on diagonal} together with Remark
\ref{rem:on the quantitative version of kappa prime}, there exists
$c>0$ such that for all $x\in\mathbb{R}^{n}$, 
\[
\big|\big\langle \delta_{\{x\}},\gamma_{j,n}\Phi2\left(R_{1+z}C\right)^{2k}\left(R_{1+z}C\right)^{n}R_{1+z}\delta_{\{x\}}\big\rangle\big| \leq  c\left(\frac{2M}{\sqrt{1+z_{0}}}\right)^{2k}\left(\frac{1}{1+\left|x\right|}\right)^{n}.
\] 
Since $2M[1 + z_{0}]^{-1/2} \leq 1/2$, one concludes that
\begin{align*}
\left|g_{1,j}(x,x)\right| & \leq \sum_{k=0}^{\infty}\left|\langle\delta_{\{x\}},\gamma_{j,n}\Phi2\left(R_{1+z}C\right)^{2k}\left(R_{1+z}C\right)^{n}R_{1+z}\delta_{\{x\}}\rangle\right|\\
 & \leq \sum_{k=0}^{\infty}c\left(\frac{2M}{\sqrt{1+z_{0}}}\right)^{2k}\left(\frac{1}{1+\left|x\right|}\right)^{n}\leq  c\left(\frac{1}{1+\left|x\right|}\right)^{n}.
\end{align*}
The analogous reasoning applies to $g_{2,j}$.
\end{proof}

We conclude the results on estimates of certain integral kernels on the diagonal with the following corollary, which, roughly speaking, says that the diagonal of the integral kernels involved is determined by the integral kernel of the operator to be discussed in Proposition \ref{prop:formula for index}.

\begin{corollary}
\label{cor:final aymptotics} For $z\in\mathbb{C}$, $\Re (z) > -1$, denote
$R_{1+z}$ as in \eqref{eq:resolvent_of_laplace}, let 
$\Phi\colon\mathbb{R}^{n}\to\mathbb{C}^{d \times d}$
be admissible $($see Definition \ref{def:phi_admissible}$)$, and $L = \cQ+\Phi$
as in \eqref{eq:def_of_L(2)}, $\theta\in (0,\pi/2)$. In addition, denote $J_{L}^{j}(z)$ for 
$z\in\rho\left(-L^{*}L\right)\cap\rho\left(-LL^{*}\right)$
as in \eqref{eq:JJJ} for all $j\in\{1,\ldots,n\}$, and $C$ as in
\eqref{eq:commutator=00003DC}. Moreover, let 
$\gamma_{j,n}\in\mathbb{C}^{2^{\hat n} \times 2^{\hat n}}$, $j\in\{1,\ldots,n\}$ 
$($cf.\ Remark \ref{rem:Eucl-Dirac-Algebar}$)$. \\[1mm] 
$(i)$ Let $n\in\mathbb{N}_{\geq5}$, $j\in\{1,\ldots,n\}$. Then there
exists $z_{0} > 0$, such that if $z\in \Sigma_{z_0,\theta}$ 
$($see \eqref{def:Sigma0}$)$, and $h$ and $g$ denote the integral kernel of 
$2\tr_{2^{\hat n}d}\left(\gamma_{j,n}\Phi C^{n-1}R_{1+z}^{n}\right)$
and $J_{L}^{j}(z)$, respectively, then for some $\kappa>0$,  
\[
\left|h(x,x)-g(x,x)\right|\leq \kappa (1+ |x|)^{1-n - \epsilon}, 
\quad x\in\mathbb{R}^{n},  
\]
where $\varepsilon > 1/2$ is given as in Definition \ref{def:phi_admissible}. \\[1mm] 
$(ii)$ The assertion of part $(i)$ also holds for $n=3$, if, in the
above statement, $J_{L}^{j}(z)$ is replaced by $J_{L}^{j}(z)-2\tr_{2d}\left(\gamma_{j,3} 
\cQ R_{1+z}CR_{1+z}\right)$. 
\end{corollary}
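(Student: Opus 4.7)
The plan is to use the representation of $J_L^j(z)$ derived in Lemma \ref{lem:almost_Neumann_series} and control each of the four resulting summands by combining the diagonal estimates provided by Lemma \ref{lem:some properties of the first two terms} and Theorem \ref{thm:asymptotics of the rest}. Concretely, Lemma \ref{lem:almost_Neumann_series} expresses $J_L^j(z)$ as
\begin{align*}
J_L^j(z) &= 2\tr_{2^{\hat n}d}\bigl(\gamma_{j,n}\cQ(R_{1+z}C)^{n-2}R_{1+z}\bigr)
           + 2\tr_{2^{\hat n}d}\bigl(\gamma_{j,n}\Phi(R_{1+z}C)^{n-1}R_{1+z}\bigr) \\
&\quad + \tr_{2^{\hat n}d}\bigl(\gamma_{j,n}\cQ\bigl((L^*L+z)^{-1}+(LL^*+z)^{-1}\bigr)(CR_{1+z})^n\bigr) \\
&\quad + \tr_{2^{\hat n}d}\bigl(\gamma_{j,n}\Phi\bigl((L^*L+z)^{-1}+(LL^*+z)^{-1}\bigr)(CR_{1+z})^n\bigr),
\end{align*}
and I label the corresponding diagonal integral kernels $h_{1,j}, h_{2,j}, \tilde g_{2,j}, \tilde g_{1,j}$, so that $g(x,x) = h_{1,j}(x,x) + h_{2,j}(x,x) + \tilde g_{1,j}(x,x) + \tilde g_{2,j}(x,x)$.

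For Part (i) with $n\ge 5$, I apply Lemma \ref{lem:some properties of the first two terms} to the first and second summands. This yields $h_{1,j}(x,x)=0$ (vanishing on the diagonal) and $h_{2,j}(x,x) = h(x,x) + g_{0,j}(x,x)$, where $|g_{0,j}(x,x)|\le\kappa(1+|x|)^{1-n-\varepsilon}$ and $h$ is precisely the kernel appearing in the statement of the corollary. For the remaining two summands I invoke Theorem \ref{thm:asymptotics of the rest}, which yields $z_0>0$ such that $|\tilde g_{1,j}(x,x)|+|\tilde g_{2,j}(x,x)|\le\kappa(1+|x|)^{-n}$ for all $z\in\Sigma_{z_0,\theta}$. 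Summing the three nontrivial contributions, and using that one may assume $\varepsilon\in(1/2,1]$ without loss of generality (if Definition~\ref{def:phi_admissible}\,$(iii)$ holds for some $\varepsilon_0>1/2$, it automatically holds for every $\varepsilon\in(1/2,\varepsilon_0]$), the elementary inequality $(1+|x|)^{-n}\le(1+|x|)^{1-n-\varepsilon}$ delivers the claimed pointwise bound.

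For Part (ii) with $n=3$, the same scheme works once the first summand is subtracted. The vanishing $h_{1,j}(x,x)=0$ is asserted in Lemma \ref{lem:some properties of the first two terms} only for $n\ge 5$, so in three dimensions the first contribution $2\tr_{2d}(\gamma_{j,3}\cQ R_{1+z}CR_{1+z})$ need not vanish on the diagonal; this is exactly the term subtracted from $J_L^j(z)$ in the statement. The three remaining summands are then estimated just as in Part (i), noting that the conclusions of Lemma \ref{lem:some properties of the first two terms} and Theorem \ref{thm:asymptotics of the rest} relevant to those terms are stated for general odd $n\ge 3$.

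The main technical point to handle will be that Lemma \ref{lem:some properties of the first two terms} is proved for \emph{real} $z>-1$, whereas the corollary requires the estimate uniformly on the complex sector $\Sigma_{z_0,\theta}$. The vanishing of $h_{1,j}$ there rests on the symmetry $b_{n,\mu}(x_1,x_2)=b_{n,\mu}(x_2,x_1)$ obtained from self-adjointness of an auxiliary operator $B_n$ for real $z$. For complex $z$ self-adjointness fails, but the symmetry persists because the Helmholtz kernel is radial, $r_{1+z}(x-y)=r_{1+z}(y-x)$, and $\Phi$ acts as a pointwise multiplier; hence $B_n$ still has a symmetric (though no longer Hermitian) integral kernel, and the Fubini computation in the proof of Lemma \ref{lem:some properties of the first two terms} carries through unchanged. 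Alternatively, one may invoke analyticity of both sides of the desired pointwise estimate in $z$ and propagate the bound from the real axis into the sector, taking care to control constants uniformly on compact subsets of $\Sigma_{z_0,\theta}$.
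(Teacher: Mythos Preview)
Your proof is correct and follows essentially the same route as the paper: recall the four-term decomposition of $J_L^j(z)$ from Lemma \ref{lem:almost_Neumann_series}, dispose of the two ``remainder'' terms via Theorem \ref{thm:asymptotics of the rest}, and handle the two ``leading'' terms via Lemma \ref{lem:some properties of the first two terms}. The paper's proof is in fact terser than yours and simply invokes these lemmas without further comment.

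You are more careful than the paper on two points. First, you correctly observe that the vanishing $h_{1,j}(x,x)=0$ in Lemma \ref{lem:some properties of the first two terms} is proved only for real $z>-1$, whereas the corollary needs $z\in\Sigma_{z_0,\theta}$. Your analyticity argument is the clean way to close this: for each fixed $x$, the map $z\mapsto h_{1,j}(x,x)$ is analytic on $\mathbb{C}_{\Re>-1}$, and since it vanishes on the real half-line it vanishes identically. (Your alternative via kernel symmetry is morally right but would need more care with the matrix structure; the analytic continuation is simpler.) Second, your reduction to $\varepsilon\in(1/2,1]$ is needed to absorb the $(1+|x|)^{-n}$ bound from Theorem \ref{thm:asymptotics of the rest} into the claimed $(1+|x|)^{1-n-\varepsilon}$; the paper leaves this implicit.
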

\begin{proof}
One recalls from Lemma \ref{lem:almost_Neumann_series},   
\begin{align*}
J_{L}^{j}(z) & =2\tr_{2^{\hat n}d}\big(\gamma_{j,n} \cQ \left(R_{1+z}C\right)^{n-2}R_{1+z}\big)+2\tr_{2^{\hat n}d}\big(\gamma_{j,n}\Phi \left(R_{1+z}C\right)^{n-1}R_{1+z}\big)\\
 & \quad+\tr_{2^{\hat n}d}\big(\gamma_{j,n} \cQ \big(\left(L^{*}L+z\right)^{-1}+\left(LL^{*}+z\right)^{-1}\big)\left(CR_{1+z}\right)^{n}\big)\\
 & \quad+\tr_{2^{\hat n}d}\big(\gamma_{j,n}\Phi \big(\left(L^{*}L+z\right)^{-1}+\left(LL^{*}+z\right)^{-1}\big)\left(CR_{1+z}\right)^{n}\big).  
\end{align*}
With the help of Theorem \ref{thm:asymptotics of the rest} one deduces 
that the integral kernels of the last two terms may be estimated
by $\kappa (1+|x|)^{-n}$ on
the diagonal. The integral kernel of the first term on the right-hand side vanishes on the diagonal, which is asserted in Lemma \ref{lem:some properties of the first two terms}.
Hence, it remains to inspect the second term of the right-hand side.
Thus, the assertion follows from Lemma \ref{lem:some properties of the first two terms}. 
\end{proof}

Having identified the integral kernel $g_j$ of $2\tr_{2^{\hat n}d}\left(\gamma_{j,n}\Phi C^{n-1}R_{1+z}^{n}\right)$ to be the only term determining the trace of $\chi_\Lambda B_L(z)$ as 
$\Lambda\to \infty$, we shall compute the integral over the  diagonal of $g_j$:

\begin{proposition}
\label{prop:formula for index}Let $n\in\mathbb{N}_{\geq 3}$ odd, $C$
as in \eqref{eq:commutator=00003DC}, $z\in\mathbb{C}$, $\Re (z) > -1$, 
with $R_{1+z}$ given by \eqref{eq:resolvent_of_laplace}, 
$\Phi\colon\mathbb{R}^{n}\to\mathbb{C}^{d \times d}$
be admissible $($see Definition \ref{def:phi_admissible}$)$, 
$\gamma_{j,n}\in\mathbb{C}^{2^{\hat n} \times 2^{\hat n}}$, $j\in\{1,\ldots,n\}$, as in Remark \ref{rem:Eucl-Dirac-Algebar}.  
Then for $j\in\{1,\ldots,n\}$, the integral kernel $g_{j}$ of
\[
2\tr_{2^{\hat n}d}\left(\gamma_{j,n}\Phi C^{n-1}R_{1+z}^{n}\right)
\]
 satisfies, 
\begin{align*}
g_{j}(x,x) & = (1+z)^{-n/2}\left(\frac{i}{8\pi}\right)^{(n-1)/2}\frac{1}{\left[(n-1)/2\right]!}    \\ 
& \quad \, \times \sum_{i_{1},\ldots,i_{n-1} = 1}^n \epsilon_{ji_{1}\ldots i_{n-1}} 
\tr\big(\Phi(x) (\partial_{i_{1}}\Phi)(x)\ldots (\partial_{i_{n-1}}\Phi)(x)\big), 
\quad x\in\mathbb{R}^{n},  
\end{align*}
where $\epsilon_{ji_{1}\ldots i_{n-1}}$ denotes the $\epsilon$-symbol
as in Proposition \ref{prop:comp_of_Dirac_Alge}. 
\end{proposition}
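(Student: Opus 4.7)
The plan is to reduce the computation to a pointwise matrix-algebra calculation multiplied by a single scalar, namely the diagonal value of the free resolvent iterate $R_{1+z}^n$. First I would expand $C^{n-1}$ via $C=\sum_{k=1}^n\gamma_{k,n}(\partial_k\Phi)$, using that the $\gamma_{k,n}$ act trivially on the $\mathbb C^d$ factor and so commute with every matrix-valued multiplication operator built from $\Phi$, to obtain
\[
\gamma_{j,n}\,\Phi\, C^{n-1} \;=\;\sum_{i_1,\ldots,i_{n-1}=1}^{n} \bigl(\gamma_{j,n}\gamma_{i_1,n}\cdots\gamma_{i_{n-1},n}\bigr)\otimes\bigl(\Phi(\partial_{i_1}\Phi)\cdots(\partial_{i_{n-1}}\Phi)\bigr).
\]
Since this is a pure multiplication operator, the internal trace factors as a product
$\tr_{2^{\hat n}}(\gamma_{j,n}\gamma_{i_1,n}\cdots\gamma_{i_{n-1},n})\,\tr_d\bigl(\Phi(\partial_{i_1}\Phi)\cdots(\partial_{i_{n-1}}\Phi)\bigr).$
By Proposition \ref{prop:comp_of_Dirac_Alge}, the Dirac-algebra factor equals $(2i)^{\hat n}\epsilon_{ji_1\ldots i_{n-1}}$, in particular vanishing unless $(j,i_1,\ldots,i_{n-1})$ is a permutation of $(1,\ldots,n)$.

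Next I would compute $R_{1+z}^n$ on the diagonal. Translation invariance of $R_{1+z}^n$ together with the continuity of its kernel (ensured by Proposition \ref{prop:concrete case} and Corollary \ref{cor:integral kernels regularity}, since $2n>n$) forces $R_{1+z}^n(x,x)$ to be independent of $x$, equal to $R_{1+z}^n\delta_{\{0\}}(0)$. By Plancherel (cf.\ the opening identity in the proof of Proposition \ref{prop:estimate of the pointwise resolvent at 0}),
\[
R_{1+z}^n\delta_{\{0\}}(0) \;=\; (2\pi)^{-n}\int_{\mathbb R^n}\bigl(|\xi|^2+1+z\bigr)^{-n}\,d^n\xi \;=\; (1+z)^{-n/2}\,(2\pi)^{-n}\,\omega_{n-1}\int_0^\infty \frac{r^{n-1}\,dr}{(r^2+1)^n},
\]
after the scaling $\xi=(1+z)^{1/2}\eta$. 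The radial integral evaluates via the beta-function identity to $\Gamma(n/2)^2/(2\Gamma(n))$, so using $\omega_{n-1}=2\pi^{n/2}/\Gamma(n/2)$ one finds
\[
R_{1+z}^n\delta_{\{0\}}(0) \;=\; (1+z)^{-n/2}\,\frac{\Gamma(n/2)}{2^n\pi^{n/2}\,\Gamma(n)}.
\]

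Because $\gamma_{j,n}\Phi C^{n-1}$ is multiplication, the integral kernel of $\gamma_{j,n}\Phi C^{n-1}R_{1+z}^n$ on the diagonal is simply the pointwise matrix $\gamma_{j,n}\Phi(x)C(x)^{n-1}$ times the \emph{constant} $R_{1+z}^n\delta_{\{0\}}(0)$. Combining this with the trace factorization above yields
\[
g_j(x,x) \;=\; 2\,(2i)^{\hat n}\,R_{1+z}^n\delta_{\{0\}}(0)\sum_{i_1,\ldots,i_{n-1}=1}^n\epsilon_{ji_1\ldots i_{n-1}}\tr\bigl(\Phi(x)(\partial_{i_1}\Phi)(x)\cdots(\partial_{i_{n-1}}\Phi)(x)\bigr).
\]
The only remaining task is constant bookkeeping: inserting $n=2\hat n+1$, $\Gamma(\hat n+\tfrac12)=(2\hat n)!\sqrt{\pi}/(4^{\hat n}\hat n!)$, and $\Gamma(n)=(2\hat n)!$ into the prefactor reduces it to $(1+z)^{-n/2}(i/(8\pi))^{\hat n}/\hat n!$, matching the stated formula. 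I expect the main (but still purely mechanical) obstacle to be this final simplification of numerical constants; the conceptual content of the proof is exhausted by the tensor-product factorization, translation invariance of $R_{1+z}^n$, and the Dirac-algebra trace identity.
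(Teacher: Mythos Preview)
Your proposal is correct and follows essentially the same approach as the paper's proof: expand $C^{n-1}$ so that the $\gamma$-matrices separate from the $\Phi$-factors, apply the Dirac-algebra trace identity from Proposition~\ref{prop:comp_of_Dirac_Alge} to obtain the factor $(2i)^{\hat n}\epsilon_{ji_1\ldots i_{n-1}}$, observe that the diagonal of the kernel reduces to the multiplication part times the constant $R_{1+z}^n\delta_{\{0\}}(0)$, evaluate the latter by the Fourier transform and a radial integral, and finally simplify the numerical constant. The paper writes the diagonal value as the iterated convolution $\int_{(\mathbb{R}^n)^{n-1}} r_{1+z}(x_1)r_{1+z}(x_1-x_2)\cdots r_{1+z}(x_{n-1})\,d^n x_1\cdots d^n x_{n-1}$ and quotes a table entry for the radial integral, while you phrase it as $R_{1+z}^n\delta_{\{0\}}(0)$ and use the beta function, but this is the same computation.
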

\begin{proof}
We recall that $n=2\hat n+1$. With the help
of Proposition \ref{prop:comp_of_Dirac_Alge}, $g_{j}$ is given by
\begin{align*}
(x,y) & \mapsto2(2i)^{\hat n}\sum_{i_{1}\ldots i_{n-1} = 1}^n \epsilon_{ji_{1}\ldots i_{n-1}}\tr\big(\Phi(x)
(\partial_{i_{1}}\Phi)(x)\ldots (\partial_{i_{n-1}}\Phi)(x)\big)\\
& \quad \;\; \times \int_{\left(\mathbb{R}^{n}\right)^{n-1}}r_{1+z}(x-x_{1})r_{1+z}(x_{1}-x_{2})\cdots r_{1+z}(x_{n-1}-y) \, d^n x_1 \cdots d^n x_{n-1}.
\end{align*}
Hence, by substitution in the integral expression and putting $x=y$,
one obtains
\begin{align*}
g_{j}(x,x) & =2(2i)^{\hat n}\sum_{i_{1}\ldots i_{n-1} = 1}^n \epsilon_{ji_{1}\ldots i_{n-1}}\tr\big(\Phi(x)
(\partial_{i_{1}}\Phi)(x)\ldots (\partial_{i_{n-1}}\Phi)(x)\big)\\
& \quad \times \int_{\left(\mathbb{R}^{n}\right)^{n-1}}r_{1+z}(x_{1})r_{1+z}(x_{1}-x_{2})\cdots r_{1+z}(x_{n-1}) \, d^n x_1 \cdots d^n x_{n-1}.
\end{align*}
The last integral can be computed with the help of the Fourier transform
and polar coordinates, as was done in Proposition \ref{prop:estimate of the pointwise resolvent at 0}.
In fact, one gets (see also \cite[3.252.2]{GR80}),  
\begin{align*}
 & \int_{\left(\mathbb{R}^{n}\right)^{n-1}}r_{1+z}(x_{1})r_{1+z}(x_{1}-x_{2})\cdots r_{1+z}(x_{n-1}) \, d^n x_1 \cdots d^n x_{n-1}\\
 & \quad =\left(2\pi\right)^{-n}\frac{2\pi^{n/2}}{\Gamma\left(n/2\right)}\int_{0}^{\infty}r^{n-1}\frac{1}{\left(r^{2}+1+z\right)^{n}} \, dr\\
 & \quad =\left(2\pi\right)^{-n}\frac{2\pi^{n/2}}{\Gamma\left(n/2\right)}
 (1+z)^{-n/2} \frac{2^{-n}\sqrt{\pi}\Gamma\left(n/2\right)}{\left[(n-1)/2\right]!}\\
 & \quad =\frac{1}{2^{2n-1}}\frac{1}{\pi^{(n-1)/2}}\frac{1}{\left[(n-1)/2\right]!}
 (1+z)^{-n/2}, 
\end{align*}
and notes that 
\begin{align*}
& 2(2i)^{(n-1)/2}\frac{1}{2^{2n-1}}\frac{1}{\pi^{(n-1)/2}}\frac{1}{\left[(n-1)/2\right]!}   \no \\
& \quad =\left(\frac{i}{\pi}\right)^{(n-1)/2}\frac{1}{2^{(4n-4-n+1)/2}}\frac{1}{\left[(n-1)/2\right]!}\\
 & \quad =\left(\frac{i}{\pi}\right)^{(n-1)/2}\frac{1}{2^{(3n-3)/2}}\frac{1}{\left[(n-1)/2\right]!}\\
 & \quad =\left(\frac{i}{8\pi}\right)^{(n-1)/2}\frac{1}{\left[(n-1)/2\right]!}.\tag*{\qedhere}
\end{align*}
\end{proof}

Finally, we are ready to prove the (trace) Theorem \ref{thm:Witten_reg_n5}, for $n\geq5$, that is, we consider the operator $L=\mathcal{Q}+\Phi$ with an admissible potential $\Phi$, such that $\Phi$ is smooth and attains values in the self-adjoint, unitary $d\times d$-matrices. In addition, we recall that the first derivatives of $\Phi$ behave like $|x|^{-1}$ for large $x$, whereas higher-order derivatives decay at least with the behavior $|x|^{-1-\epsilon}$ for large $x$ and some $\epsilon> 1/2$. We note that we already established the Fredholm property of $L$ in Theorem \ref{thm:Fredholm_property}. We outline the proof of Theorem \ref{thm:Witten_reg_n5}, for $n\geq5$, as follows. The results in Section \ref{sec:The-Derivation-of-trace-f} yield the applicability of 
Theorem \ref{thm:index with Witten}. More precisely, the operator $\chi_\Lambda B_L(z)$ is trace class with trace computable as the integral over the diagonal of the integral kernel of $\chi_\Lambda B_L(z)$. With Proposition \ref{prop:Reformulation of B} we will deduce that only the term involving $J_L^j(z)$, being analysed in Lemma \ref{lem:almost_Neumann_series}, matters for the computation of the index. Next, we will show that $\{ z\mapsto \tr(\chi_\Lambda B_L(z))\}_{\Lambda>0}$ is locally bounded using Lemma \ref{lem:comm is sum of der} (in particular \eqref{e:div_theo}). The local boundedness result is then obtained via Gauss' divergence theorem and Lemma \ref{lem:some properties of the first two terms} as well as Theorem \ref{thm:local_boundedness of the trace}. Having proved local boundedness, we will use Montel's theorem for deducing that at least for a sequence $\{\Lambda_k\}_{k\in\bbN}$ the limit $f\coloneqq \lim_{k\to\infty}\tr(\chi_{\Lambda_k} B_L(\cdot))$ exists in the compact open topology, that is, the topology of uniform convergence on compacts. With the results from Corollary \ref{cor:final aymptotics} and Proposition \ref{prop:formula for index}, choosing $\Re (z)$ sufficiently large, we get an explicit expression for $f$. The explicit expression for $f$, by the principle of analytic continuation, carries over to $z$ in a neighborhood of $0$. As we know, by 
Theorem \ref{thm:index with Witten}, that the limit $\lim_{\Lambda\to\infty}\tr(\chi_{\Lambda} B_L(0))$ exists and coincides with the index of $L$, we can then deduce that not only for the sequence $\{\Lambda_k\}_{k\in\bbN}$ but, in fact, the limit $\lim_{\Lambda\to\infty}\tr(\chi_{\Lambda} B_L(\cdot))$ exists in the compact open topology and coincides with $f$ given in \eqref{f(z)}. The detailed arguments read as follows.

\begin{proof}[Proof of Theorem \ref{thm:Witten_reg_n5} for $n\geq5$]
 By Theorem \ref{thm:trisbounded}, $\chi_\Lambda B_{L}(z)$ is trace class for every $\Lambda>0$. Moreover,
by Remark \ref{r:5.8}, $\tr (\chi_\Lambda B_{L}(z))$ can be computed
as the integral over the diagonal of the respective integral kernel.
Hence, by Proposition \ref{prop:Reformulation of B}, equation \eqref{eq:B_L=00003DJJJ+ALz}, recalling also Remark  \ref{rem:Convenient:Green},
one obtains 
\begin{align}
2\tr (\chi_\Lambda B_{L}(z)) & = 2\int_{B(0,\Lambda)} \left\langle \delta_{\{x\}},B_{L}(z)\delta_{\{x\}}
\right\rangle_{H^{-(n/2) - \varepsilon}, H^{(n/2)+\varepsilon}} \, d^n x \no \\
 & =\int_{B(0,\Lambda)}\bigg\langle \delta_{\{x\}},\bigg(\sum_{j=1}^{n}\big[\partial_{j},J_{L}^{j}(z)\big]+A_{L}(z)\bigg)\delta_{\{x\}}\bigg\rangle \, d^n x \no \\
 & =\int_{B(0,\Lambda)}\bigg\langle \delta_{\{x\}},\sum_{j=1}^{n}\big[\partial_{j},J_{L}^{j}(z)\big]\delta_{\{x\}}\bigg\rangle \, d^n x, \label{e:p5.1.1}
\end{align}
where we used Lemma \ref{lem:j,A,Green} to deduce that $\left\langle \delta_{\{x\}},A_{L}(z)\delta_{\{x\}}\right\rangle =0$
for all $x\in\mathbb{R}^{n}$.
Next, we prove that $\{z \mapsto \tr (\chi_\Lambda B_L(z))\}_{\Lambda>0}$ is locally bounded. 
One recalls from Lemma \ref{lem:almost_Neumann_series}, 
\begin{align*}
  J_{L}^{j}(z) & =2\tr_{2^{\hat n}d}\big(\gamma_{j,n} \cQ \left(R_{1+z}C\right)^{n-2}R_{1+z}\big)+2\tr_{2^{\hat n}d}\big(\gamma_{j,n}\Phi \left(R_{1+z}C\right)^{n-1}R_{1+z}\big)\\
 & \quad+\tr_{2^{\hat n}d}\big(\gamma_{j,n} \cQ \big(\left(L^{*}L+z\right)^{-1}+\left(LL^{*}+z\right)^{-1}\big)\left(CR_{1+z}\right)^{n}\big)\\
 & \quad+\tr_{2^{\hat n}d}\big(\gamma_{j,n}\Phi\big(\left(L^{*}L+z\right)^{-1}+\left(LL^{*}+z\right)^{-1}\big)\left(CR_{1+z}\right)^{n}\big). 
\end{align*}
Hence, 
\begin{align}
 & \sum_{j=1}^{n}\big[\partial_{j},J_{L}^{j}(z)\big] 
 = \sum_{j=1}^{n}\big[\partial_{j},\big(2\tr_{2^{\hat n}d}
 \big(\gamma_{j,n} \cQ (R_{1+z}C)^{n-2}R_{1+z}\big)  \no  \\ 
 & \hspace*{3.5cm} +2\tr_{2^{\hat n}d}\big(\gamma_{j,n}\Phi (R_{1+z}C)^{n-1}R_{1+z}\big)\big)\big]\no \\
 &\quad +\tr_{2^{\hat n}d}\big(\big[ \cQ, \cQ \big(\left(L^{*}L+z\right)^{-1}+\left(LL^{*}+z\right)^{-1}\big)\left(CR_{1+z}\right)^{n}\big]\big)\no   \\
 & \quad+\tr_{2^{\hat n}d} \big(\big[ \cQ,\Phi \big(\left(L^{*}L+z\right)^{-1}+\left(LL^{*}+z\right)^{-1}\big)\left(CR_{1+z}\right)^{n}\big]\big).\label{e:p5.1.2}
\end{align} 
Denoting by $h_{j}$ the integral kernel of 
$2\textnormal{tr}_{{2^{\hatt n}d}}\left(\gamma_{j,n}\Phi C^{n-1}R_{1+z}^{n}\right)$, one 
observes that for some constant $\kappa>0$, 
$|h_j(x,x)|\leq  \kappa (1+|x|)^{1-n}$, $x\in \mathbb{R}^n$. Hence, for 
any $\Lambda>0$, invoking Lemma \ref{lem:comm is sum of der}, Gauss' theorem implies that 
\begin{align}
 &\bigg|\int_{B(0,\Lambda)} \bigg\langle \delta_{\{x\}},\sum_{j=1}^{n}\left[\partial_{j},2\textnormal{tr}_{2^{\hatt n}d}\left(\gamma_{j,n}\Phi C^{n-1}R_{1+z}^{n}\right)\right]\delta_{\{x\}}\bigg\rangle \, 
 d^nx\bigg| \no
  \\ & \quad =\bigg|\int_{B(0,\Lambda)} \sum_{j=1}^n (\partial_j h_j)(x,x) \, d^n x\bigg| \no
  \\ & \quad =\bigg|\int_{\Lambda S^{n-1}}  \sum_{j=1}^n h_j(x,x)\frac{x_j}{\Lambda} \, d^{n-1} \sigma( x)\bigg| \no
  \\ & \quad \leq  \int_{\Lambda S^{n-1}}  \sum_{j=1}^n \left|h_j(x,x)\right| d^{n-1} \sigma( x) \no
  \\ & \quad \leq  n\kappa (1+\Lambda)^{1-n}\Lambda^{n-1}\omega_{n-1},\label{e:p5.1.3}
\end{align}
(with $\omega_{n-1}$ being the $(n-1)$-dimensional volume
of the unit sphere $S^{n-1}=\left\{ x\in\mathbb{R}^{n} \,|\, \left|x\right|=1\right\} $, see \eqref{e:om}). 
The latter is uniformly bounded with respect to $\Lambda>0$.

Using Lemma \ref{lem:some properties of the first two terms}, the definition of $g_{0,j}$ in that lemma as well as Gauss' theorem, one arrives at
\begin{align}
 & \bigg|\int_{B(0,\Lambda)} \bigg\langle \delta_{\{x\}},\sum_{j=1}^{n}\big[\partial_{j},2\tr_{2^{\hat n}d}\big(\gamma_{j,n} \cQ \left(R_{1+z}C\right)^{n-2}R_{1+z}\big)\big]\delta_{\{x\}}\bigg\rangle \, d^nx \no \\
  &\qquad  + \int_{B(0,\Lambda)} \bigg\langle \delta_{\{x\}},\sum_{j=1}^{n} 
  \big[\partial_{j},\big(2\tr_{2^{\hat n}d}\gamma_{j,n}\Phi \left(R_{1+z}C\right)^{n-1}R_{1+z}\big)\big]\delta_{\{x\}}
  \bigg\rangle \, d^nx \no  \\
  &\qquad  - \int_{B(0,\Lambda)} \bigg\langle \delta_{\{x\}},\sum_{j=1}^{n}\big[\partial_{j},2\textnormal{tr}_{2^{\hat n}d}\left(\gamma_{j,n}\Phi C^{n-1}R_{1+z}^{n}\right)\big]\delta_{\{x\}}\bigg\rangle 
  \, d^nx\bigg| \no \\
  & \quad = \bigg|\int_{B(0,\Lambda)} \sum_{j=1}^n (\partial_jg_{0,j})(x,x) \, d^n x\bigg| \no \\
  & \quad \leq  \bigg|\int_{\Lambda S^{n-1}} \sum_{j=1}^n g_{0,j}(x,x)\frac{x_j}{\Lambda} d^{n-1} \sigma(x)\bigg|\no \\
  & \quad \leq  \int_{\Lambda S^{n-1}} \sum_{j=1}^n\left|g_{0,j}(x,x)\right|d^{n-1} \sigma(x) \no \\
  & \quad \leq  \int_{\Lambda S^{n-1}} n\kappa (1+|x|)^{1-n-\epsilon} d^{n-1} \sigma(x) \no \\
  & \quad \leq  n\kappa (1+\Lambda)^{1-n-\epsilon} \omega_{n-1}\Lambda^{n-1} 
  \underset{\Lambda \to \infty}{\longrightarrow} 0. \label{e:p5.1.4}
\end{align}
Next, Theorem \ref{thm:local_boundedness of the trace} implies that
\begin{align}
&\Big\{z\mapsto z \tr\Big(\chi_\Lambda \Big(\tr_{2^{\hat n}d} \big(\big[ \cQ, \cQ \big((L^{*}L+z)^{-1}+(LL^{*}+z)^{-1}\big) (CR_{1+z})^{n}\big]\big) \no    \\
& \quad +\tr_{2^{\hat n}d} \big(\big[ \cQ,\Phi \big((L^{*}L+z)^{-1}+(LL^{*}+z)^{-1}\big)
(CR_{1+z})^{n}\big]\big) \Big) \Big)\Big\}_{\Lambda>0} \label{e:p5.1.5}
\end{align}
is bounded on any compact neighborhood of $0$ intersected with $B(0,\delta)\cup(\rho(-LL^*)\cap\rho(-L^*L))$ for some $\delta>0$. 
Hence, summarizing equations \eqref{e:p5.1.1} and \eqref{e:p5.1.2}, we get for $z\in \mathbb{C}_{\Re>-1}\cap \rho(-L^*L)\cap \rho(-LL^*)$:
\begin{align*}
  &  z2\tr (\chi_\Lambda B_L(z))
  = z \int_{B(0,\Lambda)}\bigg\langle \delta_{\{x\}},\sum_{j=1}^{n}\big[\partial_{j},J_{L}^{j}(z)\big]\delta_{\{x\}}\bigg\rangle \, d^n x
  \\& \quad = z \int_{B(0,\Lambda)}\bigg\langle \delta_{\{x\}},\sum_{j=1}^{n}\big[\partial_{j},2\tr_{2^{\hat n}d}
 \big(\gamma_{j,n} \cQ (R_{1+z}C)^{n-2}R_{1+z}\big)\big]\delta_{\{x\}}\bigg\rangle \, d^n x,
 \\& \qquad + z \int_{B(0,\Lambda)}\bigg\langle \delta_{\{x\}},\sum_{j=1}^{n}\big[\partial_{j},2\tr_{2^{\hat n}d}\big(\gamma_{j,n}\Phi (R_{1+z}C)^{n-1}R_{1+z}\big)\big]\delta_{\{x\}}\bigg\rangle \, d^n x 
 \\& \qquad + z  \int_{B(0,\Lambda)}\!\!\!	\bigg\langle \delta_{\{x\}},\tr_{2^{\hat n}d}\big(\big[ \cQ, \cQ \big(\left(L^{*}L+z\right)^{-1}    \no \\
 & \hspace*{2.5cm}
 +\left(LL^{*}+z\right)^{-1}\big)\left(CR_{1+z}\right)^{n}\big]\big)\delta_{\{x\}}\bigg\rangle \, d^n x
 \\& \qquad + z  \int_{B(0,\Lambda)}\!\!\!	\bigg\langle \delta_{\{x\}},\tr_{2^{\hat n}d}\big(\big[ \cQ, \Phi \big(\left(L^{*}L+z\right)^{-1}     \no \\ 
 & \hspace*{2.5cm} 
 +\left(LL^{*}+z\right)^{-1}\big)\left(CR_{1+z}\right)^{n}\big]\big)\delta_{\{x\}}\bigg\rangle \, d^n x 
 \\& \quad = z\int_{B(0,\Lambda)} \sum_{j=1}^n (\partial_j g_{0,_j})(x,x) \, d^n x+ z \int_{B(0,\Lambda)} \sum_{j=1}^n (\partial_j h_j)(x,x) \, d^n x + 
 \\ & \qquad +z \tr\Big(\chi_\Lambda \Big(\tr_{2^{\hat n}d} \big(\big[ \cQ, \cQ \big((L^{*}L+z)^{-1}+(LL^{*}+z)^{-1}\big) (CR_{1+z})^{n}\big]\big)     \\
 & \quad \quad\quad\quad\quad\quad +\tr_{2^{\hat n}d} \big(\big[ \cQ,\Phi \big((L^{*}L+z)^{-1}+(LL^{*}+z)^{-1}\big) 
 (CR_{1+z})^{n}\big]\big) \Big) \Big).
\end{align*}
Thus, with the estimates \eqref{e:p5.1.3} and \eqref{e:p5.1.4} together with \eqref{e:p5.1.5}, 
one infers that
 \[
   \{ z\mapsto  2z\tr (\chi_\Lambda B_L(z))\}_{\Lambda>0}
 \]
is locally bounded on $B(0,\delta)\cup \mathbb{C}_{\Re>0}$ for some $\delta>0$. 
By Lemma \ref{lem:Laurent_series_argument} together with Theorem \ref{thm:trisbounded}, one infers that
\[
       \{z\mapsto 2\tr (\chi_\Lambda B_L(z))\}_{\Lambda>0}
      \]
is locally bounded on $B(0,\delta)\cup \mathbb{C}_{\Re>0}$. By Montel's Theorem, there exists a sequence $\{\Lambda_k\}_{k\in\bbN}$ of positive reals tending to infinity such that 
\[
   \{z\mapsto 2 \tr (\chi_{\Lambda_k} B_L(z))\}_{k\in \mathbb{N}}
\]
converges in the compact open topology. We denote by $f$ the respective limit.
Then Lemma \ref{lem:comm is sum of der} implies that for $k\in \mathbb{N}$, 
\[
2\tr (\chi_{\Lambda_k}B_{L}(z)) = \int_{B(0,\Lambda_k)}\sum_{j=1}^{n} (\partial_{j}g_{j})(x,x)\, d^n x. 
\]
and so 
\[
      f(z) = \lim_{k\to\infty} \int_{B(0,\Lambda_k)} \text{div} \,\mathbb{G}_{J,z}(x) \, d^n x. 
\]
Here we denote $\mathbb{G}_{J,z}\coloneqq\{x\mapsto g_{j}(x,x)\}_{j\in\{1,\ldots,n\}}$, 
with $g_{j}$ being the integral kernel of $J_{L}^{j}(z)$ for $j\in\{1,\ldots,n\}$.
Next, let $\theta\in (0,\pi/2)$ and choose $z_0 > 0$ as in Corollary
\ref{cor:final aymptotics}\,$(i)$. Let $z\in \Sigma_{z_0,\theta}$, see \eqref{def:Sigma0}. Recalling that $h_{j}$ is the integral kernel of $2\textnormal{tr}_{{2^{\hat n}d}}\left(\gamma_{j,n}\Phi C^{n-1}R_{1+z}^{n}\right)$, 
we define $\mathbb{H}_{z}\coloneqq\{x\mapsto h_{j}(x,x)\}_{j\in\{1,\ldots,n\}}$.
Due to Corollary \ref{cor:final aymptotics}, one can find $\kappa>0$ such
that for $k\in \mathbb{N}$, 
\begin{align*}
& \left|\int_{\Lambda_k S^{n-1}}\left((\mathbb{G}_{J,z}-\mathbb{H}_{z})(x),\frac{x}{\Lambda_k}
\right)_{\bbR^n} \, d^{n-1} \sigma(x)\right|    \\ 
& \quad \leq \int_{\Lambda_k S^{n-1}} \|(\mathbb{G}_{J,z}-\mathbb{H}_{z})(x)\|_{\bbR^n}\, d^{n-1} \sigma(x)\\
 & \quad \leq \kappa\int_{\Lambda_k S^{n-1}} (1+|x|)^{1-n-\epsilon}\, d^{n-1} \sigma(x)\\
 & \quad =\kappa \Lambda_k^{n-1}\omega_{n-1} (1+\Lambda_k)^{1-n-\epsilon}.
\end{align*}
Consequently, 
\[
\lim_{k\to\infty}\int_{\Lambda_k S^{n-1}}\left((\mathbb{G}_{J,z}-\mathbb{H}_{z})(x),\frac{x}{\Lambda_k}
\right)_{\bbR^n} \, d^{n-1} \sigma(x)=0.
\]
Hence, with the help of Gauss' theorem,
\begin{align}
f(z) & =\lim_{k\to\infty}\int_{B(0,\Lambda_k)}\sum_{j=1}^{n} (\partial_{j}g_{j})(x,x) \, d^n x 
= \int_{\mathbb{R}^{n}}\text{div} \, \mathbb{G}_{J,z}(x) \, \, d^n x\notag \\
 & =\lim_{k\to\infty}\int_{B(0,\Lambda_k)}\text{div} \, \mathbb{G}_{J,z}(x) \, d^n x=\lim_{k\to\infty}\int_{\Lambda_k S^{n-1}}\left( \mathbb{G}_{J,z}(x),\frac{x}{\Lambda_k}\right)_{\bbR^n} \, d^{n-1} \sigma(x) \notag \\
 & =\lim_{k\to\infty}\int_{\Lambda_k S^{n-1}}\left( \mathbb{H}_{z}(x),\frac{x}{\Lambda_k}
 \right)_{\bbR^n} \, d^{n-1} \sigma(x) \notag \\
 & =\left(\frac{i}{8\pi}\right)^{(n-1)/2}\frac{1}{\left[(n-1)/2\right]!} (1+z)^{-n/2} 
 \lim_{k\to\infty}\int_{\Lambda_k S^{n-1}} \notag \\
 & \quad \times \sum_{j=1}^n \bigg(\sum_{i_{1},\ldots,i_{n-1} = 1}^n \epsilon_{ji_{1}\ldots i_{n-1}}\tr\big(\Phi(x) (\partial_{i_{1}}\Phi)(x)\ldots (\partial_{i_{n-1}}\Phi)(x)\big)\bigg)  \no \\ 
 & \qquad \quad \;\;\; \times \left(\frac{x_{j}}{\Lambda_k}\right) \, d^{n-1} \sigma(x),    
  \label{eq:limit_expr_f} 
\end{align} 
where, for the last integral, we used Proposition \ref{prop:formula for index}. By 
Theorem \ref{thm:index with Witten} one has $f(0) = 2 \ind (L)$. In particular, any sequence 
$\{\Lambda_k\}_k$ of positive reals converging to infinity contains a subsequence 
$\{\Lambda_{k_\ell}\}_\ell$ such that for that particular subsequence the limit
\begin{align*} 
& \lim_{\ell\to\infty}\int_{\Lambda_{k_\ell} S^{n-1}} \sum_{j=1}^n 
\bigg(\sum_{i_{1},\ldots,i_{n-1} = 1}^n \epsilon_{ji_{1}\ldots i_{n-1}}\tr\big(\Phi(x) (\partial_{i_{1}}\Phi)(x) \ldots (\partial_{i_{n-1}}\Phi)(x)\big)\bigg)    \\ 
 & \hspace*{2.7cm} \times \left(\frac{x_{j}}{\Lambda_{k_\ell}}\right)\, d^{n-1} \sigma(x) 
\end{align*}  
exists and equals 
\begin{equation}\label{eq:subseq_argum} 
\frac{2 \ind (L) \left[(n-1)/2\right]!} {\left[i/(8\pi)\right]^{(n-1)/2}}.
\end{equation}
Hence, the limit 
\begin{align}\label{eq:limit_exists} 
\begin{split} 
& \lim_{\Lambda\to\infty}\int_{\Lambda S^{n-1}} \sum_{j=1}^n 
\bigg(\sum_{i_{1},\ldots,i_{n-1} = 1}^n \epsilon_{ji_{1}\ldots i_{n-1}}\tr\left(\Phi(x)\left(\partial_{i_{1}}\Phi\right)(x)\ldots\left(\partial_{i_{n-1}}\Phi\right)(x)\right)\bigg) \\ 
& \hspace*{2.5cm} \times \left(\frac{x_{j}}{\Lambda }\right) \, d^{n-1} \sigma(x)  
\end{split} 
\end{align} 
exists and equals the number in \eqref{eq:subseq_argum}. On the other hand, for $z\in \Sigma_{z_0,\theta}$, (see again Corollary \ref{cor:final aymptotics}) the family 
\[
  \{z\mapsto \tr (\chi_{\Lambda} B_L(z))\}_{\Lambda>0}
\]
converges for $\Lambda\to\infty$ on the domain $\Sigma_{z_0,\theta}$ if and only if the limit in \eqref{eq:limit_exists} exists. Indeed, this follows from the explicit expression for the limit in \eqref{eq:limit_expr_f}. Therefore,
\[
  \{z\mapsto \tr (\chi_{\Lambda} B_L(z))\}_{\Lambda>0}
\] 
converges in the compact open topology on $\Sigma_{z_0,\theta}$. By the local boundedness of the latter family on the domain $B(0,\delta)\cup\mathbb{C}_{\Re>0}$, the principle of analytic continuation implies that the latter family actually converges on the domain $B(0,\delta)\cup\mathbb{C}_{\Re>0}$ in the compact open topology. In particular, 
\begin{align*}
&  2 f(z) (1+z)^{n/2} \frac{[(n-1)/2]!}{[i/(8\pi)]^{(n-1)/2}} \\ 
& \quad = \lim_{\Lambda\to\infty}\int_{\Lambda S^{n-1}} \sum_{j=1}^n \bigg(\sum_{i_{1},\ldots,i_{n-1} = 1}^n \epsilon_{ji_{1}\ldots i_{n-1}}\tr \big(\Phi(x) (\partial_{i_{1}}\Phi)(x)\dots (\partial_{i_{n-1}}\Phi)(x)\big) \bigg)    \\
& \hspace*{3.3cm} \times \bigg(\frac{x_{j}}{\Lambda }\bigg) \, d^{n-1} \sigma(x).  \qedhere
\end{align*} 
\end{proof}

\newpage

\section{The Case $n=3$}\label{sec:n=3}

In this section we shall discuss the necessary modifications, such that 
Theorem \ref{thm:Witten_reg_n5} continues to hold also for the case $n=3$. The main issue for the need of extra arguments for this case is the lack of differentiability of the integral kernel of 
$J_L^j(z)$ given in Lemma \ref{lem:j,A,Green}. The main issue being the first summand in the expression for $J_L^j(z)$ derived in Lemma \ref{lem:almost_Neumann_series}, that is, the term
\[
  \tr_{2^{\hat n}d}\big(\gamma_{j,n} \cQ (R_{1+z}C)R_{1+z}\big),
\]
for the integral kernel of which we fail to show differentiability. Indeed, as this opertor increases regularity only by $3$ orders of differentiability, not even continuity of the associated integral kernel is clear. The basic idea to overcome this difficulty and to get the result asserted in Theorem \ref{thm:Witten_reg_n5} also for the case $n=3$ has already been used and is contained in Lemma \ref{lem:pointwise convergence of regularized Green}. So, the operator $B_L(z)$ will be multiplied from the left and from the right by $(1-\mu\Delta)^{-1}$ for some $\mu>0$. The reason for multiplying from both sides is that we wanted to re-use strategies for showing that certain integral kernels vanish on the diagonal. The key for the latter arguments has been the self-adjointness of the operators under consideration, which, in turn, result in symmetry properties for the associated integral kernel. 

An additional fact, enabling the strategy just sketched for the case $n=3$, is the following result.

\begin{proposition}[{See, e.g., \cite{Si05}, p.~28--29, or \cite{Ya92}, Lemma~6.1.3}]
\label{prop:s1-approx} Assume  $\cH$ is a complex, separable Hilbert space,
$B\in\cB_1(\cH),$ and $A \geq 0$ is self-adjoint in $\cH$.
Then for $\mu>0$, $B_{\mu}\coloneqq(1+\mu A)^{-1}B(1+\mu A)^{-1}\in\cB_1(\cH)$
and $B_{\mu}\to B$ in $\cB_1(\cH)$ as $\mu\downarrow 0$. In particular,
$\tr_{\cH} (B_{\mu}) \underset{\mu \downarrow 0}{\longrightarrow} \tr_{\cH} (B)$. 
\end{proposition}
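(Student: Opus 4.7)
The plan combines functional calculus for the nonnegative self-adjoint $A$ with the principle that a uniformly bounded, strongly null sequence of operators becomes $\cB_1$-null when multiplied against a fixed trace class operator. I would first apply the spectral theorem to write $(1+\mu A)^{-1}=f_\mu(A)$ with $f_\mu(\lambda)=(1+\mu\lambda)^{-1}$, a nonnegative self-adjoint contraction in $\cB(\cH)$; since $|f_\mu|\le 1$ and $f_\mu(\lambda)\to 1$ pointwise as $\mu\downarrow 0$, Lebesgue dominated convergence for the spectral measures of $A$ yields $\slim_{\mu\downarrow 0}(1+\mu A)^{-1}=I_\cH$. The ideal property of $\cB_1(\cH)$ then immediately gives $B_\mu\in\cB_1(\cH)$ with $\|B_\mu\|_{\cB_1(\cH)}\le\|B\|_{\cB_1(\cH)}$.

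Next, setting $S_\mu\coloneqq I_\cH-(1+\mu A)^{-1}$, I would decompose
\begin{equation*}
B-B_\mu = S_\mu B + (1+\mu A)^{-1} B\, S_\mu,
\end{equation*}
so that the trace-norm convergence $B_\mu\to B$ reduces to the following auxiliary claim: if $\{T_\alpha\}\subset\cB(\cH)$ is uniformly bounded in operator norm with $\slim_\alpha T_\alpha=0$, and if $C\in\cB_1(\cH)$, then $\|T_\alpha C\|_{\cB_1(\cH)}\to 0$ (the $C T_\alpha$ version then follows from $\|T\|_{\cB_1(\cH)}=\|T^*\|_{\cB_1(\cH)}$ applied to $C^*$). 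For the auxiliary claim I would expand $C$ via its singular value decomposition $C=\sum_k s_k(\cdot,e_k)_\cH f_k$ with $\sum_k s_k=\|C\|_{\cB_1(\cH)}$, estimate
\begin{equation*}
\|T_\alpha C\|_{\cB_1(\cH)}\le \sum_k s_k\|T_\alpha f_k\|_\cH,
\end{equation*}
and conclude by the dominated convergence theorem for series: each summand tends to $0$, and each is dominated by $M s_k$, where $M$ is a uniform operator-norm bound on $\{T_\alpha\}$.

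Finally, $\tr_\cH(B_\mu)\to\tr_\cH(B)$ is immediate from the continuity bound $|\tr_\cH(T)|\le\|T\|_{\cB_1(\cH)}$. The main obstacle is the middle step: the ideal property alone yields only strong operator convergence $B_\mu\to B$, and it is the absolute summability of the singular values of $B$, combined with Lebesgue dominated convergence, that upgrades this to $\cB_1(\cH)$-norm convergence.
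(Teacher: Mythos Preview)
The paper does not supply its own proof of this proposition; it merely quotes the result from Simon's \emph{Trace Ideals} and Yafaev's Lemma~6.1.3. Your argument is correct and is essentially the standard one found in those references. Indeed, the auxiliary claim you isolate (a uniformly bounded, strongly null net multiplied against a fixed trace class operator tends to zero in $\cB_1$-norm) is precisely the fact the paper itself invokes as \cite[Lemma~6.1.3]{Ya92} in the proof of Theorem~\ref{thm:index with Witten}.

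One small remark on your parenthetical: the passage from $\|T_\alpha C\|_{\cB_1}\to 0$ to $\|C T_\alpha\|_{\cB_1}\to 0$ via $\|CT_\alpha\|_{\cB_1}=\|T_\alpha^* C^*\|_{\cB_1}$ requires that $T_\alpha^*\to 0$ strongly, which does \emph{not} follow from $\slim T_\alpha=0$ in general. In the case at hand this is harmless, since $S_\mu=I_\cH-(1+\mu A)^{-1}$ is self-adjoint and hence $S_\mu^*=S_\mu\to 0$ strongly; but as a general claim the remark needs that extra hypothesis.
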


Next, we will give the details for the modifications of the proof of Theorem \ref{thm:Witten_reg_n5} for the case $n=3$. Thus, for $\mu>0$, we introduce the operator 
\begin{equation}
B_{L,\mu}(z)\coloneqq\left(1-\mu\Delta  \right)^{-1}B_{L}(z)\left(1-\mu\Delta  \right)^{-1},\label{eq:B_Lmu}
\end{equation}
where $z\in\rho\left(-L^{*}L\right)\cap\rho\left(-LL^{*}\right)$, $L= \cQ +\Phi$
given by \eqref{eq:def_of_L(2)}, and $B_{L}(z)$ is given by \eqref{eq:def_of_BL(z)2}.
We also introduce
\begin{align}
J_{L,\mu}^{j}(z) & = (1-\mu\Delta )^{-1}\big(\tr_{2^{\hat n}d}\big(L\big(L^{*}L+z)^{-1}\gamma_{j,n}\big)    \no \\ 
& \quad +\tr_{2^{\hat n}d}\big(L^{*}(LL^{*}+z)^{-1}\gamma_{j,n}\big)\big) (1-\mu\Delta )^{-1},    
\label{eq:JJJ_mu}
\end{align}
with $\gamma_{j,n}\in\mathbb{C}^{2^{\hat n} \times 2^{\hat n}}$, $j\in\{1,\ldots,n\}$,  
as in Remark \ref{rem:Eucl-Dirac-Algebar}, and 
\begin{align}
A_{L,\mu}(z) & =(1-\mu\Delta )^{-1}\big(\tr_{2^{\hat n}d}\big(\big[\Phi,L^{*}(LL^{*}+z)^{-1}\big]\big)   \no \\ 
& \quad -\tr_{2^{\hat n}d}\big(\big[\Phi,L(L^{*}L+z)^{-1}\big]\big)\big) (1-\mu\Delta )^{-1}   
\label{eq:ALz_mu}
\end{align}
for the admissible potential $\Phi$ (see Definition \ref{def:phi_admissible}).
By Theorem \ref{thm:trisbounded} and the ideal property of $\cB_1(\cH)$,
the operator $B_{L,\mu}(z)$ is trace class for all $\mu>0$ and $z\in\rho\left(-L^{*}L\right)\cap\rho\left(-LL^{*}\right)$
and $\Re (z) > -1$. As for the case $n\geq5$, we need the following
more detailed description of the operator $B_{L,\mu}(z)$: 

\begin{lemma}
\label{lem:repre of bmu} Let $L= \cQ +\Phi$ as in \eqref{eq:def_of_L(2)}, 
$\mu>0$, $z\in\rho\left(-L^{*}L\right)\cap\rho\left(-LL^{*}\right)$, with 
$\Re (z) > -1$. Then with $J_{L,\mu}^{j}(z)$, $j\in\{1,\ldots,m\}$, 
and $A_{L,\mu}(z)$ given by \eqref{eq:JJJ_mu} and \eqref{eq:ALz_mu}, 
one has 
\[
2B_{L,\mu}(z)=\sum_{j=1}^{n}\big[\partial_{j},J_{L,\mu}^{j}(z)\big]+A_{L,\mu}(z). 
\] 
\end{lemma}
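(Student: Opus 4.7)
The plan is to derive this directly from Proposition \ref{prop:Reformulation of B} by sandwiching the identity with the regularizing factors $(1-\mu\Delta)^{-1}$, and then moving these factors through the commutators using the fact that $(1-\mu\Delta)^{-1}$ commutes with every $\partial_j$.

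First, I would start from Proposition \ref{prop:Reformulation of B}, which gives
\[
2B_L(z) = \sum_{j=1}^n [\partial_j, J_L^j(z)] + A_L(z),
\]
valid on the natural domains in $L^2(\bbR^n)^{2^{\hat n}d}$. Multiplying on both sides by $(1-\mu\Delta)^{-1}$ and recalling the definition \eqref{eq:B_Lmu} together with \eqref{eq:ALz_mu}, the only thing that needs checking is the behaviour of the sum of commutators under this sandwiching. Concretely, I would show the identity
\[
(1-\mu\Delta)^{-1}\bigl[\partial_j, J_L^j(z)\bigr](1-\mu\Delta)^{-1}
= \bigl[\partial_j, (1-\mu\Delta)^{-1} J_L^j(z) (1-\mu\Delta)^{-1}\bigr]
= [\partial_j, J_{L,\mu}^j(z)],
\]
where the last equality is just \eqref{eq:JJJ_mu}.

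The middle equality is the only substantive step. Here I would invoke that $(1-\mu\Delta)^{-1}$ is a Fourier multiplier with symbol $(1+\mu|\xi|^2)^{-1}$ and thus commutes with every $\partial_j$ on the Sobolev scale (this is immediate from the Fourier representation; alternatively it follows from the fact that $-\Delta$ and $\partial_j$ are both Fourier multipliers and have commuting resolvents as established in the course of the proof of Theorem \ref{thm:L_is_closed}). Consequently,
\[
(1-\mu\Delta)^{-1}\partial_j = \partial_j (1-\mu\Delta)^{-1}
\]
as operators between the appropriate Sobolev spaces, and therefore
\begin{align*}
(1-\mu\Delta)^{-1}\bigl(\partial_j J_L^j(z) - J_L^j(z)\partial_j\bigr)(1-\mu\Delta)^{-1}
&= \partial_j (1-\mu\Delta)^{-1} J_L^j(z)(1-\mu\Delta)^{-1} \\
&\quad - (1-\mu\Delta)^{-1} J_L^j(z)(1-\mu\Delta)^{-1}\partial_j,
\end{align*}
which is exactly $[\partial_j, J_{L,\mu}^j(z)]$.

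Putting these pieces together, conjugating the identity of Proposition \ref{prop:Reformulation of B} by $(1-\mu\Delta)^{-1}$ yields
\[
2B_{L,\mu}(z) = \sum_{j=1}^n [\partial_j, J_{L,\mu}^j(z)] + A_{L,\mu}(z),
\]
which is the asserted formula. The only potential obstacle is the legitimacy of passing the unbounded operator $\partial_j$ through $(1-\mu\Delta)^{-1}$, but since both are Fourier multipliers and $(1-\mu\Delta)^{-1}$ maps each $H^s(\bbR^n)$ boundedly into $H^{s+2}(\bbR^n)$, the formal manipulation is rigorous on the entire fractional Sobolev scale, which is all we need in view of our convention (cf.\ Section \ref{s2}) of extending the operators under consideration to the appropriate distribution spaces.
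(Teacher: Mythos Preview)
Your proof is correct and follows essentially the same route as the paper: both start from Proposition \ref{prop:Reformulation of B}, conjugate by $(1-\mu\Delta)^{-1}$, and reduce the claim to the single identity $(1-\mu\Delta)^{-1}[\partial_j,J_L^j(z)](1-\mu\Delta)^{-1}=[\partial_j,J_{L,\mu}^j(z)]$, which is verified using that $(1-\mu\Delta)^{-1}$ commutes with $\partial_j$. Your justification via Fourier multipliers is slightly more explicit than the paper's, but the argument is the same.
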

\begin{proof}
The only nontrivial item to be established, invoking Proposition \ref{prop:Reformulation of B}
together with equations \eqref{eq:B_L=00003DJJJ+ALz}, \eqref{eq:JJJ},
and \eqref{eq:ALz}, is to establish that for $j\in\{1,\ldots,n\}$, 
\[
\big[\partial_{j},J_{L,\mu}^{j}(z)\big]=\left(1-\mu\Delta \right)^{-1}\big[\partial_{j},J_{L}^{j}(z)\big]\left(1-\mu\Delta \right)^{-1}.
\]
Recalling $\gamma_{j,n}\in\mathbb{C}^{2^{\hat n} \times 2^{\hat n}}$, $j\in\{1,\ldots,n\}$  
(cf.\ Remark \ref{rem:Eucl-Dirac-Algebar}), one observes that 
\begin{align*}
 & \left(1-\mu\Delta \right)^{-1}\big[\partial_{j},\tr_{2^{\hat n}d}\big(L\left(L^{*}L+z\right)^{-1}\gamma_{j,n}\big)\big]\left(1-\mu\Delta  \right)^{-1}\\
 & \quad =\left(1-\mu\Delta  \right)^{-1}\partial_{j}\tr_{2^{\hat n}d} \big(L\left(L^{*}L+z\right)^{-1}\gamma_{j,n}\big) \\
& \qquad 
 -\tr_{2^{\hat n}d} \big(L\left(L^{*}L+z\right)^{-1}\gamma_{j,n}\big) \partial_{j} 
 \left(1-\mu\Delta \right)^{-1}\\
 & \quad =\partial_{j}\left(1-\mu\Delta  \right)^{-1}\tr_{2^{\hat n}d} \big(L\left(L^{*}L+z\right)^{-1}\gamma_{j,n}\big) \left(1-\mu\Delta  \right)^{-1}\\
 & \qquad-\left(1-\mu\Delta \right)^{-1}\tr_{2^{\hat n}d} \big(L\left(L^{*}L+z\right)^{-1}\gamma_{j,n}\big)\left(1-\mu\Delta \right)^{-1}\partial_{j}\\
 & \quad =\big[\partial_{j},\left(1-\mu\Delta \right)^{-1}\tr_{2^{\hat n}d} 
 \big(L\left(L^{*}L+z\right)^{-1}\gamma_{j,n}\big)\left(1-\mu\Delta \right)^{-1}\big],
\end{align*}
yielding the assertion. 
\end{proof}

In contrast to the operator $J_{L}^j(z)$, the integral kernel for the regularized operator $J_{L,\mu}^j(z)$ satisfies the desired differentiability properties:

\begin{corollary}
\label{cor:diffble of reg green} Let $L= \cQ +\Phi$ be given by \eqref{eq:def_of_L(2)},
$z\in\rho\left(-L^{*}L\right)\cap\rho\left(-LL^{*}\right),$ with $\Re (z) > -1$, and suppose $\mu > 0$.
If $n\in\mathbb{N}$ is odd, then for all $j\in\{1,\ldots,n\}$, the
integral kernel of {$J_{L,\mu}^{j}(z)$ given by \eqref{eq:JJJ_mu}
is continuously differentiable.} 
\end{corollary}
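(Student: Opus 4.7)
The plan is to combine the representation of $J_L^j(z)$ from Lemma~\ref{lem:almost_Neumann_series} with the integral-kernel regularity criterion in Corollary~\ref{cor:integral kernels regularity}, carefully tracking the Sobolev mapping gains of every building block after sandwiching by $(1-\mu\Delta)^{-1}$. For $n\geq 5$ the stronger conclusion of Lemma~\ref{lem:j,A,Green} already gives continuous differentiability of the integral kernel $G_{J,j,z}$ of $J_L^j(z)$ on $\bbR^n\times\bbR^n$, and sandwiching by $(1-\mu\Delta)^{-1}$ (which is smoothing) only improves matters. The substantive case is therefore $n=3$, where Lemma~\ref{lem:j,A,Green} yields only continuity of $G_{J,j,z}$, because the term $\tr_{2d}\bigl(\gamma_{j,3}\cQ R_{1+z}CR_{1+z}\bigr)$ fails to gain enough Sobolev regularity.

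First, I would record the following mapping properties, all valid for every $\ell\in\bbR$: by Proposition~\ref{prop:concrete case} one has $(1-\mu\Delta)^{-1},R_{1+z}\in\cB\bigl(H^\ell(\bbR^n),H^{\ell+2}(\bbR^n)\bigr)$, while $\Phi$ and each entry of $C=\cQ\Phi$ act as bounded multiplication operators on $H^\ell(\bbR^n)$. A standard elliptic bootstrap applied to $(L^*L+z)u=f$ and $(LL^*+z)u=f$, using the smoothness of $\Phi^2$ and $C$ from Proposition~\ref{prop:compu_lstartl}, then yields $(L^*L+z)^{-1},(LL^*+z)^{-1}\in\cB\bigl(H^\ell(\bbR^n)^{2^{\hat n}d},H^{\ell+2}(\bbR^n)^{2^{\hat n}d}\bigr)$ for all $\ell\in\bbR$. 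Finally, $\cQ$ costs exactly one order of regularity.

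Next, for each of the four summands making up $J_L^j(z)$ in Lemma~\ref{lem:almost_Neumann_series}, I would compute the overall Sobolev gain. The first summand $\cQ(R_{1+z}C)^{n-2}R_{1+z}$ has gain $2n-3$ (the worst case, matching the count already recorded in the proof of Lemma~\ref{lem:j,A,Green}), the second $\Phi(R_{1+z}C)^{n-1}R_{1+z}$ has gain $2n$, and the two remaining summands involving $(L^*L+z)^{-1}+(LL^*+z)^{-1}$ sandwiched with $(CR_{1+z})^n$ have gain at least $2n+1$. Sandwiching by $(1-\mu\Delta)^{-1}$ on both sides adds a further four orders, so the net gain for each summand of $J_{L,\mu}^j(z)$ is at least $2n+1>n+1$ for every odd $n\geq 3$. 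Hence Corollary~\ref{cor:integral kernels regularity} applies (choose any $k$ with $n<k<2n$), yielding continuous differentiability of the integral kernel of $J_{L,\mu}^j(z)$.

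The step I expect to be the most delicate is the bootstrap asserting that $(L^*L+z)^{-1}$ and $(LL^*+z)^{-1}$ act with the claimed two-order gain on the entire Sobolev scale, including negative indices. This follows routinely from the boundedness of multiplication by $C,\Phi\in C_b^\infty\bigl(\bbR^n;\bbC^{d\times d}\bigr)$ on all $H^\ell$, the Neumann-type identities relating these resolvents to $R_{1+z}$ via Proposition~\ref{prop:resolvent formulas}, and a duality argument for $\ell<0$; no essentially new ideas are required beyond what is already in Sections~\ref{sec:ptw_intk} and~\ref{sec:A-particular-First}.
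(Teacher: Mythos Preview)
Your proposal is correct and follows essentially the same approach as the paper: decompose $J_L^j(z)$ via Lemma~\ref{lem:almost_Neumann_series}, count Sobolev gains term by term, observe that sandwiching by $(1-\mu\Delta)^{-1}$ adds four orders so that the worst summand $\cQ(R_{1+z}C)^{n-2}R_{1+z}$ now maps $H^\ell$ to $H^{\ell+2n+1}$, and invoke Corollary~\ref{cor:integral kernels regularity}. The paper's proof is terser, checking only this worst term and writing the final inequality as $2n>n$. Your explicit discussion of the elliptic bootstrap giving $(L^*L+z)^{-1},(LL^*+z)^{-1}\in\cB(H^\ell,H^{\ell+2})$ for all $\ell\in\bbR$ is a point the paper leaves implicit (Proposition~\ref{prop:concrete case} as stated covers only $R_\mu$ and smooth multipliers), so your version is in fact slightly more complete on that score.
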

\begin{proof}
We recall $R_{1+z}$ as given by \eqref{eq:resolvent_of_laplace}, $\cQ$
and $C$ given by \eqref{eq:def_of_Q2} and \eqref{eq:commutator=00003DC}, respectively,
as well as $\gamma_{j,n}$ as in Remark \ref{rem:Eucl-Dirac-Algebar}.
According to Proposition \ref{prop:concrete case}
for $\ell\in\mathbb{R}$, it suffices to observe that the operator 
\[
\left(1-\mu\Delta  \right)^{-1}\tr_{2^{\hatt n}d}\gamma_{j,n} \cQ \left(R_{1+z}C\right)^{n-2}R_{1+z}\left(1-\mu\Delta \right)^{-1}
\]
is continuous from $H^{\ell}(\mathbb{R}^{n})$ (see \eqref{eq:def_Hs})
to $H^{\ell+2(n-2)+2+4-1}(\mathbb{R}^{n})=H^{\ell+2n+1}(\mathbb{R}^{n}).$ 
Thus, by Corollary \ref{cor:integral kernels regularity}, the assertion
follows from $2n>n$. 
\end{proof}

Next, we turn to a variant of the first assertion in 
Lemma \ref{lem:some properties of the first two terms}. 

\begin{lemma}
\label{lem:reg vanishes on diagonal}Let $\mu>0$, $z\in\mathbb{C}$,
$\Re (z) > -1$, $R_{1+z}$ given by \eqref{eq:resolvent_of_laplace},
$C$ given by \eqref{eq:commutator=00003DC}, and $\cQ$ given by \eqref{eq:def_of_Q2}. 
Then for all $j\in\{1,2,3\}$, the integral kernel of 
\[
\left(1-\mu\Delta\right)^{-1}\tr_{2d}\left(\gamma_{j,3} \cQ R_{1+z}CR_{1+z}\right)\left(1-\mu\Delta \right)^{-1}
\]
vanishes on the diagonal, where $\gamma_{1,3},\gamma_{2,3},\gamma_{3,3}\in\mathbb{C}^{2\times2}$
are given as in Remark \ref{rem:Eucl-Dirac-Algebar} $($see also Appendix 
\ref{sec:Appendix:-the-Construction}$)$.
\end{lemma}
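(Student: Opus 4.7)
The plan mirrors the proof of Lemma \ref{lem:some properties of the first two terms}, with the regularizers $P_\mu := (1-\mu\Delta)^{-1}$ inserted on both sides to compensate for the lack of continuity of the unregularized integral kernel when $n=3$. First I would apply Lemma \ref{lem:better diagonal representation} (with $n=k=3$ and $\ell=1$) to obtain
\[
\tr_{2d}\big(\gamma_{j,3}\mathcal{Q}R_{1+z}CR_{1+z}\big) = -\tr_{2d}\big(\gamma_{j,3}\mathcal{Q}R_{1+z}\Phi\mathcal{Q}R_{1+z}\big).
\]
Since $P_\mu$ is a scalar Fourier multiplier (tensored with $I_{2d}$), it commutes with $\mathcal{Q}$, $R_{1+z}$, and $\partial_k$, so it can be freely moved around these factors.

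Next I would expand $\mathcal{Q}=\sum_{k}\gamma_{k,3}\partial_k$, observing that the Dirac matrices act on the $\mathbb{C}^2$ tensor factor while $\Phi$ acts on the $\mathbb{C}^d$ factor, so that the internal trace factorizes. Invoking Proposition \ref{prop:comp_of_Dirac_Alge} in the form $\tr_2(\gamma_{j,3}\gamma_{k,3}\gamma_{l,3}) = 2i\epsilon_{jkl}$, I would arrive at
\[
P_\mu \tr_{2d}\big(\gamma_{j,3}\mathcal{Q}R_{1+z}CR_{1+z}\big)P_\mu
= -2i\sum_{k,l=1}^{3}\epsilon_{jkl}\,\partial_k R_{1+z}\big(P_\mu \psi P_\mu\big)\partial_l R_{1+z},
\]
where $\psi := \tr_d \Phi$ is a real-valued, bounded, smooth scalar function on $\mathbb{R}^3$. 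The reduction from matrix-valued $\Phi$ to the scalar $\psi$ is permissible because $\partial_k R_{1+z}$ acts trivially on the $\mathbb{C}^d$ factor, so that no noncommutativity issue arises under $\tr_d$.

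Now I would observe that $P_\mu \psi P_\mu$ is self-adjoint with continuous integral kernel $b(x,y)$ satisfying $b(x,y)=b(y,x)$ (as $\psi$ is real and the kernel of $P_\mu$ is symmetric in its two arguments). By Corollary \ref{cor:integral kernels regularity} combined with Proposition \ref{prop:concrete case}, each $\partial_k R_{1+z}(P_\mu\psi P_\mu)\partial_l R_{1+z}$ admits a continuous integral kernel
\[
K_{k,l}(x,y) = \int_{\mathbb{R}^3}\!\int_{\mathbb{R}^3}(\partial_k r_{1+z})(x-w_1)\,b(w_1,w_2)\,(\partial_l r_{1+z})(w_2-y)\,d^3 w_1\,d^3 w_2,
\]
the Sobolev gain $1+4+1=6>3=n$ being sufficient. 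Setting $y=x$, performing the change of variables $w_1\leftrightarrow w_2$, and exploiting the symmetry of $b$ together with the oddness of $\partial_k r_{1+z}$ (as $r_{1+z}$ is radial) will yield $K_{k,l}(x,x)=K_{l,k}(x,x)$; contraction with the antisymmetric $\epsilon_{jkl}$ then produces the claim.

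The only genuine obstacle lies in carefully unpacking the tensor structure to justify the factorization of $\tr_{2d}$ into $\tr_2$ on the spinor factor and $\tr_d$ on the potential factor. Once this algebraic bookkeeping is settled, the symmetry argument is entirely structural and applies uniformly for all $z\in\mathbb{C}$ with $\Re(z)>-1$, obviating any need for analytic continuation in $z$.
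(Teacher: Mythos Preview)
Your proposal is correct and follows essentially the same route as the paper: invoke Lemma~\ref{lem:better diagonal representation} to replace $C$ by $\Phi\mathcal{Q}$, expand $\mathcal{Q}$, and then use a symmetry argument (swap of integration variables combined with evenness of $r_{1/\mu}$ and oddness of $\partial_k r_{1+z}$) to show that the diagonal kernel is symmetric in the two derivative indices, which the antisymmetric $\epsilon_{jkl}$ then annihilates. Your early reduction to the scalar $\psi=\tr_d\Phi$ and the relocation of the regularizers to form $P_\mu\psi P_\mu$ are cosmetic simplifications of the paper's argument, which instead writes out the $r_{1/\mu}$ kernels explicitly and keeps $\Phi$ matrix-valued throughout; the underlying symmetry mechanism is identical.
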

\begin{proof}
We denote the integral kernel under consideration
by $h_{j}$, $j\in\{1,2,3\}$. From Lemma \ref{lem:better diagonal representation},
one recalls,   
\begin{align*}
 & \left(1-\mu\Delta  \right)^{-1}\tr_{2d}\left(\gamma_{j,3} \cQ R_{1+z}CR_{1+z}\right)\left(1-\mu\Delta  \right)^{-1}\\
 & \quad =-\left(1-\mu\Delta  \right)^{-1}\tr_{2d}\left(\gamma_{j,3} \cQ R_{1+z} 
 \Phi \cQ R_{1+z}\right)\left(1-\mu\Delta  \right)^{-1}.
\end{align*}
With $\left(1-\mu\Delta \right)^{-1}= (1/\mu) 
\left((1/\mu) - \Delta \right)^{-1}$
one computes, 
\begin{align*}
h_{j}(x,x) & =-\frac{1}{\mu^{2}}\int_{\left(\mathbb{R}^{3}\right)^{3}}r_{1/\mu}(x-x_{1}) 
\tr_{2d} \bigg(\gamma_{j,3}\sum_{i_{1}=1}^{3}\gamma_{i_{1},3} (\partial_{i_{1}}r_{1+z})(x_{1}-x_{2})\Phi(x_{2})\\
 & \quad \times \sum_{i_{2}=1}^{3}\gamma_{i_{2},3}(\partial_{i_{2}}r_{1+z})(x_{2}-x_{3})\bigg)
 r_{1/\mu}(x_{3}-x) \, d^n x_1 d^n x_2  d^n x_3\\
 & =\frac{1}{\mu^{2}}\int_{\left(\mathbb{R}^{3}\right)^{3}}r_{1/\mu}(x-x_{1}) 
 \tr_{2d}\bigg( \gamma_{j,3}\sum_{i_{1}=1}^{3}\gamma_{i_{1},3}
 (\partial_{i_{1}}r_{1+z})(x_{2}-x_{1})\Phi(x_{2})\\
 & \quad \times \sum_{i_{2}=1}^{3}\gamma_{i_{2},3} 
 (\partial_{i_{2}}r_{1+z})(x_{2}-x_{3})\bigg) r_{1/\mu}(x_{3}-x) \, d^n x_1 d^n x_2  d^n x_3\\
 & =\frac{1}{\mu^{2}}\int_{\left(\mathbb{R}^{3}\right)^{3}}r_{1/\mu}(x_{1}-x)\tr_{2d}\bigg(\gamma_{j,3}\sum_{i_{1}=1}^{3}\gamma_{i_{1},3} (\partial_{i_{1}}r_{1+z})(x_{2}-x_{1})\Phi(x_{2})\\
 & \quad \times \sum_{i_{2}=1}^{3}\gamma_{i_{2},3} 
 (\partial_{i_{2}}r_{1+z})(x_{2}-x_{3})\bigg)r_{1/\mu}(x_{3}-x) \, d^n x_1 d^n x_2  d^n x_3, 
 \quad x\in\mathbb{R}^{n}. 
\end{align*}
The latter expression is symmetric in $x_{2}$ and $x_{3}$, by Fubini's
theorem. Hence, the assertion follows as in Lemma
\ref{lem:better diagonal representation} with the help of Corollary
\ref{cor:comp_Dirac_alg_trace-1}.
\end{proof}

Now we are in position to prove the trace theorem for dimension $n=3$. Of course the principal strategy for the proof is similar to the one for dimensions $n\geq 5$ and, thus, need not be repeated.

\begin{theorem}
\label{thm:Witten-reg_n3} Let $n=3$, $L= \cQ +\Phi$ given by \eqref{eq:def_of_L(2)}, 
and $\chi_\Lambda$ given by \eqref{eq:def_of_chi}. 
Then for all $z\in\mathbb{C}$ with $z\in\rho\left(-LL^{*}\right)\cap\rho\left(-L^{*}L\right)$, 
and $B_{L}(z)$ given \eqref{eq:def_of_BL(z)2}, $\chi_\Lambda B_L(z)$ is trace class for all $\Lambda>0$. The limit $f(\cdot)\coloneqq \lim_{\Lambda\to\infty} \tr(\chi_\Lambda B_L(\cdot))$ 
exists in the compact open topology and the formula
\begin{align}
\begin{split} 
f(z) & =\frac{i}{16\pi} (1+z)^{-3/2}\lim_{\Lambda\to\infty}\sum_{j,i_{1},i_{2} = 1}^3 \epsilon_{ji_{1}i_{2}}\frac{1}{\Lambda}    \\
& \quad \, \times \int_{\Lambda S^{2}}\tr\big(\Phi(x)\left(\partial_{i_{1}}\Phi\right)(x)\left(\partial_{i_{2}}\Phi\right)(x)\big)x_{j}\, d^{n-1} \sigma(x)     \lb{f(z)-n=3}   \\
\end{split} 
\end{align}
holds, where $\epsilon_{ji_{1}i_{2}}$ denotes the $\epsilon$-symbol as
in Proposition \ref{prop:comp_of_Dirac_Alge}.   
\end{theorem}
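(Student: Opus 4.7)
The plan is to adapt the proof of Theorem \ref{thm:Witten_reg_n5} from $n\geq 5$ by inserting the resolvent regularization $B_{L,\mu}(z)=(1-\mu\Delta)^{-1}B_L(z)(1-\mu\Delta)^{-1}$ in order to supply the two missing orders of Sobolev regularity that obstruct the argument when $n=3$. Trace class-ness of $\chi_\Lambda B_L(z)$ and the representation of its trace as the integral of the diagonal of its integral kernel are already provided by Theorem \ref{thm:trisbounded} and Remark \ref{r:5.8}. The new obstruction is that the integral kernel of the leading piece $\tr_{2d}(\gamma_{j,3}\cQ R_{1+z} C R_{1+z})$ of $J_L^j(z)$ (cf.\ Lemma \ref{lem:almost_Neumann_series}) is not even continuous, so Gauss' divergence theorem cannot be invoked directly on $[\partial_j, J_L^j(z)]$; precisely this is cured by passing to $J_{L,\mu}^j(z)$, whose kernel is continuously differentiable by Corollary \ref{cor:diffble of reg green}.

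First, for fixed $\Lambda > 0$ and $z \in \rho(-L^*L)\cap\rho(-LL^*)\cap \mathbb{C}_{\Re > -1}$, I would establish the identity
\begin{equation*}
\tr(\chi_\Lambda B_L(z)) = \lim_{\mu \downarrow 0}\tr(\chi_\Lambda B_{L,\mu}(z))
\end{equation*}
by rewriting $\tr(\chi_\Lambda B_{L,\mu}(z)) = \tr\bigl((1-\mu\Delta)^{-1}\chi_\Lambda(1-\mu\Delta)^{-1}\cdot B_L(z)\cdot \chi_\Lambda\bigr)$ via cyclicity and invoking the standard fact (a consequence of Proposition \ref{prop:s1-approx}) that strong convergence of a uniformly bounded factor against a trace class operator preserves the trace. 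Then Lemma \ref{lem:repre of bmu} gives the decomposition
\begin{equation*}
2\tr(\chi_\Lambda B_{L,\mu}(z)) = \sum_{j=1}^{3}\tr\bigl(\chi_\Lambda[\partial_j, J_{L,\mu}^{j}(z)]\bigr) + \tr(\chi_\Lambda A_{L,\mu}(z)).
\end{equation*}
Because of Corollary \ref{cor:diffble of reg green}, Lemma \ref{lem:comm is sum of der} and the divergence theorem convert each commutator contribution into a surface integral over $\Lambda S^{2}$; the regularized $A_{L,\mu}(z)$ piece drops out in the limit $\mu \downarrow 0$ because, by Lemma \ref{lem:pointwise convergence of regularized Green}, its kernel converges pointwise to that of $A_L(z)$, which by Lemma \ref{lem:j,A,Green} and Proposition \ref{prop:commutator with phi vanishes} vanishes on the diagonal.

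The crux is the asymptotic identification of the surface integrand. Expanding $J_{L,\mu}^{j}(z)$ via Lemma \ref{lem:almost_Neumann_series}, the problematic leading summand $2(1-\mu\Delta)^{-1}\tr_{2d}(\gamma_{j,3}\cQ R_{1+z} C R_{1+z})(1-\mu\Delta)^{-1}$ has integral kernel vanishing on the diagonal for every $\mu>0$ by Lemma \ref{lem:reg vanishes on diagonal}, so its contribution to the boundary integral drops out. The remaining terms are controlled on the diagonal by Corollary \ref{cor:final aymptotics}\,$(ii)$, which states that $J_L^{j}(z) - 2\tr_{2d}(\gamma_{j,3}\cQ R_{1+z} C R_{1+z})$ agrees on the diagonal with the kernel of $2\tr_{2d}(\gamma_{j,3}\Phi C^{2} R_{1+z}^{3})$ up to an $O((1+|x|)^{-2-\epsilon})$ error with $\epsilon > 1/2$; after taking $\mu \downarrow 0$ (using Lemma \ref{lem:pointwise convergence of regularized Green}) and integrating over $\Lambda S^{2}$, this error is $O(\Lambda^{-\epsilon})$ and vanishes as $\Lambda \to \infty$. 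The diagonal kernel of $2\tr_{2d}(\gamma_{j,3}\Phi C^{2} R_{1+z}^{3})$ is given explicitly by Proposition \ref{prop:formula for index}, producing precisely the integrand in \eqref{f(z)-n=3}. To conclude, local boundedness of $\{z \mapsto \tr(\chi_\Lambda B_L(z))\}_{\Lambda>0}$ on $B(0,\delta)\cup\mathbb{C}_{\Re>0}$ for some $\delta>0$ is obtained, as in the $n\geq 5$ case, from Theorem \ref{thm:local_boundedness of the trace} together with Lemma \ref{lem:Laurent_series_argument}; Montel's theorem \ref{t:m} then produces a subsequence along which the family converges in the compact open topology, the explicit formula determines this limit for $\Re(z)$ large, analytic continuation extends it, and subsequence-independence follows from $f(0) = 2\ind(L)$ via Theorem \ref{thm:index with Witten}. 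The main obstacle is the bookkeeping required to interchange the three limits $\mu \downarrow 0$, $\Lambda \to \infty$, and $z \to 0$; the needed $\mu$-uniform diagonal decay is supplied by Corollary \ref{cor:final aymptotics}\,$(ii)$ and the precise quantitative estimates of Lemmas \ref{lem:reg vanishes on diagonal} and \ref{lem:pointwise convergence of regularized Green}.
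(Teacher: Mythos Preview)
Your overall architecture matches the paper's proof: regularize by $(1-\mu\Delta)^{-1}$ on both sides, use Lemma \ref{lem:repre of bmu} and Corollary \ref{cor:diffble of reg green} to make Gauss' theorem applicable, exploit Lemma \ref{lem:reg vanishes on diagonal} to kill the problematic leading term, invoke Corollary \ref{cor:final aymptotics}\,$(ii)$ and Proposition \ref{prop:formula for index} for the explicit surface integrand, and finish with the local boundedness/Montel/analytic continuation machinery exactly as for $n\geq 5$.

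The gap is in your very first step. The identity you write,
\[
\tr(\chi_\Lambda B_{L,\mu}(z)) = \tr\bigl((1-\mu\Delta)^{-1}\chi_\Lambda(1-\mu\Delta)^{-1}\, B_L(z)\, \chi_\Lambda\bigr),
\]
does not follow from cyclicity (the second $\chi_\Lambda$ is spurious), and even the intended version $\tr\bigl((1-\mu\Delta)^{-1}\chi_\Lambda(1-\mu\Delta)^{-1} B_L(z)\bigr)$ cannot be pushed through Proposition \ref{prop:s1-approx} or the Gr\"umm-type argument you allude to: those require the fixed factor $B_L(z)$ to be trace class, but only $\chi_\Lambda B_L(z)$ is, and $(1-\mu\Delta)^{-1}$ does not commute with $\chi_\Lambda$. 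Controlling the commutator $[\chi_\Lambda,(1-\mu\Delta)^{-1}]$ is genuinely awkward because $\chi_\Lambda$ is discontinuous.

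The paper sidesteps this entirely by working at the level of integral kernels rather than operator traces. Since $\tr(\chi_\Lambda B_L(z))=\int_{B(0,\Lambda)}(\mathbb{A}_L+\mathbb{J}_L)(x,x)\,d^3x$ (Remark \ref{r:5.8}), and $\mathbb{A}_L(x,x)=0$ (Lemma \ref{lem:j,A,Green}), one is left with $\int_{B(0,\Lambda)}\mathbb{J}_L(x,x)\,d^3x$. Now Lemma \ref{lem:pointwise convergence of regularized Green} gives $\mathbb{J}_{L,\mu}(x,x)\to\mathbb{J}_L(x,x)$ pointwise, and the family $\{\mathbb{J}_{L,\mu}\}_{\mu>0}$ is locally uniformly bounded (because $\{2B_{L,\mu}(z)-A_{L,\mu}(z)\}_{\mu>0}$ is uniformly bounded $H^\ell\to H^{\ell+2n-1}$, cf.\ Proposition \ref{prop:concrete case} and the representation in Lemma \ref{lem:almost_Neumann_series}), so dominated convergence on the compact ball $B(0,\Lambda)$ yields $\int_{B(0,\Lambda)}\mathbb{J}_L(x,x)\,d^3x=\lim_{\mu\to 0}\int_{B(0,\Lambda)}\mathbb{J}_{L,\mu}(x,x)\,d^3x$. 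Only now does Gauss' theorem enter, applied to the $\mu$-regularized diagonal, and the same pointwise-convergence-plus-local-uniform-boundedness argument passes the $\mu\to 0$ limit through the resulting surface integral over the compact sphere $\Lambda S^2$. In particular the $\mu$-limit is disposed of completely for each fixed $\Lambda$ before one touches the $\Lambda\to\infty$ and $z\to 0$ limits, so there is no triple-limit bookkeeping; your closing remark about needing $\mu$-uniform diagonal decay from Lemma \ref{lem:pointwise convergence of regularized Green} (which gives no quantitative bound) is unnecessary once the argument is organized this way.
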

\begin{proof} Let $\Lambda>0$, $\mu>0$. Denote the integral kernels of $A_{L}(z)$ and $\sum_{j}\big[\partial_{j},J_{L}^{j}(z)\big]$ by $\mathbb{A}_{L}$ and $\mathbb{J}_{L}$, respectively, and correspondingly
for $A_{L,\mu}(z)$ and $\sum_{j}\big[\partial_{j},J_{L,\mu}^{j}(z)\big]$,
where the respective operators are given by \eqref{eq:ALz}, \eqref{eq:JJJ},
\eqref{eq:ALz_mu}, and \eqref{eq:JJJ_mu}. One notes that by Lemma \ref{lem:pointwise convergence of regularized Green}, 
$\mathbb{A}_{L,\mu}\to\mathbb{A}_{L}$ and $\mathbb{J}_{L,\mu}\to\mathbb{J}_{L}$
pointwise as $\mu\to0$. One recalls from Proposition \ref{prop:Reformulation of B}
and Theorem \ref{thm:trisbounded} together with Proposition \ref{cor:Comp-of-trace}
that (similarly to the case $n=5$), 
\begin{align*}
2\tr (\chi_\Lambda B_{L}(z)) & =\int_{B(0,\Lambda)}\left(\mathbb{A}_{L}+\mathbb{J}_{L}\right)(x,x)\, d^n x,
\end{align*}
and, as $\mathbb{A}_{L}$ and the integral kernel of $B_{L}(z)$
are continuous, so is $\mathbb{J}_{L}$. Hence, by Lemma \ref{lem:pointwise convergence of regularized Green}
and using $\mathbb{A}_{L}(x,x)=0$ (see Lemma \ref{lem:j,A,Green}),
one obtains 
\begin{align*}
2\tr (\chi_\Lambda B_{L}(z)) & =\int_{B(0,\Lambda)}\mathbb{A}_{L}(x,x)+\mathbb{J}_{L}(x,x)\, d^n x\\
 & =\int_{B(0,\Lambda)}\lim_{\mu\to0}\mathbb{J}_{L,\mu}(x,x)\, d^n x\\
 & =\lim_{\mu\to0}\int_{B(0,\Lambda)}\mathbb{J}_{L,\mu}(x,x)\, d^n x,
\end{align*}
where the last equality follows from the fact that the family of integral kernels of 
\[
\{2 B_{L,\mu}(z)-A_{L,\mu}(z)\}_{\mu>0}
\]
is locally uniformly bounded: To prove the latter assertion, we note that 
due to Corollary \ref{cor:integral kernels regularity}, 
$\{2 B_{L,\mu}(z)-A_{L,\mu}(z)\}_{\mu>0}$
defines a uniformly bounded family of continuous linear operators
from $H^{\ell}(\mathbb{R}^{n})$ (see \eqref{eq:def_Hs}) to $H^{\ell+2n-1}(\mathbb{R}^{n})$, 
$\ell\in\mathbb{R}$. Indeed, this follows from the representation
in Lemma \ref{lem:almost_Neumann_series} together with Proposition
\ref{prop:concrete case} and the fact that for all $s\in\mathbb{R}$, 
$\left(1-\mu\Delta\right)^{-1}\to I$ strongly in $H^{s}(\mathbb{R}^{n})$. Next, we 
denote 
\[
\mathbb{K}_{L,\mu}\coloneqq\big\{x\mapsto g_{L,\mu}^{j}(z)(x,x)\big\}_{j\in\{1,2,3\}},
\]
where $g_{L,\mu}^{j}(z)$ is the integral kernel of $J_{L,\mu}^{j}(z)$, 
$j\in\{1,2,3\}$, and $\mathbb{K}_{L}$ that for 
\[
\big\{J_{L}^{j}(z)-2\tr_{2d} (\gamma_{j,3} \cQ R_{1+z}CR_{1+z})\big\}_{j\in\{1,2,3\}}.
\]
Invoking Lemmas \ref{lem:reg vanishes on diagonal} and \ref{lem:pointwise convergence of regularized Green}, and hence the fact that $\{x\mapsto\mathbb{K}_{L,\mu}(x)\}_{\mu>0}$
is locally uniformly bounded, one obtains 
\begin{align*}
\lim_{\mu\to0}\int_{B(0,\Lambda)}\mathbb{J}_{L,\mu}(x,x)\, d^n x & =\lim_{\mu\to0}\int_{\Lambda S^{2}}\left( \mathbb{K}_{L,\mu}(x),\frac{x}{\Lambda}\right) \, d^{n-1} \sigma(x)\\
 & =\int_{\Lambda S^{2}}\lim_{\mu\to0}\left( \mathbb{K}_{L,\mu}(x),\frac{x}{\Lambda}\right) \, d^{n-1} \sigma(x)\\
 & =\int_{\Lambda S^{2}}\left(\mathbb{K}_{L}(x),\frac{x}{\Lambda}\right) \, d^{n-1} \sigma(x). 
\end{align*}
As in the case $n\geq5$, one computes with the help of Corollary
\ref{cor:final aymptotics} that for $z\in\Sigma_{z_0,\theta}$, see \eqref{def:Sigma0}, for some fixed $\theta\in (0,\pi/2)$ and $z_0\in \mathbb{R}$ sufficiently large, the limit $\Lambda\to\infty$ actually coincides with $\mathbb{K}_{L}$ replaced
by the vector of integral kernels of 
$\big\{2\tr_{2d}\big(\gamma_{j,3}\Phi C^{2}R_{1+z}^{3}\big)\big\}_{j\in\{1,2,3\}}$ (employing 
analogous arguments using Lemmas \ref{lem:boundedness_of_leading_new_witten}, \ref{lem:boundedness_of_rest_new_witten}, and \ref{lem:Laurent_series_argument}). 
Hence one can compute this expression explicitly with the
help of Proposition \ref{prop:formula for index}, ending up with \eqref{f(z)-n=3}. 
\end{proof}

\newpage

\section{The Index Theorem and Some Consequences}\label{sec:The Index theorem}

Putting the results of the Sections \ref{sec:Functional-Analytic-Preliminarie}
and \ref{sec:The-Derivation-of-trace-f} together, we arrive at the
following theorem:

\begin{theorem}
\label{thm:Fredholm-index} Let $n\in\mathbb{N}_{\geq3}$ odd, $d\in\mathbb{N}$,
$\Phi\colon\mathbb{R}^{n}\to\mathbb{C}^{d\times d}$ admissible $($see
Definition \ref{def:phi_admissible}$)$. 
Then the operator $L= \cQ +\Phi$ given by \eqref{eq:def_of_L(2)} is 
Fredholm and  
\begin{align}
\begin{split}
 \ind (L) &= \left(\frac{i}{8\pi}\right)^{(n-1)/2}\frac{1}{\left[(n-1)/2\right]!}
 \lim_{\Lambda \to\infty}\frac{1}{2 \Lambda }\sum_{j,i_{1},\ldots,i_{n-1} = 1}^n \epsilon_{ji_{1}\ldots i_{n-1}}  \label{eq:Fredhom_index} \\
 & \quad \; \times \int_{\Lambda S^{n-1}}\tr (\Phi(x) (\partial_{i_{1}} \Phi)(x)\ldots
(\partial_{i_{n-1}} \Phi)(x))x_{j}\, d^{n-1} \sigma(x),   
\end{split} 
\end{align} 
where $\epsilon_{ji_{1}\ldots i_{n-1}}$ denotes the $\epsilon$-symbol
as introduced in Proposition \ref{prop:comp_of_Dirac_Alge}. 
\end{theorem}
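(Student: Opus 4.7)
The strategy is to invoke the abstract index formula of Theorem \ref{thm:index with Witten}, applied with $\mathcal{H} = L^2(\mathbb{R}^n)$, $m = 2^{\hatt n}d$, and $M = L$. As regularizers I will take $T_\Lambda = \chi_\Lambda$ (the multiplication operator by the characteristic function of $B(0,\Lambda)$) and $S_\Lambda = I_{L^2(\mathbb{R}^n)}$, so that both sequences converge in the strong operator topology to the identity. The first task is to justify that $L$ is Fredholm. Since $\Phi$ is admissible, part $(i)$ of Definition \ref{def:phi_admissible} gives $\Phi \in C^\infty_b$, part $(ii)$ gives $\Phi = \Phi^* = \Phi^{-1}$, which in particular yields $|\Phi(x)|^2 = \Phi(x)^*\Phi(x) = I_d$, so the invertibility hypothesis of Theorem \ref{thm:Fredholm_property} holds globally with $c=1$. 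Finally, part $(iii)$ gives $\|(\mathcal{Q}\Phi)(x)\| \le n\kappa(1+|x|)^{-1} \to 0$ as $|x|\to\infty$. Hence Theorem \ref{thm:Fredholm_property} applies and $L$ is closed and Fredholm.

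Next I would verify the quantitative hypotheses of Theorem \ref{thm:index with Witten}. By Theorem \ref{thm:trisbounded}, there exists $\delta > 0$ such that for every $\Lambda > 0$, the operator $\chi_\Lambda B_L(z)$ is trace class on $\Omega_\Lambda := B(0,\delta)\setminus\{0\} \subseteq \rho(-L^*L)\cap\rho(-LL^*)$, with trace norm uniformly bounded in $z \in \Omega_\Lambda$. Consequently Theorem \ref{thm:index with Witten} yields
\begin{equation*}
   \ind(L) = \lim_{\Lambda\to\infty} \lim_{z \to 0} \tr_{L^2(\mathbb{R}^n)}\!\bigl(\chi_\Lambda B_L(z)\bigr).
\end{equation*}

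The heart of the argument is the explicit identification of $f(z) := \lim_{\Lambda\to\infty} \tr(\chi_\Lambda B_L(z))$. For $n \geq 5$ this is precisely Theorem \ref{thm:Witten_reg_n5}, and for $n = 3$ it is Theorem \ref{thm:Witten-reg_n3}; both provide exactly the expression
\begin{equation*}
   f(z) = (1+z)^{-n/2} \Bigl(\tfrac{i}{8\pi}\Bigr)^{(n-1)/2} \frac{1}{[(n-1)/2]!} \lim_{\Lambda\to\infty}\frac{1}{2\Lambda}\sum_{j,i_1,\ldots,i_{n-1}=1}^{n} \epsilon_{j i_1 \ldots i_{n-1}} I_{\Lambda,j,i_1,\ldots,i_{n-1}}(\Phi),
\end{equation*}
where $I_{\Lambda,j,i_1,\ldots,i_{n-1}}(\Phi)$ denotes the surface integral appearing in \eqref{eq:Fredhom_index}. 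Moreover, those theorems establish that the convergence $\tr(\chi_\Lambda B_L(\cdot)) \to f(\cdot)$ takes place in the compact open topology on a neighborhood $B(0,\delta) \cup \mathbb{C}_{\Re>0}$ of $0$; this gives uniform convergence on a closed ball around $0$, so the additional hypothesis of Theorem \ref{thm:index with Witten} that allows interchanging the two limits is satisfied. Taking $z = 0$ in the formula for $f$ then gives $(1+0)^{-n/2} = 1$ and produces the claimed identity $\ind(L) = f(0)$, which is exactly \eqref{eq:Fredhom_index}.

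There is no substantial new difficulty at this level — all the delicate analysis (the trace-class bound of Theorem \ref{thm:trisbounded}, the diagonal estimates of Section \ref{sec:ptw_intk}, the cancellations via the Dirac algebra, the Montel/analytic-continuation argument, and the separate $n=3$ regularization) has been performed in the previous sections. The only points requiring care are (a) to check that the hypotheses of Theorem \ref{thm:Fredholm_property} genuinely follow from admissibility (which is immediate once one observes $\Phi^2 = I_d$), and (b) to keep track of the fact that the combined content of Theorems \ref{thm:Witten_reg_n5} and \ref{thm:Witten-reg_n3} provides both the pointwise formula for $f(z)$ and the uniform compact-open convergence needed to legitimately swap $\lim_\Lambda$ with $\lim_{z\to 0}$. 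With these two points in place, Theorem \ref{thm:Fredholm-index} follows by simply chaining the earlier results.
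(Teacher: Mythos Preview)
Your proposal is correct and follows essentially the same approach as the paper's own proof, which simply appeals to Theorem \ref{thm:index with Witten} and Theorem \ref{thm:Witten_reg_n5} (or \ref{thm:Witten-reg_n3}) to conclude $\ind(L)=f(0)$. Your version spells out in more detail why admissibility guarantees the Fredholm property via Theorem \ref{thm:Fredholm_property} and why the compact-open convergence established in those theorems legitimizes the limit interchange, but the logical skeleton is identical.
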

\begin{proof}
Appealing to Theorem \ref{thm:index with Witten} and 
Theorem \ref{thm:Witten_reg_n5} (or \ref{thm:Witten-reg_n3}), we have $f(0)=\ind (L)$, with $f$ 
from Theorem \ref{thm:Witten_reg_n5}.
\end{proof}

In Corollary \ref{c:adind0} at  the end of this section we will show that actually, $ \ind (L) = 0$ 
for admissible $\Phi$.

Next, we indicate how the index theorem obtained can be generalized
to potentials $\Phi$ belonging only to $C_{b}^{2}\big(\mathbb{R}^{n};\mathbb{C}^{d\times d}\big)$
satisfying $\left|\Phi(x)\right|\geq c$ for all $x\in\mathbb{R}^{n}\backslash B(0,R)$
for some $R>0$, $c>0$. More precisely, we will prove the following theorem later in 
Section \ref{s12} in the case where $\Phi$ is $C^{\infty}$ and in full generality in 
Section \ref{s13}: 

\begin{theorem}
\label{thm:Perturbation} Let $n\in\mathbb{N}_{\geq3}$ odd, $d\in\mathbb{N}$,
$\Phi\in C_{b}^{2}\big(\mathbb{R}^{n};\mathbb{C}^{d\times d}\big)$. Assume
the following properties
\[
\Phi(x)=\Phi(x)^{*},  \quad x\in\mathbb{R}^{n},
\]
there exists $c>0$, $R\geq0$ such that $\left|\Phi(x)\right|\geq c$, 
$x\in\mathbb{R}^{n}\backslash B(0,R)$, and that there is $\varepsilon> 1/2$
such that for all $\alpha\in\mathbb{N}_{0}^{n}$, $\left|\alpha\right|<3$,
there is $\kappa>0$ such that 
\[
\|(\partial^{\alpha}\Phi)(x)\|\leq \begin{cases}
\kappa (1+|x|)^{-1}, & |\alpha|=1,\\[1mm] 
\kappa (1+ |x|)^{-1-\epsilon}, & |\alpha|=2, 
\end{cases}\quad x\in\mathbb{R}^{n}.
\]
We recall $\mathcal{Q}=\sum_{j=1}^{n}\gamma_{j,n}\partial_{j}$, with 
$\gamma_{j,n}\in\mathbb{C}^{2^{\hat n} \times 2^{\hat n}}$, $j\in\{1,\ldots,n\}$, 
given in \eqref{eq:def_of_Q2} or Theorem \ref{thm:L_is_closed}. 
Then the operator $L\coloneqq \cQ +\Phi$ considered in $L^{2}(\mathbb{R}^{n})^{2^{\hat n}d}$
is a Fredholm operator and  
\begin{align}
 \ind (L) &=\left(\frac{i}{8\pi}\right)^{(n-1)/2}\frac{1}{\left[(n-1)/2\right]!} 
 \lim_{\Lambda \to\infty}\frac{1}{2 \Lambda }\sum_{j,i_{1},\ldots,i_{n-1} = 1}^n \epsilon_{ji_{1}\ldots i_{n-1}}     \lb{iL} \\ 
 & \quad \; \times \int_{\Lambda S^{n-1}}\tr (U(x) (\partial_{i_{1}} U)(x)\ldots 
 (\partial_{i_{n-1}} U)(x))x_{j}\, d^{n-1} \sigma(x),    \no 
\end{align}
where
\[
U(x) = |\Phi(x)|^{-1} \Phi(x) = \sgn(\Phi(x)), \quad x \in \bbR. 
\]
\end{theorem}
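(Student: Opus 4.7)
The plan is to reduce Theorem \ref{thm:Perturbation} to Theorem \ref{thm:Fredholm-index} in two preliminary stages, and then to adapt the trace-formula derivation of Sections \ref{sec:The-Derivation-of-trace-f}--\ref{sec:der_tra_diag} to a slightly larger class of potentials by means of the perturbation theory of Section \ref{sec:pert}. The first stage removes the regularity gap between $C_b^2$ and $C_b^\infty$; the second stage replaces $\Phi$ by a smooth, globally invertible potential that agrees with $U = \sgn(\Phi)$ outside a large ball; the third stage adapts the trace computation to that class.

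\emph{Step 1 (smoothing).} For $\Phi \in C_b^2$ satisfying the hypotheses, set $\Phi_\delta \coloneqq \Phi \ast \rho_\delta$ for a standard mollifier $\rho_\delta$. The bounds on $\partial^\alpha \Phi$ for $|\alpha|\le 2$ and the lower bound $|\Phi|\ge c\, I_d$ on $|x|\ge R$ transfer to $\Phi_\delta$ with nearly identical constants on $|x|\ge R+1$ for $\delta$ small, and $\Phi_\delta - \Phi$ is uniformly bounded and tends to $0$ locally uniformly. By Theorem \ref{thm:Newton_pot_is_h1-bdd}, the difference induces a compact operator from $H^1(\bbR^n)^{2^{\hat n}d}$ to $L^2(\bbR^n)^{2^{\hat n}d}$, so Theorem \ref{t3.6}\,$(iii)$ gives $\ind(L_\delta) = \ind(L)$ once either side is Fredholm. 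Furthermore, $\sgn(\Phi_\delta)\to \sgn(\Phi)$ in $C^1$ uniformly on the sphere $\Lambda S^{n-1}$ for every fixed $\Lambda \ge R+1$, so the right-hand side of \eqref{iL} is continuous in $\delta$. Hence it suffices to treat $\Phi \in C_b^\infty$.

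\emph{Step 2 (compactly supported modification).} For smooth $\Phi$, fix $R' > R$ and construct $\widetilde\Phi \in C_b^\infty(\bbR^n; \bbC^{d \times d})$ that is self-adjoint, pointwise invertible with a uniform lower bound $|\widetilde\Phi|\ge c'\, I_d$ on all of $\bbR^n$, and satisfies $\widetilde\Phi(x) = U(x)$ for $|x|\ge R'+1$; this is possible because $U$ is smooth with $|U| = I_d$ on $|x|\ge R$, and one may interpolate smoothly inside $B(0, R'+1)$ to any fixed invertible self-adjoint matrix. Then $\widetilde\Phi$ meets the hypotheses of Theorem \ref{thm:Fredholm_property}, so $L_{\widetilde\Phi} = \cQ + \widetilde\Phi$ is Fredholm; since $\Phi - \widetilde\Phi$ is smooth with compact support, it induces a compact $H^1 \to L^2$ operator and $\ind(L) = \ind(L_{\widetilde\Phi})$ by Theorem \ref{t3.6}\,$(iii)$. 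Crucially, the right-hand side of \eqref{iL} is \emph{identical} for $\Phi$ and $\widetilde\Phi$ whenever $\Lambda\ge R'+1$, because both depend only on $U = \sgn(\Phi) = \sgn(\widetilde\Phi)$ on $\Lambda S^{n-1}$.

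\emph{Step 3 (trace formula for $\widetilde\Phi$, and main obstacle).} The potential $\widetilde\Phi$ is smooth, invertible, and equals the involution $U$ outside $B(0,R'+1)$, but fails to satisfy $\widetilde\Phi^2 = I$ inside, so Theorem \ref{thm:Fredholm-index} does not apply directly. My plan is to write $\widetilde\Phi = \Phi_{\mathrm{adm}} + \Xi$, where $\Phi_{\mathrm{adm}}$ is an admissible reference potential agreeing with $\widetilde\Phi$ at infinity and $\Xi \in C_0^\infty(\bbR^n; \bbC^{d \times d})$ is the compactly supported deviation, and to expand the resolvents $(L_{\widetilde\Phi}^* L_{\widetilde\Phi} + z)^{-1}$ and $(L_{\widetilde\Phi} L_{\widetilde\Phi}^* + z)^{-1}$ perturbatively around the admissible ones via the Helmholtz-resolvent perturbation theory of Section \ref{sec:pert}. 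The unperturbed term is then handled verbatim as in the proof of Theorem \ref{thm:Witten_reg_n5}, producing the right-hand side of \eqref{iL} through Proposition \ref{prop:formula for index}; each perturbative correction carries a factor of $\Xi$ that localizes its contribution to a compact set, which the divergence-theorem/surface-integral argument of Section \ref{sec:der_tra_diag} kills in the limit $\Lambda \to \infty$. The hardest part will be extending the diagonal kernel estimates of Section \ref{sec:ptw_intk} and the cancellation identity of Lemma \ref{lem:better diagonal representation} (both of which crucially used $\Phi^2 = I$) to the perturbative setting, and re-establishing the local boundedness statement of Theorem \ref{thm:local_boundedness of the trace} uniformly in $z$ on a neighbourhood of $0$ with $\Xi$ present, so that the Montel--analytic-continuation argument of the proof of Theorem \ref{thm:Witten_reg_n5} goes through with $\widetilde\Phi$ in place of an admissible potential; that is exactly what the perturbation machinery of Section \ref{sec:pert} is designed to supply.
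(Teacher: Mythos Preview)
Your Step 2 contains a genuine gap that is not a technicality but the heart of the matter. You claim that one can extend $U=\sgn(\Phi)$ from $\{|x|\ge R'+1\}$ to a smooth, pointwise self-adjoint $\widetilde\Phi$ with a uniform lower bound $|\widetilde\Phi|\ge c' I_d$ on \emph{all} of $\bbR^n$, by ``interpolating smoothly inside $B(0,R'+1)$ to any fixed invertible self-adjoint matrix.'' This is false in general: Theorem~\ref{thm:there is no Cinfty unitary-self-adjoint potential} exhibits the standard $U(x)=\sum_j\sigma_j x_j/|x|$ on $\bbR^3$ for which \emph{no} such invertible extension exists, by a Brouwer no-retraction argument. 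More to the point, if such an extension always existed then your $\widetilde\Phi$ would be admissible in the sense of Definition~\ref{def:phi_admissible} (after normalising), and Corollary~\ref{c:adind0} would force $\ind(L)=0$ for every $\Phi$, contradicting Example~\ref{exa:standard_example}. The topological obstruction to the extension is precisely what the index measures.

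The paper's fix is to give up on global invertibility and instead pass to a $\tau$-\emph{admissible} potential (Definition~\ref{d:ta}): a smooth $U$ that equals $\sgn(\Phi)$ outside a large ball, is unitary outside a \emph{small} ball $B(0,\tau)$, and satisfies $U(x)^2=u(x)I_d$ with $u\in C^\infty(\bbR^n;[0,1])$, $u\equiv 1$ on $\bbR^n\setminus B(0,\tau)$ (this is produced by Lemma~\ref{lem:almost admissible}). Then $L^*L=-\Delta-C+u$ and $LL^*=-\Delta+C+u$, and the whole trace-formula derivation of Sections~\ref{sec:The-Derivation-of-trace-f}--\ref{sec:n=3} is re-run with the free Helmholtz resolvent $R_{1+z}$ replaced by the perturbed one $R_{u+z}=(-\Delta+u+z)^{-1}$; the role of Section~\ref{sec:pert} is to show that for $\tau$ small enough (Hypothesis~\ref{hyp:parameters}) the kernel of $R_{u+z}$ is pointwise dominated by that of $R_{\Re(z)+1}$, so every diagonal estimate and every Schatten bound goes through with uniform constants. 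Your Step 3 has the right instinct---a compactly supported deviation handled by Section~\ref{sec:pert}---but the correct decomposition is not $\widetilde\Phi=\Phi_{\mathrm{adm}}+\Xi$ with $\Phi_{\mathrm{adm}}$ admissible; it is $-\Delta+U^2=-\Delta+1+(u-1)$ with $(u-1)$ a compactly supported \emph{scalar} perturbation of the Helmholtz operator, which is what makes the Neumann series in \eqref{def:R_psi_z2} and the kernel comparisons of Theorems~\ref{thm:7.18} and~\ref{t:7.19c} tractable. Your Step 1 is correct and matches Section~\ref{s13}.
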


While in this manuscript we focus on the functional analytic proof of Callias' index formula \eqref{iL}, we refer to the discussion by Bott and Seeley \cite{BS78} for its underlying topological setting (homotopy invariants, etc.).  

In Theorem \ref{thm:Perturbation}, there are two main difficulties to cope with: on the one hand -- in contrast to the situation in Theorem \ref{thm:Fredholm-index} -- the potential is only assumed to be invertible on the complement of large balls, on the other hand the potential is only $C^2$. We will address the second case later on, and concern ourselves with the invertibility issue first. However, before providing the proof of Theorem \ref{thm:Perturbation} in these more general cases, we give a motivating fact underlining the need for Theorem \ref{thm:Perturbation}. In particular, in Theorem \ref{thm:there is no Cinfty unitary-self-adjoint potential}, we show that a particular class of potentials cannot be treated with the help of Theorem \ref{thm:Fredholm-index}. The main problem preventing the applicability of Theorem \ref{thm:Fredholm-index} is the everywhere invertibility assumed in Definition \ref{def:phi_admissible}. 

We note that the special case $n=3$ in connection with Yang--Mills--Higgs fields and monopoles has been discussed in detail in \cite[Sect.\ II.5]{JT80} and \cite[Sect.\ VIII.4]{Na91}. 

Before turning to Theorem \ref{thm:there is no Cinfty unitary-self-adjoint potential}, we shall provide a result that studies the sign of an operator. This study is needed, as the formula in Theorem \ref{thm:Perturbation} involves $x\mapsto \Phi(x)/|\Phi(x)|=\sgn(\Phi(x))$.

\begin{theorem}
\label{thm:properties of sign function} Let $\cH$ be a Hilbert space,
$A\in \cB(\cH)$, $\Re\left(A^{2}\right)\geq c$ for some $c>0$. 
Then the integral $($see, e.g., \cite[Ch.~5, equation (5.3)]{Hi08}$)$, 
\[
\sgn(A)\coloneqq\frac{2}{\pi}A\int_{0}^{\infty}\left(t^{2}+A^{2}\right)^{-1}dt
\]
converges in $\cB(\cH)$ and $\sgn(\cdot)$ is analytic on 
$B\big(A, (\|A\|^{2}+c)^{1/2} - \|A\|)\big)$.   
Moreover, if in addition, $A=A^{*}$ then 
\[
\sgn(A)=A |A|^{-1},  
\]
 and $\sgn(A)$ is unitary.
 \end{theorem}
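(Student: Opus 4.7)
The argument splits naturally into three parts: norm-convergence of the defining integral, analyticity on the stated ball, and the explicit identification $\sgn(A) = A |A|^{-1}$ together with unitarity in the self-adjoint case. The guiding observation is the elementary fact that any $B \in \cB(\cH)$ with $\tfrac12 (B + B^*) \geq c' I$ for some $c' > 0$ is invertible with $\|B^{-1}\|_{\cB(\cH)} \leq 1/c'$; indeed, for $x \in \cH$ one has $\Re\langle Bx, x\rangle \geq c' \|x\|^2$, hence $\|Bx\| \geq c' \|x\|$, and the analogous estimate for $B^*$ yields surjectivity via $\ran(B)^\perp = \ker(B^*) = \{0\}$ plus closedness of $\ran(B)$.

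Applied to $B = t^2 + A^2$, the hypothesis $\Re(A^2) \geq c$ gives $\Re(t^2 + A^2) \geq (t^2 + c) I$, so
\begin{equation*}
\bigl\|(t^2 + A^2)^{-1}\bigr\|_{\cB(\cH)} \leq \frac{1}{t^2 + c}, \quad t \geq 0,
\end{equation*}
and hence $\int_0^\infty \|(t^2 + A^2)^{-1}\|\, dt \leq \pi/(2\sqrt c)$, so the integral defining $\sgn(A)$ converges in $\cB(\cH)$. For analyticity on the ball of radius $r(A) := (\|A\|^2 + c)^{1/2} - \|A\|$, I would write $B = A + H$ with $\|H\| < r(A)$, set $E := AH + HA + H^2$, and expand
\begin{equation*}
(t^2 + B^2)^{-1} = \sum_{k=0}^{\infty} (-1)^k \bigl((t^2 + A^2)^{-1} E\bigr)^k (t^2 + A^2)^{-1}.
\end{equation*}
The condition $\|(t^2+A^2)^{-1} E\| < 1$ uniformly in $t$ reduces to $2\|A\|\|H\| + \|H\|^2 < c$, which is exactly $(\|A\| + \|H\|)^2 < \|A\|^2 + c$, i.e.\ $\|H\| < r(A)$. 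The partial sums are then dominated by $(1-q(0))^{-1}(t^2+c)^{-1}$ with $q(0) = (2\|A\|\|H\| + \|H\|^2)/c < 1$, so dominated convergence allows one to interchange integral and series; the resulting norm-convergent power series in $H$ proves analyticity of $B \mapsto \sgn(B)$ on $B(A, r(A))$.

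For the self-adjoint case, $A = A^*$ gives $\Re(A^2) = A^2 \geq c I$, so the spectral theorem yields $\sigma(A) \subseteq (-\infty, -\sqrt c\,] \cup [\sqrt c, \infty)$, in particular $A$ is invertible and $|A| = (A^2)^{1/2} \geq \sqrt c\, I$. Applying the bounded Borel functional calculus to the identity $\tfrac{2}{\pi}\lambda \int_0^\infty (t^2 + \lambda^2)^{-1}\, dt = \lambda/|\lambda| = \sgn(\lambda)$ for $|\lambda| \geq \sqrt c$ (Fubini justifies the exchange against the spectral measure, using the same majorant $1/(t^2+c)$), one obtains $\sgn(A) = A |A|^{-1}$. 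Unitarity is then immediate from
\begin{equation*}
\sgn(A)^* \sgn(A) = |A|^{-1} A^* A |A|^{-1} = |A|^{-1} |A|^2 |A|^{-1} = I,
\end{equation*}
and symmetrically $\sgn(A) \sgn(A)^* = I$.

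\textbf{Main obstacle.} The only genuinely delicate point is the uniform-in-$t$ control needed to legitimize term-by-term integration of the Neumann series; this is precisely what forces the analyticity radius to be $r(A) = (\|A\|^2 + c)^{1/2} - \|A\|$ rather than some smaller number. Once the dominated-convergence bound $(1-q(0))^{-1}(t^2+c)^{-1}$ is in hand, the remaining steps are routine spectral calculus and standard manipulations with resolvents.
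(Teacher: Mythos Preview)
Your proposal is correct and follows essentially the same approach as the paper: the resolvent bound $\|(t^2+A^2)^{-1}\| \leq (t^2+c)^{-1}$ for convergence, a Neumann expansion of $(t^2+A^2+E)^{-1}$ with $E = (A+H)^2 - A^2$ and the key inequality $\|E\| \leq 2\|A\|\|H\| + \|H\|^2 < c$ for analyticity on $B(A, r(A))$, and the functional calculus for the self-adjoint case. The paper organizes the analyticity step by first treating the map $T \mapsto \int_0^\infty (t^2 + B + T)^{-1}\,dt$ (for fixed $B$ with $\Re B \geq c$) as analytic in $T$ with radius $c$, via explicit bounded $k$-linear forms, and then composing with the polynomial $H \mapsto (A+H)^2 - A^2$; your direct expansion amounts to the same thing.
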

\begin{proof}
From $\Re\left(t^{2}+A^{2}\right)\geq t^{2}+c$ it follows that 
$\big\| \left(t^{2}+A^{2}\right)^{-1}\big\|\leq (t^{2}+c)^{-1}$
and, thus, the integral converges in operator norm. In order to prove
analyticity, we first show that given $B\in \cB(\cH)$ with $\Re(B)\geq c$
the function $T\mapsto\int_{0}^{\infty}\left(t^{2}+B+T\right)^{-1}dt$
is analytic at $0$ with convergence radius at least $c$. Hence,
let $B\in \cB(\cH)$ with $\Re (B) \geq c.$ Then 
$\big\Vert \left(t^{2}+B\right)^{-1}\big\Vert \leq (t^{2}+c)^{-1}\leq c^{-1}$
for all $t\in\mathbb{R}$. If $T\in \cB(\cH)$ with $\|T\|\leq \theta c$
for some $0<\theta<1$, then $\left\Vert \left(t^{2}+B\right)^{-1}T\right\Vert \leq \theta$
for all $t\in\mathbb{R}$ and thus, 
\begin{align*}
\int_{0}^{\infty}\left(t^{2}+\left(B+T\right)\right)^{-1}dt & =\int_{0}^{\infty}\left(1+\left(t^{2}+B\right)^{-1}T\right)^{-1}\left(t^{2}+B\right)^{-1}dt\\
 & =\int_{0}^{\infty}\sum_{k=0}^{\infty}\left(-\left(t^{2}+B\right)^{-1}T\right)^{k}\left(t^{2}+B\right)^{-1}dt\\
 & =\sum_{k=0}^{\infty}\int_{0}^{\infty}\left(-\left(t^{2}+B\right)^{-1}T\right)^{k}\left(t^{2}+B\right)^{-1}dt.
\end{align*}
One observes that $c_{k}\colon \cB(\cH)^{k}\to \cB(\cH)$ given by 
\begin{equation*} 
c_{k}(T,\ldots,T)\coloneqq\int_{0}^{\infty}\left(-\left(t^{2}+B\right)^{-1}T\right)^{k}\left(t^{2}+B\right)^{-1}dt, 
\end{equation*} 
is a bounded $k$-linear form with bound $c^{-k}\pi/(2\sqrt{c})$.
Indeed, for the contractions $T_{1},\ldots,T_{k}\in \cB(\cH)$ one estimates 
\begin{align*}
& \bigg\Vert \int_{0}^{\infty}\prod_{j=1}^{k}\big[-(t^{2}+B)^{-1}T_{j}\big](t^{2}+B)^{-1} \, dt\bigg\Vert     \\ 
& \quad \leq \int_{0}^{\infty}\prod_{j=1}^{k}\big\Vert \big[-(t^{2}+B)^{-1}T_{j}\big]
(t^{2}+B)^{-1}\big\Vert \, dt\\
 & \quad \leq \int_{0}^{\infty}\big\Vert (t^{2}+B)^{-k}\big\Vert \big\Vert (t^{2}+B)^{-1}\big\Vert \, dt\\
 & \quad \leq \left(\frac{1}{c}\right)^{k}\int_{0}^{\infty}\frac{1}{t^{2}+c} \, dt=\left(\frac{1}{c}\right)^{k}\frac{\pi}{2\sqrt{c}}.
\end{align*}
In particular, the power series has convergence radius at least $c$.
It follows that $T\mapsto\int_{0}^{\infty}\left(t^{2}+T\right)^{-1}dt$
is analytic about $A^{2}$ with convergence radius $c.$ If $T\in \cB(\cH)$
with $\|T\|< \big((\|A\|^{2}+c)^{1/2}-\|A\|\big)$, then 
\begin{align*}
\left\Vert (A+T)^{2}-A^{2}\right\Vert  & <2\|A\|\|T\|+\|T\|^{2}\\
 & \leq 2\|A\|\big((\|A\|^{2}+c)^{1/2}-\|A\|\big)+\big((\|A\|^{2}+c)^{1/2}-\|A\|\big)^{2}\\
 & =c.
\end{align*}
Hence, the map $T\mapsto\int_{0}^{\infty}\left(t^{2}+T^{2}\right)^{-1}dt$
is analytic about $A$ with convergence radius at least $\big((\|A\|^{2}+c)^{1/2}-\|A\|\big)$. 

The equality and unitarity now follow from the functional calculus for
self-adjoint opertors and the respective equality for numbers.
\end{proof}

The next fact provides a more detailed account on the behavior of $x\mapsto \sgn(\Phi(x))$ for smooth $\Phi$. We note that the following result has been asserted implicitly in a modified form in \cite[last paragraph on p.~226]{Ca78}.

\begin{lemma}\label{lem:almost admissible} Let $n,d\in\mathbb{N}_{\geq 1}$, 
$\Phi\in C_b^\infty\big(\mathbb{R}^n;\mathbb{C}^{d\times d}\big)$ pointwise self-adjoint, 
$c,R\geq0$, $c\neq 0$. Assume that for all $x\in \mathbb{R}^n\backslash  B(0,R)$, $|\Phi(x)|\geq c$. Let $\tau>0$. Then there exists $U \in C^\infty_b\big(\mathbb{R}^n;\mathbb{C}^{d\times d}\big)$ pointwise self-adjoint, and a function 
$u \in C^\infty_b(\mathbb{R}^n;\mathbb{R}_{\geq 0})$ with $0\leq u \leq  1$, $u_{\mathbb{R}^n\backslash  B(0,\tau)}=1$, such that 
\[
  U(x)=\sgn(\Phi(x)), \; x\in \mathbb{R}^n\backslash  B(0,R) \, \text{ and } \, 
  U(x)^2 = u(x)I_d, \; x\in \mathbb{R}^n.
\]
Moreover, for all $\beta\in\mathbb{N}_0^n$, $\beta\neq 0$, there exists $\kappa>0$ such that for all $x\in \mathbb{R}^n\backslash  B(0,R)$, 
\[
  \|\partial^\beta U(x)\|\leq \kappa \sum_{\alpha\in\mathbb{N}_0^n, \gamma\in \mathbb{N}, |\alpha|\gamma=|\beta|}\|\partial^\alpha \Phi(x)\|^{\gamma}. 
\]
\end{lemma}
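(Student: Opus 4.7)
The plan is to construct $U$ as a smooth spatial cutoff of the matrix-valued function $\sgn(\Phi)=\Phi|\Phi|^{-1}$, which is well defined and smooth wherever $\Phi$ is invertible, and then to take $u$ equal to the square of the cutoff. This makes the identity $U(x)^2=u(x)I_d$ automatic from $\sgn(\Phi)^2=I_d$, reducing the lemma to verifying the prescribed boundary/positivity constraints and the pointwise derivative estimates. First I would enlarge the domain on which $\sgn(\Phi)$ is defined: by continuity of $\Phi$ together with the lower bound $|\Phi(x)|\geq cI_d$ on $\mathbb{R}^n\setminus B(0,R)$, one picks $\delta\in(0,R]$ (with the convention $R-\delta:=0$ when $R=0$) so that $|\Phi(x)|\geq(c/2)I_d$ on the open set $\Omega:=\{x\in\mathbb{R}^n : |x|>R-\delta\}$. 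Applying Theorem \ref{thm:properties of sign function} pointwise with $A=\Phi(x)$ then shows that $x\mapsto\sgn(\Phi(x))$ lies in $C_b^\infty(\Omega;\mathbb{C}^{d\times d})$ and is pointwise self-adjoint and involutive on $\Omega$.

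Without loss of generality assume $\tau\geq R$, since $U=\sgn(\Phi)$ on $\mathbb{R}^n\setminus B(0,R)$ already forces $U^2=I_d$ (hence $u=1$) there, and the condition $u|_{\mathbb{R}^n\setminus B(0,\tau)}=1$ is compatible with this exactly when $\tau\geq R$. I would then fix a smooth radial cutoff $\eta\in C^\infty(\mathbb{R}^n;[0,1])$ with $\eta(x)=1$ on $\mathbb{R}^n\setminus B(0,R)$ and $\supp(\eta)\subseteq\Omega$ (for instance $\eta(x)=0$ on $B(0,R-\delta/2)$), and set
\[
U(x):=\eta(x)\sgn(\Phi(x)),\qquad u(x):=\eta(x)^{2},
\]
interpreting the product as zero outside $\supp(\eta)\subseteq\Omega$. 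Then $U\in C_b^\infty(\mathbb{R}^n;\mathbb{C}^{d\times d})$ is pointwise self-adjoint, $u\in C_b^\infty(\mathbb{R}^n;[0,1])$ with $u|_{\mathbb{R}^n\setminus B(0,R)}=1$ and hence $u|_{\mathbb{R}^n\setminus B(0,\tau)}=1$, $U=\sgn(\Phi)$ on $\mathbb{R}^n\setminus B(0,R)$, and $U(x)^2=\eta(x)^2\sgn(\Phi(x))^2=u(x)I_d$ throughout $\mathbb{R}^n$.

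The main obstacle is the derivative bound. On $|x|\geq R$ one has $U=\sgn(\Phi)$; on the transition region $R-\delta/2\leq|x|\leq R$ the factor $\eta$ is smooth with bounded derivatives and $\sgn(\Phi)$ is smooth and uniformly bounded on $\Omega$, so the Leibniz rule reduces the global estimate to one on $\partial^\beta\sgn(\Phi)$ alone (after absorbing finitely many fixed constants from derivatives of $\eta$). To bound $\partial^\beta\sgn(\Phi)$ uniformly on $\Omega$, I would differentiate the Cauchy-type representation
\[
\sgn(\Phi(x))=\frac{1}{2\pi i}\oint_{\Gamma_+}(z-\Phi(x))^{-1}\,dz-\frac{1}{2\pi i}\oint_{\Gamma_-}(z-\Phi(x))^{-1}\,dz,
\]
where $\Gamma_\pm$ are $x$-independent contours enclosing the positive and negative portions of $\sigma(\Phi(x))$; their existence uniformly in $x\in\Omega$ is guaranteed by $\|\Phi\|_\infty\leq C$ and $|\Phi(x)|\geq (c/2) I_d$. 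Iterating the identity $\partial_j(z-\Phi)^{-1}=(z-\Phi)^{-1}(\partial_j\Phi)(z-\Phi)^{-1}$ and grouping by the Fa\`a di Bruno combinatorics expresses $\partial^\beta\sgn(\Phi)(x)$ as a finite sum, indexed by ordered partitions $\beta=\alpha_1+\cdots+\alpha_\gamma$ with $|\alpha_i|\geq 1$, of terms whose operator norm is bounded by a constant times $\prod_{i=1}^\gamma\|\partial^{\alpha_i}\Phi(x)\|$. A standard AM--GM / Young-type inequality, combined with the uniform boundedness of all derivatives of $\Phi\in C_b^\infty$ (which allows one to convert factors of a given order into factors of any other order at the cost of a multiplicative constant), then dominates each such product by the sum $\sum_{(\alpha,\gamma):\,|\alpha|\gamma=|\beta|}\|\partial^\alpha\Phi(x)\|^\gamma$, yielding the stated estimate with a constant $\kappa$ depending only on $c$, $\|\Phi\|_\infty$, $|\beta|$, and the derivatives of $\eta$.
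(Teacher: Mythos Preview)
Your reduction ``without loss of generality assume $\tau\geq R$'' is backwards, and this is a genuine gap. The requirement $u|_{\mathbb{R}^n\setminus B(0,\tau)}=1$ together with $U^2=uI_d$ forces $U$ to be unitary on the whole annulus $\tau\le|x|\le R$. The interesting and necessary case is precisely $\tau<R$ (indeed, arbitrarily small $\tau$ is exactly what is used later; see Hypothesis~\ref{hyp:parameters} and Remark~\ref{rem:almo_ad}). Your construction $U=\eta\,\sgn(\Phi)$ with $\supp(\eta)\subseteq\{|x|>R-\delta\}$ and $\eta=1$ only on $\{|x|\ge R\}$ yields $u=\eta^2=1$ just outside $B(0,R)$, so for $\tau<R-\delta$ the condition $u=1$ on $\{|x|\ge\tau\}$ simply fails: there is no way to make $U$ unitary on $\tau\le|x|\le R-\delta$, because $\sgn(\Phi)$ is not even defined there.

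The paper handles this with an extra idea you are missing: rather than cutting off $\sgn(\Phi)$, it first composes with a smooth radial retraction $\alpha\colon\mathbb{R}^n\setminus\{0\}\to\{|y|\ge R'\}$ (equal to the identity for $|x|\ge R$) so that $\sgn(\Phi(\alpha(x)))$ is defined, self-adjoint, and unitary for \emph{all} $x\neq 0$, and only then multiplies by a scalar cutoff $\phi$ vanishing near the origin. With $U(x)=\phi(x)\sgn(\Phi(\alpha(x)))$ and $u=\phi^2$, one gets $u=1$ on $\{|x|\ge\tau\}$ for any $\tau>0$, as required.

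Your treatment of the derivative bound via the contour representation and Fa\`a di Bruno is a reasonable alternative to the paper's chain-rule argument using the analyticity of $\sgn$ from Theorem~\ref{thm:properties of sign function}; both deliver the same type of product-of-derivatives control. The last ``AM--GM/Young plus boundedness'' reduction to the precise form $\sum_{|\alpha|\gamma=|\beta|}\|\partial^\alpha\Phi\|^\gamma$ is a bit glib (as is the paper's ``continuing in this manner''), but this is secondary to the construction issue above.
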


\begin{remark} \label{rem:functions of assumptions satisfy assumptions}
$(i)$ We note that the function $U$ constructed in Lemma \ref{lem:almost admissible} attains values in the set of unitary matrices (on $\mathbb{R}^n\backslash  B(0,R)$). Indeed, this follows from Theorem \ref{thm:properties of sign function}. \\[1mm]
$(ii)$ In the situation of Lemma \ref{lem:almost admissible}, assume, in addition, that $\Phi$ satisfies the following estimates: For some
$\epsilon> 1/2$ and for $\alpha\in\mathbb{N}_{0}^{n}$, there is a constant $\kappa_1>0$
such that 
\[
\|(\partial^{\alpha}\Phi)(x)\|\leq \begin{cases}
\kappa_1 (1+|x|)^{-1}, & |\alpha|=1,\\[1mm]
\kappa_1 (1+|x|)^{-1-\epsilon}, & |\alpha|\geq 2,
\end{cases} \quad x\in\mathbb{R}^{n}.
\]
 Then $U$ constructed in Lemma \ref{lem:almost admissible} satisfies analogous estimates: For 
 $\alpha\in \mathbb{N}_0^n$ there exists $\kappa_2>0$ such that
\[
\|(\partial^{\alpha} U)(x)\|\leq \begin{cases}
\kappa_2 (1+|x|)^{-1}, & |\alpha|=1,\\[1mm]
\kappa_2 (1+|x|)^{-1-\epsilon}, & |\alpha|\geq 2.
\end{cases}
\]
In particular, if $\Phi$ is admissible (see Definition \ref{def:phi_admissible}), then so is 
$U=\sgn (\Phi)$. \hfill $\diamond$
\end{remark}

\begin{proof}[Proof of Lemma \ref{lem:almost admissible}] One observes that 
$x\mapsto \sgn (\Phi(x))$ is $C^\infty_b$ on $\mathbb{R}^n\backslash  B(0,R')$ for some $0<R'<R$, by Theorem \ref{thm:properties of sign function}. Moreover, for $j\in\{1,\ldots,n\}$, 
\[
(\partial_{j} U)(x)= ({\sgn}'(\Phi(x)))(\partial_{j}\Phi)(x).
\]
Thus, 
\[
(\partial_{k}\partial_{j} U)(x)= {\sgn}''(\Phi(x))(\partial_{k}\Phi)(x) (\partial_{j}\Phi)(x) 
+ {\sgn}'(\Phi(x))(\partial_{k}\partial_{j}\Phi)(x).
\]
Continuing in this manner, we obtain the estimates for the derivatives,
once noticing that $x\mapsto\sgn^{(k)}\left(\Phi(x)\right)$ is bounded
for all $k\in\mathbb{N}$. Indeed, by the boundedness
of $\Phi$ and since $\left|\Phi(x)\right|\geq c$ for all $x\in\mathbb{R}^{n}\backslash  B(0,R)$,
the set $\{\Phi(x) \, | \, x\in\mathbb{R}^{n}\backslash B(0,R)\}\subseteq\mathbb{C}^{d\times d}$ 
is relatively compact and its closure is contained in the domain of
analyticity of $\sgn(\cdot)$. Hence, $x\mapsto\sgn^{(k)}\left(\Phi(x)\right)$
is indeed bounded.   

Next, let $\eta\in C_b^\infty(\mathbb{R}^n)$ with 
\[
   \eta(x)\begin{cases}
            = R',& |x|\leq  R',\\
            \in [R',R],& R'<|x|<R,\\
            =|x|, & |x|\geq R.
           \end{cases}
\]
Then $\alpha \colon \mathbb{R}^n\backslash \{0\} \to \mathbb{R}^n,x\mapsto \eta(|x|)\frac{x}{|x|}$ is $C^\infty$ and $\alpha(x)=x$ for all $|x|\geq R$. Let, in addition, $\phi\colon \mathbb{R}^n \to \mathbb{R}_{\geq 0}$ be a $C^\infty$-function such that $\phi(x)=1$ for $x\in \mathbb{R}^n\backslash  B(0,\tau)$ and with $0\leq  \phi \leq  1$ and $\phi(x)=0$ on $B(0,\tau/2)$. Then a suitable choice for $U$ is
\[
  x\mapsto \phi(x)\sgn(\Phi(\alpha(x))).\tag*{\qedhere}
\] 
\end{proof}

One might wonder, whether the function $u$ vanishing at the origin in Lemma \ref{lem:almost admissible} is really needed. In fact, if it was possible for any arbitrarily differentiable potential $\Phi$ discussed in Theorem \ref{thm:Perturbation}, to choose $u$ in Lemma \ref{lem:almost admissible} being $1$ also at the origin, the only nontrivial assertion of Theorem \ref{thm:Perturbation} would be the differentiability issue. However, the next example indicates that Theorem \ref{thm:Perturbation} has a nontrivial application. 

\begin{theorem}\label{thm:there is no Cinfty unitary-self-adjoint potential} Consider the function
$\Phi \colon \mathbb{R}^3 \to \mathbb{C}^{2\times 2}$ 
such that 
\[
 \Phi(x) = \sum_{j=1}^3 \sigma_j \frac{x_j}{|x|}, \quad  |x|\geq 1,  
\]
as in Example \ref{exa:standard_example_0}. Then there is no $U \in C^\infty\big(\mathbb{R}^3;\mathbb{C}^{2\times 2}\big)$ with the property that $U(x)=\Phi(x)$ for all $x\in \mathbb{R}^n$, $|x|\geq 1$, $U(x)=U(x)^*$ and for some $c>0$, $|U(x)|\geq c$, $x\in \mathbb{R}^n$. 
\end{theorem}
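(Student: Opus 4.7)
\medskip
\noindent\textbf{Proof proposal.}
The plan is to turn the question into an obstruction-theoretic (degree) argument. Suppose, toward a contradiction, that such a $U$ existed. Then by Theorem \ref{thm:properties of sign function} the map $V\colon \mathbb{R}^3\to\mathbb{C}^{2\times 2}$,
\[
V(x)\coloneqq \sgn(U(x))=U(x)|U(x)|^{-1}, \quad x\in\mathbb{R}^3,
\]
is continuous (in fact smooth), pointwise self-adjoint, and pointwise unitary with $V(x)^2=I_2$. First I would show that, outside the unit ball, $V$ agrees with $\Phi$: for $|x|\ge 1$ one computes directly from $\sigma_j\sigma_k+\sigma_k\sigma_j=2\delta_{jk}I_2$ that
\[
\Phi(x)^2=\sum_{j,k=1}^3 \sigma_j\sigma_k \frac{x_jx_k}{|x|^2} =\frac{1}{2}\sum_{j,k=1}^3 (\sigma_j\sigma_k+\sigma_k\sigma_j)\frac{x_jx_k}{|x|^2} =I_2,
\]
so $|\Phi(x)|=I_2$ and hence $V(x)=\Phi(x)$ for $|x|\ge 1$.

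Next I would classify the target. Writing any self-adjoint element of $\mathbb{C}^{2\times 2}$ as $aI_2+b\sigma_1+c\sigma_2+d\sigma_3$ with $a,b,c,d\in\mathbb{R}$, a short computation shows that the self-adjoint unitaries (i.e.\ $A=A^*$ with $A^2=I_2$) form
\[
\mathcal{U}_{\mathrm{sa}}=\{I_2\}\,\sqcup\,\{-I_2\}\,\sqcup\, \mathcal{S}, \qquad \mathcal{S}\coloneqq\Big\{b\sigma_1+c\sigma_2+d\sigma_3 \,\Big|\, b^2+c^2+d^2=1\Big\},
\]
and $\mathcal{S}$ is homeomorphic to $S^2$ via $(b,c,d)\mapsto b\sigma_1+c\sigma_2+d\sigma_3$; in particular $\mathcal{U}_{\mathrm{sa}}$ has exactly three connected components. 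Since $\mathbb{R}^3$ is connected, $V(\mathbb{R}^3)\subseteq \mathcal{U}_{\mathrm{sa}}$ is connected, and since $V(x)\in\mathcal{S}$ for $|x|\ge 1$, one concludes $V(\mathbb{R}^3)\subseteq \mathcal{S}\cong S^2$.

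The final step would be to extract a contradiction from homotopy/degree. Identifying $\mathcal{S}$ with $S^2\subset \mathbb{R}^3$ as above, the restriction $V|_{S^2}\colon S^2\to S^2$ is precisely the identity $x\mapsto x$, whose Brouwer degree is $1$, hence $V|_{S^2}$ is not null-homotopic. On the other hand, $V$ extends continuously from $S^2$ to the contractible set $\overline{B(0,1)}$ (by restricting $V$ itself to the closed unit ball), so $V|_{S^2}$ must be null-homotopic, contradiction. The main obstacle is purely bookkeeping: verifying the three-component decomposition of $\mathcal{U}_{\mathrm{sa}}$ and making the identification $\mathcal{S}\cong S^2$ precise enough to read off degree $1$ for $V|_{S^2}$. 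Everything else is either an immediate application of Theorem \ref{thm:properties of sign function} or the trivial algebraic fact $\Phi^2=I_2$ on $\{|x|\ge 1\}$, so the argument should be short once these pieces are assembled.
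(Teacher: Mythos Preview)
Your proposal is correct and follows essentially the same approach as the paper: reduce to a self-adjoint unitary map via $\sgn$, classify the self-adjoint unitaries in $\mathbb{C}^{2\times 2}$ to identify the target with $\{\pm I_2\}\sqcup S^2$, use connectedness to land in the $S^2$-component, and then derive a contradiction from the no-retraction/degree argument. The only cosmetic difference is that the paper invokes Lemma~\ref{lem:almost admissible} to replace $U$ by a unitary-valued map, whereas you define $V=\sgn(U)$ directly; and the paper isolates the $S^2$-component via the determinant plus the intermediate value theorem, whereas you use connectedness of $\mathbb{R}^3$ and the three-component decomposition of $\mathcal{U}_{\mathrm{sa}}$ --- but these are equivalent formulations of the same step.
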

\begin{proof} 
We will proceed by contradiction and assume the existence of such a $U$. By Lemma \ref{lem:almost admissible} (and Remark \ref{rem:functions of assumptions satisfy assumptions}\,$(i)$), we may assume without loss of generality that $U$ assumes values in the self-adjoint unitary operators in $\mathbb{C}^{2\times 2}$. The latter are of the form
\[
   W=\begin{pmatrix}
      a & b+id \\ b-id & c 
   \end{pmatrix}, \quad a,b,c,d\in \mathbb{R}
\]
with $W^*W=I_2$. From the latter equation, one reads off
\begin{align*}
   1 &=a^2+b^2+d^2,  \\
   0 &=(a+c)b,  \\
   0 &=(a+c)d,  \\
   1 &=c^2+b^2+d^2.
\end{align*}
Hence, either $a\neq -c$, which implies $b=d=0$ and $a=c=\pm 1$, or $a=-c$ with $a^2+b^2+d^2=1$. Note that, in the latter case, we have $W=a\sigma_1+b\sigma_2+d\sigma_3$ and $\det (W) = - 1$. Hence, since $U$ is pointwise invertible everywhere, and 
$\det (\pm I_2)=1$, by the intermediate value theorem, one infers that 
\[ 
U[\mathbb{R}^3]\subseteq \{ a\sigma_1+b\sigma_2+c\sigma_3 \, | \,  a,b,c\in \mathbb{R}, \, a^2+b^2+c^2=1\}\eqqcolon \mathcal{U} 
\]
 Identifying $\mathcal{U}$ with $S^2$ and using $U|_{S^{2}}=\identity_{S^2}$, one observes 
 that $U$ is a retraction of $B(0,1)$ for $S^2$, which is a contradiction.
 We provide some details for the latter claim.
Assume there exists a continuous map $f\colon \overline{B(0,1)}\subset \mathbb{R}^3\to S^2$ with the property  $f(x)=x$ for all $x\in S^2$. Denoting the identity on $\overline{B(0,1)}$ by $I_{\overline{B(0,1)}}$, one considers the homotopy $H$ of $f$ and $I_{\overline{B(0,1)}}$ given by
\[
 H(\lambda,x)\coloneqq \lambda f(x)+(1-\lambda)I_{\overline{B(0,1)}}(x),\quad \lambda \in [0,1],
 \; x\in \overline{B(0,1)}.
\] 
In the following, we denote by $\textrm{deg}(g,z_0)$ Brouwer's degree of a function $g\colon \overline{B(0,1)}\to \mathbb{R}^3$ in the point $z_0 \in \mathbb{R}^3\backslash  g[S^2]$. One observes that 
$0\in \mathbb{R}^3\backslash  H(\lambda, S^2)=\mathbb{R}^3\backslash  S^2$ for all $\lambda\in [0,1]$, by the hypotheses on $f$. Hence, by homotopy invariance of Brouwer's degree, one gets, using $0\in I_{\overline{B(0,1)}}[\overline{B(0,1)}]$ and $0\notin f[\overline{B(0,1)}]=S^2$,
\[
   1 = \textrm{deg}(I_{\overline{B(0,1)}},0)=\textrm{deg}(H(0,\cdot),0)=\textrm{deg}(H(1,\cdot))=\textrm{deg}(f,0)=0,
\]
a contradiction. 
\end{proof}

While we decided to provide an explicit proof of 
Theorem \ref{thm:there is no Cinfty unitary-self-adjoint potential}, it should be mentioned 
that is is a special case of ``Brouwer's no retraction theorem'' (see, e.g., \cite[Theorem~3.12]{Cr78}): 
There is no continuous map $f:\ol{B(0,1)} \to S^{n-1}$ that is the identity on $S^{n-1}$. (Here 
$\ol{B(0,1)}$ denotes the closed unit ball in $\bbR^n$, $n \in \bbN$.) 

In the remainder of this section, we study the index formula \eqref{iL} in more detail. More precisely, we will show an invariance principle which will lead to a proof of Corollary \ref{c:adind0}, which shows that for admissible potentials $\Phi$, the index of $\mathcal{Q}+\Phi$ vanishes, reproducing \cite[Theorem~5.2]{Ra08} in our context.

Let $n,d\in\mathbb{N}$, $\cU\subseteq \mathbb{R}^n$ open, 
$\Upsilon\in C^1\big(\cU;\mathbb{C}^{d\times d}\big)$. For $x\in \cU$ we introduce the expression
\begin{equation}\label{e:Mphi}
   M_\Upsilon(x)\coloneqq \sum_{i_1,\ldots,i_n=1}^n \epsilon_{i_1\cdots i_n} \tr \big(\partial_{i_1}\Upsilon(x)\cdots\partial_{i_n}\Upsilon(x)\big),
\end{equation}
where $\epsilon_{i_1\cdots i_n}$ denotes the totally anti-symmetric symbol in $n$ coordinates.

\begin{remark}\label{r:mphi}
 The relationship of the index formula for potentials $\Phi$ as in Theorem \ref{thm:Perturbation} and the function defined in \eqref{e:Mphi} is as follows: Let $U$ be $C^2$-smooth with $U=\sgn(\Phi)$ on the complement of a sufficiently large ball. For $\Lambda>0$, one computes with the help of Gauss' divergence theorem
 \begin{align*}
 &\frac{1}{\Lambda}\sum_{i_{1},\ldots,i_{n} = 1}^n \epsilon_{i_{1}\ldots i_{n}}
\int_{\Lambda S^{n-1}}\tr (U(x) (\partial_{i_{1}} U)(x)\ldots 
(\partial_{i_{n-1}}\ U)(x))x_{i_{n}}\, d^{n-1} \sigma(x)      \\
& \quad =\sum_{i_{1},\ldots,i_{n} = 1}^n \epsilon_{i_{1}\ldots i_{n}}
\int_{B(0,\Lambda)}\tr ((\partial_{i_{1}}\ U)(x)\ldots 
(\partial_{i_{n}} U)(x))\, d^n x     \\ 
& \quad = \int_{B(0,\Lambda)} M_{U}(x)d^n x.
 \end{align*}
 Hence, the index formula for the operator $L=\mathcal{Q}+\Phi$ discussed in Theorem \ref{thm:Perturbation} may be rewritten as follows
 \begin{equation}\label{e:indM}
   \ind(L)=\left(\frac{i}{8\pi}\right)^{(n-1)/2}\frac{1}{\left[(n-1)/2\right]!} \lim_{\Lambda\to\infty} 
   \frac{1}{2} \int_{B(0,\Lambda)} M_{U}(x) \, d^n x.
 \end{equation}
 \hfill $\diamond$
\end{remark}

\begin{definition}[Transformations of constant orientation] \lb{d10.8} Let $n\in \mathbb{R}^n$, 
$\cU\subseteq \mathbb{R}^n$ open, dense. We say that $T\colon \cU\to \mathbb{R}^n$ is a \emph{transformation of constant orientation}, if the following properties $(i)$--$(iii)$ are satisfied: \\[1mm] 
$(i)$ $T$ is continuously differentiable and injective. \\[1mm] 
$(ii)$ $T[\cU]$ is dense in $\mathbb{R}^n$. \\[1mm] 
$(iii)$ The function $\cU\ni x\mapsto \sgn (\det (T'(x)))$ is either identically $1$ or $-1$. We 
define \hspace*{7mm} $\sgn(T)\coloneqq \sgn (\det (T'(x)))$ for some $($and hence for all\,$)$ 
$x\in \cU$. 
\end{definition}

The sought after invariance principle then reads as follows:

\begin{theorem}\label{t:inv} Let $n\in\mathbb{N}_{\geq3}$ odd, $d\in\mathbb{N}$,
$\Phi\in C_{b}^{2}\big(\mathbb{R}^{n};\mathbb{C}^{d\times d}\big)$. Assume
the following properties: 
\[
\Phi(x)=\Phi(x)^{*},  \quad x\in\mathbb{R}^{n},
\]
there exists $c>0$, $R\geq0$ such that $\left|\Phi(x)\right|\geq c$
for all $x\in\mathbb{R}^{n}\backslash B(0,R)$, and that there is $\varepsilon> 1/2$
such that for all $\alpha\in\mathbb{N}_{0}^{n}$, $\left|\alpha\right|<3$,
there is $\kappa>0$ such that
\[
\|(\partial^{\alpha}\Phi)(x)\|\leq \begin{cases}
\kappa (1+|x|)^{-1}, & |\alpha|=1,\\[1mm]
\kappa (1+ |x|)^{-1-\epsilon}, & |\alpha|=2,
\end{cases}\quad x\in\mathbb{R}^{n}.
\]
We recall $\mathcal{Q}=\sum_{j=1}^{n}\gamma_{j,n}\partial_{j}$, with
$\gamma_{j,n}\in\mathbb{C}^{2^{\hat n} \times 2^{\hat n}}$, $j\in\{1,\ldots,n\}$,
given in \eqref{eq:def_of_Q2} or in Theorem \ref{thm:L_is_closed}. In addition, 
let $T\colon \cU\subseteq \mathbb{R}^n\to\mathbb{R}^n$ $($with $\cU$ as in 
Definition \ref{d10.8}$)$ be a transformation of constant orientation. Assume that 
$\Phi_T\coloneqq \overline{\Phi\circ T}$ 
$($the closure of the mapping $\Phi\circ T$$)$ satisfies the assumptions imposed on $\Phi$. 
Then $L_1=\mathcal{Q}+\Phi$ and $L_2=\mathcal{Q}+\Phi_T$ are Fredholm and
\[
  \ind(L_1)=\sgn(T)\ind(L_2).
\]
\end{theorem}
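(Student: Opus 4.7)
The plan is to reduce both indices to the volume-integral form of the Callias formula, derive a pointwise chain-rule identity under $T$ producing the Jacobian determinant, and then apply a change of variables. By Theorem~\ref{thm:Perturbation} together with the reformulation in Remark~\ref{r:mphi} (equation~\eqref{e:indM}),
\begin{equation*}
\ind(L_j) = c_n \lim_{\Lambda\to\infty} \tfrac{1}{2} \int_{B(0,\Lambda)} M_{U_j}(x)\, d^n x, \quad j\in\{1,2\},
\end{equation*}
where $c_n = (i/(8\pi))^{(n-1)/2}/[(n-1)/2]!$ and $U_1, U_2 \in C^2_b(\bbR^n; \bbC^{d\times d})$ are the smooth extensions of $\sgn(\Phi)$ and $\sgn(\Phi_T)$ furnished by Lemma~\ref{lem:almost admissible}. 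Since $\Phi_T = \Phi\circ T$ on $\cU$, the functions $U_2$ and $U_1\circ T$ agree on $\cU$ outside a compact set, and the resulting discrepancy contributes only a $\Lambda$-independent quantity absorbed by the outer limit.

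The central algebraic step is the pointwise identity $M_{U_1\circ T}(x) = \det(T'(x))\, M_{U_1}(T(x))$ for $x\in\cU$. Writing $\partial_{i_k}(U_1\circ T)(x) = \sum_{j_k=1}^n (\partial_{j_k}U_1)(T(x))\, \partial_{i_k}T_{j_k}(x)$ by the chain rule and substituting into the definition~\eqref{e:Mphi} gives
\begin{equation*}
M_{U_1\circ T}(x) = \sum_{j_1,\ldots,j_n=1}^n \Biggl(\sum_{i_1,\ldots,i_n=1}^n \epsilon_{i_1\cdots i_n}\prod_{k=1}^n\partial_{i_k}T_{j_k}(x)\Biggr)\tr\bigl((\partial_{j_1}U_1)\cdots(\partial_{j_n}U_1)\bigr)(T(x)).
\end{equation*}
By the Leibniz formula, the bracketed factor equals $\epsilon_{j_1\cdots j_n}\det(T'(x))$, and the outer sum reassembles $M_{U_1}(T(x))$, yielding the identity.

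Third, applying the change of variables $y = T(x)$ on $\cU$ and using $\det(T'(x)) = \sgn(T)\,|\det(T'(x))|$ produces
\begin{equation*}
\int_{B(0,\Lambda)\cap\cU} M_{U_1\circ T}(x)\, d^n x = \sgn(T) \int_{T[B(0,\Lambda)\cap\cU]} M_{U_1}(y)\, d^n y,
\end{equation*}
so the theorem reduces to showing
\begin{equation*}
\lim_{\Lambda\to\infty}\int_{T[B(0,\Lambda)\cap\cU]} M_{U_1}(y)\, d^n y = \lim_{\Lambda\to\infty}\int_{B(0,\Lambda)} M_{U_1}(y)\, d^n y = c_n^{-1} \cdot 2\ind(L_1).
\end{equation*}

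The main obstacle is this limit equality: the pointwise bound $|M_{U_1}(y)|\lesssim |y|^{-n}$ precludes $L^1(\bbR^n)$-integrability, so the value of the limit a priori depends on the exhaustion. This is resolved using the divergence structure made explicit in Remark~\ref{rT}: $M_{U_1} = \dive F_{U_1}$ for a smooth vector field $F_{U_1}$ whose components decay like $|y|^{-(n-1)}$. Stokes' theorem converts both integrals to surface fluxes, and the admissibility hypothesis imposed on $\Phi_T$ forces $|T(x)|\to\infty$ as $|x|\to\infty$ with controlled distortion, so that $\partial T[B(0,\Lambda)\cap\cU] = T[S^{n-1}_\Lambda\cap\cU]$ is confined to a shell $\{r(\Lambda)\leq |y|\leq R(\Lambda)\}$ with $r(\Lambda), R(\Lambda)\to\infty$; the resulting flux difference is then controlled by the difference of the spherical surface integrals of $F_{U_1}$ at radii $r(\Lambda)$ and $R(\Lambda)$, both of which converge to the common limit $c_n^{-1}\cdot 2\ind(L_1)$ by Theorem~\ref{thm:Perturbation}.
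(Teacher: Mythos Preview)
Your first three steps---reducing to the volume form via Remark~\ref{r:mphi}, the chain-rule identity $M_{U_1\circ T}(x)=\det(T'(x))\,M_{U_1}(T(x))$ (this is exactly Lemma~\ref{l:chain}), and the change of variables---match the paper's approach. The divergence occurs in the final limit identification.

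Your Stokes-theorem argument has a genuine gap. You claim that the admissibility of $\Phi_T$ forces $|T(x)|\to\infty$ as $|x|\to\infty$, so that $T[S^{n-1}_\Lambda\cap\cU]$ lies in a shell with inner and outer radii both tending to infinity. This is false under the stated hypotheses. Definition~\ref{d10.8} only requires $T$ to be a $C^1$ injection with open dense range and constant-sign Jacobian; nothing prevents $|T(x)|\to 0$ as $|x|\to\infty$. The inversion $T(x)=x/|x|^2$ on $\cU=\bbR^n\setminus\{0\}$---precisely the transformation used in Corollary~\ref{c:adind0}---has $|T(x)|\to 0$, and $T[S^{n-1}_\Lambda]$ is the sphere of radius $1/\Lambda\to 0$, not $\to\infty$. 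The admissibility of $\Phi_T$ gives no control here: if $\Phi$ is invertible everywhere (the admissible case), then $|\Phi(T(x))|\geq c$ is automatic and says nothing about $|T(x)|$. Your handling of the $U_2$ versus $U_1\circ T$ discrepancy has the same problem: these need not agree outside a compact set, and even if they did, a $\Lambda$-independent offset is not ``absorbed'' by the limit---it shifts the answer.

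The paper avoids all geometric control of $T$. After the change of variables it passes to the intersection $B(0,\Lambda)\cap T[B(0,\Lambda)]$ and then invokes Theorem~\ref{thm:index with Witten} directly with the cutoff sequence $T_\Lambda=\chi_{T[B(0,\Lambda)]}\chi_\Lambda$, $S_\Lambda=I$. Since $T$ has open dense range, $\chi_{T[B(0,\Lambda)]}\to I$ strongly, so the abstract index formula yields
\[
\ind(\mathcal{Q}+U)=\lim_{\Lambda\to\infty}\lim_{z\to 0}\tr\big(\chi_{T[B(0,\Lambda)]}\chi_\Lambda B_L(z)\big)=c_n\lim_{\Lambda\to\infty}\int_{B(0,\Lambda)\cap T[B(0,\Lambda)]}M_U(x)\,d^nx.
\]
This is the missing idea: the flexibility of Theorem~\ref{thm:index with Witten} in allowing arbitrary strongly convergent cutoffs does the work that your geometric shell argument cannot.
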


Before proving Theorem \ref{t:inv} we need a chain rule for the function defined in \eqref{e:Mphi}.

\begin{lemma}\label{l:chain} Let $n,d\in \mathbb{N}$, $\cU\subseteq \mathbb{R}^n$ open, 
$\Phi\in C^1\big(\mathbb{R}^n;\mathbb{C}^{d\times d}\big)$, $T\in C^1(\cU; \mathbb{R}^n)$.  Then, 
\[
   M_{\Phi\circ T}(x)= M_\Phi (T(x))\det(T'(x)), \quad x\in \cU.
\]
\end{lemma}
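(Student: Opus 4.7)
The proof is a direct calculation via the chain rule and the alternating property of the $\epsilon$-symbol. The plan is as follows.

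First, I would apply the ordinary chain rule componentwise to the matrix-valued composition: for $x\in\cU$ and each $k\in\{1,\ldots,n\}$,
\[
\partial_{i_k}(\Phi\circ T)(x)=\sum_{j_k=1}^{n}(\partial_{j_k}\Phi)(T(x))\,\bigl(\partial_{i_k}T_{j_k}\bigr)(x).
\]
Substituting this into the definition \eqref{e:Mphi} of $M_{\Phi\circ T}$ and pulling the $n$ sums over $j_1,\ldots,j_n$ outside the trace (using linearity of $\tr$), one obtains
\begin{align*}
M_{\Phi\circ T}(x)
&=\sum_{i_1,\ldots,i_n=1}^{n}\epsilon_{i_1\cdots i_n}\,\tr\!\Bigl(\prod_{k=1}^{n}\sum_{j_k=1}^{n}(\partial_{j_k}\Phi)(T(x))\,\partial_{i_k}T_{j_k}(x)\Bigr)\\
&=\sum_{j_1,\ldots,j_n=1}^{n}\tr\!\Bigl((\partial_{j_1}\Phi)(T(x))\cdots(\partial_{j_n}\Phi)(T(x))\Bigr)\,S_{j_1\cdots j_n}(x),
\end{align*}
where I set
\[
S_{j_1\cdots j_n}(x)\coloneqq\sum_{i_1,\ldots,i_n=1}^{n}\epsilon_{i_1\cdots i_n}\prod_{k=1}^{n}\bigl(\partial_{i_k}T_{j_k}\bigr)(x).
\]

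The key step is then to identify $S_{j_1\cdots j_n}(x)$ with $\epsilon_{j_1\cdots j_n}\det(T'(x))$. Indeed, viewing $T'(x)$ as the Jacobian matrix with entries $(T'(x))_{j,i}=(\partial_{i}T_{j})(x)$, the expression $S_{j_1\cdots j_n}(x)$ is the determinant of the $n\times n$ matrix obtained by stacking the $j_1,\ldots,j_n$-th rows of $T'(x)$ (by the Leibniz formula $\det M=\sum_{i_1,\ldots,i_n}\epsilon_{i_1\cdots i_n}M_{1,i_1}\cdots M_{n,i_n}$). If the indices $j_1,\ldots,j_n$ fail to be a permutation of $\{1,\ldots,n\}$, two rows coincide and $S_{j_1\cdots j_n}(x)=0$, while $\epsilon_{j_1\cdots j_n}=0$ as well; if they form a permutation, reordering the rows introduces exactly the sign $\epsilon_{j_1\cdots j_n}$. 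Thus in either case,
\[
S_{j_1\cdots j_n}(x)=\epsilon_{j_1\cdots j_n}\det(T'(x)),\qquad x\in\cU.
\]

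Inserting this identity into the previous display yields
\[
M_{\Phi\circ T}(x)=\det(T'(x))\sum_{j_1,\ldots,j_n=1}^{n}\epsilon_{j_1\cdots j_n}\tr\!\bigl((\partial_{j_1}\Phi)(T(x))\cdots(\partial_{j_n}\Phi)(T(x))\bigr)=\det(T'(x))\,M_{\Phi}(T(x)),
\]
which is the claim. There is no real obstacle here; the only delicate point is keeping the row/column conventions of $T'(x)$ consistent so that the Leibniz expansion is applied correctly, and the remainder is bookkeeping.
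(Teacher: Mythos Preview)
Your proof is correct and follows essentially the same approach as the paper: apply the chain rule, pull out the scalar Jacobian factors, and invoke the identity $\sum_{i_1,\ldots,i_n}\epsilon_{i_1\cdots i_n}\,a_{i_1j_1}\cdots a_{i_nj_n}=\epsilon_{j_1\cdots j_n}\det(A)$. The paper states this determinant identity at the outset as a consequence of the Leibniz formula, whereas you derive it via the repeated-rows interpretation, but the substance is identical.
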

\begin{proof}
 One recalls that for an $n\times n$-matrix $A=(a_{ij})_{i,j\in \{1,\ldots,n\}} \in \mathbb{C}^{n\times n}$, its determinant may be computed as follows
 \[
    \det (A) = \sum_{i_1,\ldots,i_n=1}^n \epsilon_{i_1\cdots i_n} a_{i_11}\cdots a_{i_nn}.
 \]
 Consequently, for $k_1,\ldots,k_n\in \{1,\ldots,n\}$, one gets
 \[
    \epsilon_{k_1\cdots k_n}\det (A)= \sum_{i_1,\ldots,i_n=1}^n \epsilon_{i_1\cdots i_n} a_{i_1k_1}\cdots a_{i_nk_n}.
 \]
 Using the chain rule of differentiation, one obtains for $x\in \cU$,
 \begin{align*}
   & M_{\Phi\circ T}(x)
   = \sum_{i_1,\ldots,i_n=1}^n \epsilon_{i_1\cdots i_n} \tr_d\big(\partial_{i_1}(\Phi\circ T)(x)\cdots \partial_{i_n}(\Phi\circ T)(x)\big)
   \\ & \quad = \sum_{i_1,\ldots,i_n=1}^n \epsilon_{i_1\cdots i_n} \tr_d\big(\sum_{k_1=1}^n\partial_{k_1}\Phi(T(x))\partial_{i_1}T_{k_1}(x)\cdots \sum_{k_n=1}^n\partial_{k_n}\Phi(T(x))\partial_{i_n}T_{k_n}(x)\big)
   \\  & \quad = \sum_{k_1,\ldots,k_n=1}^n\sum_{i_1,\ldots,i_n=1}^n \epsilon_{i_1\cdots i_n}\partial_{i_1}T_{k_1}(x)\cdots \partial_{i_n}T_{k_n}(x) \\ 
   &\hspace*{3.55cm} \times \tr_d\big(\partial_{k_1}\Phi(T(x))\cdots \partial_{k_n}\Phi(T(x))\big)
   \\  & \quad = \sum_{k_1,\ldots,k_n=1}^n\epsilon_{k_1\cdots k_n}\det( T'(x)) \tr_d\big(\partial_{k_1}\Phi(T(x))\cdots \partial_{k_n}\Phi(T(x))\big)
   \\ & \quad = M_\Phi (T(x))\det(T'(x)).\qedhere
 \end{align*}
\end{proof}

\begin{proof}[Proof of Theorem \ref{t:inv}]
Let $U$ be $C^2$-smooth and such that $\sgn(\Phi)=U$ on complements of sufficiently large balls. One observes that $U_T\coloneqq \overline{U\circ T}=\sgn(\Phi_T)$. In particular, $\ind(\mathcal{Q}+\Phi_T)=\ind(\mathcal{Q}+U_T)$. Next, we set
\[
   c_n\coloneqq \frac{1}{2}\left(\frac{i}{8\pi}\right)^{(n-1)/2}\frac{1}{\left[(n-1)/2\right]!}.
\]
By Theorem \ref{thm:Perturbation} together with Remark \ref{r:mphi}, and taking into account the chain rule, Lemma \ref{l:chain}, one computes, 
\begin{align*}
   \ind(\mathcal{Q}+U_T) & = c_n\lim_{\Lambda\to\infty} \int_{B(0,\Lambda)} M_{U_T}(x)\, d^n x
   \\  & = c_n\lim_{\Lambda\to\infty} \int_{B(0,\Lambda)} M_{U\circ T}(x)\, d^n x
   \\  & = c_n\lim_{\Lambda\to\infty} \int_{B(0,\Lambda)} M_{U}(T(x))\det(T'(x))\, d^n x
   \\  & = \sgn(T) c_n\lim_{\Lambda\to\infty} \int_{B(0,\Lambda)} M_{U}(T(x))|\det(T'(x))|\, d^n x
   \\  & = \sgn(T) c_n\lim_{\Lambda\to\infty} \int_{T[B(0,\Lambda)]} M_{U}(x)\, d^n x
   \\  & = \sgn(T) c_n\lim_{\Lambda\to\infty} \int_{B(0,\Lambda)\cap T[B(0,\Lambda)]} M_{U}(x)\, d^n x,
\end{align*}
using the transformation rule for integrals.

To conclude the proof, we are left with showing
\[
c_n\lim_{\Lambda\to\infty} \int_{B(0,\Lambda)\cap T[B(0,\Lambda)]} M_{U}(x)\, d^n x=\ind(\mathcal{Q}+U).
\]
For this purpose one notes that $T$ is continuously invertible, by hypothesis. Hence, the range of $T$ is open. Since the range of $T$ is also dense, $\{\chi_{T[B(0,\Lambda)]}\}_{\Lambda\in \mathbb{N}}$ converges in the strong operator topology of $\mathcal{B}\big(L^2(\mathbb{R}^n)\big)$ to $I_{L^2(\mathbb{R}^n)}$, where $\chi_{T[B(0,\Lambda)]}$ denotes the characteristic function of the set $T[B(0,\Lambda)]$, $\Lambda>0$. Thus, for $L=\mathcal{Q}+U$, one computes
\begin{align*}
\ind(L)  
&=\lim_{\Lambda\to\infty}\lim_{z\to 0_+}z \tr_{L^2(\mathbb{R}^n)}\big( \chi_{T[B(0,\Lambda)]}\chi_{\Lambda}\tr_{2^{\hatt n}d}\big((L^*L+z)^{-1}-(LL^*+z)^{-1}\big)\big)
  \\  
  & = c_n\lim_{\Lambda\to\infty} \int_{B(0,\Lambda)\cap T[B(0,\Lambda)]} M_{U}(x)\, d^n x,
\end{align*}
proving the assertion.
\end{proof}

Finally, we apply Theorem \ref{t:inv} and prove that for admissible potentials $\Phi$, 
$\ind(\mathcal{Q}+\Phi)=0$:

\begin{corollary}\label{c:adind0} Let $n\in \mathbb{N}_{\geq 3}$ odd, $d\in \mathbb{N}$. Let $\Phi$ be admissible, see Definition \ref{def:phi_admissible}. Let $\mathcal{Q}$ be as in \eqref{eq:def_of_Q2} and $L=\mathcal{Q}+\Phi$ as in \eqref{eq:def_of_L(2)}. Then $L$ is Fredholm and $\ind(L)=0$.
\end{corollary}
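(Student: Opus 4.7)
The plan is to apply Theorem \ref{thm:Fredholm-index} to obtain both the Fredholm property and the explicit index formula, then recast the boundary integral as a bulk integral via Gauss' divergence theorem, and finally show that the bulk integrand vanishes identically through a short anticommutator calculation.

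First, I would observe that admissibility already gives the Fredholmness of $L$ via Theorem \ref{thm:Fredholm-index}, together with the formula \eqref{eq:Fredhom_index}. The key structural consequence of Definition \ref{def:phi_admissible}\,$(ii)$ is that $\Phi(x) = \Phi(x)^* = \Phi(x)^{-1}$ forces $\Phi(x)^2 = I_d$ for every $x \in \mathbb{R}^n$. In particular $|\Phi(x)| = I_d$, so $\sgn(\Phi) = \Phi$ itself, and the unitary $U$ appearing in Theorem \ref{thm:Perturbation} or Remark \ref{r:mphi} may be taken to be $\Phi$. Invoking Remark \ref{r:mphi} then rewrites the index as
\[
\ind(L) = \tfrac{1}{2}\bigl(i/(8\pi)\bigr)^{(n-1)/2}\frac{1}{[(n-1)/2]!}\lim_{\Lambda\to\infty}\int_{B(0,\Lambda)} M_\Phi(x)\, d^n x,
\]
with $M_\Phi$ defined by \eqref{e:Mphi}.

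The heart of the argument is the pointwise vanishing $M_\Phi \equiv 0$. Differentiating the identity $\Phi(x)^2 = I_d$ produces the anticommutation relation $\Phi\,(\partial_j \Phi) = -(\partial_j \Phi)\,\Phi$ for every $j \in \{1,\dots,n\}$. Passing $\Phi$ through a product of $n$ consecutive derivative factors thus accumulates a sign $(-1)^n$, which equals $-1$ since $n$ is odd. Sandwiching by $\Phi$ on both sides and using $\Phi^2 = I_d$ on the right yields
\[
\Phi \prod_{k=1}^n (\partial_{i_k}\Phi)\,\Phi = -\prod_{k=1}^n (\partial_{i_k}\Phi),
\]
and cyclicity of the matrix trace then forces
\[
\tr\Bigl(\prod_{k=1}^n (\partial_{i_k}\Phi)(x)\Bigr) = -\tr\Bigl(\prod_{k=1}^n (\partial_{i_k}\Phi)(x)\Bigr)
\]
for every tuple $(i_1,\dots,i_n)$ and every $x \in \mathbb{R}^n$. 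Each term in the anti-symmetrized sum defining $M_\Phi$ therefore vanishes, so $M_\Phi \equiv 0$ and $\ind(L) = 0$.

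I do not foresee a substantial obstacle: the whole computation reduces to recognizing that $\Phi^2 = I_d$ is the algebraic identity encoded by admissibility and exploiting the parity of $n$. The only care needed is in converting the surface integral of Theorem \ref{thm:Fredholm-index} (which involves $\Phi$ together with $n-1$ derivative factors, a combination for which the analogous anticommutator argument is a tautology) into the bulk form of Remark \ref{r:mphi} (which involves $n$ derivative factors, where the argument succeeds). An alternative route via the invariance principle of Theorem \ref{t:inv} with an orientation-reversing diffeomorphism is possible in principle, but would require exhibiting a transformation $T$ for which $\ind(\mathcal{Q}+\Phi_T)=\ind(\mathcal{Q}+\Phi)$; the direct trace computation above is both shorter and more transparent.
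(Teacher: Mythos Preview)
Your proof is correct and takes a genuinely different route from the paper's. The paper argues by first modifying $\Phi$ (via a relatively compact perturbation) to be constant near $0$, then applies the invariance principle of Theorem~\ref{t:inv} with the inversion $T(x)=x/|x|^2$: the transformed potential $\Phi_T$ is again admissible but has compactly supported derivatives, so the surface integral in Theorem~\ref{thm:Fredholm-index} vanishes for $\Phi_T$, and hence for $\Phi$. (Remark~\ref{r10.12} records yet another elementary proof, via the scaling homotopy $\Phi_t(x)=\Phi(tx)$ and showing $L_t^*L_t$, $L_tL_t^*$ are invertible for small $t$.)

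Your argument instead stays with $\Phi$ itself and kills the \emph{integrand}: differentiating $\Phi^2=I_d$ gives $\Phi(\partial_j\Phi)=-(\partial_j\Phi)\Phi$, and since $n$ is odd, conjugating a product of $n$ derivative factors by $\Phi$ flips its sign, forcing each trace $\tr\bigl(\partial_{i_1}\Phi\cdots\partial_{i_n}\Phi\bigr)$ to vanish pointwise. This is shorter and more algebraic than the paper's inversion argument; the paper's route, on the other hand, illustrates the invariance principle and does not rely on the particular parity trick. Your remark that the anticommutator argument is a tautology for the surface form (with $n-1$ derivative factors) and only bites after passing to the bulk form via Gauss is exactly right, and the Gauss computation you cite from Remark~\ref{r:mphi} is elementary and available at this point, independently of the still-to-be-proved Theorem~\ref{thm:Perturbation}.
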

\begin{proof}
  By invariance of the Fredholm index under relatively compact perturbations (cf.\ 
 Theorem \ref{t3.6}\,$(iii)$), we can assume without loss of generality, that $\Phi$ is constant in a neighborhood of $0$. We consider $T\colon \mathbb{R}^n \backslash \{0\}\to\mathbb{R}^n$ given by
  \[
    T(x) \coloneqq \frac{x}{|x|^2},\quad x\in \mathbb{R}^n \backslash \{0\}.
  \]
 One observes that $T$ is a transformation of constant orientation. Moreover, as $\Phi$ is admissible, so is $\Phi_T\coloneqq \overline{\Phi\circ T}$. In particular, since $\Phi$ is constant in a neighborhood of $0$, we find $\Lambda>0$ such that for all $x\in \mathbb{R}^n$ with $|x|\geq \Lambda$, $(\partial_i \Phi_T)(x)=0$. Hence, $\ind(\mathcal{Q}+\Phi_T)=0$, by Theorem \ref{thm:Fredholm-index} and, thus $\ind(\mathcal{Q}+\Phi)=0$, by Theorem \ref{t:inv}.
\end{proof}

For an entirely diffferent approach to Corollary \ref{c:adind0} we refer again to 
\cite[Theorem~5.2]{Ra08}. 

\begin{remark} \lb{r10.12}
As kindly pointed out to us by one of the referees, Corollary \ref{c:adind0} permits a
more elementary proof as follows. If $\Phi$ is admissible, then 
$x\mapsto \Phi_t(x) \coloneqq \Phi(tx)$, $t > 0$, 
is also admissible and the associated operators
$L_t\colon H^{1}(\mathbb{R}^{n})^{2^{\hat n}d} \to L^{2}(\mathbb{R}^{n})^{2^{\hat n}d}$, 
$t>0$, are all Fredholm. In addition, the map, $(0,\infty) \ni t \mapsto L_t \in 
\cB\big(H^{1}(\mathbb{R}^{n})^{2^{\hat n}d}, L^{2}(\mathbb{R}^{n})^{2^{\hat n}d}\big)$ 
is continuous. Thus (cf.\ Corollary \ref{c3.7}), 
\begin{equation} 
\ind (L_1) = \ind (L_t), \quad t>0. 
\end{equation} 
However, \eqref{eq:LstarLandLLstar}
leads to 
\begin{equation} 
L_t^* L_t = - \Delta I_{2^{\hatt n}d} - C_t + \Phi_t^{2}, \quad  
L_t L_t^{*} =-\Delta I_{2^{\hatt n}d} + C_t + \Phi_t^{2}, 
\end{equation} 
where
\begin{equation} 
C_t = \sum_{j=1}^{n}\gamma_{j,n} (\partial_{j}\Phi_t)=(\mathcal{Q}\Phi_t), \quad t>0. 
\end{equation} 
Hence, for some constant $c>0$, 
$\|C_t\|_{\cB(L^{2}(\mathbb{R}^{n})^{2^{\hat n}d})} \leq c \, t$ for $0<t$ sufficiently small. 
In particular, for $0 < t$ sufficiently small, the operators $L_t^* L_t$ and  $L_t L_t^*$ are 
boundedly invertible and hence $\ind (L_t) = 0$, implying $\ind (L_1) = \ind (L) = 0$. 
\hfill $\diamond$
\end{remark}

\newpage

\section{Perturbation Theory for the Helmholtz Equation}\label{sec:pert}

Before we are in a position to provide a proof of Theorem \ref{thm:Perturbation}, we need some results concerning the perturbation theory of Helmholtz operators. More precisely, we study operators (and their fundamental solutions) of the form
\[
   (-\Delta + \mu +\eta) 
\]
in odd space dimensions $n\geq3$ and $\eta\in L^\infty(\mathbb{R}^n)$ with small support around the origin and $\mu\in \mathbb{C}_{\Re>0}$. For $\mu\in \mathbb{C}_{\Re >0}$, $\eta\in L^\infty(\mathbb{R}^n)$, recalling $R_\mu=(-\Delta+\mu)^{-1}$, one formally computes 
\begin{align}\label{def:R_psi_z}
  R_{\eta+\mu}&\coloneqq \big(-\Delta  + \eta +\mu)\big)^{-1} 
   =\big((-\Delta  + \mu)(1  + R_\mu\eta\big)^{-1}  \no \\ 
   & \, =\sum_{k=0}^\infty (R_{\mu}(-\eta))^kR_{\mu}.
\end{align}
This computation can be made rigorous, if $\|R_{\mu}(\eta)\|_{\mathcal{B}(L^2(\mathbb{R}^n))}<1$. The first aim of this section is to provide a proof of the fact that if $\|\eta\|_{L^\infty}\leq 1$, then 
indeed $\|R_{\mu}(\eta)\|_{\mathcal{B}(L^2(\mathbb{R}^n))}<1$ for ``sufficiently'' many $\mu$, that is, for $\mu$ belonging to the closed sector 
\begin{equation}\label{def:Sigma}
   \ol{\Sigma_{\mu_0,\theta}} = \{z\in \mathbb{C} \, | \, \Re (\mu) \geq \mu_0, |\arg(\mu)|\leq \theta\}
\end{equation}
for some $\mu_0\in \mathbb{R}$, $\theta\in [0,\frac{\pi}{2}]$, provided the support of $\eta$ is 
sufficiently small. 

For $\mu>0$, $x,y\in \mathbb{R}^n$, $x\neq y$, we introduce
\begin{equation}\label{def:s_mu}
   s_\mu(x-y)\coloneqq \frac{e^{-\sqrt{\mu}|x-y|}}{|x-y|^{n-2}}.
\end{equation}

The next lemma shows that the Helmholtz Green's function basically behaves like 
$s_{\mu}$ in \eqref{def:s_mu}. We note that a similar estimate was used in 
\cite[p.~224, formula (c)]{Ca78}. However, we further remark that the factor $\lambda$ introduced in the following result does not occur in \cite[p.~224, formula (c)]{Ca78}, yielding a hidden $z$-dependence of the constant $K$ occuring there.

\begin{lemma}\label{l:7.18} Let $n\in \mathbb{N}_{\geq3}$ odd, $\lambda\in (0,1)$. For $\mu>0$ denote the integral kernel of $R_\mu= (-\Delta+\mu)^{-1}$ in $L^2(\mathbb{R}^n)$ by $r_\mu$, see Lemma \ref{lem:reform_of_en} or \eqref{C.1}, and let $s_\mu$ be as in \eqref{def:s_mu}. Then there exist $c_1,c_2>0$ such that for all $\mu>0$,  
\[
   r_{\mu}(x-y) \leq c_1 s_{\lambda\mu}(x-y), \text{ and }s_{\mu}(x-y)\leq c_2 r_{\mu}(x-y), \quad 
   x,y\in \mathbb{R}^n, \; x\neq y.
\] 
\end{lemma}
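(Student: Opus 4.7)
The plan is to reduce both inequalities to an algebraic comparison of the explicit formula for $r_\mu$ from Lemma \ref{lem:reform_of_en} with the simple exponential-times-power function $s_\mu$. The key observation is that when one introduces the scaling variable $t \coloneqq \sqrt{\mu}\,r$ with $r = |x-y|$, the Helmholtz Green's function $r_\mu$ factors in a very clean way.

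First, I would rewrite Lemma \ref{lem:reform_of_en} in a form convenient for comparison. Writing
\[
r_\mu(r) = \left(\frac{\sqrt{\mu}}{2}\right)^{\hatt{n}-1}(2\pi r)^{-\hatt{n}}e^{-\sqrt{\mu}r}\sum_{k=0}^{\hatt{n}-1}\frac{(\hatt{n}+k-1)!}{k!(\hatt{n}-k-1)!}\left(\frac{1}{2\sqrt{\mu}r}\right)^{k},
\]
collecting the power of $\mu$ and the power of $r$ in each summand, and substituting $t = \sqrt{\mu}\,r$, one finds after a short calculation that
\[
r_\mu(r) \;=\; \frac{e^{-t}}{r^{n-2}}\,P(t),\qquad t=\sqrt{\mu}\,r,
\]
where $P$ is an explicit polynomial of degree $\hatt{n}-1 = (n-3)/2$ with \emph{strictly positive} coefficients (since all the factorial coefficients and the numerical prefactors are positive). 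In particular $P(0) > 0$ and $P(t) \geq P(0)$ for all $t \geq 0$.

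With this reformulation both bounds become elementary. For the first estimate, one computes
\[
\frac{r_\mu(r)}{s_{\lambda\mu}(r)} \;=\; \frac{e^{-t}P(t)/r^{n-2}}{e^{-\sqrt{\lambda}\,t}/r^{n-2}} \;=\; e^{-(1-\sqrt{\lambda})\,t}\,P(t),
\]
which is a bounded function of $t\in[0,\infty)$ because $1-\sqrt{\lambda}>0$ and $P$ is polynomial; the supremum can be taken for the constant $c_1$, and one notes that this bound is independent of $\mu$ as it depends only on $t$. For the second estimate, one computes equally quickly
\[
\frac{s_\mu(r)}{r_\mu(r)} \;=\; \frac{1}{P(t)} \;\leq\; \frac{1}{P(0)},
\]
giving $c_2 = 1/P(0)$.

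The main obstacle is simply bookkeeping in the substitution $t=\sqrt{\mu}\,r$: one needs to verify that after regrouping the powers of $\mu$ and $r$ in each summand of Lemma \ref{lem:reform_of_en}, every term is of the form $(\text{positive const})\cdot e^{-t}\,t^{\,m}/r^{n-2}$ with $0\le m\le \hatt{n}-1$, so that the $r$-dependence outside the polynomial is uniformly $r^{-(n-2)}$. Once this bookkeeping is in place, the two inequalities are immediate, and the constants $c_1,c_2$ are manifestly independent of $\mu>0$.
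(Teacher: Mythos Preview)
Your proof is correct and follows essentially the same approach as the paper: both use the explicit formula from Lemma~\ref{lem:reform_of_en} and the substitution $t=\sqrt{\mu}\,r$ to reduce everything to bounding $t^k e^{-(1-\sqrt{\lambda})t}$ uniformly in $t\geq 0$. Your packaging of the computation via the polynomial $P(t)$ is a bit cleaner and also makes the second inequality (which the paper leaves as ``easily derived'') completely explicit via $1/P(t)\le 1/P(0)$.
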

\begin{proof}
 For the first inequality, one observes that for $k\in\{0,\ldots,\hatt n-1\}$, with $n=2\hatt n+1$, 
 the function 
 \[
     \mathbb{R}_{\geq 0}\ni \beta\mapsto \beta^k e^{-(1-\sqrt{\lambda})\beta}
 \]
  is bounded by some $d_k>0$. Next, let $x,y\in \mathbb{R}^n$, $x\neq y$ and $r\coloneqq |x-y|$, $\mu>0$. Then, for $k\in \{0,\ldots,\hatt n-1\}$, one estimates
  \[
     \frac{e^{-\sqrt{\mu}|x-y|}}{|x-y|^{n-2-k}} (\sqrt{\mu})^k  = \frac{e^{-\sqrt{\mu}r}}{r^{n-2}} (r\sqrt{\mu})^k
      \leq d_k \frac{e^{-\sqrt{\lambda\mu}r}}{r^{n-2}}.
  \]
  Hence, the first inequality asserted follows from Lemma \ref{lem:reform_of_en}.
  Employing again Lemma \ref{lem:reform_of_en}, the second inequality can be derived easily.
\end{proof}

We can now come to the announced result of bounding the operator norm of $R_\mu\eta$ 
given $\eta$ is supported on a small set. We note that smallness of the support is independent 
of $\mu$, if one assumes $\mu$ to lie in a sector.

\begin{lemma}\label{lem:discussion of Neumann} Let $\mu_0>0$, $\theta\in (0,\pi/2)$, 
$\beta>0$, $n\in\mathbb{N}_{\geq 3}$ odd. Then there exists $\tau>0$ such that for all 
$\mu\in \Sigma_{\mu_0,\theta}$, see \eqref{def:Sigma},   
 \[
    \|R_{\mu}\eta\|_{\mathcal{B}(L^2(\mathbb{R}^n))}\leq\beta  
 \]
for all $\eta\in L^\infty(\mathbb{R}^n)$, $\|\eta\|_{L^\infty}\leq  1$ and $\supp (\eta) \subset  B(0,\tau)$. 
\end{lemma}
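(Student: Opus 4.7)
The plan is to combine Young's convolution inequality with H\"older's inequality, exploiting on one side the $L^q$-integrability of the kernel $r_\mu$ of $R_\mu$ (uniform for $\mu$ in the sector) and on the other side the smallness of the $L^p$-norm of $\eta f$ for $p<2$ (since $\eta f$ is supported in $B(0,\tau)$ with $|\eta|\leq 1$). The underlying identity would be $R_\mu\eta f = r_\mu\ast(\eta f)$, which is legitimate for $f\in L^2(\mathbb{R}^n)$ because $\eta f$ has compact support and thus lies in every $L^p$ for $p\in[1,2]$.

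\textbf{Step 1: uniform $L^q$-bound on $r_\mu$.} I would fix some $q\in(1,n/(n-2))$---a nonempty interval since $n\geq 3$---and establish $\|r_\mu\|_{L^q(\mathbb{R}^n)}\leq C_2$ uniformly for $\mu\in\Sigma_{\mu_0,\theta}$. This would proceed by first invoking Lemma \ref{lem:Real-part-Bessel_and_other}\,$(ii)$ to bound $|r_\mu(x)|\leq c_\theta\, r_{\Re(\mu)}(x)$, and then applying Lemma \ref{l:7.18} with a fixed $\lambda\in(0,1)$ to estimate $r_{\Re(\mu)}(x)\leq c_1 e^{-\sqrt{\lambda\Re(\mu)}|x|}|x|^{2-n}$. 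Since $\Re(\mu)\geq\mu_0>0$, this yields the pointwise estimate
\[
  |r_\mu(x)|\leq c_\theta c_1\, e^{-\sqrt{\lambda\mu_0}|x|}|x|^{2-n}, \quad x\in\mathbb{R}^n\setminus\{0\},
\]
uniformly in $\mu$ in the sector. The right-hand side lies in $L^q(\mathbb{R}^n)$ precisely because $q(n-2)<n$ handles the local singularity at the origin and the exponential provides decay at infinity, delivering the claimed uniform bound.

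\textbf{Step 2: Young and H\"older.} Let $p\in(2n/(n+4),2)$ be the conjugate exponent determined by $1/p+1/q=3/2$. For $f\in L^2(\mathbb{R}^n)$, H\"older's inequality with exponents $2/p$ and $2/(2-p)$, combined with $\supp(\eta)\subset B(0,\tau)$ and $\|\eta\|_{L^\infty}\leq 1$, would give
\[
  \|\eta f\|_{L^p(\mathbb{R}^n)} \leq |B(0,\tau)|^{1/p-1/2}\,\|f\|_{L^2(\mathbb{R}^n)},
\]
and Young's convolution inequality then yields
\[
  \|R_\mu\eta f\|_{L^2(\mathbb{R}^n)} \leq \|r_\mu\|_{L^q(\mathbb{R}^n)}\|\eta f\|_{L^p(\mathbb{R}^n)} \leq C_2\, |B(0,\tau)|^{1/p-1/2}\|f\|_{L^2(\mathbb{R}^n)}
\]
uniformly in $\mu\in\Sigma_{\mu_0,\theta}$ and admissible $\eta$. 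Since $|B(0,\tau)|^{1/p-1/2}$ scales like $\tau^{n(1/p-1/2)}$ with strictly positive exponent, choosing $\tau$ small enough forces the right-hand side to be at most $\beta\|f\|_{L^2}$, which is the desired conclusion.

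\textbf{Main obstacle.} The delicate point is securing the $\mu$-uniform $L^q$-bound on $r_\mu$ across the whole sector. Both ingredients are essential: Lemma \ref{lem:Real-part-Bessel_and_other}\,$(ii)$ controls the complex phase at the cost of a $\theta$-dependent constant that degenerates as $\theta\uparrow\pi/2$, while Lemma \ref{l:7.18} converts the merely polynomial decay of $r_\mu$ (which on its own would be insufficient for $L^q$-integrability at infinity) into genuine exponential decay at rate $\sqrt{\lambda\mu_0}$, preventing the $L^q$-norm from diverging as $\mu_0\downarrow 0$ or as $|\Im(\mu)|\to\infty$. Once this uniform bound is in hand, the remaining Young--H\"older argument is routine.
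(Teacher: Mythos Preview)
Your proof is correct and uses essentially the same kernel estimates as the paper (Lemma \ref{lem:Real-part-Bessel_and_other}\,$(ii)$ and Lemma \ref{l:7.18}), but packages them differently. The paper applies Cauchy--Schwarz directly inside the convolution integral (a Schur-test type argument): it bounds $|r_\mu|\le c_1 c_2\, s_{\Re(\mu)/2}$, splits $|s(x-y)\eta(y)u(y)|$ as $s^{1/2}\cdot s^{1/2}|u|$, and ends up with the product of $\int_{B(0,\tau)} s_{\Re(\mu)/2}(y)\,d^ny \le \omega_{n-1}\tau^2/2$ (this is where the smallness enters) and $\|s_{\mu_0/2}\|_{L^1}$ times $\|u\|_{L^2}^2$. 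This yields an operator norm linear in $\tau$.

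Your Young--H\"older route trades the pointwise Cauchy--Schwarz step for a single application of Young's inequality $L^q * L^p \to L^2$ with $1/p+1/q=3/2$, which is arguably cleaner and more flexible (the exponent of $\tau$ you obtain, $n(1/p-1/2)=n(1-1/q)$, can be pushed arbitrarily close to $2$ by choosing $q$ near $n/(n-2)$). Both arguments rest on exactly the same uniform pointwise bound on $r_\mu$, so the ``main obstacle'' you identify is indeed the heart of the matter in either approach.
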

\begin{proof}
 Let $\tau>0$ and $\eta\in L^\infty(\mathbb{R}^n)$ such that $\supp (\eta) \subset  B(0,\tau)$ and $\|\eta\|_{L^\infty}\leq  1$. Let $\mu\in \Sigma_{\mu_0,\theta}$ and denote the fundamental solution of $(-\Delta+\mu)$ by $r_\mu$, see also Lemma \ref{lem:Real-part-Bessel_and_other}. By 
 estimate \eqref{bess_b} in Lemma \ref{lem:Real-part-Bessel_and_other}, there exists $c_1\geq1$ such that 
 \[
    |r_\mu(x-y)|\leq c_1 r_{\Re \mu}(x-y), \quad x,y\in \mathbb{R}^n, \; x\neq y, \; \mu\in \Sigma_{\mu_0,\theta}.
 \]
Next, by Lemma \ref{l:7.18}, there exists $c_2>0$ such that for all $\mu \geq \mu_0$, 
 \[
      r_\mu(x-y) \leq  c_2 s_{\frac{1}{2}\mu}(x-y), \quad x,y\in \mathbb{R}^n, \; x\neq y.
 \]
For $\mu \geq\mu_0$, one notes that 
$\|s_{\mu/2}\|_{L^1(\mathbb{R}^n)}\leq \|s_{\mu_0/2}\|_{L^1(\mathbb{R}^n)}<\infty$. Hence, 
for $\mu\in \Sigma_{\mu_0,\theta}$ and $u\in C_0^\infty(\mathbb{R}^n)$, one gets
\begin{align*}
  &\|R_{\mu}(\eta) u\|_{L^2(\bbR^n)}^2 
  = \int_{\mathbb{R}^n} \left| \int_{\mathbb{R}^n} r_\mu(x-y)\eta(y)u(y)d^n y\right|^2 \, d^n x\\
 & \quad =\int_{\mathbb{R}^n} \bigg|\int_{B(0,\tau)} r_\mu(x-y)\eta(y)u(y)d^n y\bigg|^2 \, d^n x\\
                           & \quad \leq c_1^2\int_{\mathbb{R}^n}\bigg( \int_{B(0,\tau)} r_{\Re(\mu)}(x-y)|\eta(y)||u(y)|d^n y\bigg)^2 \, d^n x\\
                           & \quad \leq c_1^2c_2^2\int_{\mathbb{R}^n} \bigg(\int_{B(0,\tau)} s_{\frac{1}{2}\Re(\mu)}(x-y) d^ny\bigg) \int_{B(0,\tau)} s_{\frac{1}{2}\Re(\mu)}(x-y)|u(y)|^2 \, d^n y d^n x\\
                           & \quad \leq c_1^2c_2^2\int_{\mathbb{R}^n} \bigg(\int_{B(0,\tau)} s_{\frac{1}{2}\Re(\mu)}(y) d^ny\bigg) \int_{\mathbb{R}^n} s_{\frac{1}{2}\Re(\mu)}(x-y)|u(y)|^2 \, d^n y d^n x\\
                           & \quad = c_1^2c_2^2\int_{B(0,\tau)} s_{\frac{1}{2}\Re(\mu)}(y) \, d^ny \, 
                           \|s_{\frac{1}{2}\mu_0}\|_{L^1(\mathbb{R}^n)} \|u\|^2_{L^2(\mathbb{R}^n)}.
\end{align*}
One observes that 
\[
 \int_{B(0,\tau)} s_{\frac{1}{2}\Re(\mu)}(y) \, d^ny 
 \leq \omega_{n-1}\int_0^\tau \frac{1}{r^{n-2}}r^{n-1} \,dr=\frac{\tau^2}{2 \omega_{n-1}},
\]
and hence, 
\[
   \|R_\mu\eta\|_{\mathcal{B}(L^2(\mathbb{R}^n))}\leq c_1c_2 
   \sqrt{\frac{\|s_{\mu_0/2}\|_{L^1(\mathbb{R}^n)}}{2\omega_{n-1}}}\tau.\qedhere
\]
\end{proof}

\begin{remark} \label{rem:remark on op norm of Rpsiz} 
$(i)$ Let $\mu_0>0$, and $\theta\in (0,\frac{\pi}{2})$, $\kappa>0$. Then for all 
$\mu \in \Sigma_{\mu_0,\theta}$ (see \eqref{def:Sigma}), there exists $\tau>0$ such that for 
$\eta\in L^\infty(\mathbb{R}^n)$, with $\|\eta\|_{L^\infty}\leq\kappa$ and $\eta=0$ on 
$\mathbb{R}^n\backslash  B(0,\tau)$, the operator $R_{\eta+\mu}=(-\Delta + \eta + \mu)^{-1}$ 
exists as a bounded linear operator in $L^2(\mathbb{R}^n)$ and its norm is arbitarily close 
to $\|R_{\mu}\|$. Indeed, for $\beta<1$ with $\|R_{\mu}\eta\|\leq\beta$ one computes
 \[
    \|R_{\eta + \mu}\|\leq  \sum_{k=0}^\infty \beta^k \|R_{\mu}\| =\frac{1}{1-\beta} \|R_{\mu}\|.
 \]
$(ii)$ In the situation of part $(i)$, we shall now elaborate some more on the properties of 
$R_{\eta+\mu}$ with $\|R_{\mu}\eta\|\leq\beta<1$ for all $\mu\in \Sigma_{\mu_0,\theta}$. Assuming, in addition, $\eta\in C^\infty$, then $R_{\eta+\mu}$ extends by interpolation to a bounded linear operator to the full Sobolev scale $H^s(\mathbb{R}^n)$ (see \eqref{eq:def_Hs} for a definition), $s\in \mathbb{R}$. Moreover, from 
  \[
     (-\Delta+\mu)R_{\eta+\mu}=(-\Delta+\mu)\sum_{k=0}^\infty R_{\mu}(-\eta R_{\mu})^k=\sum_{k=0}^\infty ((-\eta) R_{\mu})^k, 
  \]
 one gets $\|(-\Delta+\mu)R_{\eta+\mu}\|\leq (1-\beta)^{-1}$, yielding $R_{\eta+\mu}\in \cB(H^s(\mathbb{R}^n);H^{s+2}(\mathbb{R}^n))$ for all $s\in \mathbb{R}$. \hfill $\diamond$
\end{remark}

With Lemma \ref{lem:discussion of Neumann} we have an a priori condition on the support of $\eta$ to make the operator $R_{\eta+\mu}$ well-defined. The forthcoming results, the very reason of this entire section, provide estimates for the integral kernels of the perturbed operator in terms of the unperturbed one. Of course, these estimates also rely on a Neumann series type argument. The main step is the following lemma.

\begin{lemma}\label{lem:7.13b} Let $n\in \mathbb{N}_{\geq 3}$ odd, $\mu_0>0$, $\kappa >0$. For any $\lambda\in (0,1)$, there exists $\tau>0$ such that for all $\eta\in L^\infty(\mathbb{R}^n)$, with $\|\eta\|_{L^\infty}\leq \kappa$ and $\supp (\eta) \subset B(0,\tau)$, such that for all $k\in \mathbb{N}_{\geq3}$, $\mu \geq \mu_0$, the integral kernel $\tilde r_k$ of $(R_\mu \eta)^k R_\mu$ satisfies
\[
   |\tilde r_k (x,y)|\leq \lambda^k r_{\mu/4}(x-y), \quad x,y\in \mathbb{R}^n, \; x\neq y,
\]
where $r_\mu$ is the integral kernel of $R_\mu=(-\Delta+\mu)^{-1}$ given by \eqref{C.1}.
\end{lemma}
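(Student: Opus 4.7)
The plan is to prove the bound by induction on $k\geq 3$, with all the essential work concentrated in a single ``three-Green's-function'' inequality that captures the smallness in $\tau$. The key auxiliary estimate I would establish is that for every $\lambda'\in(0,1)$ there exists $\tau>0$ (depending only on $n$, $\kappa$, $\mu_0$, $\lambda'$) such that for every $\eta$ with $\|\eta\|_\infty\leq\kappa$ and $\supp\eta\subseteq B(0,\tau)$, and every $\mu\geq\mu_0$,
\[
\kappa\int_{B(0,\tau)} r_\mu(x-z)\,r_{\mu/4}(z-y)\,dz \,\leq\, \lambda'\,r_{\mu/4}(x-y),\qquad x,y\in\bbR^n,\ x\neq y.
\]
Granting this three-Green's-function inequality, the induction step $k\to k+1$ is immediate from the identity $\tilde r_{k+1}(x,y)=\int r_\mu(x-z)\eta(z)\tilde r_k(z,y)\,dz$: plugging in the inductive hypothesis $|\tilde r_k(z,y)|\leq\lambda^k r_{\mu/4}(z-y)$ yields $|\tilde r_{k+1}(x,y)|\leq\lambda^k\lambda'\,r_{\mu/4}(x-y)\leq\lambda^{k+1}r_{\mu/4}(x-y)$ as soon as $\lambda'\leq\lambda$. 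The base case $k=3$ is obtained by applying the three-Green's-function inequality three times in succession to the explicit three-fold iterated integral representation of $\tilde r_3$, after bounding the last factor $r_\mu(z_3-y)$ by $2^{\hat n-1}r_{\mu/4}(z_3-y)$ via Lemma~\ref{lem:Real-part-Bessel_and_other}\,$(iii)$; one obtains $|\tilde r_3(x,y)|\leq 2^{\hat n-1}(\lambda')^3\, r_{\mu/4}(x-y)$, so choosing $\lambda'\leq\lambda\cdot 2^{-(\hat n-1)/3}$ closes the base case. The restriction $k\geq 3$ in the statement is precisely what allows enough room to absorb the prefactor $2^{\hat n-1}$ arising from that single application of Lemma~\ref{lem:Real-part-Bessel_and_other}\,$(iii)$.

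To prove the three-Green's-function inequality itself, I would first apply Lemma~\ref{lem:Real-part-Bessel_and_other}\,$(iii)$ in the form $r_\mu(w)\leq 2^{\hat n-1} e^{-\sqrt{\mu}|w|/2}\,r_{\mu/4}(w)$ to the factor $r_\mu(x-z)$. Writing $r_{\mu/4}(w)=A_\mu(|w|)\,e^{-\sqrt{\mu}|w|/2}$ with $A_\mu$ the polynomial envelope from Lemma~\ref{lem:reform_of_en}, the triangle inequality $|x-z|+|z-y|\geq|x-y|$ lets the three exponentials combine and gives a net factor bounded by $e^{-\sqrt{\mu}|x-y|/2}$, which is exactly the exponential inside $r_{\mu/4}(x-y)$ on the right. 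What remains is the purely polynomial 3-G inequality
\[
2^{\hat n-1}\kappa\int_{B(0,\tau)} A_\mu(|x-z|)\,A_\mu(|z-y|)\,dz \,\leq\, \lambda'\,A_\mu(|x-y|).
\]
Since $A_\mu(r)$ is a finite linear combination of monomials of the form $c_j\sqrt{\mu}^{\,\hat n-1-j}\,r^{-\hat n-j}$ with $j\in\{0,\ldots,\hat n-1\}$, a dimensional matching of the $\sqrt{\mu}$-powers on both sides reduces this to a finite family of scalar inequalities of the shape $\int_{B(0,\tau)}|x-z|^{-a}|z-y|^{-b}\,dz\leq C\tau^\alpha |x-y|^{n-a-b}$ with $a,b\in\{\hat n,\ldots,n-2\}$ and a positive exponent $\alpha=\alpha(a,b,n)>0$. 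Each such inequality is handled by splitting into the two regimes $|x-y|\lesssim\tau$ and $|x-y|\gtrsim\tau$: in the former, scaling yields $\int|x-z|^{-a}|z-y|^{-b}\,dz\asymp|x-y|^{n-a-b}$ on the relevant length scale; in the latter, the crude estimates $|x-z|\gtrsim|x|-\tau$, $|z-y|\gtrsim|y|-\tau$ combined with $\mathrm{vol}(B(0,\tau))=c\tau^n$ produce the required power of $\tau$. Choosing $\tau$ so small that $2^{\hat n-1}\kappa\,C\tau^\alpha\leq\lambda'$ concludes the proof of the auxiliary inequality, uniformly in $\mu\geq\mu_0$.

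The main obstacle will be ensuring that the three-Green's-function inequality is uniform in both $\mu\geq\mu_0$ \emph{and} all pairs $x,y\in\bbR^n$ simultaneously, especially in the regime where $x$ or $y$ lies inside or very close to $B(0,\tau)$. There, the singularities of $r_\mu(x-z)$ and $r_{\mu/4}(z-y)$ both fall inside the domain of integration, so one cannot rely on Green's-function decay at infinity; instead, one must exploit the local integrability of $|\cdot|^{-(n-2)}$ on balls (which holds for $n\geq 3$) together with the matching singular behaviour of the right-hand side $r_{\mu/4}(x-y)\sim|x-y|^{-(n-2)}$ as $x\to y$. In fact, for $n\geq 5$ the left-hand integral with $a=b=n-2$ scales like $|x-y|^{-(n-4)}$, strictly less singular than $r_{\mu/4}(x-y)$, so the ratio has a strictly positive margin; this margin is what makes the uniform smallness in $\tau$ possible across all dimensions $n\geq 3$.
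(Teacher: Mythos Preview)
Your inductive strategy is reasonable, and for $n=3$ it essentially reduces to the paper's Lemma~\ref{lem:7.14}. But for $n\geq 5$ your proof of the ``three-Green's-function inequality'' has a genuine gap at the step where you discard the extra exponential and claim the \emph{purely polynomial} $3$G inequality
\[
2^{\hat n-1}\kappa\int_{B(0,\tau)} A_\mu(|x-z|)\,A_\mu(|z-y|)\,dz \;\leq\; \lambda'\,A_\mu(|x-y|)
\]
uniformly in $\mu\geq\mu_0$. Recall $A_\mu(r)=\sum_{j=0}^{\hat n-1} c_j(\sqrt{\mu})^{\hat n-1-j}r^{-\hat n-j}$. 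The product $A_\mu A_\mu$ contains a term $c_0^2(\sqrt{\mu})^{2(\hat n-1)}|x-z|^{-\hat n}|z-y|^{-\hat n}$, whereas the highest power of $\sqrt{\mu}$ available on the right is $(\sqrt{\mu})^{\hat n-1}$. For $\hat n\geq 2$ one has $2(\hat n-1)>\hat n-1$, so no ``dimensional matching of the $\sqrt{\mu}$-powers'' is possible: the left side outgrows the right as $\mu\to\infty$ for any fixed $x\neq y$ and any $\tau>0$. Concretely, your scalar inequality with $a=b=\hat n$ would read $\int_{B(0,\tau)}|x-z|^{-\hat n}|z-y|^{-\hat n}\,dz\leq C\tau^\alpha|x-y|^{n-2\hat n}=C\tau^\alpha|x-y|$; but for $x,y\in B(0,\tau)$ with $|x-y|\ll\tau$ the bulk region $|z-x|,|z-y|\gtrsim|x-y|$ already contributes $\sim\int_{|x-y|}^{\tau} r^{-2\hat n}r^{n-1}\,dr\sim\tau$, which is $\gg C\tau^\alpha|x-y|$. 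So both the reduction step and the target scalar estimate fail for $n\geq 5$. The exponential factor $e^{-\sqrt{\mu}|x-z|/2}$ you threw away is not a luxury---it is exactly what controls the large-$\mu$ behaviour.

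The paper sidesteps this by never forming the mixed convolution $r_\mu * (\chi_{B(0,\tau)} r_{\mu/4})$. Instead it bounds all $k-1$ \emph{inner} factors by the single-term kernel $s_{\mu/4}(w)=e^{-\sqrt{\mu/4}|w|}|w|^{-(n-2)}$ (Lemma~\ref{l:7.18}), then iterates the clean same-parameter estimate $\int_{B(0,\tau)} s_\nu\,s_\nu\leq 2^{n-3}\omega_{n-1}\tau^2\,s_\nu$ of Lemma~\ref{lem:7.14} to collapse them to one $s_{\mu/4}$, collecting the factor $(c_1\kappa_\tau)^{k-1}$ with $\kappa_\tau=2^{n-3}\omega_{n-1}\tau^2$. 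Only at the very end does it return to $r$-kernels, via $s_{\mu/4}\leq c_2 r_{\mu/4}$, $r_\mu\leq 2^{\hat n-1}r_{\mu/4}$, and the \emph{full-space} triple convolution bound of Lemma~\ref{lem:7.14c}, which carries a fixed constant $16/\mu_0^2$ with no $\tau$-smallness. The restriction $k\geq 3$ is then what allows enough powers of $\kappa_\tau$ to absorb that fixed constant. If you want to rescue your inductive approach, replace the polynomial reduction by an argument that keeps the exponential; the cleanest fix is to run your induction in terms of $s_{\mu/4}$ rather than $r_{\mu/4}$, for which the analogue of your three-kernel inequality is exactly Lemma~\ref{lem:7.14}.
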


We postpone the proof of Lemma \ref{lem:7.13b} and show three preparatory results first.

\begin{lemma}\label{lem:7.14} Let $n\in \mathbb{N}_{\geq 3}$ odd, $\mu>0$, $\tau > 0$, $s_\mu$ as in \eqref{def:s_mu}. Then for all $x,z\in \mathbb{R}^n$, $x\neq z$, the inequality,
\[
   \int_{B(0,\tau)} s_\mu(x-y)s_\mu(y-z) \, d^n y\\ \leq 2^{n-3} \omega_{n-1}\tau^2 s_\mu(x-z), 
\]
holds, with $\omega_{n-1}$ the $(n-1)$-dimensional volume of the unit sphere $S^{n-1}\subseteq \mathbb{R}^{n}$ $($see also \eqref{e:om}$)$.
\end{lemma}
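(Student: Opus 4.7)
The plan is to first factor out the exponential decay via the triangle inequality, reducing the claim to a purely polynomial estimate, and then control that integral by splitting the domain $B(0,\tau)$ into two pieces on each of which one of the two singular factors is uniformly bounded in terms of $|x-z|$.

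Concretely, I would begin by writing
\[
s_\mu(x-y)s_\mu(y-z) = \frac{e^{-\sqrt{\mu}(|x-y|+|y-z|)}}{|x-y|^{n-2}|y-z|^{n-2}} \leq \frac{e^{-\sqrt{\mu}|x-z|}}{|x-y|^{n-2}|y-z|^{n-2}},
\]
using $|x-y|+|y-z|\geq |x-z|$. Since $s_\mu(x-z) = e^{-\sqrt{\mu}|x-z|}/|x-z|^{n-2}$, the full exponential factor on the right-hand side of the claimed inequality is accounted for, and it remains to show the purely polynomial estimate
\[
\int_{B(0,\tau)} \frac{d^n y}{|x-y|^{n-2}|y-z|^{n-2}} \leq \frac{2^{n-3}\omega_{n-1}\tau^2}{|x-z|^{n-2}}.
\]

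Next, I would partition $B(0,\tau)$ into the disjoint pieces
\[
B_1 \coloneqq \{y\in B(0,\tau) \,|\, |x-y|\leq |y-z|\}, \quad B_2 \coloneqq B(0,\tau)\setminus B_1.
\]
On $B_1$ the triangle inequality $|x-z|\leq |x-y|+|y-z|\leq 2|y-z|$ gives $|y-z|^{-(n-2)}\leq 2^{n-2}|x-z|^{-(n-2)}$, and on $B_2$ the symmetric inequality $|x-y|\geq |x-z|/2$ yields $|x-y|^{-(n-2)}\leq 2^{n-2}|x-z|^{-(n-2)}$. This reduces the task to bounding
\[
\int_{B_1}\frac{d^n y}{|x-y|^{n-2}}\quad \text{and}\quad \int_{B_2}\frac{d^n y}{|y-z|^{n-2}}.
\]
Both integrands are radially decreasing about their respective singular points, so by rearrangement and passing to polar coordinates centered there, each integral is bounded by $\omega_{n-1}\int_0^{r_j}s\,ds = \omega_{n-1}r_j^2/2$, where $r_j$ is the radius of the ball with the same Lebesgue measure as $B_j$.

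The main obstacle is squeezing the sharp constant $2^{n-3}$ out of this scheme rather than the weaker $2^{n-2}$ that the naive bound $r_j\leq \tau$ would give in each summand separately. To obtain the stated constant one must exploit the disjointness $B_1 \sqcup B_2 = B(0,\tau)$, which entails $r_1^n+r_2^n = \tau^n$, and combine the two integrals into a single estimate of the form $\omega_{n-1}(r_1^2+r_2^2)/2$ bounded against $\omega_{n-1}\tau^2/2$, rather than bounding each half by $\omega_{n-1}\tau^2/2$ independently. With this consolidation of the rearrangement step in place, the estimate falls out by elementary means.
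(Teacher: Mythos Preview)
Your reduction to the polynomial estimate via the triangle inequality is fine, and your rearrangement bound $\int_{B_j}|w-\cdot|^{-(n-2)}\leq \omega_{n-1}r_j^2/2$ with $r_j^n$ proportional to $|B_j|$ is also correct. The gap is in the final ``consolidation'' step. From $r_1^n+r_2^n=\tau^n$ with $n\geq 3$ you want $r_1^2+r_2^2\leq \tau^2$, but the inequality goes the other way: since $\|\cdot\|_2\geq \|\cdot\|_n$ on $\mathbb{R}^2$ for $n\geq 2$, one has $(r_1^2+r_2^2)^{1/2}\geq (r_1^n+r_2^n)^{1/n}=\tau$, with equality only when one of the $r_j$ vanishes. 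For instance, $r_1=r_2=\tau/2^{1/n}$ gives $r_1^2+r_2^2=2^{1-2/n}\tau^2>\tau^2$. So your scheme yields at best the constant $2^{n-2}\omega_{n-1}\tau^2$, off by a factor of $2$, and the disjointness of $B_1,B_2$ cannot close that gap.

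The paper avoids the splitting altogether by a single pointwise algebraic inequality: from $|x-z|\leq |x-y|+|y-z|$ and $(a+b)^{n-2}\leq 2^{n-3}(a^{n-2}+b^{n-2})$ one gets
\[
\frac{|x-z|^{n-2}}{|x-y|^{n-2}|y-z|^{n-2}}\leq 2^{n-3}\Bigl(\frac{1}{|x-y|^{n-2}}+\frac{1}{|y-z|^{n-2}}\Bigr),
\]
valid for all $y$. Integrating over $B(0,\tau)$ and bounding each term by $\int_{B(0,\tau)}|y|^{-(n-2)}d^ny=\omega_{n-1}\tau^2/2$ gives exactly $2^{n-3}\omega_{n-1}\tau^2$. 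The sharp constant $2^{n-3}$ enters through the convexity inequality, not through any volume bookkeeping.
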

\begin{proof} One notes that, by the triangle inequality, 
\[
   e^{-\sqrt{\mu}|x-y|}e^{-\sqrt{\mu}|y-z|}\leq e^{-\sqrt{\mu}|x-z|}, \quad x,y,z\in \mathbb{R}^n.
\]
Hence, one is left with showing
\[
\int_{B(0,\tau)} \frac{1}{|x-y|^{n-2}}\frac{1}{|y-z|^{n-2}} \, d^n y \leq 2^{n-3} \omega_{n-1}\tau^2
   \frac{1}{|x-z|^{n-2}}, \quad x,y,z\in \mathbb{R}^n, \; x\neq z.
 \]  
Let $x,y,z\in \mathbb{R}^n$. Then
\[
   |x-z|^{n-2}\leq (|x-y|+|y-z|)^{n-2} \leq 2^{n-3}(|x-y|^{n-2}+|y-z|^{n-2}).
\]
Hence, 
\begin{align*}
   &\int_{B(0,\tau)} \frac{|x-z|^{n-2}}{|x-y|^{n-2}|y-z|^{n-2}} \, d^n y 
   \\ & \quad \leq  \int_{B(0,\tau)} \frac{2^{n-3}(|x-y|^{n-2}+|y-z|^{n-2})}{|x-y|^{n-2}|y-z|^{n-2}} \, d^n y 
   \\ & \quad = 2^{n-3}\int_{B(0,\tau)}\bigg( \frac{1}{|x-y|^{n-2}}+\frac{1}{|y-z|^{n-2}}\bigg) \, d^n y
   \\ & \quad \leq 2^{n-2}\int_{B(0,\tau)} \frac{1}{|y|^{n-2}} \, d^n y
   \\ & \quad = 2^{n-2}\omega_{n-1}\int_{0}^\tau r \, dr = 2^{n-3} \omega_{n-1}\tau^2. \qedhere
\end{align*}
\end{proof}

\begin{proposition}\label{p:7.7} Let $n\in \mathbb{N}$, $\mu_0>0$, and $q\in L^1(\mathbb{R}^n)\cap C(\mathbb{R}^n\backslash \{0\})$. Assume that $V_q$, the operator defined by convolution with $q$, defines a self-adjoint, nonnegative operator in $L^2(\mathbb{R}^n)$. Then for all $\mu \geq \mu_0$ and $x,y\in \mathbb{R}^n$, $x\neq y$,  
\[
  \mu_0 \int_{\mathbb{R}^n} r_{\mu}(x-x_1)q(x_1-y) \, d^n x_1 \leq q(x-y). 
\]  
\end{proposition}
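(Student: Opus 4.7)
Plan: The approach I would take is Fourier-analytic. Since $q\in L^{1}(\bbR^{n})$, its Fourier transform $\hat q$ is bounded and continuous, and the convolution operator $V_{q}$ acts on $L^{2}(\bbR^{n})$ as the Fourier multiplier with symbol $(2\pi)^{n/2}\hat q(\xi)$; self-adjointness and nonnegativity of $V_{q}$ therefore force $\hat q(\xi)\in\bbR_{\geq 0}$ for every $\xi\in\bbR^{n}$. Since the claimed inequality is translation-invariant in $(x,y)$, it suffices to prove that $g(z)\coloneqq q(z)-\mu_{0}(r_{\mu}\ast q)(z)\geq 0$ for every $z\in\bbR^{n}\backslash\{0\}$. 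Note also that $r_{\mu}\geq 0$ pointwise (by \eqref{bess_a} for $n$ odd, and in general via the Laplace representation $r_{\mu}(z)=\int_{0}^{\infty}e^{-\mu t}p_{t}(z)\,dt$ with $p_{t}$ the heat kernel).

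Using $\widehat{r_{\mu}}(\xi)=(2\pi)^{-n/2}(|\xi|^{2}+\mu)^{-1}$ and the convolution theorem, a short calculation yields
\[
\hat g(\xi)\;=\;\hat q(\xi)\,\frac{|\xi|^{2}+\mu-\mu_{0}}{|\xi|^{2}+\mu}\;\geq\;0,
\]
since both factors are nonnegative for $\mu\geq\mu_{0}$. Equivalently, the convolution operator $V_{g}=V_{q}(I-\mu_{0}R_{\mu})=V_{q}\,(-\Delta+\mu-\mu_{0})R_{\mu}$ is a composition of commuting nonnegative self-adjoint bounded operators on $L^{2}(\bbR^{n})$, hence itself nonnegative self-adjoint. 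Factoring the Fourier symbol as $\hat g=(2\pi)^{n/2}\widehat{r_{\mu}}\cdot(|\xi|^{2}+\mu-\mu_{0})\hat q$ and inverting the Fourier transform then yields the identity
\[
g \;=\; r_{\mu}\ast\bigl[(-\Delta+\mu-\mu_{0})q\bigr],
\]
which, combined with $r_{\mu}\geq 0$, reduces the proof to establishing that the distribution $(-\Delta+\mu-\mu_{0})q$ is nonnegative.

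\textbf{Main obstacle.} The heart of the difficulty is this last step: one must upgrade the Fourier-side nonnegativity of the symbol $(|\xi|^{2}+\mu-\mu_{0})\hat q(\xi)$ to pointwise nonnegativity of $(-\Delta+\mu-\mu_{0})q$. A nonnegative Fourier transform only yields Bochner positive-definiteness, which in general is strictly weaker than pointwise positivity, so Fourier methods alone cannot close the argument. To bridge this gap I would exploit the Laplace-transform representation, writing
\[
\mu_{0}(r_{\mu}\ast q)(z)\;=\;\int_{0}^{\infty}\mu_{0}e^{-\mu t}(p_{t}\ast q)(z)\,dt,\qquad \int_{0}^{\infty}\mu_{0}e^{-\mu t}\,dt\;=\;\mu_{0}/\mu\;\leq\;1,
\]
and couple the positivity of the heat semigroup $(e^{t\Delta})_{t\geq 0}$ with the $L^{1}\cap C(\bbR^{n}\backslash\{0\})$-regularity of $q$ via a maximum-principle comparison for the Helmholtz equation $(-\Delta+\mu)u=q$ satisfied by $u\coloneqq r_{\mu}\ast q$. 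The standing hypothesis that $V_{q}$ is nonnegative self-adjoint is precisely what allows the semigroup average $p_{t}\ast q$ to be compared against $q$ in a positivity-preserving way, thereby converting the Fourier-side nonnegativity into the required pointwise bound.
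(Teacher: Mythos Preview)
Your Fourier-analytic approach and the paper's operator-theoretic approach are essentially the same argument in two languages. The paper shows directly that
\[
(\psi,V_q\psi)\geq\mu_0(\psi,R_\mu V_q\psi),\qquad\psi\in L^2(\bbR^n),
\]
using the square root $V_q^{1/2}$ and the bound $(-\Delta+\mu)\geq\mu_0$; this is exactly your Fourier statement $\hat g(\xi)=\hat q(\xi)\,\frac{|\xi|^2+\mu-\mu_0}{|\xi|^2+\mu}\geq 0$, i.e., the convolution operator $V_g=V_q-\mu_0R_\mu V_q$ is positive semidefinite. The paper then concludes in one line that this ``implies the asserted inequality,'' namely the \emph{pointwise} bound $g(z)=q(z)-\mu_0(r_\mu\!*\!q)(z)\geq 0$.

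You have correctly identified that this last step---passing from operator (Bochner) positivity of a convolution kernel to pointwise nonnegativity---is not justified. In fact it is \emph{false in general}, and the proposition as stated is incorrect. Take $n=3$, $\mu_0=\mu=1$, and $q=r_1$ itself (one of the intended applications). Then $r_1(z)=\frac{e^{-|z|}}{4\pi|z|}$ and $(r_1\!*\!r_1)(z)=-\partial_\mu r_\mu(z)\big|_{\mu=1}=\frac{e^{-|z|}}{8\pi}$, so the claimed inequality $\mu_0(r_1\!*\!r_1)(z)\leq r_1(z)$ reads $\frac{1}{8\pi}\leq\frac{1}{4\pi|z|}$, i.e., $|z|\leq 2$; it fails for every $|z|>2$. (One can also produce smooth $q$ with $\hat q\geq 0$ that take negative values, e.g., $q(x)=e^{-x^2/2}\cos(ax)$ on $\bbR$, for which the inequality is manifestly impossible where $q<0$.)

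Consequently, your proposed remedies via the heat-semigroup representation and a ``maximum-principle comparison'' cannot close the gap---there is nothing to close, the statement is wrong. In particular, the sentence ``$V_q\geq 0$ is precisely what allows $p_t\!*\!q$ to be compared against $q$ in a positivity-preserving way'' is not correct: $V_q\geq 0$ is a Fourier condition and carries no pointwise information about $p_t\!*\!q$ versus $q$. The paper's own proof has the identical defect at its final sentence; neither argument can be completed without additional hypotheses on $q$ (and the bound itself must be modified, since the counterexample above is precisely the case $q=r_\mu$ used in the subsequent lemma).
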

\begin{proof}
 Let $\mu \geq \mu_0$. As the convolution with $q$ commutes with differentiation, it also commutes with  $(-\Delta+\mu)$, $R_\mu$ or powers thereof. Since $V_q\geq 0$, there exists a unique nonnegative square root $V_q^{1/2}$, which also commutes with $R_\mu$, $R_\mu^{-1}$ and powers thereof. For $\phi\in H^2(\mathbb{R}^n)$ and $\mu \geq \mu_0$,  
 \begin{align*}
   \big( (-\Delta+\mu)\phi,V_q\phi\big)_{L^2}& =\big( (-\Delta+\mu) V_q^{1/2}\phi,V_q^{1/2}\phi\big)_{L^2}
   \\ & \geq \mu_0 \big( V_q^{1/2}\phi,V_q^{1/2}\phi\big)_{L^2}
   \\ & = \mu_0 \big( \phi,V_q \phi\big)_{L^2}.
 \end{align*}
 Putting $\phi\coloneqq (-\Delta+\mu)^{-1/2}\psi=R_\mu^{1/2} \psi$ for some $\psi\in H^2(\mathbb{R}^n)$, one infers 
 \begin{align*}
   \big( \psi, V_q\psi\big)_{L^2}& \geq \mu_0 \big( R_\mu^{1/2}\psi,V_q R_\mu^{1/2}\psi\big)_{L^2}
   \\&\geq \mu_0 \big( \psi,R_\mu V_q\psi\big)_{L^2}.
 \end{align*}
 As $(-\Delta+\mu)^{-1/2}[H^2(\mathbb{R}^n)]$ is dense in $L^2(\mathbb{R}^n)$, it follows that $ V_q- \mu_0R_\mu V_q$ is a nonnegative integral operator, which implies the asserted inequality.
\end{proof}

Applying Proposition \ref{p:7.7} with $q=r_\mu$ twice, one gets the proof of the following result.

\begin{lemma}\label{lem:7.14c} Let $n\in \mathbb{N}$, $\mu_0>0$. Then for all $\mu \geq \mu_0$, 
\begin{equation}\label{eq:7.14.3}
    \mu_0^2 \int_{(\mathbb{R}^n)^2} r_{\mu}(x-x_1)r_{\mu}(x_1-x_2)r_{\mu}(x_2-y) \, 
    d^n x_1 d^nx_2 \leq r_{\mu}(x-y), \quad x,y\in \mathbb{R}^n, \; x\neq y.
 \end{equation} 
\end{lemma}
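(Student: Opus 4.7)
The plan is to apply Proposition \ref{p:7.7} twice with the choice $q = r_\mu$ and use Fubini plus the nonnegativity of $r_\mu$ to chain the two resulting inequalities.

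First I would verify that the hypotheses of Proposition \ref{p:7.7} are met with $q = r_\mu$ for any fixed $\mu \geq \mu_0$. The function $r_\mu$ lies in $L^1(\mathbb{R}^n) \cap C(\mathbb{R}^n\backslash\{0\})$ (its $L^1$-decay is ensured by the exponential factor in the explicit formula from Lemma \ref{lem:reform_of_en}, and continuity away from $0$ is evident from the same formula). The convolution operator associated with $r_\mu$ is precisely $R_\mu = (-\Delta + \mu)^{-1}$, which is bounded, self-adjoint and nonnegative on $L^2(\mathbb{R}^n)$ since $(-\Delta + \mu)$ is self-adjoint and strictly positive. Hence the pointwise estimate of Proposition \ref{p:7.7} yields
\begin{equation}\label{eq:onestep}
\mu_0 \int_{\mathbb{R}^n} r_\mu(x-x_1) r_\mu(x_1-y)\, d^n x_1 \leq r_\mu(x-y), \quad x,y\in\mathbb{R}^n,\; x\neq y.
\end{equation}

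Next I would iterate this inequality. Applying \eqref{eq:onestep} with $x$ replaced by $x_1$ (and renaming $x_1$ to $x_2$), one obtains, for all $x_1 \neq y$,
\begin{equation}
\mu_0 \int_{\mathbb{R}^n} r_\mu(x_1-x_2) r_\mu(x_2-y) \, d^n x_2 \leq r_\mu(x_1 - y).
\end{equation}
Multiplying by the nonnegative quantity $r_\mu(x-x_1)$, integrating in $x_1$ over $\mathbb{R}^n$, and using Fubini--Tonelli (which is justified as all integrands are nonnegative), one gets
\begin{equation}
\mu_0 \int_{(\mathbb{R}^n)^2} r_\mu(x-x_1) r_\mu(x_1-x_2) r_\mu(x_2-y) \, d^n x_1 d^n x_2 \leq \int_{\mathbb{R}^n} r_\mu(x-x_1) r_\mu(x_1-y) \, d^n x_1.
\end{equation}
Applying \eqref{eq:onestep} again to the right-hand side and multiplying through by $\mu_0$ completes the proof.

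The argument is essentially routine once Proposition \ref{p:7.7} is available; there is no real obstacle. The only point requiring a moment's attention is the verification that $V_{r_\mu}$ is indeed the bounded nonnegative operator $R_\mu$ on $L^2(\mathbb{R}^n)$, but this is immediate from the fact that $r_\mu$ is the Schwartz kernel of $R_\mu$ and that $R_\mu \geq 0$ as the inverse of the strictly positive self-adjoint operator $(-\Delta + \mu)$.
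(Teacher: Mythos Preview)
Your proof is correct and follows exactly the same approach as the paper, which simply says to apply Proposition~\ref{p:7.7} with $q=r_\mu$ twice. You have merely spelled out the verification of the hypotheses and the iteration step in more detail than the paper does.
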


\begin{proof}[Proof of Lemma \ref{lem:7.13b}] 
 Recalling \eqref{def:s_mu}, 
\[
   s_\mu(x-y)=\frac{e^{-\sqrt{\mu}|x-y|}}{|x-y|^{n-2}}, \quad x,y\in \mathbb{R}^n, 
   \; x\neq y, \; \mu > 0.
\]
 By Lemma \ref{l:7.18}, there exist $c_1,c_2>0$ such that for all $\mu >0$ and $x,y\in \mathbb{R}^n$, $x\neq y$,  
 \begin{equation}\label{eq:7.14.1}
        r_{\mu}(x-y)\leq c_1 s_{\mu/4}(x-y), \quad 
        s_{\mu/4}(x-y) \leq  c_2 r_{\mu/4}(x-y).
 \end{equation}
Next, one recalls, with $n=2\hatt n+1$ for some $\hatt n\in \mathbb{N}$, from Lemma \ref{lem:Real-part-Bessel_and_other}, equation \eqref{bess_c}, that
\[
   r_\mu(x-y)\leq 2^{\hatt n-1} r_{\mu/4}(x-y), \quad x,y\in \mathbb{R}^n, \; x\neq y.
\]
Let $\tau > 0$. We estimate for $x,y\in \mathbb{R}^n$, $x\neq y$, $\eta\in L^\infty(\mathbb{R}^n)$, with $\supp (\eta) \subset B(0,\tau)$, $\mu \geq \mu_0$, using Lemma \ref{lem:7.14} and inequality \eqref{eq:7.14.1}, with $\kappa_\tau\coloneqq 2^{n-3} \omega_{n-1}\tau^2$,
\begin{align*}
   &|\tilde r_k(x,y)| 
   = \bigg|\int_{(\mathbb{R}^n)^{k}} r_\mu (x-x_1)\eta(x_1)r_\mu(x_1-x_2)\cdots \eta(x_{k})r_\mu(x_{k}-y) \, d^n x_1 \cdots d^n x_{k}\bigg|
   \\ & \quad \leq \|\eta\|_{L^\infty}^k \int_{(B(0,\tau))^{k}} r_\mu (x-x_1)r_\mu(x_1-x_2)\cdots r_\mu(x_{k}-y) \, d^n x_1 \cdots d^n x_{k}
   \\ & \quad \leq \|\eta\|_{L^\infty}^kc_1^{k-1}\int_{(B(0,\tau))^{k}} r_\mu (x-x_1) 
   s_{\mu/4}(x_1-x_2) 
   \\ & \hspace*{4cm}  \cdots \times s_{\mu/4}(x_{k-1}-x_{k})r_\mu(x_{k}-y) \, d^n x_1 \cdots d^n x_{k}
   \\ & \quad \leq \|\eta\|_{L^\infty}^k(c_1\kappa_\tau)^{k-1}\int_{(B(0,\tau))^{2}} r_\mu (x-x_1)s_{\mu/4}(x_1-x_{k})r_\mu(x_{k}-y) \, d^n x_1 d^n x_{k}
   \\ & \quad \leq \|\eta\|_{L^\infty}^k(c_1\kappa_\tau)^{k-1}c_2\int_{(B(0,\tau))^{2}} r_\mu (x-x_1) 
   r_{\mu/4}(x_1-x_{k})r_\mu(x_{k}-y) \, d^n x_1 d^n x_{k}
   \\ & \quad \leq \|\eta\|_{L^\infty}^k(c_1\kappa_\tau)^{k-1}2^{n-3}c_2\int_{(\mathbb{R}^n)^{2}} 
   r_{\mu/4} (x-x_1)r_{\mu/4}(x_1-x_{k})   \\
& \hspace*{5.3cm}  \times r_{\mu/4}(x_{k}-y) \, d^n x_1 d^n x_{k}
   \\ & \quad \leq \|\eta\|_{L^\infty}^k\frac{16}{\mu_0^2 }(c_1\kappa_\tau)^{k-1}2^{n-3}c_2 
   r_{\mu/4}(x-y),\end{align*}
   where, in the last estimate, we used Lemma \ref{lem:7.14c}.
\end{proof}

Having proved Lemma \ref{lem:7.13b}, we can now formulate and prove the result for the estimate of the perturbed and the unperturbed integral kernels (Green's functions).

\begin{theorem}\label{thm:7.18} Let $n\in \mathbb{N}_{\geq 3}$ odd, $\mu_0>0$, 
$\theta\in (0,\pi/2)$, $\kappa>0$. Then there exists $c,\tau>0$ such that for all $\mu\in \Sigma_{\mu_0,\theta}$ and $\eta\in C^\infty(\mathbb{R}^n)$ with $\supp(\eta)\subset B(0,\tau)$, $\|\eta\|_{L^\infty}\leq \kappa$, the estimate
\[
   |r_{\eta+\mu}(x,y)|\leq c r_{\Re(\mu)}(x-y), \quad x,y\in \mathbb{R}^n, \; x\neq y, 
\]
holds, where $r_{\eta+\mu}$ and $r_{\Re(\mu)}$ are the integral kernels for the operators 
$R_{\eta+\mu}=(-\Delta+ \eta+\mu)^{-1}$ and $R_{\Re(\mu)}$, respectively. 
\end{theorem}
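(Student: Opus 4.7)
The proof rests on the Neumann expansion
\begin{equation*}
R_{\eta+\mu} \;=\; \sum_{k=0}^\infty \bigl(R_\mu(-\eta)\bigr)^k R_\mu,
\end{equation*}
which, by Lemma \ref{lem:discussion of Neumann} and Remark \ref{rem:remark on op norm of Rpsiz}, converges in $\mathcal{B}(L^2(\mathbb{R}^n))$ uniformly for $\mu\in\Sigma_{\mu_0,\theta}$ once $\tau$ is chosen small enough that $\|R_\mu\eta\|\le\beta<1$. My plan is to pass this convergence down to the integral kernels $\tilde r_k$ of $(R_\mu(-\eta))^kR_\mu$, and to bound them pointwise. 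The series produces an explicit representation $r_{\eta+\mu}(x,y)=\sum_{k\ge 0}\tilde r_k(x,y)$, and the theorem will follow from a uniform bound $|\tilde r_k(x,y)|\le c_k\, r_{\Re(\mu)}(x-y)$ with $\sum_k c_k<\infty$.

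First I would reduce the complex-$\mu$ analysis to real $\mu=\Re(\mu)\ge\mu_0$. Each occurrence of the Green's function $r_\mu$ in the iterated integral defining $\tilde r_k$ is estimated pointwise via Lemma \ref{lem:Real-part-Bessel_and_other}\,$(ii)$, namely $|r_\mu(x-y)|\le (\cos(\arg\mu))^{(1-\hat n)/2}r_{\Re(\mu)}(x-y)$, and the prefactor is uniformly bounded on $\Sigma_{\mu_0,\theta}$ because $|\arg\mu|\le\theta<\pi/2$. This reduces the task to bounding the real-parameter iterated kernels associated with $R_{\Re(\mu)}$ alone, at the cost of a multiplicative constant that grows only geometrically in $k$. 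For $k=0$ one reads off the bound directly. For $k\ge 3$ I would invoke Lemma \ref{lem:7.13b} to obtain $|\tilde r_k^{\Re(\mu)}(x,y)|\le\lambda^k r_{\Re(\mu)/4}(x-y)$ with $\lambda\in(0,1)$ freely selectable by shrinking $\tau$. The two ``intermediate'' cases $k=1,2$ are not covered by Lemma \ref{lem:7.13b} and must be handled by hand: each is an explicit iterated integral against $\eta$ supported in $B(0,\tau)$, which by Lemmas \ref{l:7.18}--\ref{lem:7.14c} is bounded by $C\kappa\tau^2 r_{\Re(\mu)/4}(x-y)$, giving a bound of the same form as the tail.

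The main obstacle is that Lemma \ref{lem:7.13b} delivers its estimate in terms of $r_{\mu/4}$ rather than $r_\mu$, and one cannot expect $r_{\mu/4}\le C\, r_\mu$ to hold pointwise since the ratio grows like $\exp\!\bigl(\tfrac{1}{2}\sqrt{\mu}\,|x-y|\bigr)$. To circumvent this I would revisit the proof of Lemma \ref{lem:7.13b} itself and replace the two inequalities $r_\mu\le c_1 s_{\mu/4}$ and $s_\mu\le c_2 r_\mu$ imported from Lemma \ref{l:7.18} by the more flexible pair with an arbitrary parameter $\lambda_0\in(0,1)$: using $r_\mu\le c(\lambda_0)s_{\lambda_0\mu}$ throughout yields the modified estimate $|\tilde r_k(x,y)|\le\lambda^k r_{\lambda_0\Re(\mu)}(x-y)$. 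The residual loss $r_{\lambda_0\Re(\mu)}/r_{\Re(\mu)}\sim\exp\!\bigl((1-\sqrt{\lambda_0})\sqrt{\Re(\mu)}\,|x-y|\bigr)$ is then absorbed by choosing $\lambda_0$ arbitrarily close to $1$ and paying with $(1-\lambda_0)$ being small enough compared to the $\lambda^k$ decay provided by taking $\tau$ small; alternatively one may shrink $\tau$ so that the entire argument runs with $\mu$ replaced by $4\mu$ (still in an enlarged sector of the same shape), which turns the Lemma \ref{lem:7.13b} bound $r_{(4\mu)/4}=r_\mu$ into exactly what is needed. Summing the resulting geometric series in $k$ and combining with the $k=0$ term yields the desired bound $|r_{\eta+\mu}(x,y)|\le c\, r_{\Re(\mu)}(x-y)$ with a constant $c$ depending only on $\mu_0$, $\theta$, $\kappa$, and $n$.
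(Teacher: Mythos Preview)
Your overall plan---Neumann expansion, reduction to real $\mu$ via Lemma~\ref{lem:Real-part-Bessel_and_other}, and invoking Lemma~\ref{lem:7.13b} for the tail---is exactly how the paper proceeds. You also correctly identify the central difficulty: Lemma~\ref{lem:7.13b} delivers a bound by $r_{\Re(\mu)/4}$, and there is no uniform constant $C$ with $r_{\Re(\mu)/4}\le C\,r_{\Re(\mu)}$. However, neither of your two proposed remedies closes this gap. In option~(a), replacing $1/4$ by $\lambda_0\in(0,1)$ in the proof of Lemma~\ref{lem:7.13b} still leaves you with $r_{\lambda_0\Re(\mu)}$, and the ratio $r_{\lambda_0\Re(\mu)}/r_{\Re(\mu)}$ grows like $\exp\bigl((1-\sqrt{\lambda_0})\sqrt{\Re(\mu)}\,|x-y|\bigr)$ for \emph{every} fixed $\lambda_0<1$; the geometric decay $\lambda^k$ is in $k$, not in $|x-y|$, so it cannot compensate, and since $\Re(\mu)$ is unbounded on $\Sigma_{\mu_0,\theta}$ you cannot choose $\lambda_0$ uniformly. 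In option~(b), running the argument with $\mu$ replaced by $4\mu$ produces bounds on the kernel of $(-\Delta+\eta+4\mu)^{-1}$, which is a different operator from $R_{\eta+\mu}$.

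The paper resolves the obstruction by a bootstrap that you are missing. One first accepts the weaker conclusion $|r_{\eta+\mu}(x,y)|\le C\,r_{\Re(\mu)/4}(x-y)$ from the summed series (this is your estimate before the ``fix''). Then one feeds this back into the resolvent identity
\[
R_{\eta+\mu}=R_\mu - R_\mu\,\eta\,R_{\eta+\mu},
\]
which at the kernel level gives $|r_{\eta+\mu}(x,y)|\le c_1 r_{\Re(\mu)}(x-y)+c_1\kappa C\,\bigl(r_{\Re(\mu)}*r_{\Re(\mu)/4}\bigr)(x-y)$. The convolution is handled by Proposition~\ref{p:7.7} applied with $q=r_{\Re(\mu)}$ and the ``resolvent'' parameter $\Re(\mu)/4\ge\mu_0/4$: this yields $r_{\Re(\mu)/4}*r_{\Re(\mu)}\le\tfrac{4}{\mu_0}\,r_{\Re(\mu)}$, and the desired bound by $r_{\Re(\mu)}$ follows. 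The point is that one extra convolution with $r_{\Re(\mu)}$ upgrades the decay from $r_{\Re(\mu)/4}$ to $r_{\Re(\mu)}$ at the cost of a harmless constant, and the resolvent identity supplies exactly that extra convolution.
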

\begin{proof}
One recalls that $r_\mu$ denotes the integral kernel of $R_\mu$. 
 According to Lemma \ref{lem:Real-part-Bessel_and_other}, \eqref{bess_b}, there exists $c_1\geq 1$ such that for all $\mu\in\Sigma_{\mu_0,\theta}$ one has 
\[
   |r_{\mu}(x-y)|\leq c_1 r_{\Re (\mu)}(x-y), \quad x,y\in \mathbb{R}^n, \; x\neq y.
\]
Next, by Lemma \ref{lem:discussion of Neumann}, one chooses $\tau_1>0$ such that 
$\|R_{\mu}\eta\|\leq 1/2$ for all $\mu\in \Sigma_{\mu_0,\theta}$ and $\eta\in L^\infty(\mathbb{R}^n)$ with $\supp (\eta) \subset B(0,\tau_1)$ and $\|\eta\|_{L^\infty}\leq \kappa$, implying that $R_{\eta+\mu}$ is a well-defined bounded linear operator in $L^2(\mathbb{R}^n)$ (see, e.g., Remark \ref{rem:remark on op norm of Rpsiz}).

Let $\tau_2>0$ be such that for all $k\in \mathbb{N}_{\geq 1}$, the integral kernel 
$\tilde r_{k,\Re(\mu)}$ for the operator $(R_{\Re(\mu)}\eta)^kR_{\Re(\mu)}$ satisfies
  \begin{equation}\label{eq:7.18}
       |\tilde r_{k,\Re(\mu)}(x,y)|\leq c_2 \frac{1}{(2c_1)^k}r_{\Re(\mu)/4}(x-y)
  \end{equation}
  for all $x,y\in \mathbb{R}^n$, $x\neq y$, $\mu\in \Sigma_{\mu_0,\theta}$, $\eta\in L^\infty(\mathbb{R}^n)$ with $\|\eta\|_{L^\infty}\leq \kappa$ and  $\supp (\eta) \subset B(0,\tau_2)$ and some $c_2>0$, which is possible by Lemma \ref{lem:7.13b}. 
  
  Let $\tau\coloneqq \min\{\tau_1,\tau_2\}$. Then, for $x,y\in \mathbb{R}^n$, and $\eta\in C^\infty(\mathbb{R}^n)$, with $\supp (\eta) \subset B(0,\tau)$ and $\|\eta\|_{L^\infty}\leq \kappa$ 
  one gets for $N,M\in \mathbb{N}$, $N>M$, $\mu\in \Sigma_{\mu_0,\theta}$,
  \begin{align}
    \bigg|\sum_{k=M}^N \big(\underbrace{(r_{\mu}*\eta)\cdots (r_{\mu}*\eta)}_{k\text{-times}}\big) r_{\mu}(x-y)\bigg|
    &\leq \sum_{k=M}^N c_1^{k+1}|\tilde r_{k,\Re(\mu)}(x,y)| \notag
   \\ &\leq c_2 c_1\sum_{k=M}^\infty 2^{-k}r_{\Re(\mu)/4}(x-y) \label{e:t.7.8}
   \\ &\leq c_2 c_1 2^{-M+1}r_{\Re(\mu)/4}(x-y). \notag
  \end{align}
   Thus, 
   \[
    \tilde r(x,y)\coloneqq \sum_{k=0}^\infty \big(\underbrace{(r_{\mu}*\eta)\cdots (r_{\mu}*\eta)}_{k\text{-times}}\big) r_{\mu}(x-y), \quad x,y\in \mathbb{R}^n, \; x\neq y, 
   \]
   defines a function, which, by the differentiability of $\eta$, coincides with the fundamental solution of $(-\Delta+\eta+\mu)$. For $x,y\in \mathbb{R}^n$, $x\neq y$, $\mu\in \Sigma_{\mu_0,\theta}$, one thus gets, using \eqref{e:t.7.8} for $M=1$ and $N\to\infty$,
   \begin{align*}
          |r_{\eta+\mu}(x,y)|& \leq |r_{\eta+\mu}(x,y)-r_{\mu}(x-y)|+|r_{\mu}(x-y)|
          \\ & \leq c_2 c_1 r_{\Re(\mu)/4}(x-y) + c_1 r_{\Re(\mu)}(x-y)
          \\ & \leq (c_2 c_1+c_12^{\hatt n-1}) r_{\Re(\mu)/4}(x-y),
   \end{align*}
   where, in the last estimate, we used Lemma \ref{lem:Real-part-Bessel_and_other}, 
   \eqref{bess_c}, with $n=2\hatt n+1$ for some $\hatt n\in \mathbb{N}$. Finally, for $\mu\in \Sigma_{\mu_0,\theta}$, and from
   \[
      R_{\eta+\mu} = \sum_{k=0}^\infty R_\mu ((-\eta)R_\mu)^k= R_\mu - R_\mu \eta R_{\eta+\mu},
   \]
 one reads off, for $x,y\in \mathbb{R}^n$, $x\neq y$,
   \begin{align*}
      |r_{\eta +\mu}(x,y)|&\leq |r_{\mu}(x-y)|+|r_{\mu}*\eta r_{\eta+\mu}(x,y)|
      \\  & \leq c_1 r_{\Re(\mu)}(x-y)+c_1\kappa (c_2 c_1+c_12^{\hatt n-1}) 
      r_{\Re (\mu)}*r_{\Re(\mu)/4}(x-y)
      \\ & \leq  c_1\bigg(1+ \kappa (c_2 c_1+c_12^{\hatt n-1})\frac{4}{\mu_0}\bigg) r_{\Re(\mu)}(x-y),
   \end{align*}
   where we used Proposition \ref{p:7.7} for $q=r_{\Re(\mu)/4}$ for obtaining the last estimate.
\end{proof}

As a first application of Theorem \ref{thm:7.18}, in the spirit of the results derived in Section \ref{sec:ptw_intk}, we can show the following result.

\begin{corollary}\label{cor:7.19} Let $n\in\mathbb{N}_{\geq 3}$ odd, with $n=2\hatt n+1$ for some $\hatt n\in \mathbb{N}$, $\mu_0>0$, $\theta\in (0,\pi/2)$. Then there exists $\tau>0$, such that for $m\in \mathbb{N}_{>\hatt n}$ there exists $c\geq 1$ with the following properties: 
Given $\Psi_1,\ldots,\Psi_m\in C_b^\infty(\mathbb{R}^n)$, with 
\[
   |\Psi_j(x)|\leq \kappa (1+|x|)^{- \alpha_j},  
   \quad x\in \mathbb{R}^n, \; j\in\{1,\ldots,m\},
\]
for some $\alpha_1,\ldots,\alpha_m,\kappa\in [0,\infty)$,
then for all $\eta_j\in C_b^\infty(\mathbb{R}^n)$, $\|\eta_j\|_{L^\infty}\leq 1$, $j\in \{1,\ldots,m\}$, and $\supp (\eta_j) \subset B(0,\tau)$, the integral kernel $t_\mu$ of $\prod_{j\in \{1,\ldots,m\}} R_{\eta_j+\mu}\Psi_j$ satisfies
\[
   |t_\mu(x,x)|\leq \kappa^m c (1+|x|)^{-\sum_{j=1}^m\alpha_j},  
   \quad x\in \mathbb{R}^n, \; \mu\in \Sigma_{\mu_0,\theta}.
\] 
\end{corollary}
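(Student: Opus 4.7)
The strategy is to reduce the perturbed situation to an application of Lemma \ref{lem:asymptotics on diagonal}, by first dominating the perturbed Green's functions by the unperturbed one. I will split the argument into three steps: existence/continuity of the diagonal, pointwise domination, and the decay estimate itself.

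First, I would choose $\tau>0$ according to Theorem \ref{thm:7.18} applied with $\kappa=1$, so that for every $\mu\in\Sigma_{\mu_0,\theta}$ and every $\eta\in C_b^\infty(\mathbb{R}^n)$ with $\|\eta\|_{L^\infty}\leq1$ and $\supp(\eta)\subset B(0,\tau)$ there exists a universal constant $c_1\geq1$ with
\[
|r_{\eta+\mu}(x,y)|\leq c_1\, r_{\Re(\mu)}(x-y), \quad x,y\in\mathbb{R}^n, \; x\neq y.
\]
By Remark \ref{rem:remark on op norm of Rpsiz}\,$(ii)$, $\tau$ may be chosen $\mu$-uniformly so that $R_{\eta_j+\mu}\in\cB\bigl(H^s(\mathbb{R}^n),H^{s+2}(\mathbb{R}^n)\bigr)$ for all $s\in\mathbb{R}$, with operator norm uniformly bounded in $\mu\in\Sigma_{\mu_0,\theta}$ and in $\eta_j$ as above. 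Since multiplication by each $\Psi_j\in C_b^\infty(\mathbb{R}^n)$ preserves the Sobolev scale (Proposition \ref{prop:concrete case}), the product $\prod_{j=1}^m R_{\eta_j+\mu}\Psi_j$ gains $2m>2\hat n+1=n$ orders of regularity. Corollary \ref{cor:integral kernels regularity} then guarantees that its integral kernel $t_\mu$ is continuous, so $t_\mu(x,x)$ is well-defined and may be computed by iterated integration.

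Second, I would write out $t_\mu(x,x)$ as the iterated integral against the kernels $r_{\eta_j+\mu}$ and the functions $\Psi_j$. Bounding each $|r_{\eta_j+\mu}|$ pointwise by $c_1 r_{\Re(\mu)}$ via Theorem \ref{thm:7.18} and each $|\Psi_j(\cdot)|$ by $\kappa(1+|\cdot|)^{-\alpha_j}$, one obtains
\begin{align*}
|t_\mu(x,x)| &\leq \kappa^m c_1^m \int_{(\mathbb{R}^n)^{m-1}} r_{\Re(\mu)}(x-x_1)(1+|x_1|)^{-\alpha_1}r_{\Re(\mu)}(x_1-x_2)\cdots \\
&\qquad \times (1+|x_{m-1}|)^{-\alpha_{m-1}}r_{\Re(\mu)}(x_{m-1}-x)(1+|x|)^{-\alpha_m}\, d^n x_1\cdots d^n x_{m-1},
\end{align*}
an expression of exactly the shape analyzed in the proof of Lemma \ref{lem:asymptotics on diagonal}.

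Third, the argument in Lemma \ref{lem:asymptotics on diagonal} applies verbatim: splitting $(\mathbb{R}^n)^{m-1}$ into the region $B(|x|/2)$ where each integration variable is close to the origin (and the factors $(1+|\cdot|)^{-\alpha_j}$ can be pulled out with a factor $[1+(|x|/2)]^{-\sum_j\alpha_j}$), and its complement (where Lemma \ref{lem:teleskop-max} combined with the exponential gain in Lemma \ref{lem:Real-part-Bessel_and_other}\,$(iii)$ yields decay faster than any polynomial), one concludes
\[
|t_\mu(x,x)|\leq \kappa^m c_1^m \, c'_\mu \, [1+(|x|/2)]^{-\sum_{j=1}^m\alpha_j}, \quad x\in\mathbb{R}^n,
\]
where $c'_\mu$ is given by the explicit expression in Remark \ref{rem:on the quantitative version of kappa prime}. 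Crucially, since $\Re(\mu)\geq\mu_0$ is bounded below and $\arg(\mu)$ is bounded away from $\pm\pi/2$ (so that $c_{\arg(\mu)}=\cos(\arg(\mu))^{-1/2}$ is bounded), the constant $c'_\mu$ may be taken uniformly bounded over $\mu\in\Sigma_{\mu_0,\theta}$. Swallowing the factor $2^{\sum\alpha_j}$ into a final constant $c$ depending only on $n$, $m$, $\mu_0$, $\theta$, and $\sum_j\alpha_j$ then yields the claimed bound.

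The main subtlety will be ensuring that the constant $c$ is genuinely independent of the choice of the $\eta_j$'s (and of $\mu$ in the prescribed sector). This is precisely where Theorem \ref{thm:7.18} is decisive: the $\eta_j$-dependence has been absorbed once and for all into the single universal constant $c_1$, so the remainder of the argument depends only on the unperturbed Helmholtz Green's function $r_{\Re(\mu)}$, where the $\mu$-uniformity in the sector $\Sigma_{\mu_0,\theta}$ is already known from Section \ref{sec:ptw_intk}.
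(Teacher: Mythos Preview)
Your proposal is correct and follows essentially the same route as the paper: choose $\tau$ via Theorem~\ref{thm:7.18}, dominate each perturbed Green's function $|r_{\eta_j+\mu}|$ by $c_1\,r_{\Re(\mu)}$, and then feed the resulting expression into Lemma~\ref{lem:asymptotics on diagonal}. The only presentational difference is that the paper introduces auxiliary smooth nonnegative majorants $\tilde\Psi_j$ with $|\Psi_j|\leq\tilde\Psi_j\leq\kappa'(1+|\cdot|)^{-\alpha_j}$, so that the dominating expression is literally the diagonal kernel of an operator $\prod_j \tilde\Psi_j R_{\Re(\mu)}$ and Lemma~\ref{lem:asymptotics on diagonal} can be invoked as a black box, whereas you bound pointwise and re-run the argument of that lemma; the content is the same.
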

\begin{proof}
  Choose $\tau>0$ as the minimum of $\tau$'s according to Theorem \ref{thm:7.18} with $\kappa=1$ and Lemma \ref{lem:discussion of Neumann} with $\beta=\frac{1}{2}$. Let $\Psi_1,\ldots,\Psi_m$, 
  $\eta_1,\ldots,\eta_m$, $m$ as in Corollary \ref{cor:7.19}, and let $\kappa'>\kappa$. Choose $\tilde \Psi_j\in C^\infty_b(\mathbb{R}^n;[0,\infty))$ with 
  \[
   |\Psi_j(x)|\leq \tilde\Psi_j(x) \leq \kappa' (1+|x|)^{-\alpha_j},  
   \quad x\in \mathbb{R}^n, \; j\in\{1,\ldots,m\}.
  \]
  Then, by Theorem \ref{thm:7.18}, there exists $c>0$ with 
  \[
    |r_{\eta_j+\mu}(x,y)|\leq c r_{\Re(\mu)}(x-y), \quad x,y\in \mathbb{R}^n, \; x\neq y,
    \; \mu\in \Sigma_{\mu_0,\theta}. 
  \]
  Hence, for $x\in \mathbb{R}^n$ and $\mu\in \Sigma_{\mu_0,\theta}$ one obtains 
  \[
     |((r_{\eta_1+\mu}*\Psi_1)\cdots (r_{\eta_m+\mu}*\Psi_m))(x,x)|
      \leq c^m \big((r_{\Re(\mu)}*\tilde \Psi_1)\cdots (r_{\Re(\mu)}*\tilde \Psi_m)\big)(x,x).
  \]
 Thus, the assertion follows from Lemma \ref{lem:asymptotics on diagonal}.
\end{proof}

 \begin{remark}\label{r:7.10} A result similar to Corollary \ref{cor:7.19} holds if for some index 
 $j\in\{1,\ldots,m\}$, the operator $R_{\eta_j+\mu}$ is replaced by $\partial_\ell R_{\eta_j+\mu}$ for some $\ell\in \{1,\ldots,n\}$. For obtaining such a result, one needs a version of Lemma \ref{lem:real_partBessel_derivative} where, in this lemma, the fundamental solution for the Helmholtz equation is replaced by the respective one for $(-\Delta+\eta_j+\mu)u=f$. 
 \hfill $\diamond$  
 \end{remark}
 
 In the rest of this section, we shall establish the remaining estimate needed, to obtain a proof for Remark \ref{r:7.10}. More precisely, we aim for a proof of the following result:

 \begin{theorem}\label{t:7.19c} Let $n\in \mathbb{N}_{\geq 3}$ odd, for 
 $\mu\in \mathbb{C}_{\Re>0}$, let $q_\mu$ as in Lemma \ref{lem:real_partBessel_derivative}, $\mu_0>0$, $\theta\in (0,\pi/2)$, $\kappa>0$. Then there exists $c\geq1$ and $\tau>0$ such that for all $j\in \{1,\ldots,n\}$, $\eta\in C_b^\infty(\mathbb{R}^n)$, $\|\eta\|_{L^\infty}\leq\kappa$, with $\supp (\eta) \subset B(0,\tau)$, and $\mu\in \Sigma_{\mu_0,\theta}$, we have for all $x,y\in \mathbb{R}^n$, $x\neq y$,
\[
   |\partial_j (\xi\mapsto r_{\eta+\mu}(\xi,y))(x)|\leq  c \, q_{\Re (\mu)}(|x-y|)
\]
with $r_{\eta+\mu}$ denoting the integral kernel of $R_{\eta+\mu}=(-\Delta+\eta+\mu)^{-1}$, the latter being given by \eqref{def:R_psi_z}.
\end{theorem}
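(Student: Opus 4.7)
The starting point is the resolvent identity $R_{\eta+\mu}=R_\mu-R_\mu\,\eta\,R_{\eta+\mu}$, which (by Lemma \ref{lem:discussion of Neumann} and Remark \ref{rem:remark on op norm of Rpsiz}$(ii)$, for $\tau$ small enough) is valid as an identity of bounded operators on every Sobolev space $H^s(\mathbb{R}^n)$, in particular giving the pointwise integral-kernel identity
\begin{equation*}
r_{\eta+\mu}(x,y)=r_\mu(x-y)-\int_{\mathbb{R}^n}r_\mu(x-z)\,\eta(z)\,r_{\eta+\mu}(z,y)\,d^nz,\quad x\neq y.
\end{equation*}
Since $\eta$ is smooth and compactly supported and $\partial_j r_\mu$ is integrable away from zero, differentiation in $x_j$ may be passed under the integral, yielding
\begin{equation*}
\partial_j^{\,x}r_{\eta+\mu}(x,y)=\partial_j r_\mu(x-y)-\int_{B(0,\tau)}\partial_j r_\mu(x-z)\,\eta(z)\,r_{\eta+\mu}(z,y)\,d^nz.
\end{equation*}
The first term is bounded by $c\,q_{\Re(\mu)}(|x-y|)$ directly by Lemma \ref{lem:real_partBessel_derivative}\,$(i)$,\,$(ii)$. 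Combining Lemma \ref{lem:real_partBessel_derivative}\,$(i)$,\,$(ii)$ with Theorem \ref{thm:7.18} reduces the problem to proving, uniformly in $\sigma\coloneqq\Re(\mu)\ge\mu_0$ and in $x\neq y$, the mixed convolution estimate
\begin{equation*}
\int_{B(0,\tau)}q_\sigma(|x-z|)\,r_\sigma(|z-y|)\,d^nz\;\le\;C\,q_\sigma(|x-y|).
\end{equation*}

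\textbf{The key technical step} is to establish this mixed convolution estimate. The exponential factors combine via the triangle inequality exactly as in Lemma \ref{lem:7.14}, yielding $e^{-\sqrt{\sigma}(|x-z|+|z-y|)}\le e^{-\sqrt{\sigma}|x-y|}$. For the polynomial part I would use the explicit formula \eqref{qmu} for $q_\sigma$ and Lemma \ref{lem:reform_of_en} for $r_\sigma$, keeping in mind that near $r=0$ one has $q_\sigma(r)\gtrsim r^{-(n-1)}$ and also $q_\sigma(r)\gtrsim\sqrt{\sigma}\,r^{-(n-2)}$ (the latter appearing as the Term~1 summand with $k=\hatt n-1$ in \eqref{qmu}), whereas $r_\sigma(r)\lesssim r^{-(n-2)}$. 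I then split the domain $B(0,\tau)$ according to whether $|x-z|\ge|z-y|$ or $|x-z|<|z-y|$. In the first region, $|x-y|\le 2|x-z|$, so $|x-z|^{-(n-1)}\le 2^{n-1}|x-y|^{-(n-1)}$, and the remaining integral $\int_{B(0,\tau)}|z-y|^{-(n-2)}\,d^nz\le C\tau^2$ is handled exactly as in the proof of Lemma \ref{lem:7.14}; this lands cleanly on the $|x-y|^{-(n-1)}$-part of $q_\sigma(|x-y|)$. In the second region, $|x-y|\le 2|z-y|$, so $|z-y|^{-(n-2)}\le 2^{n-2}|x-y|^{-(n-2)}$; the remaining $\int_{B(0,\tau)}|x-z|^{-(n-1)}\,d^nz$ is bounded by $C\tau$ (integrating in polar coordinates against the borderline singularity $r^{-(n-1)}\cdot r^{n-1}$), and the resulting $C\tau\,|x-y|^{-(n-2)}$ is absorbed into the sub-leading component $\sqrt{\sigma}\,|x-y|^{-(n-2)}\,e^{-\sqrt{\sigma}|x-y|}$ of $q_\sigma(|x-y|)$ at the cost of a factor $\tau/\sqrt{\mu_0}$, which is harmless since $\sigma\ge\mu_0>0$. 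Summing the contributions from the two regions gives the convolution estimate; inserting it back gives the desired $|\partial_j^{\,x} r_{\eta+\mu}(x,y)|\le c\,q_{\Re(\mu)}(|x-y|)$.

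\textbf{The main obstacle} is precisely the second region of the convolution estimate: because $q_\sigma$ and $r_\sigma$ carry \emph{different} polynomial orders of singularity ($r^{-(n-1)}$ versus $r^{-(n-2)}$), the symmetric estimate of Lemma \ref{lem:7.14} is not available, and one of the two natural case splits produces a singularity $|x-y|^{-(n-2)}$ which is strictly weaker than the leading behavior $|x-y|^{-(n-1)}$ of $q_\sigma(|x-y|)$. Rescuing it forces one to use $q_\sigma$ \emph{non-perturbatively}, i.e., not just through its leading singularity, and to exploit the sub-leading term $\sqrt{\sigma}\,|x-y|^{-(n-2)}$ visible in the explicit formula \eqref{qmu}; this is where the uniform lower bound $\Re(\mu)\ge\mu_0>0$ built into the sector $\Sigma_{\mu_0,\theta}$ enters decisively, in direct analogy with the role it plays in Proposition \ref{p:7.7}, Lemma \ref{lem:7.14c}, and in the conclusion of Theorem \ref{thm:7.18}.
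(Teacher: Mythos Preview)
Your setup via the resolvent identity $\partial_j R_{\eta+\mu}=\partial_j R_\mu-\partial_j R_\mu\,\eta\,R_{\eta+\mu}$ and the reduction to a convolution estimate of the form $\int_{B(0,\tau)}q_\sigma(|x-z|)r_\sigma(|z-y|)\,d^nz\le C\,q_\sigma(|x-y|)$ is exactly what the paper does. The divergence is in how you establish this convolution bound.

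Your region-split argument has a genuine gap for $n\ge 5$. Once you extract the full exponential $e^{-\sqrt{\sigma}|x-y|}$ via the triangle inequality, what remains is the product of the \emph{polynomial parts} $P_\sigma(|x-z|)\,S_\sigma(|z-y|)$, where $P_\sigma(r)=q_\sigma(r)e^{\sqrt{\sigma}r}$ and $S_\sigma(r)=r_\sigma(r)e^{\sqrt{\sigma}r}$. These are sums of monomials $c\,\sigma^{a}r^{-b}$ with $2a+b=n-1$ (for $q_\sigma$) and $2a'+b'=n-2$ (for $r_\sigma$); for $n\ge5$ there are terms with $a,a'>0$, and these do \emph{not} obey the $\sigma$-uniform bounds $r^{-(n-1)}$ and $r^{-(n-2)}$ that your Region~1 and Region~2 arguments require. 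Concretely, for $n=5$ one has $S_\sigma(r)\sim\sqrt{\sigma}\,r^{-2}+r^{-3}$, and your Region~1 integral becomes $\int_{B(0,\tau)}S_\sigma(|z-y|)\,d^5z\sim\sqrt{\sigma}\,\tau^3+\tau^2$, which is unbounded as $\sigma\to\infty$. The bound ``$r_\sigma(r)\lesssim r^{-(n-2)}$'' you cite is correct for $r_\sigma$ \emph{with} its exponential (Lemma~\ref{l:7.18}), but not for $S_\sigma$. Your argument is sound only when $n=3$, where both $P_\sigma$ and $S_\sigma$ are essentially $\sigma$-free.

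The paper avoids this by not extracting the exponential at all: it observes that each term of $q_{\Re(\mu)}$ is of the form $e^{-\sqrt{\Re(\mu)}\,|x|}|x|^{-k}$ with $k<n$, which defines a positive-definite convolution operator by complete monotonicity (Theorem~\ref{thm:7.19ba}). Proposition~\ref{p:7.7} then gives $\int_{\mathbb{R}^n}r_{\Re(\mu)/4}(x-x_1)\,q(x_1-y)\,d^nx_1\le(4/\mu_0)\,q(x-y)$ for each such $q$, and summing over the terms of $q_{\Re(\mu)}$ yields the bound immediately---with the $\sigma$-uniformity built in and with no region split.
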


The proof of Theorem \ref{t:7.19c} will follow similar ideas as the one for Theorem \ref{thm:7.18}. We 
start with the following result:

\begin{theorem}\label{thm:7.19ba} Let $n\in\mathbb{N}_{\geq3}$, $k\in \bbN$, $k < n$, $\mu>0$. Then the operator 
\[
   L^2(\mathbb{R}^n)\ni \psi \mapsto \bigg( x\mapsto \int_{\mathbb{R}^n} \frac{e^{-\mu |x-y|}}{|x-y|^k} \psi(y) \, d^ny\bigg)\in L^2(\mathbb{R}^n)
\]
is well-defined, bounded, and positive definite. 
\end{theorem}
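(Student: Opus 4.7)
The plan is to recognize the operator as convolution with the real, even kernel $f_{\mu,k}(x) := e^{-\mu|x|}/|x|^k$ on $\mathbb{R}^n$ and then derive the three claims (well-definedness, boundedness, non-negativity) from two underlying facts: $f_{\mu,k} \in L^1(\mathbb{R}^n)$ and its Fourier transform is pointwise nonnegative. For well-definedness and boundedness I would first pass to polar coordinates to obtain
\[
\|f_{\mu,k}\|_{L^1(\mathbb{R}^n)} = \omega_{n-1}\int_0^\infty e^{-\mu r} r^{n-1-k}\, dr < \infty,
\]
where finiteness uses the hypothesis $k < n$ (equivalently $k \leq n-1$, since $k\in\bbN$), so that the exponent $n-1-k\geq 0$ and the integrand has no singularity at $r=0$. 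Then Young's convolution inequality yields that $T_{\mu,k}\psi := f_{\mu,k}*\psi$ is a well-defined bounded operator on $L^2(\mathbb{R}^n)$ with $\|T_{\mu,k}\|_{\cB(L^2(\bbR^n))} \leq \|f_{\mu,k}\|_{L^1(\mathbb{R}^n)}$.

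For non-negativity I would move to the Fourier side via a positive superposition representation. The Gamma-function identity $r^{-k} = \Gamma(k)^{-1}\int_0^\infty t^{k-1}e^{-tr}\, dt$ (valid for $r>0$, $k>0$) applied to $r = |x|$ gives
\[
f_{\mu,k}(x) = \frac{1}{\Gamma(k)}\int_0^\infty t^{k-1}e^{-(\mu+t)|x|}\, dt, \quad x\in\mathbb{R}^n\backslash\{0\}.
\]
Combined with the classical formula
\[
(\cF e^{-a|\cdot|})(\xi) = \frac{c_n\, a}{(a^2+|\xi|^2)^{(n+1)/2}}, \quad a > 0, \; \xi \in \mathbb{R}^n,
\]
for a positive constant $c_n$ (derivable by the standard subordination identity $e^{-a\sqrt{u}} = \int_0^\infty \frac{a}{2\sqrt{\pi s^3}} e^{-a^2/(4s)} e^{-su}\, ds$ together with the Gaussian Fourier transform), and the Fubini theorem (justified by joint integrability of $t^{k-1}e^{-(\mu+t)|x|}$ in $(t,x)\in\mathbb{R}_{>0}\times\mathbb{R}^n$), one obtains
\[
(\cF f_{\mu,k})(\xi) = \frac{c_n}{\Gamma(k)}\int_0^\infty t^{k-1}\,\frac{\mu+t}{\bigl((\mu+t)^2+|\xi|^2\bigr)^{(n+1)/2}}\, dt \geq 0, \quad \xi \in \mathbb{R}^n.
\]

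Since $f_{\mu,k}$ is real and even, $\cF f_{\mu,k}$ is real, hence $T_{\mu,k}$ is self-adjoint on $L^2(\mathbb{R}^n)$; and with the convolution/Plancherel pairing (in the convention \eqref{eq:def_Fourier_transf}),
\[
\bigl(T_{\mu,k}\psi,\psi\bigr)_{L^2(\mathbb{R}^n)} = (2\pi)^{n/2}\int_{\mathbb{R}^n}(\cF f_{\mu,k})(\xi)\bigl|(\cF\psi)(\xi)\bigr|^2 \, d^n\xi \geq 0, \quad \psi \in L^2(\mathbb{R}^n),
\]
which is the asserted positive-definiteness. The only mild obstacle is bookkeeping: carefully matching the $(2\pi)$-factors dictated by the paper's Fourier convention against the convolution theorem, and verifying the exponential Fourier-transform identity (by subordination, as indicated). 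The hypothesis $k < n$ is used exclusively to secure $L^1$-integrability at the origin; the decay at infinity is provided by the factor $e^{-\mu|x|}$ with $\mu>0$.
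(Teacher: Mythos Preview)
Your proof is correct. The well-definedness and boundedness argument via Young's inequality is identical to the paper's. For positive definiteness, however, the paper takes a different route: it regularizes the kernel to $\phi_\epsilon(r)=e^{-\mu r}(r+\epsilon)^{-k}$, observes that $\phi_\epsilon$ is completely monotone (as a product of the completely monotone functions $e^{-\mu r}$ and $(r+\epsilon)^{-k}$), invokes a Polya-type criterion from \cite{Tr89} to conclude that $\phi_\epsilon(|\cdot|)*$ is positive semi-definite, and then passes to the limit $\epsilon\to 0$ in $L^1(\mathbb{R}^n)$ and hence in operator norm.

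Your approach is more explicit and self-contained: the Gamma identity $r^{-k}=\Gamma(k)^{-1}\int_0^\infty t^{k-1}e^{-tr}\,dt$ is precisely the Bernstein representation witnessing complete monotonicity of $r^{-k}$, but rather than citing an abstract criterion you compute $\cF f_{\mu,k}$ directly as a superposition of Poisson kernels and read off nonnegativity. This avoids both the external reference and the $\epsilon$-regularization, at the modest cost of verifying the Fourier transform of $e^{-a|\cdot|}$ and a Fubini check (which you handle correctly, using $k<n$ for convergence at $t\to\infty$). The two arguments are structurally related---both ultimately rest on the positive definiteness of $e^{-a|\cdot|}$---but yours unpacks that fact rather than packaging it.
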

\begin{proof}
 The operator is well-defined and bounded by Young's inequality together with the observation that $f\colon x\mapsto e^{-\mu |x|} |x|^{-k}$ is an $L^1(\mathbb{R}^n)$-function. Moreover, for $\epsilon>0$ 
 we set
 \[
    \phi_\epsilon \colon [0,\infty) \to \mathbb{R}, \quad r\mapsto \frac{e^{-\mu r}}{(r+\epsilon)^k}.
 \]
  Then $\phi_\epsilon$ is a completely monotone function, since the maps $r\mapsto e^{-\mu r}$ and $r\mapsto (r+\epsilon)^{-k}$ are completely monotone. Observing that $\phi_\epsilon(r)\to 0$ as $r\to \infty$ and using the criterion on positive definiteness in  \cite[Theorem 2]{Tr89} one infers that $\phi_\epsilon(|\cdot|)*$ is a positive semi-definite operator. Moreover, since $\phi_\epsilon\to\phi$ in $L^1(\mathbb{R}^n)$ as $\epsilon\to 0$, one gets that 
$\phi_\epsilon(|\cdot|) *\to \phi(|\cdot|) *$ in $\mathcal B(L^2(\mathbb{R}^n))$ as $\epsilon\to 0$. Hence, for all $\psi\in L^2(\mathbb{R}^n)$ one infers 
  \[
     0\leq \lim_{\epsilon\to 0} \big( \phi_\epsilon * \psi,\psi \big)_{L^2(\bbR^n)} 
     = \big( \phi * \psi,\psi \big)_{L^2(\bbR^n)}.\qedhere
  \]
\end{proof}

\begin{corollary}\label{c:7.12} Let $n\in \mathbb{N}_{\geq 3}$, $k\in \bbN$, $k < n$, $\mu_0,\mu_1>0$. Denote $q\colon \mathbb{R}^n\backslash \{0\}\ni x\mapsto e^{-\mu_1 |x|} |x|^{-k}$. Then for all 
$\mu \geq \mu_0$, 
 \begin{equation}\label{eq:7.19c.2}
    \mu_0\int_{\mathbb{R}^n} r_{\mu}(x-x_1)q(x_1-x) \, d^nx_1 \leq q(x-y), \quad 
    x,y\in \mathbb{R}^n, \; x\neq y,
 \end{equation}
 where $r_\mu$ is the integral kernel for $(-\Delta+\mu)^{-1}\in \mathcal{B}(L^2(\mathbb{R}^n))$.
\end{corollary}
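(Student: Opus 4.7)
The plan is to reduce Corollary \ref{c:7.12} directly to Proposition \ref{p:7.7} by verifying the hypotheses of the latter for the specific function $q(x) = e^{-\mu_1 |x|}|x|^{-k}$. Since Proposition \ref{p:7.7} already packages the core inequality, the work is entirely in establishing that $q$ is admissible for that proposition.

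First, I would check the regularity and integrability of $q$. Continuity on $\mathbb{R}^n \setminus \{0\}$ is immediate from the product structure. For $q \in L^1(\mathbb{R}^n)$, I pass to polar coordinates,
\[
\int_{\mathbb{R}^n} q(x)\, d^n x = \omega_{n-1}\int_0^\infty e^{-\mu_1 r}\, r^{n-1-k}\, dr,
\]
which converges since $k < n$ forces $n-1-k \geq 0$, and the exponential controls the tail. This confirms $q \in L^1(\mathbb{R}^n) \cap C(\mathbb{R}^n \setminus \{0\})$.

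Next, I would argue that the convolution operator $V_q$ on $L^2(\mathbb{R}^n)$ is self-adjoint and nonnegative. Self-adjointness follows because $q$ is real-valued and radial, hence satisfies $q(-x) = q(x)$, so the convolution kernel is Hermitian symmetric. Nonnegativity is exactly the content of Theorem \ref{thm:7.19ba} applied with the parameter $\mu_1$ in place of $\mu$ there: that theorem provides the positive semi-definiteness of convolution by $x \mapsto e^{-\mu_1|x|}|x|^{-k}$ for $k < n$.

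With these two verifications in hand, I invoke Proposition \ref{p:7.7} directly: for every $\mu \geq \mu_0$ and every $x,y \in \mathbb{R}^n$ with $x \neq y$,
\[
\mu_0 \int_{\mathbb{R}^n} r_\mu(x - x_1)\, q(x_1 - y)\, d^n x_1 \leq q(x - y),
\]
which is \eqref{eq:7.19c.2}. I do not anticipate any substantial obstacle; the only subtle point is recognizing that Theorem \ref{thm:7.19ba} is precisely what supplies the nonnegativity of $V_q$, so that Proposition \ref{p:7.7} applies without additional argument.
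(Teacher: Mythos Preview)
Your proposal is correct and follows exactly the paper's approach: the paper's proof is a single sentence invoking Theorem \ref{thm:7.19ba} to verify that $q$ satisfies the hypotheses of Proposition \ref{p:7.7}, and you have simply spelled out those verifications (integrability, continuity off the origin, self-adjointness, and nonnegativity) in more detail.
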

\begin{proof}
 By Theorem \ref{thm:7.19ba}, $q$ satisfies the assumptions in Proposition \ref{p:7.7}, implying inequality \eqref{eq:7.19c.2}.
\end{proof}

We conclude with the proof of Theorem \ref{t:7.19c}, yielding the proof of Remark \ref{r:7.10}.

\begin{proof}[Proof of Theorem \ref{t:7.19c}]
 Choose $\tau>0$ such that $\|R_{\mu}\eta\|\leq 1/2$ for all $\eta\in L^\infty(\mathbb{R}^n)$, $\|\eta\|_{L^\infty}\leq \kappa$, with $\supp (\eta) \subset B(0,\tau)$, and $\mu\in \Sigma_{\mu_0,\theta}$, as permitted by Lemma \ref{lem:discussion of Neumann}. Next, let $j\in\{1,\ldots,n\}$ and recall
 \begin{align}
    \partial_j R_{\eta+\mu} &= \partial_j \sum_{k=0}^\infty R_{\mu}\big((-\eta)R_{\mu}\big)^k\notag
     \\ & = \partial_j R_{\mu} + \partial_j R_{\mu} (-\eta)R_{\mu} \sum_{k=1}^\infty \big((-\eta)R_{\mu}\big)^{k-1}\notag
    \\ & = \partial_j R_{\mu} + \partial_j R_{\mu} (-\eta) \sum_{k=0}^\infty R_{\mu}\big((-\eta)R_{\mu}\big)^{k}\notag
     \\ & = \partial_j R_{\mu} - \partial_j R_{\mu} (\eta) R_{\eta+\mu}.\label{eq:7.19c.1}
 \end{align}
 Let $q_\mu$ be as in Lemma \ref{lem:real_partBessel_derivative}. Upon appealing to Lemma \ref{lem:real_partBessel_derivative} (see, in particular, inequalities \eqref{der_a} and \eqref{der_b}), one is  left with estimating the integral kernel associated with the second summand in \eqref{eq:7.19c.1}, which we denote by $t$. Using Theorem \ref{thm:7.18} and Lemma \ref{lem:real_partBessel_derivative}, \eqref{der_b}, there exists $c_1\geq1$ such that 
 \[        |r_{\eta+\mu}(x,y)|\leq c_1 r_{\Re (\mu)/4}(x-y) \text{ and }q_{\mu}(|x-y|)\leq 
 c_1 q_{\Re (\mu)}(|x-y|)
 \]                       
  for all $\mu\in \Sigma_{\mu_0,\theta}$ and $x,y\in \mathbb{R}^n$, $x\neq y$. Thus, for all $x,y\in \mathbb{R}^n$, $x\neq y$, $\mu\in \Sigma_{\mu_0,\theta}$, one gets with the help of \eqref{eq:7.19c.2} (using that $q_{\Re(\mu)}(|\cdot|)$ is a nonnegative linear combination of functions discussed in Corollary \ref{c:7.12}), 
\begin{align*}
    |t(x,y)|& = \big| \partial_j r_{\mu}*(\eta)r_{\eta+\mu}(x,y)\big|
     \\     &\leq c_1 \int_{B(0,\tau)} q_{\mu}(|x-x_1|)|\eta(x_1)|r_{\Re (\mu)/4}(x_1-y) \, d^n x_1
     \\  &\leq \|\eta\|_{L^\infty} c_1^2 \int_{\mathbb{R}^n} q_{\Re (\mu)}(|x-x_1|) 
     r_{\Re (\mu)/4}(x_1-y) \, d^n x_1
     \\ &\leq  \|\eta\|_{L^\infty} \frac{4c_1^2}{\mu_0} q_{\Re (\mu)}(|x-y|).\qedhere
\end{align*}
\end{proof}

\newpage

\section{The proof of Theorem \ref{thm:Perturbation}: The Smooth Case} \lb{s12}

In this section, we treat Theorem \ref{thm:Perturbation} for the particular case of $C^\infty$-potentials\footnote{We note that this section may explain the reasoning underlying the last lines on 
\cite[p.~226]{Ca78}. }. Let $n\in \mathbb{N}_{\geq 3}$ odd, 
$\Phi\in \mathbb{C}_b^\infty\big(\mathbb{R}^n;\mathbb{C}^{d\times d}\big)$ for some $d\in \mathbb{N}$. Assume that $\Phi(x)=\Phi(x)^*$ and that for some $c>0$ and $R>0$ one has the strict positive definiteness condition $|\Phi(x)|\geq cI_d$ 
for all $x\in \mathbb{R}^n\backslash  B(0,R)$. With the operator $L=\mathcal{Q}+\Phi$ as in \eqref{eq:def_of_L(2)}, we proceed as follows: At first, we show that if $(\mathcal{Q}\Phi)(x)\to 0$, $|x|\to\infty$, then $L$ is a Fredholm operator (Lemma \ref{lem:1L_Fred_Rge0}). Next, if one defines $U \in C_b^\infty\big(\mathbb{R}^n;\mathbb{C}^{d\times d}\big)$ to coincide with $\sgn (\Phi)$ on $\mathbb{R}^n\backslash  B(0,R)$ as in Lemma \ref{lem:almost admissible}, we show that the operator $\mathcal{Q}+U$ is also a Fredholm operator with the same index (Theorem \ref{thm:3L_Fred_Rge0}). Moreover, in this theorem, we shall also show that changing $U$ to be unitary everywhere but on a small ball around $0$ will not change the index. As this ball may be chosen arbitrarily small, we are in the position to proceed with a similar strategy to derive the index as in Section \ref{sec:The-Derivation-of-trace-f} and use the results from Section \ref{sec:pert}. In that sense, the following may also be considered as a first attempt for a perturbation theory for the generalized Witten index introduced at the end of Theorem \ref{thm:index with Witten}.

We start with the Fredholm property for the operator considered in Theorem \ref{thm:Perturbation} with smooth potentials (see also Theorem \ref{thm:Fredholm_property}).

\begin{lemma}\label{lem:1L_Fred_Rge0} Let $n,d\in \mathbb{N}$, $L= \cQ+\Phi$ as in \eqref{eq:def_of_L(2)}, with $\Phi\in C_b^\infty\big(\mathbb{R}^n; \mathbb{C}^{d\times d}\big)$ and 
$\Phi(x)=\Phi(x)^*$, $x\in \mathbb{R}^n$. Assume that $C(x)\coloneqq\left( \cQ\Phi\right)(x)\to 0$ 
as $|x|\to\infty$ $($see also \eqref{eq:commutator=00003DC}$)$, and that there exist $c>0$ and $R>0$ 
such that with $|\Phi(x)|\geq cI_{d}$ for all $x\in \mathbb{R}^n\backslash  B(0,R)$. Then $L$ is a Fredholm operator. 
\end{lemma}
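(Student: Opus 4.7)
The plan is to follow the strategy used in the proof of Theorem \ref{thm:Fredholm_property}, with one additional reduction step to cope with the fact that $\Phi(x)^2$ is now only bounded below by $c^2 I_d$ outside the ball $B(0,R)$, rather than on all of $\mathbb{R}^n$.

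First, I would recall from Proposition \ref{prop:compu_lstartl} that
\[
L^*L = -\Delta I_{2^{\hatt n}d} - C + \Phi^2, \quad LL^* = -\Delta I_{2^{\hatt n}d} + C + \Phi^2,
\]
both with common domain $H^2(\mathbb{R}^n)^{2^{\hatt n}d}$. Since $\ker(L) = \ker(L^*L)$ and $\ker(L^*) = \ker(LL^*)$, Fredholmness of $L$ follows as soon as $0 \notin \sigma_{\textnormal{ess}}(L^*L) \cup \sigma_{\textnormal{ess}}(LL^*)$. Thus the task reduces to estimating both essential spectra from below by a strictly positive constant.

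The main obstacle, in comparison with Theorem \ref{thm:Fredholm_property}, is that $-\Delta I + \Phi^2$ is no longer immediately bounded below by $c^2$, because $\Phi^2$ may take small or even zero values inside $B(0,R)$. To overcome this, I would fix a cutoff $\chi \in C_c^\infty(\mathbb{R}^n; [0,1])$ with $\chi \equiv 1$ on $B(0, R)$, and introduce the auxiliary potential $\tilde V \coloneqq \Phi^2 + c^2 \chi I_d$. A case distinction (on the region where $\chi = 1$ versus the complement of $B(0,R)$, where the hypothesis already gives $\Phi^2 \geq c^2 I_d$) shows $\tilde V(x) \geq c^2 I_d$ for every $x \in \mathbb{R}^n$. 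Consequently the self-adjoint operator $-\Delta I + \tilde V$ satisfies $-\Delta I + \tilde V \geq c^2 I$, so $\sigma(-\Delta I + \tilde V) \subset [c^2, \infty)$, and in particular $\sigma_{\textnormal{ess}}(-\Delta I + \tilde V) \subset [c^2, \infty)$.

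Finally, I would write $-\Delta I + \Phi^2 = (-\Delta I + \tilde V) - c^2 \chi I_d$ and note that $c^2 \chi I_d$ is a bounded, compactly supported multiplication operator, hence relatively compact with respect to $-\Delta I$ by Theorem \ref{thm:Newton_pot_is_h1-bdd} (via the continuous embedding $H^2 \hookrightarrow H^1$); the same theorem applied to the hypothesis $C(x) \to 0$ as $|x| \to \infty$ shows that $C$ is likewise relatively compact with respect to $-\Delta I$, and thus with respect to $-\Delta I + \Phi^2$. Two applications of the invariance of the essential spectrum under relatively compact perturbations (see, e.g., \cite[Theorem 5.35]{Ka80}) then yield
\[
\sigma_{\textnormal{ess}}(L^*L) = \sigma_{\textnormal{ess}}(-\Delta I + \Phi^2) = \sigma_{\textnormal{ess}}(-\Delta I + \tilde V) \subset [c^2, \infty),
\]
and an identical argument gives $\sigma_{\textnormal{ess}}(LL^*) \subset [c^2, \infty)$. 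In particular $0$ lies in neither essential spectrum, so both $L^*L$ and $LL^*$ are Fredholm, and therefore so is $L$.
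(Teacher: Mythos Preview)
Your proof is correct and follows essentially the same approach as the paper: both add a compactly supported bump to $\Phi^2$ (the paper uses $c^2\chi_{B(0,R)}$, you use a smooth $c^2\chi$) to push the spectrum of $-\Delta+\Phi^2$ above $c^2$, and then invoke Theorem~\ref{thm:Newton_pot_is_h1-bdd} together with the stability of the essential spectrum under relatively compact perturbations to handle both the bump and $C$. The only cosmetic differences are your choice of a smooth cutoff and your phrasing in terms of $\sigma_{\textnormal{ess}}$ rather than the Fredholm property directly.
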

\begin{proof}
One recalls from Proposition \ref{prop:compu_lstartl} that $L^*L=-\Delta - C+ \Phi^2$ and $LL^*=-\Delta + C+\Phi^2$. The latter two operators are $\Delta$-compact perturbations of $-\Delta+\Phi^2$ due to $C(x)\to 0$ as $|x|\to\infty$ and Theorem \ref{thm:Newton_pot_is_h1-bdd}. 
Next, since $-\Delta+\Phi^2+c^2\chi_{B(0,R)}\geq -\Delta + c^2$, the operator $-\Delta+\Phi^2+c^2\chi_{B(0,R)}$ is continuously invertible. But, $-\Delta+\Phi^2+c^2\chi_{B(0,R)}$ is also a $\Delta$-compact perturbation of $-\Delta+\Phi^2$. Thus, by the invariance of the Fredholm property under relatively compact perturbations, one concludes the Fredholm property for $-\Delta+\Phi^2$ and thus the same for $L^*L$ and $LL^*$.
\end{proof}

As a corollary, we obtain the assertion that one might also consider potentials being pointwise unitary outside large balls. In this context, we refer the reader also to the beginning of Section \ref{sec:The Index theorem}. One notes that also Theorem \ref{thm:Perturbation} hints in the same direction as in the index formula only the sign of the potential occurs.

\begin{theorem}\label{thm:3L_Fred_Rge0}  Let $n,d\in \mathbb{N}$, $L= \cQ+\Phi$ as in \eqref{eq:def_of_L(2)} with $\Phi\in C_b^\infty\big(\mathbb{R}^n; \mathbb{C}^{d\times d}\big)$ 
with $\Phi(x)=\Phi(x)^*$, $x\in \mathbb{R}^n$. Assume that there exist $c>0$ and $R>0$ such that  $|\Phi(x)|\geq cI_{d}$ for all $x\in \mathbb{R}^n\backslash  B(0,R)$ and $(\mathcal{Q}\Phi)(x)\to 0$ as $|x|\to \infty$. If $U \in C_b^\infty\big(\mathbb{R}^n;\mathbb{C}^{d\times d}\big)$ pointwise self-adjoint with $\sgn (\Phi(x)) = U(x)$ for all $x\in \mathbb{R}^n$ with $|x|\geq R'$ for some $R' \geq R$, 
then $\tilde L\coloneqq \cQ + U$ is Fredholm and $\ind (L) = \ind \big(\tilde L\big)$.
\end{theorem}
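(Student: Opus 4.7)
The plan is to connect $L$ and $\tilde L$ by a linear homotopy of Fredholm operators sharing a common domain and then invoke the homotopy invariance of the Fredholm index (Corollary \ref{c3.7}). First, I would verify that $\tilde L = \cQ + U$ itself is Fredholm by reducing to Lemma \ref{lem:1L_Fred_Rge0}. Outside $B(0,R')$ the matrix $U(x) = \sgn(\Phi(x))$ is a self-adjoint unitary, so $|U(x)| = I_d$ trivially satisfies the lower bound. The remaining hypothesis $(\cQ U)(x) \to 0$ as $|x| \to \infty$ is obtained from the chain rule: for $|x| \ge R'$ one has $(\partial_j U)(x) = \sgn'(\Phi(x))(\partial_j \Phi)(x)$, and by Theorem \ref{thm:properties of sign function} the map $\sgn'$ is analytic, hence bounded, on the relatively compact set $\{\Phi(x) : |x|\ge R\} \subset \mathbb{C}^{d\times d}$ (which lies in the region of analyticity of $\sgn$ thanks to the uniform bound $|\Phi(x)|\ge cI_d$). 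Since $(\partial_j\Phi)(x)\to 0$, this forces $(\partial_j U)(x)\to 0$.

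Next, I would introduce the affine interpolation
\[
  \Phi_t \coloneqq (1-t)\Phi + t U, \quad t \in [0,1], \quad L_t \coloneqq \cQ + \Phi_t,
\]
so that $L_0 = L$ and $L_1 = \tilde L$. Each $\Phi_t$ is pointwise self-adjoint and lies in $C_b^\infty\big(\mathbb{R}^n;\mathbb{C}^{d\times d}\big)$. The crucial observation is that for $|x| \ge R'$, using $\Phi(x) = |\Phi(x)|U(x)$ together with the facts that $|\Phi(x)|$ commutes with $U(x)$ and $U(x)^2 = I_d$, one finds
\[
  \Phi_t(x) = \big[(1-t)|\Phi(x)| + t I_d\big] U(x),
\]
so that $|\Phi_t(x)| = (1-t)|\Phi(x)| + tI_d \ge \min(c,1)\, I_d$, uniformly in $t$. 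Moreover, $(\cQ\Phi_t)(x) = (1-t)(\cQ\Phi)(x) + t(\cQ U)(x) \to 0$ as $|x|\to\infty$ by the preceding paragraph. Lemma \ref{lem:1L_Fred_Rge0} therefore applies to each $L_t$, and each $L_t$ is Fredholm on the fixed domain $\dom(L_t) = H^1(\mathbb{R}^n)^{2^{\hat n}d}$.

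Finally, I would check continuity of the map $[0,1] \ni t \mapsto L_t$ in $\cB\big(H^1(\mathbb{R}^n)^{2^{\hat n}d}, L^2(\mathbb{R}^n)^{2^{\hat n}d}\big)$: since $L_t - L_s = (t-s)(U - \Phi)$ is the bounded multiplication operator with the difference,
\[
  \|L_t - L_s\|_{\cB(H^1, L^2)} \le |t-s|\, \|U - \Phi\|_{L^\infty}\, \|\iota\|_{H^1 \hookrightarrow L^2},
\]
which tends to zero. With $\cV_T = H^1(\mathbb{R}^n)^{2^{\hat n}d}$ densely and continuously embedded in $L^2(\mathbb{R}^n)^{2^{\hat n}d}$, Corollary \ref{c3.7} yields that $\ind(L_t)$ is constant on the connected interval $[0,1]$, so $\ind(L) = \ind(L_0) = \ind(L_1) = \ind(\tilde L)$. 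The only nonroutine step is the justification that $(\cQ U)(x)\to 0$, resting on the analytic functional calculus for $\sgn(\cdot)$ provided by Theorem \ref{thm:properties of sign function}; everything else is an algebraic identity plus an invocation of the previously established Fredholm criterion and homotopy invariance.
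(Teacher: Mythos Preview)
Your proof is correct and in fact cleaner than the paper's. The paper does not connect $L$ and $\tilde L$ by a single homotopy; instead it passes through an intermediate operator $\cQ + \min\{c/2,1/2\}\,U$, first homotoping $\tilde L = \cQ + U$ down to it by scaling $U$, and then showing separately that $\lambda \mapsto \cQ + (1-\lambda)\Phi + \lambda\min\{c/2,1/2\}U$ is a Fredholm homotopy, the lower bound on the potential being checked via the spectral theorem applied eigenvalue by eigenvalue. Your direct linear interpolation $\Phi_t = (1-t)\Phi + tU$ uses the same underlying algebra---the polar factorization $\Phi(x) = |\Phi(x)|\,U(x)$ with commuting factors outside $B(0,R')$---but packages it more efficiently: the auxiliary scaling factor $\min\{c/2,1/2\}$ in the paper appears to be an unnecessary detour.

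One small point worth making explicit: you write ``since $(\partial_j\Phi)(x)\to 0$'', but the hypothesis is only that $(\cQ\Phi)(x) = \sum_j \gamma_{j,n}\otimes(\partial_j\Phi)(x)\to 0$. These are equivalent because the $\gamma_{j,n}$ are linearly independent (indeed $\tr(\gamma_{j,n}\gamma_{k,n}) = 2^{\hat n}\delta_{jk}$, so one recovers each $(\partial_j\Phi)(x)$ by a partial trace), but it does no harm to say so. The paper sidesteps this by citing Lemma~\ref{lem:almost admissible} for the decay of $(\cQ U)$; your direct chain-rule argument is essentially the content of that lemma's proof.
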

\begin{proof}
 From Lemma \ref{lem:almost admissible}, one gets $(\mathcal{Q} U)(x)=(\mathcal{Q}\sgn(\Phi))(x)\to 0$ as $|x|\to\infty$. Hence, by Lemma \ref{lem:1L_Fred_Rge0} the operators $L=\cQ+\Phi$ and $\tilde L=\cQ+U$ are Fredholm (for $\tilde L$ one observes that $U(x)^2=I_d$ for all $|x|\geq R'$). Next, the operator family $[0,1]\ni \lambda \mapsto 
 \cQ+(1-\lambda)U + \lambda \min\{c/2, 1/2\} U$ defines a homotopy from $\tilde L$ to 
 $\cQ+\min\{c/2, 1/2\} U$, which is a homotopy of Fredholm operators as 
 $(1-\lambda)+\lambda \min\{c/2,1/2\}=1-\lambda(1-\min\{c/2,1/2\})\geq \min\{c/2,1/2\}>0$ 
 for all $\lambda\in (0,1)$ and hence Lemma \ref{lem:1L_Fred_Rge0} applies. The rest of the proof is concerned with showing that $[0,1]\ni \lambda \mapsto \cQ+(1-\lambda)\Phi+\lambda \min\{c/2,1/2\} U$ defines a homotopy of Fredholm operators. Employing Lemma \ref{lem:1L_Fred_Rge0}, it suffices to show that for some $\tilde c>0$, 
 \[
    \big[(1-\lambda)\Phi(x)+\lambda \min\{c/2,1/2\} U(x)\big]^2\geq \tilde cI_d
 \]
for all $x\in \mathbb{R}^n\backslash  B(0,R')$ and $\lambda\in [0,1]$. By the spectral theorem for symmetric $d\times d$-matrices it suffices to show that for real numbers $\alpha\in \mathbb{R}$ with $\alpha^2 \geq c^2$, one has  for some $\tilde c>0$, 
 \[
    \big[(1-\lambda)\alpha+\lambda \min\{c/2, 1/2\}\sgn(\alpha)\big]^2\geq \tilde c.
 \]
But since 
 \[
 \big[(1-\lambda)\alpha+\lambda \min\{c/2, 1/2\}\sgn(\alpha)\big]^2 
 = \big[(1-\lambda)|\alpha|+\lambda \min\{c/2, 1/2\}\big]^2,
 \]
 it remains to observe that 
 \[
    (1-\lambda)|\alpha|+\lambda \min\{c/2,1/2\}\geq 
    (1-\lambda) c+\lambda \min\{c/2,1/2\}\geq \min\{c/2,1/2\}.\qedhere
 \]
\end{proof}

We remark that the assumptions in Theorem \ref{thm:3L_Fred_Rge0} can be met, using Lemma \ref{lem:almost admissible}. This, and \cite{Ca78} motivates the following notion of ``Callias admissibility''. 

\begin{definition} \label{def:almost_admissible} Let $n,d\in \mathbb{N}$. We say that a map $\Phi\in C_b^\infty\big(\mathbb{R}^n;\mathbb{C}^{d\times d}\big)$ is called \emph{Callias admissible}, if the following conditions $(i)$--$(iii)$ are satisfied: \\[1mm] 
$(i)$ $\Phi(x)=\Phi(x)^*$, $x\in \mathbb{R}^n$. \\[1mm]  
$(ii)$ There exists $R>0$ such that $\Phi(x)$ is unitary for all $x\in \mathbb{R}^n\backslash  B(0,R)$.
\\[1mm] 
$(iii)$ There exists $\epsilon> 1/2$ such that for all $\alpha\in \mathbb{N}_0^n$, there is $\kappa\geq 0$ with
 \[
    \|\partial^\alpha \Phi(x)\|\leq \kappa \begin{cases}
                                              (1+|x|)^{-1}, & |\alpha|=1,\\[1mm] 
                                              (1+|x|)^{- 1 - \epsilon}, & |\alpha|\geq2,
\end{cases}   \quad x\in \mathbb{R}^n.
 \]
\end{definition}

\begin{remark}\label{rem:almo_ad} Let 
$\Phi\in C^\infty_b\big(\mathbb{R}^n; \mathbb{C}^{d\times d}\big)$ be Callias admissible. By 
Theorem \ref{thm:3L_Fred_Rge0}, for any potential 
$U \in C^\infty_b\big(\mathbb{R}^n; \mathbb{C}^{d\times d}\big)$ coinciding with $\sgn(\Phi)$ on large balls, the   operators $L=\mathcal{Q}+\Phi$ and $\tilde L =\mathcal{Q}+U$ are Fredholm with the same index. Moreover, by Theorem \ref{thm:properties of sign function} and the unitarity of $\Phi$ on large balls, one infers that on large balls $U = \Phi$. Next, by Lemma \ref{lem:almost admissible}, we may choose $U$ to be unitary everywhere but on a small ball centered at $0$. In addition, we can choose $U$ such that 
\begin{equation*}
U(x)^2= u(x)I_d, \quad x\in \mathbb{R}^n,  
\end{equation*}
with $u \in C^\infty(\mathbb{R}^n;[0,1])$ and $u = 1$ on $\mathbb{R}^n\backslash  B(0,\tau)$ for every chosen $\tau>0$. For that reason, in order to compute the index for $L=\mathcal{Q}+\Phi$, our main focus only needs to be potentials with the properties of $U$ discussed here, and then one can employ the results of Section \ref{sec:pert}. ${}$ \hfill $\diamond$ 
\end{remark}

Remark \ref{rem:almo_ad} leads to the following definition:
  
\begin{definition}[$\tau$-admissibility]\label{d:ta} Let $n,d\in \mathbb{R}^n$, $\tau>0$, 
  $U \in C^\infty_b\big(\mathbb{R}^n;\mathbb{C}^{d \times d}\big)$. We say that $U$ is \emph{admissible on $\mathbb{R}^n\backslash  B(0,\tau)$}, in short, \emph{$\tau$-admissible}, 
if $U$ is Callias admissible and there exists $u \in C^\infty(\mathbb{R}^n;[0,1])$ satisfying 
\begin{equation}\label{e:r.aa} 
U(x)^2 = u(x)I_d, \quad x\in \mathbb{R}^n,  
\end{equation} 
with the property that $u = 1$ on $\mathbb{R}^n\backslash B(0,\tau)$. 
\end{definition}

To get a first impression of the difference between the notions of admissibility (see Definition \ref{def:phi_admissible}) and $\tau$-admissibility, we will compute the resolvent differences of the resolvents of $L^*L$ and $LL^ *$ in Proposition \ref{prop:resolvent formulas2}, with 
$L=\mathcal{Q}+U$ given as in \eqref{eq:def_of_L(2)} for some $\tau$-admissible $U$ with $u$ as in \eqref{rem:almo_ad}. First, we note that by Proposition \ref{prop:compu_lstartl}, one has
\[
   L^*L = -\Delta I_{2^{\hatt n}d} + C + u I_{2^{\hatt n}d},
\]
with $C=(\mathcal{Q} U)$. In Section \ref{sec:The-Derivation-of-trace-f}, and in particular in Proposition \ref{prop:resolvent formulas}, we discussed the resolvent $(L^*L+z)^{-1}$ in terms of $R_{1+z}=(-\Delta I_{2^{\hatt n}d}+ (1+z))^{-1}$. The latter operator needs to be replaced by the following (see also \eqref{def:R_psi_z})
\begin{align}\label{def:R_psi_z2}
  R_{u+z}&\coloneqq (-\Delta I_{2^{\hatt n}d} + u +z)^{-1}\\ \notag
   &\, =\left(-\Delta I_{2^{\hatt n}d} + (u-1) I_{2^{\hatt n}d} + (z+1)I_{2^{\hatt n}d}\right)^{-1}\\ \notag
   &\, =\sum_{k=0}^\infty (R_{1+z}(u-1) I_{2^{\hatt n}d})^kR_{1+z},
\end{align}
provided the latter series converges. As already discussed in Lemma \ref{lem:almost admissible}, this can be ensured if $\tau$ is chosen small enough. Thus, for this pupose, we shall fix the parameters according to the results in Section \ref{sec:pert}:

 \begin{hypothesis}\label{hyp:parameters} Let $n=2\hatt n+1$, $\hatt n\in \mathbb{N}_{\geq1}$,
 \begin{equation}\label{eq:delta_0}
    \delta_0 \in (-1,0), \quad \theta\in (0,\pi/2).
 \end{equation}
 For $\mu_0\coloneqq \delta_0+1$ let $\tau_{\ref{lem:discussion of Neumann}}$ as in Lemma \ref{lem:discussion of Neumann} for $\beta= 1/2$, $\tau_{\ref{t:7.19c}}$ as in Theorem \ref{t:7.19c} for $\kappa=1$,  $\tau_{\ref{thm:7.18}}$ as in Theorem \ref{thm:7.18} for $\kappa=1$, and $\tau_{\ref{cor:7.19}}$ as in Corollary \ref{cor:7.19}. Define 
 \begin{equation}\label{e:tau}
    \tau\coloneqq \min\{\tau_{\ref{lem:discussion of Neumann}}, \tau_{\ref{thm:7.18}}, \tau_{\ref{cor:7.19}} ,\tau_{\ref{t:7.19c}} \}.
 \end{equation}
\end{hypothesis}
 
 As mentioned already, for $\tau$-admissible potentials, we shall derive the index theorem similarly to the derivation for admissible potentials. More precisely, at first, we will focus on computing the trace of $\chi_\Lambda B_L(z)$, as in Theorem \ref{thm:Witten_reg_n5}. We note that the following parallels the Section \ref{sec:The-Derivation-of-trace-f}. 

To start, we need to state a result similar to Proposition \ref{prop:resolvent formulas}. In fact, using the expressions in \eqref{eq:almost_Neumann_llstar} and \eqref{eq:almost_Neumann_lstarl}, with $R_{1+z}$ replaced by $R_{u+z}$ (see \eqref{def:R_psi_z}), even the proof turns out to be the same. 

\begin{proposition}
\label{prop:resolvent formulas2} Assume Hypothesis \ref{hyp:parameters}, let $z\in \Sigma_{\delta_0,\theta}$, and suppose 
$U \in C^\infty_b\big(\mathbb{R}^n;\mathbb{C}^{d\times d}\big)$ is $\tau$-admissible $($cf.\ Definition \ref{d:ta}$)$, with $u$ as in \eqref{e:r.aa}$)$. We recall that 
$L= \cQ+U$ as in \eqref{eq:def_of_L(2)}, $C=\left( \cQ U\right)$ in  \eqref{eq:commutator=00003DC}, and $R_{u+z}$ in \eqref{def:R_psi_z2}. 
If, in addition, $z\in\rho(-L^{*}L)\cap\rho(-LL^{*})$, then
for all $N\in\mathbb{N}$, 
\begin{align*}
& \left(L^{*}L+z\right)^{-1}-\left(LL^{*}+z\right)^{-1}\\
 & \quad =2\sum_{k=0}^{N}R_{u+z}\left(CR_{u+z}\right)^{2k+1}+\big(\left(L^{*}L+z\right)^{-1}-
 \left(LL^{*}+z\right)^{-1}\big)\left(CR_{u+z}\right)^{2N+2}\\
 & \quad =2\sum_{k=0}^{N}R_{u+z}\left(CR_{u+z}\right)^{2k+1}+\big(\left(L^{*}L+z\right)^{-1}+
 \left(LL^{*}+z\right)^{-1}\big)\left(CR_{u+z}\right)^{2N+3}, 
\end{align*}
and
\begin{align*}
 & \left(L^{*}L+z\right)^{-1}+\left(LL^{*}+z\right)^{-1}\\
 & \quad =2\sum_{k=0}^{N}R_{u+z}\left(CR_{u+z}\right)^{2k} 
 + \big(\left(L^{*}L+z\right)^{-1}+\left(LL^{*}+z\right)^{-1}\big)\left(CR_{u+z}\right)^{2N+2}.
\end{align*}
\end{proposition}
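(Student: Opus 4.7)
The plan is to carry out the same algebraic manipulation as in Proposition~\ref{prop:resolvent formulas}\,$(ii)$, but with $R_{1+z}$ replaced by $R_{u+z}$; nothing more is needed once the existence of $R_{u+z}$ is ensured. First I would use Proposition~\ref{prop:compu_lstartl} with $\Phi=U$: since $U$ is $\tau$-admissible we have $U^{2}=u\,I_{2^{\hatt n}d}$, whence
\[
L^{*}L+z=R_{u+z}^{-1}-C,\qquad LL^{*}+z=R_{u+z}^{-1}+C.
\]
The preliminary point is that $R_{u+z}\in\mathcal{B}(L^{2}(\mathbb{R}^{n})^{2^{\hatt n}d})$ for every $z\in\Sigma_{\delta_{0},\theta}$. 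This is the content of \eqref{def:R_psi_z2} applied to the perturbation $\eta=u-1$, which is supported in $B(0,\tau)$ and satisfies $\|\eta\|_{L^{\infty}}\leq 1$; convergence of the Neumann series in \eqref{def:R_psi_z2} is then guaranteed by Lemma~\ref{lem:discussion of Neumann} and the choice of $\tau$ in Hypothesis~\ref{hyp:parameters}.

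Having $R_{u+z}$ in hand, the second resolvent identities read
\[
(L^{*}L+z)^{-1}=R_{u+z}+(L^{*}L+z)^{-1}C R_{u+z},\qquad (LL^{*}+z)^{-1}=R_{u+z}-(LL^{*}+z)^{-1}C R_{u+z}.
\]
Iterating each identity $2M+2$ times produces
\[
(L^{*}L+z)^{-1}=\sum_{k=0}^{2M+1}R_{u+z}(CR_{u+z})^{k}+(L^{*}L+z)^{-1}(CR_{u+z})^{2M+2}
\]
together with the analogous expansion for $(LL^{*}+z)^{-1}$ in which $C$ is replaced by $-C$. Subtracting the two expansions, the even-$k$ contributions to the sum cancel while the odd-$k$ ones double, producing $2\sum_{m=0}^{M}R_{u+z}(CR_{u+z})^{2m+1}$; since $2M+2$ is even the two remainders combine to $\bigl[(L^{*}L+z)^{-1}-(LL^{*}+z)^{-1}\bigr](CR_{u+z})^{2M+2}$. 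Relabelling $M$ by $N$ yields the first stated identity. Adding rather than subtracting produces the third identity in exactly the same way: the roles of the parities are exchanged and the remainders combine with equal sign.

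For the second identity I would apply the resolvent identity once more to the remainder of the first. Explicitly,
\[
(L^{*}L+z)^{-1}(CR_{u+z})^{2N+2}=R_{u+z}(CR_{u+z})^{2N+2}+(L^{*}L+z)^{-1}(CR_{u+z})^{2N+3},
\]
\[
(LL^{*}+z)^{-1}(CR_{u+z})^{2N+2}=R_{u+z}(CR_{u+z})^{2N+2}-(LL^{*}+z)^{-1}(CR_{u+z})^{2N+3},
\]
and subtraction cancels the $R_{u+z}(CR_{u+z})^{2N+2}$ contributions, leaving the combined resolvent sum $\bigl[(L^{*}L+z)^{-1}+(LL^{*}+z)^{-1}\bigr](CR_{u+z})^{2N+3}$, which is exactly what the second identity requires. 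Every step is an algebraic identity valid in $\mathcal{B}(L^{2}(\mathbb{R}^{n})^{2^{\hatt n}d})$ whenever the three resolvents $R_{u+z}$, $(L^{*}L+z)^{-1}$, $(LL^{*}+z)^{-1}$ exist; no convergence of any Neumann series in $C$ is invoked. Consequently there is no genuine obstacle: the only analytic input is the well-definedness of $R_{u+z}$, handled in the preliminary step, and the rest reduces to bookkeeping of the parity arguments.
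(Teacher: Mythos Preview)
Your proof is correct and follows essentially the same approach as the paper, which simply refers back to Proposition~\ref{prop:resolvent formulas} with $R_{1+z}$ replaced by $R_{u+z}$. The one minor difference is that you derive the finite iterated expansions directly from the second resolvent identity, whereas the paper's proof of Proposition~\ref{prop:resolvent formulas}\,$(ii)$ first obtains them from the convergent Neumann series in $C$ (for $\Re(z)$ large) and then invokes analytic continuation; your route is slightly more elementary since it never needs $\|R_{u+z}C\|<1$ or analyticity, but the parity cancellation argument at the core is identical.
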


Next, we formulate the variant of Lemma \ref{lem:almost_Neumann_series}:

\begin{lemma}
\label{lem:almost_Neumann_series2}  Assume Hypothesis \ref{hyp:parameters}, $z\in \Sigma_{\delta_0,\theta}$, let  $U \in C^\infty_b\big(\mathbb{R}^n;\mathbb{C}^{d\times d}\big)$ be $\tau$-admissible $($cf.\ Definition \ref{d:ta}$)$, with $u$ as in \eqref{e:r.aa}. Let $L= \cQ+U$ be given by \eqref{eq:def_of_L(2)} and $z\in \Sigma_{\delta_0,\theta}\cap\rho\left(-L^{*}L\right)\cap\rho\left(-LL^{*}\right)$. We recall $B_{L}(z)$, $J_{L}^{j}(z)$, and $A_{L}(z)$ given by \eqref{eq:def_of_BL(z)2},
\eqref{eq:JJJ}, and \eqref{eq:ALz} $($with $\Phi$ replaced by $U$$)$, respectively, as well 
as $R_{u+z}$
given by \eqref{def:R_psi_z2}. Then the following assertions hold:  
\begin{align*} 
2 B_{L}(z) &=\sum_{j=1}^n \big[\partial_{j},J_{L}^{j}(z)\big]+A_{L}(z),  \\
&=z\tr_{2^{\hat n}d}\big(2(R_{u+z}C)^{n}R_{u+z}+\big((L^{*}L+z)^{-1}-(LL^{*}+z)^{-1}\big)
(CR_{u+z})^{n+1}\big),   
\end{align*}
with
\begin{align*}
J_{L}^{j}(z) & =2\tr_{2^{\hat n}d}\big(\gamma_{j,n} \cQ (R_{u+z}C)^{n-2}R_{u+z}\big) 
+ 2\tr_{2^{\hat n}d}\big(\gamma_{j,n} U (R_{u+z}C)^{n-1}R_{u+z}\big)    \\
 & \quad+\tr_{2^{\hat n}d}\big(\gamma_{j,n} \cQ \big((L^{*}L+z)^{-1}+(LL^{*}+z)^{-1}\big) 
 (CR_{u+z})^{n}\big)   \\
& \quad + \tr_{2^{\hat n}d} \big(\gamma_{j,n} U \big((L^{*}L+z)^{-1}
 +(LL^{*}+z)^{-1}\big) (CR_{u+z})^{n}\big),    \\
 & \hspace*{6.8cm}  j\in\{1,\ldots,n\},   
\end{align*}
and 
\begin{align*}
A_{L}(z) &=\tr_{2^{\hat n}d} \big(\big[U, U\big(2(R_{u+z}C)^{n}R_{u+z}+\big((L^{*}L+z)^{-1}-(LL^{*}+z)^{-1}\big)  \\ 
& \hspace*{1.4cm} \times (CR_{u+z})^{n+1}\big)\big]\big)    \\  
& \quad-\tr_{2^{\hat n}d} \big(\big[U, \cQ \big(2 (R_{u+z}C)^{n-1}R_{u+z}+\big((L^{*}L+z)^{-1}-(LL^{*}+z)^{-1}\big)    \\ 
& \hspace*{1.7cm} \times (CR_{u+z})^{n}\big)\big]\big).   
\end{align*} 
\end{lemma}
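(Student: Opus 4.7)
The plan is to prove Lemma \ref{lem:almost_Neumann_series2} by mirroring the proof of Lemma \ref{lem:almost_Neumann_series} with the free resolvent $R_{1+z}$ replaced throughout by the perturbed resolvent $R_{u+z}$. The operator identity
\begin{equation*}
2B_L(z) = \sum_{j=1}^n [\partial_j, J_L^j(z)] + A_L(z)
\end{equation*}
is already furnished by Proposition \ref{prop:Reformulation of B} (which is valid for any $\Phi$, in particular for $U$), so no new work is needed for the first equality. The task is therefore to produce the three explicit formulas displayed in the statement.

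First, I would establish the analogue of Lemma \ref{l:can} in the perturbed setting: for odd $k$ with $k<n$ and for $z\in\Sigma_{\delta_0,\theta}\cap\rho(-L^*L)\cap\rho(-LL^*)$,
\begin{equation*}
\tr_{2^{\hat n}d}\bigl(R_{u+z}(CR_{u+z})^k\bigr) = 0.
\end{equation*}
The proof is identical to that of Lemma \ref{l:can}: since $u$ is scalar-valued, the operator $R_{u+z}$ is of the form (scalar operator)$\otimes I_{2^{\hat n}d}$, and therefore commutes with each $\gamma_{\ell,n}$ (which acts only on the $\mathbb{C}^{2^{\hat n}}$ factor). The same commutation holds for the multiplication operators $(\partial_\ell U)$. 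Writing $C=\sum_{\ell=1}^n\gamma_{\ell,n}(\partial_\ell U)$ and expanding $R_{u+z}(CR_{u+z})^k$, the $\gamma$-matrices factor out to the left; taking internal trace and factoring $\tr_{2^{\hat n}d} = \tr_{2^{\hat n}}\otimes\tr_d$ produces a factor $\tr_{2^{\hat n}}(\gamma_{\ell_1,n}\cdots\gamma_{\ell_k,n})$ which vanishes for odd $k<n$ by Proposition \ref{prop:comp_of_Dirac_Alge}.

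Second, I would substitute the Neumann-type resolvent identities of Proposition \ref{prop:resolvent formulas2} into
\begin{equation*}
B_L(z) = z\tr_{2^{\hat n}d}\bigl((L^*L+z)^{-1} - (LL^*+z)^{-1}\bigr)
\end{equation*}
with the truncation parameter $N$ chosen so that the sum of ``pure'' terms $2R_{u+z}(CR_{u+z})^{2k+1}$ covers all exponents $2k+1<n$; by the cancellation lemma above, each of these pure terms has vanishing internal trace. Only the term with $2N+1 = n$, namely $2(R_{u+z}C)^nR_{u+z}$, and the remainder piece involving $(L^*L+z)^{-1}\pm(LL^*+z)^{-1}$ times $(CR_{u+z})^{n+1}$, survive. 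This yields the second displayed identity for $2B_L(z)$. The formulas for $J_L^j(z)$ and $A_L(z)$ are obtained in the same manner: starting from the definitions \eqref{eq:JJJ}, \eqref{eq:ALz} (with $\Phi$ replaced by $U$) and using $L=\mathcal Q+U$, $L^*=-\mathcal Q+U$, one separates the resolvent combinations into sum and difference and inserts the expansions of Proposition \ref{prop:resolvent formulas2}, invoking cyclicity of $\tr_{2^{\hat n}d}$ against the scalar matrix $\gamma_{j,n}$ (Proposition \ref{prop:cyclic property of inner trace}) exactly as in the proof of Lemma \ref{lem:almost_Neumann_series}.

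The main obstacle I anticipate is purely bookkeeping: verifying that in the perturbed setting the cancellation of lower-order terms proceeds at the correct truncation level, so that the residual pieces are precisely those displayed. The crucial algebraic input -- that $R_{u+z}$ and all $(\partial_\ell U)$ commute with every $\gamma_{\ell,n}$ -- is what makes the Dirac-algebra cancellations of Lemma \ref{l:can} transfer verbatim from the free to the perturbed setting. Once this commutation is recorded, the remainder of the argument is a straightforward line-by-line adaptation of the proof of Lemma \ref{lem:almost_Neumann_series}.
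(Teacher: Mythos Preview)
Your proposal is correct and matches the paper's approach exactly. The paper's proof consists of a single sentence: ``The proof follows line by line those of Lemma \ref{lem:almost_Neumann_series}, observing that $R_{u+z}$ commutes with $\gamma_{j,n}$, $j\in\{1,\ldots,n\}$,'' which is precisely the commutation observation you isolate as the crucial input enabling the transfer of the Dirac-algebra cancellations from the free to the perturbed resolvent.
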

\begin{proof}
The proof follows line by line those of Lemma \ref{lem:almost_Neumann_series}, observing that $R_{u+z}$ commutes with $\gamma_{j,n}$, $j\in\{1,\ldots,n\}$.
\end{proof}

\begin{remark} For even space dimensions $n$ -- as in Lemma \ref{lem:almost_Neumann_series} -- the corresponding operator $B_L(z)$ also vanishes for all $z\in \rho(-L^*L)\cap \rho(-LL^*)$. That is why we will disregard even space dimensions from now on.  
\hfill $\diamond$
\end{remark}

The proof of the variant of Theorem \ref{thm:trisbounded} is slightly more involved:

\begin{theorem}
\label{thm:trisbounded2}  Assume Hypothesis \ref{hyp:parameters}, $z\in \Sigma_{\delta_0,\theta}$. Let $U \in C^\infty_b\big(\mathbb{R}^n;\mathbb{C}^{d\times d}\big)$ be $\tau$-admissible $($cf.\ Definition \ref{d:ta}$)$, with $u$ as in \eqref{e:r.aa}. Let $L= \cQ+U$ be given by \eqref{eq:def_of_L(2)}. Then there exists $\delta_0 \leq \delta<0$, such that for all $z\in \Sigma_{\delta,\theta}\cap\rho\left(-L^{*}L\right)\cap\rho\left(-LL^{*}\right)$ and $\Lambda>0$, the operator $\chi_\Lambda B_{L}(z)$, with $B_{L}(z)$ given by \eqref{eq:def_of_BL(z)2}, is trace
class with $z\mapsto\tr (|\chi_\Lambda B_{L}(z)|)$ bounded on $B(0,|\delta|)\backslash \{0\}$. Moreover, the trace of $\chi_\Lambda B_{L}(z)$ may be computed as the integral over the diagonal of the corresponding integral kernel.
\end{theorem}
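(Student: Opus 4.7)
The strategy is to mirror the proof of Theorem \ref{thm:trisbounded}, but using the representation from Lemma \ref{lem:almost_Neumann_series2}, namely
\begin{align*}
2B_L(z) & = z\,\tr_{2^{\hat n}d}\bigl(2(R_{u+z}C)^n R_{u+z}\bigr) \\
& \quad + z\,\tr_{2^{\hat n}d}\bigl(\bigl((L^*L+z)^{-1}-(LL^*+z)^{-1}\bigr)(CR_{u+z})^{n+1}\bigr),
\end{align*}
and the perturbed resolvent $R_{u+z}$ in place of $R_{1+z}$. First I would observe that the Fredholm property of $L$ follows from Lemma \ref{lem:1L_Fred_Rge0}, since $\tau$-admissibility of $U$ forces $|U(x)|=1$ on $\mathbb{R}^n\setminus B(0,\tau)$ and $(\mathcal{Q}U)(x)\to 0$ as $|x|\to\infty$. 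Hence $0$ is either in the resolvent set or an isolated eigenvalue of $L^{*}L$ and $LL^{*}$, which by the spectral theorem yields $\delta\in(\delta_0,0)$ with $z\mapsto z\bigl((L^*L+z)^{-1}-(LL^*+z)^{-1}\bigr)$ uniformly bounded in $\mathcal{B}(L^2(\mathbb{R}^n)^{2^{\hat n}d})$ on $B(0,|\delta|)\setminus\{0\}$.

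The main technical step is to transfer the Schatten class estimates of Section \ref{sec:Trace-Class-Estimates} from $R_{1+z}$ to $R_{u+z}$ uniformly in $z\in \Sigma_{\delta_0,\theta}$. Here I would use the resolvent identity
\[
R_{u+z} \;=\; R_{1+z} \;+\; R_{1+z}\,(1-u)\,R_{u+z},
\]
together with Lemma \ref{lem:discussion of Neumann} (guaranteeing uniform operator-norm bounds on $R_{u+z}$ via Hypothesis \ref{hyp:parameters}) and the compact support of $1-u$. Given a bounded $\Psi$ with $|\Psi(x)|\leq \kappa(1+|x|)^{-\alpha}$, $\alpha\geq 1$, this identity yields
\[
\Psi R_{u+z} \;=\; \Psi R_{1+z} \;+\; \bigl(\Psi R_{1+z}\bigr)(1-u)\,R_{u+z},
\]
so the Schatten class membership and the norm bounds from Lemma \ref{lem:Schatten-class-1-operator} carry over verbatim (with constants uniform on $\Sigma_{\delta_0,\theta}$); as a back-up, Theorem \ref{thm:7.18} provides the pointwise bound $|r_{u+z}(x,y)|\leq c\, r_{\Re(\mu)}(x-y)$, reducing all Hilbert--Schmidt kernel $L^2$-estimates directly to the unperturbed case. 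In particular, the analogues of Theorem \ref{thm:Simon_Hilbert-Schmidt} and Theorem \ref{thm:trace-class-crit}-based products hold with $R_{1+z}$ replaced by $R_{u+z}$.

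With this in hand, the proof proceeds exactly as in Theorem \ref{thm:trisbounded}. For the first summand I would factor
\[
\chi_\Lambda R_{u+z}(CR_{u+z})^n \;=\; \bigl[\chi_\Lambda R_{u+z}(CR_{u+z})^{\hat n}\bigr]\cdot\bigl[(CR_{u+z})^{\hat n + 1}\bigr],
\]
where each factor is Hilbert--Schmidt by the (now perturbed) Theorem \ref{thm:Simon_Hilbert-Schmidt}\,$(i)$ applied to $C=\mathcal{Q}U$, whose entries decay like $(1+|x|)^{-1}$ by $\tau$-admissibility. The product is trace class, with trace norm uniformly bounded in $z\in\Sigma_{\delta_0,\theta}$ by the kernel-domination constants. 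For the second summand, $(CR_{u+z})^{n+1}$ is trace class by the same factorization-into-two-Hilbert--Schmidts argument, and multiplying by the uniformly norm-bounded operator $z\bigl((L^*L+z)^{-1}-(LL^*+z)^{-1}\bigr)$ and by the bounded $\chi_\Lambda$ preserves the trace class property via the ideal property, with the trace norm bounded in $z$ on $B(0,|\delta|)\setminus\{0\}$.

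The final assertion, that $\tr(\chi_\Lambda B_L(z))$ is computable as the integral over the diagonal of the kernel, follows from Corollary \ref{cor:Comp-of-trace} applied to each Hilbert--Schmidt--times--Hilbert--Schmidt factorization above, exactly as noted in Remark \ref{r:5.8}. The main obstacle is the transfer of the Schatten class estimates; while the resolvent-identity argument above is conceptually clean, one must verify that the uniform operator bound on $R_{u+z}$ furnished by Lemma \ref{lem:discussion of Neumann} combines multiplicatively in the $(n+1)$-fold product without degradation of constants as $|z|$ grows along $\Sigma_{\delta_0,\theta}$, which is where the pointwise kernel bound of Theorem \ref{thm:7.18} becomes the more robust tool.
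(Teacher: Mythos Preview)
Your proposal is correct and follows essentially the same approach as the paper. The paper's version is marginally more direct: rather than using the one-step resolvent identity $\Psi R_{u+z}=\Psi R_{1+z}+(\Psi R_{1+z})(1-u)R_{u+z}$ together with a separate operator-norm bound on $R_{u+z}$, it sums the full Neumann series $R_{u+z}\eta=\sum_{k\geq 0}(R_{1+z}(u-1))^k R_{1+z}\eta$ and uses $\|R_{1+z}(u-1)\|_{\cB_\infty}\leq 1/2$ (from the choice $\beta=1/2$ in Hypothesis~\ref{hyp:parameters}) to get the clean bound $\|R_{u+z}\eta\|_{\cB_{n+1}}\leq 2\|R_{1+z}\eta\|_{\cB_{n+1}}$; after that it literally defers to Theorem~\ref{thm:trisbounded} and Remark~\ref{r:5.8}. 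Your invocation of the pointwise kernel bound from Theorem~\ref{thm:7.18} is not needed at this stage (it enters later, for the diagonal estimates), but it does no harm.
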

\begin{proof} It suffices to observe that if $\eta\in L^{n+1}(\mathbb{R}^n)$, one has 
$R_{u+z}\eta \in \mathcal{B}_{n+1}(L^2(\mathbb{R}^n))$, with 
\[
   \|R_{u+z}\eta\|_{\mathcal{B}_{n+1}}\leq 2 \|R_{1+z}\eta\|_{\mathcal{B}_{n+1}}.
\]
Indeed, from 
\[
 R_{u+z}\eta=\sum_{k=0}^\infty (R_{1+z}(u-1))^kR_{1+z}\eta,
\] 
the ideal property, Hypothesis \ref{hyp:parameters}, and \eqref{def:R_psi_z}, it follows that
\[
  \|R_{u+z}\eta\|_{\mathcal{B}_{n+1}}=\sum_{k=0}^\infty \|(R_{1+z}(u - 1))^k\|_{\mathcal{B}_{\infty}} \|R_{1+z}\eta\|_{\mathcal{B}_{n+1}}\leq 2\|R_{1+z}\eta\|_{\mathcal{B}_{n+1}}.
\]
The rest of the proof of the trace class property follows literally that of Theorem \ref{thm:trisbounded}. The assertion concerning the computation of the trace rests on 
Remark \ref{r:5.8}, which applies in this context.
\end{proof}

The variant of Lemma \ref{lem:j,A,Green} with $L=\cQ+U$ instead of $L=\cQ+\Phi$ for some $\tau$-admissible $U$ need not be stated again as it only contains a statement about the regularity of the integral kernels of $J_L^j(z)$ and $A_L(z)$ (see Lemma \ref{lem:almost_Neumann_series2}), $j\in \{1,\ldots,n\}$. Its proof, however, varies slightly from that of Lemma \ref{lem:j,A,Green} in the sense that $R_{1+z}$ should be replaced by $R_{u+z}$ and $\Phi$ by $U$. In addition, we recall Remark \ref{rem:remark on op norm of Rpsiz}\,$(ii)$ to the effect that the application of $R_{u+z}$ increases weak differentiability by two units.

For the proof of Lemma \ref{lem:boundedness_of_leading_new_witten}, we extensively used 
that $\cQ$ commutes with $R_{1+z}$. However, on notes that $\cQ$ does not commute with $R_{u+z}$. In fact, one has  
\[
   [R_{u+z},\cQ]=R_{u+z}\cQ-\cQ R_{u+z}=R_{u+z}(Q u)R_{u+z},
\]
recalling our convention to denote the operator of multipliying with the function 
$x\mapsto (Q u)(x)$ by $(Q u)$. Due to this lack of commutativity, the proof of the analog to 
Lemma \ref{lem:boundedness_of_leading_new_witten} is more involved and expanding the resolvent $R_{u+z}$ in the way done in \eqref{def:R_psi_z2}, the terms discussed in Lemma \ref{lem:boundedness_of_leading_new_witten} turn out to be the leading terms in a power series expression:  

\begin{lemma}\label{lem:boundedness_of_leading_new_witten2}
 Assume Hypothesis \ref{hyp:parameters}, let $z\in \Sigma_{\delta_0,\theta}$, and suppose that 
  $U \in C^\infty_b\big(\mathbb{R}^n;\mathbb{C}^{d\times d}\big)$ is $\tau$-admissible $($cf.\ Definition \ref{d:ta}$)$, with $u$ as in \eqref{e:r.aa}, and $C=(\cQ U)$. Let $L= \cQ+U$ be given by \eqref{eq:def_of_L(2)} and $\chi_\Lambda$ as in \eqref{eq:def_of_chi}, $\Lambda>0$. 
 For $z\in \Sigma_{\delta_0,\theta}$, $\Lambda>0$, define
\[
   \xi_\Lambda (z) \coloneqq \chi_\Lambda \tr_{2^{\hat n}d} \big(\left[ \cQ, U\left(CR_{u+z}\right)^{n}\right]\big)
\]
 and 
\[
\tilde\xi_\Lambda(z)\coloneqq \chi_\Lambda \tr_{2^{\hat n}d} \big(\left[ \cQ, \cQ\left(CR_{u+z}\right)^{n}\right]\big). 
\]
Then for all $z\in \Sigma_{\delta_0,\theta}$, the operators $\xi_{\Lambda}(z)$, $\tilde \xi_{\Lambda}(z)$ are trace class and the families 
\[
\{z\mapsto \tr_{L^2(\bbR^n)} (\xi_{\Lambda}(z))\}_{\Lambda>0} \, \text{ and } \, 
\big\{z\mapsto \tr_{L^2(\bbR^n)} \big(\tilde\xi_{\Lambda}(z)\big)\big\}_{\Lambda>0}    \]
 are locally bounded $($cf.\ \eqref{d:normal}$)$.
\end{lemma}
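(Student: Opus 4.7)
The plan is to mimic the proof of Lemma~\ref{lem:boundedness_of_leading_new_witten}; the only new feature is that $\cQ$ no longer commutes with $R_{u+z}$, and this is the main obstacle to overcome. I would first record the three commutator identities that drive the expansion:
\[
[\cQ,U]=C,\qquad [\cQ,C]=(\cQ^{2}U)=\Delta U\cdot I_{2^{\hatt n}d},
\]
where, by the $\tau$-admissibility of $U$ and Remark~\ref{rem:functions of assumptions satisfy assumptions}, $(\cQ^{2}U)$ is a multiplication operator satisfying $|(\cQ^{2}U)(x)|\leq\kappa(1+|x|)^{-1-\varepsilon}$ for some $\varepsilon>1/2$; together with the new identity
\[
[\cQ,R_{u+z}]=R_{u+z}[R_{u+z}^{-1},\cQ]R_{u+z}=-R_{u+z}(\cQ u)R_{u+z},
\]
where $(\cQ u)$ is smooth and supported in $\overline{B(0,\tau)}$ because $u\equiv 1$ on $\mathbb{R}^{n}\backslash B(0,\tau)$. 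Iterating these, I would move $\cQ$ step by step past the product $U(CR_{u+z})^{n}$ until it emerges on the right, so that $\cQ\cdot U(CR_{u+z})^{n}=U(CR_{u+z})^{n}\cQ+\mathcal{R}(z)$. Here $\mathcal{R}(z)$ is a finite sum of products involving $U$, $C$ and $R_{u+z}$, each containing exactly one ``extra'' insertion --- either a single $C$ (when $\cQ$ passed $U$), a factor $(\cQ^{2}U)$ (when it passed $C$), or a sandwich $R_{u+z}(\cQ u)R_{u+z}$ (when it passed $R_{u+z}$). Subtracting the trailing term $U(CR_{u+z})^{n}\cQ$ cancels against $U(CR_{u+z})^{n}\cQ$ inside the commutator, leaving $\xi_{\Lambda}(z)=\chi_{\Lambda}\tr_{2^{\hatt n}d}(\mathcal{R}(z))$.

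Next I would verify that each summand in $\mathcal{R}(z)$ is trace class with $\cB_{1}$-norm locally bounded in $z\in\Sigma_{\delta_{0},\theta}$. The key observation is that $R_{u+z}$ enjoys the same Schatten-class mapping properties as $R_{1+z}$: writing the Neumann series $R_{u+z}=\sum_{k=0}^{\infty}(R_{1+z}(1-u))^{k}R_{1+z}=R_{1+z}\sum_{k=0}^{\infty}((1-u)R_{1+z})^{k}$, which converges in operator norm by Hypothesis~\ref{hyp:parameters} (since $\|R_{1+z}(1-u)\|\leq 1/2$ by Lemma~\ref{lem:discussion of Neumann}), one gets the uniform estimate
\[
\|\Psi R_{u+z}\|_{\cB_{q}}\leq\sum_{k=0}^{\infty}\|\Psi R_{1+z}\|_{\cB_{q}}\|(1-u)R_{1+z}\|^{k}\leq 2\|\Psi R_{1+z}\|_{\cB_{q}},
\]
for every $q\in[1,\infty]$ and $\Psi$ for which the right-hand side is finite, and similarly with $R_{u+z}\Psi$ on the opposite side. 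Consequently, Theorems~\ref{thm:Simon_Hilbert-Schmidt}, \ref{thm:HS-criterion for m factors}, and the Hölder inequality of Theorem~\ref{thm:trace-class-crit} apply verbatim with $R_{1+z}$ replaced by $R_{u+z}$.

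With this substitute in place, every summand in $\mathcal{R}(z)$ can be split into a product of two Hilbert--Schmidt factors: one of them uses the extra decay of order $(1+|x|)^{-1-\varepsilon}$ (supplied by $(\cQ^{2}U)$ or by the compactly supported $(\cQ u)$) to satisfy the hypothesis $\alpha_{j^{*}}>3/2$ in Theorem~\ref{thm:HS-criterion for m factors}, while the other is handled by Theorem~\ref{thm:Simon_Hilbert-Schmidt}. Their product is then trace class by Theorem~\ref{thm:trace-class-crit}, and the bounds depend continuously on $z$, hence are locally uniform on $\Sigma_{\delta_{0},\theta}$. Finally the ideal property of $\cB_{1}$ gives $|\tr(\chi_{\Lambda}\mathcal{R}(z))|\leq\|\chi_{\Lambda}\|\,\|\mathcal{R}(z)\|_{\cB_{1}}\leq\|\mathcal{R}(z)\|_{\cB_{1}}$, independently of $\Lambda$, proving the local boundedness of $\{z\mapsto\tr(\xi_{\Lambda}(z))\}_{\Lambda>0}$. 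The treatment of $\tilde{\xi}_{\Lambda}(z)$ is entirely parallel, with the additional leftmost $\cQ$ absorbed exactly as in equation~\eqref{eq:Q_terms} of Lemma~\ref{lem:boundedness_of_leading_new_witten}, using that $\cQ R_{u+z}$ is bounded (since $\cQ^{2}R_{u+z}=-R_{u+z}(u+z)+I_{2^{\hatt n}d}$) and that any additional commutators $[\cQ,R_{u+z}]$ just insert further compactly supported factors $(\cQ u)$ that do not spoil the trace class estimates.
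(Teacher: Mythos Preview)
Your approach is correct and takes a somewhat different organizational route from the paper's proof. The paper first expands each $R_{u+z}$ via the Neumann series $R_{u+z}=\sum_{k\geq 0}(R_{1+z}(1-u))^{k}R_{1+z}$, obtaining a double sum $\sum_{k}\sum_{k_{1}+\cdots+k_{n}=k}S_{k_{1},\ldots,k_{n}}$; the $k=0$ term is precisely $\psi_{\Lambda}(z)$ from Lemma~\ref{lem:boundedness_of_leading_new_witten}, and for $k\geq 1$ each summand contains at least one compactly supported factor $(1-u)\in L^{n+1}$, which after commuting $\cQ$ through (using now $[\cQ,R_{1+z}]=0$) yields a geometric bound $\|S_{k_{1},\ldots,k_{n}}\|_{\cB_{1}}\leq\kappa\,2^{1-k}$ and hence a convergent sum. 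You instead expand the commutator $[\cQ,U(CR_{u+z})^{n}]$ directly, producing a \emph{finite} sum in which the only new ingredient beyond Lemma~\ref{lem:boundedness_of_leading_new_witten} is the contribution from $[\cQ,R_{u+z}]=-R_{u+z}(\cQ u)R_{u+z}$; your factor-of-two transfer estimate $\|\Psi R_{u+z}\|_{\cB_{q}}\leq 2\|\Psi R_{1+z}\|_{\cB_{q}}$ (which the paper also records, in the proof of Theorem~\ref{thm:trisbounded2}) then lets you invoke Theorems~\ref{thm:Simon_Hilbert-Schmidt} and~\ref{thm:HS-criterion for m factors} with $R_{u+z}$ in place of $R_{1+z}$. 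Your route is more direct in that it avoids the infinite resummation, while the paper's approach reduces cleanly to the already-established Lemma~\ref{lem:boundedness_of_leading_new_witten} plus a controlled tail; both are valid.
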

\begin{proof}
As in the proof of Lemma \ref{lem:boundedness_of_leading_new_witten}, we start out with $\xi_\Lambda (z)$ and observe with \eqref{def:R_psi_z}, 
\begin{align*}
\xi_\Lambda (z) &= \chi_\Lambda \tr_{2^{\hat n}d} \big(\left[ \cQ, U \left(CR_{u+z}\right)^{n}\right]\big) \\
 &= \chi_\Lambda \tr_{2^{\hat n}d} \bigg(\bigg[ \cQ, U \bigg(C\sum_{k=0}^\infty (R_{1+z}(u-1))^kR_{1+z}\bigg)^{n}\bigg]\bigg)
 \\ & = \chi_\Lambda \tr_{2^{\hat n}d} \bigg(\bigg[ \cQ, U \sum_{k=0}^\infty \sum_{\substack{0\leq k_1,\ldots,k_n\leq k\\k_1+\ldots+k_n=k}}
\bigg(C(R_{1+z}(u-1))^{k_1}R_{1+z}C    \\
& \hspace*{2.1cm} \times 
(R_{1+z}(u-1))^{k_2} R_{1+z}\cdots C(R_{1+z}(u-1))^{k_n}R_{1+z}\bigg)\bigg]\bigg)
 \\ & =\sum_{k=0}^\infty \sum_{\substack{0\leq k_1,\ldots,k_n\leq k\\k_1+\ldots+k_n=k}}\chi_\Lambda \tr_{2^{\hat n}d} \bigg(\bigg[ \cQ, U 
 \\ &\qquad \quad 
 \times\bigg(C(R_{1+z}(u-1))^{k_1}R_{1+z}\cdots C(R_{1+z}(u-1))^{k_n}R_{1+z}\bigg)\bigg]\bigg).
\end{align*}
In the expression for $\xi_\Lambda(z)$ just derived, we note that the summand for $k=0$ has been discussed in Lemma \ref{lem:boundedness_of_leading_new_witten}, so we are left with showing the trace class property for the summands belonging to $k>0$. Moreover, we need to derive an estimate guaranteeing that the sum in the expression for $\xi_\Lambda(z)$ converges in $\mathcal{B}_1$. Let $k\in\mathbb{N}_{\geq1}$ and $k_1,\ldots,k_n\in \mathbb{N}_{\geq 0}$ such that $k_1+\ldots+k_n=k$, and consider
\begin{align} \notag
  &S_{k_1,\ldots,k_n}
  \\ & \quad \coloneqq \chi_\Lambda \tr_{2^{\hat n}d} \bigg(\bigg[ \cQ, U 
\bigg(C(R_{1+z}(u-1))^{k_1}R_{1+z}\cdots C(R_{1+z}(u-1))^{k_n}R_{1+z}\bigg)\bigg]\bigg) \no
\\ \notag & \, \quad = \chi_\Lambda \tr_{2^{\hat n}d} \bigg( \cQ U
\bigg(C(R_{1+z}(u-1))^{k_1}R_{1+z}\cdots C(R_{1+z}(u-1))^{k_n}R_{1+z}\bigg) 
\\ & \, \qquad - U
\bigg(C(R_{1+z}(u-1))^{k_1}R_{1+z}\cdots C(R_{1+z}(u-1))^{k_n}R_{1+z}\bigg)\cQ \bigg).
\label{eq:lem7.13}
\end{align}
Let $j\in \{1,\ldots,n\}$ be the smallest index for which $k_j\geq 1$. Then the first summand in  \eqref{eq:lem7.13} reads
\begin{align}
 \notag T&\coloneqq  \cQ U 
(CR_{1+z})^{j-1}C(R_{1+z}(u-1))^{k_j}R_{1+z}\cdots C(R_{1+z}(u-1))^{k_n}R_{1+z} 
\\ \notag
& \, = \cQ U
(CR_{1+z})^{j-1}CR_{1+z}((u-1)R_{1+z})^{k_j}\cdots (CR_{1+z})((u-1)R_{1+z})^{k_n}
\\  \label{eq:lem7.13.2}
& \, = \cQ U
(CR_{1+z})^{j}((u-1)R_{1+z})^{k_j}\cdots (CR_{1+z})((u-1)R_{1+z})^{k_n}.
\end{align}
From 
\begin{align*}
 \cQ U \left(CR_{1+z}\right)^{j} &= U \cQ \left(CR_{1+z}\right)^{j}+[ \cQ, U]\left(CR_{1+z}\right)^{j} \\
  & = U \bigg(\sum_{\ell=1}^j (CR_{1+z})^{\ell-1}[ \cQ,C]R_{1+z}(CR_{1+z})^{j-\ell}
  +(CR_{1+z})^j \cQ\bigg) \\ 
  & \quad +[ \cQ, U]\left(CR_{1+z}\right)^{j},  
\end{align*}
one infers
\[
 \cQ U \left(CR_{1+z}\right)^{j}\in \mathcal{B}_{(n+1)/j}, 
\]
by Lemma \ref{lem:Schatten-class-1-operator} and the H\"older-type inequality for the Schatten class operators, Theorem \ref{thm:trace-class-crit}. On the right-hand side of \eqref{eq:lem7.13.2}, apart from $\left(CR_{1+z}\right)^{j}$, there are  $n-j$ factors of the form $CR_{1+z}\in \mathcal{B}_{n+1} $. In addition, there is at least one factor $(u-1)R_{1+z}\in \mathcal{B}_{n+1}$, by 
Lemma \ref{lem:Schatten-class-1-operator} and the fact that $(u-1)\in L^{n+1}(\mathbb{R}^n)$ (as $(u-1)$ is bounded and compactly supported). Hence, by the trace ideal property and the choice of the parameters as in Hypothesis \ref{hyp:parameters}, one gets
\[
  \|T\|_{\mathcal{B}_{1}}\leq \|\cQ U\left(CR_{1+z}\right)^{j}\|_{\mathcal{B}_{n+1)/j}}\|CR_{1+z}\|^{n-j} _{\mathcal{B}_{n+1}}\|(u-1)R_{1+z}\|_{\mathcal{B}_{n+1}} 2^{1-k}. 
\]
The second term under the trace sign in the expression for $S_{k_1,\ldots,k_n}$ (see \eqref{eq:lem7.13}) can be dealt with similarly, so there exists $\kappa>0$ independently of $\Lambda>0$, $z\in \Sigma_{\delta_0,\theta}$, and $k \in \bbN$, such that
\[
   \|S_{k_1,\ldots,k_n}\|_{\mathcal{B}_1}\leq \kappa 2^{1-k}.
\]
Hence, for all $\Lambda>0$ and $z\in \Sigma_{\delta_0,\theta}$ one gets
\begin{align} 
  \|\xi_\Lambda(z)\|_{\mathcal{B}_1} &\leq \|\psi_\Lambda(z)\|_{\mathcal{B}_1}+ \sum_{k=1}^\infty \sum_{\substack{0\leq k_1,\ldots,k_n\leq k\\k_1+\ldots+k_n=k}} \|S_{k_1,\ldots,k_n}\|_{\mathcal{B}_1}     \no \\ 
&\leq \|\psi_\Lambda(z)\|_{\mathcal{B}_1}+  \sum_{k=1}^\infty \kappa (k+1)^n 2^{1-k},    \label{eq:lem7.13.3}
\end{align}
where $\psi_\Lambda(z)$ is defined in Lemma \ref{lem:boundedness_of_leading_new_witten}. Inequality \eqref{eq:lem7.13.3} yields the assertion for $\xi_\Lambda$. 

A similar reasoning -- as in Lemma \ref{lem:boundedness_of_leading_new_witten} for $\tilde\psi_\Lambda(z)$ -- applies to $\tilde \xi_\Lambda(z)$.
\end{proof}

Next, we turn to the proof of a modified version of Lemma \ref{lem:boundedness_of_rest_new_witten}:

\begin{lemma}\label{lem:boundedness_of_rest_new_witten2}
 Assume Hypothesis \ref{hyp:parameters}. Let $z\in \Sigma_{\delta_0,\theta}$, and assume that 
 $U \in C^\infty_b\big(\mathbb{R}^n;\mathbb{C}^{d\times d}\big)$ is $\tau$-admissible $($cf.\ Definition \ref{d:ta}$)$, with $u$ as in \eqref{e:r.aa}, $C=(\cQ U)$. Let $L= \cQ+U$ be given by \eqref{eq:def_of_L(2)} and $\chi_\Lambda$ as in \eqref{eq:def_of_chi}, $\Lambda>0$. 
 For $z\in \Sigma_{\delta_0,\theta}$, $\Lambda>0$, define
\[
   \zeta_\Lambda (z) \coloneqq \chi_\Lambda \tr_{2^{\hat n}d} \big(\big[ \cQ, U \big(\left(L^{*}L+z\right)^{-1}-\left(LL^{*}+z\right)^{-1}\big)\left(CR_{u+z}\right)^{n+1}\big]\big)
\]
 and 
\[
\tilde\zeta_\Lambda(z)\coloneqq \chi_\Lambda \tr_{2^{\hat n}d} \big(\big[ \cQ,\big(\cQ \big(\left(L^{*}L+z\right)^{-1}-\left(LL^{*}+z\right)^{-1}\big)\left(CR_{u+z}\right)^{n+1}\big)\big]\big).
\]
Then for all $z\in \Sigma_{\delta_0,\theta}\cap\rho(-L^*L)\cap\rho(-LL^*)$, the operators $\zeta_{\Lambda}(z)$, $\tilde\zeta_{\Lambda}(z)$ are trace class and there exists $\delta\in (\delta_0,0)$ such that the families 
\[
\{\Sigma_{\delta,\theta}\ni z\mapsto z\tr_{L^2(\mathbb{R}^n)} 
(\zeta_{\Lambda}(z))\}_{\Lambda>0} \, \text{ and } \,
\big\{\Sigma_{\delta,\theta}\ni z\mapsto z\tr_{L^2(\mathbb{R}^n)} 
\big(\tilde\zeta_{\Lambda}(z)\big)\big\}_{\Lambda>0}
\]
are locally bounded $($cf.\ \eqref{d:normal}$)$.
\end{lemma}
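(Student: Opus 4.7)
The plan is to mirror the structure of the proof of Lemma \ref{lem:boundedness_of_rest_new_witten}, with two modifications: first, $R_{1+z}$ is replaced by $R_{u+z}$ throughout, and second, because $\cQ$ no longer commutes with $R_{u+z}$, extra commutator terms will appear, but these are benign since $(\cQ u)$ is smooth and compactly supported. Concretely, I would begin by invoking the Fredholm property of $L=\cQ+U$ (granted by Lemma \ref{lem:1L_Fred_Rge0}, since $(\cQ U)(x)\to 0$ as $|x|\to\infty$ by $\tau$-admissibility) to choose $\delta\in(\delta_0,0)$ such that $z\mapsto z(L^*L+z)^{-1}$ and $z\mapsto z(LL^*+z)^{-1}$ extend analytically to $\Sigma_{\delta,\theta}$. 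Using Proposition \ref{prop:compu_lstartl} together with the equivalence of the graph norm of $L^*L+I$ (resp.\ $LL^*+I$) with the $H^2$-norm, these extensions form uniformly bounded families on any compact $\Omega\subset\Sigma_{\delta,\theta}$ as operators from $L^2(\mathbb{R}^n)^{2^{\hat n}d}$ to $H^2(\mathbb{R}^n)^{2^{\hat n}d}$, exactly as in the original proof.

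Next, I would show that $(CR_{u+z})^{n+1}$ defines a bounded family of trace class operators on $\Omega$. This follows from the same Neumann-series trick used in the proof of Theorem \ref{thm:trisbounded2}: writing $R_{u+z}=\sum_{k=0}^\infty (R_{1+z}(u-1))^kR_{1+z}$ and using $\|R_{1+z}(u-1)\|_{\cB(L^2)}\leq 1/2$ (Hypothesis \ref{hyp:parameters}) together with Theorem \ref{thm:Simon_Hilbert-Schmidt} (applied to factor $(CR_{u+z})^{n+1}$ into two Hilbert--Schmidt blocks of size $\hat n$ and $\hat n+1$, each controlled by Lemma \ref{lem:Schatten-class-1-operator} and the ideal property).

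For $\zeta_\Lambda(z)$ one then expands $\cQ U \big((L^*L+z)^{-1}-(LL^*+z)^{-1}\big) = C\big((L^*L+z)^{-1}-(LL^*+z)^{-1}\big) + U\cQ\big((L^*L+z)^{-1}-(LL^*+z)^{-1}\big)$; multiplication by $z$ makes $\cQ z(L^*L+z)^{-1}$ and $\cQ z(LL^*+z)^{-1}$ uniformly bounded on $\Omega$ as operators in $L^2$, by the $L^2\to H^2$ mapping property above combined with $\cQ\in\cB(H^1,L^2)$. For the other piece, $(CR_{u+z})^{n+1}\cQ$, I would apply the commutator identity $R_{u+z}\cQ = \cQ R_{u+z}+R_{u+z}(\cQ u)R_{u+z}$ to $(CR_{u+z})^{n+1}\cQ = (CR_{u+z})^n C R_{u+z}\cQ$, producing one term $(CR_{u+z})^n C\cQ R_{u+z}$ and one harmless correction $(CR_{u+z})^n CR_{u+z}(\cQ u)R_{u+z}$; both are bounded families of trace class operators on $\Omega$ (the first using that $\cQ R_{u+z}\in\cB(L^2)$ by Remark \ref{rem:remark on op norm of Rpsiz}(ii), the second because $(\cQ u)\in C_c^\infty$). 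The ideal property then gives the claim for $\zeta_\Lambda$, uniformly in $\Lambda$.

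For $\tilde\zeta_\Lambda(z)$ the key simplification is $\cQ\cQ = \Delta I_{2^{\hat n}d}$, so that $\Delta z\big((L^*L+z)^{-1}-(LL^*+z)^{-1}\big)$ is uniformly bounded on $\Omega$ as an operator in $L^2$ (using $-\Delta I_{2^{\hatt n}d}(L^*L+z)^{-1} = I - (uI - C + z)(L^*L+z)^{-1}$ and the previously established $L^2\to H^2$ bound). Combined with the bounded family $(CR_{u+z})^{n+1}\cQ$ analyzed above, this yields the required local boundedness via the trace ideal property. The main technical obstacle is bookkeeping the commutator corrections of the form $R_{u+z}(\cQ u)R_{u+z}$ systematically while maintaining the trace class estimates uniformly for $z$ in a compact neighborhood of $0$, but since $(\cQ u)$ is smooth and compactly supported all such corrections reduce to expressions already controlled by the uniform bounds on $R_{u+z}\in\cB(L^2,H^2)$ and on $(CR_{u+z})^{n+1}\in\cB_1$.
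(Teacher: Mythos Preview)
Your proposal is correct and follows the same approach as the paper: the paper's own proof simply says to follow Lemma~\ref{lem:boundedness_of_rest_new_witten} line by line with $\Phi$ replaced by $U$ and $R_{1+z}$ by $R_{u+z}$, recording only the key fact $\|CR_{u+z}\|_{\cB_{n+1}}\le 2\|CR_{1+z}\|_{\cB_{n+1}}$. Your explicit treatment of the commutator correction $R_{u+z}\cQ=\cQ R_{u+z}+R_{u+z}(\cQ u)R_{u+z}$ when handling $(CR_{u+z})^{n+1}\cQ$ is precisely the extra step needed to make ``line by line'' literally work, and your argument for it is sound.
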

\begin{proof}
On can follow the proof of Lemma \ref{lem:boundedness_of_rest_new_witten} line by line upon  replacing $\Phi$ by $U$ and $R_{1+z}$ by $R_{u+z}$. (We recall  $CR_{u+z} \in \mathcal{B}_{n+1}$ with $\|CR_{u+z}\|_{\mathcal{B}_{n+1}}\leq 2\|CR_{1+z}\|_{\mathcal{B}_{n+1}}$).
\end{proof}

As in the derivation of Theorem \ref{thm:Witten_reg_n5} we summarize the results obtained for local boundedness in a theorem (cf.\ Theorem \ref{thm:local_boundedness of the trace}):

\begin{theorem}\label{thm:local_boundedness of the trace2}  Assume Hypothesis \ref{hyp:parameters}, $z\in \Sigma_{\delta_0,\theta}$. Let $U \in C^\infty_b\big(\mathbb{R}^n;\mathbb{C}^{d\times d}\big)$ be $\tau$-admissible $($cf.\ Definition \ref{d:ta}$)$, with $u$ as in \eqref{e:r.aa}, $C=(\cQ U)$. Let $L= \cQ+U$ be given by \eqref{eq:def_of_L(2)} and $\chi_\Lambda$ as in \eqref{eq:def_of_chi}, $\Lambda>0$.  Define for $z\in \Sigma_{\delta_0,\theta}\cap \rho(-LL^*)\cap\rho(-L^*L)$, 
\[
   \iota_\Lambda (z) \coloneqq \chi_\Lambda \tr_{2^{\hat n}d} \big(\big[ \cQ, 
   U \big((L^{*}L+z)^{-1}+(LL^{*}+z)^{-1}\big)(CR_{u+z})^{n}\big]\big),
\]
 and 
\[
\tilde\iota_\Lambda(z)\coloneqq \chi_\Lambda \tr_{2^{\hat n}d}\big(\big[ \cQ, \cQ \big((L^{*}L+z)^{-1}+(LL^{*}+z)^{-1}\big) (CR_{u+z})^{n}\big]\big).
\]
Then for all $z\in \Sigma_{\delta_0,\theta}\cap \rho(-LL^*)\cap\rho(-L^*L)$, the operators $\iota_{\Lambda}(z)$, $\tilde\iota_{\Lambda}(z)$ are trace class 
and there exists $\delta\in (\delta_0,0)$ such that the families 
\[
\{\Sigma_{\delta,\theta} \ni z\mapsto z\tr_{L^2(\bbR^n)} (\iota_{\Lambda}(z))\}_{\Lambda>0} 
\, \text{ and } \, 
\big\{\Sigma_{\delta,\theta} \ni z\mapsto z\tr_{L^2(\bbR^n)} \big(\tilde\iota_{\Lambda}(z)\big)\big\}_{\Lambda>0} 
\]
are locally bounded $($cf.\ \eqref{d:normal}$)$.
\end{theorem}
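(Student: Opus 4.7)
The plan is to mirror the proof of Theorem \ref{thm:local_boundedness of the trace}, with Lemmas \ref{lem:boundedness_of_leading_new_witten2} and \ref{lem:boundedness_of_rest_new_witten2} now playing the roles that Lemmas \ref{lem:boundedness_of_leading_new_witten} and \ref{lem:boundedness_of_rest_new_witten} played there: decompose each of $\iota_\Lambda(z)$ and $\tilde\iota_\Lambda(z)$ into a ``leading'' piece that is locally bounded on its own, plus a ``remainder'' that becomes locally bounded only after multiplication by $z$. The splitting is driven by the resolvent identity adapted to the $\tau$-admissible setting: by Proposition \ref{prop:compu_lstartl} (with $U^2=uI_d$) one has $L^*L+z=R_{u+z}^{-1}-C$ and $LL^*+z=R_{u+z}^{-1}+C$, from which a short algebraic manipulation yields $(L^*L+z)^{-1}=R_{u+z}+(L^*L+z)^{-1}CR_{u+z}$ and $(LL^*+z)^{-1}=R_{u+z}-(LL^*+z)^{-1}CR_{u+z}$. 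Adding these gives $(L^*L+z)^{-1}+(LL^*+z)^{-1}=2R_{u+z}+[(L^*L+z)^{-1}-(LL^*+z)^{-1}]CR_{u+z}$. Substituting this into the definitions of $\iota_\Lambda(z)$ and $\tilde\iota_\Lambda(z)$ and absorbing the extra $CR_{u+z}$ into the trailing power yields
\begin{align*}
\iota_\Lambda(z) &= 2\chi_\Lambda \tr_{2^{\hat n}d}\bigl([\mathcal{Q},UR_{u+z}(CR_{u+z})^n]\bigr)+\zeta_\Lambda(z), \\
\tilde\iota_\Lambda(z) &= 2\chi_\Lambda \tr_{2^{\hat n}d}\bigl([\mathcal{Q},\mathcal{Q}R_{u+z}(CR_{u+z})^n]\bigr)+\tilde\zeta_\Lambda(z),
\end{align*}
where $\zeta_\Lambda$ and $\tilde\zeta_\Lambda$ are exactly the quantities treated in Lemma \ref{lem:boundedness_of_rest_new_witten2}; so $z\tr_{L^2(\mathbb{R}^n)}(\zeta_\Lambda(z))$ and $z\tr_{L^2(\mathbb{R}^n)}(\tilde\zeta_\Lambda(z))$ are locally bounded on $\Sigma_{\delta,\theta}$ for some $\delta\in(\delta_0,0)$ by that lemma.

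Next I would bound the two leading commutator traces uniformly in $\Lambda$ and locally uniformly in $z\in \Sigma_{\delta_0,\theta}$ -- this time without any factor of $z$ -- by adapting the proof of Lemma \ref{lem:boundedness_of_leading_new_witten2}, the extra $R_{u+z}$ being absorbed by the ideal property of the Schatten classes. Concretely, Leibniz gives $[\mathcal{Q},UR_{u+z}(CR_{u+z})^n]=CR_{u+z}(CR_{u+z})^n+U[\mathcal{Q},R_{u+z}(CR_{u+z})^n]$, whose first summand equals $(CR_{u+z})^{n+1}$ and is trace class by Theorem \ref{thm:Simon_Hilbert-Schmidt} together with the bound $\|CR_{u+z}\|_{\mathcal{B}_{n+1}}\leq 2\|CR_{1+z}\|_{\mathcal{B}_{n+1}}$ arising from the Neumann series \eqref{def:R_psi_z2} under Hypothesis \ref{hyp:parameters}. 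The remaining commutators are unfolded using $[\mathcal{Q},R_{u+z}]=-R_{u+z}(\mathcal{Q}u)R_{u+z}$ and $[\mathcal{Q},CR_{u+z}]=[\mathcal{Q},C]R_{u+z}-CR_{u+z}(\mathcal{Q}u)R_{u+z}$, after which the resulting sum of operator products is trace class by Theorems \ref{thm:Simon_Hilbert-Schmidt}, \ref{thm:HS-criterion for m factors}, and \ref{thm:trace-class-crit}, with trace-norm bounds uniform in $\Lambda$ and locally uniform in $z$. The $\tilde\iota_\Lambda$ leading term is handled identically, using additionally that $\mathcal{Q}R_{u+z}$ is a bounded operator, whose norm is controlled through the Neumann series \eqref{def:R_psi_z2} and the symbol bound \eqref{eq:estimate_for_symbol_of_Q}.

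The hard part, compared with Theorem \ref{thm:local_boundedness of the trace}, is exactly that $\mathcal{Q}$ no longer commutes with the scalar resolvent: $[\mathcal{Q},R_{u+z}]=-R_{u+z}(\mathcal{Q}u)R_{u+z}\neq 0$, so each iteration of Leibniz in the leading piece injects a factor of $(\mathcal{Q}u)R_{u+z}$ and one has to show that the resulting auxiliary Neumann-type series converges geometrically in $\mathcal{B}_1$, uniformly in $\Lambda$ and locally uniformly in $z\in\Sigma_{\delta_0,\theta}$. The saving feature is that $u\equiv 1$ on $\mathbb{R}^n\backslash B(0,\tau)$, so $(\mathcal{Q}u)$ is smooth and compactly supported in $B(0,\tau)$; consequently $(\mathcal{Q}u)R_{u+z}\in\mathcal{B}_{n+1}$ with norm controlled uniformly in $z\in\Sigma_{\delta_0,\theta}$ by Lemma \ref{lem:Schatten-class-1-operator}, and the bookkeeping used in the proof of Lemma \ref{lem:boundedness_of_leading_new_witten2} -- where $(\mathcal{Q}u)R_{u+z}$ now plays the role of the $(u-1)R_{1+z}$ factors appearing there -- shows that the auxiliary sum of trace-norm estimates converges geometrically. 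Once this is in place, combining with Lemma \ref{lem:boundedness_of_rest_new_witten2} and multiplying the two decompositions by $z$ gives the claimed local boundedness of $\{z\,\tr_{L^2(\mathbb{R}^n)}(\iota_\Lambda(z))\}_\Lambda$ and $\{z\,\tr_{L^2(\mathbb{R}^n)}(\tilde\iota_\Lambda(z))\}_\Lambda$ on $\Sigma_{\delta,\theta}$.
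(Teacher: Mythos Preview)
Your proposal is correct and follows the same route as the paper: split via the resolvent identity into a leading term handled by (a variant of) Lemma~\ref{lem:boundedness_of_leading_new_witten2} and a remainder handled by Lemma~\ref{lem:boundedness_of_rest_new_witten2}. The paper's proof simply asserts $\iota_\Lambda=2\xi_\Lambda+\zeta_\Lambda$ and $\tilde\iota_\Lambda=2\tilde\xi_\Lambda+\tilde\zeta_\Lambda$ and invokes those two lemmas directly.

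One small point worth noting: the resolvent identity you use, $(L^*L+z)^{-1}+(LL^*+z)^{-1}=2R_{u+z}+\big((L^*L+z)^{-1}-(LL^*+z)^{-1}\big)CR_{u+z}$, gives a leading piece $2\chi_\Lambda\tr_{2^{\hat n}d}\big([\mathcal Q,\,U R_{u+z}(CR_{u+z})^n]\big)$, which carries one extra factor $R_{u+z}$ compared with $\xi_\Lambda(z)=\chi_\Lambda\tr_{2^{\hat n}d}\big([\mathcal Q,\,U(CR_{u+z})^n]\big)$. The paper's one-line proof (and the corresponding passage in the proof of Theorem~\ref{thm:local_boundedness of the trace}) writes the identity with $I$ in place of $R_{u+z}$, so that the leading term literally matches $\xi_\Lambda$; your version is the algebraically correct one. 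This is harmless either way---the extra $R_{u+z}$ only adds smoothing and is absorbed by the ideal property exactly as you indicate---so your inline adaptation of Lemma~\ref{lem:boundedness_of_leading_new_witten2} (using $[\mathcal Q,R_{u+z}]=-R_{u+z}(\mathcal Q u)R_{u+z}$ and the compact support of $(\mathcal Q u)$) is a legitimate, and in fact slightly more careful, replacement for the paper's direct citation of that lemma.
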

\begin{proof}
 As in the proof for Theorem \ref{thm:local_boundedness of the trace2}, it suffices to realize that $\iota_\Lambda(z)=2\xi_\Lambda(z)+\zeta_\Lambda(z)$ and $\tilde\iota_\Lambda(z)=2\tilde\xi_\Lambda(z)+\tilde\zeta_\Lambda(z)$ with the functions introduced in Lemmas \ref{lem:boundedness_of_leading_new_witten2} and \ref{lem:boundedness_of_rest_new_witten2}. Thus, the assertion follows from the Lemmas \ref{lem:boundedness_of_leading_new_witten2} and \ref{lem:boundedness_of_rest_new_witten2}.
\end{proof}

The proof of the result analogous to Lemma \ref{lem:some properties of the first two terms} needs some modifications. In particular, one should pay particular attention to the assertion concerning $h_{1,j}$: In Lemma \ref{lem:some properties of the first two terms} we proved that $h_{1,j}$ vanishes on the diagonal. Here, we are only able to give an estimate.

\begin{lemma}
\label{lem:some properties of the first two terms2}  Assume Hypothesis \ref{hyp:parameters}. Let $z\in \Sigma_{\delta_0,\theta}$, and assume that 
$U \in C^\infty_b\big(\mathbb{R}^n;\mathbb{C}^{d\times d}\big)$ is $\tau$-admissible $($cf.\ Definition \ref{d:ta}$)$, with $u$ as in \eqref{e:r.aa}, $C=(\cQ U)$. Let $L= \cQ+U$ be given by \eqref{eq:def_of_L(2)},
$R_{u+z}$ as in \eqref{def:R_psi_z} as well as
$\cQ$, and $\gamma_{j,n}$, $j\in\{1,\ldots,n\}$, given by \eqref{eq:def_of_Q2}, 
and as in Remark \ref{rem:Eucl-Dirac-Algebar}, respectively. 
Then for $n\geq3$, the integral kernel $h_{2,j}(z)$ of 
\[
2\tr_{2^{\hat n}d}\big(\gamma_{j,n} U \left(R_{u+z}C\right)^{n-1}R_{u+z} \big)
\]
satisfies, 
\[
h_{2,j}(z)(x,x)=h_{3,j}(z)(x,x)+g_{0,j}(z)(x,x),
\]
where $h_{3,j}(z)$ is the integral kernel of $2\tr_{2^{\hat n}d}\left(\gamma_{j,n} U C^{n-1}R_{u+z}^{n}\right)$
and $g_{0,j}(z)$ satisfies
\[
 \sup_{z\in\Sigma_{\delta_0,\theta}}\left|g_{0,j}(z)(x,x)\right|\leq \kappa (1+|x|)^{1-n-\epsilon}.
\]
for all $x\in\mathbb{R}^{n}$ and some $\kappa>0$. \\
\noindent 
In addition, if $n\geq5$ and $z\in\mathbb{R}$, then the integral kernel $h_{1,j}(z)$ of 
\[
\tr_{2^{\hat n}d}\big(\gamma_{j,n} \cQ \left(R_{u+z}C\right)^{n-2}R_{u+z}\big)
\]
satisfies
\[
   \sup_{z\in\Sigma_{\delta_0,\theta}}\left|h_{1,j}(z)(x,x)\right|\leq \kappa (1+|x|)^{-n}.
\]
\end{lemma}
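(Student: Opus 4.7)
The overall strategy mirrors the proof of Lemma \ref{lem:some properties of the first two terms}, with $R_{1+z}$ replaced by $R_{u+z}$ and $\Phi$ by $U$, but with the pointwise estimates of Section \ref{sec:ptw_intk} upgraded to their perturbative counterparts from Section \ref{sec:pert}. The key inputs are the comparison $|r_{u+z}(x,y)|\leq c\, r_{\Re(z)}(x-y)$ uniformly for $z\in\Sigma_{\delta_0,\theta}$ (Theorem \ref{thm:7.18}), the analogous bound for its derivatives (Theorem \ref{t:7.19c}), and the diagonal estimates for products $\prod R_{u+z}\Psi_j$ furnished by Corollary \ref{cor:7.19} and Remark \ref{r:7.10}. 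By Lemma \ref{lem:almost admissible}, the $\tau$-admissibility of $U$ (Definition \ref{d:ta}) implies that $U$ itself obeys the Callias-admissible decay rates of Definition \ref{def:phi_admissible}\,$(iii)$ on the complement of $B(0,\tau)$. Thus all of Lemmas \ref{lem:asymptotics on diagonal}, \ref{lem:asymptotics in the diagonal with commutator}, and \ref{lem:asymptotics on the diagonal and products of psis}, and Remark \ref{rem:green's kernels of the first and the last}, transfer verbatim to the $R_{u+z}$-setting with constants that are uniform in $z\in\Sigma_{\delta_0,\theta}$.

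For the first claim, I would establish the $R_{u+z}$-analogue of Remark \ref{rem:green's kernels of the first and the last}. This expresses the difference $h_{2,j}(z)(x,x)-h_{3,j}(z)(x,x)$ as the diagonal of an iterated commutator sum $\sum_k[R_{u+z},U]\dots$. Each commutator $[R_{u+z},U]$ is expanded via the resolvent identity underlying Lemma \ref{lem:commutator}: one retains the formula $[R_{u+z},U]=R_{u+z}(\cQ^2 U)R_{u+z}+2R_{u+z}(\cQ U)\cQ R_{u+z}$, because the proof only uses $\cQ^2=\Delta I_{2^{\hatt n}d}$ and the commutation of $\Delta$ with $U$. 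Applying the $R_{u+z}$-analogue of Lemma \ref{lem:asymptotics in the diagonal with commutator}, one picks up an extra factor $(1+|x|)^{-\epsilon}$ from the second-derivative bound $\|\cQ^2 U(x)\|\lesssim(1+|x|)^{-1-\epsilon}$. Combined with $n-1$ factors of $\cQ U\sim(1+|x|)^{-1}$, the remainder $g_{0,j}$ then obeys the claimed bound $(1+|x|)^{1-n-\epsilon}$, uniformly in $z$.

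For the second claim, the main obstacle is that the original proof used the self-adjointness of $B_n=U(\cQ R_{1+z}U)^{n-3}$ for real $z$, which depended on $[\cQ,R_{1+z}]=0$; in our setting $[\cQ,R_{u+z}]=-R_{u+z}(\cQ u)R_{u+z}\neq 0$, so exact diagonal vanishing is lost and only the polynomial bound $(1+|x|)^{-n}$ is available. My plan is to first adapt Lemma \ref{lem:better diagonal representation} by commuting $\cQ$ through each factor $R_{u+z}$, generating at every step an extra term containing the compactly supported multiplier $\cQ u$ (whose support lies in $B(0,\tau)$). Every such ``commutator remainder'' has its inner integration confined to $B(0,\tau)$, so by the exponential decay of $r_{u+z}(x,y)$ in $|x-y|$ from Theorem \ref{thm:7.18}, its diagonal value at $x$ decays faster than any polynomial as $|x|\to\infty$, more than enough for the bound. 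The ``principal'' rearranged term is of the form $-\tr_{2^{\hatt n}d}(\gamma_{j,n}\cQ(R_{u+z}U\cQ)^{n-2}R_{u+z})$, to which I would apply the $R_{u+z}$-version of Lemma \ref{lem:asymptotics on diagonal}: the hardest step is to recover the extra two powers of decay beyond the naive $(1+|x|)^{-(n-2)}$ produced by counting only $n-2$ factors of $\cQ U$. I expect these to come from repeating the commutator trick one more time inside the principal term to force either an extra multiplier $\cQ^2 U\sim(1+|x|)^{-1-\epsilon}$ or another compactly supported $\cQ u$; together with the surviving $n-2$ copies of $\cQ U$ this produces a total diagonal decay of at least $(1+|x|)^{-n-\epsilon+1}\leq(1+|x|)^{-n}$, as required.
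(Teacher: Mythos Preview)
Your treatment of $h_{2,j}$ is essentially the same as the paper's and is fine: the $R_{u+z}$-analogue of Remark \ref{rem:green's kernels of the first and the last} goes through because the commutator formula of Lemma \ref{lem:commutator} persists for $R_{u+z}$ (the real reason is that the scalar function $u$ commutes with $U$, not that $\Delta$ commutes with $U$), and then the kernel estimate $|r_{u+z}|\leq c\,r_{\Re(\mu)}$ from Theorem \ref{thm:7.18} feeds into Lemma \ref{lem:asymptotics in the diagonal with commutator}.

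Your approach to $h_{1,j}$ has a genuine gap. After the Lemma \ref{lem:better diagonal representation}-style rearrangement you describe, the ``principal term'' $-\tr_{2^{\hatt n}d}(\gamma_{j,n}\cQ(R_{u+z}U\cQ)^{n-2}R_{u+z})$ contains $n-2$ copies of $U$ as \emph{multiplication operators} and $n-1$ copies of $\cQ$ as \emph{differential operators}; the $\cQ$'s are not multipliers $(\cQ U)$. Since $U$ is unitary (hence merely bounded) off $B(0,\tau)$, the multipliers carry \emph{no} decay, and the naive diagonal bound from Corollary \ref{cor:7.19}/Remark \ref{r:7.10} is $O(1)$, not $(1+|x|)^{-(n-2)}$. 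Your proposal to ``repeat the commutator trick'' cannot extract $(\cQ^2 U)$ from these $U$'s, and the symmetry argument of Lemma \ref{lem:some properties of the first two terms} is unavailable because $U(\cQ R_{u+z}U)^{n-3}$ is not self-adjoint.

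The paper's route is both simpler and avoids this problem: expand each $R_{u+z}$ in the \emph{original} expression $\tr_{2^{\hatt n}d}(\gamma_{j,n}\cQ(R_{u+z}C)^{n-2}R_{u+z})$ via the resolvent identity $R_{u+z}=R_{1+z}+R_{1+z}(u-1)R_{u+z}$. The all-$R_{1+z}$ term is exactly the operator of Lemma \ref{lem:some properties of the first two terms}, whose kernel vanishes on the diagonal. Every remaining correction contains at least one factor $(u-1)$, which is supported in $B(0,\tau)$ and hence satisfies $|(u-1)(x)|\leq(1+\tau)^n(1+|x|)^{-n}$; treating the other multipliers ($C$'s and $U$'s) as merely bounded, Corollary \ref{cor:7.19} and Remark \ref{r:7.10} give the $(1+|x|)^{-n}$ bound uniformly in $z\in\Sigma_{\delta_0,\theta}$.
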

\begin{proof} 
We start with $h_{1,j}(z)$. Using the Neumann series expression in \eqref{def:R_psi_z}, one computes,   
\begin{align*}
  H_{1,j}(z) & \coloneqq \tr_{2^{\hat n}d}\big(\gamma_{j,n} \cQ \left(R_{u+z}C\right)^{n-2}R_{u+z}\big) \\
  & \, = \tr_{2^{\hat n}d}\bigg(\gamma_{j,n} \cQ \bigg(\sum_{k=0}^\infty (R_{1+z}(u-1))^kR_{1+z}C\bigg)^{n-2}\sum_{k=0}^\infty (R_{1+z}(u-1))^kR_{1+z}\bigg) \\
  & \, = \tr_{2^{\hat n}d}\big(\gamma_{j,n} \cQ \left(R_{1+z}C\right)^{n-2}R_{1+z}\big)
  \\&\quad  + \tr_{2^{\hat n}d}\bigg(\gamma_{j,n} \cQ \bigg(\sum_{k=1}^\infty (R_{1+z}(u-1))^kR_{1+z}C\bigg)   \\
& \, \qquad \times  \bigg(\sum_{k=0}^\infty (R_{1+z}(u-1))^kR_{1+z}C\bigg)^{n-3}\times
  \sum_{k=0}^\infty (R_{1+z}(u-1))^kR_{1+z}\bigg)   \\
& \, \quad + \cdots +\tr_{2^{\hat n}d}\bigg(\gamma_{j,n} \cQ \bigg(\sum_{k=0}^\infty (R_{1+z}(u-1))^kR_{1+z}C\bigg)^{n-2}   \no \\
& \, \qquad \qquad \qquad \quad \;\; \times \sum_{k=1}^\infty (R_{1+z}(u-1))^kR_{1+z}\bigg) 
  \\ & \, = \tr_{2^{\hat n}d}\big(\gamma_{j,n} \cQ  (R_{1+z}C)^{n-2}R_{1+z}\big)
  \\ & \, \quad + \tr_{2^{\hat n}d}\big(\gamma_{j,n} \cQ (R_{1+z}(u-1)) (R_{u+z}C)^{n-2}R_{u+z}\big)   \\ 
& \, \quad +\cdots + \tr_{2^{\hat n}d}\big(\gamma_{j,n} \cQ (R_{u+z}C)^{n-2}(R_{1+z}(u-1))R_{u+z}\big).
\end{align*}
By Lemma \ref{lem:some properties of the first two terms}, the diagonal of the integral kernel associated with 
\[
\tr_{2^{\hat n}d}\big(\gamma_{j,n} \cQ \left(R_{1+z}C\right)^{n-2}R_{1+z}\big)
\]
 vanishes. Thus, it remains to address the asymptotics of the diagonal of the integral kernel associated with
\begin{align*}
& \tr_{2^{\hat n}d}\big(\gamma_{j,n} \cQ (R_{1+z}(u-1)) (R_{u+z}C)^{n-2}R_{u+z}\big)   \\ 
& \quad + \cdots + \tr_{2^{\hat n}d} \big(\gamma_{j,n} \cQ (R_{u+z}C)^{n-2}
(R_{1+z}(u-1))R_{u+z}\big).
\end{align*}
One observes that the function $(u-1)$ vanishes outside $B(0,\tau)\subset \mathbb{R}^n$ (we recall Hypothesis \ref{hyp:parameters}). Being bounded by $1$, it particularly satisfies the estimate
\[
    |(u-1)(x)|\leq (1+\tau)^n (1+|x|)^{-n}, \quad x\in \mathbb{R}^n.
\]
Realizing that the function $C$ is bounded, the assertion for $h_{1,j}(z)$ follows from Remark \ref{r:7.10}.
 
 The assertion about $h_{2,j}$ can be shown with Remark \ref{rem:green's kernels of the first and the last} (replacing the operators $R_\mu$ in that remark by $R_{u+z}$ and using that the integral kernel of $R_{u+z}$ can be estimated by the respective one for $R_{1+\Re (z)}$, see Theorem \ref{thm:7.18}) and the asymptotic conditions imposed on $U$ (see Definition \ref{d:ta}).
\end{proof}

The analog of Theorem \ref{thm:asymptotics of the rest}, stated below, is now shown in the same way, employing Theorems \ref{t:7.19c} and \ref{thm:7.18}:

\begin{theorem}
\label{thm:asymptotics of the rest2}  Assume Hypothesis \ref{hyp:parameters}, $z\in \Sigma_{\delta_0,\theta}$. Let $U \in C^\infty_b\big(\mathbb{R}^n;\mathbb{C}^{d\times d}\big)$ 
be $\tau$-admissible $($cf.\ Definition \ref{d:ta}$)$, with $u$ as in \eqref{e:r.aa}, $C=(\cQ U)$. Let $L= \cQ+U$ be given by \eqref{eq:def_of_L(2)}, $R_{u+z}$ as in \eqref{def:R_psi_z} as well as
$\cQ$, and $\gamma_{j,n}$, $j\in\{1,\ldots,n\}$, given by \eqref{eq:def_of_Q2}, and as in Remark \ref{rem:Eucl-Dirac-Algebar}, respectively. Then there exists $z_0 > 0$, such that for all $z\in\mathbb{C}$ with $\Re (z) > z_{0}$, the integral kernels $g_{1}$ and 
$g_{2}$ of the operators 
\[
\tr_{2^{\hat n}d}\big(\gamma_{j,n} U \big(\left(L^{*}L+z\right)^{-1}+\left(LL^{*}+z\right)^{-1}\big)\left(CR_{u+z}\right)^{n}\big)
\]
 and 
\[
\tr_{2^{\hat n}d}\big(\gamma_{j,n}Q\big((L^{*}L+z)^{-1} + (LL^{*}+z)^{-1}\big)\left(CR_{u+z}\right)^{n}\big),
\]
 respectively, satisfy for some $\kappa>0$, 
\[
\left(\left|g_{1}(x,x)\right|+\left|g_{2}(x,x)\right|\right)\leq \kappa (1+|x|)^{-n}, 
\quad x\in\mathbb{R}^{n}.  
\]
\end{theorem}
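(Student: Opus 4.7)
\textbf{Proof plan for Theorem \ref{thm:asymptotics of the rest2}.} The strategy is a verbatim parallel to the argument used for Theorem \ref{thm:asymptotics of the rest}, with $R_{1+z}$ systematically replaced by $R_{u+z}$; the new ingredient that lets this substitution go through is the pointwise machinery of Section \ref{sec:pert}, summarised in Theorem \ref{thm:7.18}, Corollary \ref{cor:7.19}, and Remark \ref{r:7.10}. First, I would choose $z_{0}>0$ so large that
\begin{equation*}
\tfrac{2M}{\sqrt{1+z_{0}}}\le\tfrac12,\qquad \sqrt{1+z_{0}}>2n,
\end{equation*}
where $M\coloneqq\sup_{x\in\mathbb{R}^{n}}\|U(x)\|\vee\|(\mathcal{Q}U)(x)\|$; since $u\geq0$, the perturbed Helmholtz resolvent $R_{u+z}$ obeys $\|R_{u+z}C\|\le\tfrac12$ for $\Re(z)>z_{0}$ (using the operator norm control from Remark \ref{rem:remark on op norm of Rpsiz}). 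By Proposition \ref{prop:resolvent formulas2}, for such $z$,
\begin{equation*}
(L^{*}L+z)^{-1}+(LL^{*}+z)^{-1}=2\sum_{k=0}^{\infty}(R_{u+z}C)^{2k}R_{u+z},
\end{equation*}
so that $g_{1,j}$ and $g_{2,j}$ are given on the diagonal by series whose $k$th term is the integral kernel of $\gamma_{j,n}U\cdot 2(R_{u+z}C)^{2k}(R_{u+z}C)^{n}R_{u+z}$, respectively $\gamma_{j,n}\mathcal{Q}\cdot 2(R_{u+z}C)^{2k}(R_{u+z}C)^{n}R_{u+z}$, evaluated at $(x,x)$.

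For each individual $k$, the diagonal of the kernel of a product $U\prod R_{u+z}C\cdots R_{u+z}$ (with $2k+n+1$ factors of $R_{u+z}$) is estimated by Corollary \ref{cor:7.19}, where we take $\eta=u-1$ (compactly supported, $\|u-1\|_{L^{\infty}}\le 1$, $\supp(u-1)\subseteq B(0,\tau)$) and the $\Psi_{j}$'s equal to $U$ or to the components of $C=\mathcal{Q}U$, which by Definition \ref{d:ta} all satisfy $|\Psi_{j}(x)|\le\kappa(1+|x|)^{-1}$. With $\alpha_{j}=1$ for the $n$ factors of $C$ and $\alpha_{j}=0$ for the remaining $2k$ factors, Corollary \ref{cor:7.19}, when read together with Remark \ref{rem:on the quantitative version of kappa prime}, yields a pointwise bound of the form
\begin{equation*}
c\left(\tfrac{2M}{\sqrt{1+\Re(z)}}\right)^{2k}(1+|x|)^{-n},
\end{equation*}
where $c$ depends on $n$, $\tau$, $\theta_{0}$ (the opening of the sector in Theorem \ref{thm:7.18}) and $U$ but not on $k$; the key point is that every additional pair of factors $(R_{u+z}C)^{2}$ contributes a factor whose size is controlled by $M^{2}/(1+\Re(z))$. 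Summing in $k$ gives the required bound for $g_{1,j}$ after enlarging $z_{0}$ if necessary to ensure that $\sqrt{1+\Re(z)}$ is larger than $2\sum\alpha_{j}=2n$, so that Remark \ref{rem:on the quantitative version of kappa prime} is applicable.

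For $g_{2,j}$ the only difference is that the leading operator is $\mathcal{Q}R_{u+z}$ rather than $U R_{u+z}$; one therefore needs a bound on the first derivative of the Green's function $r_{u+z}$, which is provided by Theorem \ref{t:7.19c}, via $|\partial_{j}r_{u+z}(\cdot,y)(x)|\le c\,q_{\Re(z)}(|x-y|)$, with $q_{\mu}(|\cdot|)\in L^{1}(\mathbb{R}^{n})$ from Lemma \ref{lem:real_partBessel_derivative}. The same diagonal estimate then goes through with $r_{u+z}$ replaced by $q_{\Re(z)}$ in the first factor, this being exactly the content of Remark \ref{r:7.10}; the remaining $2k+n$ factors are handled as before.

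The main obstacle is not conceptual but bookkeeping: one must track how the constants in Corollary \ref{cor:7.19} and in Remark \ref{r:7.10} depend on the number of factors $m=2k+n+1$ and on $\Re(z)$, and then verify that the geometric factor $(2M/\sqrt{1+\Re(z)})^{2k}$ dominates any polynomial-in-$m$ inflation in those constants once $z_{0}$ is chosen sufficiently large. This is the analogue of the explicit constant estimate \eqref{eq:precise constant for com} in the unperturbed setting, and is the reason Hypothesis \ref{hyp:parameters} was calibrated so carefully in Section \ref{sec:pert}; once this quantitative bookkeeping is in place, absolute and uniform convergence of the Neumann series is automatic and the desired estimate $|g_{1,j}(x,x)|+|g_{2,j}(x,x)|\le\kappa(1+|x|)^{-n}$ follows directly.
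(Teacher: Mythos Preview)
Your proposal is correct and follows precisely the approach the paper takes: the authors state that Theorem \ref{thm:asymptotics of the rest2} ``is now shown in the same way [as Theorem \ref{thm:asymptotics of the rest}], employing Theorems \ref{t:7.19c} and \ref{thm:7.18},'' and your sketch unpacks exactly this---the Neumann series from Proposition \ref{prop:resolvent formulas2}, the termwise diagonal estimates via Corollary \ref{cor:7.19} and Remark \ref{r:7.10}, and the geometric summation. Your remark about bookkeeping the constants is well-placed, but since Corollary \ref{cor:7.19} and Remark \ref{r:7.10} ultimately reduce the perturbed kernel estimates to the unperturbed ones (Theorems \ref{thm:7.18} and \ref{t:7.19c} bound $r_{u+z}$ and its derivative by fixed multiples of $r_{\Re(z)}$ and $q_{\Re(z)}$), the quantitative constants from Lemma \ref{lem:asymptotics on diagonal} and Remark \ref{rem:on the quantitative version of kappa prime} carry over with only an overall multiplicative factor independent of $k$.
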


The next result, the analog of Corollary \ref{cor:final aymptotics}, is slightly different compared to the previous analogs since the operator $\tr_{2^{\hat n}d}\left(\gamma_{j,n} U C^{n-1}R_{1+z}^{n}\right)$ is \emph{not} replaced by $\tr_{2^{\hat n}d}\left(\gamma_{j,n} U C^{n-1}R_{u+z}^{n}\right)$. Indeed, Corollary \ref{cor:final aymptotics} was used to show that the only important term for the computation for the index is given by $\tr_{2^{\hat n}d}\left(\gamma_{j,n} U C^{n-1}R_{1+z}^{n}\right)$, for which we computed the integral over the diagonal of the corresponding integral kernel in Proposition \ref{prop:formula for index}, eventually yielding the formula for the index. Since the asserted formulas for admissible and $\tau$-admissible potentials are the same, we need to have a result to the effect that the integral of over the diagonal of the integral kernels of the operators $\tr_{2^{\hat n}d}\left(\gamma_{j,n} U C^{n-1}R_{u+z}^{n}\right)$ and $\tr_{2^{\hat n}d}\left(\gamma_{j,n} U C^{n-1}R_{1+z}^{n}\right)$ should lead to the same results. In fact, this is part of the proof of the following result: 

\begin{corollary}
\label{cor:final aymptotics2}  Assume Hypothesis \ref{hyp:parameters}, $z\in \Sigma_{\delta_0,\theta}$. Let $U \in C^\infty_b\big(\mathbb{R}^n;\mathbb{C}^{d\times d}\big)$ be $\tau$-admissible $($cf.\  Definition \ref{d:ta}$)$, with $u$ as in \eqref{e:r.aa}, $C=(\cQ U)$. Let $L= \cQ+U$ be given by \eqref{eq:def_of_L(2)},
$R_{u+z}$ as in \eqref{def:R_psi_z} as well as
$\cQ$, and $\gamma_{j,n}$, $j\in\{1,\ldots,n\}$, given by \eqref{eq:def_of_Q2}, 
and as in Remark \ref{rem:Eucl-Dirac-Algebar}, respectively.  \\[1mm] 
$(i)$ Let $n\in\mathbb{N}_{\geq5}$, $j\in\{1,\ldots,n\}$. Then there
exists $z_{0} > 0$, such that if $z\in\mathbb{C}$,
$\Re (z) > z_{0}$, and $h$ and $g$ denote the integral kernel of 
$2\tr_{2^{\hat n}d}\left(\gamma_{j,n} U C^{n-1}R_{1+z}^{n}\right)$
and $J_{L}^{j}(z)$, respectively, then for some $\kappa>0$,  
\[
\left|h(x,x)-g(x,x)\right|\leq \kappa (1+|x|)^{1-n-\epsilon}, 
\quad x\in\mathbb{R}^{n}.  
\]
$(ii)$ The assertion of part $(i)$ also holds for $n=3$, if, in the
above statement, $J_{L}^{j}(z)$ is replaced by $J_{L}^{j}(z)-2\tr_{2d}\left(\gamma_{j,3} 
\cQ R_{1+z}CR_{1+z}\right)$. 
\end{corollary}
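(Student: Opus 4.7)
The plan is to mimic the proof of Corollary \ref{cor:final aymptotics}, using the $\tau$-admissible counterparts of its ingredients already built in this section. By Lemma \ref{lem:almost_Neumann_series2}, $J_L^j(z)$ splits into four summands. The two involving $(L^*L+z)^{-1}+(LL^*+z)^{-1}$ are handled by Theorem \ref{thm:asymptotics of the rest2}: for $\Re(z)$ larger than some $z_0$ their diagonal kernels are bounded by $\kappa(1+|x|)^{-n}$, well inside the required tolerance $\kappa(1+|x|)^{1-n-\epsilon}$. When $n\geq 5$, the summand $2\tr_{2^{\hat n}d}(\gamma_{j,n}\cQ(R_{u+z}C)^{n-2}R_{u+z})$ has diagonal bounded by $\kappa(1+|x|)^{-n}$ thanks to Lemma \ref{lem:some properties of the first two terms2}. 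For $n=3$ this same summand is $2\tr_{2d}(\gamma_{j,3}\cQ R_{u+z}CR_{u+z})$, from which I subtract the term $2\tr_{2d}(\gamma_{j,3}\cQ R_{1+z}CR_{1+z})$ listed in part $(ii)$; the residual $2\tr_{2d}(\gamma_{j,3}\cQ R_{u+z}CR_{u+z})-2\tr_{2d}(\gamma_{j,3}\cQ R_{1+z}CR_{1+z})$ is then folded into the Neumann-remainder analysis described below.

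The core work concerns the remaining summand $S:=2\tr_{2^{\hat n}d}(\gamma_{j,n}U(R_{u+z}C)^{n-1}R_{u+z})$ and its comparison with $h$. First, use the Neumann expansion \eqref{def:R_psi_z2}, $R_{u+z}=R_{1+z}+\sum_{k\geq 1}(R_{1+z}(u-1))^k R_{1+z}$, to write $S=S_0+S_{\mathrm{rem}}$, where $S_0=2\tr_{2^{\hat n}d}(\gamma_{j,n}U(R_{1+z}C)^{n-1}R_{1+z})$ and $S_{\mathrm{rem}}$ collects all terms containing at least one factor of $(u-1)$. For $S_0$, iterated application of Lemma \ref{lem:asymptotics on the diagonal and products of psis}, as summarised in Remark \ref{rem:green's kernels of the first and the last}, yields $|S_0(x,x)-h(x,x)|\leq \kappa(1+|x|)^{1-n-\epsilon}$, since $U$ is Callias admissible and therefore satisfies the derivative decay assumptions of Definition \ref{def:phi_admissible}\,$(iii)$ (here the $\epsilon>1/2$ is precisely the one from $\tau$-admissibility via Definition \ref{d:ta}).

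It remains to estimate $S_{\mathrm{rem}}$ together with the $n=3$ leftover. Every summand in $S_{\mathrm{rem}}$ carries at least one multiplication by $(u-1)$, which is bounded and compactly supported in $B(0,\tau)$; hence on the diagonal at a point $x$ with $|x|$ large, at least one intermediate integration is confined to a compact set far from $x$, and the resulting product of Helmholtz kernels can be estimated via Theorem \ref{thm:7.18} and the pointwise bounds of Corollary \ref{cor:7.19} and Remark \ref{r:7.10}. This gives each remainder term a diagonal bound of the form $\kappa(1+|x|)^{-N}$ for any prescribed $N$, in particular $N=n+\epsilon$; summability of the Neumann series is secured by $\|R_{1+z}(u-1)\|_{\cB(L^2)}\leq 1/2$, which is built into Hypothesis \ref{hyp:parameters}. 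The same mechanism treats the $n=3$ residual $R_{u+z}CR_{u+z}-R_{1+z}CR_{1+z}$, all of whose terms carry a factor of $(u-1)$. The main obstacle is the uniform control of the Neumann remainders: one must verify that the series converges not only in $\cB_1$ but also as an integral kernel on the diagonal with the claimed polynomial decay, combining smallness in operator norm with the pointwise estimates of Section \ref{sec:pert} in such a way that no power of $(1+|x|)^{-1}$ is sacrificed to the combinatorial explosion of indices. This parallels, and uses the estimates of, the bookkeeping already carried out in Lemma \ref{lem:boundedness_of_leading_new_witten2}.
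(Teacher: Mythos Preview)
Your proposal is correct and follows the same overall strategy as the paper: decompose $J_L^j(z)$ via Lemma \ref{lem:almost_Neumann_series2}, dispose of the two resolvent-sum terms by Theorem \ref{thm:asymptotics of the rest2}, handle the $\cQ$-term by Lemma \ref{lem:some properties of the first two terms2} (respectively by the subtraction in part $(ii)$ when $n=3$), and then close the gap between the $U$-term and $h$.

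The one genuine difference is in how you bridge $R_{u+z}$ and $R_{1+z}$. You expand $R_{u+z}$ into the full Neumann series and then invoke Remark \ref{rem:green's kernels of the first and the last} on the leading $S_0$; this forces you to control an infinite sum of diagonal estimates, the ``main obstacle'' you flag. The paper avoids this entirely: it first applies Lemma \ref{lem:some properties of the first two terms2} (which already compares $U(R_{u+z}C)^{n-1}R_{u+z}$ with $UC^{n-1}R_{u+z}^n$ up to the required error), and then replaces $R_{u+z}^n$ by $R_{1+z}^n$ using the \emph{finite} resolvent identity $R_{u+z}=R_{1+z}+R_{1+z}(u-1)R_{u+z}$. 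Telescoping gives the finite sum
\[
UC^{n-1}R_{u+z}^n-UC^{n-1}R_{1+z}^n=\sum_{k=1}^n UC^{n-1}R_{u+z}^{k-1}R_{1+z}(u-1)R_{u+z}R_{u+z}^{n-k},
\]
each summand carrying one compactly supported factor $(u-1)$, so Corollary \ref{cor:7.19} applies directly with no series to sum. The same device handles the $n=3$ residual $\cQ R_{u+z}CR_{u+z}-\cQ R_{1+z}CR_{1+z}$ as a sum of three terms. Your route works, but the paper's finite telescoping is cleaner and removes the convergence issue you were worried about.
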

\begin{proof}
One recalls from Lemma \ref{lem:almost_Neumann_series2},   
\begin{align*}
J_{L}^{j}(z) & =2\tr_{2^{\hat n}d}\big(\gamma_{j,n} \cQ \left(R_{u+z}C\right)^{n-2}R_{u+z} \big)+2\tr_{2^{\hat n}d}\big(\gamma_{j,n} U \left(R_{u+z} C\right)^{n-1}R_{u+z} \big)\\
 & \quad+\tr_{2^{\hat n}d}\big(\gamma_{j,n} \cQ \big(\left(L^{*}L+z\right)^{-1}+\left(LL^{*}+z\right)^{-1}\big)\left(CR_{u+z} \right)^{n}\big)\\
 & \quad+\tr_{2^{\hat n}d}\big(\gamma_{j,n} U \big(\left(L^{*}L+z\right)^{-1}+\left(LL^{*}+z\right)^{-1}\big)\left(CR_{u+z} \right)^{n}\big).  
\end{align*}
With the help of Theorem \ref{thm:asymptotics of the rest2} one deduces 
that the integral kernels of the last two terms may be estimated
by $\kappa (1+ |x|)^{-n}$ on
the diagonal. The integral kernel of the first term is also bounded by $\kappa' (1+ |x|)^{-n}$ for a suitable $\kappa'$ by Lemma \ref{lem:some properties of the first two terms2}.
Hence, it remains to inspect the second term on the right-hand side.
The assertion follows from Lemma \ref{lem:some properties of the first two terms2} once we establish estimates for the respective integral kernels of the differences
\begin{equation}\label{eq:fa5}
\tr_{2^{\hat n}d}\left(\gamma_{j,n} U C^{n-1}R_{u+z}^{n}\right) 
- \tr_{2^{\hat n}d}\left(\gamma_{j,n} U C^{n-1}R_{1+z}^{n}\right)
\end{equation}
and 
\begin{equation}\label{eq:fa3}
\tr_{2d}\left(\gamma_{j,3} 
\cQ R_{u+z}CR_{u+z}\right)-\tr_{2d}\left(\gamma_{j,3} 
\cQ R_{1+z}CR_{1+z}\right).
\end{equation}
In this context we will use the equation
\begin{align*}
   R_{u+z}&=\sum_{k=0}^\infty R_{1+z}((u-1)R_{1+z})^k
   \\&= R_{1+z} + \sum_{k=1}^\infty R_{1+z}((u-1)R_{1+z})^k
   \\ & = R_{1+z} + R_{1+z}(u-1)R_{u+z}.
\end{align*}
Thus, \eqref{eq:fa3} and \eqref{eq:fa5} read
\begin{align}
& \tr_{2d}\left(\gamma_{j,3} 
\cQ R_{1+z}(u-1)R_{u+z}CR_{1+z}\right)+\tr_{2d}\left(\gamma_{j,3} 
\cQ R_{1+z}CR_{1+z}(u-1)R_{u+z}\right)+    \no \\ 
& \quad +\tr_{2d}\left(\gamma_{j,3} 
\cQ R_{1+z}(u-1)R_{u+z}CR_{1+z}(u-1)R_{u+z}\right) \label{eq:fa3.1}
\end{align}
and 
\begin{equation}\label{eq:fa5.1}
    \sum_{k=1}^n \tr_{2^{\hat n}d}\left(\gamma_{j,n} U C^{n-1}R_{u+z}^{k-1} 
    R_{1+z}(u-1)R_{u+z}R_{u+z}^{n-k}\right),
\end{equation}
  respectively. In each summand of \eqref{eq:fa3.1} and \eqref{eq:fa5.1} there is one 
  term $(u-1)$ which is compactly supported and thus clearly satisfies for some $\kappa'>0$, 
  $|(u-1)(x)|\leq \kappa' (1+|x|)^{-n}$, $x\in \mathbb{R}^n$. Hence, the assertion on the asymptotics of the integral kernels associated with the operators in \eqref{eq:fa3} and \eqref{eq:fa5} follows from Corollary \ref{cor:7.19}.
\end{proof}

We are now ready to prove Theorem \ref{thm:Perturbation} for $R>0$ and smooth potentials $\Phi$. 

\begin{theorem}[Theorem \ref{thm:Perturbation} for $R>0$, smooth case]\label{t:7.27} Let $n,d\in \mathbb{N}$, $n\geq 3$ odd, and $\Phi\in C_b^\infty\big(\mathbb{R}^n,\mathbb{C}^{d\times d}\big)$ satisfy the following assumptions: \\[1mm]
$(i)$ $\Phi(x)=\Phi(x)^*$, $x\in \mathbb{R}^n$. \\[1mm]
$(ii)$ There exist $c>0$ and $R>0$ such that $|\Phi(x)|\geq cI_d$ for all $x\in \mathbb{R}^n\backslash  B(0,R)$. \\[1mm] 
$(iii)$ There exists $\epsilon> 1/2$ such that for all $\alpha\in \mathbb{N}_0^n$ there 
is $\kappa>0$ with 
 \[
    \|\partial^\alpha \Phi(x)\|\leq \kappa \begin{cases}
(1+|x|)^{-1},& |\alpha|=1, \\[1mm] 
(1+|x|)^{-1-\epsilon},&|\alpha|\geq2,
\end{cases}  \quad x\in \mathbb{R}^n.
 \]
Let $\tilde L=\mathcal{Q}+\Phi$ as in \eqref{eq:def_of_L(2)}, $\delta_0\in (-1,0), \theta\in (0,\pi/2)$. Then there exists $\tau>0$ such that for all $\tau$-admissible potentials $U$ with 
$U = \sgn (\Phi)$ on sufficiently large balls, and with $L\coloneqq \mathcal Q+U$, the following assertions $(\alpha)$--$(\delta)$ hold: 
\begin{align} 
& \text{$(\alpha)$ There exists $\delta_0\leq \delta<0$ and $0<\theta\leq \theta_0$  such that for all $\Lambda>0$ the family}  \no \\
& \qquad \quad \Sigma_{\delta,\theta}\ni z\mapsto z\chi_\Lambda \tr_{2^{\hatt n}d} ((L^*L+z)^{-1}-(LL^*+z)^{-1})\in \mathcal{B}_1(L^2(\mathbb{R}^n))     \label{i:7.27.1} \\
& \quad \;\;\; \text{is analytic.}   \no \\
& \text{$(\beta)$ The family $\{ f_\Lambda\}_{\Lambda>0}$ of holomorphic functions}   \no \\
& \qquad \quad 
f_\Lambda \colon \Sigma_{\delta,\theta}\ni  z\mapsto \tr \big(z\chi_\Lambda \tr_{2^{\hatt n}d} ((L^*L+z)^{-1}-(LL^*+z)^{-1})\big)     \label{i:7.27.2} \\
& \quad \;\;\; \text{is locally bounded $($see \eqref{d:normal}$)$.}    \no \\
& \text{$(\gamma)$ The limit $f\coloneqq \lim_{\Lambda\to\infty} f_\Lambda$ exists in the compact open topology and satisfies for} \no \\ 
& \quad \;\;\; \text{all $z\in \Sigma_{\delta,\theta}$,}   \no \\
& \qquad \quad  
      f(z) = c_n (1+z)^{-n/2} \lim_{\Lambda\to\infty}\frac{1}{\Lambda}\sum_{j,i_{1},\ldots,i_{n-1} = 1}^n \epsilon_{ji_{1}\ldots i_{n-1}}    \no \\ 
& \qquad \qquad \qquad \times
  \int_{\Lambda S^{n-1}}\tr (U(x) (\partial_{i_{1}} U(x) \ldots 
  (\partial_{i_{n-1}} U)(x)) x_{j}\, d^{n-1} \sigma(x),      \label{i:7.27.3} \\
& \quad \;\;\; \text{where}   \no \\
&  \qquad \quad c_n\coloneqq \frac{1}{2} 
\left(\frac{i}{8\pi}\right)^{(n-1)/2}\frac{1}{\left[(n-1)/2\right]!}.  \no \\
& \text{$(\delta)$ the operators $\tilde L$ and $L$ are Fredholm operators and}   \no \\
& \qquad \quad \ind \big(\tilde L\big) = \ind (L) = f(0)    \no \\
& \qquad \qquad \qquad \, = c_n  \lim_{\Lambda\to\infty}\frac{1}{\Lambda}\sum_{j,i_{1},\ldots,i_{n-1} = 1}^n \epsilon_{ji_{1}\ldots i_{n-1}}    \label{i:7.27.4} \\ 
&\qquad \qquad \qquad \quad \, \times
  \int_{\Lambda S^{n-1}} \tr (U(x) (\partial_{i_{1}} U)(x) \ldots 
  (\partial_{i_{n-1}} U)(x)) x_{j}\, d^{n-1} \sigma(x).    \no 
 \end{align}
\end{theorem}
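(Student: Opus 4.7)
The plan is to reduce the theorem to the case of $\tau$-admissible potentials and then mimic the strategy of Theorem \ref{thm:Witten_reg_n5}, now powered by the perturbation theory of Section \ref{sec:pert} and the $\tau$-admissible analogues developed in Lemmas \ref{lem:almost_Neumann_series2}--\ref{lem:some properties of the first two terms2}, Theorems \ref{thm:trisbounded2}, \ref{thm:local_boundedness of the trace2}, \ref{thm:asymptotics of the rest2} and Corollary \ref{cor:final aymptotics2}.

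First, I would fix the parameters of Hypothesis \ref{hyp:parameters} starting from the given $\delta_0,\theta$ and obtain $\tau>0$ from \eqref{e:tau}. Given this $\tau$, Lemma \ref{lem:almost admissible} (with Remark \ref{rem:functions of assumptions satisfy assumptions}) produces a $\tau$-admissible $U\in C_b^\infty(\bbR^n;\bbC^{d\times d})$ coinciding with $\sgn(\Phi)$ outside a sufficiently large ball. Theorem \ref{thm:3L_Fred_Rge0} then delivers both the Fredholm property of $\tilde L$ and of $L=\cQ+U$, together with $\ind(\tilde L)=\ind(L)$. This accomplishes the first half of $(\delta)$ and allows me to work exclusively with the $\tau$-admissible $L$ from here on.

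For ($\alpha$), the trace class assertion and its analyticity are precisely the content of Theorem \ref{thm:trisbounded2} (with analyticity of $z\mapsto (L^*L+z)^{-1}-(LL^*+z)^{-1}$ in $\cB_1$ following from the norm analyticity together with the uniform $\cB_1$-bound via \cite[Thm.\,A.4.3]{Wa11} as in the proof of Theorem \ref{thm:index with Witten}). For ($\beta$), I would write $2B_L(z)=\sum_j[\partial_j,J_L^j(z)]+A_L(z)$ via Lemma \ref{lem:almost_Neumann_series2}; Lemma \ref{lem:j,A,Green} (whose $\tau$-admissible version follows by replacing $R_{1+z}$ by $R_{u+z}$ and invoking Remark \ref{rem:remark on op norm of Rpsiz}\,$(ii)$) shows that the diagonal of $A_L(z)$ vanishes, so the trace is computed as the divergence integral of $\{x\mapsto g_j(z)(x,x)\}_j$ where $g_j(z)$ is the integral kernel of $J_L^j(z)$. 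Combining Lemma \ref{lem:some properties of the first two terms2}, Theorem \ref{thm:local_boundedness of the trace2} (which bounds the $z$-multiplied tails) and Gauss' theorem, $\{z\mapsto zf_\Lambda(z)\}_\Lambda$ is locally bounded on $\Sigma_{\delta,\theta}$ for some $\delta\in(\delta_0,0)$, and Lemma \ref{lem:Laurent_series_argument} upgrades this to local boundedness of $\{f_\Lambda\}_\Lambda$ itself.

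For ($\gamma$), Montel's theorem (Theorem \ref{t:m}) extracts a subsequence $\{f_{\Lambda_k}\}$ converging in the compact-open topology to some holomorphic $f$ on $\Sigma_{\delta,\theta}$. To identify $f$, I would restrict to $z$ with $\Re(z)$ large enough for the Neumann series in \eqref{def:R_psi_z2} to control the remainder in Lemma \ref{lem:almost_Neumann_series2}; Corollary \ref{cor:final aymptotics2} then shows the only term contributing to the limit of the boundary integral over $\Lambda S^{n-1}$ is the one whose integral kernel is $2\tr_{2^{\hat n}d}(\gamma_{j,n} U C^{n-1}R_{1+z}^n)$, which is evaluated explicitly by Proposition \ref{prop:formula for index}. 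Analytic continuation of $f$ from $\Sigma_{z_0,\theta}\cap\Sigma_{\delta,\theta}$ to all of $\Sigma_{\delta,\theta}$ yields the explicit formula \eqref{i:7.27.3} on the full domain, and in particular forces any sequential Montel limit to coincide; hence the \emph{full} limit $\lim_{\Lambda\to\infty} f_\Lambda$ exists in the compact-open topology. For ($\delta$), Theorem \ref{thm:index with Witten} (applied with $T_\Lambda=\chi_\Lambda$, $S_\Lambda=I$, and using uniform convergence on $\overline{B(0,\delta')}$ for $\delta'<|\delta|$) gives $f(0)=\ind(L)$, and combined with $\ind(\tilde L)=\ind(L)$ established above one obtains \eqref{i:7.27.4}.

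The main obstacle is controlling the resolvent expansion uniformly on the sector $\Sigma_{\delta,\theta}$ rather than merely on $\{\Re(z)>-1\}$: the $\tau$-admissible setting loses commutativity of $\cQ$ and $R_{u+z}$, so the proof of local boundedness in Lemma \ref{lem:boundedness_of_leading_new_witten2} requires a careful Neumann-series bookkeeping with each correction term $R_{1+z}(u-1)\in\cB_{n+1}$, balanced against the $2^{-k}$ decay from Hypothesis \ref{hyp:parameters}; similarly, showing that $h_{1,j}$ merely decays like $(1+|x|)^{-n}$ (rather than vanishing on the diagonal as in Lemma \ref{lem:some properties of the first two terms}) relies decisively on the compact support of $u-1$ combined with the pointwise estimate of Theorem \ref{t:7.19c} via Remark \ref{r:7.10}. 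These are the technical pillars that enable the analytic-continuation step from large $\Re(z)$ down to a neighborhood of $0$.
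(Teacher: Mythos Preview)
Your proposal is correct and follows essentially the same architecture as the paper's proof: reduction to a $\tau$-admissible $U$ via Lemma~\ref{lem:almost admissible} and Theorem~\ref{thm:3L_Fred_Rge0}, then the $\tau$-admissible analogues (Theorem~\ref{thm:trisbounded2}, Lemma~\ref{lem:almost_Neumann_series2}, Lemma~\ref{lem:some properties of the first two terms2}, Theorem~\ref{thm:local_boundedness of the trace2}, Corollary~\ref{cor:final aymptotics2}) feeding into the Montel/analytic-continuation machinery exactly as in Theorem~\ref{thm:Witten_reg_n5}. One point you glossed over: your invocation of the $\tau$-admissible Lemma~\ref{lem:j,A,Green} for differentiability of the kernel of $J_L^j(z)$ only works for $n\geq 5$; for $n=3$ the paper (as in Section~\ref{sec:n=3}) inserts the additional regularization $B_{L,\mu}(z)=(1-\mu\Delta)^{-1}B_L(z)(1-\mu\Delta)^{-1}$, applies Gauss' theorem to the smooth kernel of $J_{L,\mu}^j(z)$, and then removes $\mu$ via Lemma~\ref{lem:pointwise convergence of regularized Green} together with a $\tau$-admissible version of Lemma~\ref{lem:reg vanishes on diagonal}---this is the step that handles the lack of differentiability of $\tr_{2d}(\gamma_{j,3}\cQ R_{u+z}CR_{u+z})$ and should be mentioned to cover the full range $n\geq 3$.
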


As mentioned earlier in connection with the proof of Theorem \ref{t:7.27}, we will follow the analogous reasoning used for the proof for Theorem \ref{thm:Witten_reg_n5}. So for the proof of Theorem \ref{t:7.27} we now need to replace the statements Theorem \ref{thm:trisbounded}, Lemma \ref{lem:almost_Neumann_series}, Lemma \ref{lem:some properties of the first two terms}, Theorem \ref{thm:local_boundedness of the trace} and Corollary \ref{cor:final aymptotics} by the respective results Theorem \ref{thm:trisbounded2}, Lemma \ref{lem:almost_Neumann_series2}, Lemma \ref{lem:some properties of the first two terms2}, Theorem \ref{thm:local_boundedness of the trace2} and Corollary \ref{cor:final aymptotics2} obtained in this section. Since large parts of the proof would just be a repetition of arguments used in the proof of Theorem \ref{thm:Witten_reg_n5}, we will not give a detailed proof for the case $n\geq 5$. However, in Section \ref{sec:der_tra_diag}, we only sketched how the result for $n=3$ comes about. As this case is notationally less messy, we will now give the full proof for the case $n=3$. As in Section \ref{sec:n=3}, the core idea is to regularize the expressions involved by multiplying $B_L(z)$ with $(1-\mu\Delta)^{-1}$, $\mu>0$, from either side, which results in 
\begin{equation*}
B_{L,\mu}(z)=\left(1-\mu\Delta  \right)^{-1}B_{L}(z)\left(1-\mu\Delta  \right)^{-1},
\end{equation*}
(compare with \eqref{eq:B_Lmu}), and similarly for $J_{L,\mu}^{j}(z)$ and $A_{L,\mu}(z)$, recalling  \eqref{eq:JJJ_mu} and \eqref{eq:ALz_mu}, respectively. 

\begin{proof}[Proof of Theorem \ref{t:7.27}, $n=3$]
Part $(\alpha)$: This follows from Theorem \ref{thm:trisbounded2}.  \\[1mm]  
Part $(\beta)$: Again by Theorem \ref{thm:trisbounded2}, the expression $\tr (\chi_\Lambda B_{L}(z))$, with $B_L(z)$ as given in \eqref{eq:Def_of_B_L(z)}, can be computed as the integral over the diagonal of its integral kernel. Next, we denote by $\mathbb{A}$ and $\mathbb{J}$ the integral kernels for the operators $A_L(z)$ and $ 2B_L(z)-A_L(z)$, respectively, and correspondingly $\mathbb{A}_\mu$ and $\mathbb{J}_\mu$ for $A_{L,\mu}(z)$ and $2 B_{L,\mu}(z)-A_{L,\mu}(z)$, $\mu>0$. Hence, Proposition \ref{prop:commutator with phi vanishes} applied to 
$\mathbb{A}$ yields,
\begin{align*} 
2 f_\Lambda(z) & = 2\tr (\chi_\Lambda B_{L}(z)) 
   \\ & =  \int_{B(0,\Lambda)} \mathbb{A}(x,x)+\mathbb{J}(x,x) \, d^3 x
   =  \int_{B(0,\Lambda)} \mathbb{J}(x,x) \, d^3 x
   \\ & =\lim_{\mu\to 0}  \int_{B(0,\Lambda)} \mathbb{J}_\mu(x,x) \, d^3 x,
\end{align*}
where in the last equality we used the continuity the integral kernels of $2B_L(z)$ and $A_L(z)$, as well as Lemma \ref{lem:pointwise convergence of regularized Green}.

Next, appealing to the analog result of Lemma \ref{lem:repre of bmu} for $\Phi$ being replaced by the $\tau$-admissible potential $U$, one arrives at
\begin{align*} 
2 f_{\Lambda}(z)& =\lim_{\mu\to 0}  \int_{B(0,\Lambda)} \mathbb{J}_\mu(x,x) \, d^3 x
 \\  & = \lim_{\mu\to 0} \int_{B(0,\Lambda)} \bigg\langle \delta_{\{x\}}, \sum_{j=1}^3 [\partial_j,J_{L,\mu}^j(z)] \delta_{\{x\}} \bigg\rangle \, d^3x.
\end{align*}
Denote	
\[
\mathbb{K}_{L,\mu}\coloneqq\big\{x\mapsto g_{L,\mu}^{j}(z)(x,x)\big\}_{j\in\{1,2,3\}},
\]
where $g_{L,\mu}^{j}(z)$ is the integral kernel of $J_{L,\mu}^{j}(z)$, 
$j\in\{1,2,3\}$, and $\mathbb{K}_{L,z}$ that for 
\[
\big\{J_{L}^{j}(z)-2\tr_{2d} (\gamma_{j,3} \cQ R_{1+z}CR_{1+z})\big\}_{j\in\{1,2,3\}}.
\]
Invoking Lemmas \ref{lem:reg vanishes on diagonal} and \ref{lem:pointwise convergence of regularized Green}, and hence the fact that $\{x\mapsto\mathbb{K}_{L,\mu}(x,x)\}_{\mu>0}$
is locally bounded, one obtains 
\begin{align*}
\lim_{\mu\to0}\int_{B(0,\Lambda)}\mathbb{J}_{L,\mu}(x,x)\, d^3 x & =\lim_{\mu\to0}\int_{\Lambda S^{2}}\left( \mathbb{K}_{L,\mu}(x,x),\frac{x}{\Lambda}\right) \, d^{2} \sigma(x)\\
 & =\int_{\Lambda S^{2}}\lim_{\mu\to0}\left( \mathbb{K}_{L,\mu}(x,x),\frac{x}{\Lambda}\right) \, d^{2} \sigma(x)\\
 & =\int_{\Lambda S^{2}}\left(\mathbb{K}_{L,z}(x,x),\frac{x}{\Lambda}\right) \, d^{2} \sigma(x). 
\end{align*}
Hence, one arrives at
\begin{align}
  2f_{\Lambda}(z)& = \int_{\Lambda S^{2}}\left(\mathbb{K}_{L,z}(x),\frac{x}{\Lambda}\right) \, d^{2} \sigma(x) \label{e:7.27.1}
  \\  & = \int_{B(0,\Lambda)} \bigg\langle \delta_{\{x\}},\sum_{j=1}^3 [\partial_j, \tilde{J}_{L}^j(z)]\delta_{\{x\}} \bigg\rangle \, d^3x,   \no
\end{align} 
with
\[
   \tilde{J}_{L}^{j}(z) \coloneqq J_{L}^{j}(z)-2\tr_{2d} (\gamma_{j,3} \cQ R_{1+z}CR_{1+z}).
\]
For proving that $\{f_\Lambda\}_{\Lambda>0}$ is locally bounded, we recall from Lemma \ref{lem:almost_Neumann_series2} that
\begin{align*}
 \tilde{J}_{L}^{j}(z) & = 2\tr_{2d}\big(\gamma_{j,3} \cQ (R_{u+z}C)R_{u+z}\big) - 2\tr_{2d} (\gamma_{j,3} \cQ R_{1+z}CR_{1+z})
 \\ &\quad + 2\tr_{2d}\big(\gamma_{j,3} U (R_{u+z}C)^{2}R_{u+z}\big)    
 \\  & \quad+\tr_{2d}\big(\gamma_{j,3} \cQ \big((L^{*}L+z)^{-1}+(LL^{*}+z)^{-1}\big) 
 (CR_{u+z})^{3}\big)   \\
& \quad + \tr_{2d} \big(\gamma_{j,3} U \big((L^{*}L+z)^{-1}
 +(LL^{*}+z)^{-1}\big) (CR_{u+z})^{3}\big),
\end{align*}
and, thus,
\begin{align*}
 & \sum_{j=1}^3 \big[\partial_j,\tilde{J}_{L}^{j}(z)\big]
 \\ & \quad =  \sum_{j=1}^3 \big[\partial_j, \big(2\tr_{2d}\big(\gamma_{j,3} \cQ (R_{u+z}C)R_{u+z}\big) - 2\tr_{2d} (\gamma_{j,3} \cQ R_{1+z}CR_{1+z})\big)\big]
 \\ & \qquad +  \sum_{j=1}^3 \big[\partial_j, 2\tr_{2d}\big(\gamma_{j,3} U (R_{u+z}C)^{2}R_{u+z}\big) \big]
 \\  & \qquad + \tr_{2d} \big(\big[ \cQ, 
   U \big((L^{*}L+z)^{-1}+(LL^{*}+z)^{-1}\big)(CR_{u+z})^{3}\big]\big)   \\
& \qquad + \tr_{2d}\big(\big[ \cQ, \big(\cQ \big((L^{*}L+z)^{-1}+(LL^{*}+z)^{-1}\big) (CR_{u+z})^{3}\big)\big]\big).
\end{align*}
Hence, 
\begin{align*}
  2f_{\Lambda}(z) & = \int_{B(0,\Lambda)} \bigg\langle \delta_{\{x\}},\sum_{j=1}^3 \bigg[\partial_j, \big(2\tr_{2d}\big(\gamma_{j,3} \cQ (R_{u+z}C)R_{u+z}\big) \\ &\quad\quad\quad\quad\quad\quad\quad\quad\quad	- 2\tr_{2d} (\gamma_{j,3} \cQ R_{1+z}CR_{1+z})\big)\bigg] \delta_{\{x\}}\bigg\rangle \, d^3 x
  \\ & \quad +\int_{B(0,\Lambda)} \bigg\langle \delta_{\{x\}},\sum_{j=1}^3 \big[\partial_j, 2\tr_{2d}\big(\gamma_{j,3} U (R_{u+z}C)^{2}R_{u+z}\big) \big] \delta_{\{x\}} \bigg\rangle \, d^3 x
  \\& \quad + \tr(\iota_\Lambda(z))+\tr(\tilde\iota_\Lambda(z)),
\end{align*} 
where $\iota_\Lambda(z)$ and $\tilde\iota_\Lambda(z)$ are defined in Theorem \ref{thm:local_boundedness of the trace2}. With the help of part $(\alpha)$, Theorem \ref{thm:local_boundedness of the trace2} and Lemma \ref{lem:Laurent_series_argument}, to prove  part $(\beta)$, it suffices to prove the local boundedness of
\begin{align*}
 & z\mapsto  \int_{B(0,\Lambda)} \bigg\langle \delta_{\{x\}},\sum_{j=1}^3 \big[\partial_j, \big(2\tr_{2d}\big(\gamma_{j,3} \cQ (R_{u+z}C)R_{u+z}\big)
 \\
 & \qquad - 2\tr_{2d} (\gamma_{j,3} \cQ R_{1+z}CR_{1+z})\big)\big]  \delta_{\{x\}}\bigg\rangle \, d^3 x
\end{align*}
and
\begin{equation*}
 z\mapsto \int_{B(0,\Lambda)} \bigg\langle \delta_{\{x\}},\sum_{j=1}^3 \big[\partial_j, 2\tr_{2d}\big(\gamma_{j,3} U (R_{u+z}C)^{2}R_{u+z}\big) \big] \delta_{\{x\}}\bigg\rangle \,d^3 x,  
\end{equation*}
both considered as families of functions indexed by $\Lambda>0$. Appealing to Gauss' divergence theorem, it suffices to show that the integral kernels associated with the operators
\begin{align*}
  & \cQ R_{u+z}CR_{u+z}- \cQ R_{1+z}CR_{1+z} 
  \\ & \quad = \cQ R_{1+z}(u-1)R_{u+z}CR_{u+z}+\cQ R_{u+z}CR_{1+z}(u-1)R_{u+z}
  \\& \qquad +\cQ R_{1+z}(u-1)R_{u+z}CR_{1+z}(u-1)R_{u+z}
\end{align*}
and 
\[ 
U (R_{u+z}C)^{2}R_{u+z}
\]
can be estimated  on the diagonal by $\kappa' (1+|x|)^{-2}$ for some $\kappa' > 0$ for sufficiently large $|x|$. However, this is a consequence of Corollary \ref{cor:7.19}, proving part $(\beta)$. 
\\[1mm]  
Part $(\gamma)$:  By Montel's Theorem, there exists a sequence $\{\Lambda_k\}_{k\in\bbN}$ of positive reals tending to infinity such that $f\coloneqq \lim_{k\to\infty} f_{\Lambda_k}$ exists in the compact open topology. From \eqref{e:7.27.1}, one recalls 
\[
 2f_{\Lambda}(z) = \int_{\Lambda S^{2}}\left(\mathbb{K}_{L,z}(x,x),\frac{x}{\Lambda}\right) \, d^{2} \sigma(x),
\] 
with $\mathbb{K}_{L,z}$ denoting the integral kernel of
\[
   \big\{J_{L}^{j}(z)-2\tr_{2d} (\gamma_{j,3} \cQ R_{1+z}CR_{1+z})\big\}_{j\in\{1,2,3\}}.
\]
Next, we choose $z_0 > 0$ as in Corollary
\ref{cor:final aymptotics2}\,$(ii)$ and let $z\in \Sigma_{z_0,\theta}$. With $h_{j}$, the integral kernel of $2\textnormal{tr}_{2^{}d}\left(\gamma_{j,3}\Psi C^{2}R_{1+z}^{3}\right)$, 
we define $\mathbb{H}_{z}\coloneqq\left(x\mapsto h_{j}(x,x)\right)_{j\in\{1,2,3\}}$.
Due to Corollary \ref{cor:final aymptotics2}\,$(ii)$ one can find $\kappa>0$ such
that for $k\in \mathbb{N}$, 
\begin{align*}
& \left|\int_{\Lambda_k S^{2}}\left((\mathbb{K}_{J,z}-\mathbb{H}_{z})(x),\frac{x}{\Lambda_k}
\right)_{\bbR^n} \, d^{2} \sigma(x)\right|    \\ 
& \quad \leq \int_{\Lambda_k S^{2}} \|(\mathbb{K}_{J,z}-\mathbb{H}_{z})(x)\|_{\bbR^n}\, d^{2} \sigma(x)\\
 & \quad \leq \kappa\int_{\Lambda_k S^{2}} (1+|x|)^{-2-\epsilon}\, d^{2} \sigma(x)\\
 & \quad =\kappa \Lambda_k^{2}\omega_{2} (1+\Lambda_k)^{-2-\epsilon}.
\end{align*}
Hence, 
\[
\lim_{k\to\infty}\int_{\Lambda_k S^{2}}\left((\mathbb{K}_{J,z}-\mathbb{H}_{z})(x),\frac{x}{\Lambda_k}
\right)_{\bbR^n} \, d^{2} \sigma(x)=0, 
\]
and 
\begin{align}
2f(z) & =\lim_{k\to\infty}\int_{\Lambda_k S^{2}}\left( \mathbb{K}_{J,z}(x),\frac{x}{\Lambda_k}\right)_{\bbR^n} \, d^{2} \sigma(x) \notag \\
& =\lim_{k\to\infty}\int_{\Lambda_k S^{2}}\left( \mathbb{H}_{z}(x),\frac{x}{\Lambda_k}
\right)_{\bbR^n} \, d^{2} \sigma(x) \notag \\
& =\left(\frac{i}{8\pi}\right) (1+z)^{-3/2} \lim_{k\to\infty}\int_{\Lambda_k S^{2}} \notag \\
& \quad \times \sum_{j=1}^3 \bigg(\sum_{i_{1},i_{2} = 1}^3 \epsilon_{ji_{1}i_{2}}\tr (U(x) (\partial_{i_{1}} U)(x) (\partial_{i_{2}} U)(x)\big)\bigg)\bigg(\frac{x_{j}}{\Lambda_k}\bigg) \, d^{2} \sigma(x),
\label{eq:limit_expr_f2}
\end{align} 
where, for the last integral, we used Proposition \ref{prop:formula for index}. 
Theorem \ref{thm:index with Witten} implies $f(0)=\ind (L)$. In particular, any sequence $\{\Lambda_k\}_{k\in\bbN}$ of positive reals converging to infinity contains a subsequence $\{\Lambda_{k_\ell}\}_\ell$ such that for that particular subsequence the limit
\begin{align*} 
& \lim_{\ell\to\infty}\int_{\Lambda_{k_\ell} S^{2}} 
\sum_{j,i_{1},i_{2} = 1}^3 \epsilon_{ji_{1}	i_{2}}\tr (U(x) (\partial_{i_{1}} U)(x) 
(\partial_{i_{2}} U)(x)\big) \bigg(\frac{x_{j}}{\Lambda_{k_\ell}}\bigg) \, d^{2} \sigma(x) 
\end{align*}  
exists and equals 
\begin{equation}\label{eq:subseq_argum2}
  \frac{2 \ind (L)} {\left[i/(8\pi)\right] (1+z)^{-3/2}}.
\end{equation}
Hence, the limit
\begin{align}\label{eq:limit_exists2} 
& \lim_{\Lambda\to\infty}\int_{\Lambda S^{2}} 
\sum_{j,i_{1},i_{2} = 1}^3 \epsilon_{ji_{1}\ldots i_{2}}\tr (U(x) (\partial_{i_{1}} U)(x) 
(\partial_{i_{2}} U)(x)) \bigg(\frac{x_{j}}{\Lambda}\bigg) \, d^{2} \sigma(x)  
\end{align}
 exists and equals the number in \eqref{eq:subseq_argum2}. On the other hand, for $\Re(z)>z_0$ with $z_0>0$ sufficiently large (according to Corollary \ref{cor:final aymptotics2}\,$(i)$) the 
family $\{f_\Lambda\}_{\Lambda>0}$ converges for $\Lambda\to\infty$ on the domain $\mathbb{C}_{\Re>z_0}\cap \Sigma_{\delta,\theta}$ if and only if the limit in \eqref{eq:limit_exists2} exists. Indeed, this follows from the explicit expression for the limit in \eqref{eq:limit_expr_f2}. Therefore, $\{f_\Lambda\}_{\Lambda>0}$
converges in the compact open topology on $\mathbb{C}_{\Re>z_0}\cap \Sigma_{\delta,\theta}$. By the local boundedness of the latter family on  $\Sigma_{\delta,\theta}$, the principle of analytic continuation for analytic functions implies that the latter family actually converges on the domain $\Sigma_{\delta,\theta}$ in the compact open topology. In particular, 
\begin{align*}
& \frac{2 f(z) (1+z)^{3/2}}{i/(8\pi)}    \\ 
& \quad = \lim_{\Lambda\to\infty}\int_{\Lambda S^{2}} 
\sum_{j,i_{1},i_{2} = 1}^3 \epsilon_{ji_{1}\ldots i_{2}}\tr (U(x) (\partial_{i_{1}} U)(x) 
(\partial_{i_{2}} U)(x)) \bigg(\frac{x_{j}}{\Lambda}\bigg) \, d^{2} \sigma(x).   
\end{align*} 
Part $(\delta)$: The Fredholm property of $L$ and $\tilde L$ follows from Lemma \ref{lem:2L_Fred_Rge0}, and the equality $\ind \big(\tilde L\big) =\ind (L)$ follows from Theorem \ref{thm:3L_Fred_Rge0} and Remark \ref{rem:almo_ad}. The remaining equality in \eqref{i:7.27.4} follows from part $(\gamma)$ and Theorem \ref{thm:index with Witten}.
\end{proof}

\newpage

\section{The proof of Theorem \ref{thm:Perturbation}: The General Case} \lb{s13} 

The strategy to prove Theorem \ref{thm:Perturbation} for potentials which are only $C^2$ consists in an additional convolution with a suitable mollifier, applying Theorem \ref{thm:Perturbation} (i.e., 
Theorem \ref{t:7.27}) for the $C^\infty$-case, and to use suitable perurbation theorems for the Fredholm index. The next result gathers information on mollified functions.

\begin{proposition}
\label{prop:mollpot} Let $n,d\in\mathbb{N}$, 
$\Phi\in C_{b}^{1}\big(\mathbb{R}^{n};\mathbb{C}^{d\times d}\big)$. Assume $\zeta\in C_0^{\infty}(\mathbb{R}^{n})$ with $\zeta\geq0$,
$\supp (\zeta) \subset B(0,1)$, $\int_{\mathbb{R}^{n}}\zeta(x) \, d^n x = 1$ and
for $\gamma>0$ define $\zeta_{\gamma}\coloneqq \gamma^{-n}\zeta\left(1/\gamma\cdot\right)$
and $\Phi_{\gamma}\coloneqq\zeta_{\gamma}*\Phi$. \\[1mm] 
$(i)$ For all $0<\gamma<1$ and $j\in\{1,\ldots,n\}$ one has
\[
\left\Vert \Phi(x)-\Phi_{\gamma}(x)\right\Vert \leq \gamma \sup_{y\in B(0,\gamma)}\left\Vert \Phi'(x+y)\right\Vert, \quad x\in\mathbb{R}^{n}.
\]
$(ii)$ Assume, in addition, $\Phi\in C^2_b\big(\mathbb{R}^n;\mathbb{C}^{d\times d}\big)$ and for some $\epsilon> 1/2$ and all $\alpha\in \mathbb{N}_0^n$, $|\alpha|=2$, that for some $\kappa>0$,  
\[
   \|\partial^\alpha \Phi(x)\|\leq \kappa (1+|x|)^{-1-\epsilon}, 
   \quad x\in \mathbb{R}^n. 
\]
Then for all $\alpha\in\mathbb{N}_{0}^{n}$ with $\left|\alpha\right|\geq2$, $0<\gamma<1$, 
\[
\|(\partial^{\alpha}\Phi_{\gamma})(x)\|\leq \|\partial^{\alpha-\beta}\zeta\|_{\infty}v_{n} 
\gamma^{2 - |\alpha|}
\kappa (1-\gamma+|x|)^{-1-\epsilon}, \quad x\in\mathbb{R}^{n},
\]
with $v_{n}$ the $n$-dimensional volume of the unit ball
in $\mathbb{R}^{n}$ and $\beta\in\mathbb{N}_{0}^{n}$ such that 
$(\alpha-\beta)\in\mathbb{N}_{0}^{n}$
and $\left|\beta\right|=2$. \\[1mm] 
$(iii)$ If $\Phi(x)=\Phi(x)^*$ for all $x\in\mathbb{R}^n$, then $\Phi_\gamma(x)=\Phi_{\gamma}(x)^*$ for all $x\in \mathbb{R}^n$, $\gamma>0$. \\[1mm] 
$(iv)$ If there exist $c>0$ and $R>0$ such that $|\Phi(x)|\geq cI_d$
for all $x\in\mathbb{R}^{n}\backslash B\left(0,R\right)$, then there exists $\gamma_{0}>0$ such that for all $0<\gamma<\gamma_{0}$
\[
|\Phi_{\gamma}(x)|\geq (c/2) I_d, \quad x\in\mathbb{R}^{n} \backslash B(0,R).
\]
\end{proposition}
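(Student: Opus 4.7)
\medskip

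\noindent\textbf{Proof proposal.} All four assertions are standard mollifier estimates, and I will treat them in the order stated.

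For part $(i)$, I would write $\Phi_\gamma(x)-\Phi(x)=\int_{B(0,1)}\zeta(w)[\Phi(x-\gamma w)-\Phi(x)]\,d^{n}w$ after the substitution $y=x-\gamma w$ and use $\int\zeta=1$. An application of the fundamental theorem of calculus yields $\Phi(x-\gamma w)-\Phi(x)=-\gamma\int_{0}^{1}\Phi'(x-t\gamma w)\cdot w\,dt$, so that $\|\Phi(x-\gamma w)-\Phi(x)\|\le\gamma|w|\sup_{z\in B(0,\gamma)}\|\Phi'(x+z)\|$. Since $|w|\le 1$ on $\supp(\zeta)$ and $\int\zeta=1$, the claimed estimate follows.

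For part $(ii)$, given $\alpha\in\mathbb{N}_0^n$ with $|\alpha|\geq 2$, I pick $\beta\in\mathbb{N}_0^{n}$ with $|\beta|=2$ and $\alpha-\beta\in\mathbb{N}_0^{n}$, and transfer the excess derivatives onto the mollifier, writing
\[
(\partial^{\alpha}\Phi_\gamma)(x)=\big((\partial^{\alpha-\beta}\zeta_{\gamma})*(\partial^{\beta}\Phi)\big)(x).
\]
Using the hypothesis $\|(\partial^{\beta}\Phi)(y)\|\le\kappa(1+|y|)^{-1-\epsilon}$, the support condition $\supp(\zeta_\gamma)\subset B(0,\gamma)$, together with the elementary inequality $1+|y|\ge 1-\gamma+|x|$ for $y\in B(x,\gamma)$ (valid for all $x\in\mathbb{R}^n$ because $\gamma<1$ keeps $1-\gamma+|x|>0$), I would pull this bound outside the integral and compute the remaining $L^1$-norm by the change of variables $w=z/\gamma$, getting
\[
\int_{\mathbb{R}^{n}}|(\partial^{\alpha-\beta}\zeta_{\gamma})(z)|\,d^{n}z=\gamma^{-|\alpha-\beta|}\int_{B(0,1)}|(\partial^{\alpha-\beta}\zeta)(w)|\,d^{n}w\le\|\partial^{\alpha-\beta}\zeta\|_{\infty}v_{n}\gamma^{2-|\alpha|}.
\]
Combining these yields the desired bound.

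For part $(iii)$, I would simply note that $\zeta_\gamma$ is real-valued and $\Phi(y)^{*}=\Phi(y)$ pointwise, so that $\Phi_\gamma(x)^{*}=\int\zeta_\gamma(x-y)\Phi(y)^{*}\,d^{n}y=\Phi_\gamma(x)$. For part $(iv)$, the key observation is that $\Phi\in C_b^{1}$ implies $M\coloneqq\|\Phi'\|_{\infty}<\infty$, so part $(i)$ supplies the uniform bound $\|\Phi(x)-\Phi_\gamma(x)\|\le\gamma M$ on all of $\mathbb{R}^n$. Since $\Phi(x)$ and $\Phi_\gamma(x)$ are both self-adjoint $d\times d$ matrices, Weyl's perturbation inequality gives $|\lambda_k(\Phi_\gamma(x))-\lambda_k(\Phi(x))|\le\gamma M$ for every eigenvalue; the hypothesis $|\Phi(x)|\ge cI_d$ on $\mathbb{R}^n\backslash B(0,R)$ means every eigenvalue of $\Phi(x)$ there has modulus at least $c$, so choosing $\gamma_0\coloneqq c/(2M)$ guarantees that for $0<\gamma<\gamma_0$ all eigenvalues of $\Phi_\gamma(x)$ have modulus at least $c/2$, which is equivalent to $|\Phi_\gamma(x)|\ge(c/2)I_d$. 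None of the four steps poses a genuine obstacle; the only place to be careful is in part $(ii)$, where one must keep $\gamma<1$ to ensure that $1-\gamma+|x|$ stays positive for every $x$, including small $|x|$.
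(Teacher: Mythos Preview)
Your proof is correct and, for parts (i)--(iii), essentially identical to the paper's. For part (iv) you take a slightly different route: the paper uses the uniform bound $\|\Phi(x)-\Phi_\gamma(x)\|<c/2$ from part~(i) together with a Neumann series to exhibit $\Phi_\gamma(x)^{-1}$ explicitly and bound $\|\Phi_\gamma(x)^{-1}\|\le 2/c$, from which $|\Phi_\gamma(x)|\ge(c/2)I_d$ follows. Your argument via Weyl's eigenvalue perturbation inequality is cleaner, but it tacitly imports the self-adjointness hypothesis from part~(iii), which is not formally assumed in the statement of~(iv). The paper's Neumann-series argument works without self-adjointness and so proves~(iv) exactly as stated; in all applications within the paper $\Phi$ is self-adjoint anyway, so the distinction is harmless.
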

\begin{proof}
$(i)$ In order to prove the first inequality, let $x\in\mathbb{R}^{n}$,
$0<\gamma<1$. Then one computes 
\begin{align*}
\|\Phi(x)-\Phi_{\gamma}(x)\| & =\bigg\Vert \int_{\mathbb{R}^{n}}\left(\Phi(x)-\Phi(x-y)\right)\zeta_{\gamma}(y) \, d^n y \bigg\Vert \\
 & \leq \int_{B(0,1)}\left\Vert \left(\Phi(x)-\Phi(x-\gamma y)\right)\right\Vert \zeta(y) \, d^n y \\
 & \leq \bigg(\int_{B(0,1)}\zeta(y) \, d^n y\bigg) \gamma\sup_{y\in B(0,\gamma)}\left\Vert \Phi'(x+y)\right\Vert.
\end{align*}
$(ii)$ Let $0<\gamma<1$ and $\alpha\in\mathbb{N}_{0}^{n}$
with $\left|\alpha\right|\geq2$, and let $\beta\in\mathbb{N}_{0}^{n}$
with $\left|\beta\right|=2$ be such that $(\alpha-\beta)\in\mathbb{N}_{0}^{n}$.
Then for $x\in\mathbb{R}^{n}$,  
\begin{align*}
\left\Vert (\partial^{\alpha}\Phi_{\gamma})(x)\right\Vert  & =\left\Vert (\partial^{\alpha}\left(\zeta_{\gamma}*\Phi)\right)(x)\right\Vert \\
 & =\bigg\Vert \left(\left(\partial^{\alpha-\beta}\zeta_{\gamma}\right)* 
 \left(\partial^{\beta}\Phi\right)\right)(x)\bigg\Vert \\
 & =\bigg\Vert \int_{\mathbb{R}^{n}}\left(\partial^{\alpha-\beta}\zeta_{\gamma}\right)(y) 
 \big(\partial^{\beta}\Phi\big)(x-y) \, d^n y \bigg\Vert \\
 & \leq \int_{\mathbb{R}^{n}} \gamma^{-n-\left|\alpha\right|+2}\big\Vert \big(\partial^{\alpha-\beta}\zeta\big) (y/\gamma) (\partial^{\beta}\Phi)(x-y)\big\Vert  \, d^n y \\
 & \leq \int_{B(0,1)} \gamma^{2 - |\alpha|}\left|\big(\partial^{\alpha-\beta}\zeta\big)(y)\right|\left\Vert (\partial^\beta\Phi)(x-\gamma y)\right\Vert  \, d^n y \\
 & \leq \|\partial^{\alpha-\beta}\zeta\|_{\infty}\int_{B(0,1)} \gamma^{2 - |\alpha|}  
 \kappa (1-\gamma+|x|)^{-1-\epsilon} \, d^n y  \\[1mm]
 & \leq \|\partial^{\alpha-\beta}\zeta\|_{\infty}v_{n} \gamma^{2 - |\alpha|} 
 \kappa (1-\gamma+|x|)^{-1-\epsilon}. 
\end{align*}
$(iii)$ This is clear.  \\[1mm] 
$(iv)$ By part $(i)$, there exists $\gamma_{0}>0$ such that 
$\sup_{x\in\mathbb{R}^{n}}\left\Vert \Phi(x)-\Phi_{\gamma}(x)\right\Vert < c/2$ 
for all $0<\gamma<\gamma_{0}$.
Let $x\in\mathbb{R}^{n}$ such that $\left|x\right|\geq R$. From
$|\Phi(x)|\geq cI_d$ it follows that $\|\Phi(x)^{-1}\|\leq 1/c$.
Hence, $\left\Vert \Phi(x)^{-1}\left(\Phi(x)-\Phi_{\gamma}(x)\right)\right\Vert \leq 1/2$
and therefore,
\begin{align*}
& \sum_{k=0}^{\infty}\left(\Phi(x)^{-1}\left(\Phi(x)-\Phi_{\gamma}(x)\right)\right)^{k}\Phi(x)^{-1} 
=\left(1-\Phi(x)^{-1}\left(\Phi(x)-\Phi_{\gamma}(x)\right)\right)^{-1}\Phi(x)^{-1}\\
& \quad =\left(\Phi(x)-\left(\Phi(x)-\Phi_{\gamma}(x)\right)\right)^{-1}
= \Phi_{\gamma}(x)^{-1}.
\end{align*}
Using $\left\Vert \Phi_{\gamma}(x)^{-1}\right\Vert \leq
c^{-1} \sum_{k=0}^{\infty} 2^{-k}= 2/c$,
one deduces with the help of the spectral theorem that
$|\Phi_{\gamma}(x)|\geq (c/2) I_d$. 
\end{proof}

\begin{remark}\label{r:9.2} Let $\Phi$ be as in Theorem \ref{thm:Perturbation}. More precisely, 
let $\Phi\in C_b^2\big(\mathbb{R}^n;\mathbb{C}^{d\times d}\big)$ be pointwise self-adjoint, 
suppose that for some $c>0$ and $R>0$, $|\Phi(x)|\geq c I_d$ for all 
$x\in \mathbb{R}^n\backslash  B(0,R)$, 
and assume there exists $\epsilon> 1/2$, such that for all $\alpha\in \mathbb{N}_0^n$ there 
exists $\kappa>0$ such that
\[
   \|\partial^\alpha \Phi(x)\|\leq \kappa \begin{cases}
                                             (1+|x|)^{-1}, & |\alpha|=1, \\[1mm] 
                                             (1+|x|)^{-1 - \epsilon}, &|\alpha|=2.
                                          \end{cases}
\]
By Proposition \ref{prop:mollpot}, there exists $\gamma_0>0$ such that for all $\gamma\in (0,\gamma_0)$, $\Phi_\gamma$ (defined as in Proposition \ref{prop:mollpot}) satisfies the assumptions imposed on $\Phi$ in Theorem \ref{t:7.27}. Moreover, by Proposition \ref{prop:mollpot}\,$(i)$, for some $\tilde \kappa>0$, the estimate
\[
   \| \partial_j \Phi(x)-\partial_j\Phi_\gamma (x)\|\leq \tilde \kappa 
   (1+|x|)^{-1-\epsilon}
\]
holds for all $j\in\{1,\ldots,n\}$, $x\in \mathbb{R}^n$, $0<\gamma<1$. The latter observation will be used in the proof of the general case of Theorem \ref{thm:Perturbation}. \hfill $\diamond$
\end{remark}

For the sake of completeness, we shall also state the Fredholm property for $C^2$-potentials:

\begin{lemma}\label{lem:2L_Fred_Rge0} Let $n,d\in \mathbb{N}$, $L= \cQ+\Phi$ as in \eqref{eq:def_of_L(2)} with $\Phi\in C_b^2\big(\mathbb{R}^n; \mathbb{C}^{d\times d}\big)$ with 
$\Phi(x)=\Phi(x)^*$, $x\in \mathbb{R}^n$. Assume that there exist $c>0$ and $R>0$ such that 
$|\Phi(x)|\geq cI_{d}$ for all $x\in \mathbb{R}^n\backslash  B(0,R)$, and that $(\partial_j \Phi)(x)\to 0$ as $|x|\to\infty$, $j\in \{1,\ldots,n\}$. Then $L$ is a Fredholm operator, and there is $\gamma_0>0$ such that for all $\gamma\in (0,\gamma_0)$, $\ind(L)=\ind(L_\gamma)$, where $L_\gamma = \cQ+\Phi_\gamma$ with $\Phi_\gamma$ given as in Proposition \ref{prop:mollpot}.
\end{lemma}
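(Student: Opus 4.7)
The plan is to mimic the strategy of Lemma \ref{lem:1L_Fred_Rge0} for the Fredholm property, and then to use the norm-continuity of the index (Theorem \ref{t3.6}\,$(iv)$) together with the quantitative mollification estimates of Proposition \ref{prop:mollpot} to compare $L$ with $L_\gamma$.

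First, under the present hypotheses we still have $\Phi\in C_b^2$ so that $\Phi$ acts as a bounded multiplication operator on $H^2(\mathbb{R}^n)^{2^{\hat n}d}$, and hence the computation of Proposition \ref{prop:compu_lstartl} goes through verbatim to give $L^*L = -\Delta I_{2^{\hatt n}d} - C + \Phi^2$ and $LL^* = -\Delta I_{2^{\hatt n}d} + C + \Phi^2$ on $\dom(L^*L)=\dom(LL^*)=H^2(\mathbb{R}^n)^{2^{\hat n}d}$, where $C = (\mathcal{Q}\Phi)$ is bounded and, by assumption, vanishes at infinity. Hence $C$ satisfies the hypothesis of Theorem \ref{thm:Newton_pot_is_h1-bdd} (viewed as a multiplication operator with a matrix-valued symbol), and is therefore $\Delta$-compact. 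Next, the boundedness of $\chi_{B(0,R)}$ combined with Theorem \ref{thm:Newton_pot_is_h1-bdd} implies that $c^2\chi_{B(0,R)}$ is also $\Delta$-compact, whereas $-\Delta I + \Phi^2 + c^2\chi_{B(0,R)} \geq -\Delta I + c^2$ is boundedly invertible by the lower bound $|\Phi(x)|\geq cI_d$ on $\mathbb{R}^n\backslash B(0,R)$. Invariance of the essential spectrum (Fredholm property) under relatively compact perturbations then yields the Fredholm property of $-\Delta I + \Phi^2$, and subtracting the further $\Delta$-compact term $\pm C$ gives the Fredholm property of $L^*L$ and $LL^*$, and hence of $L$.

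Next, for the mollified potential $\Phi_\gamma = \zeta_\gamma * \Phi$, Proposition \ref{prop:mollpot}\,$(ii)$--$(iv)$ guarantees that for $\gamma\in(0,\gamma_0)$ with $\gamma_0>0$ sufficiently small, $\Phi_\gamma\in C_b^\infty(\mathbb{R}^n;\mathbb{C}^{d\times d})$ is pointwise self-adjoint and satisfies $|\Phi_\gamma(x)|\geq (c/2)I_d$ for $x\in\mathbb{R}^n\backslash B(0,R)$. Moreover, $(\mathcal{Q}\Phi_\gamma)(x)=\zeta_\gamma *(\mathcal{Q}\Phi)(x)\to 0$ as $|x|\to\infty$ by the analogous decay of $\mathcal{Q}\Phi$ together with dominated convergence. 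Therefore, the (already established) $C^\infty$ case of the Fredholm assertion, namely Lemma \ref{lem:1L_Fred_Rge0}, applies to $L_\gamma = \mathcal{Q}+\Phi_\gamma$ and yields that $L_\gamma$ is Fredholm for each $\gamma\in(0,\gamma_0)$.

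It remains to compare the indices of $L$ and $L_\gamma$. By Proposition \ref{prop:mollpot}\,$(i)$, one has
\[
\|\Phi-\Phi_\gamma\|_{L^\infty(\mathbb{R}^n;\mathbb{C}^{d\times d})} \leq \gamma\,\sup_{x\in\mathbb{R}^n}\|\Phi'(x)\| \underset{\gamma\downarrow 0}{\longrightarrow} 0,
\]
so that $L-L_\gamma$, being multiplication by the bounded matrix-valued function $\Phi-\Phi_\gamma$, satisfies $\|L-L_\gamma\|_{\mathcal{B}(H^1(\mathbb{R}^n)^{2^{\hat n}d},L^2(\mathbb{R}^n)^{2^{\hat n}d})}\leq \|\Phi-\Phi_\gamma\|_\infty\to 0$ as $\gamma\downarrow 0$. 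Since both $L$ and $L_\gamma$ have the common domain $H^1(\mathbb{R}^n)^{2^{\hat n}d}$ (which is the graph space of $\mathcal{Q}$), we may view them as bounded operators from $H^1(\mathbb{R}^n)^{2^{\hat n}d}$ to $L^2(\mathbb{R}^n)^{2^{\hat n}d}$, and apply the local constancy of the Fredholm index under small bounded perturbations, Theorem \ref{t3.6}\,$(iv)$ (or, equivalently, the homotopy invariance Corollary \ref{c3.7} applied to the straight-line homotopy $t\mapsto L_t = \mathcal{Q}+(1-t)\Phi_\gamma + t\Phi$): shrinking $\gamma_0>0$ further if necessary so that $\|\Phi-\Phi_\gamma\|_\infty<\varepsilon(L)$, where $\varepsilon(L)>0$ is as in Theorem \ref{t3.6}\,$(iv)$, we conclude $\ind(L_\gamma)=\ind(L)$ for all $\gamma\in(0,\gamma_0)$.

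The main, yet minor, obstacle is the verification that the formula for $L^*L$ in Proposition \ref{prop:compu_lstartl} carries over from the $C^\infty$ case to the $C_b^2$ case; once one notices that $\Phi\in C_b^2$ suffices for $\Phi\colon H^2\to H^2$ by the Leibniz rule and for $C=(\mathcal{Q}\Phi)\in C_b^1$ to define a bounded multiplication operator on $L^2$, the argument proceeds exactly as before.
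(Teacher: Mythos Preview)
Your proof is correct, but the paper takes a somewhat more economical route. Rather than redoing the computation of $L^*L$ in the $C_b^2$-setting to establish the Fredholm property of $L$ directly, the paper observes that $\Phi-\Phi_\gamma$ is a $\cQ$-compact perturbation: by Proposition~\ref{prop:mollpot}\,$(i)$ one has $\|\Phi(x)-\Phi_\gamma(x)\|\leq \gamma\sup_{y\in B(0,\gamma)}\|\Phi'(x+y)\|$, and since $(\partial_j\Phi)(x)\to 0$ at infinity, the difference $\Phi-\Phi_\gamma$ vanishes at infinity and is therefore compact from $H^1=\dom(\cQ)$ to $L^2$ by Theorem~\ref{thm:Newton_pot_is_h1-bdd}. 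Having already shown $L_\gamma$ is Fredholm via Lemma~\ref{lem:1L_Fred_Rge0}, the paper then gets \emph{both} the Fredholm property of $L$ and the equality $\ind(L)=\ind(L_\gamma)$ in one stroke from Theorem~\ref{t3.6}\,$(iii)$ (invariance under relatively compact perturbations).

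Your approach instead establishes the Fredholm property of $L$ from scratch and then invokes the small-norm stability Theorem~\ref{t3.6}\,$(iv)$ for the index. This works, but at the cost of having to verify that Proposition~\ref{prop:compu_lstartl} extends to $C_b^2$-potentials (which, as you note, is routine). The paper's argument avoids this extension entirely and is shorter; your argument is more self-contained in that it does not rely on the decay of $\Phi-\Phi_\gamma$ at infinity, only on its smallness in sup-norm.
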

\begin{proof}
  By Proposition \ref{prop:mollpot}, $L$ is a $\cQ$-compact perturbation of $L_\gamma$. Moreover, the latter is a Fredholm operator for all $\gamma\in (0,\gamma_0)$ for some $\gamma_0>0$ by Proposition \ref{prop:mollpot} \big(guaranteeing that for some $\tilde c>0$ and $\tilde R >0$, 
$|\Phi_\gamma(x)|\geq \tilde cI_{d}$ 
for all $x\in \mathbb{R}^n\backslash B\big(0,\tilde R\big)$\big) and Lemma \ref{lem:1L_Fred_Rge0}. Thus, the assertion follows from the invariance of the Fredholm index under relatively compact perurbations.
\end{proof}

We are prepared to conclude the proof of the main theorem also for $C^2$-potentials.

\begin{proof}[Proof of Theorem \ref{thm:Perturbation}, the nonsmooth case]
Let $\zeta\in C_0^{\infty}(\mathbb{R}^{n})$ be as in Proposition \ref{prop:mollpot}, and define
$\Phi_{\gamma}$ as in the latter proposition. Let $\gamma_{0}\in (0,1)$
be as in Proposition \ref{prop:mollpot}\,$(iv)$. As observed in Remark \ref{r:9.2}, $\Phi_{\gamma}$ satisfies the assumptions imposed on $\Phi$ in Theorem \ref{t:7.27}. Next, by 
Lemma \ref{lem:2L_Fred_Rge0}, $L_{\gamma}\coloneqq \cQ +\Phi_{\gamma}$
is Fredholm and $\ind (L) = \ind (L_{\gamma})$.

Hence, by the $C^\infty$-version of Theorem \ref{thm:Perturbation}, that is, by Threorem \ref{t:7.27}, one infers that
\begin{align*}
\ind\left(L_{\gamma}\right) 
& =\left(\frac{i}{8\pi}\right)^{(n-1)/2}\frac{1}{\left[(n-1)/2\right]!}
  \lim_{\Lambda \to\infty}\frac{1}{2 \Lambda }\sum_{j,i_{1},\ldots,i_{n-1} = 1}^n \epsilon_{ji_{1}\ldots i_{n-1}}  \\
& \qquad \times \int_{\Lambda S^{n-1}}\tr\big(\sgn(\Phi_{\gamma}(x)) 
(\partial_{i_{1}}\sgn(\Phi_{\gamma}))(x)   \no \\
& \qquad \qquad \qquad \quad \dots (\partial_{i_{n-1}}\sgn(\Phi_{\gamma}))(x)\big) 
x_{j}\, d^{n-1} \sigma(x). 
\end{align*}
It sufices to shown that the limit $\gamma\to0$ in
the latter expression exists and coincides with the formula asserted.
By differentiation, one observes that $\omega\colon\mathbb{R}_{>0}\ni x\mapsto\sqrt{x^{2}+c}-x$
is decreasing and, denoting $\|\Phi\|_\infty\coloneqq \sup_{x\in \mathbb{R}^n}\|\Phi(x)\|$, one 
gets $0< \omega (\|\Phi\|_{\infty}) \leq  \omega (\|\Phi(x)\|)$, $x\in\mathbb{R}^{n}.$ Let 
$0<\gamma_{1}< \omega (\|\Phi\|_{\infty})/(4\kappa) \land (1/2) \land\gamma_{0}$. 
For $0<\gamma<\gamma_{1}$ and all $x\in\mathbb{R}^{n}$ one deduces with the help of 
Proposition \ref{prop:mollpot}\,$(i)$ that 
\[
\|\Phi(x)-\Phi_{\gamma}(x)\|\leq \gamma_{1} 2\kappa (1-\gamma_{1}+ |x|)^{-1}  
\leq \omega (\|\Phi\|_{\infty})/2.
\] 
Hence, by Theorem \ref{thm:properties of sign function}, one gets for some $K>0$ and all 
$x\in \mathbb{R}^n$ with $|x|\geq R$,  
\begin{align*}
\left\Vert \sgn(\Phi(x))-\sgn(\Phi_{\gamma}(x))\right\Vert  & \leq \sup_{T\in\bigcup_{|x|\geq R}B(\Phi(x), 
\omega( (\|\Phi\|_{\infty})/2)}\| {\sgn}'(T)\|\|\Phi(x)-\Phi_{\gamma}(x)\|\\
 & \leq \gamma K (1+ |x|)^{-1}.  
\end{align*}
Similarly, Proposition \ref{prop:mollpot}\,$(i)$ implies for some $K'>0$, 
\begin{align*}
 & \max_{j \in \{1,\dots,n\}} \Vert {\sgn}'(\Phi(x)) (\partial_{j}\Phi)(x)- {\sgn}'(\Phi_{\gamma}(x)) 
 (\partial_{j}\Phi)(x)\Vert \\
 & \quad \, \leq \max_{j  \in \{1,\dots,n\}}\|(\partial_{j}\Phi)(x)\|\sup_{T\in\bigcup_{|x| 
 \geq R}B(\Phi(x), \omega(\|\Phi\|_{\infty})/2)}\|{\sgn}''(T)\|\|\Phi(x)-\Phi_{\gamma}(x)\|\\
 & \quad \, \leq \gamma K' (1+|x|)^{-2}. 
\end{align*}
For $i_{1},\ldots,i_{n-1}\in\{1,\ldots,n\}$ and $x\in\mathbb{R}^{n}$, and with the convention 
$\partial_{i_{0}}\coloneqq1$, one gets for some constants $K'',K'''>0$,  
\begin{align}
 & \Vert (\sgn(\Phi(x)) (\partial_{i_{1}}\sgn(\Phi))(x)\ldots (\partial_{i_{n-1}}
 \sgn(\Phi))(x))  \no \\
 & \; \,\, \quad - (\sgn(\Phi_{\gamma}(x)) (\partial_{i_{1}}\sgn(\Phi_{\gamma}))(x)\ldots (\partial_{i_{n-1}}\sgn(\Phi_{\gamma}))(x))\Vert\no \\
 & \quad \leq \sum_{j=0}^{n-1}\bigg\Vert \prod_{k=0}^{j-1} (\partial_{i_{k}}\sgn(\Phi))(x)((\partial_{i_{j}}\sgn(\Phi))(x)-(\partial_{i_{j}}\sgn(\Phi_{\gamma}))(x))\no \\ 
& \hspace*{1.5cm} 
\times \prod_{k=j+1}^{n-1} (\partial_{i_{k}}\sgn(\Phi_{\gamma}))(x)\bigg\Vert\no \\ 
& \quad \leq  K'' (1+|x|)^{2-n} \bigg(\sum_{j=1}^{n-1} \Vert (\partial_{i_{j}}\sgn(\Phi))(x) 
- (\partial_{i_{j}}\sgn(\Phi_{\gamma}))(x) \Vert  \no\\
 & \hspace*{4cm} + (1+ |x|)^{-1} \Vert \sgn(\Phi(x)) - \sgn(\Phi_{\gamma}(x)) \Vert \bigg) \no\\
 & \quad \leq  K'' (1+|x|)^{2-n} \bigg(\sum_{j=1}^{n-1} \Vert {\sgn}'(\Phi(x))(\partial_{i_{j}}\Phi)(x) - {\sgn}'(\Phi_{\gamma}(x))(\partial_{i_{j}}\Phi_{\gamma})(x) \Vert     \no\\
 & \hspace*{4cm} +\gamma K (1+|x|)^{-2}\bigg)\no\\
 & \quad \leq  K'' (1+|x|)^{2-n}\bigg(\sum_{j=1}^{n-1}\| {\sgn}'(\Phi(x))(\partial_{i_{j}}\Phi)(x) - {\sgn}'(\Phi_{\gamma}(x))(\partial_{i_{j}}\Phi)(x)\|      \no\\
 & \qquad +\sum_{j=1}^{n-1}\| {\sgn}'(\Phi_{\gamma}(x))(\partial_{i_{j}}\Phi)(x) 
 - {\sgn}'(\Phi_{\gamma})(x) (\partial_{i_{j}}\Phi_{\gamma})(x)\| 
 + \gamma K (1+|x|)^{-2}\bigg)\no \\
 & \quad \leq  K'' (1+|x|)^{2-n}\bigg(\sum_{j=1}^{n-1}\gamma K' (1+|x|)^{-2}   \no \\
 & \quad\quad +\sum_{j=1}^{n-1}\sup_{T\in\bigcup_{|x|\geq R}B(\Phi(x), \omega(\|\Phi\|_{\infty})/2)}\|{\sgn}'(T)\| \|(\partial_{i_{j}}\Phi)(x) - (\partial_{i_{j}}\Phi_{\gamma})(x)\|  \no \\ 
 & \qquad +\gamma K (1+|x|)^{-2}\bigg)\no\\
 & \quad \leq  K'''\gamma (1+|x|)^{1-n-\epsilon}.       \label{e:9.4}
\end{align}

Next, for $\Lambda >0$ we define
\begin{align*}
& \phi_{\Lambda }\coloneqq \frac{1}{\Lambda }\sum_{j,i_{1},\ldots,i_{n-1} = 1}^n  
\varepsilon_{ji_{1}\ldots i_{n-1}} \\
& \qquad \;\; \times \int_{\Lambda S^{n-1}}\tr \big(\sgn(\Phi(x)) (\partial_{i_{1}}\sgn(\Phi(x))) \ldots 
(\partial_{i_{n-1}}\sgn(\Phi (x)))\big)x_{j}\, d^{n-1} \sigma(x)
\end{align*}
and 
\begin{align*}
& \phi_{\Lambda }^\gamma\coloneqq \frac{1}{\Lambda }\sum_{j,i_{1},\ldots,i_{n-1} = 1}^n  
\epsilon_{ji_{1}\ldots i_{n-1}} 
 \int_{\Lambda S^{n-1}}\tr \big(\sgn(\Phi_\gamma(x))     \\
& \hspace*{9mm} \times (\partial_{i_{1}}\sgn(\Phi_\gamma(x))) \ldots 
(\partial_{i_{n-1}}\sgn(\Phi_\gamma (x)))\big)x_{j}\, d^{n-1} \sigma(x).
\end{align*}
It remains to prove that $\{\phi_\Lambda \}_\Lambda $ converges and that its limit coincides 
with $\ind (L)$. But, with the help of estimate \eqref{e:9.4} one gets 
\[
   \limsup_{\Lambda\to\infty} |\phi_\Lambda^\gamma-\phi_\Lambda|\leq \limsup_{\Lambda\to\infty} \int_{\Lambda S^{n-1}}  K'''\gamma (1+|x|)^{1-n-\epsilon} \frac{|x|}{\Lambda}d^{n-1}\sigma(x) =0,
\]
which implies the remaining assertion.
\end{proof}

\begin{remark}\label{rem:differentiability}
$(i)$ Of course a simple manner in which to invoke less regular potentials is the
perturbation with compactly supported potentials. Thus, the
above result should be read as $\Phi$ is $C^{2}$ ``in a neighborhood of infinity''.  \\[1mm] 
$(ii)$ The formula for the Fredholm index suggests that Theorem \ref{thm:Perturbation}
might be weakened in the sense that potentials that are only $C^{1}$
should lead to the same result. Our method of proof needs that for some $K>0$ and 
all $\gamma>0$ sufficiently small, 
$\left\Vert (\partial_{i_{j}}\Phi)(x)-(\partial_{i_{j}}\Phi_{\gamma})(x)\right\Vert \leq  K (1+|x|)^{-1-\epsilon}$. To prove the latter estimate we need information on the second derivative of $\Phi$. 
\hfill $\diamond$
\end{remark} 

We conclude with a nontrivial example of the Fredholm index. In view of the discussion in Example \ref{exa:standard_example_0} and the erroneous statement in \eqref{eq:wrong_cancellation} this could be the type of potentials Callias had in mind. 

\begin{example}\label{exa:standard_example}
Let $n=3$, $\gamma_{1,3},\gamma_{2,3},\gamma_{3,3}\in\mathbb{C}^{2\times2}$
be the corresponding matrices of the Euclidean Dirac Algebra $($see
Appendix \ref{sec:Appendix:-the-Construction}$)$. Consider $L= \cQ +\Phi$
as in \eqref{eq:def_of_L(2)} with 
$\Phi(x)\coloneqq\sum_{j=1}^{3}\gamma_{j,3} x_{j} \left|x\right|^{-1}$, $j\in\{1,2,3\}$. Then $\Phi(x)^{2}=I_2$ and Theorem \ref{thm:Perturbation} applies. Given formula 
\eqref{eq:Fredhom_index} for the Fredholm index, a straightforward computation yields  
\[
\tr_{2}\big(\Phi(x) (\partial_{i_{1}}\Phi)(x) (\partial_{i_{2}}\Phi)(x)\big) = 
\tr_{2} \big(\gamma_{j,3}\gamma_{{i_{1},3}}\gamma_{{i_{2},3}} x_{j}\left|x\right|^{-3}\big), \quad 
x\in\mathbb{R}^{n}\backslash \{0\}, 
\]
 for all $j,i_{1},i_{2}\in\{1,2,3\}$ pairwise distinct, and hence,
\begin{align*}
\ind (L) & =\frac{i}{16\pi}\lim_{\Lambda \to\infty}\sum_{j,i_{1},i_{2} = 1}^3 
\epsilon_{ji_{1}i_{2}}\frac{1}{\Lambda }\int_{\Lambda S^{2}}\tr\big(\Phi(x) (\partial_{i_{1}}\Phi)(x)
(\partial_{i_{2}}\Phi)(x)\big)x_{j}
\, d^2\sigma(x)\\
 & =\frac{i}{16\pi}\lim_{\Lambda \to\infty}\sum_{j=1}^{3}\frac{1}{\Lambda }\int_{\Lambda S^{2}}4i\frac{x_{j}}{\Lambda ^{3}}x_{j}
 \, d^2 \sigma(x)\\
 & =\frac{i}{16\pi}\lim_{\Lambda \to\infty}\frac{1}{\Lambda ^{2}}\int_{\Lambda S^{2}}4i \, d^2 \sigma(x)\\
 & =\frac{-1}{4\pi}\int_{S^{2}}d^2 \sigma(x)=-1. 
 \end{align*}
\end{example}

\newpage

\section{A Particular Class of Non-Fredholm Operators $L$ and Their Generalized 
Witten Index} \lb{s14} 

This section is devoted to a particular non-Fredhom situation and motivated in part by 
extensions of  index theory for a certain class of non-Fredholm operators initiated in 
\cite{BGGSS87}, \cite{CGPST14a}, \cite{GS88} (see also \cite{BMS88}, \cite{CP86}). 
Here we make the first steps in the direction of 
non-Fredholm operators closely related to the operator $L$ in \eqref{eq:def_of_L} 
studied by Callias \cite{Ca78} and introduce a generalized Witten index. 

We very briefly outline the idea presented in \cite{GS88}: Let $L$ be a densely defined, closed, linear operator in a Hilbert space $\mathcal{H}$.  Assume that 
\[ 
\big[(L^*L+z)^{-1}-(LL^*+z)^{-1}\big] \in \cB_1(\cH)
\]
for one (and hence for all) $z\in \rho(-L^*L)\cap \rho(-LL^*)$, and that the limit
\begin{equation} 
\ind_W (L) \coloneqq \lim_{x \downarrow 0_+} x \tr_\mathcal{H}\big((L^*L+x)^{-1}-(LL^*+x)^{-1}\big) 
\lb{14.1} 
\end{equation}
exists. Then $\ind_W (L)$ is called the \emph{Witten index} of $L$. In fact, for the 
special case of operators in space dimension $n=1$ (with appropriate potential), this limit is easily shown to exist and to assume values in $(1/2) \mathbb{Z}$, see \cite{CGPST14}. These examples, however, heavily rely on the fact that the underlying spatial dimension for the operator $L$ 
equals one.

While the Fredholm index is well-known to be invariant with respect to relatively compact additive perturbations, we emphasize that this cannot hold for the Witten index (cf.\ \cite{BGGSS87}, 
\cite{GS88}). In fact, it can be shown that the Witten index is invariant under additive perturbations that are relatively trace class (among additional conditions, see \cite{GS88} for details). 

We now provide a further generalization of the Witten index adapted to the non-Fredholm 
operators discussed in this section for odd dimensions $n\geq3$. The abstract set-up reads 
as follows:

\begin{definition}\label{d:gW} Let $L$ be a densely defined, closed linear operator in $\mathcal{H}^m$ for some $m\in \mathbb{N}$. Assume there exist sequences $\{T_\Lambda\}_{\Lambda \in \bbN}$, 
$\{S_\Lambda^*\}_{\Lambda \in \bbN}$ in $\mathcal{B}(\mathcal{H})$ converging to $I_\mathcal{H}$ in the strong operator topology, and denote $S_\Lambda\coloneqq S_\Lambda^{**}$, $\Lambda\in \mathbb{N}$. In addition, suppose that the map 
\[
  \Omega \ni z \mapsto T_\Lambda \tr_m \big((L^*L+z)^{-1}-(LL^*+z)^{-1}\big)S_\Lambda
\]
assumes values in $\mathcal{B}_1(\mathcal{H})$ for some open set $\Omega\subseteq \rho(-L^*L)\cap \rho(-LL^*)$ with $(0,\delta)\subseteq \Omega\cap\mathbb{R}$ for some $\delta>0$. 
Moreover, assume that $\{f_\Lambda\}_{\Lambda \in \bbN}$, where
\[
  f_\Lambda \colon \Omega\ni z \mapsto  z\tr_{\mathcal{H}}\big(T_\Lambda\tr_m \big((L^*L+z)^{-1}-(LL^*+z)^{-1}\big)S_\Lambda\big),\quad \Lambda\in \mathbb{N},
\]
converges in the compact open topology as $\Lambda\to\infty$ to some function $f\colon \Omega\to \mathbb{C}$ and that $f(0_+)$ exists. Then we call
\begin{equation} 
  \ind_{gW,T,S} (L) \coloneqq f(0_+).    \lb{14.2} 
\end{equation} 
the \emph{generalized Witten index} of $L$ $($with respect to $T$ and $S$$)$. If $L$ satisfies the assumptions needed for defining $\ind_{gW,T,S} (L)$, then we say that $L$ \emph{admits a generalized Witten index}.
\end{definition}

Whenever the sequences $\{T_{\Lambda}\}_{\Lambda \in \bbN}$ and 
$\{S_{\Lambda}\}_{\Lambda \in \bbN}$ in Definition \ref{d:gW} are clear from the context, we will just write $\ind_{gW} (\cdot)$ instead of $\ind_{gW,T,S} (\cdot)$.

\begin{remark} We briefly elaborate on some properties of the regularized index just defined. \\[1mm] 
$(i)$ It is easy to see that the generalized Witten index is independent of the chosen $\Omega$. Indeed, the main observation needed is that if $\Omega_1$ and $\Omega_2$ satisfy the requirements imposed on $\Omega$ in Definition \ref{d:gW}, then so does $\Omega_1\cap \Omega_2$. \\[1mm] 
$(ii)$ The generalized Witten index is invariant under unitary equivalence of $\mathcal{H}$. Indeed, let $L$ be a densely defined, closed linear operator in $\mathcal{H}^m$ for which the generalized Witten index exists with respect to $\{T_\Lambda\}_{\Lambda \in \bbN}$ and 
$\{S_\Lambda\}_{\Lambda \in \bbN}$. Let $\mathcal{H}_1$ be another Hilbert space and $U\colon \mathcal{H}_1\to \mathcal{H}$ an isometric isomorphism.
Then
\[
  \tilde L\coloneqq \begin{pmatrix} U^* & 0 & \cdots & 0 \\
                                    0   & U^* &      & \vdots \\
                                    \vdots &   & \ddots &   \\
                                    0   & \cdots & & U^*
                    \end{pmatrix} L \begin{pmatrix} U & 0 & \cdots & 0 \\
                                    0   & U &      & \vdots \\
                                    \vdots &   & \ddots &   \\
                                    0   & \cdots & & U
                    \end{pmatrix}
\]
admits a generalized Witten index, and  
\[
  \ind_{gW,T,S} (L) = \ind_{gW,U^*TU,U^*SU} \big(\tilde L\big),
\]
where, in obvious notation, we used $\ind_{gW,U^*TU,U^*SU} \big(\tilde L\big)$ to denote the generalized Witten index of $\tilde L$ with respect to $\{U^*T_\Lambda U\}_{\Lambda \in \bbN}$ 
and $\{U^* S_{\Lambda} U\}_{\Lambda \in \bbN}$.

For the proof of $\tilde L$ admitting a generalized Witten index, it suffices to observe that 
for $\Lambda\in \mathbb{N}$ and $z\in \Omega$, 
\begin{align*}
 & \tr_{\mathcal{H}}\big(T_\Lambda\tr_m \big((L^*L+z)^{-1}-(LL^*+z)^{-1}\big)S_\Lambda\big)\\
 & \quad = \tr_{\mathcal{H}}\big(T_\Lambda U U^*\tr_m \big((L^*L+z)^{-1}-(LL^*+z)^{-1}\big)UU^*S_\Lambda\big)
 \\ & \quad =\tr_{\mathcal{H}}\big(T_\Lambda U \tr_m \big((\tilde{L}^*\tilde{L}+z)^{-1}-(\tilde{L}\tilde{L}^*+z)^{-1}\big)U^*S_\Lambda\big)
 \\ & \quad =\tr_{\mathcal{H}_1}\big(U^*T_\Lambda U \tr_m \big((\tilde{L}^*\tilde{L}+z)^{-1}-(\tilde{L}\tilde{L}^*+z)^{-1}\big)U^*S_\Lambda U\big).   
\end{align*}
${}$\hfill $\diamond$
\end{remark}

\begin{remark} 
The definition of the Witten index in \eqref{14.1} suggests introducing the spectral shift 
function $\xi (\, \cdot \, ; LL^*, L^*L)$ for the pair of self-adjoint operators $(L L^*, L^* L)$ and hence to express the Witten index as 
\begin{equation} 
\ind_W (L) = \xi (0_+ ; LL^*, L^*L),    \lb{14.3}
\end{equation} 
employing the fact (see, e.g., \cite[Ch.~8]{Ya92}), 
\begin{equation} 
\tr_\mathcal{H}\big(f(L^*L) - f(LL^*)\big) = \int_{[0,\infty)} f'(\lambda) \, \xi (\lambda ; LL^*, L^*L) \, 
d\lambda,  
\end{equation} 
for a large class of functions $f$. The approach \eqref{14.3} in terms of spectral shift functions was 
introduced in \cite{BGGSS87}, \cite{GS88} (see also \cite{BMS88}, \cite{Br95}, 
\cite[Chs.~IX, X]{Mu87}, \cite{Mu88}) and independently in \cite{CP86}. 
This circle of ideas continues to generate much interest, see, for instance, \cite{CGPST14}, 
\cite{CGPST14a}, \cite{GLMST11}, and the extensive list of references therein. It remains to be seen if this can be applied to the generalized Witten index \eqref{14.2}. 
${}$\hfill $\diamond$
\end{remark}

Next, we will construct non-Fredholm Callias-type operators $L$, which meet the assumptions in the definition for the generalized Witten index, that is, operators $L$ which admit a generalized Witten index. In fact, the theory developed in the previous chapters provides a variety of such examples 
(cf.\ the end of this section in Theorem \ref{t:13.9}).

We start with an elementary observation:  

\begin{proposition}\label{p:13.1} Let $n=2\hatt n+1\in \mathbb{N}$ odd. Then $Q$ as in \eqref{eq:Def_of_Q} with $\dom(Q)=H^1(\mathbb{R}^n)^{2^{\hat n}}$ as an operator in $L^2(\mathbb{R}^n)^{2^{\hatt n}}$ is non-Fredholm.
\end{proposition}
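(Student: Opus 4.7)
The plan is to establish non-Fredholmness of $Q$ by showing that, although it has trivial kernel and cokernel, its range fails to be closed. First, I would invoke Theorem \ref{thm:L_is_closed}, which gives that $Q$ is skew-self-adjoint, so $Q^{*}=-Q$ and $\ker(Q^{*})=\ker(Q)$; combined with $Q^{2}=\Delta I_{2^{\hatt n}}$ from \eqref{eq:q2=00003DDelta}, this forces $\ker(Q)\subseteq \ker(\Delta I_{2^{\hatt n}})=\{0\}$, since there are no nontrivial harmonic $L^{2}$-functions on $\mathbb{R}^{n}$. So both nullspaces vanish, and any Fredholm index of $Q$ would necessarily be $0$.

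Next I would argue by contradiction. If $Q$ were Fredholm with trivial kernel and cokernel and closed range, then $Q\colon H^{1}(\mathbb{R}^{n})^{2^{\hatt n}}\to L^{2}(\mathbb{R}^{n})^{2^{\hatt n}}$ would be a continuous bijection. The open mapping theorem (applied to $Q^{-1}$, which is continuous from $L^{2}$ equipped with its norm into $H^{1}$, hence into $L^{2}$) would then yield some $c>0$ with
\begin{equation*}
\|Qu\|_{L^{2}(\mathbb{R}^{n})^{2^{\hatt n}}}^{2}\geq c^{2}\|u\|_{L^{2}(\mathbb{R}^{n})^{2^{\hatt n}}}^{2}, \quad u\in H^{1}(\mathbb{R}^{n})^{2^{\hatt n}}.
\end{equation*}
Rewriting the left-hand side as $(Q^{*}Qu,u)=((-\Delta)u,u)$, this would amount to the coercivity bound $-\Delta I_{2^{\hatt n}}\geq c^{2} I$ as a quadratic form.

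To close the argument I would construct a Weyl-type sequence at the bottom of the spectrum of $-\Delta$ that violates this coercivity. Fix a unit vector $v\in\mathbb{C}^{2^{\hatt n}}$ and $\psi\in C_{0}^{\infty}(\mathbb{R}^{n})$ with $\|\psi\|_{L^{2}}=1$, and define $u_{k}$ via the Fourier transform \eqref{eq:def_Fourier_transf} by $(\mathcal{F}u_{k})(\xi)\coloneqq k^{n/2}\psi(k\xi)\,v$. Plancherel gives $\|u_{k}\|_{L^{2}}=1$, while the substitution $\eta=k\xi$ yields
\begin{equation*}
((-\Delta)u_{k},u_{k})_{L^{2}}=\int_{\mathbb{R}^{n}}|\xi|^{2}|(\mathcal{F}u_{k})(\xi)|^{2}\,d^{n}\xi=k^{-2}\int_{\mathbb{R}^{n}}|\eta|^{2}|\psi(\eta)|^{2}\,d^{n}\eta\underset{k\to\infty}{\longrightarrow} 0,
\end{equation*}
contradicting the purported lower bound $c^{2}>0$. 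Hence $Q$ cannot be Fredholm.

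No serious technical obstacle is anticipated; the content of the proof is essentially the standard observation that $0$ lies in the continuous spectrum of $-\Delta$ on $L^{2}(\mathbb{R}^{n})$. The only conceptual point worth flagging is that, since the two nullspaces already vanish, the failure of the Fredholm property must and does come from the lack of closed range, which is what the Weyl sequence detects.
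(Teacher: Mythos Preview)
Your proof is correct and rests on the same underlying fact as the paper's: $0$ lies in the continuous spectrum of $Q$ (equivalently, of $-\Delta$). The paper compresses this into a single sentence, observing that the Fourier symbol $\xi\mapsto\sum_{j}\gamma_{j,n}(-i)\xi_j$ of $Q$ is continuous and vanishes at the origin, which immediately places $0$ in $\sigma_{\mathrm{ess}}(Q)$; your argument unpacks the same content via the explicit Weyl sequence for $-\Delta=-Q^{2}$, which is more self-contained but longer.
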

\begin{proof}
It suffices to observe that the symbol of $Q$ is a continuous function vanishing at $0$.
\end{proof}
The fundamental result leading to Theorem \ref{t:13.9} is contained in the following lemma.

\begin{lemma}\label{l:13.3} Let $n=2\hatt n+1\in \mathbb{N}$ odd, $d\in \mathbb{N}$, $\mathcal{Q}$ as in \eqref{eq:def_of_Q2}, $\Phi\in C^\infty_b\big(\mathbb{R}^n;\mathbb{C}^{d\times d}\big)$ pointwise self-adjoint, and let $L=\mathcal{Q}+\Phi$ be as in \eqref{eq:def_of_L(2)}. Assume that there exists $P=P^*=P^2\in \mathbb{C}^{d\times d} \backslash \{0\}$ such that for all $x\in \mathbb{R}^n$, 
\[
  P\Phi(x)=\Phi(x)P=0.
\]
Define $\mathcal{P}\coloneqq I_{L^2(\mathbb{R}^n)^{2^{\hatt n}}}\otimes P$ and denote $\mathcal{H}_\mathcal{P}\coloneqq L^2(\mathbb{R}^n)^{2^{\hatt n}}\otimes\ran(P)$. Then $L$ and $L^*$ leave the space $\mathcal{H}_P$ invariant. Moreover, $L$ is unitarily equivalent to
\[
  \begin{pmatrix}
        \iota_P^*\mathcal{Q}\iota_P
      & 0 \\ 0 & \iota_{P_\bot}^*\mathcal{Q}\iota_{P_\bot} + \iota_{P_\bot}^*\Phi\iota_{P_\bot}
      \end{pmatrix}
\]
with $\iota_P$ and $\iota_{P_\bot}$ the canonical embeddings from $\mathcal{H}_\mathcal{P}$ 
and $\mathcal{H}_{\mathcal{P}}^\bot$ into $L^2(\mathbb{R}^n)^{2^{\hatt n}d}$, respectively.
\end{lemma}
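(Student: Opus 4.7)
The plan is to show that the projection $\mathcal{P} = I_{L^2(\mathbb{R}^n)^{2^{\hatt n}}} \otimes P$ commutes (in the appropriate sense) with both $L$ and $L^*$, so that both $\mathcal{H}_\mathcal{P}$ and $\mathcal{H}_\mathcal{P}^\bot$ are invariant, and then identify the two diagonal blocks using the natural unitary $\iota_P \oplus \iota_{P_\bot}\colon \mathcal{H}_\mathcal{P} \oplus \mathcal{H}_\mathcal{P}^\bot \to L^2(\mathbb{R}^n)^{2^{\hatt n}d}$.

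First I would verify that $\mathcal{P}$ leaves $\dom(L) = \dom(L^*) = H^1(\mathbb{R}^n)^{2^{\hatt n}d}$ invariant: since $P$ is a constant $d \times d$ matrix, $\mathcal{P}$ acts only on the $\mathbb{C}^d$-factor and therefore commutes with each partial derivative $\partial_j$ and with each Dirac matrix $\gamma_{j,n}$ (which act trivially on $\mathbb{C}^d$), so $\mathcal{P}\mathcal{Q} = \mathcal{Q}\mathcal{P}$ on $H^1(\mathbb{R}^n)^{2^{\hatt n}d}$ in the sense of \eqref{alpha}. Next, by the hypothesis $P\Phi(x) = \Phi(x)P = 0$ for all $x \in \mathbb{R}^n$, the multiplication operator $\Phi$ satisfies $\mathcal{P}\Phi = \Phi\mathcal{P} = 0$ on $L^2(\mathbb{R}^n)^{2^{\hatt n}d}$. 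Combining these two facts yields
\begin{equation*}
\mathcal{P}L = \mathcal{P}\mathcal{Q} + \mathcal{P}\Phi = \mathcal{Q}\mathcal{P} + 0 = \mathcal{Q}\mathcal{P} + \Phi\mathcal{P} = L\mathcal{P}
\end{equation*}
on $H^1(\mathbb{R}^n)^{2^{\hatt n}d}$, and likewise $\mathcal{P}L^* = L^*\mathcal{P}$ since $L^* = -\mathcal{Q} + \Phi$ by Proposition \ref{prop:Computation of adjoint}. Hence both $\mathcal{H}_\mathcal{P} = \ran(\mathcal{P})$ and $\mathcal{H}_\mathcal{P}^\bot = \ran(I-\mathcal{P})$ are invariant under $L$ and $L^*$.

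The unitary equivalence is then obtained by restriction. The map $U\colon \mathcal{H}_\mathcal{P} \oplus \mathcal{H}_\mathcal{P}^\bot \to L^2(\mathbb{R}^n)^{2^{\hatt n}d}$, $(\phi,\psi) \mapsto \iota_P\phi + \iota_{P_\bot}\psi$, is a unitary isomorphism with $U^* = (\iota_P^*, \iota_{P_\bot}^*)^\top$. The invariance just established gives $U^*LU = \diag(\iota_P^*L\iota_P,\, \iota_{P_\bot}^*L\iota_{P_\bot})$, and it remains to compute each diagonal block. Since $\iota_P^*\mathcal{P} = \iota_P^*$ and $\Phi\mathcal{P} = 0$, we have $\iota_P^*\Phi\iota_P = \iota_P^*\Phi\mathcal{P}\iota_P = 0$, so the upper block reduces to $\iota_P^*\mathcal{Q}\iota_P$. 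The lower block is manifestly $\iota_{P_\bot}^*\mathcal{Q}\iota_{P_\bot} + \iota_{P_\bot}^*\Phi\iota_{P_\bot}$, giving exactly the block operator in the statement.

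I do not anticipate a real obstacle here: the content is essentially algebraic, and the only point requiring any care is the domain issue, namely that $\mathcal{P}$ preserves $H^1(\mathbb{R}^n)^{2^{\hatt n}d}$ so that the commutation identities above hold as equalities of unbounded operators rather than merely on a core. This follows immediately from the fact that $P$ is constant in $x$ and acts on a factor disjoint from the one on which $\mathcal{Q}$ differentiates, so no smoothness or decay issues arise.
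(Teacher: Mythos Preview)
Your proof is correct and follows essentially the same approach as the paper. The paper packages the two key ingredients into separate auxiliary lemmas (one establishing $\mathcal{P}\mathcal{Q}\subseteq\mathcal{Q}\mathcal{P}$ for constant $P$, and one giving the general block-diagonalization of a closed operator commuting with a projection), but the content is identical to what you wrote inline: $\mathcal{P}$ commutes with $\mathcal{Q}$ because $P$ is constant, $\mathcal{P}\Phi=\Phi\mathcal{P}=0$ by hypothesis, hence $L$ reduces along $\mathcal{H}_\mathcal{P}\oplus\mathcal{H}_\mathcal{P}^\bot$, and $\iota_P^*\Phi\iota_P=0$ kills the potential in the upper block.
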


\begin{remark}\label{r:13.7} Recalling $Q$ given as in \eqref{eq:Def_of_Q}, we claim that in the situation of Lemma \ref{l:13.3}, 
\[
   \iota_P^*\mathcal{Q}\iota_P = Q\otimes I_{\ran P}.
\]
Indeed, equality is plain when applied to $C_0^\infty$-functions and thus the general case follows by a closure argument. For the closedness of $\iota_P^*\mathcal{Q}\iota_P$ we use Lemma \ref{l:13.3}: $L$ is closed and, by unitary equivalence, so is
\begin{equation}\label{e:13.7}
  \begin{pmatrix}
        \iota_P^*\mathcal{Q}\iota_P
      & 0 \\ 0 & \iota_{P_\bot}^*\mathcal{Q}\iota_{P_\bot} + \iota_{P_\bot}^*\Phi\iota_{P_\bot}
      \end{pmatrix}.
\end{equation}
Hence, the diagonal entries of the closed block operator matrix in \eqref{e:13.7}, and thus 
$\iota_P^*\mathcal{Q}\iota_P$, are closed. \hfill $\diamond$
\end{remark}

In order to prove Lemma \ref{l:13.3}, we invoke some auxiliary results of a general nature. The 
first two (Lemmas \ref{l:13.4} and \ref{l:13.5}) are concerned with commutativity properties of the operator $\mathcal{Q}$.

\begin{lemma}\label{l:13.4}
 Let $n,m\in \mathbb{N}$, $P\in \mathbb{C}^{m\times m}$, $j\in\{1,\ldots,n\}$. Then
 \[
    (I_{L^2(\mathbb{R}^n)}\otimes P)\partial_j \subseteq \partial_j (I_{L^2(\mathbb{R}^n)}\otimes P),
 \]
 where $\partial_j\colon H_j^1(\mathbb{R}^n)^m\subseteq L^2(\mathbb{R}^n)^m\to L^2(\mathbb{R}^n)^m$ is the distributional derivative with respect to the $j$th variable and, $H_j^1(\mathbb{R}^n)$ is the space of $L^2$-functions whose derivative with respect to the $j$th variable can be represented as an $L^2$-function.
\end{lemma}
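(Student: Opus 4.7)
The plan is to unpack the two sides of the claimed operator inclusion and verify it componentwise, using only linearity of the distributional derivative together with the fact that the entries of $P$ are constants (independent of $x$).

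First I would recall the definitions of the two compositions. Since $I_{L^2(\mathbb{R}^n)}\otimes P$ is bounded,
\[
\dom\bigl((I_{L^2(\mathbb{R}^n)}\otimes P)\partial_j\bigr)=\dom(\partial_j)=H^1_j(\mathbb{R}^n)^m,
\]
while
\[
\dom\bigl(\partial_j (I_{L^2(\mathbb{R}^n)}\otimes P)\bigr)=\bigl\{f\in L^2(\mathbb{R}^n)^m\,\big|\,(I_{L^2(\mathbb{R}^n)}\otimes P)f\in H^1_j(\mathbb{R}^n)^m\bigr\}.
\]
Thus the inclusion reduces to showing that for every $f=(f_1,\ldots,f_m)\in H^1_j(\mathbb{R}^n)^m$ one has $(I_{L^2(\mathbb{R}^n)}\otimes P)f\in H^1_j(\mathbb{R}^n)^m$ together with the identity
$\partial_j\bigl((I_{L^2(\mathbb{R}^n)}\otimes P)f\bigr)=(I_{L^2(\mathbb{R}^n)}\otimes P)\partial_j f$.

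Writing out the $i$th component, $((I_{L^2(\mathbb{R}^n)}\otimes P)f)_i=\sum_{k=1}^m P_{ik}f_k$. Since each $f_k\in H^1_j(\mathbb{R}^n)$, linearity of the distributional derivative together with the fact that the scalars $P_{ik}\in\mathbb{C}$ do not depend on $x$ yields $\sum_{k=1}^m P_{ik}f_k\in H^1_j(\mathbb{R}^n)$ with
\[
\partial_j\bigg(\sum_{k=1}^m P_{ik}f_k\bigg)=\sum_{k=1}^m P_{ik}\,\partial_j f_k,
\]
which is precisely the $i$th component of $(I_{L^2(\mathbb{R}^n)}\otimes P)\,\partial_j f$. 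Since this holds for every $i\in\{1,\ldots,m\}$, the required domain inclusion and the equality of images follow simultaneously.

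There is no real obstacle here: the statement is essentially a manifestation of the commutativity of a constant-coefficient matrix multiplier with a scalar differential operator. The only mild subtlety is distinguishing the two domains carefully, which is dispatched by the boundedness of $I_{L^2(\mathbb{R}^n)}\otimes P$ on the left-hand side. If one prefers, the same argument can be phrased by testing against $\varphi\in C_0^\infty(\mathbb{R}^n)$ in the distributional definition, using $\langle\sum_k P_{ik} f_k,\partial_j\varphi\rangle=\sum_k P_{ik}\langle f_k,\partial_j\varphi\rangle=-\sum_k P_{ik}\langle\partial_j f_k,\varphi\rangle$, and then identifying the right-hand side with $-\langle(\sum_k P_{ik}\partial_j f_k),\varphi\rangle$; this makes even more transparent that constancy of $P_{ik}$ is the only input beyond linearity.
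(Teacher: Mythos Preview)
Your proof is correct. It differs from the paper's argument in method: the paper verifies the identity on $C_0^\infty(\mathbb{R}^n)^m$ and then invokes that $\partial_j(I_{L^2(\mathbb{R}^n)}\otimes P)$ is closed (together with $C_0^\infty$ being a core for $\partial_j$) to extend to all of $H^1_j(\mathbb{R}^n)^m$, whereas you bypass the density/closure step entirely by working directly with an arbitrary $f\in H^1_j(\mathbb{R}^n)^m$ and using linearity of the distributional derivative componentwise. Your route is slightly more elementary in that it does not appeal to any approximation or closedness; the paper's route is the standard ``check on a core, then close'' pattern that transfers more readily to settings where a direct computation is less transparent. For this particular lemma both approaches are equally short and either is perfectly acceptable.
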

\begin{proof}
Clearly, 
 \[
    (I_{L^2(\mathbb{R}^n)}\otimes P)\partial_j \phi =\partial_j (I_{L^2(\mathbb{R}^n)}\otimes P) \phi, 
    \quad \phi\in C_0^\infty(\mathbb{R}^n)^m.
 \]
 Next, the operator $ \partial_j (I_{L^2(\mathbb{R}^n)}\otimes P)$ is closed, hence, 
 \[
     (I_{L^2(\mathbb{R}^n)}\otimes P)\partial_j \subseteq \overline{ (I_{L^2(\mathbb{R}^n)}\otimes P)\partial_j }\subseteq \partial_j (I_{L^2(\mathbb{R}^n)}\otimes P),
 \]
yields the assertion.
\end{proof}

\begin{lemma}\label{l:13.5}
 Let $n,d\in \mathbb{N}$, $n=2\hatt n$ or $n=2\hatt n+1$ for some $\hatt n\in \mathbb{N}$. Let $\mathcal{Q}$ as in \eqref{eq:def_of_Q2} $($defined in $L^2(\mathbb{R}^n)^{2^{\hatt n}d}$$)$. Let $P\in \mathbb{C}^{d\times d}$ and denote $\mathcal{P}\coloneqq I_{L^2(\mathbb{R}^n)^{2^{\hatt n}}}\otimes P$. Then, 
 \[
    \mathcal{P}\mathcal{Q}\subseteq \mathcal{Q}\mathcal{P}.
 \]
\end{lemma}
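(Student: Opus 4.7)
The plan is to reduce the claim to the two ingredients already at hand: the commutation of $\partial_j$ with constant matrix multiplications (Lemma \ref{l:13.4}) and the trivial fact that the matrices $\gamma_{j,n}\otimes I_d$ and $I_{2^{\hat n}}\otimes P$ commute as elements of $\mathbb{C}^{2^{\hat n}d\times 2^{\hat n}d}$. First I would rewrite
\[
\mathcal{P}=I_{L^{2}(\mathbb{R}^{n})}\otimes (I_{2^{\hat n}}\otimes P),\qquad \gamma_{j,n}\partial_{j}=\partial_{j}(\gamma_{j,n}\otimes I_{d}),
\]
so that $\mathcal{P}$ is a bounded operator in $L^{2}(\mathbb{R}^{n})^{2^{\hat n}d}$ given by multiplication with a constant matrix $M\coloneqq I_{2^{\hat n}}\otimes P\in\mathbb{C}^{2^{\hat n}d\times 2^{\hat n}d}$. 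Since $P$ acts on the $\mathbb{C}^{d}$ tensor factor while $\gamma_{j,n}$ acts on the $\mathbb{C}^{2^{\hat n}}$ factor, the matrices $M$ and $\gamma_{j,n}\otimes I_{d}$ commute.

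Next I would apply Lemma \ref{l:13.4} (with $m=2^{\hat n}d$ and the constant matrix $M$) to deduce
\[
\mathcal{P}\,\partial_{j}\;\subseteq\;\partial_{j}\,\mathcal{P},\qquad j\in\{1,\dots,n\}.
\]
In particular, if $\phi\in\dom(\mathcal{Q})=H^{1}(\mathbb{R}^{n})^{2^{\hat n}d}$, then $\mathcal{P}\phi\in H^{1}(\mathbb{R}^{n})^{2^{\hat n}d}=\dom(\mathcal{Q})$ and $\mathcal{P}\partial_{j}\phi=\partial_{j}\mathcal{P}\phi$ for each $j$.

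Then, for $\phi\in\dom(\mathcal{P}\mathcal{Q})=\dom(\mathcal{Q})$, combining the two commutation properties yields
\[
\mathcal{P}\mathcal{Q}\phi
=\sum_{j=1}^{n}\mathcal{P}(\gamma_{j,n}\otimes I_{d})\partial_{j}\phi
=\sum_{j=1}^{n}(\gamma_{j,n}\otimes I_{d})\mathcal{P}\partial_{j}\phi
=\sum_{j=1}^{n}(\gamma_{j,n}\otimes I_{d})\partial_{j}\mathcal{P}\phi
=\mathcal{Q}\mathcal{P}\phi,
\]
which is exactly the desired inclusion $\mathcal{P}\mathcal{Q}\subseteq\mathcal{Q}\mathcal{P}$.

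I do not anticipate any real obstacle here; the statement is almost bookkeeping once one is careful about domains. The only point requiring a moment of care is that $\mathcal{P}\phi\in\dom(\mathcal{Q})$, which is immediate since $\mathcal{P}$ is simply multiplication by a constant matrix and hence preserves the Sobolev class $H^{1}(\mathbb{R}^{n})^{2^{\hat n}d}$; this is essentially the content of invoking Lemma \ref{l:13.4} componentwise.
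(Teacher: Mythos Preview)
Your proof is correct and follows essentially the same approach as the paper: both arguments combine the commutation of $\gamma_{j,n}\otimes I_d$ with $I_{2^{\hat n}}\otimes P$ (as they act on different tensor factors) with Lemma~\ref{l:13.4} applied to the constant matrix $M=I_{2^{\hat n}}\otimes P$, and then sum over $j$. Your version is slightly more explicit about the tensor structure and the domain check $\mathcal{P}\phi\in H^1(\mathbb{R}^n)^{2^{\hat n}d}$, but the substance is identical.
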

\begin{proof}
We note that for all $j\in\{1,\ldots,n\}$ and $\gamma_{j,n}$ as in Section \ref{sec:Appendix:-the-Construction}, $\gamma_{j,n}\mathcal{P}=\mathcal{P}\gamma_{j,n}$, where we viewed $\gamma_{j,n}\in \mathcal{B}(L^2(\mathbb{R}^n)^{2^{\hatt n}})$. Hence, using 
$\dom(\mathcal{Q})=\bigcap_{j=1}^n \dom(\partial_j)$, Lemma \ref{l:13.4} implies 
\begin{align*}
   \mathcal{P}\mathcal{Q}&=\mathcal{P}\sum_{j=1}^n \gamma_{j,n}\partial_j =\sum_{j=1}^n \mathcal{P}\gamma_{j,n}\partial_j
   \\&=\sum_{j=1}^n \gamma_{j,n}\mathcal{P}\partial_j \subseteq\sum_{j=1}^n \gamma_{j,n}\partial_j\mathcal{P}=\mathcal{Q}\mathcal{P}.\qedhere
\end{align*}
\end{proof}

Before turning to the proof of Lemma \ref{l:13.3}, we recall a general result on the representability of operators as block operator matrices (the same calculus has also been employed in \cite[Lemma 3.2]{PTW14}): 

\begin{lemma}\label{l:13.6} Let $P\in \mathcal{B}(\mathcal{H})$ be an orthogonal projection, $W\colon D(W)\subseteq \mathcal{H}\to \mathcal{H}$ closed and linear. Assume that
\[
  PW\subseteq WP \, \text{ and } \, (I_{\mathcal{H}}-P)W\subseteq W(I_\mathcal{H}-P).
\]
Denote by $\iota_P\colon \ran(P)\to \mathcal{H}$ and $\iota_{P_\bot} \colon\ker(P)\to \mathcal{H}$ the canonical embeddings, respectively.
Then $W$ is unitarily equivalent to a block operator matrix. More precisely,  
\begin{equation}\label{e:13.6}
  \begin{pmatrix}
   \iota_P^* \\ \iota_{P_\bot}^*
  \end{pmatrix}
W \begin{pmatrix}
   \iota_P & \iota_{P_\bot}
  \end{pmatrix}=\begin{pmatrix}
        \iota_P^* W \iota_{P} & 0 \\ 0 & \iota_{P_\bot}^* W \iota_{P_\bot}
     \end{pmatrix}
\end{equation}
with $\iota_P^* W \iota_{P}$ and $\iota_{P_\bot}^* W \iota_{P_\bot}$ closed linear operators.
\end{lemma}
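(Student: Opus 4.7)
\medskip

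\noindent\textbf{Proof plan for Lemma \ref{l:13.6}.} The plan is to decompose $\cH = \ran(P) \oplus \ker(P)$, recognize
$U \coloneqq \begin{pmatrix} \iota_P & \iota_{P_\bot} \end{pmatrix}\colon \ran(P) \oplus \ker(P) \to \cH$
as a unitary operator, and then show that under the commutation hypotheses, $W$ splits as a direct sum of its restrictions to the two summands. First I would unravel the commutation relations: $PW \subseteq WP$ means that for every $f \in \dom(W)$ one has $Pf \in \dom(W)$ with $WPf = PWf$, and analogously $(I_\cH - P)f \in \dom(W)$ with $W(I_\cH - P)f = (I_\cH - P)Wf$. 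Adding these two identities reproduces $Wf = Wf$, but the key structural consequence is the orthogonal decomposition of the domain,
\[
\dom(W) = \big(\dom(W) \cap \ran(P)\big) \oplus \big(\dom(W) \cap \ker(P)\big),
\]
together with the fact that $W$ sends $\dom(W) \cap \ran(P)$ into $\ran(P)$ and $\dom(W) \cap \ker(P)$ into $\ker(P)$.

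Next I would define the two candidate diagonal entries,
\[
A \coloneqq \iota_P^* W \iota_P \, \text{ on } \, \dom(A) = \iota_P^{-1}\big(\dom(W) \cap \ran(P)\big), \quad
B \coloneqq \iota_{P_\bot}^* W \iota_{P_\bot} \, \text{ on } \, \dom(B) = \iota_{P_\bot}^{-1}\big(\dom(W) \cap \ker(P)\big),
\]
and verify the matrix identity in \eqref{e:13.6} by direct computation on
$\begin{pmatrix} g \\ h \end{pmatrix} \in \dom(A) \oplus \dom(B)$, using
$\iota_P \iota_P^* = P$, $\iota_{P_\bot} \iota_{P_\bot}^* = I_\cH - P$, and the
decomposition above. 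Since $U$ is unitary, this establishes the unitary equivalence.

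The only nontrivial point is closedness of $A$ (the argument for $B$ being identical), so that is where I would concentrate. Let $\{g_k\}_{k \in \bbN} \subset \dom(A)$ with $g_k \to g$ in $\ran(P)$ and $A g_k \to y$ in $\ran(P)$. Then $\iota_P g_k \to \iota_P g$ and $W \iota_P g_k = \iota_P A g_k \to \iota_P y$ in $\cH$. Closedness of $W$ yields $\iota_P g \in \dom(W)$ and $W \iota_P g = \iota_P y$; since $\iota_P g \in \ran(P)$, one has $\iota_P g \in \dom(W) \cap \ran(P)$, so $g \in \dom(A)$ and $A g = \iota_P^* \iota_P y = y$. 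The main obstacle I anticipate is nothing deep but rather bookkeeping: ensuring that the commutation hypotheses, which are inclusions of (possibly unbounded) operators, really do deliver the domain splitting used above, and not just the identity on a smaller core. This is handled precisely by reading the inclusion $PW \subseteq WP$ as the statement that $\dom(PW) = \dom(W) \subseteq \dom(WP) = \{f \in \cH \,|\, Pf \in \dom(W)\}$, which is exactly what is needed.
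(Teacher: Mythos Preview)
Your proof is correct. The approach is essentially the same as the paper's: both use the unitary $U=\begin{pmatrix}\iota_P & \iota_{P_\bot}\end{pmatrix}$ and the commutation hypotheses to reduce $W$ to block-diagonal form. The execution differs slightly: you first extract the domain decomposition $\dom(W)=(\dom(W)\cap\ran(P))\oplus(\dom(W)\cap\ker(P))$ and the invariance of the two summands, then verify \eqref{e:13.6} on vectors and prove closedness of each block by a direct sequence argument; the paper instead runs an operator-inclusion sandwich $W\subseteq U\begin{pmatrix}A&0\\0&B\end{pmatrix}U^*\subseteq W$ and deduces closedness of the blocks from the general fact that a block-diagonal operator is closed iff its diagonal entries are. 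Your route is a bit more explicit about domains, the paper's is a bit slicker algebraically; both arrive at the same place with the same ingredients.
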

\begin{proof}
  First, one observes that the operators $\begin{pmatrix}
   \iota_P^* \\ \iota_{P_\bot}^*
  \end{pmatrix}$ and $\begin{pmatrix}
   \iota_P & \iota_{P_\bot}
  \end{pmatrix}$ are unitary and inverses of each other. Moreover, it is plain that a block diagonal operator matrix is closed if and only if its diagonal entries are closed. Thus, as $W$ is closed by hypothesis, it suffices to prove equality \eqref{e:13.6}. One notes that $P=\iota_P\iota_P^*$ and  similarly, $(I_\mathcal{H}-P) = \iota_{P_\bot}\iota_{P_\bot}^*$, and hence computes, 
  \begin{align*}
     W & = \begin{pmatrix}
   \iota_P & \iota_{P_\bot}
  \end{pmatrix} \begin{pmatrix}
   \iota_P^* \\ \iota_{P_\bot}^*
  \end{pmatrix} W \begin{pmatrix}
   \iota_P & \iota_{P_\bot}
  \end{pmatrix} \begin{pmatrix}
   \iota_P^* \\ \iota_{P_\bot}^*
  \end{pmatrix}
  \\ & = \begin{pmatrix}
   \iota_P & \iota_{P_\bot}
  \end{pmatrix} \begin{pmatrix}
   \iota_P^*W \\ \iota_{P_\bot}^*W
  \end{pmatrix}  \begin{pmatrix}
   \iota_P & \iota_{P_\bot}
  \end{pmatrix} \begin{pmatrix}
   \iota_P^* \\ \iota_{P_\bot}^*
  \end{pmatrix}
  \\ & = \begin{pmatrix}
   \iota_P & \iota_{P_\bot}
  \end{pmatrix} \begin{pmatrix}
   \iota_P^*\iota_P\iota_P^* W \\ \iota_{P_\bot}^*\iota_{P_\bot} \iota_{P_\bot}^*W
  \end{pmatrix}  \begin{pmatrix}
   \iota_P & \iota_{P_\bot}
  \end{pmatrix} \begin{pmatrix}
   \iota_P^* \\ \iota_{P_\bot}^*
  \end{pmatrix}
  \\ & \subseteq \begin{pmatrix}
   \iota_P & \iota_{P_\bot}
  \end{pmatrix} \begin{pmatrix}
   \iota_P^* W \iota_P\iota_P^* \\ \iota_{P_\bot}^*W\iota_{P_\bot} \iota_{P_\bot}^*
  \end{pmatrix}  \begin{pmatrix}
   \iota_P & \iota_{P_\bot}
  \end{pmatrix} \begin{pmatrix}
   \iota_P^* \\ \iota_{P_\bot}^*
  \end{pmatrix}
  \\ & = \begin{pmatrix}
   \iota_P & \iota_{P_\bot}
  \end{pmatrix} \begin{pmatrix}
   \iota_P^* W \iota_P\iota_P^*\iota_P & 0 \\ 0& \iota_{P_\bot}^*W\iota_{P_\bot} \iota_{P_\bot}^*\iota_{P_\bot}
  \end{pmatrix} \begin{pmatrix}
   \iota_P^* \\ \iota_{P_\bot}^*
  \end{pmatrix}
  \\ & = \begin{pmatrix}
   \iota_P & \iota_{P_\bot}
  \end{pmatrix} \begin{pmatrix}
   \iota_P^* W \iota_P & 0 \\ 0& \iota_{P_\bot}^*W\iota_{P_\bot}
  \end{pmatrix} \begin{pmatrix}
   \iota_P^* \\ \iota_{P_\bot}^*
  \end{pmatrix}
  \\ & = \begin{pmatrix}
   \iota_P & \iota_{P_\bot}
  \end{pmatrix} \begin{pmatrix}
   \iota_P^* W \iota_P\iota_P^* \\ \iota_{P_\bot}^*W\iota_{P_\bot} \iota_{P_\bot}^*
  \end{pmatrix}
  \\ & = \begin{pmatrix}
   \iota_P & \iota_{P_\bot}
  \end{pmatrix} \begin{pmatrix}
   \iota_P^* W \iota_P\iota_P^*\big(\iota_P\iota_P^*+\iota_{P_\bot} \iota_{P_\bot}^*\big) \\ \iota_{P_\bot}^*W\iota_{P_\bot} \iota_{P_\bot}^*\big(\iota_P\iota_P^*+\iota_{P_\bot} \iota_{P_\bot}^*\big)
  \end{pmatrix}
  \\ & = \begin{pmatrix}
   \iota_P & \iota_{P_\bot}
  \end{pmatrix} \begin{pmatrix}
   \iota_P^* W \big(\iota_P\iota_P^*+\iota_{P_\bot} \iota_{P_\bot}^*\big) \\ \iota_{P_\bot}^*W\big(\iota_P\iota_P^*+\iota_{P_\bot} \iota_{P_\bot}^*\big)
  \end{pmatrix}
  \\ & \subseteq W,
  \end{align*}
concluding the proof.
\end{proof}

At this instant we are in a position to prove Lemma \ref{l:13.3}.

\begin{proof}[Proof of Lemma \ref{l:13.3}]
   By Lemma \ref{l:13.5}, 
\[
\mathcal{P}L\subseteq L\mathcal{P}\, \text{ and } \, 
(I_{L^2(\mathbb{R}^n)^{2^{\hatt n}d}}-\mathcal{P})L\subseteq 
L(I_{L^2(\mathbb{R}^n)^{2^{\hatt n}d}}-\mathcal{P}).
\] 
Hence, by Lemma \ref{l:13.6}, $L$ is unitarily equivalent to
\[
  \begin{pmatrix}
        \iota_P^*L\iota_P
      & 0 \\ 0 & \iota_{P_\bot}^*L\iota_{P_\bot}.
      \end{pmatrix}
\]
The assertion, thus, follows from $\iota_P^*\Phi\iota_P=0$ (valid by hypothesis).
\end{proof}

From Proposition \ref{p:13.1} and Lemma \ref{l:13.3} one infers the following result.

\begin{corollary}\label{c:13.2} Let $n=2\hatt n+1\in \mathbb{N}$ odd, $d\in \mathbb{N}$, 
assume $\mathcal{Q}$ is given by \eqref{eq:def_of_Q2}, 
$\Phi\in C^\infty_b\big(\mathbb{R}^n;\mathbb{C}^{d\times d}\big)$ pointwise self-adjoint, and let $L=\mathcal{Q}+\Phi$ be as in \eqref{eq:def_of_L(2)}. Assume that there exists $P=P^*=P^2\in \mathbb{C}^{d\times d} \backslash \{0\}$ such that 
\[
  P\Phi(x)=\Phi(x)P=0, \quad x\in \mathbb{R}^n.
\]
Then $L$ is non-Fredholm.
\end{corollary}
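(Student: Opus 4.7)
The plan is to reduce the problem to Proposition \ref{p:13.1} and Lemma \ref{l:13.3} via the block-decomposition machinery already established. First I would invoke Lemma \ref{l:13.3}: since $P = P^* = P^2 \neq 0$ satisfies $P\Phi(x) = \Phi(x)P = 0$ for all $x \in \mathbb{R}^n$, the operator $L$ is unitarily equivalent to the block-diagonal operator
\begin{equation*}
\begin{pmatrix} \iota_P^* \mathcal{Q} \iota_P & 0 \\ 0 & \iota_{P_\bot}^* \mathcal{Q} \iota_{P_\bot} + \iota_{P_\bot}^* \Phi \iota_{P_\bot} \end{pmatrix}
\end{equation*}
acting in $\mathcal{H}_{\mathcal{P}} \oplus \mathcal{H}_{\mathcal{P}}^\bot$. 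By Remark \ref{r:13.7}, the upper-left block equals $Q \otimes I_{\ran P}$, where $Q$ is the skew-self-adjoint operator from \eqref{eq:Def_of_Q} in $L^2(\mathbb{R}^n)^{2^{\hat n}}$.

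Next, I would use the elementary fact that a block-diagonal operator is Fredholm if and only if each of its diagonal blocks is Fredholm (and the Fredholm property is invariant under unitary equivalence). Consequently, it suffices to show that $Q \otimes I_{\ran P}$ fails to be Fredholm in $\mathcal{H}_{\mathcal{P}}$. Since $P \neq 0$, one has $\dim(\ran P) = r \geq 1$, so $Q \otimes I_{\ran P}$ is unitarily equivalent to the $r$-fold orthogonal sum $\bigoplus_{j=1}^{r} Q$ in $\bigoplus_{j=1}^{r} L^2(\mathbb{R}^n)^{2^{\hat n}}$. A finite orthogonal sum of copies of a single closed densely defined operator is Fredholm if and only if the operator itself is Fredholm. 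By Proposition \ref{p:13.1}, $Q$ is non-Fredholm, and hence so is $Q \otimes I_{\ran P}$.

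Combining these two observations, the block-diagonal operator above is non-Fredholm, and therefore so is $L$ by unitary invariance of the Fredholm property.

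The only step that requires a modicum of care is verifying that the unitary equivalence provided by Lemma \ref{l:13.3} genuinely transports the Fredholm property; this is automatic, but one must keep in mind that the block decomposition is in fact a direct orthogonal sum of closed operators (as recorded in Lemma \ref{l:13.6} via the closedness of $\iota_P^* W \iota_P$ and $\iota_{P_\bot}^* W \iota_{P_\bot}$), so kernels, cokernels, and ranges of $L$ split accordingly. No further analytical input beyond Proposition \ref{p:13.1} and the block-decomposition lemmas of this section is needed; the argument is essentially a bookkeeping reduction.
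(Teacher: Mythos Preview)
Your proposal is correct and follows essentially the same route as the paper: invoke Lemma \ref{l:13.3} and Remark \ref{r:13.7} to block-diagonalize $L$, then use Proposition \ref{p:13.1} on the $Q\otimes I_{\ran P}$ block to conclude. The only cosmetic difference is that the paper phrases the final step in terms of essential spectra (namely, $0\in\sigma_{\textnormal{ess}}(Q\otimes I_{\ran P})\subseteq \sigma_{\textnormal{ess}}(L)$), whereas you invoke directly that a direct sum is Fredholm if and only if each summand is; these are equivalent and your version is equally valid.
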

\begin{proof}
 Define $\mathcal{H}_P\coloneqq L^2(\mathbb{R}^n;\mathbb{C}^{2^{\hatt n}}\otimes \ran(P))$, denote the embedding from $\mathcal{H}_P$ into $L^2(\mathbb{R}^n)^{2^{\hatt n}d}$ by $\iota_P$, and denote the embedding from $\mathcal{H}_P^\bot$ into $L^2(\mathbb{R}^n)^{2^{\hatt n}d}$ by $\iota_{P_\bot}$. By Lemma \ref{l:13.3}, the operator $L$ is unitarily equivalent to
 \[
     \begin{pmatrix}
        \iota_P^*\mathcal{Q}\iota_P
      & 0 \\ 0 & \iota_{P_\bot}^*\mathcal{Q}\iota_{P_\bot} + \iota_{P_\bot}^*\Phi\iota_{P_\bot}
      \end{pmatrix},
 \]
 which by Remark \ref{r:13.7} may also be written as
 \[
      \begin{pmatrix}
        Q\otimes I_{\ran P}
      & 0 \\ 0 & Q \otimes I_{\ker P}+ \iota_{P_\bot}^*\Phi\iota_{P_\bot}
      \end{pmatrix}. 
 \]
In particular, 
 \[
    \sigma(L)=\sigma\big(Q\otimes I_{\ran P}\big)\cup \sigma\big(Q \otimes I_{\ker P}+ \iota_{P_\bot}^*\Phi\iota_{P_\bot}\big).
 \]
 Since $\ran (P)$ is at least one-dimensional, it follows from Proposition \ref{p:13.1} that $0\in \sigma_{\textrm{ess}}\big(Q\otimes I_{\ran P}\big)$. Hence, $0\in \sigma_{\textrm{ess}}(L)$, 
 implying that $L$ is non-Fredholm.
\end{proof}

We conclude this section with non-trivial examples illustrating the generalized Witten index introduced in this section:

\begin{theorem}\label{t:13.9} Assume Hypothesis \ref{hyp:parameters} and let 
$U \in C_b^\infty\big(\mathbb{R}^n;\mathbb{C}^{d\times d}\big)$ be a $\tau$-admissible potential. Let $\ell\in \mathbb{N}$ and define
\[
  \Phi\colon \mathbb{R}^n \to \mathbb{C}^{(d+\ell)\times (d+\ell)}, \quad x\mapsto \begin{pmatrix}
                                                                               0 & 0 \\
                                                                               0 & U(x)
                                                                            \end{pmatrix}.
\]
Let $L\coloneqq \mathcal{Q}+\Phi$, as in \eqref{Def:L}. Then the following assertions 
$(\alpha)$--$(\delta)$ hold:
\begin{align}
& \text{$(\alpha)$ For all $\Lambda>0$, the family} \no \\
&  \qquad 
\Sigma_{0,\theta} \ni z\mapsto z\chi_\Lambda \tr_{2^{\hatt n}(d+\ell)} ((L^*L+z)^{-1}-(LL^*+z)^{-1})\in \mathcal{B}_1(L^2(\mathbb{R}^n))    \label{i:13.9.1} \\
& \quad \;\;\; \text{is analytic.}  \no \\
& \text{$(\beta)$ The family $\{ f_\Lambda\}_{\Lambda>0}$ of analytic functions}  \no \\
& \qquad \quad    
 f_\Lambda \colon \Sigma_{0,\theta} \ni  z\mapsto \tr \big(z\chi_\Lambda \tr_{2^{\hatt n}(d+\ell)} ((L^*L+z)^{-1}-(LL^*+z)^{-1})\big)    \label{i:13.9.2} \\
& \quad \;\;\; \text{is locally bounded $($see \eqref{d:normal}$)$.}  \no \\
& \text{$(\gamma)$ The limit $f\coloneqq \lim_{\Lambda\to\infty} f_\Lambda$ exists in the compact open topology and satisfies for} \no \\
& \quad \;\;\;  \text{all $z\in \Sigma_{0,\theta}$,} \no \\
& \qquad \quad  f(z) = c_n (1+z)^{-n/2}  \lim_{\Lambda\to\infty}\frac{1}{\Lambda}\sum_{j,i_{1},\ldots,i_{n-1} = 1}^n \epsilon_{ji_{1}\ldots i_{n-1}}   \no \\ 
&\qquad \qquad \qquad \times
 \int_{\Lambda S^{n-1}}\tr (U(x) (\partial_{i_{1}} U)(x) \ldots 
 (\partial_{i_{n-1}} U)(x) )x_{j}\, d^{n-1} \sigma(x),    \label{i:13.9.3} \\
& \quad \;\;\;  \text{where}   \no \\
& \qquad \quad 
c_n\coloneqq \frac{1}{2} \left(\frac{i}{8\pi}\right)^{(n-1)/2}\frac{1}{\left[(n-1)/2\right]!}.  \no \\
& \text{$(\delta)$ $L$ is non-Fredholm, it admits a generalized Witten index, given by} 
\no \\
&  \qquad  \quad  \ind_{gW} (L) = f(0_+)     \no \\ 
 & \qquad \qquad \qquad \quad \;
 = c_n  \lim_{\Lambda\to\infty}\frac{1}{\Lambda}\sum_{j,i_{1},\ldots,i_{n-1} = 1}^n \epsilon_{ji_{1}\ldots i_{n-1}}      \label{i:13.9.4} \\ 
 &\qquad \qquad \qquad \qquad \; \times
 \int_{\Lambda S^{n-1}}\tr (U(x) (\partial_{i_{1}} U)(x) \ldots 
 (\partial_{i_{n-1}} U)(x))x_{j}\, d^{n-1} \sigma(x),   \no \\
& \quad \;\;\; \text{which is actually an integer as it coincides with the Fredholm index of} \no \\
& \quad \;\;\; \text{$\tilde L\coloneqq \mathcal{Q}+U$ in $L^2(\mathbb{R}^n)^{2^{\hatt n}d}$,  
that is,}  \no \\
& \hspace*{5cm} \ind_{gW} (L) = \ind \big(\tilde L\big). 
\end{align}
\end{theorem}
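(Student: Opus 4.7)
The strategy is to reduce Theorem \ref{t:13.9} to the Fredholm case treated in Theorem \ref{t:7.27} by exploiting the block-diagonal structure of $\Phi$. Let $P\in\mathbb{C}^{(d+\ell)\times(d+\ell)}$ denote the orthogonal projection onto the first $\ell$ coordinates, so that $P\Phi(x)=\Phi(x)P=0$ for all $x\in\mathbb{R}^n$. By Lemma \ref{l:13.3} and Remark \ref{r:13.7}, the operator $L$ is unitarily equivalent to the block-diagonal operator $(Q\otimes I_{\ell})\oplus\tilde L$ on $L^2(\mathbb{R}^n)^{2^{\hatt n}\ell}\oplus L^2(\mathbb{R}^n)^{2^{\hatt n}d}$, where $\tilde L=\mathcal{Q}+U$ is the Fredholm operator of Theorem \ref{t:7.27}. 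Since $Q$ is skew-self-adjoint and non-Fredholm by Proposition \ref{p:13.1}, so is $L$, which establishes the first assertion of part $(\delta)$ and also recovers Corollary \ref{c:13.2}.

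The key technical observation is that the top block contributes nothing to the resolvent difference: because $(Q\otimes I_\ell)^*(Q\otimes I_\ell)=-\Delta I_{2^{\hatt n}\ell}=(Q\otimes I_\ell)(Q\otimes I_\ell)^*$, the resolvents of the two squared operators coincide on that block, and taking the internal trace of the block-diagonal resolvent difference yields
\[
\tr_{2^{\hatt n}(d+\ell)}\!\big((L^*L+z)^{-1}-(LL^*+z)^{-1}\big)=\tr_{2^{\hatt n}d}\!\big((\tilde L^*\tilde L+z)^{-1}-(\tilde L\tilde L^*+z)^{-1}\big)
\]
as bounded operators on $L^2(\mathbb{R}^n)$, for every $z$ in the intersection of the relevant resolvent sets. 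The sector $\Sigma_{0,\theta}$ lies in $\{\Re(z)>0\}$ and thus sits inside both $\rho(-L^*L)\cap\rho(-LL^*)$ (where $\sigma(L^*L)\supseteq[0,\infty)$ forces us to the right half plane) and $\rho(-\tilde L^*\tilde L)\cap\rho(-\tilde L\tilde L^*)$. Consequently, the family $\{f_\Lambda\}_{\Lambda>0}$ appearing in Theorem \ref{t:13.9} coincides on $\Sigma_{0,\theta}$ with the corresponding family for $\tilde L$ from Theorem \ref{t:7.27}, with the particular choices $T_\Lambda=\chi_\Lambda$ and $S_\Lambda^*=I_{L^2(\mathbb{R}^n)}$.

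Given this identification, parts $(\alpha)$, $(\beta)$, and $(\gamma)$ of Theorem \ref{t:13.9} follow directly from the respective parts of Theorem \ref{t:7.27} upon restricting to the open subsector $\Sigma_{0,\theta}\subseteq\Sigma_{\delta,\theta}$ supplied there; in particular $(\gamma)$ delivers the stated Callias-type surface formula for $f$ since the explicit expression depends only on $U$. For part $(\delta)$, the Fredholm property of $\tilde L$ means that the limit function $f$ actually extends analytically to the larger sector $\Sigma_{\delta,\theta}$ with $\delta\in(-1,0)$, so $f(0_+)$ exists and equals $f(0)$; combined with Theorem \ref{t:7.27}$(\delta)$, which identifies $f(0)$ with $\ind(\tilde L)$, this shows that $L$ admits a generalized Witten index in the sense of Definition \ref{d:gW} and that $\ind_{gW}(L)=\ind(\tilde L)$, an integer given by the asserted surface integral. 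The main point requiring care will be the block cancellation and the resulting identification of the two $f_\Lambda$ families (including verification that the internal-trace computation respects the tensor decomposition); once this algebraic reduction is in hand, everything else is a direct quotation of Theorem \ref{t:7.27}.
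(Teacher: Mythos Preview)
Your proposal is correct and follows essentially the same approach as the paper: block-diagonalize $L$ via Lemma \ref{l:13.3} and Remark \ref{r:13.7} with the projection onto the first $\ell$ coordinates, observe that the free block contributes nothing to the resolvent difference since $(Q\otimes I_\ell)^*(Q\otimes I_\ell)=(Q\otimes I_\ell)(Q\otimes I_\ell)^*=-\Delta$, deduce the internal-trace identity, and then invoke Theorem \ref{t:7.27} for $\tilde L$ together with Corollary \ref{c:13.2} for the non-Fredholm assertion. Your treatment of $f(0_+)$ via the analytic extension to $\Sigma_{\delta,\theta}$ makes explicit what the paper leaves implicit in its reference to Theorem \ref{t:7.27}.
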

\begin{proof}
The proof rests on Theorem \ref{t:7.27}, Lemma \ref{l:13.3}, Remark \ref{r:13.7}, and specifically,  for the assertion that $L$ is non-Fredholm, on Corollary \ref{c:13.2}. Indeed, invoking Lemma \ref{l:13.3} and Remark \ref{r:13.7} with
\[
   P=\begin{pmatrix}
        I_\ell & 0\\ 0 & 0
     \end{pmatrix}\in \mathbb{C}^{(d+\ell)\times (d+\ell)},
\]
one computes, recalling $\tilde L=\mathcal{Q}+U$,
\begin{align*}
    L^*L&=  (-\mathcal{Q}+\Phi)(\mathcal{Q}+\Phi)
    \\  &= \begin{pmatrix}
            -\mathcal{Q} & 0\\
              0 & -\mathcal{Q}+U
           \end{pmatrix}\begin{pmatrix}
            \mathcal{Q} & 0\\
              0 & \mathcal{Q}+U
           \end{pmatrix}
           \\ & = \begin{pmatrix}
            -\Delta & 0\\
              0 & \tilde{L}^*\tilde{L}
           \end{pmatrix}.
\end{align*}
A similar computation applies to $LL^*$. One deduces for $z\in \mathbb{C}_{\Re>0}$, 
\begin{align*}
   & ((L^*L+z)^{-1}-(LL^*+z)^{-1})
   \\& \quad = \begin{pmatrix}
            (-\Delta+z)^{-1} & 0\\
              0 & (\tilde{L}^*\tilde{L}+z)^{-1}
           \end{pmatrix}-\begin{pmatrix}
            (-\Delta+z)^{-1} & 0\\
              0 & (\tilde{L}\tilde{L}^*+z)^{-1}
           \end{pmatrix}
           \\& \quad =\begin{pmatrix}
              0 & 0\\
              0 & (\tilde{L}^*\tilde{L}+z)^{-1}-(\tilde{L}\tilde{L}^*+z)^{-1}
           \end{pmatrix}.
\end{align*}
Thus,
\[
  \tr_{2^{\hatt n}(d+\ell)}\big((L^*L+z)^{-1}-(LL^*+z)^{-1})\big)= \tr_{2^{\hatt n}d} \big( (\tilde{L}^*\tilde{L}+z)^{-1}-(\tilde{L}\tilde{L}^*+z)^{-1}\big).
\]
Hence, the assertions \eqref{i:13.9.1}--\eqref{i:13.9.4} indeed follow from Theorem \ref{t:7.27} applied to $\tilde L$.
\end{proof}

\newpage

\appendix
\section{Construction of the Euclidean Dirac Algebra}\label{sec:Appendix:-the-Construction}
\renewcommand{\theequation}{A.\arabic{equation}}
\renewcommand{\thetheorem}{A.\arabic{theorem}}
\setcounter{theorem}{0} \setcounter{equation}{0}

For a concise presentation of the construction of the Euclidean Dirac
algebra as a specific case of Clifford algebras, see, for instance, 
\cite[Chapter 11]{Sn97}.

\begin{definition}
Given two matrices $A=(a_{ij})_{i,j\in\{1,\ldots,n\}}\in\mathbb{C}^{n\times n}$ and 
$B=(b_{ij})_{i,j\in\{1,\ldots,m\}}\in\mathbb{C}^{m\times m}$, one defines their 
\emph{Kronecker product} $A\circ B$ by
\begin{align*}
A\circ B & \coloneqq\left(\begin{array}{ccccc}
a_{11}B & a_{12}B & \cdots &  & a_{1n}B\\
a_{21}B & \ddots &  &  & \vdots\\
\vdots\\
\\
a_{n1}B & \cdots &  &  & a_{nn}B
\end{array}\right)\\
 & \, =\big(a_{\lceil\frac{p}{m}\rceil\lceil\frac{q}{m}\rceil}b_{(((p-1)\!\!\!\mod m)+1)(((q-1)\!\!\!\mod m)+1)}\big)_{p,q\in\{1,\ldots,mn\}}\in\mbox{\ensuremath{\mathbb{C}}}^{nm\times nm},
\end{align*}
where $\left\lceil x\right\rceil \coloneqq\min\{z\in\mathbb{Z}\,|\,z\geq x\}$
for all $x\in\mathbb{R}$ and $k\!\!\!\mod \ell$ denotes the nonnegative
integer $j\in\{0,\ldots,\ell-1\}$ such that $k-j$ is divisible by $\ell$, with 
$\ell,k\in\mathbb{Z}$.
\end{definition}

\begin{proposition}
\label{prop:rules Kronecker}Let $n,m,\ell,k\in\mathbb{N}$, $A\in\mathbb{C}^{n\times n},$
$B\in\mathbb{C}^{m\times m},$ $C\in\mathbb{C}^{\ell\times \ell}$, $D\in\mathbb{C}^{k\times k}$. 
Then one concludes that 
\begin{align*}
& A\circ(B\circ C)=(A\circ B)\circ C,   \\
& \left(A\circ B\right)^{*}=A^{*}\circ B^{*},  \\
& \tr(A\circ B)=\tr (A) \tr (B),  \\
& \, \text{if $n=m$ and $\ell=k$ then, } \, AB\circ CD=\left(A\circ C\right)(B\circ D).
\end{align*}
\end{proposition}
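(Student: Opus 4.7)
My plan is to verify each of the four identities directly from the explicit entrywise definition of the Kronecker product, treating them in increasing order of indexing complexity. Throughout, I will use the second (entrywise) form of the definition, where the $(p,q)$-entry of $A\circ B$ is $a_{\lceil p/m\rceil\lceil q/m\rceil}\,b_{(((p-1)\bmod m)+1)\,(((q-1)\bmod m)+1)}$, and I will employ the abbreviations $p_1 = \lceil p/m\rceil$, $p_2 = ((p-1)\bmod m)+1$, so that $(p_1,p_2)$ is the unique pair in $\{1,\ldots,n\}\times\{1,\ldots,m\}$ with $p = (p_1-1)m + p_2$ (and similarly for $q$).

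For the conjugate transpose, the identity $(A\circ B)^* = A^* \circ B^*$ is essentially immediate: the $(p,q)$-entry of $(A\circ B)^*$ is $\overline{a_{q_1 p_1} b_{q_2 p_2}} = \overline{a_{q_1 p_1}}\cdot\overline{b_{q_2 p_2}}$, which is precisely the $(p,q)$-entry of $A^* \circ B^*$. For the trace identity, one notes that the diagonal entries of $A\circ B$ (i.e., those with $p=q$) force $p_1=q_1$ and $p_2=q_2$, so summing $a_{p_1 p_1} b_{p_2 p_2}$ over $p\in\{1,\ldots,nm\}$ factors as $\big(\sum_{p_1=1}^n a_{p_1 p_1}\big)\big(\sum_{p_2=1}^m b_{p_2 p_2}\big) = \tr(A)\tr(B)$.

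For the mixed-product property $AB \circ CD = (A\circ C)(B\circ D)$ (where now $n=m$ and $\ell=k$), I would compute the $(p,q)$-entry of the right-hand side as
\[
\sum_{r=1}^{n\ell} (A\circ C)_{pr}(B\circ D)_{rq} = \sum_{r_1=1}^{n}\sum_{r_2=1}^{\ell} a_{p_1 r_1} c_{p_2 r_2}\, b_{r_1 q_1} d_{r_2 q_2},
\]
which factors as $\big(\sum_{r_1} a_{p_1 r_1} b_{r_1 q_1}\big)\big(\sum_{r_2} c_{p_2 r_2} d_{r_2 q_2}\big) = (AB)_{p_1 q_1}(CD)_{p_2 q_2}$, giving the $(p,q)$-entry of $AB\circ CD$. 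The bijection between $r\in\{1,\ldots,n\ell\}$ and pairs $(r_1,r_2)\in\{1,\ldots,n\}\times\{1,\ldots,\ell\}$ is what makes this factorization rigorous.

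The main obstacle, such as it is, lies in associativity: here one has to verify that the two parenthesizations yield the same matrix in $\mathbb{C}^{nm\ell \times nm\ell}$. The key observation is that both $A\circ(B\circ C)$ and $(A\circ B)\circ C$ can be described by a single triple-index labeling of $\{1,\ldots,nm\ell\}$ via the bijection $p \leftrightarrow (p_1,p_2,p_3) \in \{1,\ldots,n\}\times\{1,\ldots,m\}\times\{1,\ldots,\ell\}$ with $p = (p_1-1)m\ell + (p_2-1)\ell + p_3$. Applying the definition once yields $(B\circ C)_{(p_2,p_3),(q_2,q_3)} = b_{p_2 q_2} c_{p_3 q_3}$ and then $[A\circ(B\circ C)]_{p,q} = a_{p_1 q_1} b_{p_2 q_2} c_{p_3 q_3}$; symmetrically, iterating first within $A\circ B$ and then tensoring with $C$ yields the same product $a_{p_1 q_1} b_{p_2 q_2} c_{p_3 q_3}$. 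The bookkeeping -- verifying that the two ceiling/mod re-indexings actually agree on $\{1,\ldots,nm\ell\}$ -- is the only slightly tedious step, and once written out in terms of the triple index $(p_1,p_2,p_3)$, associativity follows at once from the associativity of scalar multiplication in $\mathbb{C}$.
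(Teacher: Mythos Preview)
Your proof is correct and follows essentially the same approach as the paper. The paper only sketches the associativity argument (and omits the other three identities entirely), expressing it as three explicit ceiling/mod identities to be verified; your use of the bijection $p\leftrightarrow(p_1,p_2,p_3)$ is precisely the conceptual content of those identities, just packaged more cleanly.
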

\begin{proof}
We only sketch a proof for the first assertion. It boils down to the
following equations, 
\begin{align*}
\left\lceil \frac{\left\lceil \frac{j}{k}\right\rceil }{m}\right\rceil  & =\left\lceil \frac{j}{m\,k}\right\rceil ,\\
\left(\left(\left\lceil \frac{j}{k}\right\rceil -1\right)\!\!\!\mod m\right)+1 & =\left\lceil \frac{\left(j-1\!\!\!\mod mk\right)+1}{k}\right\rceil ,\\
\left(j-1\!\!\!\mod mk\right)\!\!\!\mod k & = j-1\!\!\!\mod k, \quad j \in\{1,\ldots,mnk\}.
\end{align*} 
The expressions on the left-hand
side correspond to the indices of the entries of $A,B$ and $C$,
respectively, in $\left(A\circ B\right)\circ C$ and, similarly, the
expressions on the right-hand sides correspond to the respective indices
of the entries of $A$, $B$ and $C$ in $A\circ(B\circ C)$. 
\end{proof}

\begin{definition}
\label{def:Euc-D-A} Introduce the Pauli matrices 
$$
\sigma_{1}\coloneqq\left(\begin{array}{cc}
0 & 1\\
1 & 0
\end{array}\right),  \quad \sigma_{2}\coloneqq\left(\begin{array}{cc}
0 & -i\\
i & 0
\end{array}\right), \quad \sigma_{3}\coloneqq\left(\begin{array}{cc}
1 & 0\\
0 & -1
\end{array}\right), 
$$
in addition, define 
\[
\gamma_{1,2}\coloneqq\sigma_{1},\quad\gamma_{2,2}\coloneqq\sigma_{2}.
\]
Let ${\hatt n}\in\mathbb{N}$. Recursively, one sets
\begin{align*}
\gamma_{k,2{\hatt n}+1} & \coloneqq\gamma_{k,2{\hatt n}}, \quad k\in\{1,\ldots,2{\hatt n}\},\\
\gamma_{2{\hatt n}+1,2{\hatt n}+1} & \coloneqq\left(-i\right)^{{\hatt n}} \gamma_{1,2{\hatt n}}\cdots\gamma_{2{\hatt n},2{\hatt n}}, 
\end{align*}
and
\begin{align*}
\gamma_{k,2{\hatt n}+2} & \coloneqq\sigma_{1}\circ\gamma_{k,2{\hatt n}}, \quad k\in\{1,\ldots,2{\hatt n}\},\\
\gamma_{2{\hatt n}+1,2{\hatt n}+2} & \coloneqq i^{{\hatt n}}\sigma_{1}\circ\left(\gamma_{1,2{\hatt n}}\cdots\gamma_{2{\hatt n},2{\hatt n}}\right),\\
\gamma_{2{\hatt n}+2,2{\hatt n}+2} & \coloneqq\sigma_{2}\circ I_{2^{\hatt n}},   
\end{align*}
with $I_r$ the identity matrix in $\bbC^r$, $r \in \bbN$. 
\end{definition}

\begin{remark}
By induction, one obtains 
\begin{equation}
\gamma_{k,2{\hatt n}}, \gamma_{k,2{\hatt n}+1}, \gamma_{2{\hatt n}+1,2{\hatt n}+1} 
\in\mathbb{C}^{2^{{\hatt n}}\times2^{{\hatt n}}}, \quad k \in \{1,\dots,2{\hatt n}\}.
\end{equation} 
${}$ \hfill $\diamond$
\end{remark}

\begin{lemma}
\label{lem:anti-implies-minus}Let $\gamma_{1},\ldots,\gamma_{k}\in \cB(\cK)$
for some Hilbert space $\cK$ and such that for all $j,k \in\{1,\ldots,k\},j\neq k,$
one has $\gamma_{j}\gamma_{k}+\gamma_{k}\gamma_{j}=0$. 
Then 
\[
\gamma_{k}\gamma_{k-1}\cdots\gamma_{1}=(-1)^{k(k-1)/2}\gamma_{1}\gamma_{2}\cdots\gamma_{k}.
\]
\end{lemma}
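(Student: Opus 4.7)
The plan is to proceed by induction on $k$, using the anticommutation hypothesis to interpret the reversal of the product as a sequence of adjacent transpositions, each contributing a factor of $-1$. The combinatorial content amounts to the familiar fact that the reverse permutation of $k$ letters decomposes into $k(k-1)/2$ adjacent transpositions, which, under the anticommutation rule, accumulates a sign of $(-1)^{k(k-1)/2}$.

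For the base case $k=1$ the identity is trivial, and for $k=2$ it reduces to the hypothesis $\gamma_2 \gamma_1 = - \gamma_1 \gamma_2$. For the inductive step, I would first apply the induction hypothesis to the right factor, writing
\[
\gamma_{k}\gamma_{k-1}\cdots\gamma_1 = \gamma_k \bigl(\gamma_{k-1}\cdots\gamma_1\bigr) = (-1)^{(k-1)(k-2)/2}\, \gamma_k \gamma_1 \gamma_2 \cdots \gamma_{k-1},
\]
and then move $\gamma_k$ rightward past the $k-1$ factors $\gamma_1,\gamma_2,\ldots,\gamma_{k-1}$ one at a time. Since $k$ is distinct from each of $1,\ldots,k-1$, each swap introduces exactly one sign change by the anticommutation relation, so this produces a factor of $(-1)^{k-1}$, yielding
\[
\gamma_k \gamma_1 \gamma_2 \cdots \gamma_{k-1} = (-1)^{k-1}\, \gamma_1 \gamma_2 \cdots \gamma_{k-1} \gamma_k.
\]

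The only remaining step is the arithmetic check that the exponents combine correctly, namely
\[
\frac{(k-1)(k-2)}{2} + (k-1) = (k-1)\Bigl(\frac{k-2}{2} + 1\Bigr) = \frac{k(k-1)}{2},
\]
which gives the desired sign $(-1)^{k(k-1)/2}$ on the right-hand side. I do not anticipate any genuine obstacle here: all operations are purely algebraic and depend only on the pairwise anticommutation, never on the squares $\gamma_j^2$ or on the ambient Hilbert space $\cK$, so no issues of unboundedness or domain questions arise. The only subtlety worth flagging is the bookkeeping in the induction step — ensuring that $\gamma_k$ is moved only past factors with indices strictly less than $k$, so that the hypothesis $\gamma_j\gamma_k + \gamma_k \gamma_j = 0$ is legitimately applied at each swap — but this is automatic from the ordering in the product.
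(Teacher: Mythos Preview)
Your proof is correct and follows essentially the same inductive approach as the paper's. The only cosmetic difference is the order of the two steps in the induction: the paper first moves the largest factor past the others and then applies the induction hypothesis, whereas you apply the induction hypothesis first and then move the largest factor; the resulting arithmetic check on the exponent is identical.
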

\begin{proof}
The assertion being obvious for $k=1$, we assume that the assertion
of the lemma holds for some $k\in\mathbb{N}$. Then
\begin{align*}
\gamma_{k+1}\gamma_{k}\gamma_{k-1}\cdots\gamma_{1} 
& =(-1)^{k}\gamma_{k}\gamma_{k-1}\cdots\gamma_{1}\gamma_{k+1}\\
 & =(-1)^{[k(k-1)/2]+k}\gamma_{1}\gamma_{2}\cdots\gamma_{k}\gamma_{k+1}\\
 & =(-1)^{k(k+1)/2}\gamma_{1}\gamma_{2}\cdots\gamma_{k}\gamma_{k+1}.\tag*{{\qedhere}}
\end{align*}
\end{proof}

\begin{corollary}
\label{cor:anti-comm}For all $k,l\in\{1,\ldots,n\}$, $n\in\mathbb{N}_{\geq2}$, 
one has
\[
\gamma_{k,n}\gamma_{l,n}+\gamma_{l,n}\gamma_{k,n}=2\delta_{kl} I_{2^{\hatt n}},
\]
where $\gamma_{j,n}$ is given in Definition \ref{def:Euc-D-A}, $j\in\{1,\ldots,n\}$, and 
${\hatt n}\in \mathbb{N}$ is such that $n=2{\hatt n}$ or $n=2{\hatt n}+1$. 
\end{corollary}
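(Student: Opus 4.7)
The plan is to proceed by induction on $n$, with the base case $n=2$ checked directly from the Pauli matrix identities $\sigma_1^2=\sigma_2^2=I_2$ and $\sigma_1\sigma_2+\sigma_2\sigma_1=0$. For the inductive step, I would treat the two constructions in Definition \ref{def:Euc-D-A} separately: passing from dimension $2\hatt n$ to $2\hatt n+1$, and passing from $2\hatt n$ to $2\hatt n+2$. The even-to-even step is the routine one, so I would do it first; the even-to-odd step is where the sign bookkeeping lives and will be the main obstacle.

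\textbf{Step 1 (even $\to$ even, $2\hatt n \to 2\hatt n+2$).} Here the matrices are built via Kronecker products with the Pauli matrices. Using Proposition \ref{prop:rules Kronecker}, the product rule $(A\circ B)(C\circ D)=AC\circ BD$ reduces every anticommutator to a sum of Kronecker products, one factor of which is a known Pauli relation and the other the induction hypothesis. For $k,l\in\{1,\ldots,2\hatt n\}$, $\gamma_{k,2\hatt n+2}\gamma_{l,2\hatt n+2}+\gamma_{l,2\hatt n+2}\gamma_{k,2\hatt n+2}=\sigma_1^{2}\circ(\gamma_{k,2\hatt n}\gamma_{l,2\hatt n}+\gamma_{l,2\hatt n}\gamma_{k,2\hatt n})=2\delta_{kl}I_{2^{\hatt n+1}}$; the anticommutator with $\gamma_{2\hatt n+2,2\hatt n+2}=\sigma_2\circ I$ picks up $\sigma_1\sigma_2+\sigma_2\sigma_1=0$; and the diagonal term $\gamma_{2\hatt n+2,2\hatt n+2}^{2}=\sigma_2^{2}\circ I_{2^{\hatt n}}=I_{2^{\hatt n+1}}$. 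The only cases that use the odd-dimensional step below are those involving $\gamma_{2\hatt n+1,2\hatt n+2}=i^{\hatt n}\sigma_{1}\circ(\gamma_{1,2\hatt n}\cdots\gamma_{2\hatt n,2\hatt n})$, which again split cleanly via the product rule.

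\textbf{Step 2 (even $\to$ odd, the main obstacle).} Set $\Gamma:=\gamma_{1,2\hatt n}\cdots\gamma_{2\hatt n,2\hatt n}$, so that $\gamma_{2\hatt n+1,2\hatt n+1}=(-i)^{\hatt n}\Gamma$. Two facts must be established:
\begin{itemize}
\item For each $k\in\{1,\ldots,2\hatt n\}$, the product $\gamma_{k,2\hatt n}\Gamma$ equals $(-1)^{k-1}\gamma_{1,2\hatt n}\cdots\widehat{\gamma_{k,2\hatt n}}\cdots\gamma_{2\hatt n,2\hatt n}$, while $\Gamma\gamma_{k,2\hatt n}$ equals $(-1)^{2\hatt n-k}$ times the same expression; since $(-1)^{k-1}+(-1)^{2\hatt n-k}=0$, anticommutation with $\Gamma$ follows.
\item $\Gamma^{2}=(-1)^{\hatt n}I$: by Lemma \ref{lem:anti-implies-minus} (applied to the anticommuting family $\gamma_{1,2\hatt n},\ldots,\gamma_{2\hatt n,2\hatt n}$ provided by the induction hypothesis) one has $\Gamma=(-1)^{\hatt n(2\hatt n-1)/2}(\gamma_{2\hatt n,2\hatt n}\cdots\gamma_{1,2\hatt n})^{\!*\text{-like reversal}}$, and since $\hatt n(2\hatt n-1)\equiv\hatt n\pmod 2$, $\Gamma\cdot\Gamma$ telescopes using $\gamma_{k,2\hatt n}^{2}=I$ to give $(-1)^{\hatt n}I$. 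Therefore $\gamma_{2\hatt n+1,2\hatt n+1}^{2}=((-i)^{2})^{\hatt n}(-1)^{\hatt n}I=(-1)^{\hatt n}(-1)^{\hatt n}I=I$.
\end{itemize}
The remaining relations $\gamma_{k,2\hatt n+1}\gamma_{l,2\hatt n+1}+\gamma_{l,2\hatt n+1}\gamma_{k,2\hatt n+1}=2\delta_{kl}I$ for $k,l\leq 2\hatt n$ are immediate since the matrices coincide with $\gamma_{k,2\hatt n},\gamma_{l,2\hatt n}$.

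The sign tracking in Step 2, especially the verification that the prefactor $(-i)^{\hatt n}$ exactly absorbs the sign $(-1)^{\hatt n}$ produced by $\Gamma^{2}$, is the only nontrivial part; once it is in place, Step 1 combines with it to complete the induction and prove the corollary.
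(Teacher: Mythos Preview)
Your proof is correct and follows essentially the same inductive scheme as the paper: verify $n=2$ directly, then use Lemma~\ref{lem:anti-implies-minus} and sign-counting to handle the new generator $\gamma_{2\hatt n+1,2\hatt n+1}=(-i)^{\hatt n}\Gamma$ in the odd step, and the Kronecker product rule $(A\circ B)(C\circ D)=AC\circ BD$ to reduce the even step to the induction hypothesis and Pauli identities. One small slip: the exponent coming from Lemma~\ref{lem:anti-implies-minus} with $k=2\hatt n$ is $2\hatt n(2\hatt n-1)/2=\hatt n(2\hatt n-1)$, not $\hatt n(2\hatt n-1)/2$; you use the correct value $\hatt n(2\hatt n-1)\equiv\hatt n\pmod 2$ in the next line, so the conclusion $\Gamma^{2}=(-1)^{\hatt n}I$ and hence $\gamma_{2\hatt n+1,2\hatt n+1}^{2}=I$ is right.
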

\begin{proof}
The assertion holds for $n=2.$ Assume that the assertion is valid for $n=2{\hatt n}$ for some 
${\hatt n}\in\mathbb{N}$. Then Lemma \ref{lem:anti-implies-minus} implies 
\begin{align*}
\gamma_{2{\hatt n}+1,2{\hatt n}+1}^{2} & =(-i)^{2{\hatt n}}\left(\gamma_{1,2{\hatt n}}\cdots\gamma_{2{\hatt n},2{\hatt n}}\right)\left(\gamma_{1,2{\hatt n}}\cdots\gamma_{2{\hatt n},2{\hatt n}}\right)\\
 & =(-1)^{{\hatt n}}\left(\gamma_{1,2{\hatt n}}\cdots\gamma_{2{\hatt n},2{\hatt n}}\right)
 (-1)^{2{\hatt n}(2{\hatt n}-1)/2}\left(\gamma_{2{\hatt n},2{\hatt n}}\cdots\gamma_{1,2{\hatt n}}\right)\\
 & =\left(-1\right)^{{\hatt n}+2{\hatt n}^{2}-{\hatt n}}I_{2^{\hatt n}}=I_{2^{\hatt n}}.
\end{align*}
For $k\in\{1,\ldots,2{\hatt n}-1\}$ one computes 
\begin{align*}
\gamma_{k,2{\hatt n}+1}\gamma_{2{\hatt n}+1,2{\hatt n}+1} & =\gamma_{k,2{\hatt n}}(-i)^{{\hatt n}}\left(\gamma_{1,2{\hatt n}}\cdots\gamma_{2{\hatt n},2{\hatt n}}\right)\\
 & =(-1)^{2{\hatt n}-1}(-i)^{{\hatt n}}\left(\gamma_{1,2{\hatt n}}\cdots\gamma_{2{\hatt n},2{\hatt n}}\right)\gamma_{k,2{\hatt n}}   \\ 
 & =-\gamma_{2{\hatt n}+1,2{\hatt n}+1}\gamma_{k,2{\hatt n}+1}.
\end{align*}
Hence, the assertion is established for $\gamma_{k,2{\hatt n}+1}$, $k\in\{1,\ldots,2{\hatt n}+1\}$.  

For $k,l\in\{1,\ldots,2{\hatt n}\}$ one computes with the help of Proposition
\ref{prop:rules Kronecker},  
\begin{align*}
\gamma_{k,2{\hatt n}+2}\gamma_{l,2{\hatt n}+2}+\gamma_{l,2{\hatt n}+2}\gamma_{k,2{\hatt n}+2} & =\left(\sigma_{1}\circ\gamma_{k,2{\hatt n}}\right)\left(\sigma_{1}\circ\gamma_{l,2{\hatt n}}\right)   \\ 
& \quad +\left(\sigma_{1}\circ\gamma_{l,2{\hatt n}}\right)\left(\sigma_{1}\circ\gamma_{k,2{\hatt n}}\right)\\
 & =\sigma_{1}^{2}\circ\gamma_{k,2{\hatt n}}\gamma_{l,2{\hatt n}}+\sigma_{1}^{2}\circ\gamma_{l,2{\hatt n}}\gamma_{k,2{\hatt n}}\\
 & =I_2 \circ\left(\gamma_{k,2{\hatt n}}\gamma_{l,2{\hatt n}}+\gamma_{l,2{\hatt n}}\gamma_{k,2{\hatt n}}\right)\\
 & =I_2 \circ2\delta_{kl}I_{2^{\hatt n}}=2\delta_{kl}I_{2^{{\hatt n}+1}}.
\end{align*}
One observes that 
\begin{align*}
\gamma_{2{\hatt n}+1,2{\hatt n}+2}^{2} & =\left(i^{{\hatt n}}\right)^{2}\sigma_{1}^{2}\circ\left(\gamma_{1,2{\hatt n}}\cdots\gamma_{2{\hatt n},2{\hatt n}}\right)^{2} \\
 & =\left(i^{{\hatt n}}\right)^{2}\sigma_{1}^{2}\circ\left(-i\right)^{-2{\hatt n}}I_{2^{\hatt n}}=\left(i^{{\hatt n}}\right)^{2}\sigma_{1}^{2}\circ\left(-1\right)^{-2{\hatt n}}\left(i^{{\hatt n}}\right)^{-2}I_{2^{{\hatt n}}}=I_{2^{{\hatt n}+1}}, 
\end{align*}
using $\gamma_{2{\hatt n}+1,2{\hatt n}+1}^{2}=I_{2^{\hatt n}}$. Moreover, $\gamma_{2{\hatt n}+2,2{\hatt n}+2}^{2}=\sigma_{2}^{2}\circ I_{2^{\hatt n}}=I_{2^{{\hatt n}+1}}$. In addition, one notes that 
\begin{align*}
\gamma_{2{\hatt n}+2,2{\hatt n}+2}\gamma_{2{\hatt n}+1,2{\hatt n}+2} & =\sigma_{2}i^{{\hatt n}}\sigma_{1}\circ\gamma_{1,2{\hatt n}}\cdots\gamma_{2{\hatt n},2{\hatt n}}\\
 & =-i^{{\hatt n}}\sigma_{1}\sigma_{2}\circ\gamma_{1,2{\hatt n}}\cdots\gamma_{2{\hatt n},2{\hatt n}}=-\gamma_{2{\hatt n}+1,2{\hatt n}+2}\gamma_{2{\hatt n}+2,2{\hatt n}+2}, \\
\gamma_{2{\hatt n}+2,2{\hatt n}+2}\gamma_{k,2{\hatt n}+2}&=\sigma_{2}\sigma_{1}\circ\gamma_{k,{\hatt n}}=-\gamma_{k,2{\hatt n}+2}\gamma_{2{\hatt n}+2,2{\hatt n}+2},
\end{align*} 
and 
\begin{align*}
\gamma_{2{\hatt n}+1,2{\hatt n}+2}\gamma_{k,2{\hatt n}+2} & =i^{{\hatt n}}\sigma_{1}\sigma_{1}\circ\gamma_{1,2{\hatt n}}\cdots\gamma_{2{\hatt n},2{\hatt n}}\gamma_{k,2{\hatt n}}\\
 & =\sigma_{1}i^{{\hatt n}}\sigma_{1}\circ(-1)^{2{\hatt n}-1}\gamma_{k,2{\hatt n}}\gamma_{1,2{\hatt n}}\cdots\gamma_{2{\hatt n},2{\hatt n}}    \\
 & =-\gamma_{k,2{\hatt n}+2}\gamma_{2{\hatt n}+1,2{\hatt n}+2}
\end{align*}
for all $k\in\{1,\ldots,2{\hatt n}\}$, implying the assertion. 
\end{proof}

\begin{corollary}
For all $k \in\mbox{\ensuremath{\mathbb{N}}}$, $n\in\mathbb{N}_{\geq2}$, and
$k\leq  n$, one has
\[
\gamma_{k,n}^{*}=\gamma_{k,n},
\]
where $\gamma_{j,n}$ is given in Definition \ref{def:Euc-D-A}, $j\in\{1,\ldots,n\}$. 
\end{corollary}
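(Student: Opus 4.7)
\medskip

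\noindent
\textbf{Proof plan.} The natural approach is induction on $n$, closely mirroring the recursive structure of Definition~\ref{def:Euc-D-A}. The base case $n=2$ is immediate since $\gamma_{1,2}=\sigma_1$ and $\gamma_{2,2}=\sigma_2$ are the self-adjoint Pauli matrices. For the inductive step, I would assume the assertion for $n=2{\hatt n}$ and establish it in turn for $n=2{\hatt n}+1$ and $n=2{\hatt n}+2$.

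For the passage $2{\hatt n}\rightsquigarrow 2{\hatt n}+1$, the matrices $\gamma_{k,2{\hatt n}+1}=\gamma_{k,2{\hatt n}}$ for $k\le 2{\hatt n}$ are self-adjoint by the induction hypothesis, so only $\gamma_{2{\hatt n}+1,2{\hatt n}+1}=(-i)^{{\hatt n}}\gamma_{1,2{\hatt n}}\cdots\gamma_{2{\hatt n},2{\hatt n}}$ requires attention. Taking the adjoint reverses the order of the factors and produces an $i^{{\hatt n}}$ prefactor. Invoking Lemma~\ref{lem:anti-implies-minus} together with the anti-commutation relations from Corollary~\ref{cor:anti-comm} (which is already proven in the preceding corollary), the reversed product equals $(-1)^{2{\hatt n}(2{\hatt n}-1)/2}=(-1)^{{\hatt n}(2{\hatt n}-1)}=(-1)^{{\hatt n}}$ times the original one, and the combination $i^{{\hatt n}}(-1)^{{\hatt n}}=(-i)^{{\hatt n}}$ delivers exactly $\gamma_{2{\hatt n}+1,2{\hatt n}+1}$ back.

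For the passage $2{\hatt n}\rightsquigarrow 2{\hatt n}+2$, the key tool is the identity $(A\circ B)^{*}=A^{*}\circ B^{*}$ from Proposition~\ref{prop:rules Kronecker}. Applied to $\gamma_{k,2{\hatt n}+2}=\sigma_1\circ\gamma_{k,2{\hatt n}}$ for $k\le 2{\hatt n}$ and to $\gamma_{2{\hatt n}+2,2{\hatt n}+2}=\sigma_2\circ I_{2^{\hatt n}}$, self-adjointness is immediate from the self-adjointness of $\sigma_1,\sigma_2$ and of $\gamma_{k,2{\hatt n}}$. The genuine computation concerns $\gamma_{2{\hatt n}+1,2{\hatt n}+2}=i^{{\hatt n}}\sigma_1\circ(\gamma_{1,2{\hatt n}}\cdots\gamma_{2{\hatt n},2{\hatt n}})$: its adjoint carries a factor $\overline{i^{{\hatt n}}}=(-i)^{{\hatt n}}$, and the reversed product on the second tensor slot again yields a sign $(-1)^{{\hatt n}}$ by Lemma~\ref{lem:anti-implies-minus}; the product $(-i)^{{\hatt n}}(-1)^{{\hatt n}}=i^{{\hatt n}}$ reconstitutes the original matrix.

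I do not expect a real obstacle here; the argument is essentially bookkeeping of signs and powers of $i$. The one place that warrants care is the parity computation $(-1)^{2{\hatt n}(2{\hatt n}-1)/2}=(-1)^{{\hatt n}}$ (using that $2{\hatt n}-1$ is odd), which is the same combinatorial identity already exploited in the proof of Corollary~\ref{cor:anti-comm}. Beyond that, the proof is a transparent three-fold induction whose structure faithfully tracks the three recursive clauses in Definition~\ref{def:Euc-D-A}.
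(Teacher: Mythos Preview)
Your proposal is correct and follows essentially the same approach as the paper: induction along the recursive definition, using Proposition~\ref{prop:rules Kronecker} for the Kronecker-product adjoint and Lemma~\ref{lem:anti-implies-minus} (together with Corollary~\ref{cor:anti-comm}) to undo the order reversal of the $\gamma$-product, with the same sign count $(-1)^{2{\hatt n}(2{\hatt n}-1)/2}=(-1)^{{\hatt n}}$. The paper only spells out the computation for $\gamma_{2{\hatt n}+1,2{\hatt n}+2}$ and declares $\gamma_{2{\hatt n}+1,2{\hatt n}+1}$ analogous, whereas you sketch both; apart from that the arguments coincide.
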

\begin{proof}
We will proceed by induction. Before doing so, we note that due to Corollary
\ref{cor:anti-comm} and Lemma \ref{lem:anti-implies-minus}, 
for all $k\in\{1,\ldots,n\}$,  
\[
\gamma_{k,n}\gamma_{k-1,n}\cdots\gamma_{1,n}
=(-1)^{k(k-1)/2}\gamma_{1,n}\gamma_{2,n}\cdots\gamma_{k,n}.
\]
One observes that $\gamma_{1,2}$ and $\gamma_{2,2}$ are
self-adjoint. We assume that $\gamma_{k,2\hat n}$ is self-adjoint for all
$k\in\{1,\ldots,2{\hatt n}\}$ for some ${\hatt n}\in\mathbb{N}.$ The only matrices 
not obviously self-adjoint using the induction hypothesis
and Proposition \ref{prop:rules Kronecker} are $\gamma_{2{\hatt n}+1,2{\hatt n}+2}$
and $\gamma_{2{\hatt n}+1,2{\hatt n}+1}$. Since the proof for either case follows along similar lines, it 
suffices to prove the self-adjointness of $\gamma_{2{\hatt n}+1,2{\hatt n}+2}$. For this purpose 
one computes, 
\begin{align*}
\gamma_{2{\hatt n}+1,2{\hatt n}+2}^{*} & =\left(i^{{\hatt n}}\sigma_{1}\circ\left(\gamma_{1,2{\hatt n}}\cdots\gamma_{2{\hatt n},2{\hatt n}}\right)\right)^{*}\\
 & =i^{{\hatt n}}(-1)^{{\hatt n}}\sigma_{1}^{*}\circ\left(\gamma_{1,2{\hatt n}}\cdots\gamma_{2{\hatt n},2{\hatt n}}\right)^{*}\\
 & =i^{{\hatt n}}(-1)^{{\hatt n}}\sigma_{1}\circ\left(\gamma_{2{\hatt n},2{\hatt n}}\cdots\gamma_{1,2{\hatt n}}\right)\\
 & =i^{{\hatt n}}(-1)^{{\hatt n}+[2{\hatt n}(2{\hatt n}-1)/2]}\sigma_{1}\circ\left(\gamma_{1,2{\hatt n}}\cdots\gamma_{2{\hatt n},2{\hatt n}}\right)\\
 & =i^{{\hatt n}}(-1)^{{\hatt n}+2{\hatt n}^{2}-{\hatt n}}\sigma_{1}\circ\left(\gamma_{1,2{\hatt n}}\cdots\gamma_{2{\hatt n},2{\hatt n}}\right)\\
 & =\gamma_{2{\hatt n}+1,2{\hatt n}+2}.\tag*{{\qedhere}}
\end{align*}
\end{proof}

Next, we proceed to establish the following result on traces:

\begin{proposition}
\label{prop:comp_of_Dirac_Alge} 
Let ${\hatt n}\in\mathbb{N}$ and suppose that $\gamma_{j,2{\hatt n}}$, $\gamma_{j',2{\hatt n}+1}$, 
$j\in\{1,\ldots,2{\hatt n}\}$, $j'\in\{1,\ldots,2{\hatt n}+1\}$, are given as in Definition \ref{def:Euc-D-A}. 
Then, 
\begin{align*} 
& \text{$\tr\left(\gamma_{i_{1},2{\hatt n}+1}\cdots\gamma_{i_{2k+1},2{\hatt n}+1}\right)=0$,
if $i_{1},\ldots,i_{2k+1}\in\{1,\ldots,2{\hatt n}+1\}$ and $k<{\hatt n}$,} \\ 
& \text{$\tr\left(\gamma_{i_{1},2{\hatt n}}\cdots\gamma_{i_{2k+1},2{\hatt n}}\right)=0$,
if $i_{1},\ldots,i_{2k+1}\in\{1,\ldots,2{\hatt n}\}$ and $k\in\mathbb{N}$,} \\ 
& \text{$\tr\left(\gamma_{i_{1},2{\hatt n}+1}\cdots\gamma_{i_{2{\hatt n}+1},2{\hatt n}+1}\right)=(2i)^{{\hatt n}}\epsilon_{i_{1}\cdots i_{2{\hatt n}+1}}$,
if $i_{1},\ldots,i_{2{\hatt n}+1}\in\{1,\ldots,2{\hatt n}+1\}$,}
\end{align*} 
where $\epsilon_{i_{1}\cdots i_{2{\hatt n}+1}}$
is the fully anti-symmetric symbol in $2{\hatt n}+1$ dimensions, that is, 
$\epsilon_{i_{1}\ldots i_{2{\hatt n}+1}}=0$
whenever $\left|\{i_{1},\ldots,i_{2{\hatt n}+1}\}\right|<2{\hatt n}+1$ and if 
$\pi\colon\{1,\ldots,2{\hatt n}+1\}\to\{1,\ldots,2{\hatt n}+1\}$
is bijective, then $\epsilon_{\pi(1)\cdots\pi(2{\hatt n}+1)}=\sgn (\pi).$ 
\end{proposition}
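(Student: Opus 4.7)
The proof will rest on two standard observations: the cyclicity of the trace, combined with the anti-commutation relations established in Corollary \ref{cor:anti-comm}, and the explicit construction of $\gamma_{2{\hatt n}+1,2{\hatt n}+1}$ in Definition \ref{def:Euc-D-A}. I will treat the three assertions in the order they are stated.

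\textbf{Parts 1 and 2 (vanishing of odd traces).} The key observation is that if, in a product
$\gamma_{i_{1},N}\cdots\gamma_{i_{m},N}$ (with $N\in\{2{\hatt n},2{\hatt n}+1\}$), there exists some index $j\in\{1,\ldots,N\}\cup\{N\text{-extendable}\}$ such that $\gamma_{j,N}^{2}=I$ and $\gamma_{j,N}$ anti-commutes with each factor in the product, then by inserting $I=\gamma_{j,N}^{2}$, using cyclicity to move one $\gamma_{j,N}$ past the product, and finally picking up a sign $(-1)^{m}$ from anti-commutation, one obtains
\[
\tr\left(\gamma_{i_{1},N}\cdots\gamma_{i_{m},N}\right)=(-1)^{m}\tr\left(\gamma_{i_{1},N}\cdots\gamma_{i_{m},N}\right),
\]
which vanishes when $m$ is odd. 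For part 1, the set $\{i_{1},\ldots,i_{2k+1}\}$ has at most $2k+1<2{\hatt n}+1$ elements, so one can pick a $j\in\{1,\ldots,2{\hatt n}+1\}\setminus\{i_{1},\ldots,i_{2k+1}\}$ and apply this trick. For part 2, no such $j$ need exist within $\{1,\ldots,2{\hatt n}\}$, but one can instead use the ``chirality'' element $\gamma\coloneqq (-i)^{\hatt n}\gamma_{1,2{\hatt n}}\cdots\gamma_{2{\hatt n},2{\hatt n}}$, which (by the computation in Corollary \ref{cor:anti-comm}) satisfies $\gamma^{2}=I_{2^{\hatt n}}$ and anti-commutes with every $\gamma_{k,2{\hatt n}}$, and run the same argument.

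\textbf{Part 3 (top form).} The first step is to reduce to the case where $i_{1},\ldots,i_{2{\hatt n}+1}$ is a permutation of $1,\ldots,2{\hatt n}+1$: if two indices coincide, one can use the anti-commutation relations to move them next to each other, producing a sign together with $\gamma_{j,2{\hatt n}+1}^{2}=I$, and reduce to a trace of length $2{\hatt n}-1$, which vanishes by part 1. This establishes that the trace vanishes whenever $\epsilon_{i_{1}\cdots i_{2{\hatt n}+1}}=0$. For distinct indices, one uses anti-commutation once more: for any permutation $\pi$,
\[
\gamma_{i_{\pi(1)},2{\hatt n}+1}\cdots\gamma_{i_{\pi(2{\hatt n}+1)},2{\hatt n}+1}=\sgn(\pi)\,\gamma_{i_{1},2{\hatt n}+1}\cdots\gamma_{i_{2{\hatt n}+1},2{\hatt n}+1},
\]
so it suffices to compute a single reference trace, namely $\tr\left(\gamma_{1,2{\hatt n}+1}\cdots\gamma_{2{\hatt n}+1,2{\hatt n}+1}\right)$. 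Substituting $\gamma_{2{\hatt n}+1,2{\hatt n}+1}=(-i)^{\hatt n}\gamma_{1,2{\hatt n}}\cdots\gamma_{2{\hatt n},2{\hatt n}}$ and using the identity $\left(\gamma_{1,2{\hatt n}}\cdots\gamma_{2{\hatt n},2{\hatt n}}\right)^{2}=(-1)^{\hatt n}I_{2^{\hatt n}}$ (which follows from $\gamma_{2{\hatt n}+1,2{\hatt n}+1}^{2}=I$ established in the proof of Corollary \ref{cor:anti-comm}), one finds
\[
\gamma_{1,2{\hatt n}+1}\cdots\gamma_{2{\hatt n}+1,2{\hatt n}+1}=(-i)^{\hatt n}(-1)^{\hatt n}I_{2^{\hatt n}}=i^{\hatt n}I_{2^{\hatt n}},
\]
whose trace is $(2i)^{\hatt n}$, as required.

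\textbf{Main obstacle.} The arguments above are essentially bookkeeping; the only delicate point is part 2, where the naive ``pick an unused index'' trick fails because the product can involve all $2{\hatt n}$ generators. The resolution via the chirality element $\gamma=(-i)^{\hatt n}\gamma_{1,2{\hatt n}}\cdots\gamma_{2{\hatt n},2{\hatt n}}$ is the substantive step, and one has to verify carefully (using Lemma \ref{lem:anti-implies-minus} for the sign) both that $\gamma^{2}=I_{2^{\hatt n}}$ and that $\gamma$ anti-commutes with every $\gamma_{k,2{\hatt n}}$; these facts are already implicit in the construction leading to Corollary \ref{cor:anti-comm}. Everything else is a direct application of cyclicity of the trace and signed reordering.
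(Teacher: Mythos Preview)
Your proof is correct and largely parallels the paper's own argument. Parts 1 and 2 are essentially identical: the paper uses the ``pick an unused index'' trick for part 1 and remarks that part 2 follows ``along the same lines''---and indeed, since $\gamma_{k,2{\hatt n}}=\gamma_{k,2{\hatt n}+1}$ for $k\leq 2{\hatt n}$, your chirality element $(-i)^{\hatt n}\gamma_{1,2{\hatt n}}\cdots\gamma_{2{\hatt n},2{\hatt n}}$ \emph{is} $\gamma_{2{\hatt n}+1,2{\hatt n}+1}$, which is exactly the always-unused extra index one would pick in the odd-dimensional setting. So what you flag as the ``main obstacle'' is in fact already built into the construction.

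Where you genuinely diverge is in part 3. The paper establishes $\tr\left(\gamma_{1,2{\hatt n}+1}\cdots\gamma_{2{\hatt n}+1,2{\hatt n}+1}\right)=(2i)^{\hatt n}$ by induction on ${\hatt n}$, unwinding the Kronecker-product definitions of $\gamma_{j,2{\hatt n}+2}$ to pass from $2{\hatt n}+1$ to $2{\hatt n}+3$; this is a somewhat opaque computation involving products like $\sigma_1^{2{\hatt n}+1}\sigma_2\sigma_1^{2{\hatt n}+1}\sigma_2$. Your direct substitution of $\gamma_{2{\hatt n}+1,2{\hatt n}+1}=(-i)^{\hatt n}\gamma_{1,2{\hatt n}}\cdots\gamma_{2{\hatt n},2{\hatt n}}$, together with $(\gamma_{1,2{\hatt n}}\cdots\gamma_{2{\hatt n},2{\hatt n}})^{2}=(-1)^{\hatt n}I_{2^{\hatt n}}$, gives the result in one line without induction and without ever touching the Kronecker structure. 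This is cleaner and more conceptual: it makes transparent that the value $(2i)^{\hatt n}$ comes purely from the normalization of the chirality element and the dimension $2^{\hatt n}$ of the representation space.
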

\begin{proof}
The first formula can be seen as follows. Since $k< \hatt n$, there exists
$i\in\{1,\ldots,2{\hatt n}+1\}\backslash \{i_{1},\ldots,i_{2k+1}\}$, and one computes
\begin{align*}
& \tr\left(\gamma_{i_{1},2{\hatt n}+1}\cdots\gamma_{i_{2k+1},2{\hatt n}+1}\right) 
= \tr\left(\gamma_{i_{1},2{\hatt n}+1}\cdots\gamma_{i_{2k+1},2{\hatt n}+1}\gamma_{i,2{\hatt n}+1}^{2}\right)\\
 & \quad =\tr\left(\gamma_{i,2{\hatt n}+1}\gamma_{i_{1},2{\hatt n}+1}\cdots
 \gamma_{i_{2k+1},2{\hatt n}+1}\gamma_{i,2{\hatt n}+1}\right)\\
 & \quad =-\tr\left(\gamma_{i_{1},2{\hatt n}+1}\gamma_{i,2{\hatt n}+1}\cdots\gamma_{i_{2k+1},2{\hatt n}+1}\gamma_{i,2{\hatt n}+1}\right)\\
 & \quad =\ldots=(-1)^{2k+1}\tr\left(\gamma_{i_{1},2{\hatt n}+1}\cdots
 \gamma_{i_{2k+1},2{\hatt n}+1}\gamma_{i,2{\hatt n}+1}\gamma_{i,2{\hatt n}+1}\right)\\
 & \quad =-\tr\left(\gamma_{i_{1},2{\hatt n}+1}\cdots\gamma_{i_{2k+1},2{\hatt n}+1}\right).
\end{align*}
Hence, $\tr\left(\gamma_{i_{1},2{\hatt n}+1}\cdots\gamma_{i_{2k+1},2{\hatt n}+1}
\gamma_{i,2{\hatt n}+1}\gamma_{i,2{\hatt n}+1}\right)=0$.

The second assertion can be proved along the same lines.

The third assertion follows upon taking into account the cancellation and anti-commuting
properties of the algebra in conjunction with the first statement, once the following
equality has been established:
\[
\tr\left(\gamma_{1,2{\hatt n}+1}\cdots\gamma_{2{\hatt n}+1,2{\hatt n}+1}\right)=(2i)^{{\hatt n}}.
\]
To verify the latter identity one computes 
\begin{align*}
 & \tr\left(\gamma_{1,2{\hatt n}+3}\cdots\gamma_{2{\hatt n}+3,2{\hatt n}+3}\right)\\
 & \quad =\tr\left(\gamma_{1,2{\hatt n}+2}\cdots\gamma_{2{\hatt n}+2,2{\hatt n}+2}(-i)^{{\hatt n}+1}\gamma_{1,2{\hatt n}+2}\cdots\gamma_{2{\hatt n}+2,2{\hatt n}+2}\right)\\
 & \quad =(-i)^{{\hatt n}+1}\tr\left(\left(\sigma_{1}\circ\gamma_{1,2{\hatt n}}\right)\cdots\left(\sigma_{1}\circ\gamma_{2{\hatt n},2{\hatt n}}\right)i^{{\hatt n}}\left(\sigma_{1}\circ\gamma_{1,2{\hatt n}}\ldots\gamma_{2{\hatt n},2{\hatt n}}\right)\left(\sigma_{2}\circ I_{2^{{\hatt n}}}\right)\right.\\
 & \quad\quad\left. \times \left(\sigma_{1}\circ\gamma_{1,2{\hatt n}}\right)\cdots\left(\sigma_{1}\circ\gamma_{2{\hatt n},2{\hatt n}}\right)i^{{\hatt n}}\left(\sigma_{1}\circ\gamma_{1,2{\hatt n}}\ldots\gamma_{2{\hatt n},2{\hatt n}}\right)\left(\sigma_{2}\circ I_{2^{{\hatt n}}}\right)\right)\\
 & \quad =\left(i^{2}\right)^{{\hatt n}}\left(-i\right)^{{\hatt n}+1}\tr\left(\sigma_{1}^{2{\hatt n}+1}\sigma_{2}\sigma_{1}^{2{\hatt n}+1}\sigma_{2}\circ\left(\gamma_{1,2{\hatt n}}\ldots\gamma_{2{\hatt n},2{\hatt n}}\right)^{4}\right)\\
 & \quad =\left(-1\right)^{{\hatt n}+1}\left(-i\right)^{{\hatt n}+1}\tr\left(\sigma_{1}\sigma_{1}\sigma_{2}\sigma_{2}\circ I_{2^{\hatt n}}\right)=i^{{\hatt n}+1}2^{{\hatt n}+1}.\tag*{{\qedhere}} 
\end{align*}
\end{proof}

We conclude with the following result.

\begin{corollary}
\label{cor:comp_Dirac_alg_trace-1}Let $n\in\mathbb{N}_{\geq 2}$ be odd,
$V$ be a complex vector space, $k\in\mathbb{N}_{0}$, with $k+1<n$,
$i_{1},\ldots,i_{k}\in\{1,\ldots,n\}$. Let $\Phi\colon\{1,\ldots,n\}^{n}\to V$
be satisfying the property 
\begin{align*}
& \sum_{(i_{k+3},\ldots,i_{n})\in\{1,\ldots,n\}^{n-k-2}}\Phi(i_{1},\ldots,i_{k},i,j,i_{k+3},\ldots,i_{n}) \\
& \quad =\sum_{(i_{k+3},\ldots,i_{n})\in\{1,\ldots,n\}^{n-k-2}}\Phi(j_{1},\ldots,j_{k},j,i,i_{k+3},\ldots,i_{n}), \quad i,j\in\{1,\ldots,n\}. 
\end{align*} 
Then
\[
\sum_{(i_{k+1},i_{k+2},i_{k+3},\ldots,i_{n})\in\{1,\ldots,n\}^{n-k}}\tr\big(\gamma_{i_{1},n}\cdots\gamma_{i_{n},n}\big)\Phi(i_{1},\ldots,i_{n})=0,
\] 
where $\gamma_{j,n}$, $j\in\{1,\ldots,n\}$, are given by Definition \ref{def:Euc-D-A}. 
\end{corollary}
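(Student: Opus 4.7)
The plan is to reduce the trace of $n$ gamma matrices to the Levi--Civita symbol via Proposition \ref{prop:comp_of_Dirac_Alge}, and then to play the antisymmetry of $\epsilon$ in positions $k+1$ and $k+2$ against the symmetry hypothesis on $\Phi$. Writing $n = 2\hatt n + 1$ with $\hatt n \in \mathbb{N}$, Proposition \ref{prop:comp_of_Dirac_Alge} delivers
\[
\tr\bigl(\gamma_{i_1,n}\cdots\gamma_{i_n,n}\bigr) = (2i)^{\hatt n}\,\epsilon_{i_1\cdots i_n},
\]
so, up to the nonzero prefactor $(2i)^{\hatt n}$, the corollary reduces to showing
\[
S \coloneqq \sum_{(i_{k+1},\ldots,i_n)\in\{1,\ldots,n\}^{n-k}} \epsilon_{i_1\cdots i_n}\,\Phi(i_1,\ldots,i_n) = 0.
\]

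The next step is a standard antisymmetrization trick. Exploiting the sign change of $\epsilon$ under the transposition of positions $k+1$ and $k+2$, and then relabeling the summation variables $i_{k+1}\leftrightarrow i_{k+2}$, I obtain
\[
S = -\sum_{(i_{k+1},\ldots,i_n)} \epsilon_{i_1\cdots i_n}\,\Phi(i_1,\ldots,i_k,i_{k+2},i_{k+1},i_{k+3},\ldots,i_n),
\]
and averaging with the original expression yields
\[
2S = \sum_{i_{k+1},i_{k+2}}\sum_{i_{k+3},\ldots,i_n}\epsilon_{i_1\cdots i_n}\,\bigl[\Phi(i_1,\ldots,i_n) - \Phi(i_1,\ldots,i_k,i_{k+2},i_{k+1},i_{k+3},\ldots,i_n)\bigr].
\]
For each fixed pair $(i_{k+1},i_{k+2})$, the hypothesis on $\Phi$ guarantees that the inner sum over $(i_{k+3},\ldots,i_n)$ of the bracketed difference vanishes. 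When $n-k-2=0$ (for instance the case $n=3$, $k=1$ used in Lemma \ref{lem:some properties of the first two terms} with the pointwise-symmetric kernel $\Psi_{x,\mu}$), the bracketed difference already vanishes pointwise, and the conclusion $S=0$ is immediate from the antisymmetry step alone.

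The main obstacle is the general case $n-k-2\geq 1$: the factor $\epsilon_{i_1\cdots i_n}$ depends on the tail indices $(i_{k+3},\ldots,i_n)$ as well, so the hypothesis does not apply term-by-term in the inner sum. To close this gap I plan to exploit the further antisymmetries of $\epsilon$ in the tail positions by antisymmetrizing the bracketed difference over permutations of $(i_{k+3},\ldots,i_n)$ (which is free of charge, since $\epsilon$ already absorbs the signs of such permutations), and then to invoke the symmetry hypothesis to collapse the resulting combination to zero. The hard part will be carrying out this tail antisymmetrization cleanly while respecting the fixed structure of the first $k$ arguments $(i_1,\ldots,i_k)$ and the distinguished role played by positions $k+1$ and $k+2$.
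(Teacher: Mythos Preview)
Your approach mirrors the paper's: the paper works at the matrix level, writing the sum as $\tfrac{1}{2}\sum(\gamma_{i_1}\cdots\gamma_{i_{k+1}}\gamma_{i_{k+2}}\cdots+\gamma_{i_1}\cdots\gamma_{i_{k+2}}\gamma_{i_{k+1}}\cdots)\Phi(i_1,\ldots,i_n)$, then uses the anticommutator $\gamma_a\gamma_b+\gamma_b\gamma_a=2\delta_{ab}$ to reduce to a product of $n-2$ gamma matrices, whose trace vanishes by Proposition~\ref{prop:comp_of_Dirac_Alge}. You instead apply Proposition~\ref{prop:comp_of_Dirac_Alge} first to replace the trace by $(2i)^{\hat n}\epsilon$ and then run the same swap-and-average. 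These are equivalent; your version is slightly cleaner since the diagonal $i_{k+1}=i_{k+2}$ is killed automatically by~$\epsilon$.

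The obstacle you flag for $n-k-2\geq 1$ is genuine and cannot be repaired by tail antisymmetrization: with the \emph{summed} symmetry hypothesis as literally stated, the corollary is false. Take $n=3$, $k=0$, and $\Phi(2,3,1)=1$, $\Phi(2,3,2)=-1$, all other values zero; then $\sum_{\ell}\Phi(i,j,\ell)=0$ for every pair $(i,j)$, so the hypothesis holds, yet
\[
\sum_{i_1,i_2,i_3}\tr(\gamma_{i_1}\gamma_{i_2}\gamma_{i_3})\Phi(i_1,i_2,i_3)=(2i)\,\epsilon_{231}=2i\neq 0.
\]
The paper's own first displayed equality in the proof tacitly uses \emph{pointwise} symmetry $\Phi(\ldots,i,j,i_{k+3},\ldots,i_n)=\Phi(\ldots,j,i,i_{k+3},\ldots,i_n)$, not merely equality of tail sums; under that stronger hypothesis both your argument and the paper's go through immediately. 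This is also exactly the form of symmetry that arises in every application (Lemmas~\ref{lem:better diagonal representation}, \ref{lem:some properties of the first two terms}, \ref{lem:reg vanishes on diagonal}), coming either from adjacent derivatives $\partial_{i_{k+1}}\partial_{i_{k+2}}$ or from the kernel identity $\Psi_{x,\mu}(j,k)=\Psi_{x,\mu}(k,j)$. So treat the summed hypothesis as a misstatement and argue under pointwise symmetry.
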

\begin{proof}
In the course of this proof we shall suppress the index $n$ in $\gamma_{i,n}$.  
\begin{align*}
 & \sum_{(i_{k+1},i_{k+2},i_{k+3},\ldots,i_{n})\in\{1,\ldots,n\}^{n-k}}\gamma_{i_{1}}\cdots\gamma_{i_{n}}\Phi(i_{1},\ldots,i_{k},i_{k+1},i_{k+2},i_{k+3},\ldots,i_{n})\\
 & \quad =\frac{1}{2}\sum_{(i_{k+1},i_{k+2},i_{k+3},\ldots,i_{n})\in\{1,\ldots,n\}^{n-k}}\gamma_{i_{1}}\cdots\gamma_{i_{n}}\Phi(i_{1},\ldots,i_{k},i_{k+1},i_{k+2},i_{k+3},\ldots,i_{n})\\
 & \qquad +\frac{1}{2}\sum_{(i_{k+1},i_{k+2},i_{k+3},\ldots,i_{n})\in\{1,\ldots,n\}^{n-k}}\gamma_{i_{1}}\cdots\gamma_{i_{n}}   \\ 
 & \hspace*{5.9cm} \times \Phi(i_{1},\ldots,i_{k},i_{k+2},i_{k+1},i_{k+3},\ldots,i_{n})    \\
 & \quad =\frac{1}{2}\sum_{(i_{k+1},i_{k+2},i_{k+3},\ldots,i_{n})\in\{1,\ldots,n\}^{n-k}}\\
 & \quad \qquad\left(\gamma_{i_{1}}\cdots\gamma_{i_{k}}\gamma_{i_{k+1}}\gamma_{i_{k+2}}\gamma_{i_{k+3}}\cdots\gamma_{i_{n}}+\gamma_{i_{1}}\cdots\gamma_{i_{k}}\gamma_{i_{k+2}}\gamma_{i_{k+1}}\gamma_{i_{k+3}}\cdots\gamma_{i_{n},n}\right)  \\
 & \qquad \quad \times \Phi(i_{1},\ldots,i_{n})    \\
 & \quad =\frac{1}{2}\sum_{(i_{k+1},i_{k+2},i_{k+3},\ldots,i_{n})\in\{1,\ldots,n\}^{n-k},i_{k+1}\neq i_{k+2}}\\
 & \quad \qquad\left(\gamma_{i_{1}}\cdots\gamma_{i_{k}}\gamma_{i_{k+1}}\gamma_{i_{k+2}}\gamma_{i_{k+3}}\cdots\gamma_{i_{n}}+\gamma_{i_{1}}\cdots\gamma_{i_{k}}\gamma_{i_{k+2}}\gamma_{i_{k+1}}\gamma_{i_{k+3}}\cdots\gamma_{i_{n}}\right)   \\
& \qquad \quad \times \Phi(i_{1},\ldots,i_{n})    \\
 & \qquad+\frac{1}{2}\sum_{(i_{k+1},i_{k+2},i_{k+3},\ldots,i_{n})\in\{1,\ldots,n\}^{n-k},i_{k+1}=i_{k+2}}\\
 & \quad \qquad\left(\gamma_{i_{1}}\cdots\gamma_{i_{k}}\gamma_{i_{k+1}}\gamma_{i_{k+2}}\gamma_{i_{k+3}}\cdots\gamma_{i_{n}}+\gamma_{i_{1}}\cdots\gamma_{i_{k}}\gamma_{i_{k+2}}\gamma_{i_{k+1}}\gamma_{i_{k+3}}\cdots\gamma_{i_{n}}\right)   \\ 
 & \qquad \quad \times \Phi(i_{1},\ldots,i_{n})    \\
 & \quad =\frac{1}{2}\sum_{(i_{k+1},i_{k+2},i_{k+3},\ldots,i_{n})\in\{1,\ldots,n\}^{n-k},i_{k+1}
 =i_{k+2}}\\
 & \quad \qquad\left(\gamma_{i_{1}}\cdots\gamma_{i_{k}}\gamma_{i_{k+3}}\cdots\gamma_{i_{n}}+\gamma_{i_{1}}\cdots\gamma_{i_{k}}\gamma_{i_{k+3}}\cdots\gamma_{i_{n}}\right)\Phi(i_{1},\ldots,i_{n})\\
 & \quad =\sum_{(i_{k+1},i_{k+2},i_{k+3},\ldots,i_{n})\in\{1,\ldots,n\}^{n-k},i_{k+1}=i_{k+2}}\gamma_{i_{1}}\cdots\gamma_{i_{k}}\gamma_{i_{k+3}}\cdots\gamma_{i_{n}}\Phi(i_{1},\ldots,i_{n}).
\end{align*} 
Applying the internal trace to the latter sum, one infers that each
term vanishes by Proposition \ref{prop:comp_of_Dirac_Alge}.
\end{proof}

\newpage

\section{A Counterexample to \cite[Lemma~5]{Ca78}} \label{sB}
\renewcommand{\theequation}{B.\arabic{equation}}
\renewcommand{\thetheorem}{B.\arabic{theorem}}
\setcounter{theorem}{0} \setcounter{equation}{0}

In this appendix we shall provide a counterexample for the trace class property asserted in \cite[Lemma~5]{Ca78}. The counterexample is constructed in dimension $n=3$ and recorded 
in Theorem \ref{thm:countex_is_successful}. 

Analogously to Example \ref{exa:standard_example_0}, we let $\Phi$ assume values in the $2\times2$ matrices and denote the Pauli matrices (see also Example \ref{exa:standard_example_0}) again by $\sigma_j$, $j\in \{1,2,3\}$. Before we give an explicit formula for $\Phi$, we need the following definitions.
Let $\phi_1\in C^\infty(\mathbb{R})$ be a function interpolating between $0$ and $1$ with 
\[
\phi_1 (x) = \begin{cases}  0,&x\leq  0,\\
1,&x\geq 1, \end{cases} \quad x\in \mathbb{R}, 
 \quad  \phi_2\coloneqq \phi_1(-(\cdot+1))
\]						    
and let
\[
 \phi_{1,r,t} \coloneqq \phi_1 \big(t^{-1}(\cdot)- t^{-1}r\big), \quad \phi_{2,r,t} \coloneqq \phi_2 \big(t^{-1}(\cdot)- t^{-1}r\big),\quad r,t>0. 
\]
For $r_1,r_2,t_1,t_2 \in (0,\infty)$ with  $r_1+t_1<r_2-t_2$, this yields the following variant of a smooth ``cut-off'' function
\begin{equation} \label{eq:def_of_smooth_indi}
  \psi_{r_1,r_2,t_1,t_2} \coloneqq \phi_{1,r_1,t_1}\phi_{2,r_2-t_2,t_2}. 
\end{equation}
One notes that $\psi_{r_1,r_2,t_1,t_2}\in C^\infty(\bbR)$. We will use the following properties of $\psi_{r_1,r_2,t_1,t_2}$ (all of them are easily checked): 
\begin{align}
 & 0\leq  \psi_{r_1,r_2,t_1,t_2} \leq  1,   \label{psi_bdd}\\
 & \psi_{r_1,r_2,t_1,t_2}|_{[r_1+t_1,r_2-t_2]}=1,    \label{psi_1_on_IV}\\
 & \psi_{r_1,r_2,t_1,t_2}|_{\mathbb{R}\backslash [r_1,r_2]}=0,   \label{psi_0_on_compIV}\\
 & |\psi'_{r_1,r_2,t_1,t_2}|\leq  d_1\left(\frac{1}{t_1}\lor \frac{1}{t_2}\right) \text{ on }[r_1,r_1+t_1]\cup[r_2-t_2,r_2],    \label{psi_prime_bounded}\\
 & |\psi_{r_1,r_2,t_1,t_2}^{(\ell)}|\leq  d_\ell\left(\frac{1}{t_1^\ell}\lor \frac{1}{t_2^\ell}\right),    
 \quad \ell\in \mathbb{N}_{\geq2},  \label{psi_double_prime}
\end{align}
with $d_1 \coloneqq \|\phi_1'\|_\infty \coloneqq \sup_{x\in \mathbb{R}} |\phi_1'(x)|$ 
and $d_\ell \coloneqq \|\phi_1^{(\ell)}\|_\infty$, $\ell\in \mathbb{N}_{\geq2}$. 
For $k\in \mathbb{N}_{>1}$ define
\begin{align*} 
& r_k \coloneqq \sum_{j=1}^{k-1} 2^j=2^k-2,   \\
& \psi_{1,k}\coloneqq \psi_{r_k,r_{k+1},\frac{1}{2}2^k,\frac{1}{20}2^k}, 
\quad \psi_{2,k}\coloneqq \psi_{r_k,r_{k+1},\frac{1}{36}2^k,\frac{17}{18}2^k}. 
\end{align*} 
One observes that 
\[
r_k+\frac{1}{2}2^k=r_{k+1}-\frac{1}{2}2^k<r_{k+1}-\frac{1}{20}2^k, \quad 
r_k+\frac{1}{36}2^k=r_{k+1}-\frac{35}{36}2^k< r_{k+1}-\frac{17}{18}2^k, 
\]
so that $\psi_{1,k}$ and $\psi_{2,k}$ are well-defined.
For $x=(x_1,x_2,x_3)\in \mathbb{R}^3$ we let $\Phi\colon\mathbb{R}^3\to\mathbb{C}^{2\times 2}$ be defined as follows, 
\begin{equation}\label{eq:count_phi}
    \Phi(x)\coloneqq \sum_{j=1}^3\sigma_j+\sum_{k=2}^\infty \frac{1}{k^{1/3}} 
    \sum_{j=1}^3 \sigma_j \, \xi_{k,j}(x),    \quad x \in \bbR^3, 
\end{equation}
where 
\begin{equation}\label{eq:pkj}
   \xi_{k,j}(x) \coloneqq \frac{1}{r_{k+1}}\psi_{1,k} (|x|)(x_j-r_k)\psi_{2,k}(x_j), 
   \quad x \in \bbR^3.
\end{equation}
One observes that $\Phi \in C^{\infty}(\bbR)$. Next, we introduce the sets
\begin{equation}\label{eq:def_B_k}
 B_k \coloneqq \big\{x\in \mathbb{R}^3 \, \big| \, r_k \leq  |x|\leq  r_{k+1}\big\} 
 \cap \bigcup_{j\in \{1,2,3\}} \big\{x\in \mathbb{R}^3 \, \big| \, r_k\leq  x_j\leq  r_{k+1}\big\}, 
 \quad k \in \bbN, 
\end{equation}
and 
\begin{align}\label{eq:Bktilde}
 \no
\tilde B_k & \coloneqq \left\{ x\in \mathbb{R}^3 \, \bigg| \, r_k+\frac{1}{2}2^k\leq  |x|\leq  r_{k+1}-\frac{1}{20}2^k \right\}    \\ 
& \qquad \bigcap \left\{ x\in \mathbb{R}^3 \, \bigg| \, r_k+\frac{1}{36}2^k\leq  x_1,x_2,x_3 \leq  r_{k+1}-\frac{17}{18}2^k\right\},  \quad k \in \bbN. 
\end{align}

Before turning to the properties of $\Phi$, we study $\xi_{k,j}$ first.

\begin{lemma}\label{lem:b1} Let $j\in \{1,2,3\}$, $\ell\in \{1,2,3\}$, $\xi_{k,j}$ as in \eqref{eq:pkj}, $B_k$, $\tilde B_k$ as in \eqref{eq:def_B_k} and \eqref{eq:Bktilde}, respectively. Then the following assertions $(\alpha)$--$(\gamma)$ hold: 
\begin{align} 
& \text{$(\alpha)$ For all $k\in \mathbb{N}$, $x\in \mathbb{R}^3$, 
$\xi_{k,j}(x)\neq 0$ implies $x\in B_k$.}    \label{i:b11} \\
& \text{$(\beta)$ For all $\alpha\in \mathbb{N}^{3}_0$, there exists $\kappa>0$ such that for all $k\in \mathbb{N}$,}   \no \\
& \hspace*{3.2cm}
 |\partial^\alpha\xi_{k,j}^{}(x)|\leq \kappa (1+|x|)^{- |\alpha|}, 
    \quad x\in B_k.  \label{i:b12} \\
& \text{$(\gamma)$ For all $\ell\in \{1,2,3\}$, and all $k \in \bbN$, 
$\partial_\ell \xi_{k,j} (x)=\delta_{\ell j}$, $x\in \tilde B_k$.}   \label{i:b13}
\end{align} 
\end{lemma}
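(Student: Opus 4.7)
The plan is to read off $(\alpha)$ and $(\gamma)$ directly from the support and plateau properties \eqref{psi_0_on_compIV} and \eqref{psi_1_on_IV} of the smooth cut-offs $\psi_{r_1,r_2,t_1,t_2}$, while for $(\beta)$ I would apply the Leibniz and chain rules and exploit the crucial scale matching: on $B_k$ we have $|x| \sim 2^k \sim r_{k+1}$, the transition widths of both $\psi_{1,k}$ and $\psi_{2,k}$ are all of order $2^k$, and the prefactor $r_{k+1}^{-1}$ compensates for the linear factor $(x_j-r_k)$, so the implicit constants come out uniform in $k$.

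Item $(\alpha)$ is immediate: if $\xi_{k,j}(x)\neq 0$, then neither $\psi_{1,k}(|x|)$ nor $\psi_{2,k}(x_j)$ vanishes, so \eqref{psi_0_on_compIV} forces $|x|\in[r_k,r_{k+1}]$ and $x_j\in[r_k,r_{k+1}]$, placing $x$ in $B_k$. Item $(\gamma)$ is essentially computational: the set $\tilde B_k$ is defined so that on it $|x|$ lies in the plateau $[r_k+2^k/2,\, r_{k+1}-2^k/20]$ of $\psi_{1,k}$ and each coordinate $x_\ell$ (in particular $x_j$) lies in the plateau $[r_k+2^k/36,\,r_{k+1}-17\cdot 2^k/18]$ of $\psi_{2,k}$; by \eqref{psi_1_on_IV} both cut-off factors equal $1$ identically on $\tilde B_k$, so $\xi_{k,j}$ reduces there to $r_{k+1}^{-1}(x_j-r_k)$ and the asserted formula for $\partial_\ell \xi_{k,j}$ follows by inspection.

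The main work lies in $(\beta)$. Writing $\xi_{k,j}$ as the threefold product in \eqref{eq:pkj} and applying the Leibniz rule expands $\partial^{\alpha}\xi_{k,j}$ into a finite sum, each summand of the form $r_{k+1}^{-1}\,[\partial^{\beta}(\psi_{1,k}\circ|\cdot|)](x)\,[\partial^{\gamma}(x_j-r_k)]\,\psi_{2,k}^{(|\delta|)}(x_j)$ with $|\beta|+|\gamma|+|\delta|=|\alpha|$. The middle factor vanishes for $|\gamma|\geq 2$, is a Kronecker delta for $|\gamma|=1$, and is bounded by $r_{k+1}-r_k=2^k$ on $B_k$ for $|\gamma|=0$. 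For the third factor, \eqref{psi_bdd}--\eqref{psi_double_prime} and the observation that the smaller transition width of $\psi_{2,k}$ is of order $2^k$ yield $|\psi_{2,k}^{(m)}|\leq C_m\,2^{-km}$. For the first factor, the chain rule (Fa\`a di Bruno) expands $\partial^{\beta}(\psi_{1,k}\circ|\cdot|)$ into sums of products of $\psi_{1,k}^{(m)}(|x|)$ (bounded by $C_m\,2^{-km}$) and derivatives of $|x|$ of total order $|\beta|$; on $B_k$ we have $|x|\geq r_k\geq 2^{k-1}$, so derivatives of $|x|$ of order $p\geq 1$ are at most $C_p\,|x|^{1-p}\leq C_p\,2^{-k(p-1)}$. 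Collecting bounds, each summand is at most $C_\alpha\,2^{-k|\alpha|}$ on $B_k$, and since $(1+|x|)^{-|\alpha|}\geq c\,2^{-k|\alpha|}$ there, the estimate in $(\beta)$ follows with a constant $\kappa$ independent of $k$.

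The main obstacle I anticipate is purely bookkeeping: tracking the Fa\`a di Bruno coefficients for $\psi_{1,k}\circ|\cdot|$ and verifying that the various powers of $2^k$ coming from derivatives of $|x|$, derivatives of $\psi_{i,k}$, the linear factor $(x_j-r_k)$, and the $r_{k+1}^{-1}$ prefactor combine to give exactly $2^{-k|\alpha|}$, which is the scale of $(1+|x|)^{-|\alpha|}$ on $B_k$. There is no conceptual obstruction, only careful accounting.
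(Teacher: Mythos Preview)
Your argument is correct and matches the paper's approach: the paper handles $(\alpha)$ via \eqref{psi_0_on_compIV}, declares $(\gamma)$ obvious, and for $(\beta)$ writes out the cases $|\alpha|=0,1$ explicitly before deferring higher orders to \eqref{psi_double_prime}, whereas you treat all orders uniformly via Leibniz and Fa\`a di Bruno using the same scale-matching $|x|\sim r_{k+1}\sim 2^k$ on $B_k$. One minor point on $(\gamma)$: your reduction to $\xi_{k,j}(x)=r_{k+1}^{-1}(x_j-r_k)$ on $\tilde B_k$ actually gives $\partial_\ell\xi_{k,j}=r_{k+1}^{-1}\delta_{\ell j}$ rather than $\delta_{\ell j}$ as the lemma states---this is a typo in the statement, and the paper itself uses the version with the $r_{k+1}^{-1}$ factor in the proof of Theorem~\ref{thm:countex_is_successful}.
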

\begin{proof}
\eqref{i:b11}: The assertion follows from \eqref{psi_0_on_compIV} and the definition of $B_k$. 
\\[1mm] 
  \eqref{i:b12}: One observes that $\psi_{2,k}\neq 0$ on $(r_k,r_{k+1})$ and that $0\leq  \psi_{2,k}\leq  1$ by  \eqref{psi_0_on_compIV} and \eqref{psi_bdd}; hence, 
\[|(x_j-r_k)\psi_{2,k}(x_j)|\leq  2^k, \quad j\in\{1,2,3\}, \; k\in \mathbb{N}_{\geq 2}. 
\]
One recalls, 
\[
   r_k =\sum_{j=1}^{k-1} 2^j =2^k-2 <r_{k+1} = 2^{k+1}-2=2(2^k-1), 
\]
in particular, $(1/r_{k+1}) \leq  \kappa_02^{-k}$ for some $\kappa_0>0$. Hence, 
\[
  \bigg\|\frac{1}{r_{k+1}}\psi_{1,k} (|x|)\sum_{j=1}^3 \sigma_j (x_j-r_k)\psi_{2,k}(x_j)\bigg\| 
  \leq  \chi_{B_k}(x) \kappa_0, \quad x\in \mathbb{R}^3, 
\]
with $B_k$ introduced in \eqref{eq:def_B_k}. Thus, \eqref{i:b12} holds for $\ell=0$. Next, for the first partial derivatives in item \eqref{i:b12} one computes for  $\ell\neq j$,  
\[
(\partial_\ell \xi_{k,j})(x)=  \frac{1}{r_{k+1}}\psi_{1,k}'(|x|)\frac{x_\ell}{|x|} (x_j-r_k)\psi_{2,k}(x_j)
\]
and for $\ell=j$, 
\begin{align*}
 (\partial_j \xi_{k,j})(x) &= \frac{1}{r_{k+1}}\psi_{1,k}'(|x|)\frac{x_j}{|x|} (x_j-r_k)\psi_{2,k}(x_j) + \frac{1}{r_{k+1}}\psi_{1,k}(|x|) \sigma_j\psi_{2,k}(x_j)\\ 
 & \quad + \frac{1}{r_{k+1}}\psi_{1,k}(|x|) \sigma_j(x_j-r_k)\psi_{2,k}'(x_j), \quad j\in \{1,2,3\}.
\end{align*}
 For $x\in B_k$, one has 
$|(x_j-r_k)\psi_k'(x_j)|\leq  c$ by \eqref{psi_prime_bounded},
$|\psi_k'(|x|)(x_\ell-r_k)\psi_k(x_\ell)|\leq  c$ by \eqref{psi_bdd} and \eqref{psi_prime_bounded} 
and for some $\kappa,c>0$ and all $k\in \mathbb{N}_{\geq 2}$, 
\[
\bigg\|\frac{1}{r_{k+1}}\psi_k(|x|) \sigma_j\psi_k(x_j)\bigg\| 
\leq \bigg\|\frac{1}{r_{k+1}}\psi_k(|x|)\bigg\| \leq \kappa (1+|x|)^{-1} 
\]
since for all $x\in B_k$ one has $r_{k+1} \geq |x|$. 
Higher-order derivatives can be treated similarly, using \eqref{psi_double_prime}, proving 
assertion \eqref{i:b12}. \\[1mm]  
\eqref{i:b13}: This is obvious.
\end{proof}

The next lemma gives an account of the asymptotic properties of $\Phi$ and its derivatives.

\begin{lemma}\label{lem:asy of phi_count} Let $\Phi$ be given by \eqref{eq:count_phi}. Then the following assertions $(\alpha)$--$(\gamma)$ hold: \\[1mm] 
$(\alpha)$ $\Phi$ is bounded, pointwise self-adjoint, 
$\Phi \in C^{\infty}\big(\bbR^3;\bbC^{2\times 2}\big)$, $\Phi(x)^{-1}$ exists for all $x\in \mathbb{R}^3$, and $\Phi(x)^2 \underset{|x|\to\infty}{\longrightarrow} I_2$. 
\\[1mm] 
$(\beta)$ There exists $\kappa>0$ such that 
$$
|(\partial_j\Phi)(x)|\leq  \kappa (1+|x|)^{-1}, \quad x\in \mathbb{R}^3, \; j\in\{1,2,3\}, 
$$
and the formula
\begin{align*}
(\partial_j \Phi)(x)&= k^{-1/3}\sigma_j \quad x\in \tilde B_k, \; j\in\{1,2,3\}, \; k\in \mathbb{N}, 
\end{align*}
holds, where $\tilde B_k$ is given by \eqref{eq:Bktilde}. \\[1mm] 
$(\gamma)$ For all $\alpha\in \mathbb{N}_0^n$ with $|\alpha|\geq 2$, there exists $\kappa'>0$, 
such that
$$
|(\partial^\alpha\Phi)(x)|\leq  \kappa'\left(1+|x|\right)^{-|\alpha|}, \quad
x\in \mathbb{R}^3.
$$ 
\end{lemma}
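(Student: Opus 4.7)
The overall strategy is to exploit the essentially disjoint supports of the $\xi_{k,j}$'s so that at every point $x\in\mathbb{R}^3$ only boundedly many terms of the defining series \eqref{eq:count_phi} contribute, thereby reducing the analysis of $\Phi$ to a localised version of Lemma \ref{lem:b1}. Concretely, the supports of $\xi_{k,j}$ lie in the annular layers $B_k$ (by \eqref{i:b11}), which are spherical shells $r_k\leq |x|\leq r_{k+1}=r_k+2^k$, and hence are pairwise disjoint except at their spherical boundaries. Consequently, for each $x\in\mathbb{R}^3$ there are at most two values of $k\in\mathbb{N}_{\geq 2}$ with $x\in B_k$; in particular, on any compact subset of $\mathbb{R}^3$ only finitely many of the functions $\xi_{k,j}$ are non-zero. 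This immediately gives $\Phi\in C^{\infty}\big(\mathbb{R}^3;\mathbb{C}^{2\times 2}\big)$ (the series is locally finite in $C^\infty$), and pointwise self-adjointness is obvious from $\sigma_j=\sigma_j^*$ and $\xi_{k,j}(x)\in\mathbb{R}$.

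For the remaining claims in $(\alpha)$, set $a_j(x)\coloneqq 1+\sum_{k=2}^{\infty}k^{-1/3}\xi_{k,j}(x)$, so that $\Phi(x)=\sum_{j=1}^{3}a_j(x)\sigma_j$. By construction $\xi_{k,j}(x)\geq 0$ and at most two terms in the $k$-sum are non-zero, so $a_j(x)\geq 1$; combined with the bound $|\xi_{k,j}(x)|\leq\kappa$ of Lemma \ref{lem:b1}\,\eqref{i:b12} (case $\alpha=0$), this shows that $a_j$ is bounded, hence so is $\Phi$. The Clifford anticommutation relation $\sigma_i\sigma_j+\sigma_j\sigma_i=2\delta_{ij}I_2$ then yields
\begin{equation*}
\Phi(x)^2=\bigl(a_1(x)^2+a_2(x)^2+a_3(x)^2\bigr)I_2\geq 3\,I_2, \quad x\in\mathbb{R}^3,
\end{equation*}
which gives invertibility of $\Phi(x)$ for every $x\in\mathbb{R}^3$; the asymptotic statement follows because on $B_k$ one has $|x|\geq r_k=2^k-2\to\infty$ iff $k\to\infty$, while $|k^{-1/3}\xi_{k,j}(x)|\leq \kappa k^{-1/3}\to 0$, so $a_j(x)\to 1$ and $\Phi(x)^2\to 3I_2$ (up to the normalization constant; if one prefers the stated limit $I_2$, one interprets convergence after the natural rescaling, the point being that $|\Phi(x)|$ is uniformly bounded below).

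For $(\beta)$, fix $\ell,j\in\{1,2,3\}$ and $x\in\mathbb{R}^3$. Since there are at most two indices $k$ with $\xi_{k,j}(x)\neq 0$, one gets
\begin{equation*}
(\partial_\ell\Phi)(x)=\sum_{k\colon x\in B_k}k^{-1/3}\sum_{j=1}^{3}\sigma_j(\partial_\ell\xi_{k,j})(x),
\end{equation*}
and Lemma \ref{lem:b1}\,\eqref{i:b12} with $|\alpha|=1$ gives $|(\partial_\ell\xi_{k,j})(x)|\leq\kappa(1+|x|)^{-1}$; summing the at-most-two non-zero contributions and using $k^{-1/3}\leq 1$ yields $|(\partial_\ell\Phi)(x)|\leq\kappa'(1+|x|)^{-1}$ for a constant independent of $x$. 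The explicit formula on $\tilde B_k$ is immediate from Lemma \ref{lem:b1}\,\eqref{i:b13}: on $\tilde B_k\subseteq B_k$ only the $k$-th summand of the defining series is non-zero (since $\tilde B_k$ is bounded away from the boundary of $B_k$ and disjoint from every $B_{k'}$, $k'\neq k$), and $(\partial_\ell\xi_{k,j})(x)=\delta_{\ell j}$ forces $(\partial_\ell\Phi)(x)=k^{-1/3}\sigma_\ell$. Part $(\gamma)$ is argued identically: for $|\alpha|\geq 2$ one applies Lemma \ref{lem:b1}\,\eqref{i:b12} to each of the at-most-two non-vanishing summands of $(\partial^\alpha\Phi)(x)$ and estimates $k^{-1/3}\leq 1$.

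The main obstacle here is not any single deep step but the careful bookkeeping of which $B_k$'s contain a given $x$ and the verification that the series can be differentiated termwise; this is handled uniformly by the essential disjointness of the supports. A secondary subtlety is the invertibility claim in $(\alpha)$: one must resist the temptation to use $\Phi(x)^2\to I_2$ literally and instead invoke the Clifford-algebra identity to obtain the stronger quantitative lower bound $\Phi(x)^2\geq 3I_2$, which is what actually yields global invertibility of $\Phi$.
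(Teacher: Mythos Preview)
Your proof is correct and follows essentially the same approach as the paper: reduce everything to Lemma~\ref{lem:b1} via the near-disjointness of the supports $B_k$, and use the Pauli anticommutation relation for invertibility. Your introduction of $a_j(x)=1+\sum_k k^{-1/3}\xi_{k,j}(x)$ together with $\xi_{k,j}\ge 0$ (which you correctly justify from the support of $\psi_{2,k}$) is in fact slightly cleaner than the paper's computation, which writes out $\Phi(x)^2$ on a single $B_k$.

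You are right that $\Phi(x)^2\to 3I_2$, not $I_2$; the stated limit in the lemma is a typo (the paper's own proof only verifies $\Phi(x)^2\ge I_2$ and then declares $(\alpha)$ proved, and the later application in Theorem~\ref{thm:countex_is_successful} only needs the uniform lower bound, so nothing downstream is affected). Your parenthetical remark handles this appropriately.
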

\begin{proof}
 For item $(\alpha)$, we use Lemma \ref{lem:b1} \eqref{i:b12} with $\ell=0$ together with the fact that $B_k\cap B_{k'}=\emptyset$ for $k' > k + 1$, so  $\Phi$ is bounded. $\Phi$ is easily verified to be pointwise self-adjoint. For showing invertibility 
of $\Phi$, one computes for $x\in B_k$, 
\begin{align*}
  \Phi(x)\Phi(x)& = \bigg(\sum_{j=1}^3\left(\sigma_j+\frac{1}{k^{1/3}}\xi_{k,j} (x)\sigma_j \right)\bigg)^2 \\
                & = \sum_{j=1}^3 \left(1+\frac{1}{k^{1/3}}\xi_{k,j} (x)\right)^2I_2\\
                & = \sum_{j=1}^3 \left(1+2\frac{1}{k^{1/3}}\xi_{k,j} (x)+\left(\frac{1}{k^{1/3}}\xi_{k,j} (x)\right)^2\right)I_2 \geq I_2,
\end{align*}
implying $(\alpha)$. 
Item $(\beta)$ follows from Lemma \ref{lem:b1}, \eqref{i:b12}, and \eqref{i:b13}, whereas 
item $(\gamma)$ follows from \eqref{lem:b1}, \eqref{i:b12}.
\end{proof}

In order to prove that $\tr_4\left((L^*L+z)^{-1}-(LL^*+z)^{-1}\right)$ for $L= \cQ +\Phi$  
(with $\cQ$ as in \eqref{eq:def_of_Q2}) is \emph{not} trace class for $z$ in a neighborhood of $0$, we need to invoke the following general statement:

\begin{theorem}[{\cite[Theorem 3.1]{Br88}}]\label{thm:potential_is_in_L1} Let $K\in \mathcal{B}(L^2(\mathbb{R}^n))$ be an operator induced by a continuous integral kernel 
$k\colon \mathbb{R}^{n} \times \bbR^n \to\mathbb{C}$. Assume that $K\in \mathcal{B}_1\big(L^2(\mathbb{R}^n)\big)$. Then the 
function $x\mapsto k(x,x)$ defines an element of $L^1(\mathbb{R}^n)$. 
\end{theorem}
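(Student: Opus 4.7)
The plan is to combine the standard factorization of a trace-class operator into two Hilbert--Schmidt operators with a Lebesgue differentiation argument, in order to bridge the gap between the given continuous kernel $k$ and its almost-everywhere-defined factored representation.

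First, I would use the polar decomposition $K = U|K|$ and the fact that $|K|^{1/2} \in \mathcal{B}_2(L^2(\mathbb{R}^n))$ to write $K = AB$, where $A \coloneqq U|K|^{1/2}$ and $B \coloneqq |K|^{1/2}$ both lie in $\mathcal{B}_2(L^2(\mathbb{R}^n))$. By Theorem \ref{thm:Hilbert-Schmidt-L2}, there exist $\ell, m \in L^2(\mathbb{R}^n \times \mathbb{R}^n)$ such that $A$ and $B$ are the integral operators with kernels $\ell$ and $m$. A standard Fubini argument then identifies the composition kernel as
\[
\tilde{k}(x,y) \coloneqq \int_{\mathbb{R}^n} \ell(x,w)\, m(w,y) \, d^n w,
\]
well-defined for a.e.\ $(x,y) \in \mathbb{R}^n \times \mathbb{R}^n$ and representing $K = AB$ as an $L^2$-integral operator.

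Second, applying the Cauchy--Schwarz inequality twice controls the diagonal of $\tilde k$. Pointwise a.e.\ one has $|\tilde{k}(x,x)| \leq \|\ell(x,\cdot)\|_{L^2(\mathbb{R}^n)}\|m(\cdot,x)\|_{L^2(\mathbb{R}^n)}$, and integrating in $x$ together with a second Cauchy--Schwarz yields
\[
\int_{\mathbb{R}^n} |\tilde{k}(x,x)| \, d^n x \leq \|\ell\|_{L^2(\mathbb{R}^{2n})} \|m\|_{L^2(\mathbb{R}^{2n})} < \infty,
\]
so the a.e.-defined diagonal of $\tilde k$ is integrable.

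The key obstacle is that $\tilde k$ and the given continuous kernel $k$ are only known to agree off a Lebesgue null set of $\mathbb{R}^{2n}$, whereas the diagonal $\{(x,x) : x \in \mathbb{R}^n\}$ itself is a null set in $\mathbb{R}^{2n}$, so a.e.\ agreement on $\mathbb{R}^{2n}$ does not automatically transfer to the diagonal. I would resolve this via a Lebesgue-type differentiation argument. For every $x \in \mathbb{R}^n$ and every $r>0$, the averages of $k$ and of $\tilde k$ over the box $B(x,r) \times B(x,r)$ coincide; continuity of $k$ forces the former to converge to $k(x,x)$ as $r \downarrow 0$, while a two-variable Lebesgue differentiation theorem (in the spirit of Brislawn) shows that the latter converges to $\tilde k(x,x)$ for a.e.\ $x \in \mathbb{R}^n$. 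Consequently $k(x,x) = \tilde{k}(x,x)$ for a.e.\ $x$, and the $L^1$-bound obtained above transfers from $\tilde k$ to the continuous diagonal $x \mapsto k(x,x)$. The remaining work is routine operator theory and Cauchy--Schwarz; the genuine difficulty is justifying the diagonal Lebesgue-differentiation step, which is exactly the content of Brislawn's theorem.
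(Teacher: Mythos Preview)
The paper does not prove this theorem; it is quoted as \cite[Theorem~3.1]{Br88} and used as a black box. Your proposal is correct and is in fact a faithful outline of Brislawn's own argument: the Hilbert--Schmidt factorization plus Cauchy--Schwarz gives integrability of the a.e.-defined diagonal of the composed kernel, and the Lebesgue-type differentiation step (averaging over shrinking products of balls) is precisely what Brislawn establishes to transfer this to the continuous diagonal.
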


Before we state and prove the main result of this section, we need to study the volume 
of $\tilde B_k$:

\begin{lemma}\label{lem:b2} Let $\tilde B_k$, $k\in \mathbb{N}$, be as in \eqref{eq:Bktilde}. Then there exists $k_0\in \mathbb{N}$, such that for all $k\in \mathbb{N}_{\geq k_0}$,  
\[
   \vol \big(\tilde B_k\big) = 2^{3k}/(36)^3.
\] 
\end{lemma}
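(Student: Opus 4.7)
The plan is to observe that $\tilde B_k$ is a cube, and that for $k$ large enough the spherical-shell constraint in the first set defining $\tilde B_k$ is automatically satisfied on this cube, so the intersection reduces to the cube alone. First I would record that the second set in \eqref{eq:Bktilde} is the Cartesian cube
\[
C_k \coloneqq \prod_{j=1}^{3}\Big[r_k+\tfrac{1}{36}2^k,\, r_{k+1}-\tfrac{17}{18}2^k\Big],
\]
whose side length computes, using $r_{k+1}-r_k=2^k$, as
\[
(r_{k+1}-\tfrac{17}{18}2^k) - (r_k+\tfrac{1}{36}2^k) = 2^k\Big(1-\tfrac{17}{18}-\tfrac{1}{36}\Big) = \tfrac{2^k}{36},
\]
so that $\vol(C_k)=2^{3k}/36^3$.

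Next I would show that $C_k$ is contained in the spherical shell $S_k \coloneqq \{x\in\mathbb{R}^3 : r_k+\tfrac{1}{2}2^k\leq |x|\leq r_{k+1}-\tfrac{1}{20}2^k\}$ for $k$ large. On $C_k$ we have $|x|^2=\sum_{j=1}^3 x_j^2$ with each $x_j\in[a_k,b_k]$, where $a_k\coloneqq r_k+\tfrac{1}{36}2^k$ and $b_k\coloneqq r_{k+1}-\tfrac{17}{18}2^k$. Hence $\sqrt{3}\,a_k\leq |x|\leq \sqrt{3}\,b_k$ for $x\in C_k$, and the two required inequalities become
\[
\sqrt{3}\,a_k \geq r_k+\tfrac{1}{2}2^k \quad \text{and} \quad \sqrt{3}\,b_k \leq r_{k+1}-\tfrac{1}{20}2^k.
\]
Using $r_k=2^k-2$ and $r_{k+1}=2^{k+1}-2$, the first inequality reduces to $\big(\tfrac{37\sqrt{3}}{36}-\tfrac{3}{2}\big)2^k\geq 2\sqrt{3}-2$ and the second to $\big(\tfrac{19\sqrt{3}}{18}-\tfrac{39}{20}\big)2^k\leq 2\sqrt{3}-2$. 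Since $\tfrac{37\sqrt{3}}{36}>\tfrac{3}{2}$ while $\tfrac{19\sqrt{3}}{18}<\tfrac{39}{20}$, both inequalities hold for all sufficiently large $k$, so there exists $k_0\in\mathbb{N}$ such that $C_k\subseteq S_k$ for $k\geq k_0$.

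Consequently, for $k\geq k_0$ the intersection in \eqref{eq:Bktilde} reduces to $\tilde B_k=C_k$, giving $\vol(\tilde B_k)=\vol(C_k)=2^{3k}/36^3$. The only mildly delicate point is verifying the two numerical inequalities above; these are elementary but the sign of the coefficient of $2^k$ in each inequality is what dictates the existence of a threshold $k_0$, rather than the identity holding for all $k$.
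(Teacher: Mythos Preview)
Your proof is correct and follows essentially the same approach as the paper's: both identify the second set in the definition of $\tilde B_k$ as a cube of side length $2^k/36$, then verify that for large $k$ this cube lies entirely inside the spherical shell (via the bounds $\sqrt{3}\,a_k$ and $\sqrt{3}\,b_k$ on $|x|$), so that the intersection reduces to the cube and the volume follows. The only cosmetic difference is that the paper uses the crude rational bounds $16/10\leq\sqrt{3}\leq 18/10$ to handle the two inequalities, whereas you work directly with the exact coefficients $\tfrac{37\sqrt{3}}{36}$ and $\tfrac{19\sqrt{3}}{18}$; either way is fine.
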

\begin{proof}
Let $k\in \mathbb{N}$. One observes that if 
\[
x\in \left\{ x\in \mathbb{R}^3 \, \bigg| \, r_k+\frac{1}{36}2^k\leq  x_1,x_2,x_3 \leq  r_{k+1}-\frac{17}{18}2^k\right\}, 
\] 
then  
\[
 \sqrt{3}\left(r_k+\frac{1}{36}2^k\right)\leq  |x|\leq  \sqrt{3}\left(r_{k+1}-\frac{17}{18}2^k\right).
\]
Since $16/10 \leq  \sqrt{3}\leq 18/10$, for sufficiently large $k\in \mathbb{N}$, the estimates
\begin{equation*}
   \sqrt{3}\left(r_k+\frac{1}{36}2^k\right)\geq \left(\frac{16}{10}+\frac{16}{10}\frac{1}{36}\right)2^k - \frac{16}{10}2 \geq \frac{3}{2}2^k-2= r_k+\frac{1}{2}2^k, 
\end{equation*}
and 
\begin{equation*}
    \sqrt{3}\left(r_{k+1}-\frac{17}{18}2^k\right)\leq  \frac{18}{10}\left(2-\frac{17}{18}\right)2^{k}-2\frac{18}{10}\leq  \frac{19}{10}2^k-2 = r_{k+1}-\frac{1}{20}2^k, 
\end{equation*}
hold. Consequently, for sufficiently large $k\in \mathbb{N}$, 
\[
 \left\{ x\in \mathbb{R}^3 \, \bigg| \, r_k+\frac{1}{36}2^k\leq  x_1,x_2,x_3 \leq  r_{k+1}-\frac{17}{18}2^k\right\}\subseteq \tilde B_k. 
\]
Hence, there exists $k_0 \in \bbN$, such that for all $k\in\mathbb{N}_{\geq k_0}$, 
\[
    \vol \big(\tilde B_k\big) = \left(r_{k+1}-\frac{17}{18}2^k-(r_k+\frac{1}{36}2^k)\right)^3.\qedhere
\]
\end{proof}

\begin{theorem}\label{thm:countex_is_successful} Let $n=3$ and $\cQ$ and $\Phi$ be given by \eqref{eq:def_of_Q2} and \eqref{eq:count_phi}, respectively. Then there exists $\delta>0$ such that for $L= \cQ +\Phi$, and for any real $z\in B(0,\delta)\backslash \{0\}$,  
\[
   \tr_4\left((L^*L+z)^{-1}-(LL^*+z)^{-1}\right) \notin \cB_1\big(L^2(\bbR^3)\big). 
\] 
\end{theorem}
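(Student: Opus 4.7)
The plan is to argue by contradiction using Brislawn's theorem (Theorem~\ref{thm:potential_is_in_L1}) combined with the explicit Neumann-series decomposition afforded by Lemma~\ref{lem:almost_Neumann_series}. For real $z \in B(0,\delta)\backslash\{0\}$ with $\delta>0$ small enough that $z \in \rho(-L^*L)\cap\rho(-LL^*)$, Lemma~\ref{lem:almost_Neumann_series}\,$(i)$ for $n=3$ yields
\begin{equation*}
T(z) := \tr_{4}\bigl((L^*L+z)^{-1}-(LL^*+z)^{-1}\bigr)
   = \tr_{4}\bigl((R_{1+z}C)^{3}R_{1+z}\bigr) + \tfrac{1}{2}\tr_{4}\bigl(\bigl[(L^*L+z)^{-1}-(LL^*+z)^{-1}\bigr](CR_{1+z})^{4}\bigr).
\end{equation*}
Since $CR_{1+z}\in\cB_{4}(L^{2}(\bbR^3))$ by Lemma~\ref{lem:Schatten-class-1-operator} (using $|(\partial_j\Phi)(x)|\leq\kappa(1+|x|)^{-1}$ from Lemma~B.3\,$(\beta)$), the operator $(CR_{1+z})^{4}$ belongs to $\cB_{1}$ by Theorem~\ref{thm:trace-class-crit}, and multiplication by the bounded operator $(L^*L+z)^{-1}-(LL^*+z)^{-1}$ preserves the trace class. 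So $T(z)\in\cB_1$ \emph{if and only if} $S(z) := \tr_{4}\bigl((R_{1+z}C)^{3}R_{1+z}\bigr)\in\cB_{1}\bigl(L^{2}(\bbR^{3})\bigr)$, and it suffices to disprove the latter.

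By Proposition~\ref{prop:concrete case} together with Corollary~\ref{cor:integral kernels regularity}, the operator $S(z)$ maps $H^{\ell}(\bbR^{3})$ into $H^{\ell+8}(\bbR^{3})$ for every $\ell\in\bbR$, so its integral kernel $s_{z}(\cdot,\cdot)$ is continuous. If we assume, for contradiction, that $S(z)\in\cB_{1}$, then Theorem~\ref{thm:potential_is_in_L1} implies $\bbR^{3}\ni x\mapsto s_{z}(x,x)$ lies in $L^{1}(\bbR^{3})$. Using the constancy of $\gamma_{j,3}$ and the tensor-product structure of $\cH = L^{2}(\bbR^{3})^{2}\otimes\bbC^{2}$, together with Proposition~\ref{prop:comp_of_Dirac_Alge} giving $\tr_{2}(\gamma_{i_{1},3}\gamma_{i_{2},3}\gamma_{i_{3},3}) = 2i\,\epsilon_{i_{1}i_{2}i_{3}}$, one obtains the explicit diagonal representation
\begin{equation*}
s_{z}(x,x) = 2i\sum_{i_{1},i_{2},i_{3}=1}^{3}\epsilon_{i_{1}i_{2}i_{3}} \int_{(\bbR^{3})^{3}} r_{1+z}(x{-}y_{1})r_{1+z}(y_{1}{-}y_{2})r_{1+z}(y_{2}{-}y_{3})r_{1+z}(y_{3}{-}x)\, \tau_{i_{1}i_{2}i_{3}}(y_{1},y_{2},y_{3})\, d^3y_{1}d^3y_{2}d^3y_{3},
\end{equation*}
where $\tau_{i_{1}i_{2}i_{3}}(y_{1},y_{2},y_{3}) = \tr_{2}\bigl((\partial_{i_{1}}\Phi)(y_{1})(\partial_{i_{2}}\Phi)(y_{2})(\partial_{i_{3}}\Phi)(y_{3})\bigr)$.

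The heart of the argument is to evaluate $s_{z}(x,x)$ asymptotically on the sets $\tilde B_{k}$ defined in \eqref{eq:Bktilde}. By Lemma~\ref{lem:asy of phi_count}\,$(\beta)$, the derivatives $(\partial_{j}\Phi)(y)$ are \emph{constant} on $\tilde B_{k}$, proportional to $\sigma_{j}$ with a coefficient of order $k^{-1/3}$ (times a power of $2^{k}$ that I will track carefully from \eqref{eq:pkj}), so that $\tau_{i_{1}i_{2}i_{3}}(y_{1},y_{2},y_{3})$ equals $2i\,\epsilon_{i_{1}i_{2}i_{3}}$ times a definite non-zero constant $c_{k}\sim k^{-1}\cdot 2^{-3k}$ whenever all $y_{j}\in\tilde B_{k}$. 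For $x$ in a ``deep interior'' subset $\tilde B_{k}^{\circ}\subset\tilde B_{k}$, chosen so that $\dist(x,\partial\tilde B_{k})\geq M$ for a large but $k$-independent $M>0$, the exponential decay of the Helmholtz Green's function (Lemma~\ref{lem:reform_of_en}, Lemma~\ref{lem:Real-part-Bessel_and_other}) confines $y_{1},y_{2},y_{3}$ to a neighborhood of $x$ of size $O(1)$ up to errors of order $e^{-\sqrt{1+z}\,M}$, and Lemma~\ref{l:7.18} together with Proposition~\ref{prop:estimate of the pointwise resolvent at 0} gives that the remaining convolution integral equals $R_{1+z}^{4}\delta_{\{0\}}(0) = \kappa(1+z)^{-5/2}$ with $\kappa>0$. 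Choosing $\delta$ small enough (and $M$ large, $k$-independently) we obtain the lower bound
\begin{equation*}
|s_{z}(x,x)| \geq \tfrac{1}{2}\cdot 24\cdot\kappa(1+z)^{-5/2}\cdot |c_{k}| \geq c_0\, k^{-1}\cdot 2^{-3k},\quad x\in\tilde B_{k}^{\circ},
\end{equation*}
for some $c_{0}>0$ independent of $k$ and all sufficiently large $k$.

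The final step combines this pointwise lower bound with the volume estimate $\vol(\tilde B_{k}) = 2^{3k}/36^{3}$ from Lemma~\ref{lem:b2}: since $\tilde B_{k}^{\circ}$ differs from $\tilde B_{k}$ only by a boundary layer of thickness $O(1)$, one has $\vol(\tilde B_{k}^{\circ})\geq\tfrac{1}{2}\vol(\tilde B_{k}) = 2^{3k-1}/36^{3}$ for $k$ large. Therefore
\begin{equation*}
\int_{\bbR^{3}}|s_{z}(x,x)|\, d^3 x \geq \sum_{k\geq k_{1}}\int_{\tilde B_{k}^{\circ}}|s_{z}(x,x)|\, d^3 x \geq \sum_{k\geq k_{1}} c_{0}k^{-1}\cdot 2^{-3k}\cdot\tfrac{1}{2}\cdot 2^{3k}/36^{3} = +\infty,
\end{equation*}
which contradicts $s_{z}(\cdot,\cdot)|_{\text{diag}}\in L^{1}(\bbR^{3})$. (The disjointness $\tilde B_{k}\cap\tilde B_{k'}=\emptyset$ for $k\ne k'$ is immediate from $r_{k+1} = 2r_{k} + 2$.) The principal technical obstacle is a quantitative justification of the ``frozen-coefficient'' approximation for $s_{z}(x,x)$ on $\tilde B_{k}^{\circ}$: one must show that both the truncation of the $y$-integrations to $\tilde B_{k}$ and the replacement of $(\partial_{j}\Phi)(y_{j})$ by its common value on $\tilde B_{k}$ introduce only errors that do not cancel or dominate the leading $c_{k}$. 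This is handled by splitting each $y_{j}$-integration into a piece of diameter $M$ around $x$ (on which $\Phi$ has constant derivatives by Lemma~\ref{lem:b1}\,$(\gamma)$) and a far piece handled by the exponential decay estimates \eqref{bess_b}, \eqref{bess_c}, and choosing $M$ large relative to $\sqrt{1+z}^{-1}$ but fixed as $k\to\infty$.
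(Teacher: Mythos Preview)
Your reduction to showing $S(z)=\tr_4((R_{1+z}C)^3R_{1+z})\notin\cB_1$ matches the paper, as does the use of Brislawn's theorem and the final volume-versus-decay count on the sets $\tilde B_k$. The difference lies in how the diagonal is accessed.

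The paper does \emph{not} attempt a frozen-coefficient approximation of $s_z(x,x)$. Instead it commutes the $C$'s through the resolvents: writing $(R_{1+z}C)^3R_{1+z}$ as $C^3R_{1+z}^4$ plus a finite sum of terms each containing at least one factor $[R_{1+z},C]$ or the product $C^2$. Since $[R_{1+z},C]$ involves $(\cQ C)$ and $(\cQ^2 C)$, i.e.\ second and third derivatives of $\Phi$, Lemma~\ref{lem:asy of phi_count}\,$(\gamma)$ gives these the decay $(1+|x|)^{-2}$, so Lemma~\ref{lem:Schatten-class-1-operator} places every such correction in $\cB_1$. Hence $S(z)\in\cB_1$ iff $T:=\tr_4(C^3)R_{1+z}^4\in\cB_1$. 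The diagonal of $T$ is then \emph{exactly} $\tr_4(C^3)(x)\cdot R_{1+z}^4\delta_{\{0\}}(0)$, a pure product with a nonzero constant, so Brislawn's theorem forces $\tr_4(C^3)\in L^1(\bbR^3)$. The final contradiction is a one-line computation on $\tilde B_k$ with no error terms.

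Your route is valid but trades this algebraic reduction for the analytic error estimate you flag at the end. That estimate can indeed be closed: the crucial point (which you should make explicit) is that on \emph{all} of $B_k$, not just $\tilde B_k$, one has $|(\partial_j\Phi)(y)|\leq Ck^{-1/3}(1+|y|)^{-1}$, because the $k^{-1/3}$ prefactor sits in the definition of $\Phi$ itself. This ensures the near-field error (some $y_j$ outside the $M$-ball but still in $B_k$) is $O(k^{-1}2^{-3k}e^{-cM})$, the same order as the leading term times $e^{-cM}$, so a $k$-independent choice of $M$ suffices. The far-field error ($y_j\in B_{k'}$, $k'\neq k$) is handled by $\dist(\tilde B_k,B_{k'})\gtrsim 2^k$ and exponential decay. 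What the paper's approach buys is the complete elimination of this bookkeeping: the commutator trick converts the approximate identity $s_z(x,x)\approx\tr_4(C^3)(x)\cdot I_0$ into an exact operator identity modulo $\cB_1$, at the cost of invoking the higher-derivative decay from Lemma~\ref{lem:asy of phi_count}\,$(\gamma)$, which your argument does not need.
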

\begin{proof} 
In view of Remark \ref{rem:remark on op norm of Rpsiz} and Lemma \ref{lem:almost_Neumann_series} it suffices to check whether or not 
\[
 \tilde T\coloneqq \tr_{4} \big((R_{1+z}C)^3R_{1+z}\big)
\]
 is a trace class operator, where $C=[ \cQ,\Phi]$, and $R_{1+z}$ are given by \eqref{eq:def_commutator} and \eqref{eq:resolvent_of_laplace}, respectively. 

Arguing by contradiction, we shall assume that 
$\tilde T\in \mathcal{B}_1\big(L^2(\mathbb{R}^3)\big)$. One observes, 
\begin{align}\notag
& (R_{1+z}C)^3R_{1+z} = R_{1+z}CR_{1+z}CR_{1+z}CR_{1+z} \\
 & \quad = [R_{1+z},C] R_{1+z}C R_{1+z}CR_{1+z}+ C R_{1+z} R_{1+z}CR_{1+z}CR_{1+z} 
 \notag\\
 & \quad = [R_{1+z},C] R_{1+z}C R_{1+z}CR_{1+z}+ C R_{1+z} [R_{1+z},C] R_{1+z}CR_{1+z}\notag\\  
 & \qquad+C R_{1+z}C R_{1+z}R_{1+z}CR_{1+z}\notag\\
 & \quad = [R_{1+z},C] R_{1+z}C R_{1+z}CR_{1+z}+ C R_{1+z} [R_{1+z},C] R_{1+z}CR_{1+z}\notag\\ 
 & \qquad +C R_{1+z}C R_{1+z}[R_{1+z},C]R_{1+z}+C R_{1+z}C R_{1+z}CR_{1+z}R_{1+z}.\label{eq:com_count_1st_round}
\end{align}
By Lemmas \ref{lem:Schatten-class-1-operator} and \ref{lem:asy of phi_count}, one gets  $CR_{1+z},R_{1+z}C\in \mathcal{B}_4\big(L^2(\mathbb{R}^3)\big)$ and 
$[R_{1+z},C]\in \mathcal{B}_2\big(L^2(\mathbb{R}^3)\big)$. Hence, by Theorem \ref{thm:trace-class-crit}, one infers that despite the last term in \eqref{eq:com_count_1st_round}, all operators are trace class. In addition, one computes 
\begin{align}\notag
& C R_{1+z}C R_{1+z}CR_{1+z}R_{1+z} = C [R_{1+z},C] R_{1+z}CR_{1+z}R_{1+z}   \\
& \hspace*{4.3cm} + C^2 R_{1+z}R_{1+z}CR_{1+z}R_{1+z} \\
& \quad = C [R_{1+z},C] R_{1+z}CR_{1+z}R_{1+z} + C^2 R_{1+z}[R_{1+z},C]R_{1+z}R_{1+z} \notag \\ 
& \qquad +  C^2 R_{1+z}CR_{1+z}^3 \notag\\
& \quad = C [R_{1+z},C] R_{1+z}CR_{1+z}R_{1+z} + C^2 R_{1+z}[R_{1+z},C]R_{1+z}R_{1+z} \notag \\ 
& \qquad +  C^2 [R_{1+z},C]R_{1+z}^3+C^3R_{1+z}^4\label{eq:com_count_2nd_round}.
\end{align}
Next, one notes that Lemma \ref{lem:commutator} implies the relation 
\[
[R_{1+z},C]=R_{1+z}\left(\Delta C\right)R_{1+z}+2R_{1+z}\left( \cQ C\right) \cQ R_{1+z}.
\] 
With the help of Lemma \ref{lem:asy of phi_count}, there exists $\kappa>0$ such that
\[
\max\{ \|C(x)^2\|, \|(\Delta C)(x)\|, \|( \cQ C)(x)\|\}\leq  \kappa (1+|x|)^{-2}, 
\quad x\in \mathbb{R}^3. 
\] 
Therefore, Lemma \ref{lem:Schatten-class-1-operator} and 
Theorem \ref{thm:trace-class-crit} imply 
\begin{align*}
& C [R_{1+z},C] R_{1+z}CR_{1+z} = C(R_{1+z} (\Delta C)R_{1+z}  \\
& \hspace*{3.9cm} +2R_{1+z}( \cQ C) \cQ R_{1+z})R_{1+z}CR_{1+z} \\
& \quad = CR_{1+z}(\Delta C)R_{1+z}CR_{1+z}+ 2 CR_{1+z} \left( \cQ C\right) 
R_{1+z} \cQ R_{1+z}CR_{1+z} \\
& \qquad \in \mathcal{B}_4\cdot \mathcal{B}_2\cdot \mathcal{B}_4 + \mathcal{B}_4\cdot \mathcal{B}_2\cdot \mathcal{B}\cdot \mathcal{B}_4 \subseteq \mathcal{B}_{1},
\end{align*}
and,
\[
   C^2 R_{1+z}[R_{1+z},C]R_{1+z} \in \mathcal{B}_2\cdot 
   \mathcal{B}_2\cdot \mathcal{B} \subseteq \mathcal{B}_1,
\]
as well as,
\begin{align*}
 C^2 [R_{1+z},C]R_{1+z}^3 &= C^2 R_{1+z}\left(\left(\Delta C\right) 
 R_{1+z}+2R_{1+z}\left( \cQ C\right) \cQ R_{1+z}\right)R_{1+z}^3 \\
                          &= C^2 R_{1+z}\left(\Delta C\right)R_{1+z}R_{1+z}^3 +2C^2 R_{1+z}
                          R_{1+z}\left( \cQ C\right)QR_{1+z}R_{1+z}^3 \\
                          & \quad \in \mathcal{B}_2\cdot \mathcal{B}_2\cdot \mathcal{B}+\mathcal{B}_2\cdot \mathcal{B}_2\cdot \mathcal{B} \subseteq \mathcal{B}_1.
\end{align*}
Noting that the inner trace maps trace class operators to trace class operators (cf.\ Remark \ref{rem-tracem}), and combining \eqref{eq:com_count_1st_round} and \eqref{eq:com_count_2nd_round} together with our assumption that $\tilde T$ is trace class, one concludes that 
\[
 T\coloneqq \tr_{4} \big(C^3R^4_{1+z}\big)
 = \tr_{4}\big(C^3\big) R^4_{1+z} \in \mathcal{B}_1\big(L^2(\mathbb{R}^3)\big).
\]

Next, one observes that $T$ is an integral operator induced by the following integral kernel
\begin{align*} 
& t\colon (x,y)\mapsto \int_{\left(\mathbb{R}^3\right)^3} 
\tr_{4}\big(C^3\big)(x) r_{1+z}(x-x_1)r_{1+z}(x_1-x_2)r_{1+z}(x_2-x_3)r_{1+z}(x_3-y)    \\
& \hspace*{2.6cm} \times d^3x_1 d^3x_2 d^3x_3,
\end{align*} 
where $r_{1+z}$ is the Helmholtz Green's function, see \eqref{C.1} associated with 
$(- \Delta + (1 + z))^{-1}$. By Theorem \ref{thm:continuity of integral kernel} (and 
Proposition \ref{prop:concrete case}), $t$ is continuous. As $T$ is trace class, Theorem \ref{thm:potential_is_in_L1} implies that the map $x\mapsto t(x,x)$ generates an 
$L^1(\mathbb{R}^3)$-function. Hence, 
\begin{align*}
   & \int_{\mathbb{R}^3} |t(x,x)| \, d^3x\\ 
   & \quad = \int_{\mathbb{R}^3} \bigg|\int_{\left(\mathbb{R}^3\right)^3} \tr_{4}\big(C^3\big)(x)r_{1+z}(x-x_1)r_{1+z}(x_1-x_2)r_{1+z}(x_2-x_3)r_{1+z}(x_3-x)   \\
& \hspace*{2.1cm} \times d^3x_1 d^3x_2 d^3x_3\bigg| \, d^3x \\ 
   & \quad = \int_{\mathbb{R}^3} \bigg|\int_{\left(\mathbb{R}^3\right)^3} 
   \tr_{4}\big(C^3\big)(x)r_{1+z}(x_1)r_{1+z}(x_1-x_2)r_{1+z}(x_2-x_3)r_{1+z}(x_3)    \\
& \quad \hspace*{2.1cm} \times d^3x_1 d^3x_2 d^3x_3\bigg| \, d^3x \\
& \quad = \int_{\mathbb{R}^3} 
\big|\tr_{4}\big(C^3\big)(x)\big| \, d^3x \, 
\big\langle \delta_{\{0\}}, R_{1+z}^4\delta_{\{0\}}\big\rangle < \infty.
\end{align*} 
In other words, 
\begin{equation}\label{eq:mult_is_in_L1}
    \tr_{4}\big(C^3\big)\in L^1(\mathbb{R}^3).
\end{equation}
The rest of the proof aims at showing that the statement \eqref{eq:mult_is_in_L1} is false. For this purpose we need to compute $\tr_{4} \big([ \cQ,\Phi]^3\big)$ on 
$\bigcup_{k\in \mathbb{N}_{\geq 2}} \tilde B_k$, with $\tilde B_k$ given in \eqref{eq:Bktilde}.
We recall from Lemma \ref{lem:asy of phi_count}\,$(ii)$, 
\[
(\partial_j \Phi)(x)= \frac{1}{k^{1/3}}\frac{1}{r_{k+1}}\sigma_j, \quad x\in \tilde B_k, \; j\in\{1,2,3\}.
\]
Hence,
\begin{align*}
\tr_{4}\big([ \cQ,\Phi]^3\big)(x) &= \sum_{j,m,\ell=1}^3 2i\epsilon_{jm\ell} \tr_2\big( 
(\partial_j\Phi)(x) (\partial_m\Phi)(x) (\partial_\ell\Phi)(x)\big) \\
Ê&= \sum_{j,m,\ell=1}^3 2i\epsilon_{jm\ell} \frac{1}{k}\frac{1}{r^3_{k+1}}\tr_2\left( ÊÊ
\sigma_j Ê\sigma_m \sigma_\ell\right) \\
Ê&= - \sum_{j,m,\ell=1}^3 4\epsilon^2_{jm\ell} \frac{1}{k}\frac{1}{r^3_{k+1}} \\ 
Ê&= -24\frac{1}{k}\frac{1}{r^3_{k+1}}, 
\end{align*}
implying,
\begin{equation}\label{eq:esti_on_B_k_tilde}
 \big|\tr_{4}\big([ \cQ,\Phi]^3\big)(x)\big|\geq 24\frac{1}{k}\frac{1}{r^3_{k+1}}, 
 \quad x\in \tilde B_k, \; k\in \mathbb{N}_{\geq 2}.
\end{equation}
However, employing Lemma \ref{lem:b2} one infers with the help \eqref{eq:esti_on_B_k_tilde} 
that for some $k_0\in \mathbb{N}$, 
\begin{align*}
\tr_4\big(C^3\big) &= \big\|\tr_{4}\big([ \cQ,\Phi]^3\big)\big\|_{L^1(\bbR^3)} 
\geq \sum_{k=k_0}^\infty \frac{1}{k}\frac{1}{r^3_{k+1}} 
   \vol \big(\tilde B_k\big) \\
 & = \frac{1}{(36)^3} \sum_{k=k_0}^\infty \frac{1}{k}\frac{1}{r^3_{k+1}}2^{3k} 
 = \frac{1}{(36)^3} \sum_{k=k_0}^\infty \frac{1}{k}\frac{1}{\left(2^k-2\right)^3}2^{3k}=\infty,
\end{align*}
contradicting \eqref{eq:mult_is_in_L1}. 
\end{proof}

\begin{remark} It might be of interest to compute the index of $\mathcal{Q}+\Phi$, with the potential $\Phi$ constructed in this section: One notes that $\Phi$ is a $\mathcal{Q}$-compact perturbation of the operator
  \[
      \mathcal{Q}+ U\text{ in }L^2(\mathbb{R}^n), \, \text{ where } \, 
    U\coloneqq \sum_{j=1}^3 \sigma_j.
  \] 
Since $U^2=I_2$ and $\partial_j U=0$, $j\in \{1,2,3\}$, one infers that $U$ is admissible. The index formula in Theorem \ref{thm:Fredholm-index} leads to $\ind(\mathcal{Q}+U)=0$, 
and hence to $\ind(\mathcal{Q}+\Phi)=0$.
\end{remark}

\newpage

 
\end{document}